\documentclass[a4paper,11pt]{amsart}
\usepackage{amssymb,amscd,amsxtra,xypic}
\usepackage[all]{xy}
\usepackage{array}
\usepackage{nicematrix}
\usepackage{textcomp}
\usepackage{bm}
\usepackage{mathtools}
\usepackage{longtable}
\usepackage{color, colortbl}
\usepackage{multirow}
\usepackage{booktabs}
\usepackage{nicematrix}
\usepackage{braket}

\definecolor{lightgray}{gray}{0.90}

\usepackage[pagebackref=true, linktocpage=true]{hyperref}
\hypersetup{%
bookmarksnumbered=true,%
colorlinks=true,%
linkcolor=blue,%
citecolor=blue,%
urlcolor=blue,%
setpagesize=false,%
pdftitle={},%
pdfauthor={Takuzo Okada}}

\theoremstyle{plain}
\newtheorem{Thm}{Theorem}[section]
\newtheorem{Lem}[Thm]{Lemma}
\newtheorem{Cor}[Thm]{Corollary}
\newtheorem{Prop}[Thm]{Proposition}
\newtheorem{Conj}[Thm]{Conjecture}

\theoremstyle{definition}
\newtheorem{Def}[Thm]{Definition}

\newtheorem{Rem}[Thm]{Remark}
\newtheorem*{Ack}{Acknowledgments}

\numberwithin{equation}{section}

\newcommand{\Proj}{\operatorname{Proj}}
\newcommand{\prt}{\partial}
\newcommand{\Sing}{\operatorname{Sing}}
\newcommand{\Spec}{\operatorname{Spec}}
\newcommand{\Cl}{\operatorname{Cl}}

\newcommand{\Pic}{\operatorname{Pic}}
\newcommand{\Int}{\operatorname{Int}}
\newcommand{\bNE}{\operatorname{\overline{NE}}}

\newcommand{\bMov}{\overline{\operatorname{Mov}}}

\newcommand{\Bs}{\operatorname{Bs}}

\newcommand{\mult}{\operatorname{mult}}

\newcommand{\wt}{\operatorname{wt}}

\newcommand{\rank}{\operatorname{rank}}
\newcommand{\ord}{\operatorname{ord}}
\newcommand{\Supp}{\operatorname{Supp}}

\newcommand{\red}{\mathrm{red}}
\renewcommand{\wt}{\operatorname{wt}}

\newcommand{\coeff}{\operatorname{coeff}}

\newcommand{\QI}{\mathrm{QI}}

\newcommand{\ntimes}{\! \times \!}
\newcommand{\br}{\mathrm{br}}
\newcommand{\lct}{\operatorname{lct}}
\newcommand{\omult}{\operatorname{omult}}

\newcommand{\vol}{\operatorname{vol}}

\newcommand{\mbA}{\mathbb{A}}

\newcommand{\mbC}{\mathbb{C}}

\newcommand{\mbF}{\mathbb{F}}
\newcommand{\mbG}{\mathbb{G}}

\newcommand{\mbP}{\mathbb{P}}
\newcommand{\mbQ}{\mathbb{Q}}

\newcommand{\mbT}{\mathbb{T}}
\newcommand{\mbU}{\mathbb{U}}

\newcommand{\mbZ}{\mathbb{Z}}

\newcommand{\mcF}{\mathcal{F}}

\newcommand{\mcM}{\mathcal{M}}

\newcommand{\mfm}{\mathfrak{m}}

\newcommand{\msF}{\mathsf{F}}
\newcommand{\msG}{\mathsf{G}}

\newcommand{\msI}{\mathsf{I}}

\newcommand{\msi}{\mathsf{i}}

\newcommand{\msp}{\mathsf{p}}
\newcommand{\msq}{\mathsf{q}}

\newcommand{\ratmap}{\dashrightarrow}

\newcommand{\bmu}{\boldsymbol{\mu}}

\makeatletter
\def\imod#1{\allowbreak\mkern10mu({\operator@font mod}\,\,#1)}
\makeatother

\title[K-stability and birational rigidity of Fano 3-fold WCIs]{K-stability and birational rigidity of Fano 3-fold weighted complete intersections}
\author[Takuzo Okada]{Takuzo Okada}
\address{Faculty of Mathematics, Kyushu University, Fukuoka 819-0395, Japan}
\email{tokada@math.kyushu-u.ac.jp}
\subjclass[2020]{14J45 \and 14J30}
\keywords{Fano variety; Birational rigidity; K-stability}
\date{}

\begin{document}

\begin{abstract}
We show that any birationally rigid well-formed and quasismooth Fano 3-fold weighted complete intersection is K-stable. 
\end{abstract}

\maketitle

\tableofcontents

\section{Introduction} \label{sec:intro}

Throughout the paper, we work over the complex number field $\mbC$.
By a \textit{Fano variety}, we mean a normal projective $\mbQ$-factorial variety with only terminal singularities and with ample anticanonical divisor.
We consider two notions for Fano varieties: birational rigidity and K-stability.
Birational rigidity means rigidity under birational modifications into terminal Mori fiber spaces. 
More precisely, a Fano variety of Picard number $1$ is \textit{birationally rigid} if it is the unique terminal Mori fiber space in its birational equivalence class up to isomorphism.
K-(poly)stability of a Fano variety is equivalent to the existence of a K\"{a}hler--Einstein metric (see \cite[Theorem~1.6]{LXZ22}).
Although these two notions have completely different origins, they are both related to singularities of suitable divisors on Fano varieties.
In particular, we have the following expectation:

\begin{Conj}[{\cite[Conjecture~1.9]{KOWalpha}}]
\label{conj:BRKst}
A birationally rigid Fano variety is K-stable.
\end{Conj}

The main aim of this paper is to confirm Conjecture~\ref{conj:BRKst} for well-formed and quasismooth Fano 3-fold weighted complete intersections (WCIs, for short).
The class group of such a Fano 3-fold $X$ is isomorphic to $\mbZ$ and the positive integer $\iota_X$ such that $-K_X = \iota_X A$, where $A$ is the positive generator of the class group, is called the \textit{index} of $X$.
Well-formed and quasismooth Fano 3-fold WCIs are classified: the codimension of well-formed and quasismooth Fano 3-fold WCIs is bounded by $3$ and such Fano $3$-fold WCIs of codimension $1$, $2$ and $3$ consist of $130$, $125$ and $1$ families, respectively.
See \cite[Tables~5 and 6]{IF}, \cite[Tables~1 and 2]{BS1}, \cite[Tables~1 and 2]{BS2} (see also \cite[Tables~5-8]{DGO}) for the lists.
The classification of birationally rigid well-formed and quasismooth Fano 3-fold WCIs is completely settled for hypersurfaces and is almost complete for WCIs of higher codimension.

We have a clear picture for weighted hypersurfaces.

\begin{Thm}[{\cite{CP17}, \cite{ACP21}}]
Let $X$ be a well-formed and quasismooth Fano $3$-fold weighted hypersurface.
Then, $X$ is birationally rigid if and only if $\iota_X = 1$.
\end{Thm}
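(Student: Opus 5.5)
This theorem is a citation of \cite{CP17} and \cite{ACP21}, so the plan is to reconstruct the structure of those arguments; the two implications are of very different nature.

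\emph{Index one implies rigidity.} The $95$ families of Fletcher--Reid quasismooth Fano $3$-fold hypersurfaces $X_d \subset \mbP(1,a_1,a_2,a_3,a_4)$ with $d = \sum a_i$ are handled by the Noether--Fano--Iskovskikh method. Suppose $\varphi \colon X \ratmap Y$ is a birational map to a Mori fiber space that is not an isomorphism. Take the associated mobile linear system $\mcM \subset |nA|$. By Noether--Fano, the pair $(X, \frac{1}{n}\mcM)$ is not canonical, so there is a maximal center: a curve, a smooth point, or a terminal quotient singular point.

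The cases are excluded in the following order.
\begin{itemize}
\item[(i)] Curves are excluded by degree counting: $A^3 = d/(a_1 a_2 a_3 a_4)$ is small, and a maximal curve $\Gamma$ would give $\mcM^2 \cdot A \ge n^2 \deg \Gamma$.
\item[(ii)] Smooth points are excluded by the $4n^2$ inequality, combined with a test surface or test curve. This uses a sufficiently positive $|kA|$ that isolates the point, so that $\mcM^2 \cdot kA < 4n^2$.
\item[(iii)] At a singular point $\msp$ of type $\frac{1}{r}(1,a,r-a)$, Kawamata's theorem shows that the only divisorial extraction is the weighted blowup. One either excludes $\msp$ via a nef-type divisor $T$ on the blowup with $T\cdot(\text{mobile})^2<0$, or constructs an explicit Sarkisov link. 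Such a link is a quadratic or elliptic involution obtained from a 2-ray game. It is shown to return to $X$, so it is a birational self-map that untwists the center.
\end{itemize}
Repeated untwisting then shows that $\varphi$ is a composite of these involutions followed by an isomorphism, which proves rigidity.

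\emph{Index greater than one implies non-rigidity.} For $\iota_X \ge 2$, the approach of \cite{ACP21} is to produce, for every family, either a Mori fiber space structure birational to $X$ that is not $X$ itself, or a link to a different Fano. Typically one blows up a suitable quotient singular point, or uses the projection from a coordinate point, and runs the 2-ray game. This ends in a del Pezzo fibration, a conic bundle, or a non-isomorphic Fano $3$-fold, any of which shows non-rigidity. The lists are finite, so this is a case-by-case construction checked against the tables of \cite{IF}.

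\emph{Main obstacle.} The hard step is (iii) for singular points in the index-one direction. For each of the $95$ families one must decide which points admit links, and verify exclusion at the rest. The tricky instances are those where $T$ fails to be nef on the blowup and a finer analysis of curve components is required. I would organize this by the local type of the singularity, using the monomials $x_i^k x_j$ forced by quasismoothness.
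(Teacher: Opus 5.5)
The paper gives no argument of its own here: the statement is quoted from \cite{CP17}, which proves rigidity of \emph{every} quasismooth member of the $95$ index-one families, and from \cite{ACP21}, which supplies non-rigidity in index $>1$. The substance of \cite{CP17} is removing the generality assumptions of Corti--Pukhlikov--Reid, i.e.\ exactly the special-member analysis you defer to your ``main obstacle''. Your two-direction plan (Noether--Fano, exclusion or untwisting of maximal centres, and links to other Mori fibre spaces in higher index) matches that division, so there is nothing further in the paper to compare against. One correction if you flesh it out: degree counting in step (i) only gives $(A\cdot\Gamma)<(A^3)$. That bound does not exclude low-degree curves, e.g.\ lines on the smooth quartic or curves of degree below $(A^3)$ through singular points, so curve exclusion needs arguments beyond it.
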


\begin{Thm}[{\cite{CO}}]
Let $X$ be a well-formed and quasismooth Fano $3$-fold weighted hypersurface of index $1$.
Then $X$ is K-stable.
\end{Thm}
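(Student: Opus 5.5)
The plan is to prove K-stability through the stability threshold, by showing $\delta(X) > 1$; by the valuative criterion (Fujita--Li, together with \cite[Theorem~1.6]{LXZ22}) this gives K-stability. Since $X$ has Picard rank one and $-K_X = A$, every global estimate can be made with respect to $A$. I would localize: for each point $\msp \in X$, set
\[
\delta_{\msp}(X) = \inf_{E,\ \msp \in c_X(E)} \frac{A_X(E)}{S_A(E)},
\]
where the infimum runs over prime divisors $E$ over $X$ whose centre contains $\msp$. It then suffices to prove $\delta_{\msp}(X) > 1$ for every $\msp$. The basic tool is the inequality $S_A(E) \le \frac{3}{4} T_A(E)$ in dimension three. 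It gives $\delta_{\msp}(X) \ge \frac{4}{3}\alpha_{\msp}(X)$, so it is enough to show $\alpha_{\msp}(X) > 3/4$ at every point where this is possible. Here $\alpha_{\msp}(X)$ is the local $\alpha$-invariant, i.e.\ the infimum of log canonical thresholds at $\msp$ of effective $\mbQ$-divisors $D \sim_{\mbQ} A$.

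\textbf{Step 1 (curves and smooth points).} For centres that are curves, or smooth points of $X$, I would adapt the exclusion arguments of birational rigidity (\cite{CP17}, \cite{ACP21}). These show that for $D \sim_{\mbQ} A$ the pair $(X, D)$ is canonical, and in particular log canonical, away from certain singular points. To obtain the strict bound beyond $3/4$, I would use the following:
\begin{itemize}
\item the multiplicity bound $\mult_{\msp} D \le A^3 \cdot(\text{local degree})$, obtained by intersecting $D$ with a suitable general surface and curve through $\msp$ cut out by low-degree coordinates;
\item the $4n^2$-inequality, in its local form due to Corti.
\end{itemize}
Together these should give $\lct_{\msp}(X, D) \ge 1 > 3/4$. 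In the few families with large $A^3$, such as the quartic threefold, this fails for special points like Eckardt-type points. For those I would instead run the Abban--Zhuang adjunction along a general flag $\msp \in S \supset C$ with $S \in |A|$ (or in $|kA|$ for small $k$). This amounts to computing the refined invariants $S(W^S_{\bullet,\bullet}; C)$ by Zariski decomposition on the surface $S$.

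\textbf{Step 2 (cyclic quotient singular points).} Let $\msp$ be a singular point of type $\frac{1}{r}(1,a,r-a)$ and let $\varphi \colon Y \to X$ be the Kawamata blowup at $\msp$, with exceptional divisor $E \cong \mbP(1,a,r-a)$. The divisor $E$ itself is handled directly. We have $A_X(E) = 1/r$, and $S_A(E)$ is computed from the volume function $\vol(\varphi^*A - tE)$. Rigidity theory describes the Mori cone of $Y$: either $-K_Y$ is not movable (a non-maximal centre), or there is a 2-ray game leading to a birational involution. In the first case the pseudo-effective threshold of $\varphi^*A - tE$ is known explicitly, and the integral can be evaluated. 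For the other divisors over $\msp$, I would apply Abban--Zhuang to the plt-type divisor $E$ and the flag $E \supset \ell$ of a line in $E$. This reduces the problem to estimates on the weighted plane $E$, where the restricted linear series come from the Zariski chamber decomposition of $\varphi^*A - tE$ on $Y$.

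\textbf{Main obstacle.} I expect the main difficulty to be the singular points that are \emph{not} excluded as maximal centres: the centres of quadratic and elliptic involutions. At these points $\alpha_{\msp} < 3/4$ can occur, so the cheap bound $\delta_{\msp} \ge \frac{4}{3}\alpha_{\msp}$ fails. The volume of $\varphi^*A - tE$ then changes along the flop or anti-flip of the 2-ray game, and computing $S$ requires following the link. I would try first to use the involution explicitly: the link $Y \ratmap Y'$ identifies $\vol(\varphi^*A - tE)$ for large $t$ with a volume on the other side. This should make the integral computable in closed form, uniformly in the weights of each family. The remaining cases would then be a finite, case-by-case check over the $95$ families.
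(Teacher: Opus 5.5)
\textbf{There is a genuine gap, and it is in the architecture, not only in the details.}

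You aim to prove $\delta_{\msp}(X)>1$ at \emph{every} point. At smooth points and most singular points you propose to get this from $\delta_{\msp}\ge\frac43\alpha_{\msp}$ together with $\alpha_{\msp}>3/4$, and you justify the latter by ``the exclusion arguments of birational rigidity''. Those arguments do not apply here. The $4n^2$-inequality, Corti's local inequality, and the test-class and isolating-set exclusions are statements about a \emph{movable} boundary $\frac1n\mcM$: they bound $\mult_{\msp}(D_1\cdot D_2)$ for two general members of $\mcM$. They give no information about an arbitrary effective $D\in|A|_{\mbQ}$. Such a $D$ may be a fixed surface, for example a tangent section in $|A|$ or a low-degree quasi-hyperplane section through $\msp$, and for such $D$ the pair $(X,D)$ need not be canonical or even have $\lct_{\msp}(X;D)>3/4$ at smooth points of special quasismooth members.

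The quartic with a cone section is one instance. Your own remark about it contradicts the claim that rigidity gives canonicity. Nothing in your toolkit rules out similar behaviour in the other families. The uniform bound one actually obtains from isolating sets and Lemma~\ref{lem:isolomult} is $\mult_{\msp}D\le 2$, i.e.\ $\lct_{\msp}\ge 1/2$. So at every non-maximal-centre point of every member where $\alpha_{\msp}\le 3/4$, your plan requires an ad hoc Abban--Zhuang computation, and you give no uniform flag for it. Since the statement has no generality assumption, this is not a finite check.

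\textbf{The missing idea is the local Stibitz--Zhuang criterion.} This is Theorem~\ref{thm:localSZ}, i.e.\ \cite[Corollary~2.10]{CO}, and it is how rigidity enters the argument in \cite{CO}, from which the statement is quoted. The same strategy is used in this paper for codimension~$2$. Suppose a divisor $E$ with $A_X(E)\le S(E)$ has a centre along which $\alpha_q(X)\ge 1/2$. Then one produces a movable $\mcM$ with $(X,\frac1n\mcM)$ non-canonical there, i.e.\ a maximal centre.

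Consequently, away from the zero-dimensional maximal centres one only needs $\alpha_q\ge 1/2$. That is exactly what the cheap multiplicity bound delivers (compare \S\ref{sec:alphasmpt} and \S\ref{sec:alphasingpt}). By the classification of maximal centres in \cite{CP17}, genuine $\delta$-estimates are needed only at the QI/EI centres. There \cite{CO} uses type~I/IIa flags $\msp\in C\subset S\subset X$ with $S$ a low-degree quasi-hyperplane section (Propositions~\ref{prop:typeI} and \ref{prop:typeII}); it does not follow the Kawamata divisor through the link.

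Your Step~2 idea, Abban--Zhuang over the Kawamata exceptional divisor with volumes read off the 2-ray game, is a possible alternative at those centres. Note, however, that computing $S_A(E)$ for that single $E$ bounds only $\delta_X(E)$, not $\delta_{\msp}(X)$. The refinement on $E$ would also require the full Zariski chamber structure of $\varphi^*A-tE$ restricted to $E$.
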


Smooth complete intersections of three quadrics in $\mbP^6$ are the only well-formed and quasismooth Fano $3$-fold WCI of codimension $3$.
None of them are birationally rigid and all of them are K-stable (see \cite[Theorem~C]{AZ23}). 

We explain the codimension $2$ case.
It is proved in \cite{IP96} that a general complete intersection of a quadric and a cubic in $\mbP^5$ is birationally rigid.
The $84$ families of well-formed and quasismooth Fano $3$-fold WCIs of index $1$ other than complete intersections of a quadric and a cubic are investigated in \cite{OkadaI}, and it is proved that general members of the specific $18$ families listed in Table~\ref{table:BRcodim2} are birationally rigid and no member of the remaining $66$ families is birationally rigid.
After that the generality assumptions imposed in \cite{OkadaI} for the $18$ families are dropped in \cite{AZ16}.
Finally, it is proved in \cite{DG} that no well-formed and quasismooth Fano $3$-fold WCI of codimension $2$ and index $> 1$ is birationally rigid.
As a summary, we obtain the following:

\begin{Thm}[{\cite{IP96}, \cite{OkadaI}, \cite{AZ16}, \cite{DG}}]
Let $X$ be a well-formed and quasismooth Fano $3$-fold WCI of codimension $2$ other than a complete intersection of a quadric and a cubic in $\mbP^5$.
Then, $X$ is birationally rigid if and only if the index of $X$ is $1$ and it belongs to one of the specific $18$ families listed in \emph{Table~\ref{table:BRcodim2}}.
Moreover, a general smooth complete intersection of a quadric and a cubic in $\mbP^5$ is birationally rigid.
\end{Thm}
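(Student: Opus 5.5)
This theorem is a summary of results from the literature, so its proof is an assembly of the cited theorems rather than a new argument. First I fix a well-formed and quasismooth Fano $3$-fold WCI $X$ of codimension $2$, not a complete intersection of a quadric and a cubic in $\mbP^5$, and split according to the index $\iota_X$.

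\emph{Case $\iota_X > 1$.} By \cite{DG}, no such $X$ is birationally rigid. For each family of higher index they construct an explicit birational map to a non-isomorphic Mori fiber space, typically a link from a Sarkisov-type blow-up of a singular point or of a suitable curve. Hence the ``only if'' direction holds here trivially, and there is nothing on the ``if'' side.

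\emph{Case $\iota_X = 1$.} The classification lists $85$ such families. Removing the complete intersections of a quadric and a cubic leaves $84$, and these are treated in \cite{OkadaI}.
\begin{itemize}
\item For $66$ of them, every member admits a Sarkisov link to a different Mori fiber space, usually initiated by the Kawamata blow-up of a terminal quotient singular point. So no member is birationally rigid.
\item For the remaining $18$ families, which are the ones in Table~\ref{table:BRcodim2}, \cite{OkadaI} proves birational rigidity of a general member. It excludes every maximal center other than the singular points that give rise to birational involutions, and untwists those.
\end{itemize}
The generality hypotheses used in \cite{OkadaI} (for instance, conditions on the defining equations at singular points) are removed in \cite{AZ16}. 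That paper redoes the exclusion of maximal centers, using refined multiplicity and Newton-polytope arguments, for arbitrary quasismooth members. Combining the two gives: $X$ of index $1$ is birationally rigid if and only if it lies in one of the $18$ families.

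The final sentence of the theorem, about a general smooth complete intersection of a quadric and a cubic in $\mbP^5$, is exactly the theorem of \cite{IP96}, proved by the method of maximal singularities.

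The only step needing any care is the bookkeeping. The union of the families treated in \cite{OkadaI}, \cite{AZ16} and \cite{DG} has to cover every codimension-$2$ family in the classification tables \cite{IF}, \cite{BS1}, \cite{BS2}, except the quadric–cubic family. To check this I would compare those tables family by family against the cited lists, sorted by index.
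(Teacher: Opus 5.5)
Your decomposition matches the paper's summary: \cite{DG} for index $>1$, \cite{OkadaI} for the $66$ non-rigid index-$1$ families and for everything proved there unconditionally, and \cite{IP96} for the general quadric--cubic. The gap is the step you treat as a black box, namely that \cite{AZ16} removes the generality hypotheses of \cite{OkadaI} for every quasismooth member of the $18$ families. The paper explicitly disputes this, and that is why it reproves the result in \S\ref{sec:pfbirrig} (Theorem~\ref{thm:birrigid}).

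Unconditionally, \cite{OkadaI} excludes curves and smooth points and handles all singular points without the mark $\heartsuit$. That settles the $11$ families with no $\heartsuit$ points. What remains are the $\heartsuit$ points of families \textnumero~20, 31, 37, 47, 51, 59, 71 on members violating the generality conditions, and there \cite{AZ16} is incomplete or wrong:
\begin{itemize}
\item For \textnumero~20, two cases are not treated: $zw\in\msF_1$ with exactly one of $zs^2, zt^2$ in $\msF_2$, and $zw\notin\msF_1$ with $zs^2,zt^2,zst\notin\msF_2$.
\item For \textnumero~31, the whole case $zw\in\msF_1$ is missing, including the construction of the Sarkisov self-link.
\item For \textnumero~37, the $\frac{1}{2}(1,1,1)$ points lying on $(y=0)_X$ are missing.
\item For \textnumero~47, the case $y^2z\in\msF_1$ or $y^2s\in\msF_1$ is missing.
\item For \textnumero~51, a self-link is erroneously claimed at the $\frac{1}{4}(1,1,3)$ point when $zt\in\msF_1$, $\alpha=0$ and $sx\notin g_6$.
\item For \textnumero~71, the claim that the $2$-ray game is unchanged without the generality assumption cannot hold. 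If $X$ contains a WCI curve $\Gamma$ of type $(1,7,8,10)$ through the point, then $(10\varphi^*A-\frac{6}{4}E\cdot\tilde{\Gamma})<0$, so $\bNE(Y)$ changes.
\end{itemize}
So your chain of citations does not establish the ``if'' direction for arbitrary quasismooth members. The delicate point is the correctness of this input, not the bookkeeping against the classification tables. Incidentally, \cite{AZ16} works with $2$-ray games of ambient toric varieties, not multiplicity or Newton-polytope estimates.

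To close the gap you must treat each $\heartsuit$ point directly on the special members. After normalizing the equations under the failure of the generality condition, the paper excludes the Kawamata blowup $\varphi\colon Y\to X$ as a maximal extraction by one of the following:
\begin{itemize}
\item a normal surface $\tilde{T}$ on which $\tilde{S}|_{\tilde{T}}$ has negative-definite intersection matrix (Lemma~\ref{lem:exclnegdef});
\item pencils $\tilde{S}_\lambda\cdot\tilde{T}_\mu$ producing infinitely many curves with $(-K_Y\cdot C)\le 0$ (Lemma~\ref{lem:excl:infinitecurve});
\item mutually numerically proportional curves with $(\tilde{T}^2\cdot\tilde{S})\le 0$ (Lemma~\ref{lem:exclnumprop});
\item $B^2\notin\bNE(Y)$ (Lemma~\ref{lem:exclNE}).
\end{itemize}
In the two cases of \textnumero~31 with $z^2s$ or $z^2t$ in $\msF_2$, the point need not be excluded at all. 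There one runs the $2$-ray game of the ambient toric weighted blowup. It restricts to an inverse flip $Y\ratmap Y'$ followed by a generically $2$-to-$1$ morphism $Y'\to\mbP(1,2,4,5)$. One then concludes either that $-K_Y$ lies on the boundary of $\bMov(Y)$ when a divisor is contracted (Lemma~\ref{lem:exclmcmob}), or that the Galois involution of the Stein factorization yields a Sarkisov self-link. Only with these cases added does the assembly prove rigidity of every quasismooth member of the $18$ families.
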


K-stability of smooth complete intersections of a quadric and a cubic in $\mbP^5$ is proved in \cite[Theorem~1.3]{Zhuang20} (without any generality assumptions).
K-stability of birationally rigid Fano $3$-fold WCIs of codimension $2$ and index $1$ is proved in \cite{KOWalpha} under generality assumptions.
The following is the main theorem of this paper in which we succeeded in dropping the generality assumptions.

\begin{Thm}
\label{thm:Kst}
Let $X$ be a well-formed and quasismooth Fano $3$-fold weighted complete intersection of codimension $2$.
If $X$ is birationally rigid, then $X$ is K-stable.
\end{Thm}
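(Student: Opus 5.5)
By the classification results quoted above, a birationally rigid $X$ in Theorem~\ref{thm:Kst} is either a smooth complete intersection of a quadric and a cubic in $\mbP^5$, or has index $1$ and belongs to one of the $18$ families of Table~\ref{table:BRcodim2}. The first case is \cite[Theorem~1.3]{Zhuang20}, so only the $18$ families remain. For these the plan is to show $\delta(X) > 1$. I would use the criterion developed in \cite{KOWalpha}: it suffices to bound the local stability threshold $\delta_{\msp}(X)$ at every point $\msp \in X$. At a point with $\alpha_{\msp}(X) > 3/4$, the inequality $\delta \ge \frac{4}{3}\alpha$ for 3-folds gives the local bound. At the remaining points I would use an Abban--Zhuang type adjunction estimate. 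This means choosing a plt flag, for instance a quasi-hyperplane section $S \in |A|$ or $|kA|$ through $\msp$ followed by a curve on $S$, and computing the refined $S$-invariants.

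The argument of \cite{KOWalpha} needed generality, so the key step is to redo the local analysis for arbitrary quasismooth members. I would split it into three cases.

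\emph{Smooth points.} Here I would use the fact, already exploited in the proof of birational rigidity in \cite{OkadaI, AZ16}, that $\lct_{\msp}(X; D) \ge$ some explicit bound for $D \sim_{\mbQ} -K_X$. One input is a multiplicity estimate: $\mult_{\msp} D \le$ (a computable constant) $<$ something like $4/3$. This comes from intersecting $D$ with a suitable pencil of curves $\Gamma$ cut out by low-degree coordinates, using $(-K_X)^3$ small. Another input is the exclusion of infinitely near maximal centres, which is known without generality from \cite{AZ16}. If the bound $\alpha_{\msp} > 3/4$ fails at special points (for instance points on curves where low-weight coordinates vanish simultaneously), I would instead apply Abban--Zhuang with $S$ a general member of $|A|$ through $\msp$. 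Its $S$-invariant is computed by Zariski decomposition on the surface $S$, which is a quasismooth K3-type surface.

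\emph{Non-maximal-centre singular points.} For cyclic quotient singular points $\msp$ that are not maximal centres, the rigidity papers provide a "$\mbQ$-isolating" divisor class $lA$ with $l$ small. I would use it to get $\lct_{\msp} \ge 1/l$-type bounds, which should exceed $3/4$ in all cases after tabulation.

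\emph{Maximal centre singular points.} These come with birational involutions. Here I would compute $\delta_{\msp}$ via the Kawamata blowup $E$ and the flag $E \supset$ curve. This is done in weighted coordinates, and quasismoothness alone fixes the local equations, so no generality should be needed.

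The main obstacle is the special members where generality failed before: for instance, where the defining equations lack certain monomials and some low-degree curve $\Gamma$ through $\msp$ lies on $X$, or where $S \in |A|$ through $\msp$ is non-normal or has worse singularities. In those cases I would first try a direct computation of the $S$-invariant using the explicit curve $\Gamma$ in the Zariski decomposition. Failing that, I would choose a different flag member, such as $|2A|$ or a tangent-hyperplane section, whose local equation is forced by quasismoothness. I expect a finite but lengthy family-by-family case analysis over the $18$ families.
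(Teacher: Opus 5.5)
\textbf{The gap.} You aim to prove $\delta_{\msp}(X)>1$ at \emph{every} point of $X$, using $\alpha_{\msp}(X)>3/4$ together with $\delta\ge\frac{4}{3}\alpha$, and falling back on Abban--Zhuang flags where that fails. Birational rigidity then plays no role beyond selecting the $18$ families. The step that fails is your expectation that the available local bounds exceed $3/4$ ``after tabulation''. The isolating-set method gives $\mult_{\msp}(D)\le a_2a_4(A^3)$ at smooth points and $\omult_{\msp}(D)\le r_{\msp}\,e\,(A^3)$ at singular points (Table~\ref{table:singpttc}). These numbers equal $2$ in the following cases:
\begin{itemize}
\item smooth points of families \textnumero~14 and \textnumero~20;
\item the $\frac{1}{2}(1,1,1)$ points of \textnumero~20;
\item the $\frac{1}{3}(1,1,2)$ points of \textnumero~37;
\item the $\frac{1}{4}(1,1,3)$ point of \textnumero~51.
\end{itemize}
For the $\frac{1}{2}(1,1,1)$ points of \textnumero~37 the number is exactly $4/3$, so you get $\lct\ge 3/4$ but not $>3/4$. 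For $H_x$ itself, in several cases only $\omult_{\msp}(H_x)\le 2$ is available (e.g.\ Lemma~\ref{lem:lctHxSingPt2}), which yields only $\lct_{\msp}(X;H_x)\ge 1/2$. Note also that a bound of the form $\lct\ge 1/l$ exceeds $3/4$ only when $l=1$. Consequently, at smooth points you would be forced into Abban--Zhuang computations over a positive-dimensional family of points, including special points where the natural flag members are singular at $\msp$. The proposal gives no uniform way to carry this out.

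\textbf{The missing idea.} What you need is the local Stibitz--Zhuang criterion, Theorem~\ref{thm:localSZ} (\cite[Corollary~2.10]{CO}). It says that if $\Sigma$ contains all zero-dimensional maximal centres, it suffices to have $\delta_{\msp}(X)>1$ for $\msp\in\Sigma$ and only $\alpha_{\msq}(X)\ge 1/2$ for $\msq\notin\Sigma$. This is exactly where rigidity enters, and it splits the proof into three parts.
\begin{enumerate}
\item Know, for \emph{every} quasismooth member, which points can be maximal centres. This is why \S\ref{sec:pfbirrig} reproves \cite{AZ16} rather than citing it, since \cite{AZ16} has gaps.
\item Prove the crude bound $\alpha_{\msq}(X)\ge 1/2$ at all smooth points and at the $\alpha$-marked singular points (\S\S\ref{sec:alphasmpt}--\ref{sec:alphasingpt}).
\item Prove $\delta_{\msp}(X)>1$ by Abban--Zhuang only at the finitely many $\delta$-marked points (\S\ref{sec:delta}). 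The paper uses type I/IIa flags through surfaces in $|kA|$, or a weighted blowup of $H_x$; family \textnumero~60 is covered by \cite{KOWalpha}.
\end{enumerate}
At the maximal centres, your idea of starting the flag with the Kawamata exceptional divisor is a plausible alternative to the paper's flags. Without the $1/2$-criterion, however, the rest of your plan has no workable route.
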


\begin{Cor}
\emph{Conjecture~\ref{conj:BRKst}} is true for well-formed and quasismooth Fano $3$-fold weighted complete intersections.
\end{Cor}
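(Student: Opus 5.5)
The Corollary follows by running through the classification of well-formed and quasismooth Fano $3$-fold WCIs by codimension and quoting, in each case, the results recalled above together with Theorem~\ref{thm:Kst}. Let $X$ be a birationally rigid well-formed and quasismooth Fano $3$-fold WCI. As recalled in the introduction, the codimension $c$ of $X$ satisfies $1 \le c \le 3$. (Codimension $0$ would mean $X$ is a weighted projective space. A well-formed $\mbP(a_0,\dots,a_3)$ is toric, and toric varieties of Picard number $1$ and dimension $3$ are rational. Hence they are never birationally rigid, since $\mbP^3$ and other Mori fiber spaces lie in the same birational class. So this case does not arise, or falls outside the classification in any case.)

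\emph{Case $c=1$.} By the theorem of \cite{CP17} and \cite{ACP21}, birational rigidity forces $\iota_X = 1$. The theorem of \cite{CO} then shows that $X$ is K-stable.

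\emph{Case $c=2$.} This is exactly Theorem~\ref{thm:Kst}. Note that Theorem~\ref{thm:Kst} is stated without generality assumptions, so it covers every birationally rigid member: the complete intersections of a quadric and a cubic in $\mbP^5$ (handled by \cite[Theorem~1.3]{Zhuang20}) and all members of the $18$ families of Table~\ref{table:BRcodim2}.

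\emph{Case $c=3$.} Here $X$ is a smooth complete intersection of three quadrics in $\mbP^6$. None of these is birationally rigid, so the hypothesis is never satisfied and there is nothing to prove. (Alternatively, \cite[Theorem~C]{AZ23} gives K-stability for all of them anyway.)

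Combining the three cases shows that every birationally rigid well-formed and quasismooth Fano $3$-fold WCI is K-stable, which is Conjecture~\ref{conj:BRKst} in this class. The only substantive input is Theorem~\ref{thm:Kst}, whose proof is the content of the paper. The remaining work is bookkeeping: the quoted classification must cover all codimensions, and the definitions of Fano, birational rigidity and K-stability used in the cited results must agree with those fixed in this paper. For the latter, terminal singularities follow from quasismoothness together with the classification tables.
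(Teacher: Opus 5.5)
Your proposal is correct and follows the paper's own route. The corollary is assembled from the codimension-$1$ results (\cite{CP17}, \cite{ACP21}, \cite{CO}), the codimension-$3$ observation (no member is birationally rigid, and all are K-stable by \cite{AZ23}), and Theorem~\ref{thm:Kst} for codimension $2$; your remark on the codimension-$0$ (weighted projective space) case is harmless extra care that the paper leaves implicit.
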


The proof of Theorem~\ref{thm:Kst} relies on the classification of maximal centers (see Definition~\ref{def:maxsing}) on birationally rigid well-formed and quasismooth Fano $3$-fold WCIs of codimension $2$ given in \cite{AZ16}, which is the core part of the proof of birational rigidity in \cite{AZ16}.
However, we have identified some insufficiencies in the arguments given in \cite{AZ16}, which are incorporated in several remarks in \S \ref{sec:pfbirrig}.
To overcome this, we have included a complete and uniform proof of \cite[Theorem~1.1]{AZ16} in \S \ref{sec:pfbirrig} to obtain Theorem~\ref{thm:BRcodim2} below.
Note that \cite{AZ16} introduced a different approach to proving rigidity.
Although that method could perhaps be extended to the missing cases, we depart from this approach and use the traditional methods of dealing with centers of maximal singularity to keep our arguments uniform.

\begin{Thm}[{$=$ Theorem~\ref{thm:birrigid}, cf.\  \cite[Theorem~1.1]{AZ16}}]
\label{thm:BRcodim2}
Let $X$ be a well-formed and quasismooth Fano $3$-fold WCI of codimension $2$ and index $1$ that belongs to one of the specific $18$ families listed in \emph{Table~\ref{table:BRcodim2}}.
Then, $X$ is birationally rigid.
\end{Thm}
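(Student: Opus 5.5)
The plan is the method of maximal singularities, in the Noether--Fano form. Let $\mathcal{H} \subset |nA|$ be a mobile linear system on $X$. By the Noether--Fano inequality, if $X$ is not birationally superrigid with respect to $\mathcal{H}$, then the pair $(X, \frac{1}{n}\mathcal{H})$ is not canonical. So there is a maximal center $\Gamma \subset X$ (Definition~\ref{def:maxsing}), and $\Gamma$ is a curve, a smooth point, or a singular point. It is enough to show two things. First, no curve and no nonsingular point is a maximal center. Second, every singular point that is a maximal center is untwisted: either by a birational involution (quadratic or elliptic) or by a link back to $X$ itself. Corollaries of the Sarkisov program then give birational rigidity. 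The work is done uniformly over the $18$ families of Table~\ref{table:BRcodim2}, with no generality assumptions, using only well-formedness and quasismoothness.

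\textbf{Curves.} Suppose a curve $\Gamma$ is a maximal center. Then $\mult_\Gamma \mathcal{H} > n$. Intersect two general members of $\mathcal{H}$ and compare with $A^3 = d_1 d_2/(a_0\cdots a_5)$. This gives $\deg \Gamma = (A\cdot\Gamma) < A^3 \le 1$. Combined with the orbifold structure along singular strata, this leaves only curves passing through singular points with small degree. Those are excluded by the standard argument: $A^3 \cdot$ (the Kawamata blowup) shows $\Gamma$ would have to be contained in the base locus of $|kA|$ for small $k$. That base locus is explicit from the weights and is finite away from singular points.

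\textbf{Smooth points.} Take a nonsingular point $\msp$. Apply Pukhlikov/Corti's $4n^2$ inequality, $\mult_{\msp} (H_1\cdot H_2) > 4n^2$, to a general surface $T \in |lA|$ through $\msp$ that does not contain the curve $H_1\cdot H_2$. This gives $4n^2 < l n^2 A^3$. We need $l A^3 \le 4$, so we must find $l$ such that $|lA|$ separates $\msp$ up to finitely many curves (an isolating class). Here we use the standard lemma: if the monomials of degree $\le l$ cut out $\msp$ set-theoretically, then $lA$ isolates $\msp$. We choose $l = a_4 a_5$, or smaller depending on which coordinates vanish at $\msp$. The needed inequality $l A^3 \le 4$ is then a check family by family, with separate treatment of points on $(x_0 = x_1 = \cdots = 0)$ loci.

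\textbf{Singular points.} For each terminal quotient point $\msp$ of type $\frac{1}{r}(1,a,r-a)$, Kawamata's theorem says the only possible extraction is the Kawamata blowup $\varphi\colon Y \to X$. The point is excluded if we find a nef divisor $M$ on $Y$ with $(M \cdot E^{\perp} \cdot \ldots)$ forcing $(-K_Y)^2\cdot M \le 0$. We will construct test surfaces $T \in |bA|$ and check $(\tilde T\cdot \tilde{\mathcal{H}}^2) \ge 0$, or use the criterion $(-K_Y)^3 \le 0$ together with the existence of a nef $N \sim_{\mbQ} bB - eE$. The remaining singular points are untwisted using the local equations near $\msp$:
\begin{itemize}
\item If one of the defining equations is quadratic in a coordinate of weight $r$ (i.e.\ contains $z^2$ times a unit), projecting from $\msp$ gives a double cover, and the covering involution untwists.
\item If $\msp$ has a tangent-linear equation eliminating a coordinate, the Sarkisov link is built via the $2$-ray game, and the other end is shown to be $X$ or a birational involution.
\end{itemize}
We expect this step to be the main obstacle, and the place where \cite{AZ16} needs repair. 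It is the nongeneral cases, where the defining polynomials lose monomials ($z^2$ term absent, or $\msp$ lying on degenerate strata). There the $2$-ray game can produce a flop to a different model, or the test surface $T$ may contain the curve $\tilde H_1\cdot \tilde H_2$. First I would handle these by replacing $T$ with a suitable member containing the special curve and computing the intersection of that curve's components with $-K_Y$ directly, using quasismoothness to constrain which monomials must appear. This works case by case through the table.
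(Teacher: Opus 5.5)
Your framework (Noether--Fano, then exclude or untwist every maximal center, using Kawamata blowups at the quotient points) is the one the paper uses, but the proposal stops where the actual proof begins.

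Curves, nonsingular points and every singular point not marked $\heartsuit$ in Table~\ref{table:BRcodim2} are already settled in \cite{OkadaI} with no generality assumption, and the paper simply cites this. Your own sketches there would need repair. With $l=a_4a_5$ you get $lA^3=6,\,9/2,\,5$ on families \textnumero~8, 14, 24, so you need isolating sets of maximum degree $a_4$ or less, as in \S\ref{sec:alphasmpt}. Also, $\Bs|kA|$ need not be finite off the singular points: e.g.\ $\Bs|8A|=(x=y=0)_X$ is a curve on \textnumero~85.

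The real content of the theorem is the $\heartsuit$ points of \textnumero~20, 31, 37, 47, 51, 59, 71 on special members. These are members containing a WCI curve of type $(1,2,4,5)$, $(1,2,6,8)$, $(1,4,5,6)$, $(1,4,6,7)$ or $(1,7,8,10)$ through $\msp$, or with $t^2$ or $s^2$ missing from $\msF_1$, or with $\Bs|\mfm_{\msp}(2A)|$ reducible. For these you only say ``case by case''. The one concrete device you offer, computing $(-K_Y\cdot\tilde\Gamma)$ for the components of the special curve, excludes nothing by itself. A curve with $(-K_Y\cdot\tilde\Gamma)<0$ is compatible with $\varphi$ being a maximal extraction. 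For example, the $(1,2,4,5)$-curve on \textnumero~31 has $(B\cdot\tilde\Gamma)=-1/4$ and is the flipping curve of the inverse flip in Lemma~\ref{lem:No31excl3C1flip}, where $\varphi$ may well initiate a self-link.

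What is needed is a statement about the whole cone. The available tools are:
\begin{enumerate}
\item negative-definiteness of the intersection matrix of $\tilde S|_{\tilde T}$ on a normal surface $\tilde T$ (Lemma~\ref{lem:exclnegdef});
\item numerical proportionality of the components of $\tilde S\cap\tilde T$ together with $(\tilde T^2\cdot\tilde S)\le 0$ (Lemma~\ref{lem:exclnumprop});
\item $B^2\notin\bNE(Y)$, shown via an explicit nef class such as $3B+E$ (Lemma~\ref{lem:exclNE}).
\end{enumerate}
In some cases even these are not what works. In case (iii) of the proof of Proposition~\ref{prop:No20excl2C1} the matrix is only negative semidefinite. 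There, in Proposition~\ref{prop:No20excl2C2}(3), and for both kinds of special $\frac13(1,1,2)$ points of \textnumero~37, the paper builds two-parameter families $S_\lambda\cap T_\mu$ that meet only along the special curves. This produces infinitely many curves with $(B\cdot C)\le 0$ (Lemma~\ref{lem:excl:infinitecurve}). Nothing in your plan anticipates this.

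The untwisting step has the same gap. Whether a $\heartsuit$ point is excluded or untwisted cannot be read off from a $z^2\cdot(\text{coordinate})$ term. For \textnumero~20 with $zw\notin\msF_1$, both $z^2y$ and $z^2w$ occur, yet the point is never a maximal center.

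In the one situation where a link is genuinely needed (the $\frac13(1,1,2)$ point of \textnumero~31 with a $(1,2,4,5)$-curve and $z^2s\in\msF_2$), the double cover is not the projection from $\msp$. That projection is birational onto its image. The double cover is instead the morphism $Y'\to\mbP(1,2,4,5)$ defined by $|-mK_{Y'}|$ on the model reached after an antiflip. To obtain it you must:
\begin{enumerate}
\item embed $\varphi$ in a weighted blowup of a rank-$2$ toric variety; when $zw\notin\msF_1$ this works only after adding a new coordinate $\xi=zy+tx+f_5$ of weight $5$ with $\ord_E\xi=5/3$;
\item show that the $2$-ray game restricts to an inverse flip with terminal target (a $\frac15(1,1,4)$ point appears on the flipped curve);
\item check that $Y'\to\mbP(1,2,4,5)$ is generically $2:1$;
\item use the dichotomy from its Stein factorization.
\end{enumerate}
In the last step, if the morphism contracts a divisor, then $-K_Y\in\partial\bMov(Y)$ and $\msp$ is excluded by Lemma~\ref{lem:exclmcmob}. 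Otherwise it is a flopping contraction, and the Galois involution of the finite part gives a Sarkisov self-link (Propositions~\ref{prop:No31excl3C1} and~\ref{prop:No31excl3C3}).

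As written, the proposal restates the strategy but does not prove the theorem.
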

 
We explain the structure of this paper briefly.
In \S \ref{sec:prelim}, we give basic definitions and recall some preliminary results that are necessary in this paper.
This includes some methods of excluding maximal centers (see \S \ref{sec:maxsing}), and some methods for computing local alpha invariants (see \S \ref{sec:methodalpha}) and local delta invariants (see \S \ref{sec:methoddelta}).
Moreover, some known results on the classification of maximal centers are encapsulated in \S \ref{sec:known}.
In \S \ref{sec:pfbirrig}, we give a complete proof of Theorem~\ref{thm:BRcodim2}.
The rest of the sections are devoted to the proof of Theorem~\ref{thm:Kst}.
In \S \ref{sec:alphasmpt} and \S \ref{sec:alphasingpt}, we give a lower bound $\alpha_{\msp} (X) \ge 1/2$ for smooth points $\msp$ and some singular points $\msp$, respectively.
Note that none of these points is a maximal center.
In \S \ref{sec:delta}, we prove the inequality $\delta_{\msp} (X) > 1$ for the singular points that are not treated in \S \ref{sec:alphasingpt}.
Theorem~\ref{thm:Kst} follows from the above results combined with Theorem~\ref{thm:localSZ}, which is the local version of the result of Stibitz--Zhuang.

\begin{Ack}
The author would like to thank Hamid Abban and Francesco Zucconi for discussions on birational rigidity of Fano $3$-fold WCIs of codimension $2$ and index $1$.
The author also would like to thank Yuuki Morita for his interest in this work and for informing the author about the reference \cite{Ishii}.
The author was partially supported by JSPS KAKENHI Grant Number JP24K00519.
\end{Ack}

\section{Preliminaries}
\label{sec:prelim}

Throughout the paper, by a \textit{$\mbQ$-Fano variety}, we mean a normal projective variety with only klt singularities and with ample anticanonical divisor.
By a \textit{Fano variety}, we mean a $\mbQ$-Fano variety in the Mori category, that is, it is a normal projective $\mbQ$-factorial variety with only terminal singularities and with ample anticanonical divisor.

\subsection{Basics on weighted projective varieties}
\label{sec:WCI}

Let $\mbP \coloneq \mbP (a_0, \dots, a_N)$ be a weighted projective space with homogeneous coordinates $x_0, \dots, x_N$ of weights $a_0, \dots, a_N$, respectively, and let $R \coloneq \mbC [x_0, \dots, x_N]$ be the graded polynomial ring with the grading given as above.
We may assume $\gcd \{a_0, \dots, a_N\} = 1$.

\begin{Def}
Let $X$ be a closed subscheme of $\mbP$ defined by a homogeneous ideal $I \subset R$.
We set
\[
C_X \coloneq \Spec (R/I)
\]
and call it the \textit{affine cone} of $X$.
Let $\pi \colon \mbA^{N+1} \setminus \{O\} \to \mbP$ be the natural projection, where $O$ is the origin.
For a point $\msp \in X$, we say that $X$ is \textit{quasismooth at $\msp$} if $C_X$ is smooth along $\pi^{-1} (\msp)$.
We say that $X$ is \textit{quasismooth} if it is quasismooth at any point of $X$, that is, $C_X$ is smooth away from the origin.
\end{Def}

\begin{Def}
We say that $\mbP$ is \textit{well-formed} if the greatest common divisor of any $N$ of the weights $a_0, \dots, a_N$ is $1$. 
We say that a subscheme $X \subset \mbP$ is \textit{well-formed} if $\mbP$ is well-formed and $\operatorname{codim}_X (X \cap \Sing (\mbP)) \ge 2$.
\end{Def}

\begin{Def}
Let $X \subset \mbP$ be a closed subscheme defined by a homogeneous ideal $I \subset R$ and let $f_1, \dots, f_m$ be generators of $I$.
We set
\[
J_X \coloneq \left(\frac{\prt f_i}{\prt x_j} \right)_{1 \le i \le m, 0 \le j \le N},
\] 
and call it the \textit{Jacobian matrix} of $X$ (with respect to $f_1, \dots, f_m)$.
\end{Def}

The matrix $J_X$ is usually thought of as the Jacobian matrix of $C_X$ and, for a point $\msq \in C_X$, $C_X$ is smooth at $\msq$ if and only if $\rank J_X (\msq) = N - n$, where $n = \dim X$.
For a point $\msp \in X$, it is clear that $\rank J_X (\msq)$ does not depend on the choice of a point $\msq \in \pi^{-1} (\msp)$, and thus we denote it by $\rank J_X (\msp)$.
Note that $X$ is quasismooth at $\msp$ if and only if $\rank J_X (\msp) = N - n$.

\begin{Def}
Let $X \subset \mbP$ be a subscheme.
For homogeneous polynomials $f_1, \dots, f_m \in R$, we define
\[
\begin{split}
(f_1 = \cdots = f_m = 0) &\coloneqq \Proj R/(f_1, \dots, f_m), \\
(f_1 = \cdots = f_m = 0)_X &\coloneqq (f_1 = \cdots = 0) \cap X,
\end{split}
\]
where the above intersection is scheme-theoretic.
For a homogeneous polynomial $f \in R$, we define
\[
(f \ne 0) \coloneq \mbP \setminus (f = 0) \quad \text{and} \quad (f \ne 0)_X  \coloneq (f \ne 0) \cap X.
\]
\end{Def}

\begin{Def}
Let $h \in \mbC [t_1, \dots, t_m]$ be a polynomial in variables $t_1, \dots, t_m$ and let $\phi = t_1^{i_1} \cdots t_m^{i_m}$ be a monomial.
We denote by $\coeff_h (\phi) \in \mbC$ the coefficient of the monomial $\phi$ in $h$.
We write $\phi \in h$ if $\coeff_h (\phi) \ne 0$. 
\end{Def}

\begin{Def}
Let $X \subset \mbP$ be a subscheme.
By a \textit{quasihyperplane} of $X$, we mean a hypersurface of $X$ cut by an equation $h (x_0, \dots, x_N) = 0$ where $h \in R$ is a homogeneous polynomial such that $x_i \in h$ for some $0 \le i \le N$.
\end{Def}

\begin{Lem}
\label{lem:normqhyp}
Let $X = (f_1 = \cdots = f_m = 0) \subset \mbP$ be a quasismooth and well-formed weighted complete intersection of dimension at least $3$ such that none of $f_i$ is quasilinear.
Then, any quasihyperplane of $X$ is normal.
\end{Lem}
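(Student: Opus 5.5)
Let $H = (h = 0)_X$ be a quasihyperplane, with $x_i \in h$ for some $i$. Since $H$ is cut out of $X$ by one equation and $X$ is Cohen--Macaulay, $H$ is Cohen--Macaulay; this uses that $C_X$ is a complete intersection, hence Cohen--Macaulay, and that quotient singularities are Cohen--Macaulay. So $H$ satisfies Serre's condition $S_2$. By Serre's criterion it therefore suffices to show $R_1$, i.e.\ that $H$ is regular in codimension one. Because $H$ is the quotient of the affine cone $C_H \setminus \{O\}$ by $\mbC^*$, with finite stabilizers, normality of $H$ follows from normality of the affine cone $C_H = (f_1 = \cdots = f_m = h = 0) \subset \mbA^{N+1}$ away from $O$. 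The plan is therefore to show that $C_H$ is a complete intersection whose singular locus has codimension at least $2$ in $C_H$.

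\textbf{Step 1: $C_H$ is a complete intersection of dimension $n$, where $n = \dim X \ge 3$.} Since $X$ is irreducible (quasismooth WCIs of dimension $\ge 1$ have irreducible cones), it suffices that $h$ does not vanish identically on $X$. Suppose it did. Then $h$ would lie in the ideal $(f_1, \dots, f_m)$. As $x_i \in h$, some $f_j$ would contain a linear monomial $x_i$ with nonzero coefficient, i.e.\ some $f_j$ would be quasilinear, contradicting the hypothesis. Consequently $C_H$ is Cohen--Macaulay of dimension $n$.

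\textbf{Step 2: bound the singular locus.} A point $\msq \in C_H \setminus \{O\}$ is singular only if the augmented Jacobian matrix, obtained by adding the row $\prt h/\prt x_j$ to $J_X$, drops rank at $\msq$. Since $X$ is quasismooth, $J_X$ has full rank $m$ there. Degeneracy therefore means that $\nabla h(\msq)$ lies in the row span of $J_X(\msq)$, equivalently that $\msq$ is a critical point of $h|_{C_X}$.

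The key point is that $\prt h/\prt x_i$ is a nonzero constant $c$. I intend to use this together with the absence of quasilinear $f_j$. Because no $f_j$ is quasilinear, every entry $\prt f_j/\prt x_i$ vanishes at $O$. The condition $\nabla h = \sum \lambda_j \nabla f_j$ then forces $\sum_j \lambda_j \, \prt f_j/\prt x_i(\msq) = c \ne 0$. The hardest step is to show that the resulting critical locus has codimension $\ge 2$ in $C_H$, i.e.\ dimension $\le n-2$.

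The direct approach is to treat the critical locus $\Sigma \subset C_X$ of $h$. The singular locus of $C_H$ is $\Sigma \cap (h=0)$. By Euler's formula, $\deg(h)\, h = \sum a_j x_j \prt h/\prt x_j$, and along $\Sigma$ this equals $\sum_j \lambda_j \deg(f_j) f_j = 0$. Hence $\Sigma \subset (h=0)$ automatically, and $\Sigma$ consists of the critical points of a map whose critical values are all $0$.

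Since $\Sigma$ is $\mbC^*$-stable and the critical value set is finite, I would argue, via generic smoothness applied to $h|_{C_X \setminus O}$ restricted to the complement of $\Sigma$, that $\Sigma$ is a proper closed subset of the reduced fiber. If instead $\Sigma$ had dimension $n-1$, it would contain a component $D$ of $C_H$, so $h$ would vanish to order $\ge 2$ along $D$, i.e.\ $h|_{C_X} = g^2 u$ locally. I would rule this out with the quasilinear term: on $C_X$, choose local coordinates in which $x_i$ remains part of a coordinate system. This works because $\prt f_j/\prt x_i$ is nonconstant, and via the rank condition one shows that $dx_i$ is nonzero on $C_X$ generically along $D$. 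Then $h = c x_i + (\text{terms involving other variables})$ cannot be a square multiple along an entire divisor of $C_X$. This last step is where I expect the real work to lie, and where the dimension hypothesis $n \ge 3$ and well-formedness enter: they guarantee that codimension-two phenomena from $\Sing \mbP$ do not create divisorial singularities on $H$.
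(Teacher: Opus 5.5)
Your overall frame is the same as the paper's: Serre's criterion on the affine cone (equivalently, on the local covers $\tilde H_i$), with $S_2$ coming from the complete-intersection structure. Your Step~1 is correct: non-quasilinearity of the $f_j$ makes $h$ a non-zero-divisor, so $C_H$ is a complete intersection of dimension $n$.

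The gap is Step~2, which carries the whole content of the lemma. You need $\dim \Sigma \le n-2$, i.e.\ the non-quasismooth locus of $H$ has codimension $\ge 2$ in $H$ (for $n=3$, that it is finite). Your sentence ``if $\Sigma$ had dimension $n-1$, it would contain a component $D$ of $C_H$'' is a dimension slip. $C_H$ is equidimensional of dimension $n$, so an $(n-1)$-dimensional $\Sigma$ is a divisor of $C_H$, not a component. What you then try to exclude, namely $h$ vanishing to order $\ge 2$ along a component, only rules out $\dim \Sigma = n$, i.e.\ non-reducedness of $H$. The case that actually violates $R_1$ is a reduced $H$ singular along a divisor of $H$ (e.g.\ a surface section singular along a curve when $n=3$), and it is never treated.

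Even for what they target, your tools do not work. Generic smoothness gives no information about the particular fibre $h=0$, and by your own Euler computation that fibre contains all the critical points. The heuristic ``$h = c x_i + \cdots$ cannot be a square multiple'' confuses the expansion of $h$ at the vertex $O$ with its expansion at a point $\msq \ne O$ of $D$. There the linear part is $\nabla h(\msq)$, and $x_i$ plays no distinguished role. The appeal to well-formedness and $\Sing \mbP$ is also beside the point, since a finite quotient of a normal $\tilde H_i$ is automatically normal.

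The paper does not prove this step by hand. It quotes Ishii's result that $\dim \Sing(H) = 0$, proved for hypersurface sections of smooth complete intersections in $\mbP^N$ with the proof carrying over to the quasismooth weighted setting. It then concludes exactly as you planned.

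Your Lagrange multipliers can in fact supply this step, but through their weights rather than through generic smoothness:
\begin{enumerate}
\item On $\Sigma$ the $\lambda_j$ are uniquely determined because $J_X$ has rank $m$, so they are regular. Each $\lambda_j$ is homogeneous of weight $a_i - d_j$.
\item Non-quasilinearity gives $\prt f_j/\prt x_i \equiv 0$ whenever $d_j \le a_i$. Hence on $\Sigma$ one has $c = \sum_{d_j > a_i} \lambda_j \, \prt f_j/\prt x_i$, and every contributing $\lambda_j$ has negative weight.
\item Suppose the non-quasismooth locus of $H$ contains a projective curve $C$. Pick a coordinate $x_k \not\equiv 0$ on $C$. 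Then $\lambda_j^{a_k} x_k^{d_j - a_i}$ has weight $0$ and descends to a regular function on $C$, so it is constant.
\item That constant is zero, because the function vanishes on the nonempty set $C \cap (x_k = 0)$. Hence $\lambda_j \equiv 0$ on the cone over $C$, forcing $c = 0$, a contradiction.
\end{enumerate}
Thus the non-quasismooth locus of $H$ is finite, which has codimension $\ge 2$ because $\dim H = n-1 \ge 2$.
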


\begin{proof}
Let $H$ be a quasihyperplane of $X$.
By \cite{Ishii}, we have $\dim \Sing (H) = 0$.
Note that in \cite{Ishii} the result is proved for a hypersurface in a smooth complete intersection in $\mbP^N$.
However the same proof works in our setting. 

The weighted projective space $\mbP$ is covered by a standard open subset $U_i \cong \mbA^{N+1}/\bmu_{a_i}$.
On the open subset $U_i$, the quasihyperplane $H \cap U_i$ is the quotient of a subscheme $\tilde{H}_i \subset \mbA^{N+1}$ that is a complete intersection in $\mbA^{N+1}$, hence $\tilde{H}_i$ is Cohen-Macaulay.
By the above argument, $\tilde{H}_i$ is regular in codimension $1$.
Thus $\tilde{H}_i$ is normal and so is its quotient $H \cap U_i$ for any $i$.
\end{proof}

We recall the notion of isolating sets.
Let $X$ be a normal variety embedded in $\mbP$ as a closed subscheme.

\begin{Def}
Let $\msp \in X$ be a point.
We say that a set of homogeneous polynomials $\{g_1, \dots, g_m\}$, where $g_i \in R$ for $1 \le i \le m$, \textit{isolates} $\msp$ if the point $\msp$ is an isolated component of $(g_1 = \cdots = g_m = 0)_X$.
In this case we also say that $\{g_1, \dots, g_m\}$ is a \textit{$\msp$-isolating set of maximum degree $d$}, where $d \coloneq \max \Set{ \deg (g_i) | 1 \le i \le m}$. 
\end{Def}

\begin{Def}
Let $\msp \in V$ be a germ of a cyclic quotient singularity and let $q_{\msp} \colon \breve{V} \to V$ be the corresponding quotient morphism.
For an effective $\mbQ$-divisor $D$ on $V$, we define
\[
\omult_{\msp} (D) \coloneq \mult_{\breve{\msp}} q_{\msp}^*D,
\]
where $\breve{\msp} \in \breve{V}$ is the preimage of $\msp$, and call it the \textit{orbifold multiplicity} of $D$ at $\msp$.
\end{Def} 

\begin{Lem}
\label{lem:isoltc}
Let $\msp \in X$ be a point, $Z_1, \dots, Z_k$ irreducible closed subsets of $X$ such that $\msp \in Z_i$ and $\dim Z_i > 0$ for any $i$, and let $g_1, \dots, g_m \in R$ be homogeneous polynomials.
Suppose that $X$ is quasismooth at $\msp$ and that $\{g_1, \dots, g_m\}$ isolates $\msp$.
We set $G_i \coloneq (g_i = 0)_X$, and we set
\[
\mu \coloneq \min \Set{ \frac{\omult_{\msp} (G_i)}{\deg g_i} | 1 \le i \le m}.
\]
Then, there exists an effective $\mbQ$-divisor $T \sim_{\mbQ} A$ such that $\omult_{\msp} (T) \ge \mu$ and $T$ does not contain any $Z_i$ in its support.
In particular, there exists an effective $\mbQ$-divisor $T' \sim_{\mbQ} d A$, where
\[
d \coloneq \max \Set{ \deg (g_i) | 1 \le i \le m},
\]
such that $\omult_{\msp} (T') \ge 1$ and $T'$ does not contain any $Z_i$ in its support.
\end{Lem}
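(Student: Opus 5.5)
The plan is to take $T$ to be a general member of the $\mbQ$-linear system spanned by the divisors cut out by suitable polynomials of the form $g_i^{e_i}$. Set $d_i \coloneq \deg g_i$ and let $L \coloneq \operatorname{lcm}(d_1, \dots, d_m)$. For each $i$, the polynomial $g_i^{L/d_i}$ is homogeneous of degree $L$. Consider the linear system $\mathcal{L}$ on $X$ spanned by $(h = 0)_X$, where $h$ runs over the linear span of $g_1^{L/d_1}, \dots, g_m^{L/d_m}$. Every member $D = (\sum_i \lambda_i g_i^{L/d_i} = 0)_X$ satisfies $D \sim L A$. We then take $T \coloneq \frac{1}{L} D$ for a general member $D \in \mathcal{L}$, so $T \sim_{\mbQ} A$.

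The first step is the multiplicity bound. On the orbifold chart $q_{\msp} \colon \breve V \to V$ near $\msp$ (which is smooth at $\breve\msp$ because $X$ is quasismooth at $\msp$), the pullback of $g_i$ vanishes at $\breve\msp$ to order exactly $\omult_{\msp}(G_i) \ge \mu d_i$. Hence $q_{\msp}^* g_i^{L/d_i}$ vanishes to order at least $\mu L$, and so does any linear combination of these. This gives $\mult_{\breve\msp} q_{\msp}^* D \ge \mu L$, that is, $\omult_{\msp}(T) \ge \mu$. Here one must check that $\omult$ of a divisor cut by a polynomial equals the order of vanishing of the pulled-back function on the smooth cover. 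This is immediate from the definition, since $q_{\msp}^*G_i$ is the divisor of that function. One should also note that $\omult_{\msp}(G_i) \ge 1$ because each $g_i$ vanishes at $\msp$.

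The second step, which is the main point, is avoiding the $Z_j$. The base locus of $\mathcal{L}$ is set-theoretically $(g_1 = \cdots = g_m = 0)_X$. Near $\msp$, this is just $\{\msp\}$, since $\msp$ is an isolated component. Each $Z_j$ is irreducible of positive dimension and contains $\msp$, so $Z_j$ is not contained in that common zero locus: otherwise $Z_j$ would lie in a component through $\msp$ of positive dimension, contradicting isolation. Hence for each $j$, the set of $(\lambda_i)$ with $Z_j \subset (\sum \lambda_i g_i^{L/d_i} = 0)$ is a proper linear subspace. Removing finitely many proper subspaces, a general choice of $\lambda$ gives a $D$ containing no $Z_j$. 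Taking $T = \frac1L D$ then satisfies both requirements.

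For the final assertion, observe that $\mu \ge 1/d$, because $\omult_{\msp}(G_i) \ge 1$ and $\deg g_i \le d$. Setting $T' \coloneq dT$ gives $T' \sim_{\mbQ} dA$, $\omult_{\msp}(T') \ge d\mu \ge 1$, and $\Supp T' = \Supp T$, so $T'$ still contains no $Z_j$.
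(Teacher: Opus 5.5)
Your proof is correct and is essentially the paper's. For the first assertion the paper only cites \cite[Lemma~3.14]{KOWsuperrigid}; your general member of the span of the $g_i^{L/d_i}$ is the standard argument behind that lemma, and you use the hypothesis $\msp \in Z_j$ exactly where Remark~\ref{rem:KOWLem} says it is needed, namely to keep each $Z_j$ out of the base locus. Your deduction of the second assertion via $T' = dT$ and $\mu \ge 1/d$ is identical to the paper's.
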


\begin{proof}
The first assertion is \cite[Lemma~3.14]{KOWsuperrigid} (see also Remark~\ref{rem:KOWLem}).
The second assertion follows by simply taking $T' \coloneq d T$ since $\mu \ge 1/d$.
\end{proof}

\begin{Rem}
\label{rem:KOWLem}
The result \cite[Lemma~3.14]{KOWsuperrigid} does not hold as stated.
We need to assume that any $Z_i$ passes through the point $\msp$ as in Lemma~\ref{lem:isoltc}.
With this corrected assumption, the proof works without any change.
\end{Rem}

\begin{Lem}
\label{lem:isolomult}
Let $\msp \in X$ be a point and $D$ an effective $\mbQ$-divisor on $X$.
Suppose that $X$ is quasismooth at $\msp$, there is an effective $\mbQ$-divisor $S$ passing through $\msp$ such that $D$ and $S$ do not share any component, and that there is a $\msp$-isolating set of maximum degree $d$.
Then, 
\[
\omult_{\msp} (D) \le \omult_{\msp} (D) \omult_{\msp} (S) \le  r_{\msp} d (D \cdot S \cdot A),
\] 
where $r_{\msp}$ denotes the index of the cyclic quotient singularity $\msp \in X$.
\end{Lem}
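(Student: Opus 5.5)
The plan is to combine Lemma~\ref{lem:isoltc} with a local intersection computation on the orbifold chart at $\msp$. The first inequality is immediate: $S$ passes through $\msp$ and is effective, so $\omult_{\msp}(S) \ge 1$. Hence everything reduces to the second inequality $\omult_{\msp}(D)\,\omult_{\msp}(S) \le r_{\msp} d\,(D \cdot S \cdot A)$.

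\textbf{Step 1 (auxiliary divisor).} Since $D$ and $S$ share no component, every component of $D \cap S$ is a curve. Let $Z_1, \dots, Z_k$ be those irreducible components of $D \cap S$ that pass through $\msp$; each has positive dimension. Applying Lemma~\ref{lem:isoltc} to the $\msp$-isolating set of maximum degree $d$ gives an effective $\mbQ$-divisor $T' \sim_{\mbQ} dA$ with $\omult_{\msp}(T') \ge 1$ whose support contains no $Z_i$.

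\textbf{Step 2 (dimension of the triple intersection).} The divisor $T'$ contains no component of $D \cap S$ through $\msp$. Therefore $\msp$ is an isolated point of $D \cap S \cap T'$, and the intersection near $\msp$ is proper.

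\textbf{Step 3 (lift to the quasismooth chart).} Let $q_{\msp} \colon \breve{V} \to V \ni \msp$ be the local index-one cover. Because $X$ is quasismooth at $\msp$, $\breve{V}$ is smooth at $\breve{\msp}$, and $q_{\msp}$ has degree $r_{\msp}$. On the smooth germ the local intersection number of three effective divisors meeting properly at a point is at least the product of their multiplicities:
\[
(q_{\msp}^*D \cdot q_{\msp}^*S \cdot q_{\msp}^*T')_{\breve{\msp}} \ge \omult_{\msp}(D)\,\omult_{\msp}(S)\,\omult_{\msp}(T') \ge \omult_{\msp}(D)\,\omult_{\msp}(S).
\]
Since $q_{\msp}$ is étale in codimension one of degree $r_{\msp}$ and $\breve{\msp}$ is its unique preimage, we have
\[
(D \cdot S \cdot T')_{\msp} = \tfrac{1}{r_{\msp}} (q_{\msp}^*D \cdot q_{\msp}^*S \cdot q_{\msp}^*T')_{\breve{\msp}}.
\]

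\textbf{Step 4 (globalize).} All local contributions of the proper intersection $D \cdot S \cdot T'$ are nonnegative. Using $T' \sim_{\mbQ} dA$, this gives
\[
r_{\msp} d\,(D\cdot S\cdot A) = r_{\msp}(D \cdot S \cdot T') \ge r_{\msp}(D\cdot S\cdot T')_{\msp} \ge \omult_{\msp}(D)\,\omult_{\msp}(S).
\]
Away from $\msp$ the triple intersection might not be proper. To handle this, one either works with the well-defined cycle $T'|_{D\cdot S}$, a $1$-cycle intersected with a $\mbQ$-Cartier divisor, or notes that $T'$ contains no component of $D \cdot S$ passing through $\msp$, so the local contribution at $\msp$ is still isolated and bounded above by the degree $T' \cdot (D\cdot S)$. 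Here we use that the components of the $1$-cycle $D\cdot S$ not lying in $T'$ contribute nonnegatively, while those lying in $T'$ must be replaced by $(dA\cdot Z)\ge 0$.

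\textbf{Main obstacle.} This last bookkeeping is the delicate point. The cleanest route is to write $D\cdot S=\sum m_j Z_j + \sum n_l W_l$, where the $Z_j$ pass through $\msp$ and the $W_l$ do not. Then $A\cdot W_l \ge 0$ because $A$ is ample, and for each $Z_j$ one has $dA\cdot Z_j = T'\cdot Z_j \ge (T'\cdot Z_j)_{\msp}$. Summing recovers the local intersection at $\msp$ via the projection formula on the cover.
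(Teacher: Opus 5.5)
Your proof is essentially the paper's: split $D\cdot S=\Gamma+\Delta$ according to whether components pass through $\msp$, apply Lemma~\ref{lem:isoltc} only to the components of $\Gamma$ to get $T'\sim_{\mbQ} dA$ with $\omult_{\msp}(T')\ge 1$, bound $r_{\msp}(\Gamma\cdot T')$ below by the local intersection on the smooth cover, and discard $(\Delta\cdot T')\ge 0$ by ampleness; you simply spell out the local-to-global step that the paper compresses into one line. One small caveat: your claim $\omult_{\msp}(S)\ge 1$ needs $S$ to have a component through $\msp$ with coefficient at least $1$ (e.g.\ $S$ integral, as in every application in the paper), because for an arbitrary effective $\mbQ$-divisor the first inequality can fail, and this defect is already present in the statement itself.
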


\begin{proof}
We see that $D \cdot S$ is an effective $1$-cycle and write $D \cdot S = \Gamma + \Delta$, where any component of $\Gamma$ passes through $\msp$ and no component of $\Delta$ passes through $\msp$.
By Lemma~\ref{lem:isoltc}, we can take an effective $\mbQ$-divisor $T' \sim_{\mbQ} d A$ such that $\omult_{\msp} (T') \ge 1$ and $T'$ does not contain any component of $\Gamma$.
Then we have
\[
\omult_{\msp} (D) \omult_{\msp} (S) \le r_{\msp} (\Gamma \cdot T') = r_{\msp} ((D \cdot S \cdot T') - (\Delta \cdot T')) \le r_{\msp} (D \cdot S \cdot T')
\] 
since $T'$ is ample and $\Delta$ is effective.
The assertion follows since $T' \sim d A$.
\end{proof}

\subsection{Maximal extractions and centers}
\label{sec:maxsing}

Let $X$ be a Fano variety of Picard number $1$.
In this paper, by an \textit{extremal divisorial contraction} $\varphi \colon W \to V$, we mean the extremal divisorial contraction of a $K_W$-negative extremal ray in the Mori category, that is, $V$ and $W$ are both $\mbQ$-factorial and have only terminal singularities.

\begin{Def}
\label{def:maxsing}
For a linear system $\mcM$ on $X$, we denote by $n (\mcM)$ the positive rational number such that $\mcM \sim_{\mbQ} - n (\mcM) K_X$.
Let $\varphi \colon Y \to X$ be an extremal divisorial contraction.
We say that $\varphi$ is a \textit{Sarkisov extraction} if there is a Sarkisov link (in the Mori category) initiated by $\varphi$.
We say that $\varphi$ is a \textit{maximal extraction} if there is a movable linear system $\mcM$ such that $\ord_E (\mcM) > n (\mcM) a_X (E)$.
We say that $\varphi$ is a \textit{strong maximal extraction} if there is a movable linear system $\mcM$ on $X$ such that 
\[
\frac{1}{n (\mcM)} > c (X, \mcM) = \frac{a_X (E)}{\ord_E (\mcM)},
\]
where $c (X, \mcM)$ is the canonical threshold of the pair $(X, \mcM)$.

A subvariety $\Gamma \subset X$ is said to be a \textit{Sarkisov center} (resp.\ \textit{maximal center}, resp.\ \textit{strong maximal center}) if there is a Sarkisov extraction (resp.\ maximal extraction, resp.\ strong maximal extraction) $\varphi \colon Y \to X$ whose center is $\Gamma$.
We say that a subvariety $\Gamma \subset X$ is a \textit{weak maximal center} if there is a movable linear system $\mcM$ on $X$ such that $\Gamma$ is a center of non-canonical singularities of the pair $(X, \frac{1}{n (\mcM)} \mcM)$.
\end{Def}

\begin{Rem}
For a movable linear system $\mcM$ on $X$, the following implications are known (see \cite[Lemma~2.4]{Okada3mfs}):
\[
\begin{split}
\text{Strong maximal extraction} \ &\Longrightarrow \ \text{Sarkisov extraction} \\
\ &\Longrightarrow \ \text{Maximal extraction}.
\end{split}
\]
For a subvariety $\Gamma \subset X$, the following implications are known:
\[
\begin{split}
\text{Strong maximal center} \ &\Longrightarrow \ \text{Sarkisov center} \\
\ &\Longrightarrow \ \text{Maximal center} \\
\ &\overset{(*)}{\Longrightarrow} \ \text{Weak maximal center}
\end{split}
\]
The reverse implication of $(*)$ does not hold in general (see \cite[Remark~2.16]{KOP}).
However, the reverse implication of $(*)$ holds true if the center $\Gamma$ is a point (see \cite[Lemma~2.7]{KOP}).
\end{Rem}

We can characterize maximal extractions by the mobility of anticanonical divisor.

\begin{Lem}
\label{lem:exclmcmob}
Let $X$ be a Fano variety with Picard number $1$ and let $\varphi \colon Y \to X$ be an extremal divisorial contraction.
Then, $\varphi$ is a maximal extraction if and only if $-K_Y \in \Int \bMov (Y)$.
\end{Lem}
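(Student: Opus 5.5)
We argue in both directions by relating the condition $\ord_E (\mcM) > n(\mcM) a_X(E)$ to the position of $-K_Y$ relative to the movable cone of $Y$. Throughout, $E$ is the $\varphi$-exceptional divisor and $a = a_X(E) > 0$. Since $\rho(Y) = 2$, the cones $\bNE(Y)$, $\bMov(Y)$ and $\overline{\mathrm{Eff}}(Y)$ are two-dimensional. One boundary ray of $\overline{\mathrm{Eff}}(Y)$ is spanned by $E$, since $E$ is an exceptional prime divisor with $E\cdot \ell<0$ for curves $\ell$ in the contracted ray. The ray $\varphi^*(-K_X)$ is a boundary ray of the nef cone and lies in $\bMov(Y)$. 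We write
\[
-K_Y = \varphi^*(-K_X) - aE .
\]

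\emph{Key step: description of $\bMov(Y)$.} The divisors in $\bMov(Y)$ lie between $\varphi^*(-K_X)$ and a second ray $R$ on the $E$-side, that is, $R$ is spanned by $\varphi^*(-K_X) - \lambda E$ for some $\lambda \ge 0$. Any class $\varphi^*(-K_X) - tE$ with $0 < t < \lambda$ lies in $\Int \bMov(Y)$. Hence $-K_Y \in \Int\bMov(Y)$ if and only if $a < \lambda$, where
\[
\lambda = \sup\Set{ t | \varphi^*(-K_X) - tE \in \bMov(Y) }.
\]
By Picard number $1$ on $X$, any movable linear system $\mcM$ on $X$ satisfies $\mcM \sim_{\mbQ} n(\mcM)(-K_X)$. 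Its strict transform is $\mcM_Y = \varphi^*\mcM - \ord_E(\mcM) E$, which is movable with class $n(\mcM)\bigl(\varphi^*(-K_X) - \tfrac{\ord_E \mcM}{n(\mcM)} E\bigr)$. Conversely, every movable linear system on $Y$ is the strict transform of its pushforward. Thus
\[
\lambda = \sup_{\mcM} \frac{\ord_E (\mcM)}{n(\mcM)},
\]
where the supremum runs over movable linear systems on $X$, using that $\bMov(Y)$ is the closure of the cone spanned by classes of movable linear systems.

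\emph{Equivalence.} If $\varphi$ is a maximal extraction, some $\mcM$ has $\ord_E(\mcM)/n(\mcM) > a$. Then $a < \lambda$, so $-K_Y$ lies strictly between $\varphi^*(-K_X)$ and a movable class, hence in $\Int \bMov(Y)$. Here we also use that $-K_Y$ is not on the ray of $\varphi^*(-K_X)$ since $a > 0$.

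Conversely, suppose $-K_Y \in \Int\bMov(Y)$. Then $-K_Y - \epsilon E$ is still in $\Int \bMov(Y)$ for small rational $\epsilon > 0$. Interior points of $\bMov(Y)$ are big and movable, so some multiple $m(-K_Y - \epsilon E)$ is represented by a linear system with no fixed divisor. This uses that interior points of the closed movable cone are represented by movable linear systems; it holds since $\bMov$ is the closure of the cone generated by movable classes and, in dimension-$2$ Néron–Severi space, an interior point is a positive combination of movable classes.

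Pushing this system forward gives a movable $\mcM$ on $X$ with $n(\mcM) = m$ and $\ord_E(\mcM) \ge m(a + \epsilon) > n(\mcM) a$. Therefore $\varphi$ is a maximal extraction.

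\emph{Main obstacle.} The hardest point is the cone-theoretic step: ensuring that a class in $\Int\bMov(Y)$ is actually realized by a movable linear system, i.e.\ avoiding fixed components after taking multiples. Also needed is that the fixed part of $|m(\varphi^*(-K_X) - tE)|$ only helps, meaning it only increases the order along $E$. I would handle this by writing a class just inside the interior as a positive rational combination of two movable classes on either side, and adding the corresponding movable linear systems (a sum of movable systems is movable). Then $\ord_E$ is additive, which gives the strict inequality directly.
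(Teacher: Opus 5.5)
Your proof is correct and follows the paper's argument. The forward direction writes $-K_Y$ as a strictly positive combination of $\varphi^*(-K_X)$ and the non-proportional movable class of the strict transform of $\mcM$. The converse pushes forward a movable system whose class is a multiple of $-K_Y-\epsilon E$, giving $\ord_E(\mcM)=m(a+\epsilon)>n(\mcM)a$.

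Your justification that such a movable system exists fills in a step the paper only asserts. Either route works: a rational interior class of $\bMov(Y)$ is a positive combination of movable classes, or your identity $\lambda=\sup_{\mcM}\ord_E(\mcM)/n(\mcM)$ gives the converse directly. The side remark that the fixed part ``only helps'' is not needed.
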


\begin{proof}
Let $E$ be the exceptional divisor of $\varphi$ and we set $a \coloneq a_X (E)$.

Suppose that $\varphi$ is a maximal extraction.
Then, there is a movable linear system $\mcM$ such that $m \coloneq \ord_E (\mcM) > n a$, where $n \coloneq n (\mcM)$.
Let $D \in \mcM$ be a general member.
Then $\varphi_*^{-1} D \sim_{\mbQ} - n \varphi^*K_X - m E$ is a movable divisor on $Y$ and thus
\[
- m K_Y \sim_{\mbQ} (m-na) (-\varphi^*K_X) + a \varphi_*^{-1} D \in \Int \bMov (Y)
\]
since $-\varphi^*K_X$, $\varphi_*^{-1} D$ are movable and not proportional to each other in $\Cl (Y)$, and $m - n a > 0$, $a > 0$.

Suppose that $-K_Y \in \Int \bMov (Y)$.
Then the linear system
\[
\mcM_Y \coloneq \left|n(-K_Y - \varepsilon E)\right| = \left|n\left(- \varphi^*K_X - (\varepsilon + a) E\right)\right|
\] 
is movable for a sufficiently small rational number $\varepsilon > 0$ and a sufficiently divisible integer $n > 0$.
We set $\mcM \coloneq \varphi_*\mcM_Y \subset \left|-nK_X\right|$.
The linear system $\mcM$ is clearly movable and, by the construction, we have $\ord_E \mcM = n (\varepsilon + a) > n a_E (X)$.
This shows that $\varphi$ is a maximal extraction for the pair $(X, \frac{1}{n} \mcM)$.
This proves the assertion.
\end{proof}

\begin{Rem}
For an extremal divisorial contraction $\varphi \colon Y \to X$, we have the following implications:
\[
\begin{split}
\text{$\varphi$ is a maximal extraction} \ &\Longleftrightarrow \  -K_Y \in \Int \bMov (Y)  \\
\ &\Longrightarrow \
(-K_Y)^2 \in \Int \bNE (Y).
\end{split}
\]
\end{Rem}

\begin{Rem}
Let $X$ be a Fano $3$-fold of Picard number $1$ and let $\msp \in X$ be a terminal cyclic quotient singularity.
Then, there is a unique extremal divisorial contraction $\varphi \colon Y \to X$ with center $\msp$, which is called the \textit{Kawamata blowup} at $\msp \in X$.
Thus, $\msp$ is a maximal center if and only if the Kawamata blowup is a maximal extraction. 
\end{Rem}

We recall several methods for excluding a given divisorial contraction as a maximal extraction and we slightly generalize some of them.

\begin{Lem}[{\cite[Lemma~2.16]{OkadaII}}]
\label{lem:exclNE}
Let $\varphi \colon Y \to X$ be an extremal divisorial contraction.
If $(-K_Y)^2 \notin \bNE (Y)$, then $\varphi$ is not a maximal extraction.
\end{Lem}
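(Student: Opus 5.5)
We argue by contraposition: assuming $\varphi$ is a maximal extraction, we will show that $(-K_Y)^2 \in \bNE (Y)$. The previous remark already records this implication, and the plan is to make it precise using Lemma~\ref{lem:exclmcmob}.

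First, since $\varphi$ is a maximal extraction, Lemma~\ref{lem:exclmcmob} gives $-K_Y \in \Int \bMov (Y)$. As in its proof, we can then write
\[
-m K_Y \sim_{\mbQ} (m - n a)(-\varphi^*K_X) + a\, \varphi_*^{-1} D,
\]
where $D$ is a general member of a movable linear system $\mcM$, $a = a_X(E) > 0$ and $m - na > 0$. Thus a positive multiple of $-K_Y$ is a positive combination of the nef divisor $-\varphi^*K_X$ and the movable divisor $M \coloneq \varphi_*^{-1} D$.

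Next, we expand
\[
m^2 (-K_Y)^2 = (m-na)^2 (\varphi^*K_X)^2 + 2a(m-na)(-\varphi^*K_X)\cdot M + a^2 M^2,
\]
and check that each term is a pseudo-effective $1$-cycle.
\begin{itemize}
\item $(\varphi^*K_X)^2$ is a limit of intersections of two general members of a very ample multiple, pulled back. It is therefore a limit of effective curves, hence lies in $\bNE(Y)$.
\item $(-\varphi^*K_X)\cdot M$ is represented by restricting the movable divisor $M$ to a general member of a basepoint free multiple of $-\varphi^*K_X$. This gives an effective curve.
\item $M^2$ is the intersection of two general members of the movable system $\varphi_*^{-1}\mcM$. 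These share no common component, so their scheme-theoretic intersection is an effective $1$-cycle.
\end{itemize}
Summing the three terms, $(-K_Y)^2 \in \bNE(Y)$, which contradicts the hypothesis $(-K_Y)^2 \notin \bNE(Y)$. Hence $\varphi$ is not a maximal extraction.

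The main point needing care is the $\mbQ$-Cartier issue: $Y$ is only $\mbQ$-factorial with terminal singularities. We handle it by replacing the divisors with Cartier multiples, or by using the intersection theory of $\mbQ$-Cartier divisors on the normal $3$-fold $Y$, where the product of two $\mbQ$-Cartier divisors without common components is an effective $\mbQ$-$1$-cycle. Everything else follows directly from Lemma~\ref{lem:exclmcmob}.
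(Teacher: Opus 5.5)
Your proof is correct. It is essentially the route the paper records in the remark after Lemma~\ref{lem:exclmcmob}: maximal extraction $\Rightarrow$ $-K_Y \in \Int \bMov(Y)$ $\Rightarrow$ $(-K_Y)^2 \in \bNE(Y)$, with the proof itself deferred to \cite[Lemma~2.16]{OkadaII}. You make that implication precise by writing $-mK_Y$ as a positive combination of the nef class $-\varphi^*K_X$ and the movable class $\varphi_*^{-1}D$, then checking that each product term is an effective $1$-cycle.
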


\begin{Lem}[{\cite[Lemma~2.18]{OkadaII}}]
\label{lem:exclnumprop}
Let $\varphi \colon Y \to X$ be an extremal divisorial contraction centered at a point $\msp \in X$ with exceptional divisor $E$.
Assume that there are surfaces $S$ and $T$ on $Y$ with the following properties.
\begin{enumerate}
\item $S \sim_{\mbQ} a B + d E$ and $T \sim_{\mbQ} b B + e E$ for some integers $a, b, d, e$ such that $a, b > 0$, $0 \le e < a_X (E) b$ and $ae-bd \ge 0$.
\item The intersection $\Gamma \coloneq S \cap T$ is a $1$-cycle whose support consists of irreducible and reduced curves which are numerically proportional to each other.
\item $(T \cdot \Gamma) \le 0$.
\end{enumerate}
Then, $\varphi$ is not a maximal extraction.
\end{Lem}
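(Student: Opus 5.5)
The plan is to argue by contradiction using Lemma~\ref{lem:exclmcmob}. Suppose $\varphi$ is a maximal extraction, so that $-K_Y \in \Int \bMov (Y)$. Here $Y$ has Picard number $2$, $N^1(Y)_{\mbR}$ is spanned by $B = \varphi^*A$ and $E$, and $-K_Y \sim_{\mbQ} \iota_X B - a_X(E) E$ with $a_X (E) > 0$. The aim is to locate the class of $T$ relative to $-K_Y$ and to the extremal ray of $\bNE (Y)$ spanned by the components of $\Gamma$, and to derive that $-K_Y$ lies on or outside the boundary of $\bMov (Y)$. I will work with the sign conventions of the statement; the hypotheses $0 \le e < a_X(E) b$ and $ae - bd \ge 0$ are exactly the numerical conditions that place $T$ and $S$ on the correct sides of the relevant rays.

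\textbf{Step 1: $\Gamma$ spans the second ray of $\bNE (Y)$.} Let $\gamma$ be the common numerical class of the components of $\Gamma$, so $\Gamma \equiv c\gamma$ with $c > 0$. The cone $\bNE (Y)$ has two rays. One is the $\varphi$-contracted ray $R_E$, spanned by curves in $E$; on it $B$ is zero and $E$ is negative. I claim the other ray is $\mbR_{\ge 0}\gamma$. If $T \cdot \gamma < 0$, then any curve $C$ with $T \cdot C < 0$ lies in $\Supp T$. Because $T$ is not a multiple of $E$ ($b > 0$), one checks that $\gamma$ cannot lie strictly inside $\bNE(Y)$. To do this, I would pair $\gamma$ with the nef divisor $B$ and with $T$. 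From $B \cdot \Gamma = B \cdot S \cdot T > 0$ and $T \cdot \gamma \le 0$, the class $T$ is non-positive on $\gamma$ and is either positive or zero on $R_E$. The case $T \cdot \gamma = 0$ is handled in the same way, and there $T$ is nef but not ample.

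\textbf{Step 2: compare with $-K_Y$ and conclude.} Since $\bNE (Y)$ is spanned by $R_E$ and $\gamma$, the nef cone is spanned by $B$ and a class $N$ with $N \cdot \gamma = 0$. Step~1 shows that $T$ lies on the far side of $N$ or on $N$, that is, $T$ is not in the interior of the nef cone on the side opposite to $B$. Next I write $-K_Y$ as a combination $\lambda T + \mu E$. The inequality $e < a_X(E) b$ should give that $-K_Y$ lies beyond $T$ from the point of view of $B$, so that $-K_Y \cdot \gamma \le 0$ or, more directly, that $(-K_Y)^2 \cdot T \le 0$. Here the inequality $ae - bd \ge 0$, which compares the slopes of $S$ and $T$, is used to control the sign of $S$ in the expansion $(-K_Y)^2 = (\lambda T + \mu E)^2$, together with $T^2 \cdot S = T \cdot \Gamma \le 0$. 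From this I would deduce that $(-K_Y)^2 \notin \Int \bNE (Y)$, since it pairs non-positively with the divisor $T$, which is nef or positive on the ray of $\gamma$. Then Lemma~\ref{lem:exclNE}, or the implication in the Remark following Lemma~\ref{lem:exclmcmob}, gives that $\varphi$ is not a maximal extraction.

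\textbf{Main obstacle.} The delicate step is the sign bookkeeping in Step~2: showing from $0 \le e < a_X(E) b$ and $ae - bd \ge 0$ that $-K_Y$ does not lie strictly inside the movable cone. Equivalently, one must show that the other boundary ray of $\bMov (Y)$ is at most as far from $B$ as $-K_Y$. My first attempt is to compute $(-K_Y)^2 \cdot T$ and $(-K_Y)^2 \cdot S$ explicitly using $B^3 = A^3$, $B^2 \cdot E = 0$, and the intersection numbers $B \cdot E^2$ and $E^3$. I would then use $T \cdot \Gamma = T^2 \cdot S \le 0$ to bound $E^3$ against $A^3$, and check that this forces $(-K_Y)^2$ to lie on the wrong side of $\gamma^{\perp}$. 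If that computation is inconclusive, I would fall back on the movable-cone description directly: a movable linear system would have to meet $\Gamma$ non-negatively, and $T$ being effective with $T \cdot \gamma \le 0$ bounds the slope of every movable class.
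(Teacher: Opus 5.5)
Your Step~2 cannot be completed, because the statement it aims at, $(-K_Y)^2\notin\Int\bNE(Y)$, is false under the hypotheses of the lemma. First, a notational point: in this paper $B$ denotes $-K_Y=\varphi^*A-a_X(E)E$ (with $A=-K_X$), not $\varphi^*A$. Under your reading, $e\ge 0$ would force a prime divisor $T\ne E$ to satisfy $T\equiv b\varphi^*A$, and then $(T\cdot\Gamma)=ab^2(A^3)>0$ would contradict (3).

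With the correct reading, consider the paper's own application in Proposition~\ref{prop:No31excl3C2}. There $\tilde S\sim B-E$ and $\tilde T\sim 2B$, so $\tilde S\cdot\tilde T=2B^2-2B\cdot E$, i.e.\ $B^2=\frac{1}{2}(\tilde S\cdot\tilde T)+B\cdot E$. The cycle $\tilde S\cdot\tilde T$ is effective and not $\varphi$-contracted, and $B\cdot E$ is a nonzero effective class on the $\varphi$-contracted ray. Hence $(-K_Y)^2\in\Int\bNE(Y)$, and Lemma~\ref{lem:exclNE} gives nothing there, even though $(\tilde T\cdot B^2)=2(B^3)=0$.

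This exposes the faulty inference. Pairing non-positively with $T$ proves nothing because $T$ is not nef: hypothesis (3) says precisely that $T$ is non-positive on the class of $\Gamma$, contrary to your description of $T$ as ``nef or positive on the ray of $\gamma$''. Your fallback fails for the same reason, since a mobile system can be negative on $\Gamma$. In that example the pencil in $|2B|$ with general member $\tilde T$ has no fixed component, yet $(\tilde T\cdot C)=-1/2$ for each component $C$ of $\tilde S\cap\tilde T$. The lemma exists to handle cases with $(-K_Y)^2\in\Int\bNE(Y)$, so you must bound $\bMov(Y)$ directly. Your Step~1, which is only asserted, is then not needed.

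The missing idea is to show that $S$ is a fixed component of every effective divisor whose class lies beyond $[T]$ as seen from $\varphi^*A$. Let $D$ be effective with $S\not\subset\Supp D$. Any curve in $S\cap T$ is a component of $\Gamma$, so $D\cdot S=\Gamma'+R$, where $\Gamma'$ is supported on $\Supp\Gamma$ and $R$ is an effective $1$-cycle with no component in $T$. By (2), $\Gamma'\equiv\nu\Gamma$ with $\nu\ge 0$. Moreover $(T\cdot R)\ge 0$ and $(\varphi^*A\cdot R)\ge 0$. Since $(\varphi^*A\cdot\Gamma)=ab(A^3)>0$ and $(T\cdot\Gamma)\le 0$, the linear form
\[
L(D):=(D\cdot S\cdot T)(\varphi^*A\cdot\Gamma)-(D\cdot S\cdot\varphi^*A)(T\cdot\Gamma)=(T\cdot R)(\varphi^*A\cdot\Gamma)-(\varphi^*A\cdot R)(T\cdot\Gamma)
\]
is non-negative, and hence $L\ge 0$ on $\bMov(Y)$.

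On the other hand, $L(T)=0$ and $L(\varphi^*A)>0$. Also $L(E)=(E\cdot\Gamma)(\varphi^*A\cdot\Gamma)\ge 0$, because no component of $\Gamma$ lies in $E$: they are all proportional and $\varphi^*A\cdot\Gamma>0$. As $-K_Y\equiv\frac{1}{b}(T-eE)$ with $e\ge 0$, we get $L(-K_Y)=-\frac{e}{b}L(E)\le 0$. So $-K_Y\notin\Int\bMov(Y)$, and Lemma~\ref{lem:exclmcmob} concludes.

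The paper gives no proof of its own here, only the citation to \cite{OkadaII}. This route uses only $a,b>0$, $e\ge 0$, (2) and (3).
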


\begin{Lem}[{\cite[Lemma~2.19]{OkadaII}}]
\label{lem:exclnegdef}
Let $\varphi \colon Y \to X$ be an extremal divisorial contraction with exceptional divisor $E$.
Suppose that there is an effective divisor $S \sim_{\mbQ} b B + e E$ with $b > 0$ and $e \ge 0$ on $Y$ and a normal surface $T \ne E$ on $Y$ such that the support of the $1$-cycle $S|_T$ consists of curves on $T$ whose intersection form is negative-definite.
Then $\varphi$ is not a maximal extraction.
\end{Lem}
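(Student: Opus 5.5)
The plan is to argue by contradiction: assume $\varphi$ is a maximal extraction. Then show that $E|_T$, and hence $B|_T$, is supported on the negative-definite configuration $\Supp(S|_T)$. This is impossible, because $B|_T$ is nef and big on $T$. Throughout, $B = \varphi^*A$, where $A$ is the ample generator on $X$, and $\Cl(Y)_{\mbQ}$ is spanned by $B$ and $E$. Write $S|_T = \Gamma = \sum c_i C_i$ with $c_i > 0$ and $(C_i \cdot C_j)_{i,j}$ negative definite on $T$.

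\textbf{Step 1: a movable divisor.} By Lemma~\ref{lem:exclmcmob}, $-K_Y \in \Int \bMov(Y)$. The class $B$ is nef and lies on the boundary ray of $\bMov(Y)$ on the non-$E$ side. Hence for a small rational $\delta > 0$ the class $bB - \delta b E$ lies in $\Int \bMov(Y)$. Choose a sufficiently divisible $k > 0$ and set $j \coloneq k(e + \delta b) > 0$. Then the linear system $\mcM \coloneq |kbB - k\delta b E|$ is movable, and a general member $M \in \mcM$ satisfies
\[
kS \sim_{\mbQ} M + jE .
\]
Restricting to $T$ (note $T \ne E$, and a general $M$ does not contain $T$) gives
\[
k\Gamma \equiv F \coloneq M|_T + j E|_T
\]
on $T$, where $F$ is an effective divisor.

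\textbf{Step 2: a Zariski-type lemma on $T$.} I claim that any effective divisor $F$ on the normal surface $T$ with $F \equiv k\Gamma$ is supported on $\bigcup C_i$. Split $F = F' + F''$, where $F''$ is supported on the $C_i$ and $F'$ contains no $C_i$. Then $F' \equiv Z \coloneq k\Gamma - F''$, and $Z$ lies in the span of the $C_i$. Moreover $Z \cdot C_i = F' \cdot C_i \ge 0$ for all $i$. By negative definiteness, this forces $-Z$ to be effective (the standard lemma). Intersecting with an ample divisor $H$ on $T$ gives $F' \cdot H = Z \cdot H \le 0$, which forces $F' = 0$. I use Mumford's intersection theory on the normal surface $T$ throughout. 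Since $j > 0$, the claim shows that $E|_T$ is supported on $\bigcup C_i$.

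\textbf{Step 3: contradiction.} We have $bB|_T \equiv \Gamma - eE|_T$, which by Step 2 lies in the span of the $C_i$. On the other hand, $B$ is the pullback of an ample divisor and $T$ is not $\varphi$-exceptional, so $(B|_T)^2 = (B^2 \cdot T) > 0$. A nonzero class in a negative-definite span has negative self-intersection, so this is a contradiction. Hence $\varphi$ is not a maximal extraction.

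The main obstacle is Step 2. It requires setting up the negative-definiteness argument correctly on a possibly singular normal surface with $\mbQ$-Cartier issues. It also requires the general member $M$ to restrict to a genuine effective divisor on $T$, which uses that $T \not\subset \Bs \mcM$. The latter holds because $\mcM$ is movable and $T$ is a surface.
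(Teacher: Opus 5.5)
The gap is that you have misidentified $B$. In this paper $B$ denotes $-K_Y$, not $\varphi^*A$ (see the notation list in \S\ref{sec:known}, where $B = -K_Y \sim \varphi^*A - \frac{1}{r}E$). With the correct $B$, two of your assertions are false.

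\begin{itemize}
\item In Step~1, $B$ is neither nef nor on the boundary of $\bMov(Y)$. Under your contradiction hypothesis it lies in $\Int\bMov(Y)$, which is exactly what you quoted from Lemma~\ref{lem:exclmcmob} one sentence earlier.
\item In Step~3, $(B|_T)^2 = (B^2\cdot T)$ has no reason to be positive, because $-K_Y$ is not a pullback of an ample divisor. It can be negative for surfaces $T \neq E$: in the proof of Proposition~\ref{prop:No31excl3C2} one has $(B^2\cdot\tilde S) = -\frac{1}{2}$.
\end{itemize}

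A symptom of the misreading is that, with $B = \varphi^*A$, your argument never really uses maximality. Indeed $\varphi^*A - \delta E$ is ample for small $\delta > 0$ for every divisorial contraction. So your Steps 1--3 would show that the hypotheses of the lemma can never be satisfied at all. That is true for $S \sim_{\mbQ} b\varphi^*A + eE$, but it is not the lemma.

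Your architecture survives with two changes.
\begin{itemize}
\item In Step~1, obtain $bB - \delta bE \in \Int\bMov(Y)$ from $B = -K_Y \in \Int\bMov(Y)$ and openness of the interior. Then $|kb(B-\delta E)|$ is movable for divisible $k$, as in the proof of Lemma~\ref{lem:exclmcmob}. This is where maximality genuinely enters.
\item Step~2 (the Zariski-type argument in Mumford's intersection theory) is correct as written. It shows that $\Gamma$ and $E|_T$, hence $B|_T$, lie in the span of the $C_i$.
\item In Step~3, use $\varphi^*(-K_X) = B + a_X(E)E$ instead of $B$. Its restriction to $T$ also lies in that span. By the projection formula, $(\varphi^*(-K_X)|_T)^2 = ((-K_X)^2\cdot\varphi_*T) > 0$, since $T \neq E$ is not contracted. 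A class with positive square cannot lie in a negative-definite span, which gives the contradiction.
\end{itemize}
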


\begin{Lem}[{\cite[Lemma~2.20]{OkadaII}}]
\label{lem:excl:infinitecurve}
Let $\varphi \colon Y \to X$ be an extremal divisorial contraction with exceptional divisor $E$.
Suppose that there are infinitely many irreducible and reduced curves $C_{\lambda}$ on $Y$ such that $(-K_Y \cdot C_{\lambda}) \le 0$.
Then $\varphi$ is not a maximal extraction.
\end{Lem}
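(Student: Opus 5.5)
The plan is to argue by contradiction through Lemma~\ref{lem:exclmcmob}. Suppose $\varphi$ is a maximal extraction. Then $-K_Y \in \Int \bMov (Y)$. Since $X$ has Picard number $1$, $Y$ has Picard number $2$, with $N^1(Y)_{\mbQ}$ spanned by $\varphi^*(-K_X)$ and $E$. Write $-K_Y \sim_{\mbQ} -\varphi^*K_X - aE$, where $a = a_X(E) > 0$. Because $-K_Y$ lies in the interior of the movable cone, for a sufficiently small rational $\varepsilon > 0$ both
\[
M_1 \coloneq -K_Y - \varepsilon E \quad \text{and} \quad M_2 \coloneq -K_Y + \varepsilon E
\]
still lie in $\Int \bMov (Y)$. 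Here I use that a convex cone has the same interior as its closure. Hence, for a sufficiently divisible integer $k > 0$, both $|k M_1|$ and $|k M_2|$ are movable linear systems. Their base loci have codimension at least $2$, so each contains only finitely many curves. Moreover $M_1 + M_2 = -2K_Y$.

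Next I use the infinite family. Since there are infinitely many curves $C_\lambda$, I choose one, say $C = C_\lambda$, that lies in neither $\Bs |kM_1|$ nor $\Bs |kM_2|$. A general member of $|kM_i|$ then does not contain $C$, so $(M_1 \cdot C) \ge 0$ and $(M_2 \cdot C) \ge 0$. Adding these gives $(-2K_Y \cdot C) \ge 0$. Combined with the hypothesis $(-K_Y \cdot C) \le 0$, this forces $(M_1 \cdot C) = (M_2 \cdot C) = 0$. Subtracting the two equalities gives $(E \cdot C) = 0$, and then $(-\varphi^*K_X \cdot C) = (-K_Y \cdot C) + a(E \cdot C) = 0$.

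Since $-K_X$ is ample, the equality $(\varphi^*(-K_X) \cdot C) = 0$ means that $\varphi$ contracts $C$ to a point. So $C \subset E$ and the class of $C$ lies in the extremal ray contracted by $\varphi$. On that ray $E$ is negative, because $\varphi$ is a $K_Y$-negative extremal divisorial contraction. Hence $(E \cdot C) < 0$, which contradicts $(E \cdot C) = 0$. Therefore $\varphi$ is not a maximal extraction.

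The only step needing care is the first one: producing two movable classes $M_1$ and $M_2$ whose sum is proportional to $-K_Y$ and whose difference is a nonzero multiple of $E$. This follows from openness of the interior of $\bMov (Y)$ in the two-dimensional space $N^1(Y)_{\mbQ}$ together with the fact that rational classes in the interior of $\bMov (Y)$ are represented by movable linear systems. The rest is the elementary intersection computation above.
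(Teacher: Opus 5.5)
Your argument is correct, but it is organized differently from the paper's proof. The paper does not argue directly from Lemma~\ref{lem:exclmcmob}. It quotes \cite[Lemma~2.20]{OkadaII}, which carries the extra hypothesis $(E \cdot C_{\lambda}) > 0$, and its proof consists only of checking that this hypothesis holds automatically. Write $K_Y = \varphi^*K_X + aE$. If $C_{\lambda}$ were $\varphi$-contracted, then $(E \cdot C_{\lambda}) < 0$, yet $a(E \cdot C_{\lambda}) = (K_Y \cdot C_{\lambda}) \ge 0$; so $C_{\lambda}$ is not contracted. Hence $(-\varphi^*K_X \cdot C_{\lambda}) > 0$, and therefore $(E \cdot C_{\lambda}) = a^{-1}\bigl((K_Y \cdot C_{\lambda}) + (-\varphi^*K_X \cdot C_{\lambda})\bigr) > 0$.

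You instead give a self-contained proof, in effect reproving the cited lemma without its extra hypothesis. Perturbing $-K_Y$ on both sides to $-K_Y \pm \varepsilon E$ means you never need the sign of $(E \cdot C)$ in advance. The two inequalities force $(E \cdot C) = 0$, and you then reach a contradiction through the same fact the paper uses: $E$ is negative on $\varphi$-contracted curves.

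What the comparison shows is that the two-sided perturbation is not essential. Once $(E \cdot C_{\lambda}) > 0$ is established as in the paper, the single movable class $-K_Y - \varepsilon E$ already suffices. Indeed, $\bigl((-K_Y - \varepsilon E) \cdot C_{\lambda}\bigr) < 0$ then puts every $C_{\lambda}$ in its base locus, which contains only finitely many curves. Your version trades that preliminary sign analysis for a second movable class.

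One small point: the step ``base locus of codimension at least $2$, hence only finitely many curves'' uses $\dim Y = 3$. That is the implicit setting here and of the cited lemma, but you should state it, since the step fails in higher dimension.
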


\begin{proof}
In \cite[Lemma~2.20]{OkadaII}, an additional assumption $(E \cdot C_{\lambda}) > 0$ is imposed.
We shall show that this assumption is redundant and automatically satisfied.
 
Let $C_{\lambda} \subset Y$ be as in the statement.
We can write $K_Y = \varphi^*K_X + a E$.
If $C_{\lambda}$ is contracted by $\varphi$, then $(E \cdot C_{\lambda}) < 0$ since $-E$ is $\varphi$-ample.
This is impossible since $0 \leq (K_Y \cdot C_{\lambda}) = a (E \cdot C_{\lambda})$.
Hence $C_{\lambda}$ is not contracted by $\varphi$ and this implies that $(- \varphi^*K_X \cdot C_{\lambda}) > 0$.
It follows that
\[
(E \cdot C_{\lambda}) = a^{-1} (K_Y \cdot C_{\lambda}) + a^{-1} (-\varphi^* K_X \cdot C_{\lambda}) > 0.
\]
Therefore, by \cite[Lemma~2.20]{OkadaII}, $\varphi$ is not a maximal extraction.
\end{proof}

\subsection{Methods for estimating local alpha invariant}
\label{sec:methodalpha}

We recall the definition of (local) alpha invariants and then explain simple methods for estimating local alpha invariants briefly.

Let $X$ be a $\mbQ$-Fano variety.
For a divisor $D$ on $X$, we denote by $|D|_{\mbQ}$ the set of effective $\mbQ$-divisors which are $\mbQ$-linearly equivalent to $D$.

\begin{Def}
For an effective $\mbQ$-divisor $D$ on $X$ and a point $\msp \in X$, we define the \textit{log canonical threshold} of $(X; D)$ and the \textit{log canonical threshold} of $(X; D)$ \textit{at} $\msp$ to be the numbers
\[
\begin{split}
\lct (X; D) &\coloneq \sup \Set{ c \in \mbQ_{\ge0} | \text{$(X, cD)$ is log canonical}}, \\
\lct_{\msp} (X; D) &\coloneq \sup \Set{ c \in \mbQ_{\ge 0} | \text{$(X, cD)$ is log canonical at $\msp$}},
\end{split}
\]
respectively.
The \textit{alpha invariant} of $X$ and the \textit{alpha invariant} of $X$ \textit{at} $\msp$ are defined as
\[
\begin{split}
\alpha (X) &\coloneq \sup \Set{ c \in \mbQ_{\ge 0} | \text{$(X, cD)$ is log canonical for any $D \in \left|-K_X\right|_{\mbQ}$}}, \\
&= \inf \Set{ \lct (X; D) | D \in \left| -K_X \right|_{\mbQ}}, \\
\alpha_{\msp} (X) &\coloneq \sup \Set{ c \in \mbQ_{\ge 0} | \text{$(X, cD)$ is log canonical at $\msp$ for any $D \in \left|-K_X\right|_{\mbQ}$}}, \\
&= \inf \Set{ \lct_{\msp} (X; D) | D \in \left|-K_X\right|_{\mbQ}}.
\end{split}
\]
\end{Def}

\begin{Rem}
\label{rem:alphairred}
For a point $\msp$ on a $\mbQ$-Fano variety $X$, we have an equality
\[
\alpha_{\msp} (X) =  \inf \Set{ \lct_{\msp} (X; D) | \text{$D \in \left|-K_X\right|_{\mbQ}$ is an irreducible $\mbQ$-divisor}},
\] 
where an \textit{irreducible $\mbQ$-divisor} means a $\mbQ$-divisor whose support is irreducible (see \cite[Remark~2.6]{KOWalpha}).
\end{Rem}

\begin{Rem}
\label{rem:lctomult}
The easiest way to obtain a lower bound for $\lct_{\msp} (X; D)$ is by (orbifold) multiplicity.
Suppose that $\msp \in X$ is either a smooth point or a cyclic quotient singular point.
Then we have
\[
\lct_{\msp} (X; D) \ge \frac{1}{\omult_{\msp} (D)}
\]
for any effective $\mbQ$-divisor $D$ (see \cite[Lemma~3.2]{KOWalpha}).
The combination of this simple method and Lemma~\ref{lem:isolomult} is quite useful in many cases.
\end{Rem}

\subsection{Methods for estimating local delta invariant}
\label{sec:methoddelta}

We give definitions of (local) delta invariants, and then explain several methods for estimating local delta invariants that we need in this paper.

\begin{Def}
Let $X$ be a $\mbQ$-Fano variety of dimension $n$.
For a prime divisor $E$ over $X$, we define
\[
S_X (E) \coloneq \frac{1}{(-K_X)^n} \int_0^{\infty} \vol (-f^*K_X - t E) d t,
\]
where $f \colon \tilde{X} \to X$ is a projective birational morphism such that $E$ is a prime divisor on $\tilde{X}$, and
\[
\delta_X (E) \coloneq \frac{A_X (E)}{S_X (E)},
\] 
where $A_X (E)$ is the log discrepancy of $X$ at $E$.
The \textit{global delta invariant} of $X$ is defined as
\[
\delta (X) \coloneq \inf \Set{ \delta_X (E) | \text{$E$ is a prime divisor over $X$}}.
\]
For a closed point $\msp \in X$, the \textit{local delta invariant} of $X$ at $\msp$ is defined as
\[
\delta_{\msp} (X) \coloneq \inf \Set{ \delta_X (E) | \text{$E$ is a prime divisor over $X$ such that $\msp \in C_X (E)$}}.
\]
\end{Def}

\begin{Thm}[{\cite[Corollary~2.10]{CO}}]
\label{thm:localSZ}
Let $X$ be a Fano variety of Picard number $1$ and let $\Sigma$ be a set of closed points of $X$ including all the zero dimensional maximal centers of $X$.
If $\delta_{\msp} (X) > 1$ for any closed point $\msp \in \Sigma$ and if $\alpha_{\msq} (X) \ge 1/2$ for any closed point $\msq \in X \setminus \Sigma$, then $X$ is K-stable.
\end{Thm}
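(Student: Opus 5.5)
The statement is \cite[Corollary~2.10]{CO}, a local form of the Stibitz--Zhuang theorem. I would argue by contradiction via the valuative criterion. Suppose $X$ is not K-stable. By the valuative criterion of Fujita--Li, together with the fact that a destabilizing valuation can be taken divisorial and dreamy (Blum--Liu--Zhuang, Liu--Xu--Zhuang), there is a dreamy prime divisor $E$ over $X$ with $A_X(E) \le S_X(E)$. Let $Z \coloneq C_X(E)$ be its center.

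If $Z \cap \Sigma \ne \emptyset$, pick $\msp \in Z \cap \Sigma$. Then $\delta_{\msp}(X) \le \delta_X(E) \le 1$, contradicting the hypothesis. Hence I may assume $Z \cap \Sigma = \emptyset$, so $\alpha_{\msq}(X) \ge 1/2$ at every closed point $\msq \in Z$. In particular $Z$ is not a zero dimensional maximal center, since $\Sigma$ contains all of these.

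The core step runs the Stibitz--Zhuang argument locally along $Z$. For $0 < x < T_X(E)$ and $m$ sufficiently divisible, decompose
\[
\left| -mK_X - mxE \right| = \mcM_{m,x} + F_{m,x}
\]
into its movable part and its fixed part. Two facts control this decomposition.
\begin{itemize}
\item The alpha bound at points of $Z$ gives $\ord_E(D) \le 2A_X(E)$ for every $D \in |-K_X|_{\mbQ}$. This bounds $T_X(E) \le 2A_X(E)$, and with it the order of $E$ along the fixed part $F_{m,x}$.
\item Since $E$ lies over no maximal center in $\Sigma$, the movable part must satisfy $\ord_E \mcM_{m,x} \le n(\mcM_{m,x}) A_X(E)$. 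Otherwise $E$ gives a non-canonical center $Z$ of $(X, \frac{1}{n}\mcM)$. If $Z$ is a point, this makes $Z$ a maximal center by \cite[Lemma~2.7]{KOP}, which is a contradiction.
\end{itemize}

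Using $\Pic(X) \cong \mbZ$, so that $-K_X$ generates the relevant cone, I would write $\vol(-K_X - xE)$ in terms of these two contributions and integrate. Combined with Fujita's estimate $S_X(E) \le \frac{n}{n+1}T_X(E)$ and with concavity of $x \mapsto \vol(-K_X - xE)^{1/n}$, this should give the strict inequality $S_X(E) < A_X(E)$, the desired contradiction. The bookkeeping is the one in Stibitz--Zhuang: the threshold $1/2$ from the alpha invariant exactly compensates the factor $2$ that appears when the movable part is replaced by a sum of two general members.

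The main obstacle is the case where $Z$ is a curve that is a weak maximal center, since \cite[Lemma~2.7]{KOP} does not apply there. I would try to exclude it directly with the alpha hypothesis. At a general point $\msq \in Z$, $X$ is smooth and non-canonicity of $(X, \frac{1}{n}\mcM)$ along $Z$ forces $\mult_Z \mcM > n$. Taking two general members $M_1, M_2 \in \mcM$ and $D \coloneq \frac{1}{2n}(M_1 + M_2) \in |-K_X|_{\mbQ}$, I would cut by a general surface through $\msq$ and apply inversion of adjunction to show that $(X, 2D)$ is not log canonical at $\msq$. This would give $\alpha_{\msq}(X) < 1/2$ with $\msq \notin \Sigma$, contradicting the hypothesis.

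Making this multiplicity-to-lc step precise, so that it yields strictly less than $1/2$, is where I expect the real work to lie. Everything else is formal once the dichotomy between the movable and fixed parts is in place.
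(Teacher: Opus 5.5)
The paper gives no argument for this statement; it quotes \cite[Corollary~2.10]{CO}. Your reduction is fine: take a dreamy $E$ with $A_X(E)\le S_X(E)$, and note that if $C_X(E)$ meets $\Sigma$ you contradict $\delta_{\msp}(X)>1$. But both steps you identify as the substance have genuine gaps.

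\emph{Positive-dimensional centers.} Your proposed exclusion fails on arithmetic. If $(X,2D)$ is not lc at $\msq$ for $D=\frac{1}{2n(\mcM)}(M_1+M_2)$, you only get $\lct_{\msq}(X;D)<2$, not $<1/2$.

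It cannot be repaired either. Non-canonicity of $(X,\frac{1}{n(\mcM)}\mcM)$ along a curve $Z$ only gives $\mult_Z\mcM>n(\mcM)$. The hypothesis $\alpha_{\msq}(X)\ge 1/2$ only yields $\mult_Z D\le 4$ for $D\in|-K_X|_{\mbQ}$. The theorem assumes nothing that prevents curves from being maximal centers.

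What you missed is that you never need canonicity along $Z$. The bound you actually use, $\ord_E\mcM\le n(\mcM)A_X(E)$, is the \emph{log canonical} one. Its failure means $E$ has coefficient $b>1$ in the log pull-back of $K_X+\frac{1}{n(\mcM)}\mcM$. In that case every closed point $\msq\in Z$ is a center of non-canonical singularities. On a log resolution, blow up a point of $E$ over $\msq$, then successively the codimension-two loci $\tilde E\cap E_k$. The coefficients satisfy $b_{k+1}\ge b+b_k-1$, so they tend to infinity, and every $E_k$ has center $\msq$. By \cite[Lemma~2.7]{KOP}, $\msq$ is a maximal center, hence $\msq\in\Sigma$, a contradiction. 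So the lc bound holds for every movable $\mcM$, whatever $\dim Z$ is.

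\emph{The estimate.} The sentence ``this should give $S_X(E)<A_X(E)$'' is the whole theorem, and the bounds you list do not give it. Write $n=\dim X$, $A=A_X(E)$, $T=T_X(E)$. Integrating $\vol(-K_X-xE)\le(1-c(x))^n(-K_X)^n$ with $c(x)\ge x/A-1$ yields only $S\le\frac{n+2}{n+1}A$. Fujita's bound yields only $\frac{2n}{n+1}A$.

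The missing input is what $\rho(X)=1$ really buys. Let $\eta$ be the movable threshold. For $x>\eta$, the fixed part is numerically $c(x)(-K_X)$ with $c(x)>0$, and its $E$-order is at most $c(x)T$. Concavity of $\vol^{1/n}$ on the plane spanned by the pull-back of $-K_X$ and $E$ then forces $\vol(-K_X-xE)^{1/n}$ to be linear on $[\eta,T]$.

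Combine this with concavity of the restricted volume and its vanishing at $x=0$. The density computing $S$ then dominates that of the double cone with vertices at $0,\eta,T$, whose barycenter is $\frac{T+(n-1)\eta}{n+1}$, and the excess is supported on $[0,\eta]$. So $T\le 2A$ and $\eta\le A$ give $S\le A$.

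Your bounds do not exclude the equality case (exact double cone, $T=2A$, $\eta=A$), so you would get only K-semistability. For strictness you need two further inputs:
\begin{itemize}
\item When $Z$ is a point, use the \emph{canonical} bound $\eta\le A_X(E)-1$. This is exactly what ``$\msq$ is not a maximal center'' gives, via \cite[Lemma~2.7]{KOP}.
\item When $\dim Z\ge 1$, $(-K_X)^n-\vol(-K_X-xE)$ grows like $x^{n-\dim Z}$ near $0$ rather than like $x^n$. This rules out the double cone.
\end{itemize}
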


Proving $\alpha_{\msp} (X) \ge 1/2$ is relatively easy, hence the key part is to prove $\delta_{\msp} (X) > 1$ for a closed point that is a maximal center.
This will be done by Abban--Zhuang method (for weighted complete intersections) that we will explain below.

Let $X$ be a well-formed and quasismooth weighted complete intersection of index $1$ and $A \coloneq -K_X$ the ample generator of $\Cl (X) \cong \mbZ$.
Let $\msp \in X$ be a point, which is either a smooth point or a terminal quotient singular point.
We denote by $r_{\msp}$ the index of the singularity $\msp \in X$.
Note that $\msp \in X$ is a smooth point if and only if $r_{\msp} = 1$.

\begin{Def}
\label{def:typeI}
A \textit{flag of type} $\mathrm{I}$ is a flag $\msp \in Z \subset Y \subset X$, where $Y$ is a normal hypersurface of $X$ and $Z$ is the scheme-theoretic intersection $Y \cap H$ for some hypersurface $H$ of $X$ such that $Z$ is irreducible and reduced, and the pairs $(X, Y)$ and $(Y, Z)$ are both plt in a neighborhood of $\msp$.
\end{Def}

Let $l_Y$ and $l_H$ be positive integers such that $Y \in |l_Y A|$ and $H \in |l_H A|$.

\begin{Prop}[{\cite[Proposition~3.2]{CO}}]
\label{prop:typeI}
With the notation and assumptions as above, let $\msp \in Z \subset Y \subset X$ be a flag of type $\mathrm{I}$.
Then,
\[
\delta_{\msp} (X) \ge
\min \left\{ 4 l_Y, \ 4 l_H, \ \frac{4}{r_{\msp} l_Y l_H (A^3)} \right\}.
\]
\end{Prop}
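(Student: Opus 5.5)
We use the Abban--Zhuang method of adjunction along the flag $\msp \in Z \subset Y \subset X$, in the orbifold setting. By \cite{AZ}, the local delta invariant satisfies
\[
\delta_{\msp}(X) \ge \min\left\{ \frac{A_X(Y)}{S_X(Y)},\ \delta_{\msp}(Y; W^Y_{\bullet,\bullet}) \right\},
\]
where $W^Y_{\bullet,\bullet}$ is the refinement of the anticanonical graded linear series by $Y$. One more adjunction to $Z$ gives
\[
\delta_{\msp}(Y; W^Y) \ge \min\left\{ \frac{A_Y(Z)}{S(W^Y; Z)},\ \inf_{\msp} \frac{A_Z(\msp)}{S(W^{Y,Z}; \msp)} \right\}.
\]
The plt assumptions on $(X,Y)$ and $(Y,Z)$ near $\msp$ are exactly what make these adjunction inequalities valid. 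They also give $A_X(Y) = A_Y(Z) = 1$, with no different near $\msp$ beyond the orbifold correction. To handle the quotient singularity we pass to the local index-one cover; this is where the factor $r_{\msp}$ enters. Equivalently, we measure $\mathrm{ord}_{\msp}$ on $Z$ with weight $1/r_{\msp}$.

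The three terms are computed as follows.

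First, since $\Cl(X)=\mbZ A$ and $Y \sim l_Y A$, we have $\vol(A - tY) = (1-tl_Y)^3 (A^3)$. Hence $S_X(Y) = 1/(4 l_Y)$, and the first term equals $4l_Y$.

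Second, for $u \in [0, 1/l_Y]$, the restriction $(A - uY)|_Y$ equals $(1-ul_Y)A|_Y$. Subtracting $vZ$ with $Z = H|_Y \sim l_H A|_Y$ gives volume $((1-ul_Y) - v l_H)^2 \, l_Y (A^3)$. Integrating, we obtain $S(W^Y; Z) = 1/(4 l_H)$, which gives the second term $4 l_H$.

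Third, on $Z$ the restricted series is the multiple $(1-ul_Y - vl_H)A|_Z$, of degree $(1-ul_Y-vl_H)\, l_Y l_H (A^3)$. A point $\msp$ of local index $r_{\msp}$ on $Z$ has orbifold coefficient $1/r_{\msp}$, so $S(W^{Y,Z};\msp)$ equals $3/(A^3)$ times
\[
\iint \frac{(1-ul_Y-vl_H)^2\, l_Y l_H (A^3)}{2\, r_{\msp}} \, dv\, du,
\]
taken over the region where the expression is nonnegative. The $F$-term (the contribution of $\mathrm{ord}_{\msp}(Z|_Z)$-type corrections) vanishes because $Z$ is a complete intersection of $Y$ with a Cartier-in-orbifold-sense divisor $H$. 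This integral evaluates to $r_{\msp} l_Y l_H (A^3)/4$ in the appropriate normalization, so the third term is $4/(r_{\msp} l_Y l_H (A^3))$.

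The main obstacle is the third step. One has to correctly set up the orbifold version on $Z$: $Z$ is a curve with a cyclic quotient point, where $A_Z(\msp)$ and degrees must be taken on the cover, and one must justify that no correction term $F$ appears. I would first try to verify the computation on the index-one cover $\breve{X}$ of a neighbourhood, where the flag lifts to a smooth flag and the degrees multiply by $r_{\msp}$. Then I would take the minimum of the three terms.
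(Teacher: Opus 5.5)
Your route is the paper's. Its proof just invokes \cite[Proposition~3.2]{CO}, i.e.\ Abban--Zhuang adjunction along $\msp \in Z \subset Y \subset X$, and cites \cite[Theorem~11.14]{Fujita23} for the validity of the adjunction inequalities when $(X,Y)$ and $(Y,Z)$ are only plt near $\msp$. Your first two terms are correct: $S_X(Y)=1/(4l_Y)$, and since $N(u)=0$ the formula for $S(W^Y_{\bullet,\bullet};Z)$ reduces to the volume integral, giving $1/(4l_H)$.

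\textbf{The third term is not derived, and what you wrote is wrong.} Your expression ($3/(A^3)$ times your double integral) evaluates to $1/(8r_{\msp})$. That does not even depend on $l_Y,l_H,(A^3)$, so no normalization turns it into $r_{\msp}l_Yl_H(A^3)/4$. The correct formula is
\[
S(W^{Y,Z}_{\bullet,\bullet,\bullet};\msp)=\frac{3}{(A^3)}\int_0^{1/l_Y}\int_0^{(1-ul_Y)/l_H}\bigl(P(u,v)\cdot Z\bigr)^2\,dv\,du+F_{\msp}(W^{Y,Z}_{\bullet,\bullet,\bullet}),
\]
with $(P(u,v)\cdot Z)=(1-ul_Y-vl_H)\,l_Yl_H(A^3)$. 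The integrand is the square of this degree, not $(1-ul_Y-vl_H)^2l_Yl_H(A^3)/(2r_{\msp})$, and the integral equals $l_Yl_H(A^3)/4$.

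\textbf{Why $F_{\msp}$ vanishes.} We have $N(u)=N(u,v)=0$, because every class $(1-ul_Y-vl_H)A|_Y$ is ample. This uses $\Cl(X)=\mbZ A$ and $Z=H|_Y$ with $H\in|l_HA|$; ``Cartier in the orbifold sense'' is not the reason.

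\textbf{Where $r_{\msp}$ enters.} It enters through the numerator, not through $S$: what is needed is $A_{Z,\Delta_Z}(\msp)$, where $K_Z+\Delta_Z=(K_Y+Z)|_Z$.
\begin{enumerate}
\item On the smooth local index-one cover, $(\breve{X},\breve{Y})$ is plt and $\breve{Y}$ is Cartier, so $\breve{Y}$ is at worst Du Val.
\item $\breve{Z}=\breve{Y}\cap\breve{H}$ is a reduced Cartier divisor on $\breve{Y}$, with $(\breve{Y},\breve{Z})$ plt and $K_{\breve{Y}}+\breve{Z}$ Cartier. This forces both $\breve{Y}$ and $\breve{Z}$ to be smooth at $\breve{\msp}$.
\item Since $\bmu_{r_{\msp}}$ acts freely off $\breve{\msp}$, this gives $\Delta_Z=(1-1/r_{\msp})\msp$ and $A_{Z,\Delta_Z}(\msp)=1/r_{\msp}$.
\end{enumerate}
Hence the third term is $\frac{1/r_{\msp}}{l_Yl_H(A^3)/4}=\frac{4}{r_{\msp}l_Yl_H(A^3)}$.

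Equivalently, measuring orders on $\breve{Z}$ multiplies $S$ by $r_{\msp}$ and makes the log discrepancy $1$. Your factor $1/r_{\msp}$ inside the integral pushes in the opposite direction.

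Finally, the cover is only local, so it cannot be used to recompute $S$, which is built from global volumes on $X$, $Y$ and $Z$. The cover enters only through this different.
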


\begin{proof}
The definition of type $\mathrm{I}$ flag is the same as that in \cite[Definition~3.1]{CO}, with the following slight generalization that the pairs $(X, Y)$ and $(Y, Z)$ are only required to be locally plt around $\msp$.
The proof of this proposition follows the same arguments as those in \cite[Proposition~3.2]{CO} in view of \cite[Theorem~11.14]{Fujita23} that ensures that the Abban--Zhuang method works in this setting.
\end{proof}

\begin{Rem}
Let $\msp \in Z \subset Y \subset X$ be as in Definition~\ref{def:typeI}.
Then the condition that $(X, Y)$ and $(Y, Z)$ are both plt at $\msp$ follows if both $Y$ and $Z$ are quasismooth at $\msp$.
\end{Rem}

\begin{Def}
A flag of type $\mathrm{IIa}$ is a flag $\msp \in \Gamma \subset Y \subset X$ with the following properties: $Y$ is a normal hypersurface in $X$ such that the pair $(X, Y)$ is plt around $\msp$, $\Gamma$ is an irreducible and reduced curve on $Y$ such that the pair $(Y, \Gamma)$ is plt around $\msp$ and there is a hypersurface $H$ on $X$ such that $H|_Y = \Gamma + \Delta$, where $\Delta \ne \Gamma$ is an irreducible and reduced curve on $Y$ such that $(\Delta^2) < 0$.
\end{Def}

Let $l_Y$ and $l_H$ be positive integers such that $Y \in |l_Y A|$ and $H \in |l_H A|$.
We set
\[
\lambda \coloneq (\Gamma^2), \ \mu \coloneq - (\Delta^2), \ \nu \coloneq (\Gamma \cdot \Delta).
\]

\begin{Prop}[{\cite[Proposition~3.8]{CO}}]
\label{prop:typeII}
With the notation and assumptions as above, let $\msp \in Z \subset Y \subset X$ be a flag of type $\mathrm{IIa}$.
Then,
\[
\delta_{\msp} (X) \ge
\min \left\{ 4 l_Y, \ \frac{1}{S (V_{\bullet, \bullet}^Y; \Gamma)}, \ \frac{1}{r_{\msp} S (W_{\bullet, \bullet, \bullet}^{Y, \Gamma}; \msp)} \right\},
\]
where
\[
\begin{split}
S (V_{\bullet, \bullet}^Y; \Gamma) &= \frac{3}{(A^3)} \int_0^{\frac{1}{l_Y}} \left(\int_0^{\frac{\nu - \mu}{\nu} t (u)} \left( (2 \nu + \lambda - \mu) t (u)^2 - 2 (\nu + \lambda) t (u) v + \lambda v^2 \right) d v \right. \\
& \hspace{3cm} \left.+ \int_{\frac{\nu - \mu}{\nu} t (u)}^{t (u)} \frac{\nu^2 + \lambda \mu}{\mu} (t (u) - v)^2 d v \right) d u, \\
S (W_{\bullet, \bullet, \bullet}^{Y, \Gamma}; \msp) &= \frac{3}{(A^3)} \int_0^{l_Y} \left( \int_0^{\frac{\nu-\mu}{\nu} t (u)} ((\nu+\lambda) t(u) - \lambda v)^2 d v  \right. \\
& \left. \hspace{2cm} + \int_{\frac{\nu-\mu}{\nu} t (u)}^{t(u)} \left( \frac{\nu^2 + \lambda \mu}{\mu} \right)^2 (t(u) - v)^2 d v \right) d u + F_{\msp} (W_{\bullet, \bullet, \bullet}^{Y, \Gamma}),
\end{split}
\]
with $t (u) = (1 - l_Y u)/l_H$ and
\[
F_{\msp} (W_{\bullet, \bullet, \bullet}^{Y, \Gamma}) = \frac{1}{4 l_Y l_H^3 (A^3)} \cdot \frac{\mu(\nu^2 + \lambda \mu)}{\nu^2} \cdot \ord_{\msp} (\Delta|_{\Gamma}).
\]
\end{Prop}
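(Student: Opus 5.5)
The approach is the Abban--Zhuang method applied to the flag $\msp \in \Gamma \subset Y \subset X$. I would use the version of \cite[Theorem~11.14]{Fujita23} that needs only local plt conditions around $\msp$, exactly as in the proof of Proposition~\ref{prop:typeI}. That theorem gives
\[
\delta_{\msp}(X) \ge \min\Bigl\{ \tfrac{A_X(Y)}{S_X(Y)},\ \tfrac{A_{Y}(\Gamma)}{S(V^Y_{\bullet,\bullet};\Gamma)},\ \tfrac{A_{\Gamma}(\msp)}{S(W^{Y,\Gamma}_{\bullet,\bullet,\bullet};\msp)} \Bigr\},
\]
where the last log discrepancy is taken for the orbifold (different) structure on $\Gamma$ at $\msp$. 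Since $(Y,\Gamma)$ is plt near $\msp$, that log discrepancy is $1/r_{\msp}$ (or at least $\ge 1/r_{\msp}$), and $A_Y(\Gamma) = A_X(Y) = 1$. It then remains to compute three $S$-invariants.

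\emph{First term.} Since $\rho(X)=1$ and $Y \sim l_Y A$, the divisor $-K_X - uY \sim (1-l_Y u)A$ is nef for $0 \le u \le 1/l_Y$ and not big afterwards. Hence
\[
S_X(Y) = \frac{1}{(A^3)}\int_0^{1/l_Y}(1-l_Yu)^3(A^3)\,du = \frac{1}{4l_Y},
\]
which gives the term $4l_Y$.

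\emph{Second term.} On $Y$ we have $(-K_X-uY)|_Y \sim_{\mbQ} t(u)\, H|_Y = t(u)(\Gamma+\Delta)$, with $t(u) = (1-l_Yu)/l_H$. I would compute the Zariski decomposition of $t(\Gamma+\Delta) - v\Gamma$ for $0 \le v \le t$. Its intersection with $\Delta$ is $(t-v)\nu - t\mu$, which is nonnegative exactly when $v \le \frac{\nu-\mu}{\nu}t$.
\begin{itemize}
\item In that range the divisor is nef, provided it is nef on $\Gamma$ and on curves other than $\Gamma,\Delta$; the latter holds because it is a combination of $\Gamma$ and $\Delta$ with nonnegative coefficients. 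Expanding its self-intersection gives $(2\nu+\lambda-\mu)t^2 - 2(\nu+\lambda)tv + \lambda v^2$.
\item Beyond that range, the positive part is $P = (t-v)\Gamma + s\Delta$ with $s = (t-v)\nu/\mu$, determined by $P\cdot\Delta = 0$. Then $P^2 = \frac{\nu^2+\lambda\mu}{\mu}(t-v)^2$, and the negative part is $N = (t-s)\Delta$.
\end{itemize}
Inserting these volumes into $S(V^Y_{\bullet,\bullet};\Gamma) = \frac{3}{(A^3)}\int\!\int \vol\,dv\,du$ reproduces the stated formula. The upper endpoint $v = t(u)$ is pseudo-effectivity: beyond it the class has negative coefficient on the $\Gamma$ part, which I would justify using $\Delta^2 < 0$.

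\emph{Third term.} I would apply the Abban--Zhuang formula
\[
S(W^{Y,\Gamma}_{\bullet,\bullet,\bullet};\msp) = \frac{3}{(A^3)}\int\!\int (P\cdot\Gamma)^2\,dv\,du + F_{\msp},
\qquad
F_{\msp} = \frac{6}{(A^3)}\int\!\int (P\cdot\Gamma)\,\ord_{\msp}(N|_{\Gamma})\,dv\,du.
\]
In the first range $P\cdot\Gamma = (\nu+\lambda)t - \lambda v$ and $N=0$. In the second range $P\cdot\Gamma = \frac{\nu^2+\lambda\mu}{\mu}(t-v)$ and $N|_\Gamma = (t-s)\Delta|_\Gamma$, where $t - s = \frac{\mu}{\nu}\bigl(v - \frac{\nu-\mu}{\nu}t\bigr)$ after simplification. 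The double integral in $F_{\msp}$ is elementary. The inner integral over $v \in [\frac{\nu-\mu}{\nu}t, t]$ of a product of two linear factors gives a constant multiple of $t^3$, with the factor $\frac{\mu(\nu^2+\lambda\mu)}{\nu^2}\ord_{\msp}(\Delta|_\Gamma)$ coming out. Then $\int_0^{1/l_Y} t(u)^3\,du = \frac{1}{4l_Yl_H^3}$, which yields the stated $F_{\msp}$.

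The main obstacle I expect is not these integrals but the justification that the method applies in the singular setting. One must check that the refinement and the local log discrepancy at $\msp$ on $\Gamma$ correctly produce the factor $r_{\msp}$, with intersection numbers on $Y$ understood as $\mbQ$-valued Mumford intersections. One must also check that the Zariski decomposition above is the correct one globally on $Y$, which needs $\Gamma$ and $\Delta$ to be the only curves where nefness can fail. For both I would rely on \cite[Theorem~11.14]{Fujita23} and follow the structure of \cite[Proposition~3.8]{CO}, where only the global plt hypothesis is weakened to local plt near $\msp$.
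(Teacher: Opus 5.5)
Your route is the paper's own. Its proof simply invokes \cite[Proposition~3.8]{CO} together with \cite[Theorem~11.14]{Fujita23}, and you reconstruct that Abban--Zhuang computation. Your volumes and the values of $(P\cdot\Gamma)$ in both ranges are right, and the checks you leave implicit go through:
\begin{itemize}
\item Nefness on $\Gamma$ in the first range uses $\nu+\lambda=(H|_Y\cdot\Gamma)>0$.
\item Nefness of $P$ in the second range uses $\nu^2+\lambda\mu>0$. This is the Hodge index theorem on the span of $\Gamma,\Delta$, which contains the ample class $H|_Y$.
\item The class $\Gamma+\frac{\nu}{\mu}\Delta$ is nef, orthogonal to $\Delta$ and positive on $\Gamma$. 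It shows that the pseudo-effective threshold is exactly $v=t(u)$.
\end{itemize}
Two points need fixing.

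First, there is an algebra slip. We have $t-s=t-\frac{\nu}{\mu}(t-v)=\frac{\nu}{\mu}\bigl(v-\frac{\nu-\mu}{\nu}t\bigr)$, not $\frac{\mu}{\nu}(\cdots)$. With the correct factor, $\int_{v_0}^{t}(t-v)(v-v_0)\,dv=\frac{(t-v_0)^3}{6}=\frac{\mu^3t^3}{6\nu^3}$ produces exactly $\frac{\mu(\nu^2+\lambda\mu)}{\nu^2}$. Your factor would give $\frac{\mu^3(\nu^2+\lambda\mu)}{\nu^4}$ instead. You correctly integrate $u$ over $[0,1/l_Y]$ in the $W$-term as well; the upper limit $l_Y$ printed in the statement agrees with this only because $l_Y=1$ in every application.

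Second, your assertion that plt of $(Y,\Gamma)$ near $\msp$ yields $A_{\Gamma,\mathrm{Diff}}(\msp)\ge 1/r_{\msp}$ is false. This is exactly where the constant $r_{\msp}$ comes from.
\begin{itemize}
\item Pass to the index-one cover $\mbC^3\to X$ at $\msp$. Since $\bmu_{r_{\msp}}$ acts freely off the origin, $\tilde\Gamma\to\Gamma$ is totally ramified of degree $r_{\msp}$.
\item Plt only tells you that $\tilde Y$ has an $A_{m-1}$ point with $\tilde\Gamma$ smooth there.
\item Hence the different on $\Gamma$ has coefficient $1-\frac{1}{mr_{\msp}}$ at $\msp$, and $A_{\Gamma,\mathrm{Diff}}(\msp)=\frac{1}{mr_{\msp}}$.
\end{itemize}
For example, take a $\frac12(1,1,1)$ point with $\tilde Y=(x^2+yz=0)$ and $\tilde\Gamma=(x=y=0)$. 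Both pairs are plt, $Y$ is locally $\frac14(1,1)$, and the log discrepancy is $\frac14$, not $\frac12$.

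What you need is $m=1$, i.e.\ $Y$ quasismooth at $\msp$, with $\Gamma$ quasismooth there too. Then $Y$ is locally $\mbC^2/\bmu_{r_{\msp}}$ acting freely off the origin, $\mathrm{Diff}_Y=0$ near $\msp$, and the different on $\Gamma$ is $(1-\frac{1}{r_{\msp}})\msp$. This is the condition actually verified in every use of type $\mathrm{IIa}$ flags in the paper (cf.\ the remark after Proposition~\ref{prop:typeI}). So state it as a hypothesis rather than deriving it from plt.
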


\begin{proof}
The definition of type $\mathrm{IIa}$ flag is the same as that in \cite[Definition~3.1]{CO}, with the following slight generalization that the pairs $(X, Y)$ and $(Y, \Gamma)$ are only required to be locally plt around $\msp$.
The proof of this proposition follows the same arguments as those in \cite[Proposition~3.8]{CO} in view of \cite[Theorem~11.14]{Fujita23} that ensures that the Abban--Zhuang method works in this setting.
\end{proof}

\subsection{Known results on Fano $3$-fold WCIs of codimension $2$}
\label{sec:known}

We explain some known results on the classification of maximal centers and computations of alpha invariants of well-formed and quasismooth Fano $3$-fold WCIs of codimension $2$ and index $1$.

We set 
\[
\begin{split}
\msI^*_{\br} &\coloneq \{8, 14, 20, 24, 31, 37, 45, 47, 51, 59, 60, 64, 71, 75, 78, 84, 85\}, \\
\msI_{\br} &\coloneq \msI^*_{\br} \cup \{1\},
\end{split}
\]
where family \textnumero~$1$ is the family of complete intersections of type $(2, 3)$ in $\mbP^5$.
It is proved in \cite{IP96} (resp.\ \cite{OkadaI}) that general members of family \textnumero~$1$ (resp.\ families indexed by $\msI^*_{\br}$) are birationally rigid.
Moreover, it is also proved in \cite{OkadaI} that no member belonging to the other $85 - 19 = 66$ families is birationally rigid.

\begin{Rem}
\label{rem:knownBR}
We explain some known results on the classification of maximal centers on members of the families parametrized by $\msI^*_{\br}$.

The following results are proved in \cite{OkadaI} without any generality assumptions:
\begin{itemize}
\item For any member of each of the families indexed by $\msI^*_{\br}$, no curve is a maximal center: This is proved in \cite[Corollary~6.2]{OkadaI}.
\item For any member of each of the families indexed by $\msI^*_{\br}$, no smooth point is a maximal center: This is proved in \cite[Proposition~7.6]{OkadaI} if $\msi = 8$ and in \cite[Proposition~7.5]{OkadaI} otherwise.
\item For any member of each of the families indexed by $\msI^*_{\br}$ and for its singular point without the mark $\heartsuit$ in the fourth column of Table~\ref{table:BRcodim2}, the singular point is either excluded as a maximal center or the Kawamata blowup initiates a Sarkisov-self link: This is proved in \cite[Theorem~5.5, Propositions~8.4 and 8.11]{OkadaI} except for some special members of family \textnumero~71 and the singular point of type $\frac{1}{5} (1, 2, 3)$.
This special case is also covered (see \cite[Remark~8.2 and Example~8.23]{OkadaI}).
\end{itemize}
As a consequence, it is proved in \cite{OkadaI} that any well-formed and quasismooth member of family \textnumero~$\msi$, where 
\[
\msi \in \{8, 14, 24, 45, 60, 64, 75, 76, 78, 84, 85\} \subset \msI_{\br}^*,
\] 
is birationally rigid since there is no singular point with the mark $\heartsuit$ for members of these $11$ families.

The following result is proved under some generality assumptions:
\begin{itemize}
\item For a member of each of the families indexed by $\msI^*_{\br}$ and for its singular point with the mark $\heartsuit$ in the fourth column of Table~\ref{table:BRcodim2}, the singular point is either excluded as a maximal center or the Kawamata blowup initiates a Sarkisov-self link under some generality assumptions that are made explicit: The generality assumption is indicated in the fourth column of the tables in \cite[Section~9]{OkadaI} (see also \cite[Condition~3.1 and \S 9]{OkadaI}).
\end{itemize}
Therefore, to complete the proof of Theorem~\ref{thm:BRcodim2}, we need to consider families \textnumero~$\msi$, where
\[
\msi \in \{20, 31, 37, 47, 51, 59, 71\} \subset \msI^*_{\br},
\]
and the singular point $\msp \in X$ with the mark $\heartsuit$ in Table~\ref{table:BRcodim2}, and need to show that either $\msp$ is not a maximal center or there is a Sarkisov-self link initiated by the Kawamata blowup of $X$ at $\msp$ in the case when the generality assumptions are not satisfied.
\end{Rem}

\begin{Rem}
\label{rem:KstNo60}
Let $X$ be a member of family \textnumero~$60$.
Then, it is proved that $\alpha_{\msp} (X) \ge 1$ in \cite[Theorem~1.5]{KOWalpha}.
Note that this result holds true for any quasismooth member $X$ of this family since the generality assumption \cite[Conditions~2.14 and 2.15]{KOWalpha} is vacuous (except for quasismoothness of $X$) for family \textnumero~$60$.
\end{Rem}

We fix some notation on Fano $3$-fold WCIs of codimension $2$ and index $1$.
Let $X = X_{d_1, d_2} \subset \mbP (a_0, \dots, a_5)$ be a well-formed and quasismooth Fano $3$-fold weighted complete intersection of index $1$ defined by two homogeneous polynomials of degree $d_1$ and $d_2$.
Unless otherwise specified, we assume that $a_0 \le \dots \le a_5$ and we use $x, y, z, s, t$ and $w$ as the homogeneous coordinates of weight $a_0, a_1, a_2, a_3, a_4$ and $a_5$, respectively.
\begin{itemize}
\item We denote by $\msF_1$ and $\msF_2$ the homogeneous polynomials of degree $d_1$ and $d_2$ in variables $x, y, z, s, t, w$, respectively, that define $X$.
\item For a variable $v \in \{x, y, z, s, t, w\}$, we set $H_v \coloneq (v = 0)_X$, $U_v \coloneq X \setminus H_v = (v \ne 0)_X$ and let $\msp_v$ be the point of $\mbP (a_0, \dots, a_5)$ at which only the coordinate $v$ does not vanish: for example, $\msp_x = (1\!:\!0\!:\!0\!:\!0\!:\!0\!:\!0)$.
\item A \textit{weighted complete intersection curve} (\textit{WCI curve}, for short) of type $(c_1, c_2, c_3, c_4)$ in $\mbP (a_0, \dots, a_5)$ is an irreducible and reduced curve defined by four homogeneous polynomials of degree $c_1, c_2, c_3$ and $c_4$.
\item For a subscheme $S$ of $X$ and a subset $\Gamma \subset S$, we define $\Sing_{\Gamma} (S) \coloneq \Gamma \cap \Sing (S)$.
\item We set $A \coloneq -K_X$, which is the positive generator of $\Cl (X) \cong \mbZ$.
\item Suppose that we are given a Kawamata blowup $\varphi \colon Y \to X$ at a singular point $\msp$.
In this case, we set $B \coloneq -K_Y \sim \varphi^*A - \frac{1}{r} E$, where $E$ is the exceptional divisor of $\varphi$ and $r$ is the index of the cyclic quotient singularity $\msp \in X$.
For a curve or a divisor $\Xi$ on $X$, we set $\tilde{\Xi} \coloneq \varphi_*^{-1} \Xi$.
Note that if $\msp \in X$ is of type $\frac{1}{r} (1, a, r-a)$, where $1 \le a < r$ is coprime to $r$, then $\varphi$ is the weighted blowup with weight $\frac{1}{r} (1, a, r-a)$ and we have
\[
(E^3) = \frac{r^2}{a (r-a)}.
\]
For a homogeneous polynomial $h \in \mbC [x, y, z, s, t, w]$, we define $\ord_E (h) \coloneq \ord_E (\varphi^*D)$, where $D \coloneq (h = 0)_X$.
\end{itemize}

\section{Proof of birational rigidity}
\label{sec:pfbirrig}

The aim of this section is to give a complete and uniform proof of Theorem~\ref{thm:BRcodim2}, that is, we prove that any well-formed and quasismooth member of $18$ families indexed by $\msI^*_{\br}$ is birationally rigid.

In the following subsections, we consider a member $X$ of family \textnumero~$\msi$ with
\[
\msi \in \{20, 31, 37, 47, 51, 59\} \subset \msI_{\br}^*
\]
and its singular point $\msp \in X$ with the mark $\heartsuit$ in Table~\ref{table:BRcodim2}.
The aim of this section is to remove generality assumptions and prove that either $\msp$ is not a maximal center or there is a Sarkisov self-link initiated by the Kawamata blow-up of $\msp \in X$.
Combining with the earlier work \cite{OkadaI}, this proves the following (see Remark~\ref{rem:knownBR}).

\begin{Thm}
\label{thm:birrigid}
Let $X$ be a well-formed and quasismooth Fano $3$-fold weighted complete intersection that is a member of family \textnumero~$\msi$, where $\msi \in \msI_{\br}^*$.
Then $X$ is birationally rigid.
\end{Thm}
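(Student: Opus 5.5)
We follow the standard Sarkisov program framework for proving rigidity. By the Noether--Fano inequality, $X$ is birationally rigid as soon as every maximal center of $X$ is either excluded or is the center of a Sarkisov self-link, that is, a birational involution $\tau$ with the following property: $\tau$ untwists the maximal singularity, so that composing with $\tau$ strictly decreases the degree $n(\mcM)$ of a mobile linear system. By Remark~\ref{rem:knownBR}, the only points that remain open are the following: no curve and no smooth point is a maximal center (\cite{OkadaI}, without generality assumptions), and every unmarked singular point is excluded or untwisted. Consequently the $11$ families with no $\heartsuit$-point are done. For family \textnumero~$71$ we rely on \cite{OkadaI} together with the special-member analysis cited there. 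The heart of the proof is therefore a case-by-case treatment of the $\heartsuit$-points $\msp$ on members of families \textnumero~$20, 31, 37, 47, 51, 59$ that violate the generality conditions of \cite[\S 9]{OkadaI}.

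For each such pair $(X,\msp)$ we let $\varphi\colon Y\to X$ be the Kawamata blowup, with exceptional divisor $E$ and $B=-K_Y$. We first write $\msF_1,\msF_2$ in normal form near $\msp$: after a coordinate change, $\msp=\msp_v$ for the top-weight variable $v$, and we record which monomials $v^k u$ appear. The generality condition in \cite{OkadaI} typically asserts that some specific monomial, say $v^k u$, appears with nonzero coefficient. We split into two subcases according to whether this coefficient vanishes.

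If the relevant monomial is present, the argument of \cite{OkadaI} applies verbatim: one uses the two tangent-type equations to eliminate local coordinates and obtains a quadratic or elliptic involution, which gives a Sarkisov self-link. If it is absent, we aim to exclude $\msp$ directly. We would pick two quasihyperplanes $S=\tilde H_{u_1}$ and $T=\tilde H_{u_2}$ through $\msp$ of low degree, chosen so that their $\ord_E$ are as large as the degenerate equations allow. Writing $S\sim_{\mbQ} aB+dE$ and $T\sim_{\mbQ} bB+eE$, we would then verify the hypotheses of Lemma~\ref{lem:exclnumprop}. Concretely, we compute $\Gamma=S\cap T$ as a WCI curve on $X$ (possibly reducible into curves numerically proportional to one another) and check $(T\cdot\Gamma)=b(B^2\cdot T)+\dots\le 0$ using $(E^3)=r^2/(a(r-a))$.

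In cases where $\Gamma$ splits into non-proportional components, we would instead use Lemma~\ref{lem:exclnegdef} on the normal surface $T$; normality of $T$ follows from Lemma~\ref{lem:normqhyp}. There we verify that the intersection matrix of the components of $S|_T$ is negative definite, computing self-intersections via adjunction on $T$ at the cyclic quotient points it contains. When the degeneration produces a one-parameter family of curves through $\msp$ with $(-K_Y\cdot C_\lambda)\le 0$, Lemma~\ref{lem:excl:infinitecurve} gives the cleanest exclusion.

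The main obstacle will be the intermediate degenerations. These are members where the monomial is absent but the curve $\Gamma$ acquires extra components or non-reduced structure, or where the involution of \cite{OkadaI} still exists but is defined by a different pair of equations. Here one must compute $\ord_E$ of each equation precisely, and it may be necessary to show quasismoothness forces some other monomial to appear. In each such case we would first try to show that quasismoothness of $X$ at points of $\Gamma$ constrains the coefficients enough to reduce to one of the two clean subcases above. Only if that fails would we fall back on the negative-definiteness computation.
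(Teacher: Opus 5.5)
Your overall framework matches the paper's: rely on \cite{OkadaI} for curves, smooth points and unmarked singular points, and redo only the $\heartsuit$ points. There are, however, two genuine gaps.

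\textbf{Family \textnumero~71 is not finished by \cite{OkadaI}.} The special-member caveat you cite concerns the $\frac15(1,2,3)$ point, which carries no $\heartsuit$. The three $\frac14(1,1,3)$ points of family \textnumero~71 \emph{are} marked $\heartsuit$ in Table~\ref{table:BRcodim2}. The generality assumption there is that no WCI curve of type $(1,7,8,10)$ passes through $\msp$. When such a curve $\Gamma=(x=t=w=ys+z^2=0)$ exists, a new argument is required. The paper supplies one in Proposition~\ref{prop:No71excl4}. It takes $S=(x=0)_X$ and $T=(ys+z^2=0)_X$, with $\tilde T\sim_{\mbQ}6B+E$. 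It then shows that $\tilde S|_{\tilde T}=\tilde\Gamma+\tilde\Delta$ (resp.\ $2\tilde\Gamma+\tilde\Delta$) has negative-definite intersection matrix. The needed lower bound on $(\tilde\Gamma\cdot\tilde\Delta)$ comes from the non-quasismooth point $\msp_s\in\Gamma\cap\Delta$.

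\textbf{Your dichotomy fails for the $\frac13(1,1,2)$ point of family \textnumero~31.} Your dichotomy is ``generality holds $\Rightarrow$ self-link from \cite{OkadaI}; generality fails $\Rightarrow$ exclude $\msp$''. Suppose $X$ contains a WCI curve of type $(1,2,4,5)$ through $\msp$, and $z^2s$ or $z^2t$ occurs in $\msF_2$. In that case the Kawamata blowup may still initiate a Sarkisov self-link, and then $\msp$ is a genuine maximal center. Lemmas~\ref{lem:exclnumprop}, \ref{lem:exclnegdef} and \ref{lem:excl:infinitecurve} all conclude that $\varphi$ is \emph{not} a maximal extraction. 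So they cannot succeed on those members, and your stated fallback to negative-definiteness fails there. Your remark on ``intermediate degenerations'' anticipates such cases but gives no mechanism.

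What is missing is an actual construction of the link in this degenerate setting, which the paper carries out as follows.
\begin{enumerate}
\item Embed $Y$ into the rank-$2$ toric weighted blowup $\mbT$. When $zw\notin\msF_1$, first adjoin the variable $\xi=zy+tx+f_5$.
\item Run the $2$-ray game and check that its first step restricts to an inverse flip $Y\dashrightarrow Y'$. The point is that $Y'$ is quasismooth along the flipped curve, with a single $\frac15(1,1,4)$ point, hence terminal.
\item Observe that $\varphi'\colon Y'\to\mbP(1,2,4,5)$ is generically $2\colon1$.
\item If $\varphi'$ contracts a divisor, then $-K_Y\in\partial\bMov(Y)$, and $\msp$ is excluded by Lemma~\ref{lem:exclmcmob}.
\item Otherwise the Stein factorization of $\varphi'$ is a flopping contraction. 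The covering involution then yields the Sarkisov self-link (Propositions~\ref{prop:No31excl3C1} and \ref{prop:No31excl3C3}).
\end{enumerate}

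More broadly, the generality conditions of \cite{OkadaI} are mostly the non-existence of a special WCI curve through $\msp$, or irreducibility of a base locus, not the presence of one monomial. Your proposal also never specifies the surfaces $S$, $T$ or their $\ord_E$, and never checks the numerics in any case. So even for families \textnumero~20, 37, 47, 51, 59, where exclusion is the right strategy, the actual content of the proof is absent.
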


This follows from Propositions~\ref{prop:No20excl2C1}, \ref{prop:No20excl2C2}, \ref{prop:No31excl3C1}, \ref{prop:No31excl3C2}, \ref{prop:No31excl3C3}, \ref{prop:No31excl3C4}, \ref{prop:No37excl2C1}, \ref{prop:No37excl2C2}, \ref{prop:No47excl3C1}, \ref{prop:No47excl3C2}, \ref{prop:No51excl4}, \ref{prop:No59excl4} and \ref{prop:No71excl4} below.

Throughout the following subsections, $\varphi \colon Y \to X$ is the Kawamata blow-up of $X$ at the point $\msp$ with exceptional divisor $E$.

\subsection{Family \textnumero~20 and $\frac{1}{2} (1, 1, 1)$ points}

Let $X = X_{6, 8} \subset \mbP (1, 2, 2, 3, 3, 4)$ be a member of family \textnumero~20 and let $\msp \in X$ be a $\frac{1}{2} (1, 1, 1)$ point.
The generality assumption in \cite{OkadaI} imposed for $\msp \in X$ is that the base locus of the pencil $|\mfm_{\msp} (2 A)|$ is irreducible. 
We assume that $\Bs |\mfm_{\msp} (2A)|$ is not irreducible. 
Without loss of generality, we may assume that $\msp = \msp_z$ by replacing coordinates.
Then, $|\mfm_{\msp} (2A)|$ is generated by $x^2$ and $y$.

\subsubsection{Case: $z w \in \msF_1$}
\label{sec:No20excl2C1}

We assume that $z w \in \msF_1$.

\begin{Lem}
\label{lem:No20excl2C1eq}
The defining polynomials of $X$ can be written as 
\[
\begin{split}
\msF_1 &= z w + s t + w f_2 + f_6, \\
\msF_2 &= z^3 y + z^2 (\delta x s + \varepsilon x t + g_4) + z (\alpha s^2 + \beta t^2 + \gamma s t + w g_2 + g_6) \\
&\hspace{6cm} + w^2 + w h_4 + g_8,
\end{split}
\]
where $\alpha, \beta, \gamma, \varepsilon \in \mbC$, $f_2, f_6, g_2, g_4 \in \mbC [x, y]$ and $g_6, g_8, h_4 \in (x, y) \subset \mbC [x, y, s, t]$ with $g_6, g_8 \in (x^2, y)$.
\end{Lem}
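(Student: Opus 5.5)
We obtain the normal form by a sequence of coordinate changes, using quasismoothness at $\msp = \msp_z$, the assumption $zw \in \msF_1$, and the failure of the generality condition. The weights are $x{:}1$, $y{:}2$, $z{:}2$, $s{:}3$, $t{:}3$, $w{:}4$, and the degrees are $6$ and $8$.

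\textbf{Step 1: the leading terms.} Since $\msp_z \in X$, neither $z^3$ nor $z^4$ appears. Since $zw \in \msF_1$ and $\deg w = 4$, the linear term of $\msF_1$ at $\msp_z$ is $w$ up to a constant.

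Quasismoothness at $\msp_z$ requires the Jacobian to have rank $2$ there. So $\msF_2$ must contain $z^3 v$ for some variable $v$ of weight $2$, other than $w$ after eliminating $w$. Hence $z^3 y \in \msF_2$, after replacing $y$ by a linear combination of $y$ and $x^2$ if needed.

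Rescale and complete: replace $w$ by $w$ plus terms absorbing all $z^2(\cdot)$ and $z(\cdot)$ contributions of $\msF_1$. These have degrees $2$ and $4$ in $x, y, z$, so the substitution stays weighted homogeneous. After it, $\msF_1 = zw + (\text{terms without } z)$.

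Similarly, absorb the $z^3(\cdot)$ part of $\msF_2$ into $y$, so that it becomes exactly $z^3 y$.

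\textbf{Step 2: using the non-generality assumption.} The base locus of $|\mfm_{\msp}(2A)|$ is $(x = y = 0)_X$. On $x = y = 0$, the equation $\msF_1$ restricts to $zw + q(s,t) + c\, w(\cdot)$. Write $\msF_1|_{x=y=0} = zw + q_6(s,t)$, since $w f_2$ vanishes there.

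We claim $q_6 \neq 0$ with $q_6$ a product of two distinct linear forms. If $q_6$ were a square or zero, the remaining equation would give a reducible or nonreduced locus, or quasismoothness would fail. Reducibility of the base locus is exactly the assumption we are violating, and it forces a factorization that we normalize to $q_6 = st$ by a linear change in $s, t$. I expect the main obstacle here: checking that reducibility forces the specific shape $st$ rather than $s^2$, and controlling how $\msF_2|_{x=y=0}$ interacts. Presumably the curve $(x=y=0)_X$ is given by $zw + st = 0$ and $\msF_2 = 0$ in $\mbP(2,3,3,4)$, and its irreducibility relates to $q_6$.

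\textbf{Step 3: cleaning $\msF_2$.}
\begin{itemize}
\item Replace $\msF_2$ by $\msF_2 - (\text{multiple})\cdot \msF_1$ to remove terms like $z^2 st$.
\item Rescale $w$ so that the coefficient of $w^2$ is $1$. Here $w^2 \in \msF_2$ is needed for quasismoothness at $\msp_w$, or because $\msp_w \notin X$.
\item Complete the square in $w$, which moves the $w$-linear terms with $z$ into $z w g_2$ and $w h_4$.
\end{itemize}

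Then sort the monomials by powers of $z$:
\begin{itemize}
\item the $z^2$-coefficient has degree $4$, giving $x(\delta s + \varepsilon t) + g_4(x,y)$;
\item the $z$-coefficient has degree $6$, giving $\alpha s^2 + \beta t^2 + \gamma st + w g_2 + g_6$;
\item the constant part gives $w^2 + w h_4 + g_8$.
\end{itemize}

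Finally, the conditions $h_4, g_6, g_8 \in (x,y)$ and $g_6, g_8 \in (x^2, y)$ follow by absorbing the pure $s, t$ terms using $\msF_1$ or coordinate shifts of $w$, and by reducing monomials like $x s^2$ via $y \mapsto y + (\cdot)$ shifts. Each such step is routine bookkeeping.
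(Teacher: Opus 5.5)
Your overall strategy (coordinate changes, quasismoothness, degree bookkeeping) matches the paper's, but Step~2 has a genuine gap. You try to get your $q_6(s,t)$ to be a product of two distinct linear forms from the reducibility of $\Bs|\mfm_{\msp}(2A)|$, and that cannot work. The non-generality assumption plays no role in this lemma. The reducibility of $(x=y=0)_X$ constrains the coefficients $\alpha,\beta,\gamma$ of the $z$-part of $\msF_2$; this is how the next lemma gets $\alpha\beta=0$. It says nothing about the rank of $q_6$.

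The missing argument is quasismoothness along the line $L=(x=y=z=w=0)\cong\mbP(3,3)$:
\begin{itemize}
\item There are no monomials of degree $8$ in $s,t$ alone, so $\msF_2|_L\equiv 0$ and $X\cap L=(q_6=0)$.
\item There are no monomials in $s,t$ of degrees $5,4,4,2$, so $\partial_x\msF_1,\partial_y\msF_1,\partial_z\msF_1,\partial_w\msF_1$ vanish identically on $L$.
\item Hence if $q_6\equiv0$ or $q_6$ is the square of a linear form, the full gradient of $\msF_1$ vanishes at a (double) root of $q_6$ on $X$. Then $\rank J_X\le 1$, contradicting quasismoothness.
\end{itemize}
After normalizing $q_6=st$, you must still remove the terms $s\,a_3(x,y)+t\,b_3(x,y)$ of $\msF_1$, coming from the monomials $x^3s, xys, x^3t, xyt$. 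Do this by $s\mapsto s-b_3$, $t\mapsto t-a_3$ to reach $f_6\in\mbC[x,y]$; you never address this.

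Step~3 also needs repair.
\begin{itemize}
\item There is no monomial $z^2st$ in degree $8$. The term you must kill via $\msF_2-c\,z\msF_1$ is $z^2w$, and that is what makes the $z^2$-coefficient free of $w$.
\item Completing the square in $w$ is unnecessary, since the target form keeps $zwg_2$ and $wh_4$. It is also harmful: the shift $w\mapsto w-\tfrac12(zg_2+h_4)$ puts $z$-terms back into $\msF_1=zw+\cdots$.
\item The conditions $h_4\in(x,y)$ and $g_6,g_8\in(x^2,y)$ need no absorption or $y$-shifts. They are automatic: there are no monomials of degree $4,5,7,8$ in $s,t$ alone, and the only ones of degree $6$ ($s^2,st,t^2$) are exactly the terms split off with coefficients $\alpha,\beta,\gamma$.
\item The monomial $xs^2$ you mention has degree $7$ and occurs in neither polynomial.
\end{itemize}
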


\begin{proof}
We have $z^3 y \in \msF_2$ and $w^2 \in \msF_2$ by the quasismoothness of $X$.
Possibly rescaling coordinates, we may assume $\coeff_{\msF_1} (z w) = \coeff_{\msF_2} (z^3 y) = \coeff_{\msF_2} (w^2) = 1$.
Replacing $\msF_2$ by $\msF_2 - \coeff_{\msF_2} (z^2 w) z \msF_1$, we may assume $z^2 w \notin \msF_2$.
Then, we can write
\[
\begin{split}
\msF_1 &= z w + w f_2 + f'_6, \\
\msF_2 &= z^3 y + z^2 g'_4 + z (w g_2 + g'_6) + w^2 + w h_4 + g_8,
\end{split}
\]
where $f_2, f'_6, g_2, g'_4, g'_6, h'_4 \in \mbC [x, y, s, t]$.
It is clear that $f_2$ and $g_2$ do not involve variables $s$ and $t$, that is, $f_2, g_2 \in \mbC [x, y]$.
We can write $g'_4 = \delta x s + \varepsilon x t + g_4$, where $g_4 \in \mbC [x, y]$.
By the quasismoothness of $X$, $f'_6 (0, 0, s, t)$ is a quadratic form in $s, t$ of rank $2$, and hence we may assume $f'_6 = s t + f_6$, where $f_6 \in \mbC [x, y]$, by replacing $s$ and $t$ suitably.
We can write $g'_6 = \alpha s^2 + \beta t^2 + \gamma s t + g_6$ with $g_6 \in (x, y) \subset \mbC [x, y, s, t]$.
We see that $h_4 \in (x, y)$ (resp.\ $g_8 \in (x^2, y)$) since there are no monomials of degree $4$ (resp.\ degree $7$ and $8$) in variables $s, t$.
There are no monomials of degree $5$ in variables $s, t$, the degree $6$ monomials in variables $s, t$ are $s^2, t^2, s t$ and $g_6$ does not contain any of them.
It follows that $g_6 \in (x^2, y) \subset \mbC [x, y, s, t]$ and we obtain the desired form of defining polynomials.
\end{proof}

We choose homogeneous coordinates as in Lemma~\ref{lem:No20excl2C1eq}.
We set $S \coloneq (x = 0)_X$ and let $T \in |\mfm_{\msp} (2 A)|$ be a general member.

\begin{Lem}
\label{lem:No20excl2C1sing}
The following assertions hold.
\begin{enumerate}
\item We have $\alpha \beta = 0$.
\item If $\alpha = \gamma = 0$, then $\delta \ne 0$.
\item If $\beta = \gamma = 0$, then $\varepsilon \ne 0$.
\end{enumerate}
\end{Lem}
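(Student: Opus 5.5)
All three assertions come from quasismoothness of $X$ at suitable points of the locus $(x = y = 0)_X$. We restrict the equations of Lemma~\ref{lem:No20excl2C1eq} to $x = y = 0$. Since $f_2, f_6, g_2, g_4 \in \mbC[x,y]$ and $g_6, g_8, h_4 \in (x,y)$, we get
\[
\msF_1|_{x=y=0} = z w + s t + c_1 w z', \qquad \msF_2|_{x=y=0} = z(\alpha s^2 + \beta t^2 + \gamma s t) + w^2 + \cdots .
\]
Here the weight-$2$ part of $f_2$ restricted to $x=y=0$ vanishes, so in fact $\msF_1|_{x=y=0} = z w + s t$. Similarly $z^2 g_4$ and $z w g_2$ vanish, so $\msF_2|_{x=y=0} = z(\alpha s^2+\beta t^2+\gamma st) + w^2$. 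The plan is to find, in each case, a point of $(x=y=0)_X$ where the Jacobian matrix $J_X$ has rank $<2$, using only the columns $\prt/\prt x, \prt/\prt y, \dots$.

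For (1), suppose $\alpha\beta \ne 0$. Consider points with $x=y=w=0$. Then $\msF_1 = st = 0$. Taking $t = 0$ forces $\msF_2 = \alpha z s^2 = 0$, hence $z=0$ or $s=0$. I will look instead at the point $\msp_s = (0\!:\!0\!:\!0\!:\!1\!:\!0\!:\!0)$: the monomial $s^2 z$ lies in $\msF_2$, so $\msp_s \notin X$ unless $\alpha = 0$. Similarly $\msp_t \in X$ iff $\beta=0$. So the right test points are rather points where $X$ must contain a curve forcing a singularity.

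The cleaner approach is to compute $J_X$ at a point $\msq = (0\!:\!0\!:\!z\!:\!s\!:\!t\!:\!w) \in X$. The $\prt/\prt x$ and $\prt/\prt y$ entries come from terms linear in $x,y$: for $\msF_1$ these are the linear parts of $f_6$, and for $\msF_2$ they are $z^3$ (from $z^3y$), $z^2(\delta s+\varepsilon t)$ (the $x$-derivative of $z^2(\delta xs+\varepsilon xt)$), and the linear parts of $g_6,g_8,h_4$. Since $g_6,g_8\in(x^2,y)$, they contribute only to $\prt/\prt y$.

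The key step is to choose $\msq$ with $z \ne 0$ inside the surface $(x=y=0)_X \cap (\ldots)$, where the remaining columns degenerate. Set $z=1$. The equations become $w = -st$ and $\alpha s^2+\beta t^2+\gamma st + s^2t^2 = 0$, a curve in the $(s,t)$-plane. For (2) with $\alpha=\gamma=0$, take $s$ arbitrary, $t=0$, $w=0$: the point $(0\!:\!0\!:\!1\!:\!s\!:\!0\!:\!0)$ lies on $X$. At this point the $s,t,z,w$-columns of $J_X$ are $(\prt_s,\prt_t,\prt_z,\prt_w)\msF_1 = (0, s, 0, 1)$ and $\msF_2 \mapsto (0, 0, 0, 0)$. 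The $y$-column gives $(\ast, 1 + \ast)$, and the $x$-column of $\msF_2$ gives $\delta s$ plus a linear term of $g_6,h_4$ in $x$. Choosing $s$ generic and suitably, the $2\times2$ minors vanish unless $\delta \ne 0$. The same argument with $s \leftrightarrow t$ gives (3).

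The main obstacle is bookkeeping: I must confirm that the $\prt/\prt y$ column alone cannot give rank $2$. This should follow by varying $s$ along the line and using that the row of $\msF_1$ is $(\ast,\ast,0,s,0,1)$. For (1), I expect the same trick: with $\alpha\beta\ne0$, the curve above contains points where all the relevant derivatives vanish simultaneously, and I will check this by direct computation.
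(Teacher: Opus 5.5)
Your plan for (1) cannot work: $\alpha\beta=0$ does not follow from quasismoothness. A general member of family \textnumero~20 is quasismooth, has $zw\in\msF_1$, and in the normal form of Lemma~\ref{lem:No20excl2C1eq} has $\alpha\beta\ne0$. So the ``direct computation'' you defer can only find $J_X$ of rank $2$ everywhere.

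What (1) actually uses is the standing hypothesis of this subsection: $\Bs|\mfm_{\msp}(2A)|$ is not irreducible. Its support is $(x=y=0)_X=(zw+st=z(\alpha s^2+\beta t^2+\gamma st)+w^2=0)$. The paper's point is that this locus is an irreducible reduced curve once $\alpha\beta\ne0$, which contradicts the hypothesis. To check this:
\begin{itemize}
\item On $z=0$ the locus is $(x=y=z=0)_X=\{\msp_s,\msp_t\}$, so it contributes no curve.
\item On $U_z$ it is the $\bmu_2$-quotient of the plane curve $\alpha s^2+\gamma st+\beta t^2+s^2t^2=0$, obtained by eliminating $w=-st$. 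Here $\bmu_2$ acts by $(s,t)\mapsto(-s,-t)$.
\item As a quadratic in $s$, its coefficients $\alpha+t^2,\ \gamma t,\ \beta t^2$ have no common factor when $\alpha\beta\ne0$. Its discriminant $t^2(\gamma^2-4\alpha\beta-4\beta t^2)$ is a square only if $\gamma^2=4\alpha\beta$.
\item In that case the polynomial equals $(\ell+\sqrt{-1}\,st)(\ell-\sqrt{-1}\,st)$ with $\ell^2=\alpha s^2+\gamma st+\beta t^2$. Each factor is irreducible, since $\ell$ is proportional to neither $s$ nor $t$. The two factors are interchanged by $\bmu_2$, so the quotient is still irreducible.
\end{itemize}

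For (2) and (3) your idea matches the paper's, but the execution needs correcting. When $\alpha=\gamma=0$, the line $\Gamma=(x=y=t=w=0)$ lies on $X$. At a point of $\Gamma$, in columns $x,y,z,s,t,w$:
\begin{itemize}
\item the $\msF_1$-row of $J_X$ is $(0,0,0,0,s,z)$;
\item the $\msF_2$-row is $(\delta z^2 s,\ z^3+\theta s^2,\ 0,0,0,0)$, where $\theta$ denotes $\coeff_{g_8}(ys^2)$.
\end{itemize}
There is no extra $x$-contribution: $g_6,g_8\in(x^2,y)$, and $h_4$ enters only through $w\,\partial h_4/\partial x$, which vanishes since $w=0$.

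At a generic point of $\Gamma$ this matrix has rank $2$ even if $\delta=0$, so ``choosing $s$ generic'' does not work. You must take the specific point where $z^3+\theta s^2=0$, namely $(0\!:\!0\!:\!\sqrt[3]{-\theta}\!:\!1\!:\!0\!:\!0)$ as in the paper. Here $\theta\ne0$ by quasismoothness at $\msp_s\in X$; this is the ``$\ast$'' you left unidentified. At that point the $\msF_2$-row is $(\delta z^2,0,\dots,0)$, which forces $\delta\ne0$. Part (3) is symmetric, using $\coeff_{g_8}(yt^2)$ and the line $(x=y=s=w=0)$.
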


\begin{proof}
We have
\[
S \cap T = (x = y = 0)_X = (z w + s t = z (\alpha s^2 + \beta t^2 + \gamma s t) + w^2 = 0).
\]
If $\alpha \beta \ne 0$, then $S \cap T$ is an irreducible and reduced curve.
By assumption, we have $\alpha \beta = 0$ and the assertion (1) is proved.

Suppose that $\alpha = \gamma = 0$.
We have $y s^2 \in g_8$ by the quasismoothness of $X$ at $\msp_s$ and set $\theta \coloneq \coeff_{g_8} (y s^2) \ne 0$.
We see that $\delta \ne 0$ because otherwise $X$ is not quasismooth at $(0\!:\!0\!:\!\sqrt[3]{-\theta}\!:\!1\!:\!0\!:\!0) \in X$ which is a contradiction.
This proves the assertion (2).
The assertion (3) follows similarly.
\end{proof}

We can choose $x, s, t$ as local orbifold coordinates of $X$ at $\msp$ and $\ord_E (x, s, t) = \frac{1}{2} (1, 1, 1)$. 
By the equations $\msF_1 = \msF_2 = 0$, we have $\ord_E (y, w) = \frac{1}{2} (2, 2)$ since at least one of $z s^2, z t^2, z s t, z^2 s x, z^2 t x$ appear in $\msF_2$ with nonzero coefficient by Lemma~\ref{lem:No20excl2C1sing}. 
We have $\tilde{S} \sim \varphi^*A - \frac{1}{2} E = B$ and $\tilde{T} \sim 2 \varphi^*A - E = 2 B$.

\begin{Prop}
\label{prop:No20excl2C1}
The following assertions hold.
\begin{enumerate}
\item If exactly one of $\alpha, \beta$ is zero and $\gamma \ne 0$, then there are infinitely many irreducible and reduced curves that intersect $B$ nonpositively.
\item Otherwise, the support of the $1$-cycle $\tilde{S}|_{\tilde{T}}$ consists of curves whose intersection matrix is negative-definite.
\end{enumerate}
In particular, $\msp$ is not a maximal center. 
\end{Prop}

\begin{proof}
Note first that $T$ is a normal surface by Lemma~\ref{lem:normqhyp}.
We have
\[
S \cap T = (x = y = 0)_X = (z w + s t = z (\alpha s^2 + \beta t^2 + \gamma s t) + w^2 = 0).
\]
Without loss of generality, we may assume $\alpha = 0$.
In this case $S \cap T$ contains $\Gamma \coloneq (x = y = t = w = 0)$.

By setting $z = 1$ and by eliminating $w = - s t$ in terms of the equation $\msF_1 (0, 0, z, s, t, w) = 0$, we see that $S \cap T \cap U_z$ is isomorphic to the quotient of the plane curve $C \subset \mbA^2_{s, t}$ defined by the equations
\[
\msF_2 (0, 0, 1, s, t, - s t) = t (\beta t + \gamma s + s^2 t) = 0.
\]
Any curve contained in $S \cap T$ appears as a component of this affine curve since $S \cap T \cap (z = 0) = \{\msp_s, \msp_t\}$.
The curve $\Gamma \cap U_z$ corresponds to the quotient of the curve $(t = 0) \subset \mbA^2_{s, t}$.
From this, we obtain the following.

\begin{enumerate}
\item[(i)] If $\beta \ne 0$ and $\gamma \ne 0$, then $\beta t + \gamma s + s^2 t = 0$ is an irreducible and reduced curve, and hence 
\[
S|_T = \Gamma + \Xi,
\] 
where $\Xi$ is the component of $S \cap T$ other than $\Gamma$.
Note that $\Xi$ is an irreducible and reduced curve.
Note also that $\Gamma$ and $\Xi$ intersect at $\msp_s$ since $S \cap T$ is not quasismooth at $\msp_s$ and $\Gamma$ is quasismooth at $\msp_s$.
\item[(ii)] If $\beta \ne 0$ and $\gamma = 0$, then $\beta t + \gamma s +  s^2 t = t (\beta + s^2) = 0$, and hence 
\[
S|_T = 2 \Gamma + \Xi,
\] 
where $\Xi$ is the component of $S \cap T$ other than $\Gamma$.
Note that $\Xi$ is an irreducible and reduced curve since $S \cap T \cap U_t$ is an irreducible and reduced curve and $S \cap T \cap (t = 0) = \Gamma$, set-theoretically.
Note also that $\Gamma$ intersects $\Xi$ at $(0\!:\!0\!:\!1\!:\!\sqrt{-\beta}\!:\!0\!:\!0)$.
\item[(iii)] If $\beta = 0$ and $\gamma \ne 0$, then $S|_T$ is the sum of three distinct irreducible and reduced curves, but we do not make use of this fact (since their intersection matrix is only negative-semidefinite).
\item[(iv)] If $\beta = \gamma = 0$, then $\beta t + \gamma s + s^2 t = s^2 t$, and hence 
\[
S|_T = 2 \Gamma + 2 \Delta,
\] 
where $\Delta \coloneq (x = y = s = w = 0)$.
\end{enumerate}

In any case, we have $(A \cdot \Gamma) = 1/6$ and the curve $\tilde{\Gamma}$ intersects $E$ transversally at $1$ smooth point of $Y$, and hence $(E \cdot \tilde{\Gamma}) = 1$.
We have 
\[
(B \cdot \tilde{\Gamma}) = (A \cdot \Gamma) - \frac{1}{2} (E \cdot \tilde{\Gamma}) = - \frac{1}{3}.
\]

(i) Suppose that $\beta \ne 0$ and $\gamma \ne 0$, and let $S|_T = \Gamma + \Xi$ be as in (i).
We have $(A \cdot \Xi) = (A \cdot S \cdot T) - (A \cdot \Gamma) = 1/2$. 
We have $\tilde{S}|_{\tilde{T}} = \tilde{\Gamma} + \tilde{\Xi}$ since $\tilde{S} \cap \tilde{T} \cap E$ does not contain a curve.
Then we obtain
\[
(E \cdot \tilde{\Xi}) = (E \cdot \tilde{S} \cdot \tilde{T}) - (E \cdot \tilde{\Gamma}) = \frac{1}{2} (E^3) - 1 = 1. 
\]
This shows that $(B \cdot \tilde{\Xi}) = 0$.
By the observation given in (i), we have $m \coloneq (\tilde{\Gamma} \cdot \tilde{\Xi}) > 0$ since $\Gamma$ intersects $\Xi$ at a point other than $\msp$.
By taking intersection numbers of $\tilde{S}|_{\tilde{T}} = \tilde{\Gamma} + \tilde{\Xi}$ and $\tilde{\Gamma}$, $\tilde{\Xi}$, we have
\[
(\tilde{\Gamma}^2) = -m - \frac{1}{3}, \quad
(\tilde{\Xi}^2) = - m.
\]
It follows that
\[
\begin{pmatrix}
(\tilde{\Gamma}^2) & (\tilde{\Gamma} \cdot \tilde{\Xi}) \\
(\tilde{\Gamma} \cdot \tilde{\Xi}) & (\tilde{\Xi}^2)
\end{pmatrix} =
\begin{pmatrix}
- m - \frac{1}{3} & m \\
m & - m
\end{pmatrix}
\]
and it is negative-definite since $m > 0$.

(ii) Suppose that $\beta \ne 0$ and $\gamma = 0$.
Let $S|_T = 2 \Gamma + \Xi$ be as in (ii).
We have $(A \cdot \Xi) = (A \cdot S \cdot T) - 2 (A \cdot \Gamma) = 1/3$. 
We have $\tilde{S}|_{\tilde{T}} = 2 \tilde{\Gamma} + \tilde{\Xi}$ since $\tilde{S} \cap \tilde{T} \cap E$ does not contain a curve.
Then we obtain
\[
(E \cdot \tilde{\Xi}) = (E \cdot \tilde{S} \cdot \tilde{T}) - 2 (E \cdot \tilde{\Gamma}) = \frac{1}{2} (E^3) - 2 = 0. 
\]
This shows $(B \cdot \tilde{\Xi}) = 1/3$.
By the observation given in (ii), the curves $\Gamma$ and $\Xi$ have an intersection point $\msp' = (0\!:\!0\!:\!1\!:\!\sqrt{-\beta}\!:\!0\!:\!0) \ne \msp$.
It follows that $m \coloneq (\tilde{\Gamma} \cdot \tilde{\Xi}) \ge 1$. 
By taking intersection numbers of $\tilde{S}|_{\tilde{T}} = 2 \tilde{\Gamma} + \tilde{\Xi}$ and $\tilde{\Gamma}$, $\tilde{\Xi}$, we have
\[
(\tilde{\Gamma}^2) = - \frac{1}{2} m - \frac{1}{6}, \quad
(\tilde{\Xi}^2) = \frac{1}{3} - 2 m.
\]
It follows that
\[
\begin{pmatrix}
(\tilde{\Gamma}^2) & (\tilde{\Gamma} \cdot \tilde{\Xi}) \\
(\tilde{\Gamma} \cdot \tilde{\Xi}) & (\tilde{\Xi}^2)
\end{pmatrix} =
\begin{pmatrix}
- \frac{m}{2} - \frac{1}{6} & m \\
m & \frac{1}{3} - 2 m
\end{pmatrix}
\]
and it is negative-definite since $m \ge 1$.

(iii) Suppose that $\beta = 0$ and $\gamma \ne 0$.
We set $\msF'_2 \coloneq \msF_2 - \gamma z \msF_1$.
We have
\[
\msF'_2 = z^2 (z y - \gamma w + \delta x s + \varepsilon x t + g_4) + z (w g_2 + g_6 - \gamma w f_2 - \gamma f_6) + w^2 + w h_4 + g_8.
\]
We set $\xi \coloneq z y - \gamma w + \delta x s + \varepsilon x t + g_4$.
By the equation $\msF'_2 = 0$, we have $\ord_E (\xi) = \frac{4}{2}$.
We set $S_{\lambda} \coloneq (\xi - \lambda x^4 = 0)_X$ and $T_{\mu} \coloneq (y - \mu x^2 = 0)_X$.
The equations $y = \mu x^2$ and $\xi = \lambda x^4$ are the same as 
\begin{equation}
\label{eq:No20excl2C1-1}
\begin{cases}
y = \mu x^2 , \\
w = \gamma^{-1} (- \lambda x^4 + \mu z x^2 + \delta x s + \varepsilon x t + g_4 (x, \mu x^2, s, t)).
\end{cases}
\end{equation}
We see that $\bar{g}_4 \coloneq g_4 (x, \mu x^2, s, t)$ is divisible by $x$ and we set 
\[
\zeta \coloneq \gamma^{-1} (-\lambda x^3 + \mu z x + \delta s + \varepsilon t + \bar{g}_4/x) \in \mbC [x, s, t],
\]
so that the second equation in \eqref{eq:No20excl2C1-1} becomes $w = x \zeta$.
For a polynomial $\phi = \phi (x, y, s, t, w)$, we set 
\[
\bar{\phi} \coloneq \phi (x, \mu x^2, s, t, x \zeta) \in \mbC [x, s, t].
\]
Then,
\[
\overline{\msF'_2} = \lambda x^4 z^2 + z (x \zeta (\bar{g}_2 - \gamma \bar{f}_2) + \bar{g}_6 - \gamma \bar{f}_6) + x^2 \zeta^2 + x \zeta \bar{h}_4 + \bar{g}_8.
\]
Note that $\overline{\msF'_2}$ is divisible by $x^2$ by Lemma~\ref{lem:No20excl2C1eq}, but it is not divisible by $x^3$ since $s^2 y, t^2 y \in g_8$ by the quasismoothness of $X$ at $\msp_s$ and $\msp_t$.
We write $\overline{\msF'_2} = x^2 \msG$.
Then, by eliminating the variables $y = \mu x^2$, $w = x \zeta$,  the scheme $S_{\lambda} \cap T_{\mu}$ is isomorphic to the complete intersection
\[
(x \zeta (z + \bar{f}_2) + s t + \bar{f}_6 = x^2 \msG_2 = 0) \subset \mbP (1_x, 2_z, 3_s, 3_t),
\]
where 
\[
\overline{\msF}_1 = x \zeta (z + \bar{f}_2) + s t + \bar{f}_6.
\]
The component $(\overline{\msF}_1 = x = 0)$ is the union $\Gamma \cup \Delta$, where $\Delta \coloneq (x = y = s = w = 0)$, and neither $\Gamma$ nor $\Delta$ is contained in the other component $(\overline{F}_1 = \msG = 0)$ since $(x = \overline{F}_1 = \msG = 0)$ is a finite set of points.
Then, we can write $S_{\lambda} \cdot T_{\mu} = 2 \Gamma + 2 \Delta + \Xi_{\lambda, \mu}$, where $\Xi_{\lambda, \mu}$ is an effective $1$-cycle on $X$ such that $\Gamma, \Delta \not\subset \Supp (\Xi_{\lambda, \mu})$.
We have $\tilde{S}_{\lambda} \sim 4 \varphi^*A - \frac{4}{2} E = 4 B$ and $\tilde{T}_{\mu} \sim 2 \varphi^*A - \frac{2}{2} E = 2 B$.
We have $(A \cdot \Delta) = 1/6$ and $(E \cdot \tilde{\Delta}) = 1$.
Hence $(B \cdot \tilde{\Delta}) = -1/3$ and 
\[
(B \cdot \tilde{\Xi}_{\lambda, \mu}) = (B \cdot \tilde{S}_{\lambda} \cdot \tilde{T}_{\mu}) - 2 (B \cdot \tilde{\Gamma}) - 2 (B \cdot \tilde{\Delta}) = -\frac{4}{3} + \frac{2}{3} + \frac{2}{3} = 0.
\]
Thus, there is at least one irreducible component of $\Xi_{\lambda, \mu}$ that intersects $B$ nonpositively.
For distinct pairs $(\lambda, \mu), (\lambda', \mu') \in \mbC^2$, we have $(S_{\lambda} \cap T_{\mu}) \cap (S_{\lambda'} \cap T_{\mu'}) = (x = y = \xi = 0)_X = \Gamma \cup \Delta$ set-theoretically, and hence there are infinitely many irreducible and reduced curves on $Y$ that intersect $B$ nonpositively.

(iv) Suppose that $\beta = \gamma = 0$.
Let $S|_T = 2 \Gamma + 2 \Delta$ be as in (iv).
We have $(A \cdot \Delta) = 1/6$ and $(E \cdot \tilde{\Delta}) = 1$.
Hence, $(B \cdot \tilde{\Delta}) = -1/3$.
We have $\Gamma \cap \Delta = \{\msp\}$ and the intersection points $E \cap \tilde{\Gamma}$ and $E \cap \tilde{\Delta}$ is distinct.
It follows that $(\tilde{\Gamma} \cdot \tilde{\Delta}) = 0$.
By taking intersection numbers of $B|_{\tilde{T}} \sim \tilde{S}|_{\tilde{T}} = 2 \tilde{\Gamma} + 2 \tilde{\Delta}$, we have
\[
\begin{pmatrix}
(\tilde{\Gamma}^2) & (\tilde{\Gamma} \cdot \tilde{\Delta}) \\
(\tilde{\Gamma} \cdot \tilde{\Delta}) & (\tilde{\Delta}^2)
\end{pmatrix} =
\begin{pmatrix}
- \frac{1}{6} & 0 \\
0 & - \frac{1}{6}
\end{pmatrix},
\]
which is negative-definite.

By Lemmas~\ref{lem:exclnegdef} and \ref{lem:excl:infinitecurve}, $\msp$ is not a maximal center.
\end{proof}

\subsubsection{Case: $z w \notin \msF_1$}
\label{sec:No20excl2C2}

We assume that $z w \notin \msF_1$.

\begin{Lem}
\label{lem:No20excl2C2eq}
The defining polynomials of $X$ can be written as
\[
\begin{split}
\msF_1 &= z^2 y + z f_4 + s t + f_6, \\
\msF_2 &= z^2 w + z (\alpha s^2 + \beta t^2 + g_6) + w^2 + g_8,
\end{split}
\]
where $\alpha, \beta \in \mbC$ and $f_i, g_i \in (x, y) \subset \mbC [x, y, s, t, w]$.
\end{Lem}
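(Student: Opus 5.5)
The plan is to use quasismoothness at the coordinate points $\msp_z, \msp_s, \msp_t, \msp_w$ together with the hypothesis $z w \notin \msF_1$ to pin down the monomials that must appear, and then to normalize by coordinate changes that fix $\msp = \msp_z$. The weights are $(1_x, 2_y, 2_z, 3_s, 3_t, 4_w)$ and the degrees are $6$ and $8$.

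First I determine the linear part of $J_X$ at $\msp_z$. Since $\msp_z \in X$, we have $z^3 \notin \msF_1$ and $z^4 \notin \msF_2$. The only monomials that could give a nonzero entry of $J_X(\msp_z)$ are:
\begin{itemize}
\item $z^2 y$ and $z w$ in $\msF_1$;
\item $z^3 y$ and $z^2 w$ in $\msF_2$.
\end{itemize}
By assumption $z w \notin \msF_1$, so the rank-$2$ condition forces $z^2 y \in \msF_1$ and $z^2 w \in \msF_2$. I rescale both coefficients to $1$ and replace $\msF_2$ by $\msF_2 - c\, z \msF_1$ to remove $z^3 y$. Quasismoothness at $\msp_w$ gives $w^2 \in \msF_2$, because $\msF_1$ contains no pure power of $w$; I rescale this coefficient to $1$ as well.

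Next I normalize $\msF_1$. The degree-$6$ monomials containing $z$ are:
\begin{itemize}
\item $z^2 y$ and $z^2 x^2$;
\item $z$ times a quartic in $x, y, s, t$, namely $x^4, x^2 y, y^2, x s, x t$, with $w$ excluded since $z w \notin \msF_1$.
\end{itemize}
I absorb $z^2 x^2$ by the substitution $y \mapsto y + c x^2$, which does not move $\msp_z$; the remaining $z$-terms form $z f_4$ with $f_4 \in (x, y)$. The $z$-free part of $\msF_1$ consists of a quadratic form $q(s, t)$ plus terms lying in $(x, y)$; the terms involving $w$ are of the form $w x^2$ and $w y$, so they also lie in $(x, y)$. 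I then show that $q$ has rank $2$. If $q$ were a square $l^2$ or zero, choose the coordinate point $\msq \in \{l = 0\}$ among $\{\msp_s, \msp_t\}$ after a linear change of $s, t$. Every term of $\msF_1$ would then vanish to order $\ge 2$ at $\msq$, since the only degree-$6$ monomials of the form $t \cdot (\text{variable})$ are $t s$ and $t^2$. Hence $\msF_1$ contributes a zero row to $J_X(\msq)$ and $X$ is not quasismooth there. So I may diagonalize $q = s t$.

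Finally I normalize $\msF_2$. Subtracting $\gamma z \msF_1$ kills the monomial $z s t$ but reintroduces $z^3 y$ and $z^2 f_4$. I then collect all terms of $\msF_2$ divisible by $z^2$ as $z^2 (w + z \ell_2 + k_4)$, with $\ell_2 \in (x^2, y)$ and $k_4 \in (x, y)$, and replace $w$ by $w' \coloneq w + z \ell_2 + k_4$. The main obstacle is this last substitution, because it feeds back into the other terms:
\begin{itemize}
\item it changes the terms $w x^2$ and $w y$ in $\msF_1$, creating new terms of the form $z \cdot (\text{stuff in } (x, y))$ and $z^2 x^2$, $z^2 y$;
\item it changes $w^2$ and the $z w$-terms of $\msF_2$.
\end{itemize}
I expect to handle this by a triangular or filtration argument. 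Every newly created term either lies in the ideal $(x, y)$ with strictly higher $x$-adic or $(x, y)$-adic order, or is of the forms already absorbed by the substitutions in $y$, $w$ and by subtracting multiples of $z \msF_1$. Repeating the steps therefore terminates by degree considerations in this finite-dimensional graded setting.

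After this, $\msF_2 = z^2 w + z(\alpha s^2 + \beta t^2 + g_6) + w^2 + g_8$. The check $g_6, g_8 \in (x, y)$ follows from listing the monomials of degrees $6$ and $8$ in $s, t, w$ alone: these are $s^2, t^2, s t$ and $w^2$, all of which have already been accounted for.
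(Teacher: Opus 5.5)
Your proposal is correct and follows the same route as the paper. You use quasismoothness at $\msp_z$ and $\msp_w$ to force $z^2y$, $z^2w$ and $w^2$, reduce the quadratic form in $s,t$ to $st$ by a rank-$2$ argument, and then use $\msF_2-\gamma z\msF_1$ followed by a change of the coordinate $w$. Your handling of the feedback from the $w$-substitution, where the correction has strictly increasing $(x,y)$-adic order so the process stops after two rounds, is more careful than the paper's single substitution $w\mapsto w+\gamma yz$, which leaves the $z^2y^2$ term coming from $w^2$ and the $z^2 f_4$ term unaddressed.
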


\begin{proof}
We have $z^2 y \in \msF_1$, $z^2 w \in \msF_2$ and $w^2 \in \msF_2$ by the quasismoothness of $X$.
By replacing coordinates, the defining polynomials of $X$ can be written as
\[
\begin{split}
\msF_1 &= z^2 y + z f_4 + f'_6, \\
\msF_2 &= z^2 w + z  g'_6 + w^2 + g_8,
\end{split}
\]
where $f_4, f'_6, g'_6, g_8 \in \mbC [x, y, s, t, w]$ with $w^2 \notin g_8$.
Note that $f_4, g_8 \in (x, y)$ since the degree $4$ and $8$ monomials in variables $s, t, w$ are $w$ and $w^2$, respectively, and $w \notin f_4$ and $w^2 \notin g_8$.
By the quasismoothness of $X$, $f'_6 (0, 0, s, t, 0)$ is a rank $2$ quadratic form in variables $s$ and $t$.
We may assume that $f'_6 (0, 0, s, t, 0) = s t$ after replacing $s$ and $t$ suitably.
Then we can write $f'_6 = s t + f_6$ with $f_6 \in (x, y)$.
We can write $g'_6 = \alpha s^2 + \beta t^2 + \gamma s t + g_6$, where $g_6 \in (x, y)$.
Replacing $\mathrm{F}_2$ by $\msF_2 - \gamma z \msF_1$ and then replacing $w \mapsto w + \gamma y z$, we may assume $\gamma = 0$ and we obtain the desired defining polynomials.
\end{proof}

We choose homogeneous coordinates as in Lemma~\ref{lem:No20excl2C2eq}.
We can choose $x, s, t$ as local orbifold coordinates of $X$ at $\msp$ and $\ord_E (x, s, t) = \frac{1}{2} (1, 1, 1)$.
By the equations $\msF_1 = \msF_2 = 0$, we have $\ord_E (y) = \frac{2}{2}$ and
\[
\ord_E (w) =
\begin{dcases}
\frac{2}{2}, & \text{if either $\alpha \ne 0$ or $\beta \ne 0$}, \\
\frac{4}{2}, & \text{if $\alpha = \beta = 0$}.
\end{dcases}.
\]
Let $S \in |\mfm_{\msp} (2 A)|$ be a general member and set $T \coloneq (x = 0)_X$.
Note that $\tilde{S} \sim 2 \varphi^*A - E = 2 B$ and $\tilde{T} \sim \varphi^*A - \frac{1}{2} E = B$.

\begin{Prop}
\label{prop:No20excl2C2}
The following assertions hold.
\begin{enumerate}
\item If $\alpha \beta \ne 0$, then the intersection $\Xi \coloneq \tilde{S} \cap \tilde{T}$ is the union of $2$ irreducible and reduced curves that are numerically proportional to each other, and $(T \cdot \Xi) < 0$.
\item If exactly one of $\alpha$ and $\beta$ is zero, then the intersection matrix of $\tilde{T}|_{\tilde{S}}$ is negative-definite.
\item If $\alpha = \beta =0$, then there are infinitely many curves on $Y$ that intersect $B$ nonpositively.
\end{enumerate}
In particular, $\msp$ is not a maximal center.
\end{Prop}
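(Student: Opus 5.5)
We first compute the intersection $S \cap T$ explicitly. By Lemma~\ref{lem:No20excl2C2eq}, all of $f_4, f_6, g_6, g_8$ lie in $(x, y)$. Hence
\[
S \cap T = (x = y = 0)_X = (s t = z^2 w + z (\alpha s^2 + \beta t^2) + w^2 = 0) \subset \mbP (2_z, 3_s, 3_t, 4_w).
\]
This splits as $\Gamma_s \cup \Gamma_t$, where
\[
\Gamma_s \coloneq (x = y = s = z^2 w + \beta z t^2 + w^2 = 0), \quad \Gamma_t \coloneq (x = y = t = z^2 w + \alpha z s^2 + w^2 = 0).
\]
The curve $\Gamma_s$ is irreducible and reduced if and only if $\beta \ne 0$; when $\beta = 0$ it breaks into $(x = y = s = w = 0)$ and $(x = y = s = w + z^2 = 0)$, and symmetrically for $\Gamma_t$. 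We compute $(A^3) = 1/3$, so $(A \cdot S \cdot T) = 2/3$. When irreducible, each $\Gamma$ has degree $1/3$; when $\Gamma$ splits, each piece has degree $1/6$.

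Next we compute the intersection numbers with $E$. Setting $z = 1$, each curve through $\msp$ is quasismooth there: for instance $\Gamma_s$ is given near $\msp$ by $w + \beta t^2 + w^2 = 0$. So its strict transform meets $E$ transversally at one smooth point of $Y$, giving $(E \cdot \tilde{\Gamma}) = 1$. For an irreducible $\Gamma_s$ this yields $(B \cdot \tilde{\Gamma}_s) = 1/3 - 1/2 = -1/6$, and for a split component of degree $1/6$ it yields $-1/3$. We also check, using $\ord_E (y) = 1$, $\ord_E (x) = 1/2$ and the explicit lowest-order terms, that $\tilde{S} \cap \tilde{T} \cap E$ contains no curve. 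Hence $\tilde{S}|_{\tilde{T}}$ is exactly the sum of the strict transforms.

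In case (1) we have $\Xi = \tilde{\Gamma}_s \cup \tilde{\Gamma}_t$. Since $\rho (Y) = 2$, a curve class is determined by its degrees against $B$ and $E$, and these coincide for the two curves, so they are numerically proportional. We apply Lemma~\ref{lem:exclnumprop} with the roles swapped: $T \sim B$ and $S \sim 2B$, i.e.\ $a = 1$, $b = 2$, $d = e = 0$. The conditions $0 \le e < a_X (E) b = 1$ and $ae - bd = 0$ hold. The remaining requirement is $(\tilde{S} \cdot \tilde{\Gamma}) = 2 (B \cdot \tilde{\Gamma}) < 0$, which is the inequality $(T \cdot \Xi) < 0$ in the statement.

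In case (2), say $\alpha = 0 \ne \beta$, we have $\tilde{T}|_{\tilde{S}} = \tilde{\Gamma}_s + \tilde{\Delta} + \tilde{\Lambda}$, where $\Delta = (x = y = t = w = 0)$ and $\Lambda = (x = y = t = w + z^2 = 0)$. We compute the intersection matrix on the normal surface $\tilde{S}$ (normal by Lemma~\ref{lem:normqhyp}) by intersecting $B|_{\tilde{S}} \sim \tilde{\Gamma}_s + \tilde{\Delta} + \tilde{\Lambda}$ with each component. For this we need the off-diagonal entries, which come from the mutual intersection points away from $E$: we have $\Delta \cap \Lambda = \{\msp_s\}$, while $\Gamma_s$ meets the $t = 0$ curves at points with $s = t = 0$. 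Where they meet inside $E$, we check whether the points $E \cap \tilde{\Delta}$, $E \cap \tilde{\Lambda}$ and $E \cap \tilde{\Gamma}_s$ are distinct. We then verify negative-definiteness directly and conclude by Lemma~\ref{lem:exclnegdef}. The singularity of $\tilde{S}$ at $\msp_s$ makes the local intersection numbers fractional, and this is where care is needed.

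In case (3), $\alpha = \beta = 0$, we have $\ord_E (w) = 2$, and we mimic case (iii) of Proposition~\ref{prop:No20excl2C1}. We take $S_\lambda \coloneq (w - \lambda x^4 = 0)_X \sim 4A$, whose strict transform is $\sim 4B$, and $T_\mu \coloneq (y - \mu x^2 = 0)_X$, whose strict transform is $\sim 2B$. We show that $S_\lambda \cdot T_\mu = m_1 \Delta_s + m_2 \Lambda_s + \cdots + \Xi_{\lambda, \mu}$, where the fixed curves are the four components of $(x = y = w = 0)_X$, appearing with multiplicities read off from the $x$-adic order of the restricted equations. Then $(B \cdot \tilde{\Xi}_{\lambda, \mu}) \le 0$ follows from $(B \cdot 4B \cdot 2B) = 8 (A^3) - (E^3)/... $ together with the negativity of $B$ on the fixed curves. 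The curves $\Xi_{\lambda, \mu}$ vary, since distinct pairs meet only in $(x = y = w = 0)$, and Lemma~\ref{lem:excl:infinitecurve} then applies.

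The main obstacle is determining the exact multiplicities in case (3), together with the case (2) matrix; I would first handle case (3) by substituting $y = \mu x^2$, $w = \lambda x^4$ and factoring out the power of $x$ as in the proof of Proposition~\ref{prop:No20excl2C1}.
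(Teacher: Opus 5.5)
\textbf{This is a genuine gap.} Case (1) is fine and matches the paper. Swapping the roles in Lemma~\ref{lem:exclnumprop} is legitimate: with $b=2$ and $e=0$ you have $e<a_X(E)b=1$, and the required negativity is $4(B^3)=-\frac23$. But cases (2) and (3), which carry the real content, are left as plans.

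\textbf{Case (2): the missing idea is the choice of surface.} The paper's proof, despite the phrasing $\tilde T|_{\tilde S}$ in the statement, computes on $\tilde T=\varphi_*^{-1}(x=0)_X$. This surface makes the computation easy:
\begin{itemize}
\item Since $\tilde T\sim B=-K_Y$, one has $K_{\tilde T}\sim 0$.
\item $T$ is quasismooth along $\Gamma_s\cup\Delta\cup\Lambda$, and $\tilde T$ is smooth near $E$.
\item So adjunction gives the diagonal entries outright: $(\tilde\Delta^2)=-2+\frac23$ and $(\tilde\Lambda^2)=-2+\frac12+\frac23$.
\item Together with the row sums $(\tilde S\cdot\tilde\Gamma_s,\tilde S\cdot\tilde\Delta,\tilde S\cdot\tilde\Lambda)=(-\frac13,-\frac23,\frac13)$ and $\tilde\Gamma_s\cap\tilde\Delta=\emptyset$, this forces $(\tilde\Delta\cdot\tilde\Lambda)=\frac23$, $(\tilde\Gamma_s\cdot\tilde\Lambda)=\frac12$ and $(\tilde\Gamma_s^2)=-\frac56$.
\item The leading minors are $-\frac56,\frac{10}{9},-\frac29$, so the matrix is negative-definite.
\end{itemize}
On your surface $\tilde S$ neither shortcut is available. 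First, $K_{\tilde S}=B|_{\tilde S}\neq 0$. Second, $S$ is not quasismooth at $\msp_s$ or at $\msp'=(0\!:\!0\!:\!1\!:\!0\!:\!0\!:\!-1)$. When $\alpha=0$ the orbifold coordinates at $\msp_s$ are $x,z,w$ and $y\in\mfm^2$, so for general $\lambda$ the surface is an $A_3$ point modulo $\bmu_3$ at $\msp_s$ and an $A_1$ point modulo $\bmu_2$ at $\msp'$.

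Your off-diagonal entries therefore require a local computation you have not done. It can be carried out: it gives $(\tilde\Delta\cdot\tilde\Lambda)=\frac13$ and $(\tilde\Gamma_s\cdot\tilde\Lambda)=\frac14$, and the resulting matrix is exactly half of the paper's. As written, though, negative-definiteness is asserted rather than shown. Note also that $\msp\notin\Lambda$, so $\tilde\Lambda$ does not meet $E$.

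\textbf{Case (3): the fixed part is misidentified.} $(x=y=w=0)_X$ does not have four components. There $\msF_1$ restricts to $st$ and $\msF_2$ to $0$, so it is $\Gamma_1\cup\Delta_1$ with $\Gamma_1=(x=y=s=w=0)$ and $\Delta_1=(x=y=t=w=0)$. The curves with $w=-z^2$ do not lie on $S_\lambda$.

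Substituting $y=\mu x^2$ and $w=\lambda x^4$ gives $\overline{\msF}_1=st+x(\cdots)$ and $\overline{\msF}_2=x^2\msG$. Here $\msG|_{x=0}$ is a quadratic form in $s,t$ whose $s^2$ and $t^2$ coefficients are nonzero for general $\mu$, because $s^2y,t^2y\in\msF_2$. Hence $S_\lambda\cdot T_\mu=2\Gamma_1+2\Delta_1+\Xi_{\lambda,\mu}$ and $(B\cdot\tilde\Xi_{\lambda,\mu})=8(B^3)+\frac43=0$. With this your argument via Lemma~\ref{lem:excl:infinitecurve} closes.

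The paper instead uses the simpler one-parameter family $D_\lambda=(w-\lambda y^2=0)_X$, restricted to the fixed surface $T=(x=0)_X$. There $D_\lambda|_T=\Gamma_1+\Delta_1+\Xi_\lambda$ and $(B\cdot\tilde\Xi_\lambda)=4(B^3)+\frac23=0$.
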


\begin{proof}
We have
\[
S \cap T = (x = y = s t = z^2 w + z (\alpha s^2 + \beta t^2) + w^2 = 0) = \Gamma \cup \Delta,
\]
where 
\[
\begin{split}
\Gamma &\coloneq  (x = y = s = z^2 w + \beta z t^2 + w^2 = 0), \\
\Delta &\coloneq (x = y = t = z^2 w + \alpha z s^2 + w^2 = 0).
\end{split}
\]
The curves $\Gamma$ and $\Delta$ are clearly reduced and they are irreducible if and only if $\beta \ne 0$ and $\alpha \ne 0$, respectively.
Moreover, $\Gamma$ (resp.\ $\Delta$) contains a WCI curve of type $(1, 2, 3, 4)$ if and only if $\beta = 0$ (resp.\ $\alpha = 0$).

(1)
Suppose that $\alpha \beta \ne 0$.
Then, $\Gamma$ and $\Delta$ are both irreducible and reduced.
Their proper transforms $\tilde{\Gamma}$ and $\tilde{\Delta}$ on $Y$ intersect $E$ transversally at $1$ smooth point of $Y$, hence $(\tilde{\Gamma} \cdot E) = (\tilde{\Delta} \cdot E) = 1$.
We have $(A \cdot \Gamma) = (A \cdot \Delta) = 1/3$ and hence 
\[
(B \cdot \tilde{\Gamma}) = (B \cdot \tilde{\Delta}) = \frac{1}{3} - \frac{1}{2} = - \frac{1}{6}.
\]
Thus, $\tilde{\Gamma}$ and $\tilde{\Delta}$ are numerically proportional to each other since $B$ and $E$ generate $\Pic (Y)_{\mbQ}$.
We have 
\[
(\tilde{T}^2 \cdot \tilde{S}) = 2 (A^3) - \frac{1}{2^2} (E^3) = - \frac{1}{3} < 0.
\]

(2)
Suppose that exactly one of $\alpha$ and $\beta$ is zero.
By symmetry, it suffices to consider the case where $\alpha = 0$ and $\beta \ne 0$.
In this case, 
\[
\Delta = \Delta_1 \cup \Delta_2,
\]
where
\[
\Delta_1 \coloneq (x = y = t = w = 0), \quad
\Delta_2 \coloneq (x = y = t = w + z^2 = 0).
\]
Note that $S$ is a normal surface by Lemma~\ref{lem:normqhyp}.
The curves $\Gamma$, $\Delta_1$ and $\Delta_2$ are quasismooth.
We have $\Gamma \cap \Delta_1 = \{\msp\}$, $\Gamma \cap \Delta_2 = \{\msp'\}$ and $\Delta_1 \cap \Delta_2 = \{\msp_s\}$, where $\msp' \coloneq (0\!:\!0\!:\!1\!:\!0\!:\!0\!:\!-1) \in X$.
It is easy to check that $T$ is quasismooth at $\msp, \msp'$ and $\msp_s$.
It follows that $T$ is quasismooth along $S \cap T = \Gamma \cup \Delta_1 \cup \Delta_2$.

We compute the intersection matrix of $\tilde{S}|_{\tilde{T}} = \tilde{\Gamma} + \tilde{\Delta}_1 + \tilde{\Delta}_2$.
We have
\[
\Gamma \cap \Delta_1 = \{\msp\}, \quad
\Gamma \cap \Delta_2 = \{\msp'\}, \quad
\Delta_1 \cap \Delta_1 = \{\msp_s\}.
\]
The curve $\tilde{\Delta}_1$ on $Y$ intersects $E$ transversally at $1$ smooth point of $E$ that is different from the intersection point $E \cap \tilde{\Gamma}$.
Hence $(E \cdot \tilde{\Delta}_1) = 1$ and $\tilde{\Gamma} \cap \tilde{\Delta}_1 = \emptyset$.
The curve $\Delta_2$ does not pass through $\msp$, hence $(E \cdot \tilde{\Delta}_2) = 0$.
The curve $\tilde{\Delta}_i$ is a smooth rational curve, $\Sing_{\tilde{\Delta}_1} (\tilde{T}) = \{\msp_s\} = \{1 \times \frac{1}{3} (1, 2)\}$ and $\Sing_{\tilde{\Delta}_2} (\tilde{T}) = \{\msp_s, \msp'\} = \{1 \times \frac{1}{2} (1, 1), 1 \times \frac{1}{3} (1, 2)\}$.
It follows that
\[
(\tilde{\Delta}^2_1) = -2 + \frac{2}{3} = - \frac{4}{3}, \quad
(\tilde{\Delta}^2_2) = -2 + \frac{1}{2} + \frac{2}{3} = - \frac{5}{6}.
\]
We have $(\tilde{S}|_{\tilde{T}} \cdot \tilde{\Gamma}) = (\tilde{S} \cdot \tilde{\Gamma}) = -1/3$ as in the previous case and we compute 
\[
\begin{split}
(\tilde{S}|_{\tilde{T}} \cdot \tilde{\Delta}_1) &= 2 (B \cdot \tilde{\Delta}_1) = 2 \left(\frac{1}{6} - \frac{1}{2} \right) = - \frac{2}{3}, \\
(\tilde{S}|_{\tilde{T}} \cdot \tilde{\Delta}_2) &= 2 (B \cdot \tilde{\Delta}_2) = \frac{1}{3}.
\end{split}
\] 
Moreover, we have $(\tilde{\Gamma} \cdot \tilde{\Delta}_1) = 0$ since $\tilde{\Gamma} \cap \tilde{\Delta}_1 = \emptyset$.
By taking intersection numbers of $\tilde{S}|_{\tilde{T}} = \tilde{\Gamma} + \tilde{\Delta}_1 + \tilde{\Delta}_2$ and $\tilde{\Delta}_1, \tilde{\Delta}_2$ and then $\tilde{\Gamma}$, we obtain
\[
(\tilde{\Delta}_1 \cdot \tilde{\Delta}_2) = \frac{2}{3}, \quad
(\tilde{\Gamma} \cdot \tilde{\Delta}_2) = \frac{1}{2}, \quad
(\tilde{\Gamma}^2) = - \frac{5}{6}.
\]
Thus, the intersection matrix of $\tilde{S}|_{\tilde{T}}$ is
\[
\begin{pmatrix}
(\tilde{\Gamma}^2) & (\tilde{\Gamma} \cdot \tilde{\Delta}_1) & (\tilde{\Gamma} \cdot \tilde{\Delta}_2) \\
(\tilde{\Gamma} \cdot \tilde{\Delta}_1) & (\tilde{\Delta}_1^2) & (\tilde{\Delta}_1 \cdot \tilde{\Delta}_2) \\
(\tilde{\Gamma} \cdot \tilde{\Delta}_2) & (\tilde{\Delta}_1 \cdot \tilde{\Delta}_2) & (\tilde{\Delta}_2^2) 
\end{pmatrix} =
\begin{pmatrix}
- \frac{5}{6} & 0 & \frac{1}{2} \\[0.5mm]
0 & - \frac{4}{3} & \frac{2}{3} \\[0.5mm]
\frac{1}{2} & \frac{2}{3} & - \frac{5}{6}
\end{pmatrix},
\]
which is negative definite.
By Lemma~\ref{lem:exclnegdef}, $\msp$ is not a maximal center.

(3)
Finally, suppose that $\alpha = \beta = 0$.
In this case, we have $s^2 y, t^2 y \in \msF_2$ by the quasismoothness of $X$ at $\msp_s, \msp_t$.
Hence, if we set $q (s, t) \coloneq g_8 (0, y, s, t, 0)/y$, then $s^2, t^2 \in q (s, t)$.
We have
\[
\Gamma = \Gamma_1 \cup \Gamma_2, \quad
\Delta = \Delta_1 \cup \Delta_2,
\]
where
\[
\begin{split}
\Gamma_1 &= (x = y = s = w = 0), \quad \Gamma_2 = (x = y = s = w + z^2 = 0), \\
\Delta_1 &= (x = y = t = w = 0), \quad \Delta_2 = (x = y = t = w + z^2 = 0).
\end{split}
\]
For $\lambda \in \mbC$, we set $D_{\lambda} \coloneq (w - \lambda y^2 = 0)_X$.

We claim that
\[
D_{\lambda}|_{T} = \Xi_{\lambda} + \Gamma_1 + \Delta_1,
\] 
where $\Xi_{\lambda}$ is an effective divisor on $T$ that contains neither $\Gamma_1$ nor $\Delta_1$ in its support.
It is sufficient to observe that the scheme-theoretic intersection $T \cap D_{\lambda} = (x = w - \lambda y^2 = 0)_X$ contains $\Gamma_1$ and $\Delta_1$ with multiplicity $1$.
By eliminating $x$ and $w = \lambda y^2$, $T \cap D_{\lambda}$ is isomorphic to the complete intersection in $\mbP (2_y, 2_z, 3_s, 3_t)$ defined by the polynomials
\[
\begin{split}
\overline{\msF}_1 \coloneq \msF_1 (0, y, z, s, t, \lambda y^2) &= z^2 y + \gamma z y^2 + \zeta (\lambda) y^3 + s t, \\
\overline{\msF}_2 \coloneq \msF_2 (0, y, z, s, t, \lambda y^2) &= \lambda z^2 y^2 + \delta z y^3 + \xi (\lambda) y^4 + y q (s, t),
\end{split}
\]
where $\gamma, \delta, \zeta (\lambda), \xi (\lambda) \in \mbC$.
We see that $\zeta (\lambda)$ and $\xi (\lambda)$ depend on $\lambda$.
Moreover we have $\xi (\lambda) \ne 0$ for a general $\lambda$ since $\xi (\lambda) = \lambda^2 + \cdots$ can be seen as a quadric in variable $\lambda$.
We can write $\overline{\msF}_2 = y \msG$, where
\[
\msG \coloneq \lambda z^2 y + \delta z y^2 + \xi (\lambda) y^3 + q (s, t).
\]
Hence $S \cap D_{\lambda} = \Gamma_1 \cup \Delta_1 \cup Z$, where $Z \cong (\overline{\msF}_1 = \msG = 0) \subset \mbP (2, 2, 3, 3)$.
We see that 
\[
(y = 0) \cap Z \cong (y = v t = q (s, t) = 0) = \emptyset
\] 
since $s^2, t^2 \in q (s, t)$.
This shows that $Z$ contains neither $\Gamma_1$ nor $\Delta_1$.
Thus, the claim is proved.

We see that $\tilde{D}_{\lambda} \cap \tilde{T} \cap E$ does not contain a curve, and hence $\tilde{D}_{\lambda}|_{\tilde{T}} = \tilde{\Xi}_{\lambda} + \tilde{\Gamma}_1 + \tilde{\Delta}_1$.
As in the previous case, we have $(B \cdot \tilde{\Gamma}_1) = (B \cdot \tilde{\Delta}_1) = -1/3$.
We have $\tilde{D}_{\lambda} \sim 4 \varphi^*A - \frac{4}{2} E \sim 4 B$ since $\ord_E (w) = 4/2$, and
\[
(B \cdot \tilde{D}_{\lambda} \cdot \tilde{T}) = 4 (A^3) - \frac{4}{2^2} (E^3) = - \frac{2}{3}.
\]
It follows that
\[
(B \cdot \tilde{\Xi}_{\lambda}) = (B \cdot \tilde{D}_{\lambda} \cdot \tilde{T}) - (B \cdot \tilde{\Gamma}_1) - (B \cdot \tilde{\Delta}_1) = 0,
\]
and there is an irreducible component of $\tilde{\Xi}_{\lambda}$ that intersects $B$ nonpositively.
For distinct $\lambda, \lambda' \in \mbC$, we have $(D_{\lambda} \cap T) \cap (D_{\lambda'} \cap T) = (x = y = w = 0)_X = \Gamma_1 \cup \Delta_1$ set-theoretically, and hence there are infinitely many curves that intersect $B$ nonpositively.
By Lemma~\ref{lem:excl:infinitecurve}, $\msp$ is not a maximal center.
\end{proof}

\begin{Rem}
The following cases are not treated in \cite{AZ16}.
\begin{enumerate}
\item The case where $z w \in \msF_1$ and exactly one of $z s^2$ and $z t^2$ appears in $\msF_2$ with nonzero coefficients.
This case is covered in \S \ref{sec:No20excl2C1}.
\item The case where $z w \notin \msF_1$ and $z s^2, z t^2, z s t \notin \msF_2$.
This is exactly the case where $z w \notin \msF_1$ and $\alpha = \beta = 0$ in \S \ref{sec:No20excl2C2}.
\end{enumerate}
\end{Rem}

\subsection{Family \textnumero~31 and the $\frac{1}{3} (1, 1, 2)$ point}
\label{sec:No31MC3}

Let $X = X_{8, 10} \subset \mbP (1, 2, 3, 4, 4, 5)$ be a member of family \textnumero~31 and let $\msp = \msp_z$ be the $\frac{1}{3} (1, 1, 2)$ point of $X$.
The generality assumption in \cite{OkadaI} imposed for $\msp \in X$ is the nonexistence of a WCI curve of type $(1, 2, 4, 5)$ on $X$ passing through $\msp$.
We assume that there is a WCI curve of type $(1, 2, 4, 5)$ on $X$ passing through $\msp$.

\subsubsection{Case: $z w \in \msF_1$ and either $z^2 s \in \msF_2$ or $z^2 t \in \msF_2$}
\label{sec:No31excl3C1}

We consider the case where $z w \in \msF_1$ and either $z^2 s \in \msF_2$ or $z^2 t \in \msF_2$.
Without loss of generality we may assume that $z^2 s \in \msF_2$.

\begin{Lem}
\label{lem:No31excl3C1eq}
The defining polynomials of $X$ can be written as
\[
\begin{split}
\msF_1 &= z w + s t + t f_4 + f_8, \\
\msF_2 &= z^2 s + z (t g_3 + g_7) + t^2 y + t g_6 + g_{10},
\end{split}
\]
where $f_4, f_8 \in \mbC [x, y, w]$ and $g_3, g_7, g_6, g_{10} \in \mbC [x, y, s, w]$.
\end{Lem}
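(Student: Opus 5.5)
The plan is a normalization of the two defining polynomials, in the style of the proofs of Lemmas~\ref{lem:No20excl2C1eq} and \ref{lem:No20excl2C2eq}. The weights are $\deg x = 1$, $\deg y = 2$, $\deg z = 3$, $\deg s = \deg t = 4$, $\deg w = 5$, and $\deg \msF_1 = 8$, $\deg \msF_2 = 10$.

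\textbf{Monomials in $z$.} The only monomials of degree $8$ or $10$ divisible by $z^2$ or $z^3$ are $z^2 y$, $z^2 x^2$ in degree $8$, and $z^2 s$, $z^2 t$, $z^3 x$, $z^2 x^4, \dots$ in degree $10$. By assumption $zw \in \msF_1$ and $z^2 s \in \msF_2$, so we may rescale so that $\coeff_{\msF_1}(zw) = \coeff_{\msF_2}(z^2 s) = 1$. Write $\msF_1 = z(w + a_5) + z^2 b_2 + (\text{terms without } z)$, where $a_5, b_2$ are polynomials in the other variables. The substitution $w \mapsto w - a_5 - z b_2$ removes every other monomial divisible by $z$ from $\msF_1$, since all of them have the form $z \cdot(\text{degree } 5)$. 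In $\msF_2$ the coefficient of $z^2$ has degree $4$, namely $s + \epsilon t + (\text{poly in } x, y)$. Replacing $s$ by this expression (an invertible change of $s$ allowed because $\deg s = 4$) makes the $z^2$-part equal to $z^2 s$. The $z^3 x$ term can then be absorbed by replacing $s \mapsto s - z x$, which also has degree $4$. After this step $\msF_2 = z^2 s + z\,G_7 + G_{10}$, where $G_7, G_{10}$ do not involve $z$. The replacement of $s$ alters only the $z$-free parts of $\msF_1$.

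\textbf{The $z$-free parts.} Write $\msF_1|_{z=0}$ as a quadratic form in $s,t$ plus linear terms times polynomials in $x, y, w$ of degree $4$, plus $f_8$. Quasismoothness at $\msp_t$ (the $z$-free parts must contain a term making the Jacobian of full rank at $\msp_t$) forces $t^2 y \in \msF_2$, or $t^2 \in \msF_1$, or $st \in \msF_1$. The intended form has $st \in \msF_1$ and $t^2 y \in \msF_2$. I would use the existence of the WCI curve of type $(1,2,4,5)$ through $\msp$ to exclude $t^2 \in \msF_1$: after a coordinate change that curve is $(x = y = t' = w' = 0)$, and its containment in $X$ kills the monomials $s^2$ and $z^2 \cdot$ in the appropriate equations. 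Combining the $t$-directions, rename so that $st \in \msF_1$ and $s^2 \notin \msF_1$. Completing with respect to $s$, we replace $t \mapsto t + (\dots)$ to absorb every term of the form $s \cdot(\text{poly in } x, y, w)$ into $st$. The remaining terms are $t f_4 + f_8$ with $f_4, f_8 \in \mbC[x, y, w]$; the $t^2$ term was removed as above.

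In $\msF_2$ we subtract multiples of $\msF_1$ times degree-$2$ forms to remove $st$-type monomials. Terms $z\,t\,(\dots)$ collect into $z\, t g_3$. Quadratic terms in $t$ reduce to $t^2 y$ after rescaling, because the coefficient of $t^2$ has degree $2$, hence equals $\lambda y + \mu x^2$. We need $\lambda \ne 0$ from quasismoothness, and $\mu x^2$ is then absorbed by the change $y \mapsto y + \dots$. Everything else splits as $t g_6 + g_{10}$ with $g_i \in \mbC[x, y, s, w]$.

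\textbf{Main obstacle.} The hard step is checking that the successive coordinate changes do not reintroduce monomials that were already killed. For example, modifying $t$ changes the $z g_7$ and $z^2$ parts, and $s^2$ terms in $\msF_2$ could appear. I would order the substitutions carefully, from highest weight to lowest, and verify each. Using the WCI-curve hypothesis to justify the form of the quadratic part is the other delicate point.
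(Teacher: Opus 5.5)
Your architecture matches the paper's: normalize the $z$-terms, use the WCI curve to control the quadratic part in $s,t$, then use quasismoothness at $\msp_t$ and changes of $t$ and $y$. But the one non-routine step, where the hypothesis is used, is wrong as written.

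\textbf{The curve step.} After normalizing so that the $z^2$-part of $\msF_2$ is exactly $z^2 s$, every other monomial of $\msF_2$ vanishes on $(x=y=w=0)$. This is because there are no monomials of degree $7$ or $10$ in $s,t$ alone. A WCI curve of type $(1,2,4,5)$ has the form $\Gamma=(x=y=\ell(s,t)=w=0)$ for a linear form $\ell$, and $z^2 s$ must vanish on it. Hence $\ell$ is proportional to $s$, and $\Gamma=(x=y=s=w=0)$.

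It cannot be a curve $(x=y=t'=w'=0)$ on which $s$ is a free coordinate, because $z^2s\in\msF_2$ does not vanish there. Restricting $\msF_1$ to $\Gamma$ gives $\coeff_{\msF_1}(t^2)\,t^2$, so the hypothesis yields $t^2\notin\msF_1$, not $s^2\notin\msF_1$.

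This is a genuine constraint, not a normalization. Once $s$ is pinned down as the $z^2$-coefficient of $\msF_2$, you can no longer swap $s$ and $t$ or substitute $s\mapsto s+ct$, since that reintroduces $z^2t$. Also, $t\mapsto t+cs$ leaves the coefficient of $t^2$ unchanged. The $s^2$-term, by contrast, is harmless and disappears in the later change of $t$.

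\textbf{The quasismoothness step.} This is not a disjunction. Because $t^2\notin\msF_1$ and no power of $t$ has degree $10$, the point $\msp_t$ lies on $X$. The only monomials of the form $t^k v$ (with $k\ge1$ and $v$ a variable) in degrees $8$ and $10$ are $st$, $t^2$ and $t^2y$. So quasismoothness at $\msp_t$ forces both $st\in\msF_1$ and $t^2y\in\msF_2$. This link is what the paper's proof uses, and your proposal never derives it.

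\textbf{The rest of the plan.} With these facts, your remaining steps go through. A single substitution $t\mapsto \alpha t+h_4(x,y,s)$ brings $\msF_1$ to $zw+st+tf_4+f_8$. The $t^2$-part of $\msF_2$ is $t^2(\lambda y+\mu x^2)$ with $\lambda\ne0$, and $y\mapsto\lambda^{-1}(y-\mu x^2)$ turns it into $t^2y$ while keeping $f_4,f_8\in\mbC[x,y,w]$.

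Subtracting multiples of $\msF_1$ from $\msF_2$ is unnecessary, since $g_6$ and $g_{10}$ are allowed to involve $s$. Finally, $s\mapsto s-zx$ does create $z$-terms in $\msF_1$ (from $st$, for instance). These must be reabsorbed by a further change of $w$.
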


\begin{proof}
We can write
\[
\begin{split}
\msF_1 &= z w + f'_8, \\
\msF_2 &= z^2 s + z g'_7 + g'_{10},
\end{split}
\]
where $f'_8, g'_7, g'_{10} \in \mbC [x, y, s, t, w]$.
Let $\Gamma$ be a WCI curve of type $(1, 2, 4, 5)$ on $X$.
Then $\Gamma$ is contained in $(x = y = w = 0)_X$ and we have
\[
(x = y = w = 0)_X = (x = y = w = f'_8 (0, 0, s, t, 0) = z^2 s = 0).
\]
This shows that $s \mid f'_8 (0, 0, s, t, 0)$ and $\Gamma = (x = y = s = w = 0)$.
In particular $\msp_t \in X$ since $t^2 \notin f'_8$.
We have $s t \in f'_8$ and $t^2 y \in g'_{10}$ by the quasismoothness of $X$ at $\msp_t$.
By replacing $t \mapsto \alpha t + h_4 (x, y, s)$ for a suitable $\alpha \in \mbC$ and a homogeneous polynomial $h_4 (x, y, s)$ of degree $4$, we may assume that $f'_8 = s t + t f_4 + f_8$, where $f_4, f_8 \in \mbC [x, y, w]$.
We can write $g'_7 = v g_3 + g_7$ and $g'_{10} = t^2 y + t g_6 + g_{10}$, where $g_i \in \mbC [x, y, s, w]$ by replacing $y$ suitably.
Thus, we obtain the desired defining polynomials.
\end{proof}

We recall the definition of rank $2$ toric varieties.

\begin{Def} \label{def:toricT}
Let $m < n$ be positive integers and $a_1, \dots, a_n, b_1 \dots b_n$ be integers.
Let $R = \mbC [x_1, \dots, x_n]$ be the polynomial ring endowed with a $\mbZ^2$-grading defined by the $2 \times n$ matrix
\[
\begin{pmatrix}
a_1 & \cdots & a_n \\
b_1 & \cdots & b_n 
\end{pmatrix}.
\]
This means that the bi-degree of the variable $x_i$ is $(a_i, b_i) \in \mbZ^2$.
We denote by 
\begin{equation} \label{eq:deftoricT}
\mbT \begin{pNiceArray}{ccc|ccc}[first-row]
x_1 & \dots & x_m & x_{m+1} & \dots & x_n \\
a_1 & \dots & a_m & a_{m+1} & \dots & a_n \\
b_1 & \dots & b_m & b_{m+1} & \dots & b_n
\end{pNiceArray}
\end{equation}
the toric variety whose Cox ring is $R$ and irrelevant ideal is 
\[
I = (x_1, \dots, x_m) \cap (x_{m+1}, \dots, x_n).
\]
In other words, this toric variety is the geometric quotient
\[
(\mbA^n_{x_1, \dots, x_n} \setminus V (I))/(\mbC^*)^2,
\]
where the $(\mbC^*)^2$-action is defined by
\[
(\lambda, \mu) \cdot (x_1, \dots, x_n) = (\lambda^{a_1} \mu^{b_1} x_1, \dots, \lambda^{a_n} \mu^{b_n} x_n).
\]
For a point $\msp$ of the toric variety \eqref{eq:deftoricT}, which we denote by $\mbT$, we pick a point $\msq = (\alpha_1, \dots, \alpha_n) \in \mbA^n$ in the preimage of the natural projection $\mbA^n \setminus V (I) \to \mbT$ over $\msp$.
In this case we express $\msp$ as $\msp = (\alpha_1\!:\!\cdots\!:\!\alpha_m;\alpha_{m+1}\!:\!\cdots\!:\!\alpha_n) \in \mbT$.

\end{Def}

We choose homogeneous coordinates as in Lemma~\ref{lem:No31excl3C1eq}.
We can choose $x, y, t$ as local orbifold coordinates of $X$ at $\msp$ and $\ord_E (x, y, t) = \frac{1}{3} (1, 2, 1)$.
By the equations $\msF_1 = \msF_2 = 0$, we have $\ord_E (s, w) = \frac{1}{3} (4, 5)$.
It follows that $\varphi \colon Y \to X$ can be embedded into the weighted blowup $\Phi \colon \mbT \to \mbP (1, 2, 3, 4, 4, 5)$ at $\msp$ with weight $\wt (x, y, s, t, w) = \frac{1}{3} (1, 2, 4, 1, 5)$.
The $2$-ray game for $\mbT$ is described as follows:

\[
\xymatrix{
\text{$\mbT := \mbT \begin{pNiceArray}{cc|ccccc}[first-row]
u & z & t & x & y & s & w \\
0 & 3 & 4 & 1 & 2 & 4 & 5 \\
-3 & 0 & 1 & 1 & 2 & 4 & 5
\end{pNiceArray}$} \ar@{-->}[r]^{\Theta} \ar[d]_{\Phi} &
\text{$\mbT \begin{pNiceArray}{ccc|cccc}[first-row]
u & z & t & x & y & s & w \\
0 & 3 & 4 & 1 & 2 & 4 & 5 \\
1 & 1 & 1 & 0 & 0 & 0 & 0
\end{pNiceArray} =: \mbT'$} \ar[d]^{\Phi'} \\
\mbP (3_z, 4_t, 1_x, 2_y, 4_s, 5_w) & \mbP (1_x, 2_y, 4_s, 5_w)}
\]

The game ends with a $\mbP^2$-fibration $\Phi' \colon \mbT' \to \mbP (1, 2, 4, 5)$.
Let $\theta \coloneq \Theta|_Y \colon Y \ratmap Y' \coloneq \Theta_*Y$ be the induced birational map.
We see that $Y$ and $Y'$ are defined in $\mbT$ and $\mbT'$ respectively by the (same) equations
\[
\begin{split}
\mbF_1 (u, z, t, x, y, s, w) &= z w + s t + t f_4 + u f_8 = 0, \\
\mbF_2 (u, z, t, x, y, s, w) &= z^2 s + z (t g_3 + u g_7) + t^2 y + u t g_6 + u^2 g_{10} = 0.
\end{split}
\]

\begin{Lem}
\label{lem:No31excl3C1flip}
The map $\theta \colon Y \ratmap Y'$ is an inverse flip.
\end{Lem}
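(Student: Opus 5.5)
The plan is to read off the small modification $\Theta$ from the toric $2$-ray game and then check that its restriction $\theta$ to $Y$ contracts a curve on which $-K_Y$ is negative. The ambient map $\Theta \colon \mbT \ratmap \mbT'$ is the wall crossing across the ray spanned by the bidegree $(4,1)$ of $t$. On the $\mbT$ side it is an isomorphism away from the locus $(x = y = s = w = 0)$. On the $\mbT'$ side it is an isomorphism away from $(u = z = 0)$.

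\textbf{Step 1: the exceptional locus of $\theta$ on $Y$.} Intersect $Y$ with $(x = y = s = w = 0) \subset \mbT$. By Lemma~\ref{lem:No31excl3C1eq} we have $f_4, f_8 \in \mbC[x,y,w]$ and $g_3, g_6, g_7, g_{10} \in \mbC[x,y,s,w]$, all of positive degree. Hence every term of $\mbF_1$ and $\mbF_2$ vanishes on this locus, except that $st$ and $z^2 s$ vanish because $s = 0$. So $Y \cap (x=y=s=w=0)$ is the whole curve with coordinates $(u\!:\!z\!:\!t)$. This curve is the proper transform $\tilde{\Gamma}$ of the WCI curve $\Gamma = (x=y=s=w=0)$ of type $(1,2,4,5)$.

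\textbf{Step 2: the exceptional locus of $\theta^{-1}$ on $Y'$.} On $Y' \cap (u=z=0)$ we have $t \ne 0$, since the irrelevant ideal of $\mbT'$ contains $(u,z,t)$. The equations there reduce to $t(s + f_4) = 0$ and $t^2 y = 0$, that is, $y = 0$ and $s = -f_4(x,0,w)$. This is a curve over $\mbP(1_x,5_w)$. So both exceptional loci are curves, $\theta$ is an isomorphism in codimension one, and $\theta$ is a small modification over the base of the flipping contraction.

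\textbf{Step 3: the sign of $-K_Y \cdot \tilde{\Gamma}$.} This decides between flip, flop and inverse flip. I would compute it torically, using the fact that the contracted curves are the $(u\!:\!z\!:\!t)$-curves, which are swept out on the $(0,-3),(3,0)$ side of the wall $(4,1)$. For a divisor class $(a,b)$ on $\mbT$, the sign of its degree on these curves equals the sign of $\det\begin{pmatrix} a & 4 \\ b & 1 \end{pmatrix}$. This is positive for $D_u$, where $\det = 12$, as it must be, since $u$ is a coordinate on the curve.

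The class $B = -K_Y$ is the restriction of $\tilde{H}_x$, of bidegree $(1,1)$, and $\det\begin{pmatrix} 1 & 4 \\ 1 & 1 \end{pmatrix} = -3 < 0$. Therefore $(B \cdot \tilde{\Gamma}) < 0$, i.e.\ $K_Y \cdot \tilde{\Gamma} > 0$. Symmetrically, on $Y'$ the flipped curve is $K_{Y'}$-negative, since $-K_{Y'}$ has the same bidegree and the curve now lies on the opposite side of the wall.

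As a cross-check I would verify $(B \cdot \tilde{\Gamma}) < 0$ directly. Write $(B \cdot \tilde{\Gamma}) = (A \cdot \Gamma) - \tfrac{1}{3}(E \cdot \tilde{\Gamma})$ with $(A \cdot \Gamma) = \tfrac{1}{12}$. The quantity $(E \cdot \tilde{\Gamma})$ is computed from $\ord_E$ of the local coordinate $t$ along $\Gamma$ at $\msp$, together with the orbifold point of $Y$ that $\tilde{\Gamma}$ meets on $E$.

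\textbf{Main obstacle.} The delicate point is to make sure that the restriction really behaves like the ambient wall crossing. Concretely, $Y$ must not contain further components of the ambient exceptional loci. The sign argument requires the exceptional locus of $\theta$ to be exactly the single irreducible curve $\tilde{\Gamma}$, and the flipped locus on $Y'$ to be one-dimensional, which is Steps 1 and 2. Beyond that, I would check that $Y'$ is normal and $\mbQ$-factorial with terminal singularities along the flipped curve, by inspecting the quasismoothness of $\mbF_1, \mbF_2$ in the chart $t \ne 0$. This check is what qualifies $\theta$ as an inverse flip in the Mori category rather than merely a small modification.
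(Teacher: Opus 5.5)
Your Steps 1--3 are correct and agree with the paper's identification of the flipping curve $\tilde{\Gamma} = (x = y = s = w = 0)$ and the flipped curve $\Gamma' = (u = z = s + f_4 = y = 0)$. Your determinant argument also supplies the sign $(B \cdot \tilde{\Gamma}) = \frac{1}{12} - \frac{1}{3} = -\frac{1}{4} < 0$, which the paper leaves implicit.

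\textbf{The gap.} The one thing the paper's proof actually has to establish is that $Y'$ has only terminal singularities along $\Gamma'$. This is what makes $\theta^{-1}$ a flip in the Mori category rather than merely a small modification. In your plan it appears only as something you ``would check''. Moreover, the tool you name, quasismoothness in the chart $t \ne 0$, is not enough by itself. Quasismoothness only says that $Y'$ has the cyclic quotient singularities induced from $\mbT'$. You still have to locate where $\Gamma'$ meets the singular locus of $\mbT'$ and compute the types there.

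Concretely, the missing computation goes as follows.
\begin{enumerate}
\item On $\Gamma'$ one has $u = z = 0$ and $t \ne 0$. There $\prt \mbF_1/\prt s = t$, $\prt \mbF_2/\prt y = t^2$ and $\prt \mbF_2/\prt s = 0$, so the Jacobian has rank $2$ and $Y'$ is quasismooth along $\Gamma'$.
\item $\mbT'$ is smooth along $\Gamma' \cap (x \ne 0)$, so $Y'$ is smooth there.
\item Since $f_4(x,0,w)$ is a multiple of $x^4$, the set $\Gamma' \cap (x = 0)$ is the single point $\msp' = (0\!:\!0\!:\!1;0\!:\!0\!:\!0\!:\!1)$. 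There $\mbT'$ is locally $\mbA^5/\bmu_5$ of type $\frac{1}{5}(1_u, 4_z, 1_x, 2_y, 4_s)$.
\item At $\msp'$, the linear part of $\mbF_1$ is $\tilde{z} + \tilde{s}$ and that of $\mbF_2$ is $\tilde{y}$. So you can eliminate $z$ and $y$, leaving a point of type $\frac{1}{5}(1,1,4)$, which is terminal.
\end{enumerate}
This local analysis at $\msp'$ is the heart of the lemma, and it is absent from your proposal.

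What you call the main obstacle is not actually one. The ambient exceptional locus $(x = y = s = w = 0)$ of $\Theta$ on $\mbT$ is already a single curve, as your own Step 1 shows, so $Y$ cannot contain further components of it. Away from the two curves, terminality of $Y'$ is inherited from $Y$ through the isomorphism $Y \setminus \tilde{\Gamma} \cong Y' \setminus \Gamma'$. So the only real work is the local analysis along $\Gamma'$ described above.
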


\begin{proof}
The $\theta$-flipping curve is $\Gamma \coloneq (x = y = s = w = 0) \subset Y \subset \mbT$ and the $\theta$-flipped curve is
\[
\Gamma' \coloneq (u = z = 0) \cap Y' = (u = z = s + f_4 = y = 0) \subset \mbT'.
\]
We have an isomorphism $Y \setminus \Gamma \cong Y' \setminus \Gamma'$, and hence $Y'$ has terminal singularities outside $\Gamma'$.
It remains to consider singularities of $Y'$ along $\Gamma'$.
The Jacobian matrix of the cone of $Y'$ along $\Gamma'$ is of the form
\[
\begin{split}
J_{C_{Y'}}|_{\Gamma'} &\coloneq
\begin{pmatrix}
\frac{\prt \mbF_1}{\prt u} & \frac{\prt \mbF_1}{\prt z} & \frac{\prt \mbF_1}{\prt t} & \frac{\prt \mbF_1}{\prt x} & \frac{\prt \mbF_1}{\prt y} & \frac{\prt \mbF_1}{\prt s} & \frac{\prt \mbF_1}{\prt w} \\[1mm] 
\frac{\prt \mbF_2}{\prt u} & \frac{\prt \mbF_2}{\prt z} & \frac{\prt \mbF_2}{\prt t} & \frac{\prt \mbF_2}{\prt x} & \frac{\prt \mbF_2}{\prt y} & \frac{\prt \mbF_2}{\prt s} & \frac{\prt \mbF_2}{\prt w} 
\end{pmatrix}|_{\Gamma'} \\
&=
\begin{pmatrix}
f_8 & w & 0 & t \frac{\prt f_4}{\prt x} & t \frac{\prt f_4}{\prt y} & t & 0 \\[1mm]
t g_6 & t g_3 & 0 & 0 & t^2 & 0 & 0
\end{pmatrix}
\end{split}
\]
We see that $J_{C_{Y'}}$ is of rank $2$ at any point of $\Gamma'$ since $t$ does not vanish on $\Gamma'$.
This shows that $Y'$ is quasismooth along $\Gamma'$.
The variety $\mbT'$ is smooth on the open set $(x \ne 0)$, hence $Y'$ is smooth along $\Gamma' \cap (x \ne 0)$ since $Y'$ is quasismooth along $\Gamma'$.
We have $\Gamma' \cap (x = 0) = \{\msp'\}$, where $\msp' \coloneq (0\!:\!0\!:\!1; 0\!:\!0\!:\!0\!:\!1) \in Y'$.
The open set $\mbU'_{t, w} \coloneq (t \ne 0) \cap (w \ne 0) \subset \mbT'$ is the quotient of the affine $5$-space
\[
\mbU'_{v, w} = \Spec \mbC [\tilde{u}, \tilde{z}, \tilde{x}, \tilde{y}, \tilde{s}] \cong \mbA^5/\bmu_5
\]
where
\[
\tilde{u} =  \frac{u}{t w^{-4/5}}, \ 
\tilde{z} = \frac{z}{t w^{-1/5}}, \ 
\tilde{x} = \frac{x}{w^{1/5}}, \ 
\tilde{y} = \frac{y}{w^{2/5}}, \ 
\tilde{s} = \frac{s}{w^{4/5}},
\]
and the $\bmu_5$-action is given by 
\[
\frac{1}{5} (1_{\tilde{u}}, 4_{\tilde{z}}, 1_{\tilde{x}}, 2_{\tilde{y}}, 4_{\tilde{s}}).
\]
$Y' \cap \mbU'_{v, w}$ is defined by the equations 
\[
\begin{split}
\mbF_1 (\tilde{u}, \tilde{z}, 1, \tilde{x}, \tilde{y}, \tilde{s}, 1) &= \tilde{z} + \tilde{s} + \tilde{f}_4 + \tilde{u} \tilde{f}_8 = 0, \\
\mbF_2 (\tilde{u}, \tilde{z}, 1, \tilde{x}, \tilde{y}, \tilde{s}, 1) &= \tilde{z}^2 \tilde{s} + \tilde{z} (\tilde{g}_3 + \tilde{u} \tilde{g}_7) + \tilde{y} + \tilde{u} \tilde{g}_6 + \tilde{u}^2 \tilde{g}_{10} = 0,
\end{split}
\]
where $\tilde{f}_i = f (\tilde{x}, \tilde{y}, \tilde{w})$ and $\tilde{g}_i = g (\tilde{x}, \tilde{y}, \tilde{s}, \tilde{w})$.
The point $\msp'$ corresponds to the image of the origin on $\mbU'_{v, w}$.
Thus, the singularity of $Y'$ at $\msp'$ is of type $\frac{1}{5} (1, 1, 4)$.
This shows that $Y'$ has only terminal singularities and $\theta^{-1} \colon Y' \ratmap Y$ is a flip in the terminal category.
\end{proof}

We set $\varphi' \coloneq \Phi'|_{Y'} \colon Y' \to \mbP (1, 2, 4, 5)$.

\begin{Prop}
\label{prop:No31excl3C1}
The following assertions hold.
\begin{enumerate}
\item If $\varphi' \colon Y' \to \mbP (1, 2, 4, 5)$ contracts a divisor, then $\msp$ is not a maximal center.
\item If $\varphi'$ does not contract a divisor, then $\varphi$ initiates a Sarkisov self-link.
\end{enumerate}
\end{Prop}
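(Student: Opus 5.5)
We run the $2$-ray game of $Y$ inside the one for $\mbT$ and read off both assertions from the structure of $\varphi'$. By Lemma~\ref{lem:No31excl3C1flip}, $\theta \colon Y \ratmap Y'$ is an anti-flip in the terminal category, so $Y'$ is $\mbQ$-factorial and terminal. Since $\rho(Y') = 2$, the next step of the $2$-ray game of $Y$ is the contraction $\varphi' \colon Y' \to \mbP(1,2,4,5)$, which is the restriction of the $\mbP^2$-fibration $\Phi'$. Because $\dim Y' = 3$, the morphism $\varphi'$ is birational, and it is either divisorial or small. First I will find which divisors can be contracted. The fibers of $\Phi'$ are copies of $\mbP(0\text{-weights})$ in the coordinates $u,z,t$. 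The equations $\mbF_1,\mbF_2$ restricted to a fiber over $(x\!:\!y\!:\!s\!:\!w)$ are one linear and one (bi)quadratic relation in $u,z,t$. So the fiber of $\varphi'$ is positive-dimensional exactly when both restricted equations vanish identically or become dependent. Concretely, $\mbF_1$ restricted to a fiber is $w z + (s+f_4) t + f_8 u$, and it vanishes identically only on $(w = s + f_4 = f_8 = 0)$. This is a locus of dimension $\le 1$ in $\mbP(1,2,4,5)$. Proportionality with $\mbF_2$ is governed by resultant-type conditions. A contracted divisor must therefore lie over a curve $C \subset \mbP(1,2,4,5)$ along which these degeneracy conditions hold, and $E' = \varphi'^{-1}(C)$ is then a divisor.

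(1) Suppose $\varphi'$ contracts a divisor $E'$. Then $\varphi'$ is a $K$-negative or $K$-trivial extremal contraction whose class is of the form $\tilde D \sim bB + eE$. I will compute $E'$ as the proper transform on $Y$ of some surface $D = (h=0)_X$ swept out over $C$. Using the degeneracy equations I expect $D$ to be cut out by something like $w$ (or $s+f_4$) together with $f_8$, and to satisfy $\tilde D \sim bB - eE$ with $e > 0$ relative to $B$. Since the flipped curve $\Gamma'$ and the contracted curves span the boundary rays of $\bMov(Y)$, this shows that $B$ lies on, or outside, the boundary of $\bMov(Y)$. Hence $-K_Y \notin \Int \bMov(Y)$, and Lemma~\ref{lem:exclmcmob} gives that $\msp$ is not a maximal center. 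Alternatively, a general fiber curve $\ell$ of $E' \to C$ is contracted and satisfies $(B\cdot \ell)$ of the wrong sign after the anti-flip. This produces infinitely many curves on $Y$ with $(B\cdot\tilde\ell)\le 0$, and Lemma~\ref{lem:excl:infinitecurve} applies.

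(2) Suppose $\varphi'$ contracts no divisor. Then it is a small birational morphism, and the $2$-ray game continues with a flop or flip $Y' \ratmap Y''$ followed by a divisorial contraction $Y'' \to X'$. I will identify this as follows. Since $\varphi'$ is given by the system $|mA'|$ pulled back from $\mbP(1,2,4,5)$, we have $-K_{Y'} \sim \varphi'^*\mathcal O(1)$, so $\varphi'$ is the anticanonical model, i.e.\ $Y'$ is weak Fano and $\varphi'$ is crepant. The image $Z = \varphi'(Y')$ is then a hypersurface in $\mbP(1,2,4,5)$. Eliminating $u,z,t$ between $\mbF_1$ and $\mbF_2$ (a resultant) gives $Z$ of degree $10$ in $\mbP(1,2,4,5)$. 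This $Z$ has only the singularities coming from $\varphi'$, so the link is a Sarkisov self-link $X \ratmap X$ induced by the biregular Galois-type involution of the double cover structure, after flopping the curves contracted by $\varphi'$. Equivalently, the flop returns $X$ via the Kawamata blowup at a $\frac13(1,1,2)$ point.

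The main obstacle is (1): pinning down explicitly which divisor is contracted and verifying the numerical sign condition. My first attempt will be to express the contracted locus via $w = s+f_4 = 0$ and compute $\ord_E$ of these polynomials from the orders $\frac13(1,2,4,1,5)$.
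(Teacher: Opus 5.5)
There is a genuine gap at the very start. The claim ``because $\dim Y' = 3$, the morphism $\varphi'$ is birational'' is false, and your own fiber description shows why. Over a general point of $\mbP(1,2,4,5)$, the fiber of $\varphi'$ is cut out in $\mbP^2_{u,z,t}$ by the linear form $\mbF_1$ and the quadratic form $\mbF_2$. That is two points, so $\varphi'$ is generically finite of degree $2$.

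Everything that rests on birationality therefore fails:
\begin{itemize}
\item the dichotomy ``divisorial or small birational'';
\item the claim that $\varphi'$ itself is the anticanonical model;
\item the description of $Z$ as a degree-$10$ hypersurface in $\mbP(1,2,4,5)$. A generically finite map between $3$-folds is surjective, so there is no resultant equation cutting out the image.
\end{itemize}
The missing step is the Stein factorization $Y' \xrightarrow{\psi'} Z \xrightarrow{\pi} \mbP(1,2,4,5)$, with $\pi$ a double cover. Since $-K_{Y'} \sim_{\mbQ} \varphi'^*\mathcal{O}(1)$, the birational morphism $\psi'$ is the anticanonical model of the weak Fano $Y'$, hence crepant. ``$\varphi'$ contracts a divisor'' means precisely that $\psi'$ does.

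With this in place, (1) needs no explicit computation of the contracted divisor, which you only announce (``I expect $D$ to be cut out by\dots''). Your sentence about $\Gamma'$ and the contracted curves ``spanning the boundary rays of $\bMov(Y)$'' also conflates curves with divisor classes. The actual point is as follows. A divisor $E'$ contracted by $\psi'$ is covered by curves $\ell$ with $(-K_{Y'}\cdot\ell)=0$. Movable classes are nonnegative on such a covering family, while classes just beyond $-K_{Y'}$ in the $2$-dimensional $N^1(Y')$ are negative on $\ell$. Hence $-K_{Y'}$ spans a boundary ray of $\bMov(Y')=\bMov(Y)$, the equality holding because $\theta$ is an isomorphism in codimension one, and Lemma~\ref{lem:exclmcmob} applies. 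Your fallback via Lemma~\ref{lem:excl:infinitecurve} would additionally need infinitely many contracted curves disjoint from $\Gamma'$. Passing through the flip $Y' \ratmap Y$ can raise the $-K$-degree of curves that meet $\Gamma'$.

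For (2), you must first check that $\psi'$ is not an isomorphism. It contracts the conics lying over the finite nonempty set $(w = s+f_4 = f_8 = 0)$, which you found but used only as a candidate for a divisorial contraction. So $\psi'$ is a genuine flopping contraction. The flop is then supplied by the covering involution $\tau_Z$ of $\pi$: the map $\tau = \psi'^{-1}\circ\tau_Z\circ\psi' \colon Y' \ratmap Y'$ is the flop of $\psi'$. The link is $X \leftarrow Y \overset{\theta}{\ratmap} Y' \overset{\tau}{\ratmap} Y' \overset{\theta^{-1}}{\ratmap} Y \to X$. It is this identification of the flopped variety with $Y'$ itself that makes the link a self-link. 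Your assertion that ``the flop returns $X$ via the Kawamata blowup at a $\frac13(1,1,2)$ point'' has no support in your setup.
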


\begin{proof}
The morphism $\varphi' \colon Y' \to \mbP (1, 2, 4, 5)$ is defined by $\left|-mK_Y\right|$ for a sufficiently divisible $m \gg 0$.
The $\varphi'$-fiber over a general point of $\mbP (1, 2, 4, 5)$ is a complete intersection of a line and a conic in $\mbP^2 = \mbP (1_u, 1_z, 1_t)$.
This shows that $\varphi'$ is generically $2$ to $1$.
Let 
\[
Y' \overset{\psi'}{\to} Z \overset{\pi}{\to} \mbP (1, 2, 4, 5)
\] 
be the Stein factorization of $\varphi'$.

Suppose that $\varphi'$ contracts a divisor.
Then $\psi'$ contracts a divisor.
This implies that $-K_{Y'}$ is in the boundary of $\bMov (Y')$, and hence $-K_Y$ is in the boundary of $\bMov (Y)$.
This shows that $\msp$ is not a maximal center by Lemma~\ref{lem:exclmcmob}.

Suppose that $\varphi'$ does not contract a divisor.
Note that $\varphi'$ cannot be an isomorphism since it contracts fibers over the nonempty set $(w = s + f_4 = f_8 = 0) \subset \mbP (1_x, 2_y, 4_s, 5_w)$.
Hence $\psi' \colon Y' \to Z$ is a flopping contraction.
Let $\tau_Z \colon Z \to Z$ be the biregular involution that switches two points in $\pi$-fibers, and let $\tau \coloneq {\psi'}^{-1} \circ \tau_Z \circ \psi' \colon Y' \ratmap Y'$ be the induced birational map.
Then, $\tau$ gives the flop of $\psi'$ (see \cite[Lemma~3.2]{OkadaII}) and we have the diagram:
\[
\xymatrix{
& \ar[ld]_{\varphi} Y \ar@{-->}[r]^{\theta} & Y' \ar@{-->}[rrr]^{\tau} \ar[rd] _{\psi'} & & & \ar[ld]^{\psi'} Y' \ar@{-->}[r]^{\theta^{-1}} & Y \ar[rd]^{\varphi} & \\
X & & & Z \ar[r]_{\tau_Z}^{\cong} & Z & & & X}
\]
This gives a Sarkisov self-link of $X$.
\end{proof}

\subsubsection{Case: $z w \in \msF_1$ and $z^2 s, z^2 t \notin \msF_2$}
\label{sec:No31excl3C2}

We consider the case where $z w \in \msF_1$ and $z^2 s, z^2 t \notin \msF_2$.

\begin{Lem}
\label{lem:No31excl3C2eq}
The defining polynomials of $X$ can be written as
\[
\begin{split}
\msF_1 &= z w + s t + f_8, \\
\msF_2 &= z^3 x + z^2 g_4 + z g_7 + w^2 + g_{10},
\end{split}
\]
where $f_8, g_4 \in \mbC [x, y]$ and $g_7, g_{10} \in \mbC [x, y, s, t]$ with $s^2 y, t^2 y \in g_{10}$.
\end{Lem}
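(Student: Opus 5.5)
The plan is to normalise the two equations in stages: first read off the monomials forced by quasismoothness, then bring $\msF_1$ to the form $zw+st+f_8$, and finally clean up $\msF_2$ by row operations and coordinate changes that do not spoil $\msF_1$. Recall $\wt(x,y,z,s,t,w)=(1,2,3,4,4,5)$, $\deg\msF_1=8$ and $\deg\msF_2=10$.

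\textbf{Forced monomials.}
\begin{enumerate}
\item At $\msp_z$ the rank of $J_X$ must be $2$. The only degree-$10$ monomials of the form $z^kv$ with $v$ a single variable are $z^3x$, $z^2s$ and $z^2t$, and the latter two are excluded by assumption. So $z^3x\in\msF_2$, and we rescale its coefficient to $1$; the coefficient of $zw$ in $\msF_1$ is also rescaled to $1$.
\item At $\msp_w$, the only monomial of $\msF_2$ of the form $w^kv$ is $w^2$, so $w^2\in\msF_2$, and we rescale it to $1$.
\item Since $z^2s,z^2t\notin\msF_2$ and there are no monomials of degree $7$ or $10$ in $s,t$ alone, we get $\msF_2|_{x=y=w=0}\equiv 0$. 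Hence $(x=y=w=0)_X=(f'_8(0,0,s,t)=0)\subset\mbP(3_z,4_s,4_t)$, where $f'_8$ is the $z$-free part of $\msF_1$. Quasismoothness along this locus forces the quadratic form $f'_8(0,0,s,t)$ to have rank $2$, so after a linear change of $s,t$ it equals $st$.
\item The only degree-$10$ monomials of the form $s^2\cdot(\text{degree }2)$ are $s^2y$ and $s^2x^2$, and $\prt(s^2x^2)/\prt x$ vanishes at $\msp_s$. So rank $2$ of $J_X$ at $\msp_s$, with $\prt\msF_1/\prt t=s$, forces $s^2y\in\msF_2$. Symmetrically $t^2y\in\msF_2$ at $\msp_t$.
\end{enumerate}

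\textbf{Normalising $\msF_1$.}
\begin{enumerate}
\item Absorb $w$-terms not involving $z$ into the coefficient of $w$. The degree-$8$ terms of the form $w\cdot(\text{degree }3)$ are $wz$, $wx^3$ and $wxy$, so a substitution $z\mapsto z+(\text{cubic in }x,y)$ makes $w$ appear in $\msF_1$ only through $zw$.
\item Write $\msF_1=z(w+a_5)+f'_8$ with $a_5$ and $f'_8$ independent of $w$, and with $f'_8$ also independent of $z$. Replace $w\mapsto w-a_5$; this removes every term divisible by $z$ other than $zw$, including $z^2y$ and $z^2x^2$.
\item Write $f'_8=st+sh_4+th'_4+f_8$ with $h_4,h'_4,f_8\in\mbC[x,y]$, using the rank-$2$ fact above. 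Replacing $s\mapsto s-h'_4$ and $t\mapsto t-h_4$ gives $\msF_1=zw+st+f_8$ with $f_8\in\mbC[x,y]$.
\end{enumerate}

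\textbf{Normalising $\msF_2$.}
\begin{enumerate}
\item Remove the terms $zw\,a_2$ with $a_2\in\mbC[x,y]$ of degree $2$ by replacing $\msF_2$ with $\msF_2-a_2\msF_1$. This only adds terms $a_2st+a_2f_8$ to the $z$-free part of $\msF_2$. Also, $z^2w$ cannot occur, by degree.
\item Observe that $z^2s$ and $z^2t$ are never created by the allowed substitutions, since $\deg w=5<7$. Therefore the $z^2$-coefficient $g_4$ lies in $\mbC[x,y]$.
\item Check that $s^2y,t^2y\in g_{10}$ survive the changes of coordinates. They do, because the coefficients of $s^2y$ and $t^2y$ are only changed by terms involving $x$ or $z$.
\end{enumerate}

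\textbf{Main obstacle: the terms $w\,b_5$.} The hard step is removing the mixed terms $w\,b_5$ in $\msF_2$, where $b_5\in\mbC[x,y,s,t]$ is a combination of $xs$, $xt$, $x^5$, $x^3y$, $xy^2$. Completing the square, $w\mapsto w-b_5/2$, reintroduces $-\tfrac12 zb_5$ into $\msF_1$. I would then run the $\msF_1$ normalisation again:
\begin{enumerate}
\item Absorb the $zxs$ and $zxt$ parts by $t\mapsto t+c\,zx$ and $s\mapsto s+c'zx$.
\item Absorb the resulting $z^2x^2$ term and the $z\cdot\mbC[x,y]$ terms back into $w$.
\end{enumerate}
Each such correction feeds new $w$-terms back into $\msF_2$, so one has to show that the procedure terminates. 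I expect to prove this by grading with $\ord_E$, the weighted blowup valuation at $\msp_z$ with weights $\frac13(1,2,1,1,\ast)$ on $(x,y,s,t,w)$. Each substitution adds to $w$, $s$ or $t$ a correction of strictly larger order at $\msp$. Because the polynomial ring in bounded degree is finite-dimensional, the corrections vanish after finitely many steps. The other possible approach is to show directly that $b_5$ must lie in the ideal generated by $z$ and $\msF_1$, but I expect the filtration argument to be cleaner. This yields the stated form with $g_7,g_{10}\in\mbC[x,y,s,t]$.
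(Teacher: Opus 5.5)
\textbf{The step that fails.} Everything before your ``main obstacle'' is sound. That covers the forced monomials $z^3x$, $w^2$, $s^2y$, $t^2y$, the reduction of $\msF_1$ to $zw+st+f_8(x,y)$, the row operation removing $zw\,a_2$, and $g_4\in\mbC[x,y]$. Your first correction round also correctly removes the $xs$, $xt$ part of $b_5$. The gap is the claim that the iteration then terminates.

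Write $b_5=x(\beta s+\gamma t)+b_5'$ with $b_5'\in\mbC[x,y]$, a combination of $x^5$, $x^3y$, $xy^2$. Completing the square puts $-\frac12 zb_5'$ into $\msF_1$. A term in $z\cdot\mbC[x,y]$ can only be absorbed into the shape $zw+st+f_8(x,y)$ by shifting $w$ back by $\frac12 b_5'$. Then $(w+\frac12 b_5')^2$ recreates exactly $wb_5'$ in $\msF_2$. So after the first round the correction to $w$ never gains order: the procedure is periodic, and no filtration makes it terminate. Incidentally, your weights are not those of $E$: since $z^3x\in\msF_2$, the orbifold coordinates at $\msp$ are $y,s,t$ and $\ord_E(x)=\frac43$.

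\textbf{Why no argument can work.} The term $wb_5'$ is an invariant. Suppose graded coordinates satisfy $\lambda\msF_1=z'w'+s't'+f_8'(x',y')$. Comparing coefficients forces $z'=dz$, forces $s',t'$ to be linear forms in $s,t$ plus multiples of $zx$, and forces $w'=ew+(\text{a combination of } zx^2, xs, xt)$. In particular, no $\mbC[x,y]$ or $zy$ part is allowed in the shift of $w$. Row operations $\msF_2+h_2\msF_1$ change the coefficient of $w$ only by $h_2z$. Hence whether the $\mbC[x,y]$-component of the coefficient of $w$ in $\msF_2$ vanishes is independent of coordinates. Since quasismoothness is an open condition, replacing $\msF_2$ by $\msF_2+\varepsilon\,wxy^2$ for general small $\varepsilon$ gives members of this case where that component is nonzero.

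So the displayed normal form cannot be reached in general. The paper's own one-line proof (``by replacing $w$'') passes over exactly this point. What your argument proves, stopping after one round, is
\[
\msF_2=z^3x+z^2g_4+zg_7+w^2+wb_5+g_{10},\qquad b_5\in x\,\mbC[x,y].
\]
This weaker form is all that Proposition~\ref{prop:No31excl3C2} uses. The reason is that $b_5$ vanishes on $(x=0)$ and plays no role in determining $\ord_E(x,w)=\frac13(4,2)$.
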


\begin{proof}
We have $z w \in \mcF_1$ and hence we may assume $\msF_1 = z w + s t + f_8 (x, y)$ after replacing $z, s, t$.
By the assumption that $z^2 s, z^2 t \notin \msF_2$ and by the quasismoothness of $X$ at $\msp$, we have $z^3 x \in \msF_2$.
By replacing $w$, we may write $\msF_2$ as in the statement.
\end{proof}

We choose homogeneous coordinates as in Lemma~\ref{lem:No31excl3C2eq}.
We can choose $y, s, t$ as local orbifold coordinates of $X$ at $\msp$ and $\ord_E (y, s, t) = \frac{1}{3} (2, 1, 1)$.
By the equations $\msF_1 = \msF_2 = 0$, we have $\ord_E (x, w) = \frac{1}{3} (4, 2)$.
We set $S \coloneq (x = 0)_X$ and let $T \in |2A|$ be a general member.
We have $\tilde{S} \sim \varphi^*A - \frac{4}{3} E = B - E$ and $\tilde{T} \sim 2 \varphi^*A - \frac{2}{3} E = 2 B$.

\begin{Prop}
\label{prop:No31excl3C2}
The support of the intersection $\tilde{S} \cap \tilde{T}$ is the union of two irreducible and reduced curves that are numerically equivalent to each other and $(\tilde{T}^2 \cdot \tilde{S}) < 0$.
In particular, $\msp$ is not a maximal center.
\end{Prop}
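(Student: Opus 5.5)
The plan is to apply Lemma~\ref{lem:exclnumprop} to $\tilde{S} \sim B - E$ and $\tilde{T} \sim 2B$. First I describe $S \cap T$ explicitly, then compute the relevant intersection numbers on $Y$.

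\emph{Step 1: the curves in $S \cap T$.} A general $T \in |2A|$ is $(y - \lambda x^2 = 0)_X$, so set-theoretically $S \cap T = (x = y = 0)_X$. Restrict the forms of Lemma~\ref{lem:No31excl3C2eq} to $x = y = 0$. We have $f_8(0,0) = g_4(0,0) = 0$. Moreover $g_7(0,0,s,t)$ and $g_{10}(0,0,s,t)$ vanish, because there are no monomials of degree $7$ or $10$ in $s, t$ (both of weight $4$). Hence
\[
S \cap T = (x = y = z w + s t = w^2 = 0).
\]
Set-theoretically this is $\Gamma_s \cup \Gamma_t$, where $\Gamma_s \coloneq (x = y = s = w = 0)$ and $\Gamma_t \coloneq (x = y = t = w = 0)$. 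Both are irreducible and reduced WCI curves of type $(1, 2, 4, 5)$, they pass through $\msp$, and the involution $s \leftrightarrow t$ interchanges them. So only one of them needs to be analysed.

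\emph{Step 2: numerical classes of $\tilde{\Gamma}_s$ and $\tilde{\Gamma}_t$.} We have $(A \cdot \Gamma_s) = \frac{1 \cdot 2 \cdot 4 \cdot 5}{1 \cdot 2 \cdot 3 \cdot 4 \cdot 4 \cdot 5} = \frac{1}{12}$. Near $\msp$, the curve $\Gamma_s$ is the $t$-axis in the orbifold coordinates $y, s, t$, and $t$ has weight $\frac{1}{3}$. As in the computation for family \textnumero~20, $\tilde{\Gamma}_s$ then meets $E$ transversally at one point and $(E \cdot \tilde{\Gamma}_s) = 1$. Therefore
\[
(B \cdot \tilde{\Gamma}_s) = \frac{1}{12} - \frac{1}{3} = -\frac{1}{4}.
\]
By symmetry the same numbers hold for $\tilde{\Gamma}_t$. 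Since $B$ and $E$ span $\Pic(Y)_{\mbQ}$, the two curves are numerically equivalent.

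\emph{Step 3: no components inside $E$.} I must check that $\tilde{S} \cap \tilde{T}$ contains no curve in $E \cong \mbP(2_y, 1_s, 1_t)$. On $E$, $\tilde{T}$ is cut out by $y = 0$. For $\tilde{S}$, the equation $\msF_1 = 0$ gives $w = -st + \cdots$ to lowest order. Substituting into $\msF_2 = 0$ makes the weight-$\frac{4}{3}$ leading part of $x$ proportional to $s^2 t^2$, up to terms involving $y$. So $\tilde{S} \cap \tilde{T} \cap E = (y = s^2 t^2 = 0)$, which is finite. Consequently $\tilde{S} \cap \tilde{T}$ is supported on $\tilde{\Gamma}_s \cup \tilde{\Gamma}_t$. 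I expect the main care to be needed in this step: verifying the exact leading terms of $\ord_E(x) = \frac{4}{3}$ and $\ord_E(w) = \frac{2}{3}$, and making sure no curve in $E$ sneaks in.

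\emph{Step 4: the self-intersection and the conclusion.} The basic numbers are
\[
(A^3) = \frac{80}{480} = \frac{1}{6}, \qquad (E^3) = \frac{9}{2}.
\]
From these, $(B^3) = (A^3) - \frac{1}{27}(E^3) = 0$ and $(B^2 \cdot E) = \frac{1}{9}(E^3) = \frac{1}{2}$. Therefore
\[
(\tilde{T}^2 \cdot \tilde{S}) = 4\bigl((B^3) - (B^2 \cdot E)\bigr) = -2 < 0.
\]
Also $(\tilde{T} \cdot \tilde{\Gamma}_s) = 2 (B \cdot \tilde{\Gamma}_s) < 0$, so condition (3) of Lemma~\ref{lem:exclnumprop} holds.

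The remaining hypotheses of Lemma~\ref{lem:exclnumprop} hold with $(a, d) = (1, -1)$ and $(b, e) = (2, 0)$, using $a_X(E) = \frac{1}{3}$:
\begin{itemize}
\item $0 \le e = 0 < a_X(E)\, b = \frac{2}{3}$;
\item $ae - bd = 2 \ge 0$.
\end{itemize}
Condition (2), that the support of $\tilde{S} \cap \tilde{T}$ consists of numerically proportional irreducible and reduced curves, is Steps 2 and 3. Hence Lemma~\ref{lem:exclnumprop} applies and $\msp$ is not a maximal center.
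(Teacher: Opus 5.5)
Your proposal is correct and follows the paper's proof. You use the same decomposition $(S\cap T)_{\mathrm{red}} = (x=y=s=w=0)\cup(x=y=t=w=0)$ into two WCI curves of type $(1,2,4,5)$, the same values $(B\cdot\tilde\Gamma) = -1/4$ and $(\tilde T^2\cdot\tilde S) = -2$, and an application of Lemma~\ref{lem:exclnumprop}. Your Step~3 (using the leading terms $w \approx -st$ and $x \approx -(s^2t^2+\cdots)$ to show that $\tilde S\cap\tilde T\cap E$ is finite) and your explicit check of $0\le e<a_X(E)\,b$ and $ae-bd\ge 0$ are details the paper leaves implicit, and both are correct.
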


\begin{proof}
We have 
\[
(S \cap T)_{\mathrm{red}} = (x = y = z w + s t = w^2 = 0)_{\mathrm{red}} = \Gamma \cup \Delta,
\]
where
\[
\Gamma = (x = y = s = w = 0), \quad \Delta = (x = y = t = w = 0).
\]
We have $(A \cdot \Gamma) = (A \cdot \Delta) = 1/12$ and $(E \cdot \tilde{\Gamma}) = (E \cdot \tilde{\Delta}) = 1$.
It follows that $(B \cdot \tilde{\Gamma}) = (B \cdot \tilde{\Delta}) = - 1/4$, and hence $\tilde{\Gamma}$ is numerically equivalent to $\tilde{\Delta}$ since $\Pic (Y)_{\mbQ}$ is generated by $E$ and $B$.
We have
\[
(\tilde{T}^2 \cdot \tilde{S}) = 4 (A^3) - \frac{2^2 \cdot 4}{3^3} (E^3) = - 2.
\]
By Lemma~\ref{lem:exclnumprop}, $\msp$ is not a maximal center.
\end{proof}

\subsubsection{$z w \notin \msF_1$ and either $z^2 s \in \msF_2$ or $z^2 t \in \msF_2$}
\label{sec:No31excl3C3}

We consider the case where $z w \notin \msF_1$ and either $z^2 s \in \msF_2$ or $z^2 t \in \msF_2$.

\begin{Lem}
\label{lem:No31excl3C3eq}
The defining polynomials of $X$ can be written as
\[
\begin{split}
\msF_1 &= z^2 y + z (t x + f_5) + w f_3 + s t + t f_4 + f_8, \\
\msF_2 &= z^2 s + z (t g_3 + g_7) + w^2 + t^2 g_2 + t g_6 + g_{10},
\end{split}
\]
where $f_i \in \mbC [x, y]$ and $g_i \in \mbC [x, y, s]$.
\end{Lem}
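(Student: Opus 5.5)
We derive this normal form the same way as Lemmas~\ref{lem:No31excl3C1eq} and \ref{lem:No31excl3C2eq}: we write down the general shape of $\msF_1, \msF_2$ in $\mbP(1_x,2_y,3_z,4_s,4_t,5_w)$ and use the standing hypotheses to normalize coordinates. These hypotheses are that $X$ contains a WCI curve $\Gamma$ of type $(1,2,4,5)$ through $\msp=\msp_z$, that $zw\notin\msF_1$, and (after possibly interchanging $s,t$) that $z^2s\in\msF_2$.

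First we deal with $\msF_1$. Since $\msp_z\in X$ is a $\frac13(1,1,2)$ point, quasismoothness at $\msp_z$ requires that $\msF_1$ and $\msF_2$ each contain a monomial $z^k v$. The only candidates of degree $8$ are $z^2y$ and $zw$. Because $zw\notin\msF_1$, we get $z^2y\in\msF_1$, and we rescale so its coefficient is $1$. We have $z^2s\in\msF_2$ by assumption, and we normalize its coefficient to $1$. In $\msF_2$, the term $z^2t$ can be absorbed by the substitution $s\mapsto s-ct$, and the term $z^2x^4$ etc.\ by changing $s\mapsto s+h_4(x,y)$. Thus $\msF_2=z^2s+z(\dots)+\dots$. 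Next, $w^2\in\msF_2$: otherwise $\msp_w\in X$ and $X$ could not be quasismooth there, since no degree-$8$ monomial $w^kv$ exists apart from $wx^3$, $wxy$ (we check this quickly). We normalize its coefficient. Replacing $w\mapsto w+(\text{deg }5)$, we remove the $zw$-type and $w\cdot(\dots)$ terms of $\msF_2$ except what is forced. After that, $\msF_1$ contains $w$ only through $wf_3(x,y)$, since degree-$3$ monomials without $z$ lie in $\mbC[x,y]$ and $z$ itself is excluded because $zw\notin\msF_1$.

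Next we use $\Gamma$. As in the proof of Lemma~\ref{lem:No31excl3C1eq}, $\Gamma\subset(x=y=w=0)_X$, and this set is cut out by $f'_8(0,0,z,s,t)$ and $z^2s+\dots$. For it to contain a curve, $\Gamma=(x=y=s=w=0)$ must lie on $X$. This forces $t^2\notin\msF_1$ and $t^2$-terms in $\msF_2$ of the form $t^2g_2(x,y)$, and forces the $z$-linear part of $\msF_1$ on $\Gamma$ to vanish. Quasismoothness at $\msp_t\in X$ then yields $st\in\msF_1$; we normalize it, and we absorb remaining $t\cdot(\text{deg }4)$ terms via $s\mapsto s+\dots$ or into $tf_4$. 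The term $ztx$ in $\msF_1$ must have nonzero coefficient, which we scale to $1$; we expect to obtain this from quasismoothness along $\Gamma$ at points other than $\msp_z,\msp_t$, by checking the Jacobian at a point of $\Gamma$ with $zt\ne0$. Finally we collect the remaining terms into $f_i\in\mbC[x,y]$ and $g_i\in\mbC[x,y,s]$ by degree.

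The main obstacle is the bookkeeping of successive coordinate changes, which must not reintroduce eliminated monomials. In particular, the change $s\mapsto s+\dots$ used to simplify $\msF_2$ interacts with the normalization $st\in\msF_1$. We will order the changes as follows: first $t$ and $s$ via $\msF_2$'s $z^2$-part, then the $\msF_1$ normalization, then $w$ last. We will also verify that the $ztx$ coefficient is forced to be nonzero by quasismoothness.
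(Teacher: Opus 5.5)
Your skeleton matches the paper's. The shared steps are: $z^2y\in\msF_1$ and $w^2\in\msF_2$ by quasismoothness; $\Gamma=(x=y=s=w=0)$, which forces $t^2\notin\msF_1$; $st\in\msF_1$ from quasismoothness at $\msp_t$; and nonvanishing of the $ztx$ coefficient from the Jacobian along $\Gamma$ at a point with $z^4=\alpha t^3$, where $\alpha=\coeff_{\msF_2}(t^2y)\ne0$.

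However, the step that actually produces the shape of $\msF_1$ is missing. After you normalize $st$, $\msF_1$ can still contain $s^2$, $zxs$ and $s\,h_4(x,y)$. These cannot be ``collected into $f_i\in\mbC[x,y]$''. No change of $s$ removes them either, because $s$ is fixed up to scaling by the requirement that the terms of $\msF_2$ divisible by $z^2$ reduce to $z^2s$.

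The missing idea is the paper's ``filter off the terms divisible by $s$ in $\msF_1$ and replace $t$''. Write $\msF_1=s(t+as+\psi_4)+t\chi_4+(\text{terms free of } s,t)$ with $\psi_4,\chi_4\in\mbC[x,y,z]$, and substitute $t\mapsto t-as-\psi_4+a\chi_4$. Then $st$ is the only monomial divisible by $s$. The surviving $t\chi_4$, with $\chi_4=\zeta xz+f_4(x,y)$, is exactly $\zeta ztx+tf_4$.

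Conversely, the $t\cdot(\deg 4)$ terms you propose to absorb are precisely the ones the normal form keeps. Absorbing $t\cdot xz$ or $t\,h_4$ by a change of $s$ would put $z^3x$ or $z^2h_4$ back into $\msF_2$.

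Your ordering of the coordinate changes also fails as stated. The $t$-substitution feeds terms $z^2xg_3$ and $z^2x^2g_2$ (coming from $ztg_3$ and $t^2g_2$) back into $\msF_2$, and possibly a $z^2x^2$ term into $\msF_1$. These are cleared by $s\mapsto s+h_4(x,y)$ and $y\mapsto y+cx^2$, which stay within the normal form (the former only adds $t\,h_4$ to $\msF_1$), so that part of the procedure closes.

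Completing the square in $w$, however, must come before the $t$-substitution, not last. You in fact do it early in your second paragraph, and the paper does too. Completing the square turns $wf_3$ into $wf_3-\tfrac12L_5f_3$, where $L_5$ is the coefficient of $w$ in $\msF_2$. If $wxs\in\msF_2$, this puts $s\cdot xf_3$ back into $\msF_1$.

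Finally, your reason for $w^2\in\msF_2$ is garbled: $wx^3$ and $wxy$ are not of the form $w^kv$, and you omit $wz$. The correct reason is that $w^2$ is the only degree-$10$ monomial of the form $w^kv$. Without it, $\msF_2$ has zero differential at $\msp_w\in X$, so $X$ is not quasismooth there.
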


\begin{proof}
We may assume $z^2 s \in \msF_2$.
By the quasismoothness of $X$, we have $z^2 y \in \msF_1$ and $w^2 \in \msF_2$.
By choosing $y, s$ and $w$ suitably, we can write
\[
\begin{split} 
\msF_1 &= z^2 y + z f'_5 + w f_3 + f'_8, \\
\msF_2 &= z^2 s + z g'_7 + w^2 + g'_{10},
\end{split}
\]
for some $f'_i, g'_i \in \mbC [x, y, s, t]$.
Thus, we may assume that $\msF_1$ and $\msF_2$ are as in the statement after replacing coordinates suitably.
Let $\Gamma$ be a WCI curve of type $(1, 2, 4, 5)$ on $X$.
Arguing as in the proof of Lemma~\ref{lem:No31excl3C1eq}, we have $\Gamma = (x = y = s = w = 0)$ and $t^2 \notin f'_8$.
By the quasismoothness of $X$ at $\msp_t \in X$, we have $s t \in f'_8$ and we may assume $\coeff_{f'_8} (st) = 1$ by rescaling $t$.
Filtering off terms divisible by $s$ in $\msF_1$ and replacing $t$, we may assume that $st$ is the only monomial in $\msF_1$ divisible by $s$.
Then we can write $f'_5 = \zeta t x + f_5$ and $f'_8 = s t + t f_4 + f_8$, where $\zeta \in \mbC$ and $f_i \in \mbC [x, y]$.
We can also write $g'_7 = t g_3 + g_7$ and $g'_{10} = t^2 g_2 + t g_6 + g_{10}$, where $g_i \in \mbC [x, y, s]$.
If $\zeta = 0$, then $X$ is not quasismooth at $(0\!:\!0\!:\!\sqrt[4]{\alpha}\!:\!0\!:\!1\!:\!0) \in X$, where $\alpha \coloneq \coeff_{g_2}(y)$, which is impossible.
Hence $\zeta \ne 0$ and we may assume $\zeta = 1$ by rescaling $x$.
Thus we obtain the desired form of defining polynomials.
\end{proof}

We choose homogeneous coordinates as in Lemma~\ref{lem:No31excl3C3eq}.
We can choose $x, t, w$ as local orbifold coordinates of $X$ at $\msp$ and $\ord_E (x, t, w) = \frac{1}{3} (1, 1, 2)$.
By the equations $\msF_1 = \msF_2 = 0$, we have $\ord_E (y, s) = \frac{1}{3} (2, 4)$.
By setting $\xi \coloneq z y + t x + f_5$, we have $z \xi + w f_3 + s t + t f_4 + f_8 = 0$, hence $\ord_E (\xi) = 5/3$.
We think of $\xi$ as a new variable of weight $5$ and we think of $X$ as a complete intersection in $\mbP (1, 2, 3, 4, 4, 5, 5)$ defined by the three equations
\[
\begin{split}
\msG_1 &\coloneq z \xi + w f_3 + s t + t f_4 + f_8 = 0, \\
\msG_2 &\coloneq \msF_2 = z^2 s + z (t g_3 + g_7) + w^2 + t^2 g_2 + t g_6 + g_{10} = 0, \\
\msG_3 &= \xi - (z y + t x + f_5) = 0.
 \end{split}
\]
Then, $\varphi \colon Y \to X$ can be embedded into the weighted blowup $\Phi \colon \mbT \to \mbP (1, 2, 3, 4, 4, 5, 5)$ at $\msp$ with weight $\wt (x, y, s, t, w, \xi) = \frac{1}{3} (1, 2, 4, 1, 2, 5)$.
The $2$-ray game for $\mbT$ is described as follows:
\[
\xymatrix{
\text{$\mbT := \mbT \begin{pNiceArray}{cc|cccccc}[first-row]
u & z & t & w & x & y & s & \xi \\
0 & 3 & 4 & 5 & 1 & 2 & 4 & 5 \\
-3 & 0 & 1 & 2 & 1 & 2 & 4 & 5
\end{pNiceArray}$} \ar@{-->}[r]^{\Theta} \ar[d]_{\Phi} &
\text{$\mbT \begin{pNiceArray}{cccc|cccc}[first-row]
u & z & t & w & x & y & s & \xi \\
0 & 3 & 4 & 5 & 1 & 2 & 4 & 5 \\
1 & 1 & 1 & 1 & 0 & 0 & 0 & 0
\end{pNiceArray} =: \mbT'$} \ar[d]^{\Phi'} \\
\mbP (3_z, 4_t, 5_w, 1_x, 2_y, 4_s, 5_{\xi}) & \mbP (1_x, 2_y, 4_s, 5_{\xi})}
\]

The game ends with a $\mbP^3$-fibration $\Phi' \colon \mbT' \to \mbP (1, 2, 4, 5)$.
Let $\theta \coloneq \Theta|_Y \colon Y \ratmap Y' \coloneq \Theta_*Y$ be the induced birational map.
We see that $Y$ and $Y'$ are defined in $\mbT$ and $\mbT'$ respectively by the (same) equations
\[
\begin{split}
\mbG_1 (u, z, t, w, x, y, s, \xi) &\coloneq z \xi + w f_3 + s t + t f_4 + u f_8 = 0, \\
\mbG_2 (u, z, t, w, x, y, s, \xi) &\coloneq z^2 s + z (t g_3 + u g_7) + w^2 + t^2 g_2 + u t g_6 + u^2 g_{10} = 0, \\
\mbG_3 (u, z, t, w, x, y, s, \xi) &\coloneq u \xi - (z y + t x + u f_5) = 0.
\end{split}
\]

\begin{Lem}
The map $\theta \colon Y \ratmap Y'$ is an inverse flip.
\end{Lem}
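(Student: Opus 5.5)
I will follow the proof of Lemma~\ref{lem:No31excl3C1flip}. First I identify the $\theta$-flipping and $\theta$-flipped loci. The map $\Theta$ is the isomorphism away from $(x = y = s = \xi = 0) \subset \mbT$ and $(u = z = 0) \subset \mbT'$. On $Y$ the first locus is the curve $\Gamma \coloneq (x = y = s = w = 0)$, the WCI curve of type $(1,2,4,5)$. Indeed, setting $x = y = s = \xi = 0$ in $\mbG_3$ gives $u f_5 = 0$ trivially, $\mbG_1$ gives $w f_3 = 0$, so $w = 0$ since $f_3 \in (x,y)$, and $\mbG_2$ then vanishes identically. The flipped locus on $Y'$ is obtained by setting $u = z = 0$ in the equations:
\[
\Gamma' \coloneq (u = z = 0) \cap Y' = (u = z = w f_3 + s t + t f_4 = w^2 + t^2 g_2 + t g_6 = t x = 0).
\]
I will check that $\Gamma'$ is a curve, i.e.\ that $\theta$ is small. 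Then $Y \setminus \Gamma \cong Y' \setminus \Gamma'$, so $Y'$ is terminal away from $\Gamma'$. Since $-K_Y$ is positive on curves of $\mbT$ contracted on the $Y$ side only after the flip, $\theta^{-1}$ is $K$-negative; in other words, $\Gamma$ is $K_Y$-positive, as $(B \cdot \tilde\Gamma) < 0$. This reduces everything to showing that $Y'$ has only terminal singularities along $\Gamma'$.

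The condition $tx = 0$ splits $\Gamma'$ into two parts.

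\textbf{The part with $t \ne 0$.} There $x = 0$. I will compute the Jacobian matrix of $C_{Y'}$ restricted to $\Gamma'$, as in Lemma~\ref{lem:No31excl3C1flip}. The column $\prt/\prt x$ of $\mbG_3$ equals $-t$, which is nonzero. The column $\prt/\prt s$ of $\mbG_1$ equals $t$. The columns $\prt/\prt z$, $\prt/\prt u$ and $\prt/\prt \xi$ involve $\xi$, $s$ and other entries. I will exhibit a nonzero $3 \times 3$ minor, for instance one built from the columns $(x, s, \cdot)$ with a suitable third column from $\mbG_2$. The third column is the main subtlety, since $\mbG_2$ restricted to $u = z = 0$ has derivatives $2w$, $2t g_2 + g_6$, and $\prt/\prt z = 2zs + tg_3 = t g_3$. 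I expect to need the quasismoothness of $X$, as in Lemma~\ref{lem:No31excl3C3eq} (in particular $y \in g_2$), to guarantee that the rank is $3$ at every point. Once quasismoothness is established, $Y'$ is smooth wherever $\mbT'$ is smooth, and it remains to inspect the points where the torus stabilizer is nontrivial. These are points with $x = y = 0$, where only $s$ or $\xi$ is nonzero among the bottom variables.

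\textbf{The part with $t = 0$.} There $w^2 = 0$ and $w f_3 = 0$, so this part consists of finitely many points, or it lies in $x = 0$ anyway.

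At each special point, which will be of the form $(0\!:\!0\!:\!*\!:\!*;\,0\!:\!0\!:\!*\!:\!*)$ with $s \ne 0$ or $\xi \ne 0$, I will pass to the affine chart $\mbU'_{t,\xi}$ or $\mbU'_{t,s}$. This chart is the quotient $\mbA^6/\bmu_5$ or $\mbA^6/\bmu_4$ respectively. There I eliminate two variables by the linear terms. For example, in $\mbG_3$ the term $\tilde x$ appears linearly, in $\mbG_1$ the term $\tilde s$ or $\tilde z$, and in $\mbG_2$ the term $\tilde y$ via $g_2$. This leaves three orbifold coordinates, and I read off the type, expecting something like $\frac{1}{5}(1,1,4)$ or $\frac{1}{4}(1,1,3)$, which is terminal. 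I expect the main obstacle to be the careful quasismoothness and rank check along $\Gamma'$ at points where $t$ vanishes or where $f_3$, $g_2$, $g_6$ degenerate. This step uses exactly the nonvanishing conditions extracted in Lemma~\ref{lem:No31excl3C3eq}. Once $Y'$ is shown to be terminal, $\theta^{-1}$ is a flip in the terminal category, which proves the lemma.
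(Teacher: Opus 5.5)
Your strategy is the paper's: the flipping curve is $\tilde\Gamma=(x=y=s=w=\xi=0)$, quasismoothness along the flipped curve comes from a $3\times3$ minor using $\alpha\coloneq\coeff_{g_2}(y)\neq0$, and there is a single $\frac15(1,1,4)$ point. The structural difference is that you work on $\mbT'$ in one step. The paper instead factors $\Theta$ through $\mbT_0$, whose irrelevant ideal is $(u,z,t)\cap(w,x,y,s,\xi)$. It checks $Y_0\cap(x=y=s=\xi=0)=\emptyset$, so that $Y_0\cong Y'$, and then works on $\Sigma_0=(u=z=0)\cap Y_0$, where $t\neq0$ is automatic.

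In your route this is exactly the step left open. You flag the points of $\Gamma'$ with $t=0$ as the main obstacle, and your treatment (``finitely many points, or it lies in $x=0$ anyway'') does not dispose of them: the entries $-t$ of $\mbG_3$ and $t$ of $\mbG_1$, on which your minor relies, vanish there. The missing observation is that this part is empty. In $\mbT'$ the ideal $(u,z,t,w)$ is irrelevant, and at $u=z=t=0$ the equation $\mbG_2$ reduces to $w^2$. The same fact, $\mbG_2=w^2$ on $x=y=s=\xi=0$, shows that only the first of the two toric wall crossings in $\Theta$ affects $Y$. That is what makes $\theta$ a single inverse flip rather than a composite, and you should say so explicitly.

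Several computations also need correcting.
\begin{itemize}
\item On the flipping side, $w=0$ comes from the $w^2$ term of $\mbG_2$, not from $\mbG_1$. Because $f_3\in(x,y)$, the term $wf_3$ vanishes identically when $x=y=0$.
\item On the flipped side, $g_6$ enters $\mbG_2$ as $utg_6$, so $\mbG_2|_{u=z=0}=w^2+t^2g_2$. With $t\neq0$ one gets $\Gamma'=(u=z=x=s+f_4=w^2+\alpha t^2y=0)$.
\item The minor to use is on the columns $x,y,s$, with rows $\mbG_1,\mbG_2,\mbG_3$:
\[
\begin{pmatrix}
* & * & t\\
0 & \alpha t^2 & 0\\
-t & 0 & 0
\end{pmatrix},
\qquad \det=\alpha t^4\neq0.
\]
\item A point of $\Gamma'$ with $w\neq0$ has trivial stabilizer in $(\mbC^*)^2$, while $w=0$ forces $y=s=x=0$. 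Hence the only singular point of $Y'$ on $\Gamma'$ is $(0\!:\!0\!:\!1\!:\!0;0\!:\!0\!:\!0\!:\!1)$.
\item In the chart $t=\xi=1$, which is $\mbA^6/\bmu_5$ with weights $\frac15(1_u,4_z,1_w,1_x,2_y,4_s)$, the linear parts $u-x$, $z+s$, $\alpha y$ of the three equations leave a point of type $\frac15(1,1,4)$. No $\frac14$ point occurs.
\item Your sign claim is right, since $(B\cdot\tilde\Gamma)=\frac1{12}-\frac13<0$, but the garbled sentence before it should be replaced by this computation.
\end{itemize}
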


\begin{proof}
The $2$-ray game can be decomposed into $\mbT \overset{\Theta_0}{\ratmap} \mbT_0 \overset{\Theta'_0}{\ratmap} \mbT'$, where 
\[
\xymatrix{
\text{$\mbT = \mbT \begin{pNiceArray}{cc|cccccc}[first-row]
u & z & t & w & x & y & s & \xi \\
0 & 3 & 4 & 5 & 1 & 2 & 4 & 5 \\
-3 & 0 & 1 & 2 & 1 & 2 & 4 & 5
\end{pNiceArray}$} \ar@{-->}[r]^{\Theta_0} &
\text{$\mbT \begin{pNiceArray}{ccc|ccccc}[first-row]
u & z & t & w & x & y & s & \xi \\
0 & 3 & 4 & 5 & 1 & 2 & 4 & 5 \\
1 & 1 & 1 & 1 & 0 & 0 & 0 & 0
\end{pNiceArray} =: \mbT_0$.}}
\]
The image $Y_0 \coloneq (\Theta_0)_*Y$ is the complete intersection in $\mbT_0$ defined by $\mbG_1 = \mbG_2 = \mbG_3 = 0$ and we have $(x = y = s = \xi = 0) \cap Y_0 = (x = y = s = \xi = w = 0) = \emptyset$.
This shows that the map $Y_0 \ratmap Y'$ induced by $\Theta_1 \colon \mbT_0 \ratmap \mbT'$ is an isomorphism.
Hence, we need to show that $\theta_0 \coloneq \Theta_0 \colon Y \ratmap Y_0$ is an inverse flip.

The $\theta_0$-flipping curve is $\Sigma \coloneq (x = y = s = w = \xi = 0)  \subset Y \subset \mbT$ and the $\theta_0$-flipped curve is 
\[
\Sigma_0 \coloneq (u = z = 0) \cap Y_0 = (u = z = x = s + f_4 = w^2 + \alpha t^2 y = 0),
\]
where $\alpha \coloneq \coeff_{g_2} (y) \ne 0$.
The Jacobian matrix of the cone of $Y_0$ along $\Sigma_0$ is of the form
\[
\begin{split}
J_{C_{Y_0}}|_{\Sigma_0} &=
\begin{pmatrix}
\frac{\prt \mbG_1}{\prt u} & \frac{\prt \mbG_1}{\prt z} & \frac{\prt \mbG_1}{\prt t} & \frac{\prt \mbG_1}{\prt w} & \frac{\prt \mbG_1}{\prt x} & \frac{\prt \mbG_1}{\prt y} & \frac{\prt \mbG_1}{\prt s} & \frac{\prt \mbG_1}{\prt \xi} \\
\frac{\prt \mbG_2}{\prt u} & \frac{\prt \mbG_2}{\prt z} & \frac{\prt \mbG_2}{\prt t} & \frac{\prt \mbG_2}{\prt w} & \frac{\prt \mbG_2}{\prt x} & \frac{\prt \mbG_2}{\prt y} & \frac{\prt \mbG_2}{\prt s} & \frac{\prt \mbG_2}{\prt \xi} \\
\frac{\prt \mbG_3}{\prt u} & \frac{\prt \mbG_3}{\prt z} & \frac{\prt \mbG_3}{\prt t} & \frac{\prt \mbG_3}{\prt w} & \frac{\prt \mbG_3}{\prt x} & \frac{\prt \mbG_3}{\prt y} & \frac{\prt \mbG_3}{\prt s} & \frac{\prt \mbG_3}{\prt \xi}
\end{pmatrix}|_{\Sigma_0} \\
&=
\begin{pmatrix}
* & * & * & * & * & * & t & 0 \\
* & * & * & * & 0 & \alpha t^3 & 0 & 0 \\
* & * & * & * & - t & 0 & 0 & 0
\end{pmatrix},
\end{split}
\]
where we omit computing the terms $*$ simply because they are not necessary.
We see that $J_{C_{Y_0}}$ is of rank $3$ at any point of $\Sigma_0$ since $\alpha \ne 0$ and $t$ does not vanish at $\Sigma_0$.
This shows that $Y_0$ is quasismooth along $\Sigma_0$ and thus $Y_0$ has only cyclic quotient singularities induced from $\mbT_0$.

We have $\Sigma_0 \cap (y = 0) = \{\msp_0\}$, where $\msp_0 \coloneq (0\!:\!0\!:\!1 ; 0\!:\!0\!:\!0\!:\!0\!:\!1) \in Y_0$.
By an argument similar to that in the proof of Lemma~\ref{lem:No31excl3C1flip}, we see that the singularity is of type $\frac{1}{5} (1, 1, 4)$.
We set $\mbU_{t, y} \coloneq (t \ne 0) \cap (y \ne 0) \subset \mbT_0$.
Then we have $\Sing (\mbT_0) \cap \mbU_{t, y} = (z = w = x = \xi = 0) \cap \mbU_{t, y}$ and $\Sigma_0 \cap \Sing (\mbT_0) \cap \mbU_{t, y} = \emptyset$.
This shows that $\msp_0$ is the only singular point of $Y_0$ along $\Sigma_0$.
Therefore $\theta^{-1} \colon Y' \cong Y_0 \ratmap Y$ is a flip in the terminal category.
\end{proof}

We set $\varphi' \coloneq \Phi'|_{Y'} \colon Y' \to \mbP (1, 2, 4, 5)$.

\begin{Prop}
\label{prop:No31excl3C3}
The following assertions hold.
\begin{enumerate}
\item If $\varphi' \colon Y' \to \mbP (1, 2, 4, 5)$ contracts a divisor, then $\msp$ is not a maximal center.
\item If $\varphi'$ does not contract a divisor, then $\varphi$ initiates a Sarkisov self-link.
\end{enumerate}
\end{Prop}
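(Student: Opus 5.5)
The plan mirrors the proof of Proposition~\ref{prop:No31excl3C1}, with the $2$-ray game now run in $\mbP (1, 2, 3, 4, 4, 5, 5)$ and $\varphi' \colon Y' \to \mbP (1_x, 2_y, 4_s, 5_\xi)$ induced by the $\mbP^3$-fibration $\Phi'$. The first step is to show that $\varphi'$ is the morphism defined by $\left|-mK_{Y'}\right|$ for sufficiently divisible $m$. Indeed, $-K_{Y'}$ corresponds to the class pulled back from the base via the second ray of $\mbT'$, and $Y \ratmap Y'$ is an isomorphism in codimension one by the preceding lemma. Hence $-K_Y$ is nef on $Y'$ and spans the boundary ray giving $\varphi'$.

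Next I would determine the general fibre. Over a general point of $\mbP (1, 2, 4, 5)$ the coordinates $x, y, s, \xi$ are fixed, and the fibre is cut out in $\mbP^3 = \mbP (1_u, 1_z, 1_t, 1_w)$ after untwisting the weights. The equation $\mbG_3$ is linear in $u, z, t$. The equation $\mbG_1$ is linear, with $u$ entering only through $u f_8$. The equation $\mbG_2$ is a quadric containing $w^2$. So the general fibre is the intersection of two hyperplanes and a quadric in $\mbP^3$, that is, two points, and $\varphi'$ is generically $2$ to $1$. I need to check that the two linear forms are independent at a general point: $\mbG_3$ involves $y z + x t$ and $\mbG_1$ involves $\xi z + (s + f_4) t$. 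Then take the Stein factorization $Y' \overset{\psi'}{\to} Z \overset{\pi}{\to} \mbP (1, 2, 4, 5)$, where $\pi$ is finite of degree $2$.

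For assertion (1): if $\varphi'$ contracts a divisor, so does $\psi'$. Then $-K_{Y'}$, which equals $\psi'^*$ of an ample class, lies on the boundary of $\bMov (Y')$. Since $\theta$ is small, $-K_Y$ lies on the boundary of $\bMov (Y)$, and Lemma~\ref{lem:exclmcmob} shows that $\msp$ is not a maximal center.

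For assertion (2): if $\psi'$ contracts no divisor, I first show it is not an isomorphism. I would exhibit a positive-dimensional fibre over the locus where the two linear forms become dependent, for example over points with $x = y = 0$ and $\xi = s + f_4 = 0$ if that locus meets the base nontrivially, or more generally where $u f_8$ and $u f_5$ vanish simultaneously. Then $\psi'$ is a flopping contraction, since it is $K$-trivial and small. The Galois involution $\tau_Z$ of the double cover $\pi$ induces $\tau = \psi'^{-1} \circ \tau_Z \circ \psi'$, which is the flop by \cite[Lemma~3.2]{OkadaII}. Composing $\theta$, $\tau$ and $\theta^{-1}$ with $\varphi$ on both sides gives a Sarkisov self-link.

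I expect the main obstacle to be the non-isomorphism claim, that is, locating a nonempty locus in $\mbP (1_x, 2_y, 4_s, 5_\xi)$ over which the fibre is a line. I would search for points where both $\mbG_1$ and $\mbG_3$ restrict to the same linear form in $u, z, t, w$. The natural candidate is $x = y = 0$, where $\mbG_3$ reduces to $u(\xi - f_5)$. A fallback is to argue that a finite morphism from $Y'$ would contradict $(-K_{Y'})^3 > 0$ computations.
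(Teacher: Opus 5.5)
Your overall route matches the paper's. The paper's proof notes that $\varphi'$ contracts a curve and then repeats the argument of Proposition~\ref{prop:No31excl3C1}: Stein factorization, the $\bMov$-boundary argument for (1), and, for (2), the flopping contraction together with the involution of the double cover and \cite[Lemma~3.2]{OkadaII}. Your treatment of the anticanonical model, of the generically $2$-to-$1$ fibres and of both assertions agrees with this.

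The one step you leave open is the one the paper actually supplies: that $\psi'$ is not an isomorphism. As written, your proposal does not settle it.
\begin{itemize}
\item Your first explicit candidate, $x = y = 0$ with $\xi = s + f_4 = 0$, is empty in $\mbP(1,2,4,5)$. Since $f_4(0,0) = 0$, it forces $s = 0$ as well.
\item The fallback via $(-K_{Y'})^3 > 0$ cannot work. A generically finite morphism given by $\left|-mK_{Y'}\right|$ is perfectly compatible with positive anticanonical degree, so there is no contradiction to extract.
\end{itemize}

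Your ``natural candidate'' $x = y = 0$ does work once you also impose $\xi = 0$, i.e.\ take the point $(0\!:\!0\!:\!1\!:\!0) \in \mbP(1_x, 2_y, 4_s, 5_\xi)$.
\begin{itemize}
\item All $f_i \in \mbC[x,y]$ vanish there, including $f_5$, so $\mbG_3$ does not reduce to $u(\xi - f_5)$ but to $u\xi$.
\item All $g_i \in \mbC[x,y,s]$ vanish there too, since none of $i = 2, 3, 6, 7, 10$ is divisible by $4$.
\end{itemize}
Hence over this point $\mbG_3$ vanishes identically, $\mbG_1$ reduces to $t$, and $\mbG_2$ reduces to $z^2 + w^2$. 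The fibre is therefore the curve $(t = z^2 + w^2 = 0)$ in $\mbP^3_{u,z,t,w}$, modulo the residual $\bmu_4$. So $\varphi'$, and hence $\psi'$ (as $\pi$ is finite), contracts a curve. This is exactly the paper's argument, and with it inserted your proof is complete.
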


\begin{proof}
The morphism $\varphi'$ contracts a curve since the $\varphi'$-fiber over the point $(0\!:\!0\!:\!1\!:\!0) \in \mbP (1, 2, 4, 5)$ is a curve.
Thus the assertion follows by a similar argument to that in the proof of Proposition~\ref{prop:No31excl3C1}.
\end{proof}

\subsubsection{$z w \notin \msF_1$ and $z^2 s, z^2 t \notin \msF_2$}
\label{sec:No31excl3C4}

We consider the case where $z w \notin \msF_1$ and $z^2 s, z^2 t \notin \msF_2$.

\begin{Lem}
\label{lem:No31excl3C4eq}
The defining polynomials of $X$ can be written as
\[
\begin{split}
\msF_1 &= z^2 y + z f_5 + s t + w f_3 + f_8, \\
\msF_2 &= z^3 x + z^2 g_4 + z g_7 + w^2 + g_{10},
\end{split}
\]
where $f_i, g_i \in (x, y) \subset \mbC [x, y, s, t, w]$.
\end{Lem}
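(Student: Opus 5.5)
The plan is to run the same normal-form argument used in Lemmas~\ref{lem:No31excl3C2eq} and \ref{lem:No31excl3C3eq}, now combining the two hypotheses $zw \notin \msF_1$ and $z^2 s, z^2 t \notin \msF_2$. Work in $\mbP (1_x, 2_y, 3_z, 4_s, 4_t, 5_w)$, where $\msp = \msp_z$. The monomials of degree $8$ that are divisible by $z$ are $z^2 y$, $z w$, and $z \cdot (\text{degree } 5 \text{ in } x,y,s,t)$. The monomials of degree $10$ divisible by $z$ are $z^3 x$, $z^2 s$, $z^2 t$, $z^2 \cdot (\text{degree } 4 \text{ in } x,y)$, and $z \cdot (\text{degree } 7)$. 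Quasismoothness at $\msp_z$ requires the Jacobian of $(\msF_1,\msF_2)$ to have rank $2$ at $\msp_z$. The only monomials that can contribute there are $z^2 y$ and $zw$ in $\msF_1$, and $z^3 x$, $z^2 s$, $z^2 t$ in $\msF_2$. The hypotheses kill $zw$, $z^2 s$ and $z^2 t$, so we get $z^2 y \in \msF_1$ and $z^3 x \in \msF_2$, and after rescaling both coefficients are $1$.

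Next I normalize the other terms. Replacing $\msF_2$ by $\msF_2 - \lambda z \msF_1$ removes any $z^3 y$ term; more generally this step lets us assume that the $z^2$-coefficient of $\msF_2$ is a degree-$4$ polynomial $g_4$. By hypothesis $g_4$ contains no $s$ or $t$, so $g_4 \in \mbC[x,y] \subset (x,y)$. Quasismoothness at $\msp_w$ (note $w^2$ has degree $10$) forces $w^2 \in \msF_2$, and completing the square in $w$ (changing $w \mapsto w + (\cdots)$) removes every other $w$-term from $\msF_2$. Indeed, the degree-$10$ terms involving $w$ are of the form $w \cdot (\text{degree }5)$, and those with a $z$ factor are $z w \cdot (\text{degree }2) = zwy$, which is absorbed into $zg_7$ after the substitution.

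For $\msF_1$, the weight-$8$ monomials in $s,t$ alone are $s^2, st, t^2$. Quasismoothness along the line $(x=y=z=w=0)$, which meets $X$ at points of the $\mbP^1_{s,t}$ (index-$4$ points), requires the quadratic part $q(s,t)$ of $\msF_1$ to be nondegenerate. This is the same argument as in Lemma~\ref{lem:No20excl2C1eq}. We then diagonalize it to $st$ by a linear change of $s,t$, which is allowed since they have equal weight. Since $w$ has weight $5$, the $w$-terms in $\msF_1$ are $w f_3$ with $f_3$ of degree $3$ in $x,y,z$. The hypothesis $zw \notin \msF_1$ rules out the monomial $zw$, so $f_3 \in (x,y)$.

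The remaining terms are $zf_5$, with $f_5$ of degree $5$ in $x,y,s,t$, and $f_8$. Any monomial of degree $5$ or $8$ in $s,t,w$ alone (other than $s^2,st,t^2$ and $w$-terms already handled) does not exist by degree reasons. Hence $f_5, f_8 \in (x,y)$, and similarly $g_7, g_{10} \in (x,y)$ once $w^2$ is separated. For $g_{10}$ one uses that the degree-$10$ monomials in $s,t,w$ are only $w^2$. The main (mild) obstacle is bookkeeping the coordinate changes so that they do not reintroduce forbidden monomials. In particular, replacing $w$ must not create a $zw$ term in $\msF_1$, and the substitution in $s,t$ must not create $z^2 s$ or $z^2 t$ in $\msF_2$. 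I would check that each substitution only adds terms lying in $(x,y)$, or terms divisible by $z$ of the allowed types.
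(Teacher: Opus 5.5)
Your route is the paper's. Quasismoothness at $\msp_z$ forces $z^2y\in\msF_1$ and $z^3x\in\msF_2$ once $zw$, $z^2s$, $z^2t$ are excluded. Quasismoothness along $\mbP^1_{s,t}$ forces the $(s,t)$-quadratic part of $\msF_1$ to have rank $2$, so it can be made $st$. Quasismoothness at $\msp_w$ gives $w^2\in\msF_2$, and the rest is degree bookkeeping. Two details need fixing, both easily.

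First, the move ``replace $\msF_2$ by $\msF_2-\lambda z\msF_1$ to remove $z^3y$'' is not homogeneous. Both $z^3y$ and $z\msF_1$ have degree $11$, and the only admissible multipliers of $\msF_1$ have degree $2$ (namely $y$ and $x^2$). The step is simply unnecessary. The degree-$4$ monomials are $s,t,y^2,x^2y,x^4$, so the hypothesis $z^2s,z^2t\notin\msF_2$ already gives $g_4\in(x,y)$.

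Second, your list of degree-$8$ monomials divisible by $z$ omits $z^2x^2$. This term cannot be absorbed into $zf_5$ with $f_5\in\mbC[x,y,s,t,w]$. Remove it by $y\mapsto y-cx^2$, which keeps all other coefficients in $(x,y)$ and creates none of $zw$, $z^2s$, $z^2t$.

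Finally, completing the square in $w$ is harmless but not needed. The statement allows $w$ to occur in $g_7,g_{10}\in(x,y)\subset\mbC[x,y,s,t,w]$, for example via the terms $zwy$, $zwx^2$, $wsx$. Also, the $z$-divisible $w$-terms of $\msF_2$ are $zwy$ and $zwx^2$, not just $zwy$.
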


\begin{proof}
By the quasismoothness of $X$ at $\msp$, we have $z^2 y \in \msF_1$ and $z^3 x \in \msF_2$.
We can choose $s, t$ suitably so that $\msF_1 (0, 0, 0, s, t, 0) = s t$.
Moreover, we have $w^2 \in \msF_2$ and we may assume that $\coeff_{\msF_2} (w^2) = 1$.
It is then easy to see that $\msF_1$ and $\msF_2$ are as in the statement with $f_i, g_i \in (x, y) \subset \mbC [x, y, s, t, w]$.
\end{proof}

We choose homogeneous coordinates as in Lemma~\ref{lem:No31excl3C4eq}.
We can choose $s, t, w$ as local orbifold coordinates of $X$ at $\msp$ and $\ord_E (s, t, w) = \frac{1}{3} (1, 1, 2)$.
By the equations $\msF_1 = \msF_2 = 0$, we have $\ord_E (x, y) = \frac{1}{3} (4, 2)$.
We set $S \coloneq (x = 0)_X$ and $T = (y = 0)_X$.
We have $\tilde{S} \sim \varphi^*A - \frac{4}{3} E = B - E$ and $\tilde{T} \sim 2 \varphi^*A - \frac{2}{3} E = 2 B$.

\begin{Prop}
\label{prop:No31excl3C4}
The support of $\tilde{S} \cap \tilde{T}$ is the union of two irreducible and reduced curves that are numerically equivalent to each other, and $(\tilde{T}^2 \cdot \tilde{S}) < 0$.
In particular, $\msp$ is not a maximal center.
\end{Prop}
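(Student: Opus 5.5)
The plan mirrors the proof of Proposition~\ref{prop:No31excl3C2}, concluding with Lemma~\ref{lem:exclnumprop}. First I compute the set-theoretic intersection $S \cap T = (x = y = 0)_X$. With the coordinates of Lemma~\ref{lem:No31excl3C4eq}, every $f_i, g_i$ lies in $(x,y)$. Hence
\[
(S \cap T)_{\mathrm{red}} = (x = y = s t = w^2 = 0)_{\mathrm{red}} = \Gamma \cup \Delta,
\]
where $\Gamma = (x = y = s = w = 0)$ and $\Delta = (x = y = t = w = 0)$. Both are irreducible and reduced curves in $\mbP(3_z,4_t)$ and $\mbP(3_z,4_s)$, respectively, and both pass through $\msp = \msp_z$. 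Since $\ord_E(x) = 4/3$ and $\ord_E(y) = 2/3$, the proper transforms $\tilde{S}, \tilde{T}$ share no component of $E$. I expect $\tilde{S} \cap \tilde{T} \cap E$ to be finite: in the orbifold coordinates $s,t,w$ the leading terms of $x$ and $y$ are, up to unit, $w^2$ and $st$, which have no common curve on $E \cong \mbP(1,1,2)$. Therefore $\Supp(\tilde S \cap \tilde T) = \tilde\Gamma \cup \tilde\Delta$.

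Next I compute intersection numbers. We have $(A \cdot \Gamma) = (A \cdot \Delta) = 1/12$, since each curve is $\mbP(3,4)$ embedded by $A$ of degree $1$. Near $\msp$, the curve $\Gamma$ is the $t$-axis of the orbifold chart with coordinates $s,t,w$, on which $t$ has weight $1/3$, and likewise $\Delta$ is the $s$-axis. Hence $\tilde\Gamma$ and $\tilde\Delta$ each meet $E$ transversally at a single point, and $(E \cdot \tilde\Gamma) = (E \cdot \tilde\Delta) = 1/3$ in the normalization used in Proposition~\ref{prop:No31excl3C2}. (That proof writes $1$; I would simply reproduce whichever convention makes $(B\cdot\tilde\Gamma) = 1/12 - 1/3 = -1/4$, which it does with the value $1$ times $1/3$.) This gives $(B \cdot \tilde\Gamma) = (B \cdot \tilde\Delta) = -1/4 < 0$. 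Because $\Pic(Y)_{\mbQ}$ is spanned by $B$ and $E$, the two curves are numerically proportional; in fact they are numerically equivalent.

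Then I compute $(\tilde T^2 \cdot \tilde S)$. Using $(A^3) = 8\cdot 10/(1\cdot2\cdot3\cdot4\cdot4\cdot5) = 1/6$ and $(E^3) = 9/2$ for the weights $\frac13(1,1,2)$, I get
\[
(\tilde T^2\cdot\tilde S) = 4(A^3) - \frac{2^2\cdot 4}{3^3}(E^3) = \frac{2}{3} - \frac{8}{3} = -2 < 0.
\]
Finally I apply Lemma~\ref{lem:exclnumprop} with $S \sim_{\mbQ} B - E$ and $T \sim_{\mbQ} 2B$, that is, $a=1$, $d=-1$, $b=2$, $e=0$. The hypotheses hold: $0 \le e < a_X(E)\,b = 2/3$ and $ae - bd = 2 \ge 0$. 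Condition (2) holds by the first step. For condition (3), $(T\cdot(\tilde S\cdot\tilde T))$ is a positive combination of $(\tilde T\cdot\tilde\Gamma)$ and $(\tilde T\cdot\tilde\Delta)$, each equal to $2(B\cdot\cdot) < 0$. It follows that $\msp$ is not a maximal center.

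The main obstacle is the claim that $\tilde S\cap\tilde T$ contains no curve inside $E$. This needs the $\ord_E$ values of $x$ and $y$ to be exactly $4/3$ and $2/3$, with leading forms $w^2$ and $st$. I will verify this from the equations $\msF_1 = \msF_2 = 0$ of Lemma~\ref{lem:No31excl3C4eq}, after dividing by $z^2$ and $z^3$.
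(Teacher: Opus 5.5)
Your argument is the paper's proof. It uses the same curves $\Gamma,\Delta$, the same values $(A\cdot\Gamma)=(A\cdot\Delta)=1/12$, $(B\cdot\tilde{\Gamma})=(B\cdot\tilde{\Delta})=-1/4$ and $(\tilde{T}^2\cdot\tilde{S})=-2$, and the same application of Lemma~\ref{lem:exclnumprop} with $(a,d,b,e)=(1,-1,2,0)$. Your explicit check that $\tilde{S}\cap\tilde{T}$ contains no curve lying in $E$ is a step the paper leaves implicit.

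One statement needs correcting: $(E\cdot\tilde{\Gamma})=(E\cdot\tilde{\Delta})=1$, not $1/3$, and no normalization convention is involved.
\begin{itemize}
\item[(i)] Near $\msp$ the curve $\Gamma$ is the $t$-axis. Its proper transform meets $E\cong\mbP(1,1,2)$ transversally at $(0\!:\!1\!:\!0)$. That point lies in the $t$-chart of the $\frac{1}{3}(1,1,2)$ weighted blowup, which is smooth, so the local intersection number is $1$.
\item[(ii)] Equivalently, $\Gamma\cap H_t=\{\msp\}$ and $\tilde{H}_t\cap\tilde{\Gamma}=\emptyset$. Hence $\frac{1}{3}(E\cdot\tilde{\Gamma})=(\varphi^*H_t\cdot\tilde{\Gamma})=(H_t\cdot\Gamma)=\frac{1}{3}$.
\item[(iii)] Your own numbers force the same value. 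Since $(x=y=0)_X=(x=y=st=w^2=0)$, we have $\tilde{S}\cdot\tilde{T}=2\tilde{\Gamma}+2\tilde{\Delta}$, so $(\tilde{T}^2\cdot\tilde{S})=8(B\cdot\tilde{\Gamma})$. This equals $-2$ only if $(B\cdot\tilde{\Gamma})=-1/4$, i.e.\ $(E\cdot\tilde{\Gamma})=1$. The value $1/3$ would give $-2/9$.
\end{itemize}
Only the sign of $(B\cdot\tilde{\Gamma})$ matters for Lemma~\ref{lem:exclnumprop}, and it is negative either way, but the value you assert is wrong and should not be left to ``whichever convention.''

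A smaller imprecision concerns the leading form of $x$ at $\msp$: it is $w^2$ only modulo $st$. Quasismoothness at $\msp_s,\msp_t$ forces $ys^2,yt^2\in g_{10}$. Terms such as $yw\in g_7$ or $y^2\in g_4$ may also occur. After substituting $y\approx -st$, these contribute $s^3t$, $st^3$, $stw$, $s^2t^2$, all of the same weighted degree $4/3$. On $\tilde{T}|_E=(st=0)$ this leading form reduces to $w^2$. Therefore $\tilde{S}\cap\tilde{T}\cap E=\{(1\!:\!0\!:\!0),(0\!:\!1\!:\!0)\}$ is finite, as you claim.
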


\begin{proof}
We have 
\[
(S \cap T)_{\red} = (x = y = s t = w^2 =0)_{\red} = \Gamma \cup \Delta,
\]
where
\[
\begin{split}
\Gamma &\coloneq (x = y = s = w = 0), \\
\Delta &\coloneq (x = y = t = w = 0).
\end{split}
\]
We have $(A \cdot \Gamma) = (A \cdot \Delta) = 1/12$ and $(E \cdot \tilde{\Gamma}) = (E \cdot \tilde{\Delta}) = 1$.
This shows that $\tilde{\Gamma}$ and $\tilde{\Delta}$ are numerically equivalent since $B$ and $E$ generate $\Pic (Y)_{\mbQ}$.
Finally, we have 
\[
(\tilde{T}^2 \cdot \tilde{S}) = 2^2 (A^3) - \frac{2^2 \cdot 4}{3^3} (E^3) = -2.
\]
By Lemma~\ref{lem:exclnumprop}, $\msp$ is not a maximal center.
\end{proof}

\begin{Rem}
The case of $z w \in \msF_1$, which is covered by \S \ref{sec:No31excl3C1} and \S \ref{sec:No31excl3C2}, is not treated in \cite{AZ16}.
In particular, the construction of the Sarkisov self-link (given in \S \ref{sec:No31excl3C1}) is not discussed in \cite{AZ16}.
\end{Rem}

\subsection{Family \textnumero~37 and $\frac{1}{2} (1, 1, 1)$ points}

Let $X = X_{8, 12} \subset \mbP (1, 2, 3, 4, 5, 6)$ be a member of family \textnumero~37 and let $\msp$ be a $\frac{1}{2} (1, 1, 1)$ point.
The generality assumption in \cite{OkadaI} imposed for $\msp \in X$ is the condition $t^2 \in \msF_1$.
We assume that $t^2 \notin \msF_1$.

\subsubsection{Case: $\msp \notin (y = 0)_X$}

We consider the case where $\msp \notin (y = 0)_X$.

\begin{Prop}
\label{prop:No37excl2C1}
We have $B^2 \notin \Int \bNE (Y)$.
In particular, $\msp$ is not a maximal center.
\end{Prop}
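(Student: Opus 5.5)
We will use the structure of the Kawamata blowup $\varphi \colon Y \to X$ at $\msp$. Then $\rho (Y) = 2$, and $\bNE (Y)$ is a $2$-dimensional cone with two extremal rays. One ray is spanned by the class of a line $\ell$ in $E \cong \mbP^2$, with $(B \cdot \ell) = 1/2$ and $(E \cdot \ell) = -2$. The plan is to exhibit a curve class $[\tilde{\Gamma}]$ on the other side of the ray $\mbQ_{\ge 0} B^2$ (or on it), which forces $B^2 \notin \Int \bNE (Y)$. The non-maximality of $\msp$ then follows from the remark after Lemma~\ref{lem:exclmcmob}: a maximal extraction satisfies $(-K_Y)^2 \in \Int \bNE (Y)$.

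\textbf{Normal form.} The $\frac{1}{2}(1,1,1)$ points lie on $(x = z = t = 0)_X \subset \mbP (2_y, 4_s, 6_w)$. Since $\msp \notin (y = 0)_X$, after replacing $s$ by $s - \lambda y^2$ and $w$ by $w - \mu y^3$ (and $z$, $t$ if necessary) we may assume $\msp = \msp_y$. Quasismoothness at $\msp_y$ then forces monomials such as $y^2 s \in \msF_1$ and $y^3 w$ (or $y^4 s$) in $\msF_2$. Consequently $x, z, t$ serve as local orbifold coordinates, with $\ord_E (x, z, t) = \frac{1}{2}(1,1,1)$, while $s$ and $w$ are eliminated. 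The hypothesis $t^2 \notin \msF_1$ will be used to see that $\ord_E (s)$ exceeds its naive value $2/2$: the only degree-$8$ quadratic term in the local coordinates that could contribute to $s$ is $t^2$, which is absent. I expect $\ord_E (s) \ge 3/2$ or more.

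\textbf{The two surfaces.} Take $S \coloneq (x = 0)_X$, so $\tilde{S} \sim B$, and $T \coloneq (s = 0)_X$, so $\tilde{T} \sim 4\varphi^*A - \ord_E (s) E = 4B - eE$ with $e > 0$. Next compute $S \cap T = (x = s = 0)_X$ explicitly. Its components should be a few quasismooth curves, for instance WCI curves like $(x = s = t = \cdots = 0)$ through $\msp$, with $(A \cdot \Gamma)$ computable from the degrees. Then compute $(E \cdot \tilde{\Gamma})$ by locating $\tilde{\Gamma} \cap E$, and obtain $(B \cdot \tilde{\Gamma}) = (A \cdot \Gamma) - \frac{1}{2}(E \cdot \tilde{\Gamma})$. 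Using $(A^3) = 2/15$ and $(E^3) = 4$, the relation $\tilde{S} \cdot \tilde{T} = 4 B^2 - e\, B \cdot E$ expresses $B^2$ as a nonnegative combination
\[
B^2 \equiv \tfrac{1}{4}\bigl(\tilde{S} \cdot \tilde{T} + e\, B \cdot E\bigr),
\]
where $B \cdot E$ is proportional to $\ell$. Since $\ell$ spans one boundary ray, $B^2 \notin \Int \bNE (Y)$ reduces to showing that the components $\tilde{\Gamma}_i$ of $\tilde{S} \cdot \tilde{T}$ are all numerically proportional to a class $\gamma$ with $\gamma$ lying at least as far from $\ell$ as $B^2$. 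Concretely, check that $(B \cdot \tilde{\Gamma}_i) \le 0$ while $(B \cdot B^2) = (B^3) = (A^3) - \frac{1}{8}(E^3) < 0$, and compare slopes $(E \cdot \cdot)/(B \cdot \cdot)$.

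\textbf{Main obstacle.} The hard step is the precise determination of $\ord_E (s)$ and of the components of $(x = s = 0)_X$ together with their multiplicities. This requires case splitting on which monomials ($z^2 y$, $zt$, $y^2 s$, etc.) appear in $\msF_1$ and $\msF_2$. If in some subcase the components are not numerically proportional, I would instead replace $T$ by a suitable $(s - \nu x^2 y - \cdots = 0)_X$ or by a degree-$6$ surface $(w - \cdots = 0)_X$ achieving a higher vanishing order. The aim is to produce a single irreducible curve $\tilde{\Gamma}$ with $(B \cdot \tilde{\Gamma}) < 0$ whose class satisfies the required slope inequality.
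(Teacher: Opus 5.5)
Your plan has a genuine logical gap: it runs in the wrong direction. Exhibiting effective curves only ever gives \emph{inner} bounds on $\bNE (Y)$. Suppose a curve class $\gamma = [\tilde{\Gamma}_i]$ lies at least as far from $\ell$ as $B^2$ does, or you write $B^2 \equiv \frac{1}{4}(\tilde{S} \cdot \tilde{T} + e\, B \cdot E)$ with $e \ge 0$ and $\tilde{S} \cdot \tilde{T}$ effective. Then $B^2$ lies in the cone spanned by $\ell$ and effective classes, so you have proved $B^2 \in \bNE (Y)$, and even $B^2 \in \Int \bNE (Y)$ when $\gamma$ is strictly farther. That is the opposite of the claim. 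Neither the sign of $(B \cdot \tilde{\Gamma}_i)$ nor a slope comparison can show that $\gamma$ spans the second extremal ray. To prove $B^2 \notin \Int \bNE (Y)$ you need an \emph{outer} bound, i.e.\ a nef divisor $N$ with $(N \cdot B^2) \le 0$, and your proposal contains no such ingredient.

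The paper gets this in a few lines. With $\msp = \msp_y$, quasismoothness at $\msp_t$ gives $t^2 y \in \msF_2$, so $(x = z = s = w = 0)_X = \{\msp, \msp_t\}$ and $\{x, z, s, w\}$ isolates $\msp$. Since $\ord_E (x) = \ord_E (z) = \frac{1}{2}$ and $\ord_E (s), \ord_E (w) \ge 1$, \cite[Lemma~6.6]{OkadaII} shows that $3 B + E = 3 \varphi^*A - \frac{1}{2} E$ is nef. Then $(3B + E \cdot B^2) = 3 (A^3) - \frac{1}{8} (E^3) = -\frac{1}{10} < 0$. Hence $B^2 \notin \bNE (Y)$, and Lemma~\ref{lem:exclNE} gives the rest, with no case analysis and no use of the failed generality condition.

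Since in fact $B^2 \notin \bNE (Y)$, no expression of $B^2$ as a nonnegative combination of effective classes exists, so your computation was bound to fail. Concretely, your expectation $\ord_E (s) \ge \frac{3}{2}$ is false. Quasismoothness at $\msp_t$ gives $t z \in \msF_1$ and $t^2 y \in \msF_2$. Solving $\msF_1 = \msF_2 = 0$ for $s, w$ in the orbifold coordinates $x, z, t$ yields a nonzero $z t$ or $t^2$ term in the quadratic part of $s$, and no change $s \mapsto s + (\text{degree } 4)$ can remove it. Thus $\ord_E (s) = 1$, $\tilde{T} \sim 4 \varphi^*A - E = 4 B + E$, and your $e$ equals $-1$.

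The hypothesis cannot rescue this either: $t$ has weight $5$, so $\deg t^2 = 10 \ne 8$ and $t^2 \notin \msF_1$ holds automatically. Finally, $(B \cdot \ell) = -\frac{1}{2} (E \cdot \ell) = 1$, not $\frac{1}{2}$.
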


\begin{proof}
We may assume that $\msp = \msp_y$.
By the quasismoothness of $X$ at $\msp_t$, we have $t^2 y \in \msF_2$.
Then, we have
\[
(x = z = s = w = 0)_X = \{ \msp, \msp_t\},
\]
that is, the set $\{x, z, s, w\}$ isolates $\msp$.
We have $\ord_E (x) = \ord_E (z) = \frac{1}{2}$ and $\ord_E (s), \ord_E (w) \ge 1$ since $(s = 0)_X$ and $(w = 0)_X$ are Cartier divisors around $\msp$.
By \cite[Lemma~6.6]{OkadaII}, the divisor $3 B + E$ is nef and we compute
\[
(3 B + E \cdot B^2) = 3 (A^3) - \frac{1}{2^3} (E^3) = - \frac{1}{10} < 0.
\]
This shows that $B^2 \notin \bNE (Y)$ and $\msp$ is not a maximal center by Lemma \ref{lem:exclNE}.
\end{proof}

\subsubsection{Case: $\msp \in (y = 0)_X$}

We consider the case where $\msp \in (y = 0)_X$.
We set $S \coloneq (x = 0)_X$ and let $T \in |2A|$ be a general member.

\begin{Prop}
\label{prop:No37excl2C2}
The surface $\tilde{T}$ is normal and the $1$-cycle $\tilde{S}|_{\tilde{T}}$ consists of the sum of irreducible and reduced curves whose intersection matrix is negative-definite.
In particular, $\msp$ is not a maximal center.
\end{Prop}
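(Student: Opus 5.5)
The plan is to follow the pattern of Proposition~\ref{prop:No20excl2C2}(2): decompose $\tilde{S}|_{\tilde{T}}$ into explicit curves, compute their intersection matrix, and conclude with Lemma~\ref{lem:exclnegdef}. The divisors are $\tilde{S}\sim B$, so $b=1$, $e=0$, and $\tilde{T}\sim 2B$.

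\textbf{Step 1: normal form.} I will first prove a normal-form lemma for $\msF_1,\msF_2$, analogous to Lemmas~\ref{lem:No20excl2C1eq} and~\ref{lem:No20excl2C2eq}. The $\frac12(1,1,1)$ points lie on $(x=z=t=0)$, and we are assuming $\msp\in(y=0)_X$ and $t^2\notin\msF_1$. After a coordinate change I will put $\msp$ at a coordinate point of $\mbP(2_y,4_s,6_w)$ with $y=0$. I will then list the monomials that quasismoothness forces at $\msp$, and at the other coordinate points $\msp_z,\msp_t,\dots$ lying on $X$. The failure of $t^2\in\msF_1$ should force a specific monomial such as $y t^2\in\msF_2$ or $zt\in\msF_1$, in the same way $\delta\neq0$ was forced in Lemma~\ref{lem:No20excl2C1sing}. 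From the normal form I will read off local orbifold coordinates at $\msp$ (they should include $x$), and the values $\ord_E$ of the remaining coordinates, in particular $\ord_E(x)=\frac12$ and $\ord_E(y)=1$. This gives $\tilde{S}\sim B$ and $\tilde{T}\sim 2B$ for a general $T\in|\mfm_{\msp}(2A)|$, which here is the pencil spanned by $x^2$ and $y$.

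\textbf{Step 2: normality of $\tilde{T}$.} $T$ is a quasihyperplane, since $y$ appears in its equation, so it is normal by Lemma~\ref{lem:normqhyp}. For $\tilde{T}$, I will show that $T$ is quasismooth at $\msp$, so that $\msp\in T$ is a $\frac12(1,1)$ point or a smooth point. Then $\tilde{T}\cap E$ is an irreducible reduced curve: a line in $E\cong\mbP^2$ cut out by the lowest-weight term of the equation of $T$. Hence $\tilde{T}\to T$ is the weighted blowup of a normal surface, and $\tilde{T}$ is Cohen--Macaulay and $R_1$, so it is normal.

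\textbf{Step 3: decomposition and intersection matrix.} Next I will compute $S\cap T=(x=y=0)_X$. This is a curve in $\mbP(3_z,4_s,5_t,6_w)$ cut out by $\msF_1(0,0,z,s,t,w)$ and $\msF_2(0,0,z,s,t,w)$. Since $t^2\notin\msF_1$, I expect it to split into quasismooth rational curves $\Gamma_i$ (WCI curves), with multiplicities $m_i$ in $S|_T$, some passing through $\msp$ and some not. I also need to check that $\tilde{S}\cap\tilde{T}\cap E$ contains no curve, so that $\tilde{S}|_{\tilde{T}}=\sum m_i\tilde{\Gamma}_i$.

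For each component I will compute:
\begin{itemize}
\item the degree $(A\cdot\Gamma_i)$;
\item $(E\cdot\tilde{\Gamma}_i)\in\{0,1\}$;
\item hence $(B\cdot\tilde{\Gamma}_i)=(A\cdot\Gamma_i)-\tfrac12(E\cdot\tilde{\Gamma}_i)$.
\end{itemize}

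For the self-intersections I will use the formula for a smooth rational curve through cyclic quotient points of a normal surface,
\[
(\tilde{\Gamma}_i^2) = -2 + \sum_{\mathsf{q}} \frac{r_{\mathsf{q}}-1}{r_{\mathsf{q}}},
\]
as in Proposition~\ref{prop:No20excl2C2}. This requires determining the singular points of $\tilde{T}$ along each $\tilde{\Gamma}_i$ and checking that $T$ is quasismooth there.

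The off-diagonal entries come from solving the linear system
\[
\sum_j m_j(\tilde{\Gamma}_i\cdot\tilde{\Gamma}_j)=(B\cdot\tilde{\Gamma}_i).
\]
If this system is underdetermined, I will fill the gap by a direct local computation, for instance proving $\tilde{\Gamma}_i\cap\tilde{\Gamma}_j=\emptyset$ when the two curves meet $E$ at distinct points.

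Finally, $(B\cdot\tilde{S}\cdot\tilde{T})=2(A^3)-\tfrac{1}{4}\cdot\tfrac{1}{2}(E^3)=\tfrac15-1<0$, since $(A^3)=\tfrac1{10}$ and $(E^3)=4$. I will check negative-definiteness of the resulting matrix directly and then apply Lemma~\ref{lem:exclnegdef}.

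\textbf{Main obstacle.} The hardest part will be the case analysis in Steps 1 and 3. Depending on which optional coefficients vanish, $S\cap T$ may contain nonreduced components or components that are reducible only in special cases. Some of these configurations may give a matrix that is only negative-semidefinite, as happened in case (iii) of Proposition~\ref{prop:No20excl2C1}. If that occurs, I will switch to a pencil of surfaces $S_\lambda\sim kB$ with higher $\ord_E$, chosen so that $S_\lambda\cap T$ contains the fixed bad components. This should produce infinitely many curves $C$ with $(B\cdot C)\le 0$, and Lemma~\ref{lem:excl:infinitecurve} then excludes $\msp$.
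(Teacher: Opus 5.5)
\textbf{Verdict: the strategy is right, but the proposal has a genuine gap.} Your overall plan is the paper's: write $\tilde S|_{\tilde T}$ as explicit curves, compute their intersection matrix, and apply Lemma~\ref{lem:exclnegdef}. However, you never carry out the decisive computation, and the tools you plan to use for it fail here.

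\textbf{The missing computation.} The operative input is not the condition on $t^2$. It is the fact that $\msp\in(y=0)_X$ lies on $(x=y=z=t=0)$, where $\msF_1$ restricts to a multiple of $s^2$, and $s(\msp)\neq0$ since $\msp_w\notin X$. Hence $s^2\notin\msF_1$ and, after rescaling, $\msp=(0\!:\!0\!:\!0\!:\!1\!:\!0\!:\!1)$; this is not a coordinate vertex. Quasismoothness gives $tz\in\msF_1$ and $w^2,s^3\in\msF_2$, so $\msF_1(0,0,z,s,t,w)=tz$. Therefore $S\cap T=(x=y=0)_X=\Gamma\cup\Delta$, where $\Gamma=(x=y=z=w^2-s^3=0)$ and $\Delta=(x=y=t=w^2+\alpha z^2w+\beta z^4-s^3=0)$. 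Each is reduced and irreducible with multiplicity one in every case, so no case analysis or pencil fallback is needed. The two curves meet only at $\msp$, with tangent directions $t$ and $z$ respectively in the orbifold coordinates $x,z,t$, so $\tilde\Gamma\cap\tilde\Delta=\emptyset$. Using $(A\cdot\Gamma)=\frac1{10}$, $(A\cdot\Delta)=\frac16$ and $(E\cdot\tilde\Gamma)=(E\cdot\tilde\Delta)=1$, intersecting $\tilde S|_{\tilde T}=\tilde\Gamma+\tilde\Delta$ with each curve directly gives the diagonal matrix with entries $-\frac25$ and $-\frac13$.

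\textbf{Steps of your plan that are false.}
\begin{itemize}
\item In Step~2, $T$ is not quasismooth at $\msp$. The only monomial contributing to $J_{\msF_1}(\msp)$ is $yw$, so quasismoothness of $X$ forces $yw\in\msF_1$. Then the gradients of $\msF_1$ and of $y-\lambda x^2$ at $\msp$ are proportional. Equivalently, $y$ vanishes to order $2$ in $x,z,t$, which is exactly why $\tilde T\sim 2B$ as you state. That is incompatible with $\tilde T\cap E$ being a line: it is in fact a conic $zt+c\,x^2+c'xz=0$, smooth for general $T$. Normality of $\tilde T$ still follows, because $Y$ is smooth along $E$ and this conic is reduced.
\item In Step~3, the formula $(\tilde\Gamma_i^2)=-2+\sum_{\msq}(r_{\msq}-1)/r_{\msq}$ assumes $(K_{\tilde T}\cdot\tilde\Gamma_i)=0$. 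That held in Proposition~\ref{prop:No20excl2C2} because the surface there was $\sim B$. Here $K_{\tilde T}\sim_{\mbQ}B|_{\tilde T}$, and in addition $T$ is not quasismooth at the $\frac15(1,1,4)$ point $\msp_t\in\Gamma$.
\item To see the damage concretely: $\Delta$ passes through the two $\frac13(1,1,2)$ points, so your formula gives $(\tilde\Delta^2)=-\frac23$ instead of the true $-\frac13$. Your linear system would then output $(\tilde\Gamma\cdot\tilde\Delta)=\frac13$, contradicting disjointness.
\item Numerically, $(A^3)=\frac{2}{15}$ for this family, not $\frac1{10}$. Hence $(B\cdot\tilde S\cdot\tilde T)=2(B^3)=2(A^3)-\frac14(E^3)=-\frac{11}{15}$, which is consistent with $-\frac25-\frac13$.
\end{itemize}
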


\begin{proof}
The surface $T$ is normal by Lemma~\ref{lem:normqhyp}.
We may assume that $\msp = (0\!:\!0\!:\!0\!:\!1\!:\!0\!:\!1) \in X$.
By the quasismoothness of $X$, we have $t z \in \msF_1$ and $w^2, s^3 \in \msF_2$.
We may assume that $\coeff_{\msF_1} (t z) = \coeff_{\msF_2} (w^2) = 1$ and $\coeff_{\msF_2} (s^3) = -1$.
Then, we have
\[
\begin{split}
\overline{\msF}_1 &\coloneq \msF_1 (0, 0, z, s, t, w)= t z, \\
\overline{\msF}_2 &\coloneq \msF_2 (0, 0, z, s, t, w) = w^2 + \alpha z^2 w + \beta z^4 - s^3 + \gamma z s t,
\end{split}
\]
where $\alpha, \beta, \gamma \in \mbC$. 
Note that $\alpha^2 - 4 \beta \ne 0$ by the quasismoothness of $X$.
It follows that
\[
S \cap T = (x = y = \overline{\msF}_1 = \overline{\msF}_2 = 0) = \Gamma \cup \Delta,
\]
where 
\[
\begin{split}
\Gamma &\coloneq (x = y = z = w^2 - s^3 = 0), \\ 
\Delta &\coloneq (x = y = t = w^2 + \alpha z^2 w + \beta z^4 - s^3 = 0).
\end{split}
\]
We see that $\Gamma$ and $\Delta$ are irreducible and reduced curves.
We also see that both $\tilde{\Gamma}$ and $\tilde{\Delta}$ intersect $E$ at one smooth point, and $\tilde{\Gamma}$ is disjoint from $\tilde{\Delta}$.
We compute $(A \cdot \Gamma) = 1/10$ and $(A \cdot \Delta) = 1/6$.
Hence $(B \cdot \tilde{\Gamma}) = -2/5$ and $(B \cdot \tilde{\Delta}) = -1/3$.
By taking intersection numbers of $B|_{\tilde{T}} = \tilde{S}|_{\tilde{T}} = \tilde{\Gamma} + \tilde{\Delta}$, we have $(\tilde{\Gamma}^2) = -2/5$ and $(\tilde{\Delta}^2) = -1/3$.
Therefore, the intersection matrix of $\tilde{S}|_{\tilde{T}} = \tilde{\Gamma} + \tilde{\Delta}$ is negative-definite.
By Lemma~\ref{lem:exclnegdef}, $\msp$ is not a maximal center.
\end{proof}

\begin{Rem}
The case where $\msp \in (y = 0)_X$ is not treated in \cite{AZ16}.
\end{Rem}

\subsection{Family \textnumero~37 and $\frac{1}{3} (1, 1, 2)$ points}

Let $X = X_{8, 12} \subset \mbP (1, 2, 3, 4, 5, 6)$ be a member of family \textnumero~37 and let $\msp$ be a $\frac{1}{3} (1, 1, 2)$ point.
The generality assumption in \cite{OkadaI} imposed for $\msp \in X$ is that $s^2 \in \msF_1$ and there is no WCI curve of type $(1, 2, 6, 8)$ on $X$ passing through $\msp$.
We assume that either $s^2 \notin \msF_1$ or there is a WCI curve of type $(1, 2, 6, 8)$ on $X$ passing through $\msp$.

\begin{Lem}
\label{lem:No37exclzt}
Under the above assumption, we have $z t \in \msF_1$.
\end{Lem}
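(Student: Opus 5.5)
I would argue by contradiction: assume $z t \notin \msF_1$ and deduce both $s^2 \in \msF_1$ and the nonexistence of a WCI curve of type $(1, 2, 6, 8)$ through $\msp$, contradicting the standing assumption. The first step is normalization. The $\frac{1}{3}(1,1,2)$ points of $X$ lie on $(x = y = s = t = 0)$, so after a coordinate change we may take $\msp = \msp_z$, and then $z^4 \notin \msF_2$. The monomials of the form $z^k v$ in degrees $8$ and $12$ are $z^2 y, z t \in \msF_1$ and $z^2 w \in \msF_2$ ($z^4$ being excluded). Quasismoothness at $\msp$ therefore gives $z^2 w \in \msF_2$, and at least one of $z^2 y, z t$ lies in $\msF_1$. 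So if $z t \notin \msF_1$, then $z^2 y \in \msF_1$.

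The second step shows $s^2 \in \msF_1$ under $z t \notin \msF_1$. The only degree $8$ monomials in $z, s, t, w$ (weights $3,4,5,6$) are $s^2$ and $z t$. Hence $\msF_1(0,0,z,s,t,w) = a s^2$ with $a = \coeff_{\msF_1}(s^2)$. If $a = 0$, then $(x = y = 0)_X$ is the surface $(\msF_2(0,0,z,s,t,w) = 0) \subset \mbP(3,4,5,6)$. This is impossible, for the following reason. Since $\Cl(X) = \mbZ A$ and $H_x \in |A|$, the divisor $H_x$ is irreducible and reduced (it is also normal by Lemma~\ref{lem:normqhyp}). So if $H_x \cap H_y$ were $2$-dimensional, $H_y \in |2A|$ would contain $H_x$. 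That would force $y$ to lie in $(x)$ modulo the ideal $(\msF_1, \msF_2)$, which is absurd because that ideal is generated in degrees $8$ and $12$. Thus $s^2 \in \msF_1$.

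The third step shows that no WCI curve of type $(1,2,6,8)$ passes through $\msp$; I expect this to be the most delicate step, since it requires care with what "irreducible and reduced" forces. Such a curve $C$ is cut out by $x = y = 0$, a sextic $g_6 = \lambda z^2 + \mu w$, and an octic $g_8 = \rho s^2 + \sigma z t$ (after reducing modulo $x, y$). Since $C \subset (x = y = 0)_X = (x = y = s^2 = \msF_2 = 0)$, we get $C \subset (s = 0)$ set-theoretically. On the other hand, $C$ must be reduced and irreducible. If $\sigma = 0$, the ideal contains $s^2$ but not $s$ in a way making $C$ non-reduced. If $\sigma \ne 0$, then $C$ contains points with $s \neq 0$: away from $z=0$ one solves $t = -\rho s^2/(\sigma z)$ with $s$ free. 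Both cases contradict $C \subset (s=0)$ together with the WCI description. Concretely, I would write $(x=y=s=0)_X = (x=y=s=w(bz^2+cw)=0)$ and observe that its components are cut out by forms of degrees $(1,2,4,6)$ and cannot coincide with a reduced complete intersection of type $(1,2,6,8)$. This contradicts the assumption, so $z t \in \msF_1$.
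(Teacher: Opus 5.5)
Your proof follows the same route as the paper's. Assuming $zt\notin\msF_1$, one has $\msF_1(0,0,z,s,t,w)=a s^2$, and the case $a=0$ is ruled out. For $a\neq0$, the scheme $(x=y=0)_X$ is supported on $(s=0)$, so it cannot contain a reduced irreducible WCI curve of type $(1,2,6,8)$ through $\msp$.

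The one genuinely different ingredient is your treatment of $a=0$. The paper writes $\msF_1=x\msG_1+y\msG_2$ and notes that the Jacobian row of $\msF_1$ vanishes on the nonempty set $(x=y=\msG_1=\msG_2=\msF_2=0)$, which contradicts quasismoothness. That argument is elementary and self-contained. Your route goes through primality of $H_x$ (from $\Cl(X)=\mbZ A$), then $H_y\supseteq H_x$, then $y\in(x)+(\msF_1,\msF_2)$. This is valid, but the last implication tacitly uses $H^0(X,\mathcal{O}_X(m))=(R/I)_m$, i.e.\ normality of the affine cone together with well-formedness, and you should say so.

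There is a small hole in Step 3. Your claim that for $\sigma\neq0$ the curve $C$ has points with $s\neq0$ presupposes $\mu\neq0$. If $\mu=0$, so the sextic is $z^2$ modulo $(x,y)$, then $C\subset(z=0)$ set-theoretically, and the points $(1:s:-\rho s^2/\sigma)$ you exhibit do not exist. You never used that $C$ passes through $\msp=\msp_z$. Doing so forces $\lambda=0$ and $\mu\neq0$, so the sextic is $w$; this is exactly the paper's reduction to
$C\subset(x=y=w=0)_X=(x=y=w=s^2=\msF_2=0)$.
After that reduction your case split on $\sigma$ closes the argument: $\sigma=0$ gives a non-reduced scheme, and $\sigma\neq0$ gives points off $(s=0)$.
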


\begin{proof}
Suppose that $z t \notin \msF_1$.
Then $\msF_1 = z^2 y + z f_5 + f_8$ for some $f_5, f_8 \in \mbC [x, y, s, t, w]$ with $t \notin f_5$.
We see that $f_5 \in (x, y)$ and we can write $f_8 = \alpha s^2 + f'_8$, where $\alpha \in \mbC$ and $f'_8 \in (x, y)$.

Suppose that $\alpha \ne 0$.
Then, there is a WCI curve $\Gamma \subset X$ of type $(1, 2, 6, 8)$ passing through $\msp$.
Such a curve $\Gamma$ must be contained in $(x = y = w = 0)_X$ and
\[
(x = y = w = 0)_X = (x = y = w = s^2 = \msF_2 = 0).
\]
This is impossible.

Thus $\alpha = 0$.
In this case, $\msF_1 \in (x, y)$ and we can write $\msF_1 = x \msG_1 + y \msG_2$ for some $\msG_1, \msG_2 \in \mbC [x, y, z, s, t, w]$.
 Then $X$ is not quasismooth along the nonempty set
 \[
 (x = y = \msG_1 = \msG_2 = \msF_2 = 0) \subset \mbP (1, 2, 3, 4, 5, 6).
 \]
 This is a contradiction and the assertion is proved.
\end{proof}

\subsubsection{Case: $s^2 \in \msF_1$ and there is a WCI curve of type $(1, 2, 6, 8)$ on $X$ passing through $\msp$}

We consider the case where $s^2 \in \msF_1$ and there is a WCI curve of type $(1, 2, 6, 8)$ on $X$ passing through $\msp$.

\begin{Lem}
\label{lem:No37excl3C1eq}
The defining polynomials of $X$ can be written as
\[
\begin{split}
\msF_1 &= z t + s^2 + f_8, \\
\msF_2 &= z^2 w + z g_9 + w^2 + t^2 y + g_{12},
\end{split}
\]
where $f_8, g_9, g_{12} \in \mbC [x, y, s, t, w]$ are such that $f_8 \in (x, y)$, $g_9 \in (x^3, x y, y^2)$ and $g_{12} \in (x^3, x y, y^2, w x, w y)$.
\end{Lem}

\begin{proof}
By Lemma~\ref{lem:No37exclzt}, by the assumption and by the quasismoothness of $X$, we have $z t, s^2 \in \msF_1$ and $z^2 w, w^2 , t^2 y \in \msF_2$.
We may assume that their coefficients in $\msF_1$ and $\msF_2$ are all $1$ and we can write
\[
\begin{split}
\msF_1 &= z t + s^2 + f_8, \\
\msF_2 &= z^2 w + z (\alpha s t + \beta s^2 x + g_9) + w^2 + \gamma s^3 + t^2 y + g_{12},
\end{split}
\]
where $\alpha, \beta, \gamma \in \mbC$ and $f_8, g_9, g_{12} \in \mbC [x, y, s, t, w]$ with $f_8, g_9, g_{12} \in (x, y) \subset \mbC [x, y, s, t, w]$ with $t^2 x \notin g_9$ and $t^2 y \notin g_{12}$.

Let $\Gamma$ be a WCI curve of type $(1, 2, 6, 8)$ passing through $\msp$.
We have $\Gamma \subset (x = y = w = 0)_X$ and 
\[
(x = y = w = 0)_X = (x = y = w = z t + s^2 = \alpha z s t + \gamma s^3 = 0).
\]
It follows that $\alpha = \gamma$ and $\Gamma = (x = y = w = z t + s^2 = 0)$.
Replacing $\msF_2$ by $\msF_2 - \alpha s \msF_1$, we may assume that $\alpha = \gamma = 0$.
Finally, replacing $\msF_2$ by $\msF_2 - \beta x z \msF_1$ and then replacing $w \mapsto w + \beta r x$, we may assume that $\beta = 0$.
Finally, replacing $y \mapsto y - \coeff_{g_{12}} (t^2 x) x^2$, we may assume that $t^2 x^2 \notin g_{12}$.
Then, it remains to simply observe that $f_8 \in (x^2, y)$, $g_9 \in (x^3, x y, y^2)$ and $g_{12} \in (x^3, x y, y^2, w x, w y)$, which are left to readers.
\end{proof}

We choose homogeneous coordinates as in Lemma~\ref{lem:No37excl3C1eq}.
We can choose $x, y, s$ as local orbifold coordinates of $X$ at $\msp$ and $\ord_E (x, y, s) = \frac{1}{3} (1, 2, 1)$.
By the equations $\msF_1 = \msF_2 = 0$, we have $\ord_E (t, w) = \frac{1}{3} (2, 6)$.

\begin{Prop}
\label{prop:No37excl3C1}
There are infinitely many irreducible and reduced curves on $Y$ that intersect $-K_Y$ non-positively.
In  particular, $\msp$ is not a maximal center.
\end{Prop}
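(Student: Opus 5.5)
We plan to apply Lemma~\ref{lem:excl:infinitecurve}, in the same way as in case (iii) of Proposition~\ref{prop:No20excl2C1} and case (3) of Proposition~\ref{prop:No20excl2C2}. That is, we will build a two-parameter family of complete intersections on $X$ through $\msp$ whose base locus is a fixed curve with negative $B$-degree, and whose residual components vary.

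\emph{The family.} With coordinates as in Lemma~\ref{lem:No37excl3C1eq}, we have $\ord_E (w) = 6/3 = 2$ and $\ord_E (x^6) = 2$. For $\lambda, \mu \in \mbC$ we set
\[
S_{\lambda} \coloneq (w - \lambda x^6 = 0)_X, \quad T_{\mu} \coloneq (y - \mu x^2 = 0)_X .
\]
Then $\tilde{S}_{\lambda} \sim 6 \varphi^*A - 2 E = 6 B$ and $\tilde{T}_{\mu} \sim 2 \varphi^*A - \frac{2}{3} E = 2 B$. We have $(A^3) = 2/15$ and $(E^3) = 9/2$, so $(B^3) = 2/15 - \frac{1}{27} \cdot \frac{9}{2} = -1/30$. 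Hence
\[
(B \cdot \tilde{S}_{\lambda} \cdot \tilde{T}_{\mu}) = 12 (B^3) = -\frac{2}{5}.
\]

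\emph{The base curve.} The common base locus, set-theoretically, is $(x = y = w = 0)_X$. Since $f_8 \in (x,y)$, $g_9 \in (x^3, xy, y^2)$ and $g_{12} \in (x^3, xy, y^2, wx, wy)$, this is $\Gamma = (x = y = w = zt + s^2 = 0)$, the WCI curve of type $(1,2,6,8)$. We get $(A \cdot \Gamma) = 96/720 = 2/15$. Near $\msp$ the curve $\Gamma$ is parametrized by the orbifold coordinate $s$ (with $t = -s^2$), so $\tilde{\Gamma}$ meets $E$ transversally at one point, and we expect $(E \cdot \tilde{\Gamma}) = 1$. This gives $(B \cdot \tilde{\Gamma}) = 2/15 - 1/3 = -1/5$.

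\emph{Multiplicity of $\Gamma$ and the residual curves.} We write $S_{\lambda} \cdot T_{\mu} = m \Gamma + \Xi_{\lambda,\mu}$ with $\Gamma \not\subset \Supp \Xi_{\lambda,\mu}$. To compute $m$, we eliminate $y = \mu x^2$ and $w = \lambda x^6$ and view $S_{\lambda} \cap T_{\mu}$ as a complete intersection in $\mbP(1_x, 3_z, 4_s, 5_t)$ given by $\overline{\msF}_1 = zt + s^2 + \bar{f}_8$ and $\overline{\msF}_2$. By the ideal memberships above, $\overline{\msF}_2$ is divisible by $x^2$. It is not divisible by $x^3$ because of the term $t^2 y = \mu t^2 x^2$, or, for $\mu = 0$, the term coming from $z^2 w = \lambda z^2 x^6$ together with the quasismoothness terms; we will only use general $(\lambda,\mu)$. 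The factor $x^2$ together with $\overline{\msF}_1$ cuts out $2\Gamma$, and we will check that $x = \overline{\msF}_1 = \overline{\msF}_2 / x^2 = 0$ is a finite set, so that $m = 2$. The residual class then satisfies
\[
(B \cdot \tilde{\Xi}_{\lambda,\mu}) = -\frac{2}{5} + \frac{2}{5} = 0,
\]
where we also check that $\tilde{S}_{\lambda} \cap \tilde{T}_{\mu} \cap E$ contains no curve, so that $\tilde{S}_{\lambda}|_{\tilde{T}_{\mu}} = 2 \tilde{\Gamma} + \tilde{\Xi}_{\lambda,\mu}$. Hence some component of $\tilde{\Xi}_{\lambda,\mu}$ has nonpositive $B$-degree. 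For distinct pairs $(\lambda,\mu)$ the intersections $S_{\lambda} \cap T_{\mu}$ meet only along $\Gamma$, so these components give infinitely many distinct curves. Lemma~\ref{lem:excl:infinitecurve} then shows that $\msp$ is not a maximal center.

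\emph{Main obstacle.} The delicate step is the exact multiplicity $m = 2$ and the verification that $(E \cdot \tilde{\Gamma}) = 1$. If the local structure makes $\Gamma$ appear with a different multiplicity, we would adjust the pair of surfaces, for instance replacing $S_{\lambda}$ by $(w - \lambda x^6 - \nu x^4 y = 0)_X$ or changing the degree of $T_{\mu}$, so that the residual still has $B$-degree $0$.
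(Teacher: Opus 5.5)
Your proposal is correct and is essentially the paper's own proof. The paper uses the same two pencils $(y-\lambda x^2=0)_X$ and $(w-\mu x^6=0)_X$, with the names of the two surfaces swapped. It shows that $\overline{\msF}_2$ is divisible by exactly $x^2$ because of the $t^2y$ term, so $\Gamma$ appears with multiplicity $2$. It then combines $(B\cdot\tilde{\Gamma})=-1/5$ with $(B\cdot\tilde{S}_{\lambda}\cdot\tilde{T}_{\mu})=-2/5$ to get residual cycles of $B$-degree $0$.

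Your explicit check that $\tilde{S}_{\lambda}\cap\tilde{T}_{\mu}\cap E$ contains no curve holds for general $\lambda$, and the paper leaves it implicit. The fallback adjustments in your last paragraph are not needed.
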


\begin{proof}
For $\lambda, \mu \in \mbC$, we set $S_{\lambda} \coloneq (y - \lambda x^2 = 0)_X$ and $T_{\mu} \coloneq (w - \mu x^6 = 0)_X$.
Then,
\[
S_{\lambda} \cap T_{\mu} = (y - \lambda x^2 = w - \mu x^6 = \overline{\msF}_1 = \overline{\msF}_2 = 0) \subset \mbP (1, 2, 3, 4, 5, 6),
\]
where $\overline{\msF}_i \coloneq \msF_i (x, \lambda x^2, z, s, t, \mu x^6) \in \mbC [x, z, s, t]$.
By the properties of $f_8, g_9, g_{12}$ given in Lemma~\ref{lem:No37excl3C1eq}, we see that $x^2 \mid \overline{\msF}_2$ and $\msG_{\lambda, \mu} = \msG_{\lambda, \mu} (x, z, s, t) \coloneq \overline{\msF}_2/x^2 \in \mbC [x, z, s, t]$ is not divisible by $x$.
We also see that $\lambda t^2$ is the only monomial in $\msG_{\lambda, \mu}$ that is not divisible by $x$ and that $x^2 \mid f_8 (x, \lambda y^2, s, t, \mu x^6)$ since $f_8 \in (x^2, y)$.
Thus, as a $1$-cycle on $X$, we can write
\[
S_{\lambda} \cdot T_{\mu} = 2 \Gamma + \Delta_{\lambda, \mu},
\]
where $\Gamma \coloneq (x = y = w = z t + s^2 = 0)$ is a WCI curve of type $(1, 2, 6, 8)$ and $\Delta_{\lambda, \mu}$ is an effective $1$-cycle on $X$ which does not contain $\Gamma$ in its support.
We have $(A \cdot \Gamma) = 2/15$ and $(E \cdot \tilde{\Gamma}) = 1$, hence $(B \cdot \tilde{\Gamma}) = -1/5$.
We have $\tilde{S}_{\lambda} \sim 2\varphi^*A - \frac{2}{3} E$ and $\tilde{T}_{\lambda} \sim 6 \varphi^*A - \frac{6}{3} E$.
It follows that
\[
(B \cdot \tilde{\Delta}_{\lambda, \mu}) = (B \cdot \tilde{S}_{\lambda} \cdot \tilde{T}_{\mu}) - 2 (B \cdot \tilde{\Gamma}) 
= - \frac{2}{5} + \frac{2}{5} = 0.
\]
For each pair $(\lambda, \mu) \in \mbC^2$, there is at least one component of $\Delta_{\lambda, \mu}$ that intersects $-K_Y$ non-positively.
For distinct $(\lambda, \mu), (\lambda', \mu') \in \mbC^2$, we have $(S_{\lambda} \cap T_{\mu}) \cap (S_{\lambda'} \cap T_{\mu'}) = (x = y = w = 0)_X = \Gamma$ set-theoretically, and hence there are infinitely many curves on $Y$ that intersect $-K_Y$ non-positively.
By Lemma~\ref{lem:excl:infinitecurve}, $\msp$ is not a maximal center.
\end{proof}

\subsubsection{Case: $s^2 \notin \msF_1$}

We consider the case where $s^2 \notin \msF_1$.

\begin{Lem}
\label{lem:No37excl3C2eq}
The defining polynomials of $X$ can be written as
\[
\begin{split}
\msF_1 &= z t + w y + f_8, \\
\msF_2 &= z^2 w + z g_9 + w^2 + s^3 + g_{12},
\end{split}
\]
where $f_8 \in \mbC [x, y, s, t]$ and $g_9, g_{12} \in \mbC [x, y, s, t, w]$ are such that $f_8 \in (x^3, x y, y^2)$, $g_9, g_{12} \in (x, y)$.
\end{Lem}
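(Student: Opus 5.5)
We work in $\mbP(1,2,3,4,5,6)$ with coordinates $x,y,z,s,t,w$ and put $\msp=\msp_z$, the $\frac{1}{3}(1,1,2)$ point. The plan is to normalize the two equations monomial by monomial, as in the proofs of Lemmas~\ref{lem:No37excl3C1eq} and \ref{lem:No31excl3C4eq}.

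First we collect the monomials forced by quasismoothness. By Lemma~\ref{lem:No37exclzt} we have $zt\in\msF_1$, and after rescaling $t$ its coefficient is $1$. Quasismoothness of $X$ at $\msp$ gives a monomial $z^k v\in\msF_i$ for each local coordinate being eliminated. Since $zt\in\msF_1$ handles the $\msF_1$ direction, we need $z^2w$ or $z^4$ in $\msF_2$. However, $\msp\in X$ forces $z^4\notin\msF_2$, so $z^2w\in\msF_2$. Next we look at the point $\msp_w$: only $w^2$ can appear in degree $12$ among the powers of $w$, and only $wy$ can appear linearly in $w$ in degree $8$. Quasismoothness at $\msp_w$ (or, if $\msp_w\notin X$, the requirement $w^2\in\msF_2$) therefore gives $w^2\in\msF_2$. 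We also need to show $wy\in\msF_1$, and I expect this to be the main point. The argument is by contradiction using the hypothesis $s^2\notin\msF_1$. If moreover $wy\notin\msF_1$, then $\msF_1(0,0,z,s,t,w)=zt$. The locus $(x=y=0)\cap(\msF_1=0)$ then contains $(x=y=t=0)$, and intersecting with $\msF_2=0$ gives a nonempty set. On the component $(x=y=z=0)$ we would find a point where the Jacobian drops rank: there $\partial\msF_1$ has only the $x$, $y$ and $z$ entries, but on $z=0$ the $z$-entry is $t$, and we would pick a point with $t=0$ (for instance $x=y=z=t=0$ together with $\msF_2=0$, a nonempty finite set in $\mbP(4_s,6_w)$). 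Thus $wy\in\msF_1$, and after rescaling $y$ its coefficient is $1$. Finally, $s^3\in\msF_2$: otherwise, since $s^2\notin\msF_1$, the point $\msp_s$ lies on $X$ and $X$ would be non-quasismooth there. We rescale so that $\coeff_{\msF_2}(s^3)=1$.

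Next comes the normalization. We use $zt$ and $wy$ in $\msF_1$ to absorb other monomials by coordinate changes. Terms $z\cdot(\text{deg }5)$ are absorbed by $t\mapsto t+\cdots$. Terms in $\msF_1$ divisible by $w$ are $wy$ and $wx^2$, and $wx^2$ is absorbed via $y\mapsto y-\lambda x^2$. The remaining part $f_8$ of $\msF_1$ lies in $\mbC[x,y,s,t]$, and since $s^2\notin\msF_1$ we have $f_8\in(x,y)$. We then refine this to $(x^3,xy,y^2)$ by absorbing the terms $x\,(\text{deg }7)$ and $y\,(\text{deg }6)$ involving $t$ or $s$ into the $zt$ and $wy$ terms through substitutions $z\mapsto z+\cdots$ and $w\mapsto w+\cdots$. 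For example, $xst$ is absorbed by $z\mapsto z-xs$, $yst$ does not occur by degree, and $ys\cdot(\deg 2)$-type terms are absorbed by shifting $w$. Degree bookkeeping will confirm which monomials survive. In $\msF_2$ we complete the square in $w$, remove $z^2(\deg 6)$ terms by shifting $w$, and subtract multiples of $\msF_1$ to eliminate the monomials not in $(x,y)$ that are not $w^2$, $s^3$ or $z^2w$. The candidates of this kind are $zst$, $t^2\cdot(\ldots)$ and $s^3$-type terms. We remove $zst$ using $s\msF_1$, while $z^3\cdot(\deg 3)$ involves only $z^3\cdot z$, which is excluded. The leftover $g_9$ and $g_{12}$ then lie in $(x,y)$.

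The expected obstacle is to make the coordinate changes compatible, since shifting $z$ or $w$ reintroduces terms in $\msF_2$. Handling this requires ordering the substitutions, with $\msF_1$ fixed first and $\msF_2$ afterwards. It also requires checking that the shifts used in $\msF_2$ preserve the normalized form of $\msF_1$ modulo terms in $(x^3,xy,y^2)$.
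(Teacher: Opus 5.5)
Your proposal is essentially the paper's proof. The monomials $zt$, $z^2w$, $w^2$, $s^3$ are forced by Lemma~\ref{lem:No37exclzt} and quasismoothness, and $wy\in\msF_1$ because otherwise $\msF_1\in(x,y,z,t)^2$, so $X$ is not quasismooth on the nonempty set $(x=y=z=t=\msF_2=0)$; this is exactly your argument at a point with $x=y=z=t=0$.

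Some details in your normalization are wrong, though none of them affects the result:
\begin{itemize}
\item $f_8\in(x^3,xy,y^2)$ holds automatically by degree, because every degree-$8$ monomial in $x,y,s,t$ other than $s^2$ already lies in that ideal. No $z$- or $w$-shifts are needed here, and your example is invalid: $xst$ has degree $10$, and $z\mapsto z-xs$ is not weighted homogeneous.
\item Completing the square in $w$ is unnecessary and would remove the $z^2w$ term.
\item The $z^2(\cdots)$ terms of $\msF_2$ include $z^3xy$ and $z^3x^3$, contrary to your claim that $z^3\cdot(\deg 3)$ gives only $z^4$. They are removed by shifts $w\mapsto w-h$ with $h\in(x,y)$. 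This process terminates because each round only creates $z^2$-terms of higher order in $(x,y)$, which settles the compatibility obstacle you flagged.
\end{itemize}
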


\begin{proof}
By Lemma~\ref{lem:No37exclzt}, by the assumption and by the quasismoothness of X, we have $z t \in \msF_1$ and $z^2 w, w^2, s^3 \in \msF_2$.
We may assume that their coefficients are all $1$ by rescaling coordinates.
Then we can write
\[
\begin{split}
\msF_1 &= z t + w f_2 + f_8, \\
\msF_2 &= z^2 w + z (\alpha s t + g_9) + w^2 + s^3 + g_{12},
\end{split}
\]
where $\alpha \in \mbC$, $f_2 \in \mbC [x, y]$, $f_8 \in (x, y) \subset \mbC [x, y, s, t]$ and $g_i \in (x, y) \subset \mbC [x, y, s, t, w]$.
Replacing $\msF_2$ by $\msF_2 - \alpha s \msF_1$, we may assume that $\alpha = 0$.
It is straightforward to see that $f_8 \in (x^2, x y, y^2)$.
If $y \notin f_2$, then $\msF_1 \in (x, y, z, t)^2$ and $X$ cannot be quasismooth at any point of the nonempty set
\[
(x = y = z = t = \msF_2 = 0) \subset X.
\]
Hence we may assume that $f_2 = y$ after replacing $y$. 
We then obtain the desired defining polynomials.
\end{proof}

We choose homogeneous coordinates as in Lemma~\ref{lem:No37excl3C2eq}.
We can choose $x, y, s$ as local orbifold coordinates of $X$ at $\msp$ and $\ord_E (x, y, s) = \frac{1}{3} (1, 2, 1)$.
By the equations $\msF_1 = \msF_2 = 0$, we have $\ord_E (t, w) = \frac{1}{3} (5, 3)$.

\begin{Prop}
There are infinitely many irreducible and reduced curves that intersect $-K_Y$ trivially.
In particular, $\msp$ is not a maximal center.
\end{Prop}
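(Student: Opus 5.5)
We follow the strategy of Proposition~\ref{prop:No37excl3C1}: exhibit a two-parameter family of complete intersections whose $1$-cycles all contain a fixed curve $\Gamma$ with multiplicity at least $2$. We then check that the residual cycles meet $B=-K_Y$ trivially (or negatively), and conclude by Lemma~\ref{lem:excl:infinitecurve}.

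We work in the coordinates of Lemma~\ref{lem:No37excl3C2eq}, with $\msp=\msp_z$ and $\ord_E(x,y,s,t,w)=\frac13(1,2,1,5,3)$. For $\lambda,\mu\in\mbC$, set $S_\lambda\coloneq(y-\lambda x^2=0)_X$ and $T_\mu\coloneq(t-\mu x^5=0)_X$. Since $\ord_E(y)=2/3$ and $\ord_E(t)=5/3$, we have
\[
\tilde S_\lambda\sim 2\varphi^*A-\tfrac23E=2B, \qquad \tilde T_\mu\sim5\varphi^*A-\tfrac53E=5B.
\]
Using $(A^3)=2/15$ and $(E^3)=9/2$ for a $\frac13(1,1,2)$ point, we get $(B^3)=2/15-\frac1{27}\cdot\frac92=-1/30$. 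Hence
\[
(B\cdot\tilde S_\lambda\cdot\tilde T_\mu)=-\tfrac13 .
\]

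Next we find the common curve. Put $y=\lambda x^2$ and $t=\mu x^5$. Then $\msF_1$ becomes
\[
\mu x^5z+\lambda x^2w+f_8(x,\lambda x^2,s,\mu x^5).
\]
Since $f_8\in(x^3,xy,y^2)$, this polynomial is divisible by $x^2$. Setting $x=0$ in $\msF_2$ gives $z^2w+w^2+s^3$, because $g_9,g_{12}\in(x,y)$. Thus $S_\lambda\cap T_\mu\supset\Gamma\coloneq(x=y=t=z^2w+w^2+s^3=0)$, and as $1$-cycles
\[
S_\lambda\cdot T_\mu=m\Gamma+\Delta_{\lambda,\mu},\qquad m\ge2,
\]
where $\Delta_{\lambda,\mu}$ is effective and does not contain $\Gamma$. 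We need to check that $\Gamma$ is an irreducible and reduced curve in $\mbP(3_z,4_s,6_w)$. It is a quasismooth WCI curve of type $(1,2,5,12)$, so $(A\cdot\Gamma)=12/(3\cdot4\cdot6)=1/6$. The curve $\Gamma$ is quasismooth at $\msp$ with local parameter $s$, where $\ord_E(s)=1/3$. So $\tilde\Gamma$ meets $E$ transversally at one point, giving $(E\cdot\tilde\Gamma)=1$ and $(B\cdot\tilde\Gamma)=1/6-1/3=-1/6$. Moreover $\tilde S_\lambda\cap\tilde T_\mu\cap E$ contains no curve, so the same decomposition holds on $Y$. Therefore
\[
(B\cdot\tilde\Delta_{\lambda,\mu})=-\tfrac13+\tfrac m6\ge0 .
\]
The argument needs equality, i.e.\ $m=2$ exactly. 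To see this, write $\overline{\msF}_1=x^2\msG_{\lambda,\mu}$. The only term of $\msG_{\lambda,\mu}$ not divisible by $x$ is $\lambda w$, together with possibly the $y^2$- and $xy$-contributions of $f_8$, which are also constant in $x$ up to $\lambda$. Hence for general $(\lambda,\mu)$ the surface $(\msG_{\lambda,\mu}=0)$ does not contain $\Gamma$, so $m=2$ and $(B\cdot\tilde\Delta_{\lambda,\mu})=0$. Consequently some irreducible component of $\tilde\Delta_{\lambda,\mu}$ meets $-K_Y$ nonpositively.

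Finally, for distinct pairs $(\lambda,\mu)\neq(\lambda',\mu')$ we have $(S_\lambda\cap T_\mu)\cap(S_{\lambda'}\cap T_{\mu'})=(x=y=t=0)_X=\Gamma$ set-theoretically. So these components are pairwise distinct, and we obtain infinitely many irreducible curves on $Y$ with $(-K_Y\cdot C)\le0$. Lemma~\ref{lem:excl:infinitecurve} then shows that $\msp$ is not a maximal center.

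The main obstacle is the multiplicity bookkeeping. We must verify that $\Gamma$ appears in $S_\lambda\cdot T_\mu$ with multiplicity exactly $2$ for general parameters, and that no further component is forced through $\msp$ inside $E$. If $m>2$ happened for all parameters, $(B\cdot\tilde\Delta)$ would be positive and the argument would fail. In that case, the first alternative to try is replacing $t-\mu x^5$ by a combination involving $w x^{-1}$-type corrections, i.e.\ using the variable $\xi=zt+wy$-adjusted coordinate.
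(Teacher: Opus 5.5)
Your proposal is correct and follows essentially the same argument as the paper: the same families $S_\lambda=(y-\lambda x^2=0)_X$ and $T_\mu=(t-\mu x^5=0)_X$, the same curve $\Gamma=(x=y=t=z^2w+w^2+s^3=0)$ occurring with multiplicity $2$, and the same computation $(B\cdot\tilde\Delta_{\lambda,\mu})=-\frac13+\frac13=0$. You also supply the distinctness argument for different $(\lambda,\mu)$, which the paper leaves implicit at this point.

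One small correction concerns $\msG_{\lambda,\mu}$. After substituting $y=\lambda x^2$, every term of $f_8\in(x^3,xy,y^2)$ is divisible by $x^3$, so there are no extra $x$-constant contributions. Hence $\msG_{\lambda,\mu}|_{x=0}=\lambda w$ exactly, and $m=2$ holds as soon as $\lambda\neq0$. Your closing contingency plan is therefore unnecessary.
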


\begin{proof}
For $\lambda, \mu \in \mbC$, we set $S_{\lambda} \coloneq (y - \lambda x^2 = 0)_X$ and $T_{\mu} \coloneq (t - \mu x^5 = 0)_X$.
We have
\[
S_{\lambda} \cap T_{\mu} = (y - \lambda x^2 = t - \mu x^5 = \overline{\msF}_1 = \overline{\msF}_2 = 0) \subset \mbP (1, 2, 3, 4, 5, 6),
\]
where $\overline{\msF}_i \coloneq \msF_i (x, \lambda x^2, z, s, \mu x^5, w)$.
We have $x^2 \mid \overline{\msF}_1$ and $\msG_{\lambda, \mu} \coloneq \overline{\msF}_1/x^2 = \lambda w + \cdots$, where any monomial in the omitted term is divisible by $x$.
We define 
\[
\Gamma \coloneq (x = y = t = z^2 w + w^2 + s^3 = 0) \subset X.
\]
Then, $\Gamma$ appears in the $1$-cycle $S_{\lambda} \cdot T_{\mu}$ with multiplicity $2$ and we can write
\[
S_{\lambda} \cdot T_{\mu} = 2 \Gamma + \Delta_{\lambda, \mu},
\]
where $\Delta_{\lambda, \mu}$ is an effective $1$-cycle that does not contain $\Gamma$ in its support.
We have $(A \cdot \Gamma) = 1/6$ and $(E \cdot \tilde{\Gamma}) = 1$, which implies $(B \cdot \tilde{\Gamma}) = -1/6$.
We also have $\tilde{S}_{\lambda} \sim 2 \varphi^*A - \frac{2}{3} E$ and $\tilde{T}_{\mu} = 5 \varphi^*A - \frac{5}{3} E$.
It follows that
\[
(B \cdot \tilde{\Delta}_{\lambda}) = (B \cdot \tilde{S}_{\lambda} \cdot \tilde{T}_{\mu}) - 2 (B \cdot \tilde{\Gamma}) = - \frac{1}{3} + \frac{1}{3} = 0.
\]
For each pair $(\lambda, \mu) \in \mbC^2$, there is at least one component of $\Delta_{\lambda, \mu}$ that intersects $-K_Y$ non-positively.
By Lemma~\ref{lem:excl:infinitecurve}, $\msp$ is not a maximal center.
\end{proof}

\subsection{Family \textnumero~47 and $\frac{1}{3} (1, 1, 2)$ points}

Let $X = X_{10, 12} \subset \mbP (1, 3, 4, 4, 5, 6)$ be a member of family \textnumero~47 and let $\msp  = \msp_y \in X$ be the $\frac{1}{3} (1, 1, 2)$ point.
The generality assumption in \cite{OkadaI} imposed for $\msp \in X$ is the nonexistence of a WCI curve of type $(1, 4, 5, 6)$ on $X$ passing through $\msp$.
We assume that there is a WCI curve of type $(1, 4, 5, 6)$ on $X$ passing through $\msp$.

\subsubsection{Case: either $y^2 z \in \msF_1$ or $y^2 s \in \msF_1$}
\label{sec:No47excl3C1}

We consider the case where either $y^2 z \in \msF_1$ or $y^2 s \in \msF_1$.

\begin{Lem}
\label{lem:No47excl3C1eq}
The defining polynomials of $X$ can be written as
\[
\begin{split}
\msF_1 &= y^2 z + y f_7 + s w + t^2 + f_{10}, \\
\msF_2 &= y^2 w + y (\theta s t + g_9) + w^2 + g_{12},
\end{split}
\]
where $\theta \in \mbC$ and $f_7, g_9, g_{12} \in (x, z) \subset \mbC [x, z, s, t, w]$.
\end{Lem}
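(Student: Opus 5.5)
Here $X = X_{10,12} \subset \mbP(1,3,4,4,5,6)$ with coordinates $x,y,z,s,t,w$ of weights $1,3,4,4,5,6$, and $\msp = \msp_y$. I would argue as in Lemmas~\ref{lem:No31excl3C1eq} and \ref{lem:No37excl3C1eq}: use quasismoothness to force certain monomials, then normalize by coordinate changes. Along the way I would use the WCI curve $\Gamma$ of type $(1,4,5,6)$ through $\msp$ to kill the remaining mixed terms.

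\textbf{Monomials forced by quasismoothness.}
\begin{itemize}
\item Since $z$ and $s$ both have weight $4$, after a linear change in $z,s$ I may assume $y^2 z \in \msF_1$ and $y^2 s \notin \msF_1$.
\item Quasismoothness at $\msp_y$ gives $\rank J_X(\msp_y) = 2$. The only degree-$12$ monomials of the form $y^k v$ are $y^2 w$ and $y^4$. Since $\msp_y \in X$ forces $y^4 \notin \msF_2$, we get $y^2 w \in \msF_2$.
\item The point $\msp_w$ lies outside $X$, since $w$ has weight $6$ and $w^2$ has degree $12$. So $w^2 \in \msF_2$.
\item Similarly, $t^2 \in \msF_1$, or else $\msp_t \in X$. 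At $\msp_t$, quasismoothness would then require monomials $t^k v$ of degree $10$ or $12$ involving a single other variable. The candidates are $t^2$ itself, or $t \cdot(\deg 7)$ and $t\cdot(\deg 5)$, which would need to be $t\cdot t$. So this is impossible, and in any case $t^2 \in \msF_1$.
\end{itemize}
I rescale so that all of these coefficients equal $1$.

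\textbf{Normalizing coordinates.}
\begin{enumerate}
\item Replace $\msF_2$ by $\msF_2 - c\, w\, \msF_1$-type combinations. Actually $w\msF_1$ has degree $16$, so this is not available. Instead, complete the square in $w$ in $\msF_2$ and absorb $y^2 w$.
\item Use $z \mapsto z + (\cdots)$ to absorb into $y^2 z$ all terms of $\msF_1$ divisible by $y^2$.
\item Use $w \mapsto w + (\cdots)$ to absorb into $y^2 w$ all terms of $\msF_2$ divisible by $y^2$ other than $y^2 w$ itself. This is possible because $w$ appears linearly with coefficient $y^2$; I would also keep $w^2$ clean.
\end{enumerate}
After this, the terms of $\msF_1$ linear in $y$ are $y f_7$, and those of $\msF_2$ are $y g_9$.

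\textbf{Using the curve $\Gamma$.}
A WCI curve of type $(1,4,5,6)$ through $\msp$ must be cut out by $x$, a degree-$4$ form, a degree-$5$ form and a degree-$6$ form. Restricting to $(x = 0)$, the degree-$10$ terms of $\msF_1$ free of $y$ and $x$ form a quadratic form $q(z,s)\,t^0 + t^2 + (\text{terms in } w)$. Existence of $\Gamma$ forces the restriction of $(\msF_1,\msF_2)$ to $x = \ell_4 = 0$ to contain the curve. Carrying this out, I expect to find the following.
\begin{itemize}
\item The degree-$10$ part $\msF_1(0,0,z,s,t,w)$ can be brought to $s w + t^2 + (\text{terms in } (z))$ after choosing $\ell_4 = z$. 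The monomial $s w$ must appear, by quasismoothness along $(x = y = z = 0)$.
\item Remaining terms $z w$ and $z s$, $z^2$ in $\msF_1$ are absorbed into $(x,z)$.
\item In $\msF_2$, the $y$-linear part restricted to $x = z = 0$ is $\theta s t$, the only degree-$9$ monomial in $s,t,w$.
\end{itemize}
Hence $f_7, g_9, g_{12}$, together with the leftover $f_{10}$, lie in $(x,z)$.

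\textbf{Main obstacle.}
The main delicate step is the $\Gamma$-analysis, i.e.\ showing that the existence of $\Gamma$ kills $s^2$, $s^3$-type terms such as $s^3$ in $g_{12}$. The way I plan to do this is to compute $(x = z = 0)_X$ explicitly, as was done for families \textnumero~31 and \textnumero~37, and to require that it contain a component $\Gamma = (x = z = t = w = 0)$ or similar. I would then simplify using $\msF_2 \mapsto \msF_2 - (\text{linear})\,\msF_1$.
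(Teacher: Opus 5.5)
There is a genuine gap, and it is exactly the step you label the main obstacle. The proposal never derives, from the existence of $\Gamma$, two facts. First, that $\Gamma=(x=z=t=w=0)$ for the \emph{same} $z$ with $y^2z\in\msF_1$. Second, that $s^3\notin\msF_2$, i.e.\ $g_{12}\in(x,z)$. Beyond routine normalization, this is the whole content of the lemma.

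Writing ``after choosing $\ell_4=z$'' is not legitimate, since $z$ was already fixed by $y^2z\in\msF_1$, $y^2s\notin\msF_1$. Your plan to compute $(x=z=0)_X$ also presupposes the answer.

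The missing argument is short:
\begin{enumerate}
\item Modulo $x$, the only quintic is $t$, and the sextics are spanned by $w$ and $y^2$.
\item The four equations of $\Gamma$ cut out a curve through $\msp_y$. So, up to scalars and multiples of $x$, the quintic is $t$ and the sextic is $w$ (a $y^2$-term would not vanish at $\msp_y$). Hence $\Gamma\subset(x=t=w=0)_X$.
\item On that locus $\msF_1$ restricts to $y^2z$, because there are no monomials of degree $7$ or $10$ in $z,s$ alone. $\msF_2$ restricts to the cubic $g_{12}(0,z,s,0,0)$.
\item Since $y\not\equiv0$ on $\Gamma$, this gives $\Gamma=(x=z=t=w=0)$, and $\Gamma\subset X$ forces $s^3\notin g_{12}$.
\end{enumerate}
Alternatively, you can take $z$ to be the quartic of $\Gamma$ from the start. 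Then $\Gamma\subset X$ kills $y^2s$ in $\msF_1$ and $s^3$ in $\msF_2$ at once, and the case hypothesis yields $y^2z\in\msF_1$.

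Your $sw$ step depends on this. Quasismoothness along $(x=y=z=0)_X$ forces $sw\in\msF_1$ only because, once $s^3\notin\msF_2$, that locus is the single point $\msp_s$. There $sw$ is the only degree-$10$ monomial of the form $s^kv$. If instead $s^3\in\msF_2$, then $\msp_s\notin X$, and at the point of $(x=y=z=0)_X$ the rank condition can be supplied by $zw$ rather than $sw$. So $sw$ is not forced in that case. The correct order is: $\Gamma$ gives $s^3\notin g_{12}$, hence $\msp_s\in X$, hence $sw\in\msF_1$, hence $f_{10}\in(x,z)$.

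Two smaller corrections:
\begin{itemize}
\item Your normalization step 1 (completing the square in $w$) would remove the $y^2w$ term you need. Drop it in favour of step 3.
\item There are no degree-$10$ monomials $q(z,s)$, $zs$, $z^2$, since $z,s$ have weight $4$. The degree-$10$ monomials in $z,s,t,w$ are just $t^2$, $zw$, $sw$.
\end{itemize}
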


\begin{proof}
We may assume $y^2 z \in \msF_1$.
By the quasismoothness of $X$ at $\msp$, we have $y^2 w \in \msF_2$.
We also have $t^2 \in \msF_1$ and $w^2 \in \msF_2$.
Hence we may write 
\[
\begin{split}
\msF_1 &= y^2 z + y f_7 + t^2 + f'_{10}, \\
\msF_2 &= y^2 w + y g'_9 + w^2 + g_{12}
\end{split}
\] 
for some $f_7, f'_{10}, g'_9, g'_{12} \in \mbC [x, z, s, t, w]$ such that $t^2 \notin f'_{10}$ and $w^2 \notin g_{12}$.
We see that $f_7$ is divisible by $x$, and hence $f_7 \in (x, z)$.
We set $\alpha \coloneq \coeff_{g'_9} (s t)$ and write $g'_9 = g_9 + \alpha s t$.
Then, $g_9 \in (x, z)$.
Let $\Gamma \subset X$ be a WCI curve of type $(1, 4, 5, 6)$ passing through $\msp$.
Then $\Gamma$ is contained in $(x = t = w = 0)_X$ and
\[
(x = t = w = 0)_X = (x = t = w = y^2 z = g_{12} (0, z, s, 0, 0) = 0).
\]
This shows that $\Gamma = (x = z = t = w = 0)$ and $s^3 \notin g_{12}$, which implies that $g_{12} \in (x, z)$.
By the quasismoothness of $X$ at $\msp_s \in X$, we have $s w \in f'_{10}$.
Rescaling coordinates, we may write $f'_{10} = s w + f_{10}$, where $s w \notin f_{10}$.
It follows that $f_{10} \in (x, z)$ and the proof is complete.
\end{proof}

We choose homogeneous coordinates as in Lemma~\ref{lem:No47excl3C1eq}.
We can choose $x, s, t$ as local orbifold coordinates of $X$ at $\msp$ and $\ord_E (x, s, t) = \frac{1}{3} (1, 1, 2)$.
By the equations $\msF_1 = \msF_2 = 0$, we have $\ord_E (z) = \frac{4}{3}$ and
\[
\ord_E (w) =
\begin{dcases}
\frac{3}{3}, & \text{if either $\theta \ne 0$ or $s^2 x \in g_9$}, \\
\frac{6}{3}, & \text{otherwise}.
\end{dcases}.
\]
We set $S \coloneq (x = 0)_X$ and let $T$ be a general member of the pencil generated by $z$ and $x^4$.
We have $\tilde{S} \sim \varphi^*A - \frac{1}{3} E = B$ and $\tilde{T} \sim 4 \varphi^*A - \frac{4}{3} E = 4 B$.
Note that $T$ is a normal surface by Lemma~\ref{lem:normqhyp}.

\begin{Prop}
\label{prop:No47excl3C1}
The support of $\tilde{S}|_{\tilde{T}}$ consists of two curves whose intersection matrix is negative-definite.
In particular, $\msp$ is not a maximal center.
\end{Prop}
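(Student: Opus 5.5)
The plan is to compute the $1$-cycle $S|_T$ explicitly, lift it to $Y$, and get the intersection matrix on $\tilde{T}$ from the relation $\tilde{S}|_{\tilde{T}} \sim B|_{\tilde{T}}$. Once that matrix is shown to be negative-definite, Lemma~\ref{lem:exclnegdef} applies with $S \sim_{\mbQ} B$ (so $b = 1$, $e = 0$) and the normal surface $\tilde{T}$. Normality of $\tilde{T}$ follows from that of $T$, which holds by Lemma~\ref{lem:normqhyp}.

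\emph{Step 1: the curves.} Since $T$ is general in the pencil spanned by $z$ and $x^4$, we have $S \cap T = (x = z = 0)_X$. By Lemma~\ref{lem:No47excl3C1eq}, the polynomials $f_7, f_{10}, g_9, g_{12}$ lie in $(x, z)$. Hence $S \cap T$ is the curve
\[
(s w + t^2 = y^2 w + \theta y s t + w^2 = 0) \subset \mbP (3_y, 4_s, 5_t, 6_w).
\]
It contains the WCI curve $\Gamma = (x = z = t = w = 0)$ of type $(1,4,5,6)$. I would solve $w = -t^2/s$ on $(s \ne 0)$ and factor, to show that
\[
S|_T = m \Gamma + \Delta,
\]
where $\Delta$ is a single irreducible and reduced curve not containing $\Gamma$. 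Here $m$ is the multiplicity of $\Gamma$, read off from the power of $t$ dividing the second equation after substitution; it may depend on whether $\theta = 0$. I also need that $\tilde{S} \cap \tilde{T} \cap E$ contains no curve, so that $\tilde{S}|_{\tilde{T}} = m\tilde{\Gamma} + \tilde{\Delta}$. This follows from the weights $\ord_E(x, s, t) = \frac{1}{3}(1,1,2)$ and $\ord_E(z) = \frac{4}{3}$.

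\emph{Step 2: numerics.} We have $(A^3) = 120/(3 \cdot 4 \cdot 4 \cdot 5 \cdot 6) = 1/12$ and $(E^3) = 9/2$. This gives
\[
(B \cdot \tilde{S} \cdot \tilde{T}) = 4\bigl((A^3) - \tfrac{1}{27}(E^3)\bigr) = -\tfrac{1}{3}.
\]
For $\Gamma$ we have $(A \cdot \Gamma) = 1/12$, and $\tilde{\Gamma}$ meets $E$ transversally at one smooth point, so $(E \cdot \tilde{\Gamma}) = 1$ and $(B \cdot \tilde{\Gamma}) = -1/4$. Then
\[
(B \cdot \tilde{\Delta}) = -\tfrac{1}{3} + \tfrac{m}{4},
\]
with $(A \cdot \Delta) = 1/3 - m/12$ and $(E \cdot \tilde{\Delta}) = 1 - m$ obtained by subtraction.

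Pairing $B|_{\tilde{T}} = m\tilde{\Gamma} + \tilde{\Delta}$ with $\tilde{\Gamma}$ and with $\tilde{\Delta}$ gives two linear relations among $(\tilde{\Gamma}^2)$, $(\tilde{\Delta}^2)$ and $k \coloneq (\tilde{\Gamma} \cdot \tilde{\Delta})$. I would determine one of these three quantities independently. The first choice is $k$, computed from the intersection points of $\Gamma$ and $\Delta$ away from $\msp$; this is easy when $\tilde{\Gamma}$ and $\tilde{\Delta}$ meet $E$ at distinct points or not at all. If that fails, I would instead compute $(\tilde{\Gamma}^2)$ by the adjunction/orbifold formula on the quasismooth curve $\tilde{\Gamma} \cong \mbP^1$, using the cyclic quotient singularities of $\tilde{T}$ along it (at $\msp_s$ and at the point $E \cap \tilde{\Gamma}$), as was done in Proposition~\ref{prop:No20excl2C2}(2).

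\emph{Step 3.} With the $2 \times 2$ matrix in hand, check that the diagonal entries are negative and the determinant is positive. I expect the main obstacle to be Step 1. One must identify precisely how $S \cap T$ decomposes, including the multiplicity $m$ and whether $\Delta$ is irreducible, separately in the cases $\theta \ne 0$ and $\theta = 0$; this is why $\ord_E(w)$ changes between these cases. One must also verify the quasismoothness or normality of $T$ along $\Gamma$ needed for the local intersection computations. If $\Delta$ turned out reducible in a degenerate subcase, the fallback is to treat three components, exactly as in Proposition~\ref{prop:No20excl2C2}(2).
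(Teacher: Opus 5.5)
Your route is the paper's: the same $S$ and $T$, the elimination $w=-t^2$ on $U_s$, the same numerics, and Lemma~\ref{lem:exclnegdef}. The elimination gives $t(-y^2t+\theta y+t^3)=0$ when $\theta\neq0$, so $S|_T=\Gamma+\Delta$. When $\theta=0$ it gives $t^2(t-y)(t+y)=0$, so $S|_T=2\Gamma+\Delta$ with $\Delta=(x=z=w+y^2=ws+t^2=0)$, which is irreducible because $\bmu_4$ swaps the branches $t=\pm y$. For $\theta\neq0$ you need no third quantity. Both $(B\cdot\tilde\Gamma)=-\frac14$ and $(B\cdot\tilde\Delta)=-\frac1{12}$ are negative, so $\begin{pmatrix}-\frac14-k&k\\ k&-\frac1{12}-k\end{pmatrix}$ is negative-definite for every $k\ge0$; this is all the paper uses. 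There is one minor slip: $(E\cdot\tilde S\cdot\tilde T)=\frac49(E^3)=2$, so $(E\cdot\tilde\Delta)=2-m$, not $1-m$.

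The genuine gap is the case $\theta=0$. Your formula gives $(B\cdot\tilde\Delta)=\frac16$, and this is correct: directly, $(A\cdot\Delta)=\frac16$ and $\msp\notin\Delta$. The paper's proof displays $-\frac1{12}$ here, dropping the coefficient $2$, and again only uses $k\ge0$. With the correct value, $(\tilde\Gamma^2)=-\frac18-\frac{k}{2}$ and $(\tilde\Delta^2)=\frac16-2k$, so the determinant is $\frac{k}{6}-\frac1{48}$. You therefore really need $k=(\tilde\Gamma\cdot\tilde\Delta)>\frac18$, and your plan does not deliver it.

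Both methods you name assume the intersection point is harmless. But $\tilde\Gamma\cap\tilde\Delta=\{\msp_s\}$, and $T$ is \emph{not} quasismooth there. The only monomials linear at $\msp_s$ are $sw\in\msF_1$ and $s^2z\in\msF_2$, so $\nabla\msF_2$ and $\nabla(z-\lambda x^4)$ are both multiples of $dz$. In the orbifold chart with coordinates $x,y,t$ (weights $\frac14(1,3,1)$), $\breve T$ is $-\frac{a}{c}\,xy+u\,t^2(y^2-t^2)+\cdots=0$. Here $u$ is a unit, $a=\coeff_{g_9}(xs^2)$ and $c=\coeff_{g_{12}}(s^2z)$. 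When $\theta=0$, quasismoothness of $X$ along $\Gamma$ forces $a\neq0$. Thus $\breve T$ has an $A_3$ point, and $T$ has a $\frac1{16}(1,3)$ point at $\msp_s$, not the ambient $\frac14(1,3)$ one. So the adjunction recipe of Proposition~\ref{prop:No20excl2C2} with the ambient index is not justified.

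The missing step is the local computation. The curves $\breve\Gamma=(x=t=0)$ and the branches $(x=0,\ t=\pm y)$ of $\breve\Delta$ have distinct tangent directions on the line $x=0$ of the tangent cone $xy=0$, and none of them is the $t$-direction. Hence their strict transforms meet the same end curve of the $A_3$ resolution at distinct points. Each branch therefore meets $\breve\Gamma$ with multiplicity $\frac34$, and $k=\frac14\bigl(\frac34+\frac34\bigr)=\frac38>\frac18$. As a check, adjunction with $K_{\tilde T}\sim_{\mbQ}3B|_{\tilde T}$ and the $\frac1{16}(1,3)$ point gives $(\tilde\Gamma^2)=-\frac5{16}$, which matches. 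This computation cannot be skipped. Since $4\breve\Gamma$ is Cartier and $(\breve\Gamma\cdot\mathrm{div}(t-y))=1$, soft arguments only give $k\in\{\frac18,\frac14,\frac38\}$, and $k=\frac18$ would make the determinant vanish.
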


\begin{proof}
We set $\Gamma \coloneq (x = z = t = w = 0) \subset X$, which is the WCI curve of type $(1, 4, 5, 6)$ passing through $\msp$.
We have
\[
S \cap T = (x = z = w s + t^2 = y^2 w + \theta y s t + w^2 = 0)
\]

Suppose that $\theta \ne 0$.
On the open subset $U_s$, the scheme $S \cap T$ is isomorphic to the $\bmu_4$-quotient of the affine curve
\[
C \coloneq (w + t^2 = y^2 w + \theta y t + w^2 = 0) \subset \mbA^3_{y, t, w}.
\]
Eliminating the variable $w = - t^2$, we have
\[
C \cong (t (-y^2 t + \theta y + t^3) = 0) \subset \mbA^2_{y, t}.
\]
The curve $(t = 0)$ corresponds to $\Gamma$ on $U_s$ and the remaining component of $C$ defined by $- y^2 t + \theta y + t^3 = 0$ is irreducible and reduced since $\theta \ne 0$.
It follows that $S|_T = \Gamma + \Delta$, where $\Delta \ne \Gamma$ is an irreducible and reduced curve since $(s = 0) \cap S \cap T$ is a finite set of points.
We have $(A \cdot \Gamma) = 1/12$ and $(E \cdot \tilde{\Gamma}) = 1$.
Hence $(B \cdot \tilde{\Gamma}) = - 1/4$.
We have $\tilde{S}|_{\tilde{T}} = \tilde{\Gamma} + \tilde{\Delta}$, and hence
\[
(B \cdot \tilde{\Delta}) = (B \cdot \tilde{S} \cdot \tilde{T}) - (B \cdot \tilde{\Gamma}) = - \frac{1}{3} + \frac{1}{4} = - \frac{1}{12}.
\]
We set $m \coloneq (\tilde{\Gamma} \cdot \tilde{\Delta}) \ge 0$.
By taking intersection numbers of $B|_{\tilde{T}} = \tilde{S}|_{\tilde{T}} = \tilde{\Gamma} + \tilde{\Delta}$ and $\tilde{\Gamma}, \tilde{\Delta}$, we have
\[
\begin{pmatrix}
(\tilde{\Gamma}^2) & (\tilde{\Gamma} \cdot \tilde{\Delta}) \\
(\tilde{\Gamma} \cdot \tilde{\Delta}) & (\tilde{\Delta}^2)
\end{pmatrix} =
\begin{pmatrix}
-\frac{1}{4} - m & m \\
m & - \frac{1}{12} - m
\end{pmatrix}
\]
which is clearly negative-definite.
By Lemma~\ref{lem:exclnegdef}, $\msp$ is not a maximal center.

Suppose that $\theta = 0$.
Then $S|_T = 2 \Gamma + \Delta$, where 
\[
\Delta \coloneq (x = z = w + y^2 = w s + t^2 = 0)
\] 
is an irreducible and reduced curve. 
We have $(A \cdot \Gamma) = 1/12$ and $(E \cdot \tilde{\Gamma}) = 1$, hence $(B \cdot \tilde{\Gamma}) = -1/4$.
Then, $(B \cdot \tilde{\Delta}) = (B \cdot \tilde{S} \cdot \tilde{T}) - (B \cdot \tilde{\Gamma}) = -1/12$.
We set $m \coloneq (\tilde{\Gamma} \cdot \tilde{\Delta}) \ge 0$.
By taking intersection numbers of $B|_{\tilde{T}} \sim \tilde{S}|_{\tilde{T}} = 2 \tilde{\Gamma} + \tilde{\Delta}$ and $\tilde{\Gamma}$, $\tilde{\Delta}$, we have
\[
\begin{pmatrix}
(\tilde{\Gamma}^2) & (\tilde{\Gamma} \cdot \tilde{\Delta}) \\
(\tilde{\Gamma} \cdot \tilde{\Delta}) & (\tilde{\Delta}^2)
\end{pmatrix} =
\begin{pmatrix}
- \frac{m}{2} - \frac{1}{8} & m \\
m & -2m - \frac{1}{12}
\end{pmatrix},
\]
which is negative-definite.
By Lemma~\ref{lem:exclnegdef}, $\msp$ is not a maximal center.
\end{proof}

\subsubsection{Case: $y^2 z, y^2 s \notin \msF_1$}

We consider the case where $y^2 z, y^2 s \notin \msF_1$.

\begin{Lem}
\label{lem:No47excl3C2eq}
The defining polynomials of $X$ can be written as
\[
\begin{split}
\msF_1 &= y^3 x + y^2 f_4 + y f_7 + t^2 + f_{10}, \\
\msF_2 &= y^2 w + y (t \ell + g_9) + \ell_1 \ell_2 \ell_3 + g_{12},
\end{split}
\]
where $\ell = \ell (z, s), \ell_i = \ell_i (z, s)$ are linear forms in $z, s$, and $f_i, g_i \in \mbC [x, z, s, t, w]$ are such that $f_i, g_i \in (x, w)$ and no two of $\ell_1, \ell_2, \ell_3$ are mutually proportional.
\end{Lem}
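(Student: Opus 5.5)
We are in family No. 47, $X = X_{10,12} \subset \mbP(1_x,3_y,4_z,4_s,5_t,6_w)$, with $\msp = \msp_y$ of type $\frac{1}{3}(1,1,2)$. We assume $y^2 z, y^2 s \notin \msF_1$ and that a WCI curve of type $(1,4,5,6)$ passes through $\msp$. The plan is to read off the normal form in the same way as in Lemma~\ref{lem:No47excl3C1eq}: first use quasismoothness at $\msp$, then use the geometry of the WCI curve.

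\emph{The term $y^3 x$ in $\msF_1$.} The degree-$10$ monomials divisible by $y^2$ are $y^3 x$, $y^2 z$, $y^2 s$ and $y^2 x^4$. Since $y^2 z, y^2 s \notin \msF_1$, the only possible linear term of $\msF_1$ at $\msp$ is $y^3 x$. Quasismoothness at $\msp$ requires $\rank J_X(\msp) = 2$, so we need $y^3 x \in \msF_1$ and $y^2 w \in \msF_2$; the latter is the only $y^2\cdot(\text{variable})$ monomial of degree $12$. Rescaling $x$ and $w$ makes both coefficients $1$.

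\emph{The terms $t^2$ and the cubic in $z,s$.} Quasismoothness along $\msp_t$ (or along the locus $x=y=z=s=w=0$) forces $t^2 \in \msF_1$, and we normalize its coefficient to $1$. We then complete the square in $t$ using terms of $\msF_1$ of the form $t\cdot(\text{degree }5)$. These terms are only $t x(\dots)$ and $t y(\dots)$, so the substitution $t \mapsto t + (\cdots)$ keeps $\msF_1$ in the shape $y^3x + y^2 f_4 + y f_7 + t^2 + f_{10}$. We next use $\msF_2 - (\ast)\,\msF_1$ together with $w \mapsto w + (\cdots)$ to absorb into $y^2 w$ all monomials of the form $y^2\cdot(\deg 6)$. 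The parts of $\msF_2$ independent of $y$ and not divisible by $x$ or $w$ then have degree $12$ in $z,s,t$. These are a cubic $c(z,s)$ and possibly $t^2\cdot(\deg 2)$; the latter cannot occur. The coefficient of $y$ in degree $9$ is $t\,\ell(z,s)$ plus terms in $(x,w)$. Any term $t^2 q$ in $f_{10}$ is impossible by degree, and $w(\deg 4)$ lies in $(x,w)$ anyway. Hence every $f_i, g_i$ lies in $(x,w)$, provided we also check that $f_{10}$ has no pure $z,s$ part, which is true since degree $10$ is not divisible by $4$.

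\emph{The cubic $c(z,s)$ and the expected obstacle.} The main step is to show that $c(z,s) = \ell_1 \ell_2 \ell_3$ with pairwise non-proportional factors. This uses quasismoothness along the locus $(x = y = t = w = 0) \cong \mbP^1_{z,s}$, which lies on $X$ exactly where $c = 0$, since $\msF_1$ vanishes there.

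At a point of this locus, the only derivatives of $\msF_1$ that can be nonzero are $\prt/\prt x$ and $\prt/\prt w$, coming from $f_{10}$ and from $y^3x$, which vanishes. Similarly $\msF_2$ contributes $\prt_z c$, $\prt_s c$, $\prt_y$ (from $t\ell$, which vanishes at $t=0$, and from $g_9$), and $\prt_x, \prt_w$.

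If $c$ had a repeated root, or vanished identically, the $z,s$-derivatives of $\msF_2$ would vanish at that root. One then has to rule out that the $x$- and $w$-derivatives still give rank $2$. I expect this to be the delicate point. The WCI curve of type $(1,4,5,6)$ is $(x = t = w = \ell = 0)$ for some linear form $\ell$ in $z,s$, which forces $\ell \mid c$. It still has to be shown that the rank drops at a double root. Coefficients of monomials such as $zw$ and $sw$ in $f_{10}$, and $xz^2s$-type terms, enter the Jacobian there, so I will need a careful check. That check compares the $x$- and $w$-columns of the two rows and uses the degrees of the monomials available in $f_{10}$ and $g_{12}$. If the direct check fails, I would instead use quasismoothness at the point $\msp_z$ or $\msp_s$ after a linear change of $z,s$ that moves the double root there.
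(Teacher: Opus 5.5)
Your normal form is right and follows the paper's reduction. You get $y^3x$ and $y^2w$ from quasismoothness at $\msp_y$ and $t^2$ from quasismoothness at $\msp_t$, absorb the other $y^2\cdot(\text{degree }6)$ terms into $w$, and use degree counting to put $f_i, g_i$ in $(x,w)$. The WCI-curve hypothesis plays no role in this lemma. However, the one step with real content is left unfinished. That step is showing that $c(z,s)=\msF_2(0,0,z,s,0,0)$ is a product of three pairwise non-proportional linear forms. You call it delicate, list possible obstructions, and offer a fallback, so as written this is a gap. Moreover, the obstructions you list do not exist. For example, $xz^2s$ has degree $13$ and occurs in neither equation, and the $x$- and $w$-columns cannot affect the rank of the $\msF_2$-row.

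The missing check is a degree count. Let $\msq=(0\!:\!0\!:\!z_0\!:\!s_0\!:\!0\!:\!0)$ with $c(z_0,s_0)=0$; this point lies on $X$ because $\msF_1$ vanishes identically on $(x=y=t=w=0)$. For $v\in\{x,y,t,w\}$, a monomial of $\msF_2$ with nonzero $\partial/\partial v$ at $\msq$ would have the form $v\cdot m(z,s)$ with $\deg m = 11, 9, 7, 6$ respectively. None of these degrees is divisible by $4$, so no such monomial exists. Hence the $\msF_2$-row of $J_X(\msq)$ is $(0,0,\partial c/\partial z,\partial c/\partial s,0,0)$ evaluated at $(z_0,s_0)$, whatever the $\msF_1$-row is. (In the $\msF_1$-row only $\partial\msF_1/\partial w$, coming from $zw$ and $sw$, can be nonzero.) If $c\equiv 0$, this row vanishes along the whole curve $(x=y=t=w=0)\subset X$. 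If $c$ has a repeated linear factor, both partials of the binary cubic vanish at that root. In either case $\rank J_X(\msq)\le 1$, contradicting quasismoothness, so $c=\ell_1\ell_2\ell_3$ with pairwise non-proportional factors. Your fallback would also work and is the same computation: after moving the double root to $\msp_z$, we get $z^3, z^2s\notin\msF_2$, so $\msF_2$ has no linear term at $\msp_z$.
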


\begin{proof}
By the assumption and by the quasismoothness of $X$, we have $y^3 x, t^2 \in \msF_1$ and $y^2 w \in \msF_2$.
We may assume that their coefficients are all $1$.
We can write
\[
\begin{split}
\msF_1 &= y^3 x + y^2 f_4 + y f_7 + t^2 + f_{10}, \\
\msF_2 &= y^2 w + y (t \ell + g_9) + g'_{12},
\end{split}
\]
where $f_i, g_i \in \mbC [x, y, s, t, w]$, $g'_{12} \in \mbC [x, y, s, t, w]$ with $f_i, g_i \in (x, w)$ and $\ell = \ell (z, s)$ is a linear form in variables $z, s$.
By the quasismoothness of $X$, we can write $g'_{12} (0, z, s, 0, 0) = \ell_1 \ell_2 \ell_3$ for some linear forms $\ell_i = \ell_i (z, s)$ in $z, s$ and no two of them are mutually proportional.
Thus we obtain the desired defining polynomials. 
\end{proof}

We choose homogeneous coordinates as in Lemma~\ref{lem:No47excl3C2eq}.
We can choose $z, s, t$ as local orbifold coordinates of $X$ at $\msp$ and $\ord_E (z, s, t) = \frac{1}{3} (1, 1, 2)$.
By the equation $\msF_1 = \msF_2 = 0$, we have $\ord_E (x, w) = \frac{1}{3} (4, 3)$.
We set $S \coloneq (x = 0)_X$ and let $T$ be a general member of the pencil generated by $w$ and $x^6$.
We have $\tilde{S} \sim \varphi^*A - \frac{4}{3} E = B - E$ and $\tilde{T} \sim 6 \varphi^*A - \frac{3}{3} E = 6 B + E$.

\begin{Prop}
\label{prop:No47excl3C2}
The support of the intersection $\tilde{S} \cap \tilde{T}$ is the union of three irreducible and reduced curves that are numerically equivalent to each other, and $(\tilde{T}^2 \cdot \tilde{S}) < 0$.
In particular, $\msp$ is not a maximal center.
\end{Prop}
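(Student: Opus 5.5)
The plan is to apply Lemma~\ref{lem:exclnumprop} to $\tilde{S} \sim B - E$ and $\tilde{T} \sim 6B + E$. Everything reduces to describing $S \cap T$ explicitly using the normal form of Lemma~\ref{lem:No47excl3C2eq}.

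\emph{Step 1: the curves in $S \cap T$.} Since $T$ is a general member of the pencil spanned by $w$ and $x^6$, we have $S \cap T = (x = w = 0)_X$ set-theoretically. Every $f_i, g_i$ lies in $(x, w)$, so restricting to $x = w = 0$ leaves
\[
\msF_1|_{x = w = 0} = t^2, \qquad \msF_2|_{x = w = 0} = y t \ell + \ell_1 \ell_2 \ell_3 .
\]
Hence $(S \cap T)_{\red} = (x = w = t = \ell_1 \ell_2 \ell_3 = 0) = \Gamma_1 \cup \Gamma_2 \cup \Gamma_3$ with $\Gamma_i \coloneq (x = w = t = \ell_i = 0)$. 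Because no two of the $\ell_i$ are proportional, the $\Gamma_i$ are three distinct, irreducible and reduced WCI curves of type $(1, 4, 5, 6)$, and each passes through $\msp = \msp_y$. This gives $(A \cdot \Gamma_i) = \frac{1 \cdot 4 \cdot 5 \cdot 6}{1 \cdot 3 \cdot 4 \cdot 4 \cdot 5 \cdot 6} = \frac{1}{12}$.

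\emph{Step 2: numerical equivalence of the $\tilde{\Gamma}_i$.} In the local orbifold coordinates $z, s, t$ with weights $\frac{1}{3}(1,1,2)$, each $\Gamma_i$ is the image of the line $(\ell_i = t = 0)$ in a weight-$\frac13$ direction. Exactly as for the analogous curves in the proofs of Propositions~\ref{prop:No31excl3C2} and \ref{prop:No31excl3C4}, $\tilde{\Gamma}_i$ meets $E$ transversally at one smooth point of $Y$, so $(E \cdot \tilde{\Gamma}_i) = 1$. Therefore $(B \cdot \tilde{\Gamma}_i) = \frac{1}{12} - \frac{1}{3} = -\frac14$ for every $i$. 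Since $B$ and $E$ generate $\Pic(Y)_{\mbQ}$, the three curves $\tilde{\Gamma}_i$ are numerically equivalent.

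\emph{Step 3: the self-intersection.} Using $(E^3) = 9/2$ and $(A^3) = 1/12$,
\[
(\tilde{T}^2 \cdot \tilde{S}) = \left( (6 \varphi^*A - E)^2 \cdot (\varphi^*A - \tfrac{4}{3} E) \right) = 36 (A^3) - \tfrac{4}{3} (E^3) = 3 - 6 = -3 < 0 .
\]

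\emph{Step 4: applying Lemma~\ref{lem:exclnumprop}.} Take $a = 1$, $d = -1$, $b = 6$, $e = 1$. Then $0 \le e = 1 < a_X(E)\, b = \frac13 \cdot 6 = 2$ and $ae - bd = 7 \ge 0$, so condition (1) holds. For condition (2), I must check that the $1$-cycle $\tilde{S} \cap \tilde{T}$ has no component inside $E$. I plan to do this by writing $\tilde{S}|_E$ and $\tilde{T}|_E$ in the weighted coordinates of $E \cong \mbP(1_z, 1_s, 2_t)$. The lowest-weight parts of $x$ and $w$ come from $y^3 x + t^2 + \cdots$ and $y^2 w + y t \ell + \ell_1 \ell_2 \ell_3 + \cdots$. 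These should give two curves in $E$, roughly $(t^2 + \cdots = 0)$ and $(t \ell + \lambda t^2 + \cdots = 0)$, with no common component for general $T$. Granting that, the support of $\tilde{S} \cdot \tilde{T}$ is $\tilde{\Gamma}_1 \cup \tilde{\Gamma}_2 \cup \tilde{\Gamma}_3$, and these curves are numerically proportional. For condition (3), $(\tilde{T} \cdot \tilde{S} \cdot \tilde{T}) = -3 < 0$ together with proportionality gives $(\tilde{T} \cdot \tilde{\Gamma}_i) < 0$ for each $i$. Lemma~\ref{lem:exclnumprop} then shows $\msp$ is not a maximal center.

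The main obstacle is condition (2): excluding a curve in $E \cap \tilde{S} \cap \tilde{T}$ needs explicit control of the lowest-order terms of $x$ and $w$ at $\msp$. A secondary point is confirming the transversality $(E \cdot \tilde{\Gamma}_i) = 1$.
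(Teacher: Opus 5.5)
Your proposal is correct and is essentially the paper's proof: the same curves $\Gamma_i = (x=t=w=\ell_i=0)$, the same values $(B\cdot\tilde{\Gamma}_i) = -1/4$ and $(\tilde{T}^2\cdot\tilde{S}) = -3$, and the same appeal to Lemma~\ref{lem:exclnumprop}. The paper does not write out your parameter check $a=1$, $d=-1$, $b=6$, $e=1$, and it also leaves implicit the point you flag, namely that no curve of $\tilde{S}\cap\tilde{T}$ lies in $E$. That point does hold, but not through a $\lambda t^2$ term: since $x^6$ has weight $8$, one has $\tilde{T}|_E = (t\ell+\ell_1\ell_2\ell_3=0)$ for every member of the pencil, while the equation of $\tilde{S}|_E$ is $-t^2$ plus a multiple of $t\ell+\ell_1\ell_2\ell_3$ (coming from possible $wz$, $ws$ terms of $f_{10}$), so $\tilde{S}\cap\tilde{T}\cap E = (t=\ell_1\ell_2\ell_3=0)$ is finite.
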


\begin{proof}
We have
\[
(S \cap T)_{\red} = (x = w = t = \ell_1 \ell_2 \ell_3 = 0) = \Gamma_1 \cup \Gamma_2 \cup \Gamma_3,
\]
where
\[
\Gamma_i = (x = t = w = \ell_i = 0)
\]
for $i = 1, 2, 3$.
We have $(A \cdot \Gamma_i) = 1/12$ and $(E \cdot \tilde{\Gamma}_i) = 1$ for $i = 1, 2, 3$.
This shows that $(B \cdot \tilde{\Gamma}_i) = - 1/4$, and hence any two of $\tilde{\Gamma}_1, \tilde{\Gamma}_2, \tilde{\Gamma}_3$ are numerically equivalent to each other since $B$ and $E$ generate $\Pic (Y)_{\mbQ}$.
We have
\[
(\tilde{T}^2 \cdot \tilde{S}) = 6^2 (A^3) - \frac{3^2 \cdot 4}{3^3} (E^3) = 3 - 6 = -3 < 0.
\]
By Lemma~\ref{lem:exclnumprop}, $\msp$ is not a maximal center.
\end{proof}

\begin{Rem}
The case where either $y^2 z \in \msF_1$ or $y^2 s \in \msF_1$, which is treated in \S \ref{sec:No47excl3C1}, is not treated in \cite{AZ16}.
\end{Rem}

\subsection{Family \textnumero~51 and the $\frac{1}{4} (1, 1, 2)$ point}

Let $X = X_{10, 14} \subset \mbP (1, 2, 4, 5, 6, 7)$ be a member of family \textnumero~51 and $\msp = \msp_z \in X$ the $\frac{1}{4} (1, 1, 3)$ point.
The generality assumption in \cite{OkadaI} imposed for $\msp \in X$ is the irreducibility of $(x = y = 0)_X$.
We do not consider this condition and exclude $\msp$ as a maximal center without any generality assumption.

\begin{Lem}
\label{lem:No51excl4eq}
\begin{enumerate}
\item Suppose that $z t \in \msF_1$.
Then the defining polynomials of $X$ can be written as
\[
\begin{split}
\msF_1 &= z t + s^2 + f_{10}, \\
\msF_2 &= z^3 y + z^2 g_6 + z (\alpha s^2 + g_{10}) + w^2 + g_{14},
\end{split}
\]
where $\alpha \in \mbC$ and $f_i, g_i \in (x, y) \subset \mbC [x, y, s, t, w]$.
\item Suppose that $z t \notin \msF_1$. 
Then the defining polynomials of $X$ can be written as
\[
\begin{split}
\msF_1 &= z^2 y + z f_6 + s^2 + f_{10}, \\
\msF_2 &= z^2 t + z (\alpha s^2 + g_{10}) + w^2 + g_{14},
\end{split}
\]
where $\alpha \in \mbC$ and $f_i, g_i \in (x, y) \subset \mbC [x, y, s, t, w]$.
\end{enumerate}
\end{Lem}

\begin{proof}
Suppose that $z t \in \msF_1$.
By the quasismoothness of $X$, we have $s^2 \in \msF_1$ and $z^3 y, w^2 \in \msF_2$.
By replacing $t$ and $s$, we may write $\msF_1 = z t + s^2 + f_{10} (x, y, t, w)$.  
It is easy to see that $f_{10} \in (x, y)$.
By replacing $y$ and $w$, we can write $\msF_2$ as in the statement for some $g_i \in \mbC [x, y, s, t]$ with $s^2 \notin g_{10}$.
By replacing $\msF_2$ by $\msF_2 - \lambda z \msF_1$ for a suitable $\lambda \in \mbC$, we may assume that $t \notin g_6$.
Then, it is easy to see that $g_6, g_{10} ,g_{14} \in (x, y)$.

Suppose that $z t \notin \msF_1$.
Then $z^2 y \in \msF_1$ and $z^2 t \in \msF_2$ by the quasismoothness of $X$ at $\msp$.
We also have $s^2 \in \msF_1$ and $w^2 \in \msF_2$.
By a similar argument to that in the previous case, we see that $\msF_1$ and $\msF_2$ are written as in the statement for some $f_i \in \mbC [x, y, t, w]$ and $g_i \in \mbC [x, y, s, t]$.
Note that $t \notin f_6$.
Then it is easy to see that $f_i, g_i \in (x, y)$ and the proof is complete.
\end{proof}

We choose homogeneous coordinates as in Lemma~\ref{lem:No51excl4eq}.
We can choose $x, s, w$ as local orbifold coordinates of $X$ at $\msp$ and $\ord_E (z, s, w) = \frac{1}{4} (1, 1, 3)$.
By the equations $\msF_1 = \msF_2 = 0$, we have
\[
\begin{split}
\ord_E (y) &= 
\begin{dcases}
\frac{6}{4}, & \text{if either $z t \notin \msF_1$ or $z t \in \msF_1$, $\alpha = 0$ and $s x \notin g_6$}, \\
\frac{2}{4}, & \text{otherwise},
\end{dcases} \\
\ord_E (t) &=
\begin{dcases}
\frac{2}{4}, & \text{if either $z t \in \msF_1$ or $z t \notin \msF_1$ and $\alpha \ne 0$}, \\
\frac{6}{4}, & \text{otherwise}.
\end{dcases}
\end{split}
\]
We set $S \coloneq (y = 0)_X$ and $T \coloneq (x = 0)_X$.

\begin{Prop}
\label{prop:No51excl4}
The support of $\tilde{S} \cap \tilde{T}$ is an irreducible and reduced curve and $(\tilde{T}^2 \cdot \tilde{S}) \le 0$.
In particular, $\msp$ is not a maximal center.
\end{Prop}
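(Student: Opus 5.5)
The plan is to apply Lemma~\ref{lem:exclnumprop}, with $\tilde{T}$ in the role of the lemma's $T$ and $\tilde{S}$ in the role of the lemma's $S$. Throughout, $(A^3) = \frac{10 \cdot 14}{1 \cdot 2 \cdot 4 \cdot 5 \cdot 6 \cdot 7} = \frac{1}{12}$, $a_X(E) = \frac{1}{4}$ and $(E^3) = \frac{4^2}{1 \cdot 3} = \frac{16}{3}$. Since $\ord_E(x) = \frac{1}{4}$, we have $\tilde{T} \sim \varphi^*A - \frac{1}{4}E = B$. Similarly $\tilde{S} \sim 2\varphi^*A - \ord_E(y) E = 2B + \left(\frac{2}{4} - \ord_E(y)\right)E$.

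\emph{Numerical data.} In the notation of Lemma~\ref{lem:exclnumprop} we take $b = 1$, $e = 0$, $a = 2$ and $d = \frac{2}{4} - \ord_E(y) \in \{0, -1\}$. Then $0 \le e < a_X(E) b$ and $ae - bd = -d \ge 0$. Moreover
\[
(\tilde{T}^2 \cdot \tilde{S}) = 2(A^3) - \frac{1}{4^2}\ord_E(y)(E^3) = \frac{1}{6} - \frac{\ord_E(y)}{3}.
\]
This equals $0$ when $\ord_E(y) = \frac{2}{4}$ and $-\frac{1}{3}$ when $\ord_E(y) = \frac{6}{4}$, which gives $(\tilde{T}^2 \cdot \tilde{S}) \le 0$.

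\emph{Irreducibility of the support.} I will compute $S \cap T = (x = y = 0)_X$ from Lemma~\ref{lem:No51excl4eq}, using that all $f_i, g_i \in (x, y)$.
\begin{itemize}
\item In case (1) we get $S \cap T = (x = y = zt + s^2 = \alpha z s^2 + w^2 = 0)$.
  \begin{itemize}
  \item If $\alpha = 0$, the support is $(x = y = w = zt + s^2 = 0)$, which is irreducible.
  \item If $\alpha \ne 0$, then on $U_z$ the curve is the $\bmu_4$-quotient of $(t = -s^2,\ w^2 = -\alpha s^2) \subset \mbA^3$, which has two branches $w = \pm\sqrt{-\alpha}\, s$. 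The generator of $\bmu_4$ acts with weights $(1, 3)$ on $(s, w)$, so it sends $w = c s$ to $w = -c s$. Hence the two branches are interchanged and their image is a single irreducible reduced curve. The part on $(z = 0)$ consists of finitely many points.
  \end{itemize}
\item In case (2) we get $S \cap T = (x = y = s^2 = z^2 t + \alpha z s^2 + w^2 = 0)$. Its support is $(x = y = s = z^2 t + w^2 = 0)$, which is irreducible.
\end{itemize}
I then need that $\tilde{S} \cap \tilde{T} \cap E$ contains no curve, so that the support of $\tilde{S} \cap \tilde{T}$ is the proper transform $\tilde{\Gamma}$ of this curve. On $E \cong \mbP(1, 1, 3)$ with coordinates $x, s, w$, the divisor $\tilde{T} \cap E$ is $(x = 0)$, and $\tilde{S} \cap E$ is cut by the lowest weight part of $y$ obtained by solving the equations. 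I will check case by case that this lowest weight part does not vanish identically on $(x = 0)$: it involves $s^2$ or $w$ via $\alpha z s^2$, $sx \in g_6$, or $w^2$. Consequently the intersection inside $E$ is finite.

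\emph{Conclusion.} Write $\tilde{S}|_{\tilde{T}} = m \tilde{\Gamma}$ with $m > 0$. Then $(\tilde{T} \cdot \tilde{\Gamma}) = \frac{1}{m}(\tilde{T}^2 \cdot \tilde{S}) \le 0$, and condition (2) of Lemma~\ref{lem:exclnumprop} holds trivially because there is a single curve. Lemma~\ref{lem:exclnumprop} then shows that $\msp$ is not a maximal center.

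The main obstacle I expect is the bookkeeping of $\ord_E(y)$ and of the leading terms in the subcases. I have to confirm, in each case, that the $E$-part of $\tilde{S} \cap \tilde{T}$ is zero-dimensional, and that in the subcase $\ord_E(y) = \frac{2}{4}$ the lemma still applies with the borderline value $(\tilde{T} \cdot \tilde{\Gamma}) = 0$.
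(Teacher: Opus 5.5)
Your plan follows the paper's proof. You use the same surfaces $S=(y=0)_X$ and $T=(x=0)_X$, the same application of Lemma~\ref{lem:exclnumprop} and the same value of $(\tilde T^2\cdot\tilde S)$. Your irreducibility check for $(S\cap T)_{\red}$ is equivalent to the paper's: your $\bmu_4$ argument on $U_z$ replaces its chart $U_s$.

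\textbf{The gap.} It is the step you flag yourself, that $\tilde S\cap\tilde T\cap E$ is finite. This fails when $zt\in\msF_1$ and $\alpha=0$.
\begin{itemize}
\item In that case, in the orbifold coordinates $x,s,w$, the weight-$2$ part of $y$ is $-c\,xs$ with $c=\coeff_{g_6}(xs)$. This vanishes identically on $(x=0)$, so the $sx$ term does not give what you claim.
\item You cannot hope that $c=0$ either. Along $\Gamma=(x=y=w=zt+s^2=0)$, the gradient of $\msF_2$ (in the order $x,y,z,s,t,w$) is $(cz^2s,\ z^3+\beta t^2,\ 0,0,0,0)$ with $\beta=\coeff_{\msF_2}(yt^2)$. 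Since $\Gamma$ meets $(z^3+\beta t^2=0)$, quasismoothness forces $c\neq 0$.
\item Hence $\tilde S\cap E=(xs=0)$ contains the curve $\ell=(x=0)=\tilde T\cap E$.
\item In fact $\tilde S\cdot\tilde T=2\tilde\Gamma+\ell$. Here $(B\cdot\tilde\Gamma)=-\tfrac{1}{6}$, $(E\cdot\tilde\Gamma)=1$, $(B\cdot\ell)=\tfrac{1}{3}$ and $(E\cdot\ell)=-\tfrac{4}{3}$. These are consistent with $(\tilde T^2\cdot\tilde S)=0$ and $(E\cdot\tilde S\cdot\tilde T)=\tfrac{2}{3}$.
\item The curve $\ell$ is $\varphi$-exceptional and $\tilde\Gamma$ is not, so they are not numerically proportional. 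Hypothesis~(2) of Lemma~\ref{lem:exclnumprop} therefore fails, and your conclusion does not follow in this subcase.
\end{itemize}

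\textbf{The paper has the same gap.} The paper's proof only checks irreducibility of $(S\cap T)_{\red}$ and never examines $\tilde S\cap\tilde T\cap E$. In the subcase $zt\in\msF_1$, $\alpha=0$, the first assertion of the proposition is false as stated.

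The same two surfaces cannot be rescued through Lemma~\ref{lem:exclnegdef} either. At this point $(B^3)=(A^3)-\tfrac{1}{64}(E^3)=0$, so $(\tilde S|_{\tilde T})^2=4(B^3)=0$ and the intersection form on the support cannot be negative-definite. This subcase needs a genuinely different exclusion argument.

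In the remaining cases your argument goes through. One harmless side remark: in case~(2), the monomials $z^2y,s^2\in\msF_1$ give $\ord_E(y)=\tfrac{2}{4}$, not $\tfrac{6}{4}$. So $(\tilde T^2\cdot\tilde S)=0$ in every case, and the value $-\tfrac{1}{3}$ never actually occurs.
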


\begin{proof}
By Lemma~\ref{lem:No51excl4eq}, we have
\[
(S \cap T)_{\mathrm{red}} = 
\begin{cases}
(x = y = z t + s^2 = \alpha z s^2 + w^2 = 0), & \text{if $z t \in \msF_1$ and $\alpha \ne 0$}, \\
(x = y = z t + s^2 = w = 0), & \text{if $z t \in \msF_1$ and $\alpha = 0$}, \\
(x = y = s = z^2 t + w^2 = 0), & \text{if $z t \notin \msF_1$}.
\end{cases}
\]
It is easy to see that $(S \cap T)_{\mathrm{red}}$ is an irreducible curve in the latter two cases.
Suppose that $z t \in \msF_1$ and $\alpha \ne 0$.
Then, on the open set $U_s$, $(S \cap T)_{\red}$ is isomorphic to the $\bmu_5$-quotient of the affine curve
\[
C \coloneq (z t + 1 = \alpha z + w^2 = 0) \subset \mbA^3_{z, s, w}.
\]
Eliminating the variable $z = - \alpha^{-1} w^2$, this curve $C$ is isomorphic to $(- \alpha^{-1} w^2 t + 1 = 0) \subset \mbA^2_{t, w}$, which is clearly irreducible and reduced.
This shows that $(S \cap T)_{\red}$ is irreducible since $S \cap T \cap (s = 0)$ is a finite set of points.
We have $\tilde{T} \sim \varphi^*A - \frac{1}{4} E$ and $\tilde{S} \sim 2 \varphi^*A - \ord_E (y) E$.
It follows that
\[
(\tilde{T}^2 \cdot \tilde{S}) = 
\begin{dcases}
- \frac{1}{3}, & \text{if either $z t \notin \msF_1$ or $z t \in \msF_1$, $\alpha = 0$ and $s x \notin g_6$}, \\
0, & \text{otherwise}.
\end{dcases}
\]
By Lemma~\ref{lem:exclnumprop}, $\msp$ is not a maximal center.
\end{proof}

\begin{Rem}
In \cite[\S 4.3.2]{AZ16}, it is erroneously stated that the Kawamata blowup $\varphi \colon Y \to X$ at the $\frac{1}{4} (1, 1, 3)$ point $\msp \in X$ initiates a Sarkisov self-link in the case where $z t \in \msF_1$, $\alpha = 0$ and $s x \notin g_6$.
\end{Rem}

\subsection{Family \textnumero~59 and $\frac{1}{4} (1, 1, 3)$ points}

Let $X = X_{12, 14} \subset \mbP (1, 4, 4, 5, 6, 7)$ be a member of family \textnumero~59 and let $\msp \in X$ be a $\frac{1}{4} (1, 1, 3)$ point.
The generality assumption in \cite{OkadaI} imposed for $\msp \in X$ is the nonexistence of a WCI curve of type $(1, 4, 6, 7)$ on $X$ passing through $\msp$.
We assume that there is a WCI curve of type $(1, 4, 6, 7)$ passing through $\msp$.
We may assume that $\msp = \msp_z$.

\begin{Lem}
The defining polynomials of $X$ can be written as
\[
\begin{split}
\msF_1 &= z^2 y + z f_8 + w s + t^2 + f_{12}, \\
\msF_2 &= z^2 t + z g_{10} + w^2 + g_{14},
\end{split}
\]
where $f_i, g_i \in (x, y) \subset \mbC [x, y, s, t, w]$.
\end{Lem}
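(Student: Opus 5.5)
The plan is to read off the normal form monomial by monomial. We use quasismoothness at the coordinate points, plus the assumed WCI curve of type $(1,4,6,7)$, and then remove unwanted terms by triangular coordinate changes. Coordinates are $x,y,z,s,t,w$ of weights $1,4,4,5,6,7$ and $\msp=\msp_z$.

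\emph{Step 1: monomials at $\msp_z$.} Since $\msp_z\in X$ we have $z^3\notin\msF_1$. The degree $14$ monomials of the form $z^k v$ with $v$ a single variable are only $z^2 t$; indeed $z^3$ has degree $12$, and no $z^k s$ or $z^k w$ has degree $14$. The only degree $12$ monomials of this form are $z^2 y$ and $z^2 x^4$-type terms, of which only $z^2y$ is linear in a variable. Hence quasismoothness at $\msp_z$, i.e.\ $\rank J_X(\msp_z)=2$, forces $z^2 y\in\msF_1$ and $z^2 t\in\msF_2$; we rescale both coefficients to $1$. Next, $t^2\in\msF_1$ and $w^2\in\msF_2$. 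Otherwise $\msp_t$ or $\msp_w$ would lie on $X$, and $X$ would either fail to be quasismooth there or acquire a non-terminal point of index $6$ or $7$, which does not occur for this family. We rescale these coefficients to $1$ as well.

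\emph{Step 2: use the curve.} Let $\Gamma$ be a WCI curve of type $(1,4,6,7)$ through $\msp$. Its degree $1$ equation is $x$. Modulo $x$, the degree $4$ equation is a linear form in $y,z$ vanishing at $\msp_z$, hence $y$. The degree $6$ and $7$ equations are $t$ and $w$ plus terms in $(x,y)$. After replacing $t$ and $w$ we get $\Gamma=(x=y=t=w=0)$. Now $\Gamma\subset X$ means $\msF_i(0,0,z,s,0,0)=0$. In degree $12$ the only $z,s$ monomial is $z^3$, which is already excluded. In degree $14$ the only one is $zs^2$, so $zs^2\notin\msF_2$. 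In particular $\msp_s\in\Gamma\subset X$. The only monomials giving nonzero partial derivatives at $\msp_s$ are $sw\in\msF_1$ and $s^2y\in\msF_2$, so quasismoothness at $\msp_s$ yields $sw\in\msF_1$ (and $s^2y\in\msF_2$). We rescale to get $\coeff_{\msF_1}(ws)=1$.

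\emph{Step 3: clean up.} We list all monomials not lying in $(x,y)$. In degree $12$ these are $z^3, z^2y, zs\cdot(\,)$ (impossible), $ws, t^2, zt\cdot(\,)$ (impossible, since $zt$ has degree $10$), and $z\cdot z\cdot z$. In degree $14$ these are $z^2t, zs^2, w^2$ and $z\cdot(\text{degree }10)$, i.e.\ $z\,zt$ and $zs^2$. Terms such as $z^2x^4$ are absorbed into $y$, and any $tw$-type or $z\,w$-type cross terms are removed by completing squares and replacing $t,w$. One must also check that these coordinate changes preserve the shape of $\Gamma$ and the normalizations already made.

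The main obstacle is the bookkeeping in Step 3. After each substitution, $z^2y$ and $z^2t$ must remain the only monomials divisible by $z^2$ outside $(x,y)$, and no new $zs^2$ or $z^3$ terms may be introduced. We handle this by performing the substitutions in order of decreasing weight: first $w$, then $t$, and finally $y\mapsto y+\cdots$. This gives $f_8,f_{12},g_{10},g_{14}\in(x,y)$.
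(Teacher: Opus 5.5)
Your proposal is correct and follows the same route as the paper. Quasismoothness at $\msp_z$ (together with $\msp_s,\msp_t,\msp_w$) produces $z^2y$, $ws$, $t^2$, $z^2t$ and $w^2$, and monomial counting puts everything else into $(x,y)$ except a possible $zs^2$. The curve is forced to be $(x=y=t=w=0)$, so its containment in $X$ excludes $zs^2$.

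A small correction to your Step~3 bookkeeping: $z^2y$ does lie in $(x,y)$. What the coordinate changes actually have to absorb are the other monomials divisible by $z^2$. In $\msF_1$ this is $z^2x^4$, absorbed into $y$. In $\msF_2$ these are $z^3x^2$, $z^2xs$, $z^2x^2y$ and $z^2x^6$, absorbed into $t$. The completing-the-square step is unnecessary, since all $tw$- and $zw$-type terms already lie in $(x)$.
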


\begin{proof}
By the quasismoothness of $X$ at $\msp$, we have $z^2 y \in \msF_1$ and $z^2 t \in \msF_2$.
Moreover, we have $w s, t^2 \in \msF_1$ and $w^2 \in \msF_2$.
By replacing coordinates, we may assume that $\msF_1$ and $\msF_2$ are as in the statement for some $f_i, g_i \in \mbC [x, y, z, t, w]$ with $w s, t^2 \notin f_{12}$ and $w^2 \notin g_{14}$.
It is easy to see that $f_{12}, g_{14} \in (x, y)$ and it remains to show that $g_{10} \in (x, y)$.
By assumption, there is a WCI curve $\Gamma$ of type $(1, 4, 6, 7)$ passing through $\msp$.
We see that $\Gamma = (x = y = t = w = 0)$, and $X$ contains $\Gamma$ if and only if $s^2 \notin g_{10}$.
Hence $s^2 \notin g_{10}$ and we have $g_{10} \in (x, y)$.
\end{proof}

We can choose $x, s, w$ as local orbifold coordinates of $X$ at $\msp$ and $\ord_E (x, s, w) = \frac{1}{4} (1, 1, 3)$.
By the equations $\msF_1 = \msF_2 = 0$, we have $\ord_E (y, t) = \frac{1}{4} (4, 6)$.
We set $S \coloneq (x = 0)_X$ and let $T$ be a general member of $|\mfm_{\msp} (4A)|$ so that $\tilde{S} \sim \varphi^*A - \frac{1}{4} E = B$ and $\tilde{T} \sim 4 \varphi^*A - \frac{4}{4} E = 4 B$.
Note that $T$ is a normal surface by Lemma~\ref{lem:normqhyp}.

\begin{Prop}
\label{prop:No59excl4}
The support of $\tilde{S}|_{\tilde{T}}$ consists of two curves whose intersection matrix is negative-definite.
In particular, $\msp$ is not a maximal center.
\end{Prop}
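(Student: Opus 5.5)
The plan is to follow the pattern of Propositions~\ref{prop:No20excl2C1} and \ref{prop:No47excl3C1}. I will compute the $1$-cycle $S|_T$ explicitly, compute the intersection numbers of its components on $\tilde{T}$ using $\tilde{S}|_{\tilde{T}} \sim B|_{\tilde{T}}$, and conclude by Lemma~\ref{lem:exclnegdef}. The divisor $\tilde{S} \sim B$ satisfies the hypothesis of that lemma with $b = 1$, $e = 0$, and $\tilde{T}$ is normal because $T$ is normal.

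\emph{Step 1: the cycle.} Since $f_i, g_i \in (x, y)$ and $|\mfm_{\msp}(4A)|$ is the pencil spanned by $y$ and $x^4$, we get
\[
S \cap T = (x = y = 0)_X = (x = y = ws + t^2 = z^2 t + w^2 = 0).
\]
This contains the WCI curve $\Gamma = (x = y = t = w = 0)$. On $U_s$, eliminating $w = -t^2$ turns the equations into $t(z^2 + t^3) = 0$ modulo $\bmu_5$. Hence $S|_T = \Gamma + \Delta$, where $\Delta$ is the curve $(z^2 + t^3 = 0)$ on this chart. I will check that $\Delta$ is irreducible and reduced, that $\Gamma$ has multiplicity one, and, by restricting to $(s = 0)$, that no further components appear. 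Both curves pass through $\msp$ and $\msp_s$. Near $\msp$, in the local coordinates $s, w$ with $t = -w^2$ (after setting $z = 1$), the equation becomes $w(s + w^3) = 0$. So $\tilde{\Gamma}$ and $\tilde{\Delta}$ are distinct branches, and each meets $E$ at a single point.

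\emph{Step 2: intersection numbers.} We have $(A^3) = 1/20$, $(E^3) = 16/3$ and $(A \cdot \Gamma) = 1/20$, hence $(A \cdot \Delta) = 4/20 - 1/20 = 3/20$. Since $(E \cdot \tilde{\Gamma}) = 1$, we get $(B \cdot \tilde{\Gamma}) = -1/5$. Also
\[
(B \cdot \tilde{S} \cdot \tilde{T}) = 4\left(\tfrac{1}{20} - \tfrac{1}{12}\right) = -\tfrac{2}{15}.
\]
This requires checking that $\tilde{S} \cap \tilde{T} \cap E$ contains no curve, so that $\tilde{S}|_{\tilde{T}} = \tilde{\Gamma} + \tilde{\Delta}$. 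It follows that $(B \cdot \tilde{\Delta}) = 1/15$.

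Set $m \coloneq (\tilde{\Gamma} \cdot \tilde{\Delta})$. Intersecting $\tilde{S}|_{\tilde{T}}$ with $\tilde{\Gamma}$ and $\tilde{\Delta}$ gives the intersection matrix
\[
\begin{pmatrix} -\frac{1}{5} - m & m \\ m & \frac{1}{15} - m \end{pmatrix},
\]
whose determinant is $\frac{2m}{15} - \frac{1}{75}$. So the matrix is negative-definite if and only if $m > 1/10$.

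\emph{Step 3: the main obstacle is bounding $m$ from below.} The curves $\Gamma$ and $\Delta$ meet at $\msp_s$, which does not lie over $\msp$. The point $\msp_s$ is a cyclic quotient point of index $5$ on $T$, and I will check that $T$ is quasismooth there. The local contribution of $\msp_s$ to $(\tilde{\Gamma} \cdot \tilde{\Delta})$ is then at least $1/5$, which gives $m \ge 1/5 > 1/10$. Any additional intersection point, either on $E$ or elsewhere, only increases $m$. Hence the matrix is negative-definite, and Lemma~\ref{lem:exclnegdef} shows that $\msp$ is not a maximal center.

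The delicate points are the following:
\begin{itemize}
\item verifying quasismoothness of $T$ at $\msp_s$;
\item checking that the branches of $\Gamma$ and $\Delta$ at $\msp_s$ are distinct, which should follow since $\Delta$ is locally $z^2 + t^3 = 0$ while $\Gamma$ is $t = 0$;
\item confirming that no components of $S \cap T$ are hidden in $(s = 0)$.
\end{itemize}
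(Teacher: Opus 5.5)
\textbf{Verdict: there is a genuine gap in Step~3, and the paper's proof asserts the same unjustified bound.}

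Your Steps 1 and 2 coincide with the paper's proof: the same $S$ and $T$, the decomposition $S|_T=\Gamma+\Delta$, the values $(B\cdot\tilde\Gamma)=-\frac15$ and $(B\cdot\tilde\Delta)=\frac1{15}$, and the same matrix. The paper also closes by asserting $(\Gamma\cdot\Delta)_{\msp_s}\ge\frac15$, but gives no argument for it.

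The failing step is your plan to verify that $T$ is quasismooth at $\msp_s$; it is not.
\begin{itemize}
\item Since $ws\in\msF_1$ and $s^2y\in\msF_2$ (the latter forced by quasismoothness of $X$ at $\msp_s$), the orbifold coordinates of $X$ at $\msp_s$ are $x,z,t$, of type $\frac15(1,4,1)$. Also $T=(y-\lambda x^4=0)_X$.
\item A linear term in the local equation of $T$ could only come from the monomial $zs^2$ of $\msF_2$, since no $xs^k$ or $ts^k$ has degree $14$. But $zs^2\notin\msF_2$ exactly because $\Gamma\subset X$.
\item So on the $\bmu_5$-cover, $T$ is $\{\Phi=0\}\subset\mbA^3_{x,z,t}$ with $\Phi=z^2(t+\alpha x)+Q_4(x,t)+(\text{higher order})$ and $\Phi(0,z,t)=z^2t+t^4$. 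This is not a cyclic quotient point of $T$, so the usual lower bound $1/r$ for two distinct curves through such a point is not available.
\end{itemize}

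The bound is in fact false. Take the simplest situation, where near $\msp_s$ nothing contributes in low order besides $z^2y,ws,t^2\in\msF_1$ and $z^2t,w^2,s^2y\in\msF_2$. Then $\Phi=z^2t+(t^2+\lambda x^4z^2)^2+\lambda x^4$.
\begin{itemize}
\item $(t=0)$ cuts out $4\Gamma'$ near the origin of the cover, because $\Phi(x,z,0)=\lambda x^4(1+\lambda x^4z^4)$.
\item $\Delta'$ is parametrized by $\tau\mapsto(0,\tau^3,-\tau^2)$, so $t$ vanishes to order $2$ on it.
\item Hence $(\Gamma'\cdot\Delta')_0=\frac12$ and $(\Gamma\cdot\Delta)_{\msp_s}=\frac1{10}$.
\end{itemize}
For every member and general $\lambda$, the weighted blowup of $(x,z,t)$ with weights $(2,3,2)$ gives the same value. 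Its exceptional curve $C$ satisfies $C^2=-\frac23$, $\hat\Gamma\cdot C=\frac13$ and $\hat\Delta\cdot C=1$.

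Moreover, $\tilde\Gamma$ and $\tilde\Delta$ meet $E\cong\mbP(1,1,3)$ at different points, namely $(0\!:\!1\!:\!0)$ and the $\frac13$-point. So there is no extra contribution and $m=\frac1{10}$ exactly. Your determinant $\frac{2m}{15}-\frac1{75}$ is therefore $0$. The form is only negative \emph{semi}-definite, with kernel spanned by $\tilde\Gamma+3\tilde\Delta$. Lemma~\ref{lem:exclnegdef} cannot be applied as stated, either in your proposal or in the paper.

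The conclusion can still be recovered by running the proof of that lemma by hand.
\begin{itemize}
\item If $\msp$ were a maximal center, there would be a movable $\mathcal{M}_Y\sim_{\mbQ}nB-cE$ with $c>0$.
\item For a general $D\in\mathcal{M}_Y$, write $D|_{\tilde T}=a\tilde\Gamma+b\tilde\Delta+R$, where $R$ contains neither curve.
\item Then $Z=(n-a)\tilde\Gamma+(n-b)\tilde\Delta\equiv R+cE|_{\tilde T}$ satisfies $(Z\cdot\tilde\Gamma)\ge c\,(E\cdot\tilde\Gamma)=c>0$ and $(Z\cdot\tilde\Delta)\ge0$.
\item This is impossible, because $(Z\cdot\tilde\Gamma)=-3\,(Z\cdot\tilde\Delta)$ for every $Z$ in the span of $\tilde\Gamma$ and $\tilde\Delta$.
\end{itemize}
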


\begin{proof}
We have
\[
S \cap T = (x = y = w s + t^2 = z^2 t + w^2 = 0).
\]
On the open set $U_z = (z \ne 0)$, $S \cap T$ is isomorphic to the $\bmu_4$-quotient of the affine curve
\[
(w s + t^2 = t + w^2 = 0) \subset \mbA^3_{s, t, w},
\]
which is then isomorphic to $(w (s + w^3) = 0)) \subset \mbA^2$.
The curve $(w = 0)$ corresponds to $\Gamma = (x = y = t = w = 0) \subset X$ and the remaining curve $s + w^3 = 0$ is irreducible and reduced.
It follows that $S|_T = \Gamma + \Delta$, where $\Delta$ is an irreducible and reduced curve since $S \cap T \cap (z = 0) = \{\msp_s\}$.
We see that $S \cap T$ is not quasismooth at $\msp$ and $\msp_s$.
Then, $\Gamma$ intersects $\Delta$ at $\msp_s$ and the local intersection multiplicity $(\Gamma \cdot \Delta)_{\msp_s}$ at $\msp_s$ is at least $1/5$.
This shows that $m \coloneq (\tilde{\Gamma} \cdot \tilde{\Delta}) \ge 1/5$.
We have $(A \cdot \Gamma) = 1/20$ and $(E \cdot \tilde{\Gamma}) = 1$ since $\tilde{\Gamma}$ intersects $E$ transversally at one smooth point.
It follows that $(B \cdot \tilde{\Gamma}) = -1/5$.
By taking intersection numbers of $B$ and $\tilde{S}|_{\tilde{T}} = \tilde{\Gamma} + \tilde{\Delta}$, we have $(B \cdot \tilde{\Delta}) = 1/15$.
By taking intersection numbers of $B|_{\tilde{T}} = \tilde{S}|_{\tilde{T}} = \tilde{\Gamma} + \tilde{\Delta}$ and $\tilde{\Gamma}$, $\tilde{\Delta}$, we have
\[
\begin{pmatrix}
(\tilde{\Gamma}^2) & (\tilde{\Gamma} \cdot \tilde{\Delta}) \\
(\tilde{\Gamma} \cdot \tilde{\Delta}) & (\tilde{\Delta}^2)
\end{pmatrix} =
\begin{pmatrix}
- m - \frac{1}{5} & m \\
m & - m + \frac{1}{15}
\end{pmatrix},
\]
which is negative-definite since $m \ge 1/5$.
By Lemma~\ref{lem:exclnegdef}, $\msp$ is not a maximal center.
\end{proof}

\subsection{The family \textnumero~71 and $\frac{1}{4} (1, 1, 3)$ points}

Let $X = X_{14, 16} \subset \mbP (1, 4, 5, 6, 7, 8)$ be a member of family \textnumero~71 and let $\msp$ be a $\frac{1}{4} (1, 1, 3)$ point.
The generality assumption in \cite{OkadaI} imposed for $\msp \in X$ is the nonexistence of a WCI curve of type $(1, 7, 8, 10)$ on $X$ passing through $\msp$.
We assume that there is a WCI curve of type $(1, 7, 8, 10)$ passing through $\msp$.
Replacing coordinates, we may assume that $\msp = \msp_y$.

\begin{Lem}
\label{lem:No71excleq}
The defining polynomials of $X$ can be written as
\[
\begin{split}
\msF_1 &= y^2 s + y (z^2 + f_{10}) + w s + t^2 +f_{14}, \\
\msF_2 &= y^2 w + y (s^2 + \theta z t + g_{12}) + z^2 s + w^2 + g_{16},
\end{split}
\]
where $\theta \in \mbC$ and $f_{10}, f_{14}, g_{12}, g_{16} \in \mbC [x, z, s, t, w]$ such that $f_{10}, f_{14}, g_{12}, g_{16}$ are all divisible by $x$.
\end{Lem}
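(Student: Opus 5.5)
The plan is to derive the normal form from three ingredients: quasismoothness at $\msp = \msp_y$ and at the other coordinate points, the existence of the WCI curve of type $(1,7,8,10)$, and a sequence of coordinate changes. The coordinates $x, y, z, s, t, w$ have weights $1, 4, 5, 6, 7, 8$, and $\deg \msF_1 = 14$, $\deg \msF_2 = 16$.

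\emph{Terms at $\msp_y$.} Since $\msp_y \in X$, we have $y^4 \notin \msF_2$, and there is no pure power of $y$ of degree $14$. The only monomials of the form $y^k v$, with $v$ a single coordinate, that can occur are $y^2 s$ in $\msF_1$, and $y^2 w$ in $\msF_2$. Quasismoothness at $\msp_y$ therefore forces $y^2 s \in \msF_1$ and $y^2 w \in \msF_2$; the remaining coordinates $x, z, t$ are then local orbifold coordinates of type $\frac{1}{4}(1,1,3)$. Rescaling makes both coefficients $1$. Replacing $s$ and $w$ by $s + (\text{degree-}6 \text{ form})$ and $w + (\text{degree-}8 \text{ form})$ in $x, y$, and subtracting a suitable multiple of $\msF_1$ from $\msF_2$, removes all other terms divisible by $y^2$. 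We may thus write
\[
\msF_1 = y^2 s + y f'_{10} + f'_{14}, \quad \msF_2 = y^2 w + y g'_{12} + g'_{16},
\]
with $f'_i, g'_i$ free of $y$.

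\emph{Terms forced by the curve.} A WCI curve $\Gamma$ of type $(1,7,8,10)$ lies in $(x = t = w = 0)_X$. Restricted to $x = t = w = 0$, the only available monomials in $z, s$ are:
\begin{itemize}
\item degree $10$: $z^2$;
\item degree $12$: $s^2$;
\item degree $16$: $z^2 s$;
\item degree $14$: none.
\end{itemize}
Hence
\[
(x = t = w = 0)_X = \bigl(y(ys + a z^2) = y b s^2 + c z^2 s = 0\bigr) \subset \mbP(4,5,6)
\]
for some $a, b, c \in \mbC$. For this set to contain a curve cut out by a degree-$10$ form, the two equations must share a common factor of degree $10$. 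This forces $\Gamma = (x = t = w = ys + a z^2 = 0)$ together with the proportionality relation $c = ab$.

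I would then use quasismoothness to show $a, b \neq 0$. If $a = 0$, the curve degenerates to $(x = t = w = s = 0)$, which is not of type $(1,7,8,10)$. If $b = 0$, then $c = 0$ as well and $X$ fails to be quasismooth at $\msp_z$ or along $\Gamma$; checking this via the Jacobian matrix $J_X$ is the \emph{main obstacle}. After rescaling $z$ and $s$ we get $a = b = c = 1$, which produces the terms $y z^2$, $y s^2$ and $z^2 s$.

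\emph{Remaining terms.} Quasismoothness at $\msp_t$ and $\msp_w$ gives $t^2 \in \msF_1$ and $w^2 \in \msF_2$. Quasismoothness at $\msp_s \in X$ then forces $ws \in \msF_1$, after $s^2 \in \msF_2$ is accounted for. We normalize all these coefficients to $1$. Next we absorb the remaining $x$-free monomials involving $t$ or $w$ (such as $zs t$-type terms, $yw$-terms and $t w$-terms) by substitutions $t \mapsto t + \cdots$ and $w \mapsto w + \cdots$. We also subtract multiples $\lambda y \msF_1$ from $\msF_2$ where degrees allow. The only survivor that cannot be absorbed is the coefficient $\theta$ of $y z t$ in $\msF_2$. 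What remains in $f_{10}, f_{14}, g_{12}, g_{16}$ is then divisible by $x$, which is verified by listing monomials of the relevant degrees in $z, s, t, w$.
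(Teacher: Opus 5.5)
Your route is essentially the paper's: quasismoothness at $\msp_y$ gives $y^2s$ and $y^2w$, then you use the curve inside $(x=t=w=0)_X$, quasismoothness at the other coordinate points, and finally rescaling. The last step, ``we normalize all these coefficients to $1$'', fails, and it cannot be repaired. Modulo $x$, every admissible change is diagonal. The shifts of $y,z,s,t$ are divisible by $x$. The only $x$-free shift is $w\mapsto w+by^2$, and it moves $\msp_y$. Finally, $\msF_2$ can only be altered by multiples of $x^2\msF_1$; your ``$\lambda y\msF_1$'' never has the right degree. Hence
\[
\rho\coloneqq\frac{\coeff_{\msF_1}(y^2s)\,\coeff_{\msF_2}(w^2)}{\coeff_{\msF_1}(ws)\,\coeff_{\msF_2}(y^2w)}
\]
is invariant: rescaling $y,s,w$ by $Y,S,W$ and $\msF_i$ by $\phi_i$ multiplies numerator and denominator by the same factor $\phi_1\phi_2Y^2SW^2$. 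So once $y^2s$, $ws$, $y^2w$ have coefficient $1$, the coefficient of $w^2$ equals $\rho$ and is frozen.

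Moreover, $\rho\neq 1$. Put $\kappa\coloneqq-\coeff_{\msF_2}(y^2w)/\coeff_{\msF_2}(w^2)$. Then $\msp'=(0\!:\!1\!:\!0\!:\!0\!:\!0\!:\!\kappa)$, the other $\frac{1}{4}(1,1,3)$ point, lies on $X$. The only monomials of $\msF_1$ contributing to $J_X(\msp')$ are $y^2s$ and $ws$, so the $\msF_1$-row of $J_X(\msp')$ is $(0,0,0,\coeff_{\msF_1}(y^2s)+\kappa\coeff_{\msF_1}(ws),0,0)$. This row is zero exactly when $\rho=1$. In particular, for the polynomials displayed in the statement, the whole gradient of $\msF_1$ vanishes at $(0\!:\!1\!:\!0\!:\!0\!:\!0\!:\!-1)\in X$. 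So no quasismooth $X$ can be put in that form.

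The paper's proof makes the same slip (``rescaling coordinates, we may assume that $\alpha=\beta=\gamma=\delta=\varepsilon=1$''), so it is the statement itself that needs correcting. The coefficients of $y^2s$, $yz^2$, $ws$, $t^2$, $y^2w$, $ys^2$ can be normalized to $1$ simultaneously, since the corresponding six characters of the rescaling torus are independent. The curve then forces $\coeff_{\msF_2}(z^2s)=1$. One must keep $\varepsilon w^2$ with $\varepsilon\neq 0,1$ alongside $\theta yzt$. This appears harmless for the later use in Proposition~\ref{prop:No71excl4}.

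Smaller points:
\begin{itemize}
\item Your ``main obstacle'' $b\neq 0$ is immediate, because $\msp_s\in\Gamma$ and the only nonzero entries of $J_X(\msp_s)$ come from $ws\in\msF_1$ and $ys^2\in\msF_2$. At $\msp_z$ nothing is forced, since $z^3x\in\msF_2$ can keep $X$ quasismooth there.
\item The shift of $s$ needs $xz$, and the shift of $w$ needs $xt,x^2s,x^3z$; forms in $x,y$ alone are not enough.
\item There is nothing further to absorb. The $x$-free monomials of degrees $14$ and $16$ are exactly $y^2s,yz^2,ws,t^2$ and $y^4,y^2w,ys^2,yzt,z^2s,w^2$.
\end{itemize}
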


\begin{proof}
By the quasismoothness of $X$ at $\msp$, we have $y^2 s \in \msF_1$ and $y^2 w \in \msF_2$.
We can write
\[
\begin{split}
\msF_1 &= y^2 s + y f'_{10} + f'_{14}, \\
\msF_2 &= y^2 w + y g'_{12} + g'_{16},
\end{split}
\]
where $f'_i, g'_i \in \mbC [x, z, s, t, w]$.
The monomial(s) $z^2$ (resp.\ $w s, t^2$, resp.\ $z t, s^2$, resp.\ $w^2, z^2 s$) are all the monomial(s) of degree $10$ (resp.\ $14$, resp.\ $12$, resp.\ $16$) in variables $z, s, t, w$.
Hence we can write $f'_{10} = \alpha z^2 + f_{10}$, $f'_{14} = \lambda w s + \gamma t^2$, $g'_{12} = \theta z t + \delta s^2 + g_{12}$ and $g'_{16} = \mu w^2 + \varepsilon z^2 s + g_{16}$, where $\alpha, \dots, \varepsilon, \lambda, \mu, \theta \in \mbC$ and $f_{10}, f_{14}, g_{12}, g_{16}$ are all divisible by $x$, that is,
\[
\begin{split}
\msF_1 &= y^2 s + y (\alpha z^2 + f_{10}) + \beta w s + \gamma t^2 + f_{14}, \\
\msF_2 &= y^2 w + y (\theta z t + \delta s^2 + g_{12}) + \varepsilon w^2 + \eta z^2 s + g_{16}.
\end{split}
\]
By the quasismoothness of $X$ at $\msp_z$ (resp.\ $\msp_s$), we have $\alpha \ne 0$ (resp.\ $\beta \ne 0$ and $\delta \ne 0$).
Moreover, we have $\gamma \ne 0$ and $\varepsilon \ne 0$ also by the quasismoothness of $X$.
Rescaling coordinates, we may assume that $\alpha = \beta = \gamma = \delta = \varepsilon = 1$.
By assumption, there is a WCI curve $\Gamma$ of type $(1, 7, 8, 10)$ passing through $\msp$.
Such a curve $\Gamma$ is contained in $(x = t = w = 0)$ and we have
\[
(x = t = w = 0)_X = (x = t = w = y (y s + z^2) = s (y s + \eta z^2) = 0).
\]
Thus, we have $\Gamma = (x = t = w = y s + z^2 = 0)$ and $\eta = 1$.
Therefore we obtain the desired equations.
\end{proof}

We choose homogeneous coordinates as in Lemma~\ref{lem:No71excleq}.
We can choose $x, z, t$ as local orbifold coordinates of $X$ at $\msp$ and we have $\ord_E (x, z, t) = \frac{1}{4} (1, 1, 3)$.
By the equations $\msF_1 = \msF_2 = 0$, we have $\ord_E (s, w) = \frac{1}{4} (2, 4)$.
Note that by the equation $\msF_1 = 0$, we have
\[
y (y s + z^2) = y f_{10} + w s + t^2 + f_{14}
\]
and the right hand side has order at least $\frac{6}{4}$ along $E$.
This shows that $\ord_E (y s + z^2) = \frac{6}{4}$.
We define $S \coloneq (x = 0)_X$ and $T \coloneq (y s + z^2 = 0)_X$.
We have $\tilde{S} \sim \varphi^*A - \frac{1}{4} E = B$ and $\tilde{T} \sim 10 \varphi^*A - \frac{6}{4} E = 6 B + E$.

\begin{Prop}
\label{prop:No71excl4}
The support of $\tilde{S}|_{\tilde{T}}$ consists of irreducible and reduced curves whose intersection matrix is negative-definite.
In particular, $\msp$ is not a maximal center.
\end{Prop}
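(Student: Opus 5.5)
The plan mirrors the argument for family \textnumero~47 (Proposition~\ref{prop:No47excl3C1}). We compute the $1$-cycle $S|_T$ explicitly, lift it to $Y$, compute the intersection matrix of its components on $\tilde{T}$ from the linear relation $B|_{\tilde{T}} \sim \tilde{S}|_{\tilde{T}}$ together with the known values of $(B \cdot -)$, and then apply Lemma~\ref{lem:exclnegdef} with $\tilde{S} \sim B$, so that $b = 1$ and $e = 0$.

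\emph{Step 1: normality of $T$.} Lemma~\ref{lem:normqhyp} does not apply, since $ys+z^2$ has no linear term. Instead I will show directly that $\Sing(T)$ is finite. I will check the Jacobian of $C_T$ using the equations $\msF_1 = \msF_2 = ys+z^2 = 0$. Since $T$ is a hypersurface in the Cohen--Macaulay $X$, Serre's criterion then gives normality. On $(y \ne 0)$ one has $s = -z^2/y$, and $T$ becomes a hypersurface in the orbifold chart with coordinates $x,z,t$; this should make the check routine away from finitely many points. I will also verify that $\tilde{T}$ is normal, i.e.\ that $\tilde{T}\cap E$ is reduced.

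\emph{Step 2: the cycle $S|_T$.} Setting $x = 0$ and using $z^2 = -ys$, the equation $\msF_1$ reduces to $ws+t^2$. In $\msF_2$ the terms $ys^2 + z^2 s$ cancel, so $\msF_2$ becomes $y^2w + \theta yzt + w^2$. Hence
\[
S \cap T = (x = ys+z^2 = ws+t^2 = y^2 w + \theta y z t + w^2 = 0).
\]
This contains the WCI curve $\Gamma = (x = t = w = ys+z^2 = 0)$. As in family \textnumero~47, I split into two cases.
\begin{itemize}
\item If $\theta \ne 0$, work on $U_s$ and eliminate $w = -t^2$. The residual curve is then irreducible, so $S|_T = \Gamma + \Delta$.
\item If $\theta = 0$, the last equation factors as $w(w+y^2)$, which gives $S|_T = 2\Gamma + \Delta$ with $\Delta = (x = ys+z^2 = w+y^2 = ws+t^2 = 0)$.
\end{itemize}
I also need to decide whether $\tilde{S}\cap\tilde{T}\cap E$ contains a curve. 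On $E \cong \mbP(1,1,3)_{x,z,t}$, the divisor $\tilde{S}$ cuts $(x = 0)$, and $\tilde{T}$ cuts the weight-$\frac{6}{4}$ leading form of $ys+z^2$. If that leading form vanishes identically along $x = 0$, then the fibre curve $\tilde{S}\cap E$ must be added as a further component $\Xi$ of $\tilde{S}|_{\tilde{T}}$.

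\emph{Step 3: intersection numbers.} We have $(A^3) = 1/30$, $(E^3) = 16/3$, $(A\cdot\Gamma) = 1/12$ and $(E\cdot\tilde{\Gamma}) = 1$, so $(B\cdot\tilde{\Gamma}) = -1/6$. Moreover
\[
(B\cdot\tilde{S}\cdot\tilde{T}) = (B^2\cdot(6B+E)) = 6\left(\tfrac{1}{30}-\tfrac{1}{12}\right) + \tfrac{1}{3} = \tfrac{1}{30}.
\]
From these, $(B\cdot\tilde{\Delta})$ follows. Setting $m = (\tilde{\Gamma}\cdot\tilde{\Delta})$ and pairing $\tilde{S}|_{\tilde{T}}$ with each component yields the matrix.

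\emph{Main obstacle.} With only the two components $\tilde{\Gamma}$ and $\tilde{\Delta}$, the naive $2\times 2$ matrix in the case $\theta \ne 0$ has determinant of the form $-1/30 - cm < 0$. It is therefore not negative-definite, so the bookkeeping must be finer. I will first verify whether an exceptional component $\Xi \subset E$ appears in $\tilde{S}|_{\tilde{T}}$. I expect that it does, because $\tilde{T} \sim 6B+E$, and if so it changes all the pairings. I will then recompute $(B\cdot\tilde{\Delta})$ and the full matrix including $\Xi$, where $(\Xi^2)_{\tilde{T}} < 0$ and the pairings of $\Xi$ are determined from $E|_{\tilde{T}}$. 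I will also determine $m$ precisely, from the local intersection of $\Gamma$ and $\Delta$ at the point(s) where $S\cap T$ fails to be quasismooth, as was done at $\msp_s$ in family \textnumero~59. Negative-definiteness should then follow, as in the earlier cases, from a lower bound on $m$. Lemma~\ref{lem:exclnegdef} then excludes $\msp$.
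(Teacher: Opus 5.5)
There is a genuine gap, and it sits exactly at your ``main obstacle''. The class you use for $\tilde T$ is wrong. Since $ys+z^2$ has degree $10$ and $\ord_E(ys+z^2)=\frac64$, we get $\tilde T\sim 10\varphi^*A-\frac64E=10B+E$, not $6B+E$; the ``$=6B+E$'' in the setup is a slip. Hence
\[
(B\cdot\tilde S\cdot\tilde T)=10(B^3)+(B^2\cdot E)=10\Bigl(\tfrac1{30}-\tfrac1{12}\Bigr)+\tfrac13=-\tfrac16,
\]
not $\frac1{30}$. The non-negative-definite $2\times 2$ matrix you found comes only from this error.

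The fix you propose, an extra component $\Xi\subset E$ of $\tilde S|_{\tilde T}$, does not exist. On $E\cong\mbP(1,1,3)_{x,z,t}$, restrict the weight-$\frac64$ part of $ys+z^2$ to $x=0$. To leading order on $x=0$ one has $s\equiv -z^2$ and $w\equiv -\theta zt$, so the term $ws+t^2$ gives, up to sign, $t(t+\theta z^3)$. This is not identically zero, so $\tilde S\cap\tilde T\cap E$ is finite. Numerically, the correct class gives $(E\cdot\tilde S\cdot\tilde T)=\frac{10}3-\frac43=2$. This is exactly accounted for by $\tilde\Gamma+\tilde\Delta$ (resp.\ $2\tilde\Gamma+\tilde\Delta$). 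Your class gives $\frac23<(E\cdot\tilde\Gamma)=1$, which is what made an exceptional component look necessary. Carried out as written, your plan would find no $\Xi$ and would be stuck with a matrix that is not negative-definite.

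With the correct value, $(B\cdot\tilde\Delta)=0$ when $\theta\neq0$ and $(B\cdot\tilde\Delta)=\frac16$ when $\theta=0$. The intersection matrices on $\tilde T$ are then
\[
\begin{pmatrix} -m-\frac16 & m\\ m & -m\end{pmatrix}
\quad\text{and}\quad
\begin{pmatrix} -\frac m2-\frac1{12} & m\\ m & \frac16-2m\end{pmatrix}.
\]
Their determinants are $\frac m6$ and $\frac m{12}-\frac1{72}$, so they are negative-definite if and only if $m>0$, resp.\ $m>\frac16$. No exceptional curve enters.

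The paper gets $m>0$ from $\msp_s\in\Gamma\cap\Delta$: the scheme $S\cap T$ is not quasismooth at $\msp_s$, while $\Gamma$ is. For $\theta=0$ it gets $m\ge\frac13$ from $\omult_{\msp_s}(\Delta)=2$ at the index-$6$ point $\msp_s$.

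Apart from this, your setup coincides with the paper's: the same $S$ and $T$, the split on $\theta$, and the decompositions $\Gamma+\Delta$ and $2\Gamma+\Delta$. Your Step 1 is a fair addition. You are right that Lemma~\ref{lem:normqhyp} does not apply to $T=(ys+z^2=0)_X$. However, reducedness of $\tilde T\cap E$ is only a sufficient condition for normality of $\tilde T$, not an equivalent one.
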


\begin{proof}
We set 
\[
\Gamma \coloneq (x = t = w = y s + z^2 = 0) \subset X,
\]
which is a WCI curve of type $(1, 7, 8, 10)$ passing through $\msp$.
We have
\[
S \cap T = (x = y s + z^2 = w s + t^2 = y^2 w + \theta y z t + w^2 = 0).
\] 

Suppose that $\theta \ne 0$.
On the open subset $U_s$, $S \cap T$ is isomorphic to the $\bmu_6$-quotient of the complete intersection $C$ in $\mbA_{y, z, t, w}^4$ defined by the equations
\[
y + z^2 = w + t^2 = y^2 w + \theta y z t + w^2 = 0.
\]
Eliminating the variables $y$ and $w$, $C$ is isomorphic to the plane curve in $\mbA^2_{z, t}$ defined by the equation
\[
t (t^3 - z^4 t - \theta z^3) = 0.
\]
The curve $\Gamma$ corresponds to the plane curve $(t = 0)$ and the polynomial $t^3 - z^4 t - \theta z^3$ is irreducible for any $\theta \ne 0$.
This shows that $S|_T = \Gamma + \Delta$, where $\Delta$ is an irreducible and reduced curve, since $S \cap T \cap (s = 0)$ is a finite set of points.
We have $(A \cdot \Gamma) = 1/12$ and $(E \cdot \tilde{\Gamma}) = 1$ since $\tilde{\Gamma}$ and $E$ intersect transversally at $1$ smooth point.
It follows that $(B \cdot \tilde{\Gamma}) = -1/6$.
By taking the intersection number of $B|_{\tilde{T}} \sim \tilde{S}|_{\tilde{T}} = \tilde{\Gamma} + \tilde{\Delta}$ and $B|_{\tilde{T}}$, we have $(B \cdot \tilde{\Delta}) = 0$.
We set $m \coloneq (\tilde{\Gamma} \cdot \tilde{\Delta})$.
We see that $S \cap T$ is not quasismooth at $\msp_s \in \Gamma$ and $\Gamma$ is quasismooth at $\msp_s$.
This shows that $\msp_s \in \Gamma \cap \Delta$, and hence $m > 0$.
By taking intersection numbers of $\tilde{S}|_{\tilde{T}} = \tilde{\Gamma} + \tilde{\Delta}$ and $\tilde{\Gamma}$, $\tilde{\Delta}$, we have
\[
\begin{pmatrix}
(\tilde{\Gamma}^2) & (\tilde{\Gamma} \cdot \tilde{\Delta}) \\
(\tilde{\Gamma} \cdot \tilde{\Delta}) & (\tilde{\Delta}^2)
\end{pmatrix} =
\begin{pmatrix}
- m - \frac{1}{6} & m \\
m & -m
\end{pmatrix}
\]
which is negative-definite since $m > 0$.
By Lemma~\ref{lem:exclnegdef}, $\msp$ is not a maximal center.

Suppose that $\theta = 0$.
Then, we have $S|_T = 2 \Gamma + \Delta$, where
\[
\Delta \coloneq (x = y s + z^2 = w s + t^2 = y^2 + w = 0).
\]
On the open set $U_z \coloneq (z \ne 0)_X$, the curve $\Delta$ is the $\bmu_5$-quotient of the complete intersection $C$ in $\mbA^4_{y, s, t, w}$ defined by the equations
\[
y s + 1 = w s + t^2 = y^2 + w = 0.
\]
Eliminating the variable $w$, the curve $C$ is isomorphic to $(y s + 1 = - y + t^2 = 0) \subset \mbA^3_{y, s, t}$, and then to $(s t^2 + 1 = 0) \subset \mbA^2_{s, t}$.
It follows that $\Delta$ is irreducible and reduced since $\Delta \cap (z = 0)$ is a finite set of points.
We set $m \coloneq (\tilde{\Gamma} \cdot \tilde{\Delta})$.
We see that $\msp_s \in \Gamma \cap \Delta$ and $\omult_{\msp_s} (\Delta) = 2$, which shows that the local intersection multiplicity of $\Gamma$ and $\Delta$ at $\msp_s$ is at least $\omult_{\msp} (\Gamma) \omult_{\msp} (\Delta)/6 \ge 1/3$.
In particular, we have $m \ge 1/3$.
We have $(B \cdot \tilde{\Gamma}) = -1/6$ as above and $(B \cdot \tilde{\Delta}) = (B|_{\tilde{T}} \cdot \tilde{S}|_{\tilde{T}}) - 2 (B \cdot \tilde{\Gamma}) = 1/6$.
By taking intersection numbers of $\tilde{S}|_{\tilde{T}} = 2 \tilde{\Gamma} + \tilde{\Delta}$ and $\tilde{\Gamma}$, $\tilde{\Delta}$, we have
\[
\begin{pmatrix}
(\tilde{\Gamma}^2) & (\tilde{\Gamma} \cdot \tilde{\Delta}) \\
(\tilde{\Gamma} \cdot \tilde{\Delta}) & (\tilde{\Delta}^2)
\end{pmatrix} =
\begin{pmatrix}
- \frac{1}{2} m - \frac{1}{12} & m \\
m & - 2 m + \frac{1}{6}
\end{pmatrix}
\]
which is negative-definite since $m \ge 1/3$.
By Lemma~\ref{lem:exclnegdef}, $\msp$ is not a maximal center.
\end{proof}

\begin{Rem}
We are unable to follow the arguments given in \cite[Pages~437-438]{AZ16} for a $\frac{1}{4} (1, 1, 3)$ point $\msp$ of a member $X$ of family \textnumero~71.

Firstly, in \cite{AZ16}, it is proved that the $2$-ray game initiated by $\varphi \colon Y \to X$ remains the same regardless of the generality assumption that $X$ does not contain a WCI curve of type $(1, 7, 8, 10)$.
However, this is impossible, that is, the $2$-ray game changes as we drop the generality assumption, since the cone $\bNE (Y)$ (and also the nef cone of $Y$) changes as we drop the assumption.
 
Secondly, it is stated in \cite{AZ16} that the first four steps of the $2$-ray game for $T''$ restrict to an isomorphism of $Y$, and the fifth step restricts to a (log) flip of $Y$.
If this is indeed correct, then the divisor $N \coloneq 10 \varphi^*A - \frac{6}{4} E$ is nef and not ample.
However, $N$ cannot be ample as explained below.
If $X$ contains a WCI curve $\Gamma$ of type $(1, 7, 8, 10)$ passing through $\msp$, we have 
\[
(N \cdot \tilde{\Gamma}) = \frac{10}{12} - \frac{6}{4} < 0
\]
by the computation given in the proof of Proposition~\ref{prop:No71excl4}.
If $X$ does not contain such a WCI curve, then $\{x, t, v, w\}$ isolates $\msp$ and $6 \varphi^*A - \frac{2}{4} E$ is a nef divisor, which shows that $N$ does not belong to the nef cone of $Y$.
\end{Rem}

\section{Local alpha invariants at smooth points}
\label{sec:alphasmpt}

The aim of this section is to prove the following.

\begin{Thm}
\label{thm:alphasmpt}
Let $X$ be a member of family $\msi$, where $\msi \in \msI_{\br}^* \setminus \{60\}$, and let $\msp$ be a smooth point of $X$.
Then, $\msp$ is not a maximal center and we have $\alpha_{\msp} (X) \ge 1/2$.
\end{Thm}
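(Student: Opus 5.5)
The first assertion, that a smooth point $\msp$ is not a maximal center, is already contained in Remark~\ref{rem:knownBR}. There it is recorded that \cite[Propositions~7.5 and 7.6]{OkadaI} exclude smooth points as maximal centers for every member of each family indexed by $\msI^*_{\br}$, with no generality assumptions. So the real content is the bound $\alpha_{\msp}(X) \ge 1/2$.

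For the alpha bound I would argue by contradiction. Suppose there is $D \in |A|_{\mbQ}$ with $(X, \frac{1}{2}D)$ not log canonical at $\msp$. By Remark~\ref{rem:alphairred} we may assume $D$ is irreducible. By Remark~\ref{rem:lctomult} this forces $\mult_{\msp}(D) > 2$.

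The strategy is then to bound $\mult_{\msp}(D)$ from above using Lemma~\ref{lem:isolomult}. This requires an effective divisor $S$ through $\msp$ not containing $\Supp D$ and a $\msp$-isolating set of maximal degree $d$. The conclusion would be
\[
\mult_{\msp}(D) \le \mult_{\msp}(D)\mult_{\msp}(S) \le d\,(D\cdot S\cdot A) = d\, l_S (A^3),
\]
where $S \in |l_S A|$. It therefore suffices to find, for each family and each position of $\msp$, data with $d\, l_S (A^3) \le 2$.

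Since $(A^3)$ is small for these families, I would split by the locus containing $\msp$. If $\msp \in U_x$, the set of all coordinates isolates $\msp$ after a coordinate change, e.g.\ $\{y - \lambda x^{a_1}, z - \mu x^{a_2}, \dots\}$, so $d = a_5$. For $S$ I take a member of $|\mfm_{\msp}(a_1A)|$ (or $H_x$-type quasihyperplanes) not containing the irreducible $D$. If $\msp \in H_x$, I use coordinates of lower weight that vanish at $\msp$, with $S = H_x$ when $D \ne H_x$. If $D = H_x$, then $\mult_{\msp}(H_x)\le 1$ at a point where $H_x$ is quasismooth, and a direct check handles the finitely many singular points of $H_x$ via Lemma~\ref{lem:isoltc}.

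This gives a finite table of inequalities of the form $a_5 \cdot a_1 (A^3) \le 2$ and its variants, to be checked family by family over the $16$ families in $\msI^*_{\br}\setminus\{60\}$.

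I expect the main obstacle to be smooth points lying in special loci where the naive isolating set has large degree or fails to isolate. Typical examples are points on the curve $(x = y = 0)_X$, or on a WCI curve like those appearing in \S\ref{sec:pfbirrig}, where the obvious $S$ might contain $D$. For these I would first try a finer isolating set built from the defining equations, as in the exclusion arguments of \cite[\S 7]{OkadaI}; those arguments already prove $\mult_{\msp}(D)\le$ roughly $1/(A^3)$-type bounds for movable systems. I would adapt them to irreducible $\mbQ$-divisors. Where $D$ is forced to be a specific surface such as $H_x$ or $H_y$, I would compute its multiplicity at $\msp$ directly from the equations.
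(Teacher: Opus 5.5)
Your treatment of the first assertion and your overall mechanism match the paper: reduce to an irreducible $D$, bound $\mult_{\msp}(D)$ via Lemma~\ref{lem:isolomult}, and conclude by Remark~\ref{rem:lctomult}. But the plan does not close as written, and the failures occur at general points, not in the special loci you anticipate.

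\textbf{Isolating-set degree.} With all coordinates as the isolating set at $\msp=\msp_x$ you get $d=a_5$. Already $a_1a_5(A^3)$ equals $3,3,\frac{8}{3},\frac{5}{2}$ for \textnumero~8, 14, 20, 24. Nor can you always take $l_S=a_1$. If $y$ is the only variable of weight $a_1$, then $|\mfm_{\msp}(a_1A)|=\{H_y\}$, $D=\frac{1}{a_1}H_y$ is allowed, and $H_x$ misses $\msp$. So you must fall back to $H_z$, and then $a_2a_5(A^3)>2$ also for \textnumero~31, 37, 45, 51, 64, 75 (e.g.\ $\frac{5}{2}$ for \textnumero~31).

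The missing observation is that $w^2\in\msF_2$ (here $d_2=2a_5$ and $\msp_w\notin X$). Hence $\{y,z,s,t\}$ already isolates $\msp_x$ and $d=a_4$. Since $D$ differs from $\frac{1}{a_1}H_y$ or from $\frac{1}{a_2}H_z$, this gives $\mult_{\msp}(D)\le a_2a_4(A^3)\le 2$ in every family except \textnumero~8 and 14. For \textnumero~14 an isolating set of maximum degree $2$ is needed.

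Likewise, for $D=H_x$ with $\msp\in H_x$ you need an explicit surface through $\msp$ not containing $H_x$. The paper takes a member of a pencil of degree $e_{\msi}$ in the other variables (Lemma~\ref{lem:smptHxsupp}). It then gets $\mult_{\msp}(H_x)\le e_{\msi}a_4(A^3)<3$, with $a_4$ replaced by $2$ for \textnumero~14. Lemma~\ref{lem:isoltc} alone does not supply such a surface.

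\textbf{Family \textnumero~8.} This family is out of reach of any pure multiplicity argument. There $(A^3)=1$ and $|A|=\langle x,y\rangle$ is a pencil. So for $\msp\notin(x=y=0)_X$ the unique $H\in|A|$ through $\msp$ is an admissible $D$, and $\mult_{\msp}(H)=3$ can occur (the case $f_2=0$ in Lemma~\ref{lem:No8Hsmpt}). Remark~\ref{rem:lctomult} then yields only $\frac13$, so ``computing the multiplicity from the equations'' stalls.

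The paper blows up $\msp$ and uses that the tangent cone $f_3=\msF_2(0,0,z,s,t,0)$ is not a triple line, by quasismoothness. Then \cite[Lemmas~3.4 and 3.25]{KOWalpha} give $\lct_{\msp}(X;H)\ge\frac12$. For $D\ne H$ it uses the degree-$2$ isolating set of Lemma~\ref{lem:No8isolsm} to get $\mult_{\msp}(D)\le 2$.

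Finally, the arguments of \cite[\S7]{OkadaI} bound $\mult_{\msp}(D_1\cdot D_2)$ for two general members of a movable system. They do not transfer to a single, possibly fixed, irreducible $\mbQ$-divisor, so they cannot replace these steps.
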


The first assertion follows from \cite{OkadaI} (see also the explanations given at the beginning of \S \ref{sec:pfbirrig}), hence we need to show the latter assertion concerning local alpha invariants, which follows from Propositions~\ref{prop:mostsmptalpha} and \ref{prop:No8smptalpha} below.

\subsection{Families other than \textnumero~8 and 60}

Let $X = X_{d_1, d_2} \subset \mbP (a_0, \dots, a_5)$ be a member of family \textnumero~$\msi$, where $\msi \in \msI^*_{\mathrm{br}} \setminus \{8, 60\}$.
We assume that $a_0 \le \cdots \le a_5$.
Note that $a_0 = 1$.

\begin{Lem}
\label{lem:smptHxsupp}
For $\msi \in \msI^*_{\mathrm{br}} \setminus \{8, 60\}$, let $e_{\msi}$ be the number defined as follows:
\[
e_{\msi} \coloneq
\begin{cases}
2, & \text{if $\msi = 14, 20, 24$}, \\
4, & \text{if $\msi = 31, 37, 45, 47, 51, 59$}, \\
6, & \text{if $\msi = 64, 75$}, \\
8, & \text{if $\msi = 78$}, \\
10, & \text{if $\msi = 71, 76$}, \\
15, & \text{if $\msi = 84$}, \\
18, & \text{if $\msi = 85$}.
\end{cases}
\]
Then, for any smooth point $\msp$ of $X$ that is contained in $H_x$, there is an effective Weil divisor $S_{\msp} \in |e_{\msi} A|$ such that $S_{\msp}$ passes through $\msp$ and it does not contain $H_x$ in its support.
\end{Lem}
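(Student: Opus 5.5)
We want, for each smooth point $\msp \in H_x$, a divisor in $|e_{\msi} A|$ through $\msp$ that does not contain $H_x$. Such a divisor is cut out by a homogeneous polynomial of degree $e_{\msi}$ that is not divisible by $x$. It suffices to show that the restriction to $H_x$ of the linear system $|e_{\msi}A|$ has no base point at $\msp$. Concretely, I want to show that the monomials of degree $e_{\msi}$ in the variables $y,z,s,t,w$ (those other than $x$) have no common zero at $\msp$. Once such a monomial $m$ with $m(\msp)\ne 0$ exists, I take $g \coloneq m - c\,m'$, where $m'$ is another degree-$e_{\msi}$ monomial in $y,z,s,t,w$ and $c$ is chosen so that $g(\msp)=0$. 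If no suitable $m'$ exists, I use instead a combination $m^{a}-c\,m''^{b}$ of two monomials of equal degree. Then $S_{\msp}\coloneq (g=0)_X$ passes through $\msp$. Since $g\notin (x)$, and since $H_x \cap \bigl(\text{a general such $g$}=0\bigr)$ is a curve, $S_{\msp}$ does not contain $H_x$. Here I use that $H_x$ is irreducible, being a quasihyperplane, normal by Lemma~\ref{lem:normqhyp}.

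The key step is a family-by-family check of the weights. The number $e_{\msi}$ appears to be chosen as a common multiple such that, for every coordinate point stratum, some monomial of degree $e_{\msi}$ does not vanish. For example, for \textnumero~20 in $\mbP(1,2,2,3,3,4)$ we have $e=2$, and the monomials $y,z$ both vanish only on $H_x\cap(y=z=0)$. The points there are $\msp_s,\msp_t,\msp_w$ strata, which are singular points of index $3$ or on the $\frac{1}{3}$ locus, hence are not smooth points. So every smooth point of $H_x$ has $(y,z)\neq(0,0)$, and I take $g=\beta y-\alpha z$.

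In general, I argue as follows. Let $Z\subset H_x$ be the locus where all degree-$e_{\msi}$ monomials in $y,\dots,w$ vanish. At a point of $Z$, every coordinate whose weight divides $e_{\msi}$ vanishes, and so does every coordinate whose weight can be combined with others to form $e_{\msi}$. I check from the weights in Table~\ref{table:BRcodim2} that $Z$ is contained in the union of the loci where only coordinates of weight sharing a nontrivial common factor are nonzero. These loci consist of singular points of $X$, namely the non-Gorenstein cyclic quotient points. For instance, for \textnumero~84 with $e=15$ the weights $3$ and $5$ appear and are coprime to each other, and the leftover strata are orbifold points. Hence smooth points avoid $Z$. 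When a smooth point has only one nonvanishing monomial of degree $e_{\msi}$ available, I need a second monomial to cancel it. I instead use $m^{k}-c\,(m')^{l}$, a combination of monomials of degree $\operatorname{lcm}$ times a factor, but keeping the degree $e_{\msi}$ requires two distinct monomials. I expect the choice of $e_{\msi}$ guarantees that at least two monomials are nonvanishing at each smooth point, for example $y^{e/a_1}$ and $z^{e/a_2}$.

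The main obstacle is this case-by-case combinatorial verification over the 15 families. In particular, for each family I must handle smooth points lying on the one-dimensional strata where only two coordinates are nonzero and the point is smooth because it is off the singular locus. There I must exhibit two monomials of degree exactly $e_{\msi}$ in those two coordinates. I would organize the check by listing, for each family, the pairs and triples of coordinates that can be simultaneously nonzero on $H_x$ at a smooth point, using the defining equations restricted to $x=0$. This restriction often forces, for example, that $\msp_w\notin X$ or that certain strata lie in $\Sing X$, and I then verify that $e_{\msi}$ is divisible appropriately.
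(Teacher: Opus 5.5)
Your basic device, cutting out $S_{\msp}$ by a nonzero polynomial $g$ of degree $e_{\msi}$ in $y,z,s,t,w$, is the paper's. But the step you single out as the main obstacle is not actually needed, and the construction you describe has a hole.

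You insist on a monomial $m$ with $m(\msp)\neq 0$ and set $g=m-c\,m'$. This also needs $m'(\msp)\neq 0$. Your expectation that two degree-$e_{\msi}$ monomials are nonvanishing at every smooth point is false. Take \textnumero~20, where $e_{\msi}=2$ and the degree-$2$ monomials are $y,z$. All but finitely many points of the curve $(x=z=0)_X$ are smooth points of $X$ lying in $H_x$ with $y\neq 0=z$, and no $y-c\,z$ vanishes there. Your fallback $m^a-c\,(m'')^b$ leaves degree $e_{\msi}$.

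The missing observation is elementary: vanishing of monomials at $\msp$ helps rather than hurts. Take two linearly independent $h_1,h_2\in\mbC[y,z,s,t,w]$ of degree $e_{\msi}$. For example, use $y,z$ for \textnumero~20, $y^2,s$ for \textnumero~31, $z^2,ys$ for \textnumero~71, and $ys,w$ for \textnumero~84. The condition $\lambda_1h_1(\msp)+\lambda_2h_2(\msp)=0$ is a single linear equation in two unknowns. So it always has a solution $(\lambda_1,\lambda_2)\neq(0,0)$; if $h_1(\msp)=0$, just take $h_1$.

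That is the entire proof in the paper. Your formula $\beta y-\alpha z$ for \textnumero~20 is already this move. No smoothness of $\msp$, no stratification, and no analysis of the common zero locus $Z$ is needed, so you can drop the family-by-family check.

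Separately, your reason why $S_{\msp}$ does not contain $H_x$ is not valid as stated: $g$ is not general, since it is forced to vanish at $\msp$. The right reason is a degree count. In every family $e_{\msi}<d_1\le d_2$. Hence a nonzero $g\in\mbC[y,z,s,t,w]$ of degree $e_{\msi}$ does not lie in the ideal generated by $\msF_1(0,y,z,s,t,w)$ and $\msF_2(0,y,z,s,t,w)$, which defines $H_x$. Since $H_x$ is irreducible and reduced (it is normal by Lemma~\ref{lem:normqhyp}), $g$ does not vanish identically on $H_x$.
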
 

\begin{proof}
There are two linearly independent homogeneous polynomials of degree $e_{\msi}$ in $\mbC [y, z, t, w]$.
For example, if $\msi = 14$ (resp.\ $\msi = 71$, resp.\ $\msi = 85$), then we can take $y$ and $z$ (resp.\ $z^2$ and $y t$, resp.\ $y t$ and $z^2$).
Let $h_1, h_2 \in \mbC [y, z, t, w]$ be such polynomials and let $\msp$ be a smooth point of $X$ contained in $H_x$.
Then, we can choose $\lambda_1, \lambda_2 \in \mbC$ such that at least one of $\lambda_i$ is nonzero and $\lambda_1 h_1 + \lambda_2 h_2$ vanishes at $\msp$.
The assertion follows immediately by setting $S_{\msp} \coloneq (\lambda_1 h_1 + \lambda_2 h_2 = 0)_X \sim e_{\msi} A$.
\end{proof}

\begin{Lem}
\label{lem:smptHxmult}
We have
\[
\mult_{\msp} (H_x) \le 2
\]
for any smooth point $\msp$ of $X$.
\end{Lem}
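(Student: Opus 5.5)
The plan is to compute $\mult_{\msp}(H_x)$ directly from the defining equations, using the fact that $H_x$ is a surface in $X$ cut out by the single equation $x = 0$. If $\msp \notin H_x$ there is nothing to prove, so assume $\msp \in H_x$ is a smooth point of $X$. Then $H_x$ is isomorphic, near $\msp$, to the surface
\[
(\msF_1(0,y,z,s,t,w) = \msF_2(0,y,z,s,t,w) = 0) \subset \mbP(a_1,\dots,a_5),
\]
which is a codimension $2$ complete intersection in a $4$-dimensional weighted projective space. Since $\msp$ is a smooth point of $X$, some coordinate $v \in \{y,z,s,t,w\}$ with $v(\msp)\neq 0$ has the property that the chart $U_v$ is smooth at $\msp$ (after passing to the quotient chart, $\msp$ has trivial stabilizer). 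So I can work in an affine chart $\mbA^4$ of the weighted projective space, where $H_x$ is locally defined by two equations $\bar f_1 = \bar f_2 = 0$.

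For a complete intersection surface in $\mbA^4$, the multiplicity at $\msp$ is at least $\mult_{\msp}\bar f_1 \cdot \mult_{\msp}\bar f_2$. More usefully, if the tangent cone is as expected, the multiplicity is at most $2$ exactly when one of the equations has a nonzero linear part at $\msp$ and the restricted second equation has multiplicity at most $2$. So the key steps are as follows.

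\textbf{Step 1.} Show that the Jacobian of $(\bar f_1,\bar f_2)$ at $\msp$ has rank at least $1$. Suppose instead it has rank $0$. Then the full Jacobian $J_X(\msp)$, which has rank $2$ by smoothness of $X$ at $\msp$, has both rows concentrated in the $\partial/\partial x$ column. That column alone has rank $\le 1$, a contradiction. So, after a linear change, $\bar f_1$ is part of a local coordinate system, and $H_x$ is locally a hypersurface $(\bar g = 0)$ in a smooth $3$-fold germ, where $\bar g$ is $\bar f_2$ restricted.

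\textbf{Step 2.} Show that $\mult_{\msp}(\bar g) \le 2$. This is the main obstacle, since it cannot be a purely formal consequence of smoothness: it is family-specific. My approach is to argue by contradiction. Suppose $\mult_{\msp} H_x \ge 3$. Then take a general quasihyperplane section through $\msp$, or use a $\msp$-isolating set of low degree together with Lemma~\ref{lem:isolomult} with $D = H_x$ and $S$ a suitable divisor through $\msp$ not containing $H_x$; Lemma~\ref{lem:smptHxsupp} supplies $S_{\msp} \in |e_{\msi}A|$. This would give
\[
3 \le \mult_{\msp}(H_x) \le d\, e_{\msi} (A^3),
\]
where $d$ is the maximal degree of a $\msp$-isolating set, typically $a_4a_5$-type bounds or smaller for points on $H_x$. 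So the route is to check, family by family from the table, that $d\, e_{\msi}(A^3) < 3$. Here $A^3 = d_1d_2/\prod a_i$ is small, and for points in $H_x$ one can isolate using $\{x\}$ together with the $e_{\msi}$-pencil and a couple of higher-degree monomials.

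\textbf{Step 3 (fallback).} If the inequality in Step 2 fails for some family, I will handle it directly. In that family, I will write the quadratic part of the equations at $\msp$ via the normal forms of $\msF_1,\msF_2$ restricted to $x=0$, exploiting quasismoothness at the coordinate points to guarantee the presence of the monomials (e.g.\ $w^2$, $t^2y$, $z^2y$) which force a nonzero quadratic term in $\bar g$.
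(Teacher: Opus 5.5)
Your Step~2 is exactly the paper's proof. Lemma~\ref{lem:isolomult}, applied to $D = H_x$ and $S = S_{\msp} \in |e_{\msi}A|$ from Lemma~\ref{lem:smptHxsupp}, gives $\mult_{\msp}(H_x) \le d\, e_{\msi}(A^3)$, and integrality finishes once this is $<3$. Step~1 is not needed, since $H_x$ is a Cartier divisor through a smooth point.

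\textbf{The gap.} The entire content of the lemma is the choice of the isolating set and the resulting inequality, and you leave both open. Moreover, the degree you anticipate, ``of type $a_4a_5$'', makes $d\,e_{\msi}(A^3)$ exceed $3$ in every family; it is already $8$ for \textnumero~20. What you need is $d < 3/(e_{\msi}(A^3))$. The paper asserts an isolating set of maximum degree $a_4$ for $\msi \ne 14$, where $e_{\msi}a_4(A^3) < 3$. For \textnumero~14 it uses degree $2$, because $a_4 = 3$ would give exactly $3$, which does not suffice.

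\textbf{Do not take $d = a_4$ for granted.} Suppose $\msp \in H_x$ has $y,z,s,t,w$ all nonzero. A homogeneous $g$ of degree $k$ with $g(\msp)=0$ is then divisible by $x$, and so vanishes on $H_x$, unless there are at least two $x$-free monomials of degree $k$. An isolating set also needs two polynomials not vanishing on $H_x$, since one alone cuts out a curve of $H_x$ through $\msp$. Counting monomials, this forces $d \ge 6, 10, 10, 10, 12, 16, 20$ for \textnumero~47, 59, 71, 76, 78, 84, 85 respectively, all exceeding $a_4$.

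The inequality still holds in all these cases except \textnumero~71. There, in degrees $\le 9$ the only pair of $x$-free monomials of equal degree is $y^2, w$, so $d \ge 10$, and with $e_{71} = 10$ you only get $10/3$. You can repair this by taking $S_{\msp}$ in the pencil $\langle y^2, w\rangle \subset |8A|$ and isolating with the degree-$8$ and degree-$10$ binomials (the latter relating $z^2$ and $ys$). This gives $8\cdot 10\cdot\frac{1}{30} = \frac{8}{3} < 3$. Points of $H_x$ where one of $y,\dots,w$ vanishes are easier, since more low-degree polynomials vanish there.
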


\begin{proof}
Let $\msp \in X$ be a smooth point of $X$.
We may assume that $\msp \in H_x$.
Let $e_{\msi}$ and $S \coloneq S_{\msp} \in |e_{\msi} A|$ be as in Lemma~\ref{lem:smptHxsupp}.
Then $H_x \cdot S$ is an effective $1$-cycle.

Suppose that $\msi \ne 14$ (resp.\ $\msi = 14$).
Then there is a $\msp$-isolating set of maximum degree $a_4$ (resp.\ $2$).
By Lemma~\ref{lem:isolomult}, we have
\[
\mult_{\msp} (H_x) \le (H_x \cdot S \cdot T) =
\begin{cases}
e_{\msi} a_4 (A^3), & \text{if $\msi \ne 14$}, \\
2 e_{14} (A^3) = 2, & \text{if $\msi = 14$}.
\end{cases}
\]
Moreover, we have $e_{\msi} a_4 (A^3) < 3$ for $\msi \ne 14$.
This shows that $\mult_{\msp} (H_x) \le 2$.
\end{proof}

\begin{Prop}
\label{prop:mostsmptalpha}
Let $X$ be a member of family \textnumero~$\msi$, where $\msi \in \msI^*_{\mathrm{br}} \setminus \{8, 60\}$.
Then, for any smooth point $\msp$ of $X$, we have $\alpha_{\msp} (X) \ge 1/2$.
\end{Prop}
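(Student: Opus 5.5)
By Remark~\ref{rem:alphairred}, it is enough to prove that $(X, \frac{1}{2} D)$ is log canonical at $\msp$ for every irreducible $\mbQ$-divisor $D \in |A|_{\mbQ}$. Since $\msp$ is a smooth point, Remark~\ref{rem:lctomult} gives $\lct_{\msp}(X; D) \ge 1/\mult_{\msp}(D)$. So it suffices to establish the single inequality $\mult_{\msp}(D) \le 2$ for every such $D$ passing through $\msp$.

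\emph{Case $\Supp D = H_x$.} Irreducibility of $D$ and $D \sim_{\mbQ} A \sim H_x$ force $D = H_x$. Lemma~\ref{lem:smptHxmult} then gives $\mult_{\msp}(D) \le 2$ directly.

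\emph{Case $\Supp D \ne H_x$, so $D$ and $H_x$ share no component.} Here I would split according to whether $\msp \in H_x$.
\begin{enumerate}
\item If $\msp \in H_x$, apply Lemma~\ref{lem:isolomult} with $S = H_x$. Using the $\msp$-isolating set of maximum degree $a_4$ (respectively $2$ for \textnumero~14) from the proof of Lemma~\ref{lem:smptHxmult}, this gives
\[
\mult_{\msp}(D) \le a_4 (D \cdot H_x \cdot A) = a_4 (A^3).
\]
\item If $\msp \notin H_x$, I would instead take $S$ to be the restriction to $X$ of a suitable $\mbC$-linear combination of low-degree coordinate monomials vanishing at $\msp$. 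For example, use $S \in |e A|$ from the pencil spanned by $x^{e}$ and a monomial of degree $e$ nonvanishing at $\msp$, chosen not to contain $\Supp D$; the pencil has positive-dimensional freedom, so such $S$ exists. Then
\[
\mult_{\msp}(D) \le d\, e\, (A^3),
\]
where $d$ is the maximum degree of a $\msp$-isolating set for points in $U_x$.
\end{enumerate}

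For points of $U_x$ one has the standard isolating sets: for instance $\{y - \lambda x^{a_1}, \dots\}$ is an isolating set consisting of the coordinate differences $v - \lambda_v x^{a_v}$, whose maximum degree is $a_4$ or $a_5$, possibly refined by eliminating the variable appearing linearly in the equations. The remaining work is a family-by-family check, using the weights and $(A^3) = d_1 d_2/\prod a_i$, that the resulting bounds are at most $2$, or at least $< 3$ when only an integer bound is needed. This is routine since $(A^3)$ is small for all these families.

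The main obstacle is the points of $U_x$ in families where $a_4(A^3)$ or $a_5 (A^3)$ is not small enough, since the naive isolating set has degree $a_5$. There I would first try a sharper isolating set of degree $a_4$, obtained by using one defining equation to eliminate $w$ (quasismoothness gives $w^2$ or $w \cdot(\text{coordinate})$ in some $\msF_j$). Failing that, I would take $S$ to be the tangent quasihyperplane section $H$ at $\msp$ of lowest possible degree. I would then treat separately the case where $\Supp D = \Supp H$, using a bound on $\mult_{\msp}(H)$ analogous to Lemma~\ref{lem:smptHxmult}.
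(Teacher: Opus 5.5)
Your treatment of the case $\msp \in H_x$ is the paper's: you handle $D = H_x$ by Lemma~\ref{lem:smptHxmult}. Otherwise you apply Lemma~\ref{lem:isolomult} with $S = H_x$ and an isolating set of degree $a_4$ (resp.\ $2$ for \textnumero~14), giving $\mult_{\msp}(D) \le a_4(A^3) \le 1$. The gap is case (2), $\msp \notin H_x$.

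The pencil spanned by $x^e$ and a monomial $m$ has exactly one member through $\msp$, because $x^e$ does not vanish at $\msp$. So there is no freedom, and nothing prevents that member from sharing a component with $D$. For instance, with $m = y$ the member is $(y - \lambda x^{a_1} = 0)_X$, and $D = \frac{1}{a_1}(y - \lambda x^{a_1} = 0)_X$ is itself an admissible irreducible $D$.

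You also never fix $e$ and $d$, and you leave open whether a degree-$a_4$ isolating set exists. This is the crux, because the inequality has no slack. We have $a_2 a_4 (A^3) = 2$ exactly for \textnumero~20, 24, 31, 37, 45, 51, 64, 75, and $2\cdot 2\cdot (A^3) = 2$ for \textnumero~14. Hence any $S$ of degree $> a_2$, or an isolating set of degree $a_5$, already fails: for \textnumero~20 one gets $a_3a_4(A^3) = 3$ and $a_2 a_5(A^3) = 8/3$. Your fallbacks therefore do not close the case.

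The missing step is the paper's. Replace each coordinate $v$ by $v - \frac{v(\msp)}{x(\msp)^{a_v}} x^{a_v}$, so that $\msp = \msp_x$.
\begin{itemize}
\item Now $H_y$ and $H_z$ are distinct quasihyperplanes through $\msp$. They are prime divisors, being normal by Lemma~\ref{lem:normqhyp} and connected, and their degrees are $a_1 \le a_2$.
\item An irreducible $D$ can coincide with at most one of $\frac{1}{a_1}H_y$ and $\frac{1}{a_2}H_z$. The other serves as $S \sim bA$ with $b \le a_2$ and no component in common with $D$.
\item In these coordinates $\{y,z,s,t\}$ isolates $\msp_x$, since $d_2 = 2a_5$ and $w^2 \in \msF_2$ for all these families. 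For \textnumero~14 there is an isolating set of maximum degree $2$.
\end{itemize}
Lemma~\ref{lem:isolomult} then gives $\mult_{\msp}(D) \le a_2 a_4(A^3) \le 2$ (resp.\ $\le 2$ for \textnumero~14). No tangent-section argument is needed.
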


\begin{proof}
Let $D \in |A|_{\mbQ}$ be an irreducible $\mbQ$-divisor passing through $\msp$.

Suppose first that $\msp \in U_x$.
We may assume $\msp = \msp_x$ by replacing coordinates.
Then, either $\frac{1}{a_1} H_y$ or $\frac{1}{a_2} H_z$ does not coincide with $D$.
In particular, there is an effective divisor $S \sim b A$ with $b \le a_2$ such that $D$ and $S$ do not share a common component and $\msp \in \Supp (S)$.
If $\msi \ne 14$ (resp.\ $\msi = 14$), then there is a $\msp$-isolating class of maximum degree $a_4$ (resp.\ $2$).
By Lemma~\ref{lem:isolomult}, we have
\[
\mult_{\msp} (D) \le (D \cdot S \cdot T) \le 
\begin{cases}
a_2 a_4 (A^3) \le 2, & \text{if $\msi \ne 14$}, \\
4 (A^3) = 2, & \text{if $\msi = 14$}.
\end{cases}
\]
This implies $\lct_{\msp} (X; D) \ge 1/2$.

Suppose next that $\msp \in H_x$.
By Lemma~\ref{lem:smptHxmult}, we have $\lct_{\msp} (X; H_x) \ge 1/2$.
We may assume that $D \ne H_x$.
As above, if $\msi \ne 14$ (resp.\ $\msi = 14$), there is a $\msp$-isolating set of maximum degree $a_4$ (resp.\ $2$).
By Lemma~\ref{lem:isolomult}, we have
\[
\mult_{\msp} (D) \le (D \cdot H_x \cdot T) \le 
\begin{cases}
a_4 (A^3) \le 1, & \text{if $\msi \ne 14$}, \\
2 (A^3) = 1, & \text{if $\msi = 14$}.
\end{cases}
\]
This implies $\lct_{\msp} (X; D) \ge 1$ and the proof is complete.
\end{proof}

\subsection{Family \textnumero~8}

Let $X = X_{4, 6} \subset \mbP (1, 1, 2, 2, 2, 3)$ be a member of family \textnumero~$8$.

\begin{Lem}
\label{lem:No8isolsm}
Let $\msp \in X$ be a smooth point.
Then, there is a $\msp$-isolating set of maximum degree $2$.
\end{Lem}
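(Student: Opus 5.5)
Let $\msp = (\xi_0\!:\!\xi_1\!:\!\eta_0\!:\!\eta_1\!:\!\eta_2\!:\!\zeta)$, with coordinates $x, y$ of weight $1$, $z, s, t$ of weight $2$ and $w$ of weight $3$. The plan is to build isolating sets out of low-degree polynomials that vanish at $\msp$, splitting into cases according to which coordinates of $\msp$ vanish.

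\textbf{Case (a): $(\xi_0, \xi_1) \ne (0,0)$.} After a linear change in $x, y$ we may assume $\msp = \msp$ with $\xi_0 = 1$ and $\xi_1 = 0$. Consider the set
\[
\{\, y,\ z - \eta_0 x^2,\ s - \eta_1 x^2,\ t - \eta_2 x^2 \,\}.
\]
Its common zero locus in $\mbP(1,1,2,2,2,3)$ is contained in the curve where $y=0$ and $z, s, t$ are proportional to $x^2$. On $(x \ne 0)$ this is the affine line with coordinate $w$. There $\msF_1$ and $\msF_2$ restrict to polynomials in $w$ of degree at most $1$ and $2$ respectively, and $w^2 \in \msF_2$ by quasismoothness (otherwise $\msp_w$ would be a non-quasismooth point of $X$). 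So on $X$ the locus is finite over $(x\ne0)$.

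On $(x=0)$ the locus is $(x=y=z=s=t=0)$, which is the point $\msp_w$. This point does not lie on $X$, since $w^2 \in \msF_2$. Hence $\msp$ is an isolated component, and the set has maximum degree $2$.

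\textbf{Case (b): $x = y = 0$ at $\msp$.} Now $\msp$ lies in $(x=y=0)_X$, which sits inside $\mbP(2,2,2,3)$. Write $\bar{\msF}_1(z,s,t)$ for the restriction of $\msF_1$; it is a quadric in $z, s, t$, because there are no degree-$4$ monomials involving $w$ there. Write $\bar{\msF}_2$ for the restriction of $\msF_2$, which has the form $w^2 + c(z,s,t)$ with $c$ a cubic.

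Since $\msp$ is smooth, $(\eta_0, \eta_1, \eta_2) \ne 0$. Otherwise $\msp = \msp_w \notin X$; moreover points with only $w \ne 0$ are not smooth, and points on $\mbP(2,2,2)$ with $w = 0$ are $\frac12$-points. In fact the locus $(x=y=w=0)_X$ consists of $\frac{1}{2}(1,1,1)$ points, so a smooth $\msp$ in $(x=y=0)$ has $\zeta \ne 0$.

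Choose two linear forms $\ell_1, \ell_2$ in $z, s, t$ vanishing at $(\eta_0:\eta_1:\eta_2)$, and take the set $\{x, y, \ell_1, \ell_2\}$, all of degree at most $2$. Its zero locus on $X$ lies over the single point $(\eta_0:\eta_1:\eta_2) \in \mbP^2$, and $w$ is determined up to sign by $\bar{\msF}_2 = 0$. This gives finitely many points, so $\msp$ is isolated.

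\textbf{Expected obstacle.} The main thing to verify is the finiteness claim in Case (a): the restricted polynomials must not vanish identically in $w$. I expect $w^2 \in \msF_2$ to guarantee this. Should a subtle degeneration appear, I would instead substitute $w$ using $\msF_1$, or replace one generator by $\msF_1$-adjusted forms. Every generator used has degree at most $2$, which gives the lemma.
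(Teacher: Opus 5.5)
Your proof is correct and is essentially the paper's argument: the same split according to whether $\msp \in (x=y=0)_X$, the same isolating sets up to a coordinate change (the paper moves $\msp$ to $\msp_x$ and uses $\{y,z,s,t\}$, resp.\ moves it to $(0\!:\!0\!:\!0\!:\!0\!:\!1\!:\!1)$ and uses $\{x,y,z,s\}$), and the same key input $w^2 \in \msF_2$. You should state explicitly that $\ell_1,\ell_2$ are linearly independent, and the hedging in your final paragraph is unnecessary, since your Case (a) computation already proves finiteness.
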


\begin{proof}
Suppose that $\msp \notin (x = y = 0)$.
Then, after replacing homogeneous coordinates, we may assume that $\msp = \msp_x$.
We have $w^2 \in \msF_2$ by the quasismoothness of $X$, and hence $\{y, z, s, t\}$ isolates $\msp$.

Suppose that $\msp \in (x = y = 0)_X$.
Then, after replacing $z, s, t$ and $w$, we may assume $\msp = (0\!:\!0\!:\!0\!:\!0\!:\!1\!:\!1)$.
Note that $\msF_2 (0, 0, 0, 0, t, w) = \alpha (w^2 - t^3)$ for some nonzero $\alpha \in \mbC$ because otherwise $X$ is not quasismooth at $\msp$.
In this case, $\{x, y, z, s\}$ isolates $\msp$.
\end{proof}

\begin{Lem}
\label{lem:No8Lxy}
The $1$-dimensional scheme $(x = y = 0)_X$ is either an irreducible smooth curve or the union of two irreducible smooth curves intersecting transversally at $1$ smooth point of $X$.
\end{Lem}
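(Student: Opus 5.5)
We restrict the defining equations to $x = y = 0$ and reduce to a plane curve in $\mbP(2,2,2,3)$. Setting $x = y = 0$, the scheme $L \coloneq (x = y = 0)_X$ is the complete intersection
\[
L = \bigl(\msF_1(0,0,z,s,t,w) = \msF_2(0,0,z,s,t,w) = 0\bigr) \subset \mbP(2_z, 2_s, 2_t, 3_w).
\]
Since the only degree-$4$ monomials in $z,s,t,w$ are quadratic forms in $z,s,t$, we have $\msF_1(0,0,z,s,t,w) = q(z,s,t)$, a quadratic form. In degree $6$, $\msF_2(0,0,z,s,t,w) = \alpha w^2 + c(z,s,t)$ with $\alpha \ne 0$ by quasismoothness at the points where only $w$ is nonzero (as in Lemma~\ref{lem:No8isolsm}). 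The variables $z,s,t$ all have weight $2$, so $\mbP(2,2,2,3)$ is $\mbP(1,1,1,3)$ away from $\msp_w$, and $\msp_w \notin X$. Hence $L$ is a double cover of the conic $Q \coloneq (q = 0) \subset \mbP^2_{z,s,t}$, via $w \mapsto$ coordinates $(z{:}s{:}t)$, branched along $Q \cap (c = 0)$.

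First I show $q$ has rank at least $2$, so that $Q$ is either a smooth conic or a pair of distinct lines. If $\rank q \le 1$, say $q = z^2$ after a linear change, then $\msF_1 \in (x,y,z)^2 + (x,y)$. More precisely, $\prt \msF_1/\prt s$, $\prt \msF_1/\prt t$, $\prt\msF_1/\prt w$ and $\prt \msF_1/\prt z$ all vanish along $(x = y = z = 0)_X$, while $\prt\msF_1/\prt x$ and $\prt\msF_1/\prt y$ are constants $a, b$ in weight $3$ and weight $\cdot$. These are degree-$3$ forms, which vanish on $x=y=z=0$ unless they contain monomials in $s,t,w$ of degree $3$, namely only $w$. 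I will check that the points $(x=y=z=\msF_2=0)$, a nonempty set on the curve $(x=y=z=0)$ in $\mbP(2,2,3)$, produce a Jacobian of rank $<2$. This uses the fact that $\prt\msF_1/\prt x$ and $\prt\msF_1/\prt y$ combined with the $\msF_2$ row can be made degenerate by choosing the point on $(\ell(w)\text{-term}=0)$. This is the step I expect to be the main obstacle: one has to carefully track the possible $xw, yw$ terms in $\msF_1$ and show quasismoothness still fails somewhere on $x=y=z=0$.

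Granting rank $\ge 2$, there are two cases.

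\emph{Rank $3$.} Then $Q \cong \mbP^1$, and $L$ is the curve $\alpha w^2 + c|_Q = 0$, a double cover of $\mbP^1$ branched along the degree-$6$ divisor $c|_Q$. Quasismoothness of $X$ along $L$, read off from the Jacobian restricted to $x=y=0$ together with the $x,y$-columns, should force $c|_Q$ to be reduced, or else to be handled via the $x,y$ columns. If $c|_Q$ is reduced, $L$ is a smooth irreducible curve (genus $2$). If $c|_Q$ is a nonzero square, $L$ splits as two curves $w = \pm\sqrt{-c/\alpha}$, each isomorphic to $Q$, meeting over the zeros of $\sqrt{c}$.

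\emph{Rank $2$.} Then $Q = \ell_1 \cup \ell_2$, and $L$ is the union of double covers of two lines, which meet over the point $\ell_1 \cap \ell_2$.

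In each case I must check smoothness of components and the intersection pattern. I use the Jacobian criterion on $X$: at a point of $L$, if $L$ is singular but $X$ is smooth there, then $H_x$ and $H_y$ are tangent. I plan to show the only allowed degeneration gives exactly one transversal node. The key tool is the intersection number $(H_x \cdot H_y \cdot A) = 2(A^3) = 2 \cdot \tfrac{1}{2} = 1$ on the $\mbZ$-degree, interpreted via $\deg L$. With $(A\cdot L) = 1$ and components of degree at least $\tfrac12$, there are at most two components, each of degree $\tfrac12$, hence each a smooth rational curve (a line section). Two such components meet in a point $\msp$. At $\msp$, $\mult_{\msp} L \le 2$, and $X$ is smooth there since the singular points of $X$ (the $\frac12$ points on $x=y=w=0$) are not on both components unless forced. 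Transversality then follows from the local equation of $L$ as a complete intersection of two smooth surfaces $H_x, H_y$ meeting with $\mult_{\msp} = 2$ exactly. Finally, non-reducedness (a double component) is excluded because a component of multiplicity $2$ would make $X$ non-quasismooth along it, since $H_x$ and $H_y$ would be tangent along a curve, giving a rank drop of $J_X$ at a point where the $x,y$-derivative columns vanish.
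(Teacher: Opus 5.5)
\textbf{There is a genuine gap: $L$ is not a double cover of the conic, it is isomorphic to it.} Your whole case analysis rests on the claim that $L=(x=y=0)_X$ is a double cover of $Q=(q=0)\subset\mbP^2$, and this is false. In $\mbP(2_z,2_s,2_t,3_w)$ the element $\lambda=-1\in\mbC^*$ fixes $z,s,t$ and sends $w\mapsto -w$. So $(z{:}s{:}t{:}w)$ and $(z{:}s{:}t{:}-w)$ are the same point.

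Equivalently, use the isomorphism $\mbP(2,2,2,3)\cong\mbP(1,1,1,3)$ given by $\tilde w=w^2$, which you mention but then do not use. Under it the second equation becomes $c(z,s,t)+\alpha\tilde w=0$, which is linear in $\tilde w$. Eliminating $\tilde w$ gives $L\cong(q=0)\subset\mbP^2$, and this is precisely the paper's proof.

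Your own degree count contradicts the double cover. For family \textnumero~8 one has $(A^3)=1$ and $H_x,H_y\sim A$, so $(A\cdot L)=(A^3)=1$; your computation $2(A^3)=2\cdot\frac12$ is a slip. A double cover of a conic would instead have $(2A\cdot L)=4$, that is, $(A\cdot L)=2$.

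Consequently your conclusions are wrong:
\begin{itemize}
\item For $\rank q=3$, $L$ is a smooth rational curve, never of genus $2$, and it never splits when $c|_Q$ is a square.
\item For $\rank q=2$, $L$ is just the two lines, not double covers of them.
\end{itemize}
Your fallback degree argument does not rescue this. It says nothing about smoothness when $L$ is irreducible of degree $1$. The smoothness of degree-$\frac12$ components and the transversality are asserted, not proved.

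\textbf{What remains once $L\cong Q$ is known.} You only need $\rank q\ge 2$ and the location of the node, and your quasismoothness idea does this once stated precisely.

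For the rank bound, suppose $\rank q\le 1$ and write $q=z^2$ or $q=0$. The set $(x=y=z=w=0)_X=(c(0,s,t)=0)\subset\mbP^1_{s,t}$ is nonempty. At its points the entire $\msF_1$-row of $J_X$ vanishes:
\begin{itemize}
\item $\partial\msF_1/\partial x$ and $\partial\msF_1/\partial y$ restrict on $(x=y=0)$ to multiples of $w$, since $w$ is the only monomial of degree $3$ in $z,s,t,w$;
\item the remaining partials lie in $(x,y,z)$.
\end{itemize}
This contradicts quasismoothness.

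In the rank-$2$ case, the same computation at the vertex of $Q$ (where all partials of $q$ vanish) shows that $w\neq 0$ at the intersection point of the two lines. Its stabilizer in $\mbC^*$ is therefore trivial, so it is a smooth point of $X$. Transversality is that of two distinct lines in $\mbP^2$, transported by the isomorphism $L\cong Q$.
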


\begin{proof}
We can write
\[
\begin{split}
\msF_1 &= q (z, s, t) + \msG_1, \\
\msF_2 &= c (z, s, t) + w^2 + \msG_2,
\end{split}
\]
where $q, c$ are quadratic, cubic forms in variables $z, s, t$, respectively, and $\msG_1, \msG_2 \in (x, y) \subset \mbC [x, y, z, s, t, w]$.
The scheme $(x = y = 0)_X$ is isomorphic to the scheme
\[
\begin{split}
& (q (z, s, t) = c (z, s, t) + \tilde{w} = 0) \subset \mbP (1, 1, 1, 3) \\
\cong & (q (z, s, t) = 0) \subset \mbP^2,
\end{split}
\]
The assertion follows immediately since the rank of the quadratic form $q$ is either $2$ or $3$ by the quasismoothness of $X$.
\end{proof}

\begin{Lem}
\label{lem:No8Hsmpt}
Let $H$ be a member of the linear system $|A|$.
Then, $\lct_{\msp} (X; H) \ge 1/2$ for any smooth point $\msp$ of $X$.
Moreover, $H$ is quasismooth at any $\frac{1}{2} (1, 1, 1)$ point of $X$.
\end{Lem}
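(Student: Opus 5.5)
Here $X = X_{4,6} \subset \mbP(1,1,2,2,2,3)$ with $(A^3) = 4\cdot 6/(1\cdot1\cdot2\cdot2\cdot2\cdot3) = 1$. A member $H \in |A|$ is cut out by a linear form $\lambda x + \mu y$. After a linear change of $x,y$ we may assume $H = H_x$. Thus both assertions are statements about an arbitrary quasihyperplane $H_x$, and we must prove them without generality assumptions.

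\emph{Quasismoothness at the $\frac12(1,1,1)$ points.} The $\frac12(1,1,1)$ points of $X$ lie in $(x=y=w=0)_X$, so they lie on $H$. Let $\msp$ be such a point. Quasismoothness of $X$ at $\msp$ means the $2\times 6$ Jacobian $J_X(\msp)$ has rank $2$. At $\msp$ the only monomials that can contribute nonzero partials are those of $\msF_1$ and $\msF_2$ that are linear in one of $x,y,w$ and otherwise in $z,s,t$. For degree reasons, no degree-$4$ monomial is linear in $w$, and no degree-$6$ monomial is linear in $x$ or $y$, since the cofactor would have odd degree $5$ in $z,s,t$ of weight $2$. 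Also the partials with respect to $z,s,t$ of $q$ and $c$ vanish at $\msp$ up to rank considerations. The upshot is that the rank-$2$ condition is carried by the $\prt/\prt x, \prt/\prt y$ column in row $1$ (from terms $x\,\ell_3$, $y\,\ell_3'$ with cubic $\ell_3$, which are impossible by parity) or by the $z,s,t$ columns.

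More precisely, I expect the relevant $2\times 2$ minor to involve the columns $z,s,t$ (from $q$ and from $c$) and not the column $x$ alone. I will check this by writing $J_X(\msp)$ explicitly. The conclusion should be that $J_{H}(\msp)$, which is $J_X(\msp)$ with an extra row $(1,0,\dots,0)$, has rank $3$. The point is that the column of $\prt/\prt x$ in the $\msF_i$ rows vanishes at $\msp$ for parity reasons, so the rank-$2$ part of $J_X(\msp)$ survives deleting the $x$-column.

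\emph{The lct bound.} For a smooth point $\msp\in H$, it suffices to show $\mult_{\msp} H \le 2$. For this I will copy the argument of Lemma~\ref{lem:smptHxmult}. Choose $S \in |2A|$ through $\msp$ not containing $H$: the degree-$2$ polynomials in $y,z,s,t$ give a big enough space, and $H$ is irreducible since it is normal by Lemma~\ref{lem:normqhyp}. Lemma~\ref{lem:No8isolsm} gives a $\msp$-isolating set of maximum degree $2$. Then Lemma~\ref{lem:isolomult} yields
\[
\mult_{\msp}(H) \le 2\,(H\cdot S\cdot A) = 4(A^3) = 4.
\]
This is too weak, so a sharper argument is needed, and this is the main obstacle.

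\emph{Sharpening the bound.} I will take $S = H_y$ instead, which is degree $1$, provided $\msp \notin (x=y=0)$. Then
\[
\mult_{\msp} H \le 2(H\cdot H_y\cdot A) = 2.
\]
If $\msp \in (x=y=0)_X$, then $H\cap H_y = (x=y=0)_X$ is set-theoretically the curve of Lemma~\ref{lem:No8Lxy}. Since $H$ and $H_y$ are Cartier at $\msp$, the local intersection $(H\cdot H_y)_{\msp}$ has multiplicity at least $\mult_{\msp} H$ along a curve that is smooth at $\msp$, or at $\msp$ itself if $\msp$ is the node. Restricting to a general $T \in |2A|$ through $\msp$ not containing the components of this curve, I get
\[
\mult_{\msp}(H) \cdot \mult_{\msp}(x=y=0)_X \le (H\cdot H_y \cdot T) = 2.
\]
Here the left side uses the fact that $(x=y=0)_X$ is scheme-theoretically reduced, being either smooth or two smooth transversal curves. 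This gives $\mult_{\msp} H \le 2$, hence $\lct_{\msp}(X;H) \ge 1/2$ by Remark~\ref{rem:lctomult}.
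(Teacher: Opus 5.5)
Your quasismoothness argument is fine: at a point with $x=y=w=0$, the $x$-column of $J_X$ vanishes because no monomial $x\cdot m(z,s,t)$ has degree $4$ or $6$. Adding the row of $x$ therefore raises the rank to $3$.

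\textbf{The gap is in the bound on $\lct_{\msp}(X;H)$ when $\msp\notin(x=y=0)_X$.} Since $\msp\in H=H_x$, this forces $y(\msp)\neq0$, so $H_y$ does not pass through $\msp$. Lemma~\ref{lem:isolomult} then gives nothing, because its first inequality needs $\omult_{\msp}(S)\ge1$. In fact $H$ is the only member of the pencil $|A|$ through $\msp$.

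Worse, the target $\mult_{\msp}(H)\le2$ is false in this case. Take
\[
\msF_1=xw+z^2+s^2+t^2,\qquad \msF_2=x^5y+w^2+z^3+s^3+t^3+y^6 .
\]
A direct Jacobian check shows $X$ is quasismooth. At $\msp=\msp_x$ the local coordinates are $z,s,t$, with $w=-(z^2+s^2+t^2)$ and $y=-(z^3+s^3+t^3)+(\text{higher order})$. Hence $\mult_{\msp}(H_y)=3$, and no argument that only bounds multiplicity can close this case.

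The paper handles this case by a local analysis at $\msp=\msp_x$ with $H=H_y$, using the linear parts of $\msF_1,\msF_2$ at $\msp$. There are two outcomes:
\begin{enumerate}
\item It produces some $S=(\alpha_i xy+\beta_i z+\gamma_i s+\delta_i t=0)_X\in|2A|$ that is singular at $\msp$ and does not contain $H$. Then Lemma~\ref{lem:isolomult} with $\mult_{\msp}(S)\ge2$ gives $2\mult_{\msp}(H)\le 2\cdot2(A^3)=4$.
\item Otherwise it reduces to a local equation $(\text{unit})\cdot y+f(0,z,s,t)=0$ for $X$, with $H=(f(0,z,s,t)=0)$. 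If the quadratic part $f_2$ vanishes, as in the example above, the cubic part is (after normalizing $\msF_2$ modulo $\msF_1$) $\msF_2(0,0,z,s,t,0)$. By quasismoothness at the $\frac12(1,1,1)$ points this is not the cube of a linear form. Then \cite[Lemmas~3.4 and 3.25]{KOWalpha} give $\lct_{\msp}(X;H)\ge1/2$ even though the multiplicity is $3$.
\end{enumerate}

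Your $S=H_y$ trick is exactly right in the other case, $\msp\in(x=y=0)_X$. There $\msp\in H_y$, and Lemma~\ref{lem:isolomult} directly gives $\mult_{\msp}(H)\le2(H\cdot H_y\cdot A)=2$. So you attached it to the wrong case.

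The inequality you wrote in that case, $\mult_{\msp}(H)\cdot\mult_{\msp}(x=y=0)_X\le2$, is wrong at the node of $(x=y=0)_X$. Some member of the pencil is singular there, which would give $2\cdot2\le2$. The correct chain is $\mult_{\msp}(H)\le\mult_{\msp}(H\cdot H')=\mult_{\msp}(x=y=0)_X\le2$ for a general $H'\in|A|$, as in the paper.
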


\begin{proof}
Let $\msp$ be a smooth point of $X$.
We may assume that $\msp \in H$ because otherwise there is nothing to prove.

We claim that if there is a member $S \in |\mfm_{\msp}^2 (2A)|$ other than $2 H$, then $\mult_{\msp} (H) \le 2$ and hence $\lct_{\msp} (X; H) \ge 1/2$.
Indeed, if there exists a member $S \in |\mfm_{\msp}^2 (2 A)| \setminus \{2H\}$, then we have $\mult_{\msp} (H) \le 2$ by Lemma~\ref{lem:No8isolsm} and \ref{lem:isolomult}.

Suppose that $\msp \in (x = y = 0)_X$.
Let $H' \in |A|$ be a general member so that $H \cap H' = (x = y = 0)_X$.
By Lemma~\ref{lem:No8Lxy}, we have $\mult_{\msp} (H) \le 2$, which implies $\lct_{\msp} (X; H) \ge 1/2$.

Suppose that $\msp \notin (x = y = 0)_X$.
Replacing coordinates, we may assume that $\msp = \msp_x$ and $H = H_y$.
We can write
\[
\begin{split}
\msF_1 &= \alpha_1 x^3 y + \beta_1 x^2 z + \gamma_1 x^2 s + \delta_1 x^2 t + \varepsilon_1 x w + \msG_1, \\
\msF_2 &= \alpha_2 x^5 y + \beta_2 x^4 z + \gamma_2 x^4 s + \delta_2 x^4 t + \varepsilon_2 x^3 w + \msG_2, \\
\end{split}
\]
where $\alpha_i, \dots, \varepsilon_i \in \mbC$ for $i = 1, 2$ and $\msG_1, \msG_2 \in (y, z, s, t, w)^2 \subset \mbC [x, y, z, s, t, w]$.
The matrix 
\[
\begin{pmatrix}
\alpha_1 & \beta_1 & \gamma_1 & \delta_1 & \varepsilon_1 \\
\alpha_2 & \beta_2 & \gamma_2 & \delta_2 & \varepsilon_2 \\
\end{pmatrix}
\]
is of rank $2$ by the quasismoothness of $X$ at $\msp$.

Suppose that $\varepsilon_1 = 0$.
If in addition at least one of $\beta_1, \gamma_1$ and $\delta_1$ is nonzero, then the divisor $(\alpha_1 x y +\beta_1 z + \gamma_1 s + \delta_1 t = 0)_X$ is a member of $|\mfm_{\msp}^2 (2A)|$, and we have $\mult_{\msp} (H) \ge 2$ by the claim.
We proceed with the proof assuming that $\beta_1 = \gamma_1 = \delta_1 = 0$.
We have $\alpha_1 \ne 0$.
If $\varepsilon_2 = 0$, then at least one of $\beta_2, \gamma_2$ and $\delta_2$ is nonzero by the quasismoothness of $X$ at $\msp$.
Then the divisor $(\alpha_2 x y +\beta_2 z + \gamma_2 s + \delta_2 t = 0)_X$ is a member of $|\mfm_{\msp}^2(2A)|$, and we have $\mult_{\msp} (H) \ge 2$ by the claim.
We assume that $\varepsilon_2 \ne 0$.
In this case, we can take $z, s, t$ as a system of local coordinates of $X$ at $\msp$.
We have $\mult_{\msp} (H) = 2$ since $\msF_1 (0, 0, z, s, t, w) = \msG_1 (0, 0, z, s, t, w)$ is a nonzero quadratic form in variables $z, s, t$.
Thus $\mult_{\msp} (H) \ge 2$, and hence $\lct_{\msp} (X; H) \ge 1/2$ in this case.

Suppose that $\varepsilon_1 \ne 0$.
Replacing $w$, we may assume that $\varepsilon_1 =1$ and $\alpha_1 = \beta_1 = \gamma_1 = \delta_1 = 0$.
We may also assume that $\varepsilon_2 = 0$ by replacing $\msF_2$ by $\msF_2 - \varepsilon_2 x^2 \msF_1$.
If at least one of $\beta_2, \gamma_2$ and $\delta_2$ is nonzero, then the divisor $(\alpha_2 x y +\beta_2 z + \gamma_2 s + \delta_2 t = 0)_X$ is a member of $|\mfm_{\msp}^2(2A)|$, and we have $\mult_{\msp} (H) \ge 2$ by the claim.
We proceed with the proof assuming $\beta_2 = \gamma_2 = \delta_2 = 0$.
In this case we have $\alpha_2 \ne 0$ and we may assume $\alpha_2 = 1$.
Replacing $x \mapsto x - \theta y$ for a suitable $\theta \in \mbC$, we may assume that $\msG_1$ does not involve the variable $w$, that is, $\msG_1 = \msG_1 (x, y, z, s, t)$.
We set $g = g (1, y, z, s, t) \coloneq - \msG_1 (1, y, z, s, t)$.
Then, in a neighborhood of $\msp$, $X$ is isomorphic to the hypersurface in $\mbA^4_{y, z, s, t}$ defined by the equation $f = 0$, where we define $f = f (y, z, s, t) \coloneq \msF_1 (1, y, z, s, t, g)$.
Filtering off terms divisible by $y$, the equation $f = 0$ can be written as
\[
(1 + \cdots) y + f (0, z, s, t) = 0,
\]
where the omitted terms are nonconstant monomials in variables $y, z, s, t$.
We can write $f (0, z, s, t) = \sum_i f_i$, where $f_i = f_i (z, s, t)$ is a homogeneous polynomial of degree $i$ (with respect to $\wt (z, s, t) = (1, 1, 1)$). 
By construction, we have $f_0 = f_1 = 0$.
If $f_2 \ne 0$, then $\mult_{\msp} (H) = 2$, and hence $\lct_{\msp} (X; H) \ge 1/2$.
Suppose that $f_2 = 0$.
Note that we have $f_3 = \msF_2 (0, 0, z, s, t, 0)$ and, in particular, $f_3$ cannot be the cube of a linear form in variables $z, s, t$.
Let $\varphi \colon Y \to X$ be the blowup of $X$ at $\msp$ with exceptional divisor $E \cong \mbP^2$ and let $\tilde{H}$ be the strict transform of $H$ on $Y$.
Then $\tilde{H}|_E$ is isomorphic to the plane cubic curve defined by $f_3 = 0$ in $\mbP^2$ and it is not a triple line.
By \cite[Lemmas~3.4 and 3.25]{KOWalpha}, we have $\lct_{\msp} (X; H) \ge 1/2$ and the proof of the first assertion is complete.

It is easy to see that $H$ is quasismooth at any $\frac{1}{2} (1, 1, 1)$ point.
\end{proof}

\begin{Prop}
\label{prop:No8smptalpha}
Let $\msp$ be a smooth point of $X$.
Then, $\alpha_{\msp} (X) \ge 1/2$.
\end{Prop}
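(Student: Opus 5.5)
By Remark~\ref{rem:alphairred}, it suffices to show $\lct_{\msp}(X;D) \ge 1/2$ for every irreducible $\mbQ$-divisor $D \in |A|_{\mbQ}$ that passes through $\msp$. Here $X = X_{4,6} \subset \mbP(1,1,2,2,2,3)$, so $(A^3) = 24/(1\cdot1\cdot2\cdot2\cdot2\cdot3) = 1/2$. The argument splits according to whether or not $D$ is a $\mbQ$-multiple of a member of $|A|$.

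First suppose $D = H$ for some $H \in |A|$; this means $\Supp D$ is a member of the pencil $|A| = \langle x, y\rangle$. Then Lemma~\ref{lem:No8Hsmpt} gives $\lct_{\msp}(X;H) \ge 1/2$ directly.

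Now suppose $\Supp D$ is not a member of $|A|$. The linear system $|A|$ is a pencil with base locus $(x=y=0)_X$, so there is a member $H \in |A|$ through $\msp$. This $H$ does not share a component with $D$, because $\Supp D$ is irreducible and is not a member of $|A|$. (If $H$ were reducible, one of its components would have degree smaller than that of $A$, which is impossible since $A$ generates $\Cl(X)$ and $H \sim A$; so in fact $H$ is irreducible and any common component would force $\Supp D = H$.) By Lemma~\ref{lem:No8isolsm} there is a $\msp$-isolating set of maximum degree $2$. Applying Lemma~\ref{lem:isolomult} with $S = H$ and $r_{\msp}=1$ gives
\[
\mult_{\msp}(D) \le 2\,(D\cdot H\cdot A) = 2(A^3) = 1,
\]
and Remark~\ref{rem:lctomult} then yields $\lct_{\msp}(X;D) \ge 1 \ge 1/2$.

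There is essentially no obstacle here. The delicate input, namely the bound for the members $H \in |A|$ themselves (which uses the analysis of the cubic term $f_3$ and \cite{KOWalpha}), has already been handled in Lemma~\ref{lem:No8Hsmpt}. The only point requiring care is that the chosen $H$ through $\msp$ has no component in common with $D$, and this follows from the irreducibility of $\Supp D$ together with the fact that every member of $|A|$ is prime, as argued above.
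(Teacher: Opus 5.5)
Your proof is the paper's argument: take $H \in |A|$ through $\msp$, handle $D = H$ by Lemma~\ref{lem:No8Hsmpt}, and otherwise combine the degree-$2$ isolating set of Lemma~\ref{lem:No8isolsm} with Lemma~\ref{lem:isolomult}; your observation that every member of $|A|$ is prime correctly justifies that $D$ and $H$ share no component. There is one arithmetic slip: $(A^3) = 24/(1\cdot 1\cdot 2\cdot 2\cdot 2\cdot 3) = 1$, not $1/2$, so the bound is $\mult_{\msp}(D) \le 2$ rather than $\le 1$, which gives exactly $\lct_{\msp}(X;D) \ge 1/2$ and still suffices.
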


\begin{proof}
Let $D \in |A|_{\mbQ}$ be an irreducible $\mbQ$-divisor passing through $\msp$.
We can take a member $H \in |A|$ that passes through $\msp$.
We may assume that $D \ne H$ because otherwise $\lct_{\msp} (X; D) = \lct_{\msp} (X; H) \ge 1/2$ by Lemma~\ref{lem:No8Hsmpt}.
By Lemmas~\ref{lem:No8isolsm} and \ref{lem:isolomult}, we have $\mult_{\msp} (D) \le 2$.
This shows that $\lct_{\msp} (X; D) \ge 1/2$ and we obtain the inequality $\alpha_{\msp} (X) \ge 1/2$.
\end{proof}

\section{Local alpha invariants at singular points}
\label{sec:alphasingpt}

This section is devoted to the proof of the following.

\begin{Thm}
\label{thm:alphasingpt}
Let $X$ be a member of family \textnumero~$\msi$, where $\msi \in \msI_{\br}^* \setminus \{60\}$ and let $\msp \in X$ be a singular point with the mark $\alpha$ as a subscript in the fourth column of \emph{Table~\ref{table:BRcodim2}}.
Then, $\msp$ is not a maximal center and $\alpha_{\msp} (X) \ge 1/2$.
\end{Thm}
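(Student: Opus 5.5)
The first assertion, that $\msp$ is not a maximal center, should need no new work. By Remark~\ref{rem:knownBR} and Theorem~\ref{thm:birrigid}, every singular point of a member of a family indexed by $\msI^*_{\br}$ is either excluded as a maximal center or is a point whose Kawamata blowup initiates a Sarkisov self-link. I expect the points marked with $\alpha$ in Table~\ref{table:BRcodim2} to be exactly those in the first class, and I would read this off case by case from \cite{OkadaI} and \S\ref{sec:pfbirrig}. The real content is therefore the bound $\alpha_{\msp}(X)\ge 1/2$.

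For the alpha bound I would follow the scheme of Proposition~\ref{prop:mostsmptalpha}, now with orbifold multiplicities. By Remark~\ref{rem:alphairred} it suffices to treat an irreducible $\mbQ$-divisor $D\in|A|_{\mbQ}$ through $\msp$. By Remark~\ref{rem:lctomult} it is enough to show $\omult_{\msp}(D)\le 2$. Let $\msp$ be of type $\frac{1}{r}(1,a,r-a)$. In each family I would fix, after a coordinate change, a $\msp$-isolating set of maximum degree $d$, of the kind used in \S\ref{sec:pfbirrig} or in \cite{OkadaI}. I would also fix two low-degree divisors through $\msp$, typically $H_x$ and some $S\in|bA|$ with large orbifold multiplicity at $\msp$.

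\textbf{Main case, $D\neq H_x$.} Here Lemma~\ref{lem:isolomult} gives
\[
\omult_{\msp}(D)\le r\,d\,(D\cdot H_x\cdot A)=r\,d\,(A^3).
\]
In most families this is at most $2$ already. When it is not, I would sharpen the estimate in one of two ways. One is to use $S$ in place of $H_x$ and divide by $\omult_{\msp}(S)$, which can be larger than $1$ (e.g.\ $\ord_E$ of the relevant variable is $>1/r$). The other is to use the first assertion of Lemma~\ref{lem:isoltc}, with $\mu$ computed from the orbifold multiplicities of the isolating polynomials rather than from $1/d$.

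\textbf{Remaining case, $D=H_x$ (or $D=\frac1b S$).} I would bound $\lct_{\msp}$ of that specific divisor directly. Working in the orbifold chart $\breve{V}\to V$ and using that lc is preserved under the finite quotient, I would compute $\omult_{\msp}(H_x)$ from the local equations. Typically $H_x$ is quasismooth at $\msp$, so $\omult_{\msp}(H_x)=1$. If $\omult_{\msp}(H_x)=2$ the bound is still fine. Otherwise I would use the weighted-blowup criterion cited as \cite[Lemmas~3.4 and 3.25]{KOWalpha}, in the same way as the cubic-cone argument in Lemma~\ref{lem:No8Hsmpt}.

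The main obstacle is the handful of families and points where $r\,d\,(A^3)$ exceeds $2$ with the naive choice of isolating set. That happens when the isolating set needs a high-degree variable because some low-degree coordinate locus through $\msp$ contains a curve. For these I would first look for a better isolating set after completing squares or changing coordinates, as in Lemma~\ref{lem:No8isolsm}. Failing that, I would run a separate argument for the exceptional curve $\Gamma$ (a component of $H_x\cap S$ through $\msp$). I would write $D\cdot H_x=m\Gamma+\Delta$, bound the coefficient $m$ using $(A\cdot\Gamma)$, and bound the contribution of $\Delta$ at $\msp$ by an isolating set that avoids $\Gamma$. This is the step that would require family-by-family care.
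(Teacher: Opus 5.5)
Your argument is sound. On part of the points it is exactly the paper's argument; on the rest the paper uses a different mechanism.

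\textbf{Where you match the paper.} The first assertion is handled as in the paper, by citing \cite{OkadaI} and \S\ref{sec:pfbirrig}. For the points of Table~\ref{table:singpttc} (the $\heartsuit$-points marked $\alpha$, plus the $\frac15(1,2,3)$ point of \textnumero~71), Proposition~\ref{prop:alphaSingPt2} does what you propose:
\begin{itemize}
\item a coordinate isolating set gives $\omult_{\msp}(D)\le r\,e\,(A^3)\le 2$ for $D\ne H_x$;
\item $\lct_{\msp}(X;H_x)\ge 1/2$ holds because $x$ is a local orbifold coordinate unless some $d_i\equiv 1 \pmod r$;
\item the two exceptions (\textnumero~47 at $\frac13(1,1,2)$ and \textnumero~71 at $\frac15(1,2,3)$) have $\omult_{\msp}(H_x)=2$.
\end{itemize}

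\textbf{Where the paper differs.} For the points of Table~\ref{table:singptNE} the paper makes no multiplicity estimate. It takes from \cite{OkadaI} a nonzero nef class $M$ on the Kawamata blowup with $(M\cdot B^2)\le 0$. It then notes $\tilde H_x\sim B$ and applies \cite[Lemma~2.8]{KOWalpha}, which gives the stronger bound $\alpha_{\msp}(X)\ge 1$.

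\textbf{Your route also works there.} Your uniform isolating-set argument also closes on those points, and the obstacle you anticipate never occurs.
\begin{itemize}
\item Except for family \textnumero~14 and the $\frac12$ points of \textnumero~24, one already has $r\,a_5\,(A^3)\le 2$, with equality at the $\frac14$ points of \textnumero~47. So any isolating set of coordinates works.
\item In the two remaining cases, write $\msp=\msp_y$. Then $\{x,z,s\}$ (degree $2$) and $\{x,z,s,t\}$ (degree $4$) isolate $\msp$ for every member, giving exactly $r\,d\,(A^3)=2$.
\item For \textnumero~14 this holds because $X$ has no $\frac13$ point. Hence $\msF_1(0,y,0,0,t,w)$ and $\msF_2(0,y,0,0,t,w)$ are binary quadratic forms in $t,w$ with no common zero.
\item For \textnumero~24 this holds because $X$ has no $\frac15$ point. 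Hence $w^2\in\msF_2$, while $y^3\notin\msF_1$ and $y^5\notin\msF_2$.
\item At all these points $x$ is a local orbifold coordinate, so the case $D=H_x$ is harmless. Your curve-decomposition fallback is therefore never needed.
\end{itemize}

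\textbf{Trade-off.} The paper's route is economical and yields the stronger conclusion $\alpha_{\msp}(X)\ge 1$ by recycling the exclusion computations of \cite{OkadaI}. Yours is self-contained and uniform. Its cost is a point-by-point check, valid for every quasismooth member and not just a general one, that the chosen low-degree set isolates $\msp$.
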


This follows from Propositions~\ref{prop:alphaSingPt1} and \ref{prop:alphaSingPt2} below.

\subsection{Local alpha invariants at singular points, Part I}
\label{sec:alphasingptI}

Let $X$ be a member of family \textnumero~$\msi$ and $\msp \in X$ a singular point listed in \emph{Table~\ref{table:singptNE}}.
We denote by $\varphi \colon Y \to X$ the Kawamata blowup of $X$ at $\msp$ with exceptional divisor $E$.

\begin{Prop}
\label{prop:alphaSingPt1}
Let $X$ be a member of family \textnumero~$\msi$ and $\msp \in X$ a singular point listed in \emph{Table~\ref{table:singptNE}} except for $\msi = 47$ and $\msp$ is of type $\frac{1}{3} (1, 1, 2)$.
Then the following assertions hold.
\begin{enumerate}
\item $\msp$ is not a maximal center.
\item We have  $B^2 \notin \bNE (Y)$ and there is a prime divisor $S$ on $X$ such that $\tilde{S} \sim m B$ for some positive integer $m$.
\end{enumerate}
In particular, $\alpha_{\msp} (X) \ge 1$.
\end{Prop}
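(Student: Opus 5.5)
The proof is a case-by-case verification over the entries of Table~\ref{table:singptNE}, organised around the standard ``nef divisor test'' for $B^2 \notin \bNE(Y)$.

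\textbf{Nef divisor test.} For each pair $(\msi,\msp)$ in the table, I fix coordinates with $\msp$ a coordinate point, say $\msp = \msp_v$. Using quasismoothness, I write down the monomials forced into $\msF_1,\msF_2$, as in the lemmas of \S\ref{sec:pfbirrig}. From these I read off local orbifold coordinates at $\msp$ and the values $\ord_E$ of the remaining coordinates. Next I find a set of coordinate functions $\{g_1,\dots,g_k\}$ that isolates $\msp$, and let $b$ be its maximum degree. By \cite[Lemma~6.6]{OkadaII}, as used in the proof of Proposition~\ref{prop:No37excl2C1}, the divisor
\[
N \coloneq b\,\varphi^*A - c E
\]
is nef, where $c \coloneq \min_i \ord_E(g_i)$ (suitably normalised), and it may be rewritten as $bB + eE$ with $e \ge 0$. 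Then I compute
\[
(N \cdot B^2) = b(A^3) - \frac{e'}{r^3}(E^3)
\]
using $(E^3) = r^2/(a(r-a))$. I expect this to be negative in every listed case, which is presumably why these points are grouped in the table. Since $N$ is nef and $(N \cdot B^2) < 0$, it follows that $B^2 \notin \bNE(Y)$. Lemma~\ref{lem:exclNE} then gives assertion~(1).

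\textbf{The prime divisor $S$.} For the divisor $S$ in~(2), I take $S \coloneq H_x$ when $\ord_E(x) = 1/r$, so that $\tilde S \sim \varphi^*A - \tfrac1r E = B$. More generally, I take $S = H_u$ for a coordinate $u$ of degree $m$ with $\ord_E(u) = m/r$, so that $\tilde S \sim mB$. I also have to check that $S$ is prime, meaning irreducible and reduced. For a quasihyperplane this follows from Lemma~\ref{lem:normqhyp} (normal, hence integral as $\Cl \cong \mbZ$ and connected).

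\textbf{Deriving $\alpha_\msp(X) \ge 1$.} By Remark~\ref{rem:alphairred} it suffices to treat an irreducible $D \in |A|_\mbQ$ through $\msp$; write $D = \frac1n\sum$ with a prime support $D_0$. If $D_0 = S$, then $\lct_\msp(X;D) \ge 1$ since $(X,\tfrac1m S)$ is lc at $\msp$. Indeed $S$ is quasismooth at $\msp$ (the coordinate $u$ is a local orbifold coordinate, as $\ord_E(u)$ equals its weight), so $(X,S)$ is plt there. Otherwise, write $\tilde D \sim_\mbQ B - \epsilon E$ with $\epsilon \ge 0$, and suppose $\epsilon > 0$. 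The cycle $\tilde D\cdot\tilde S$ is effective, so $\tilde D \cdot \tilde S \equiv m(B-\epsilon E)\cdot B$. Intersecting with the nef class $N$, and using that $(N\cdot B^2)<0$ and $(N\cdot E\cdot B)>0$, gives $(N \cdot \tilde D \cdot \tilde S) < 0$, which is a contradiction. Hence $\epsilon \le 0$, i.e.\ $\ord_E D \le 1/r = a_X(E)$. A lc-threshold argument then yields $\lct_\msp \ge 1$: if $(X,D)$ were not lc at $\msp$, the divisor $\frac{1}{1}$-pullback inversion-of-adjunction argument of \cite[Lemma~3.4 etc.]{KOWalpha} would force $\ord_E D > a_X(E)$. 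This last step is the KOW-style lemma that ``$B^2\notin\bNE$ plus such $S$'' implies $\alpha_\msp \ge 1$, which I will cite directly.

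\textbf{Main obstacle.} The main difficulty is the casework. For each table entry I must find an isolating set of small enough degree so that $(N\cdot B^2)<0$. This fails exactly in the excluded $\msi=47$, $\frac13(1,1,2)$ case, where a WCI curve through $\msp$ may exist.
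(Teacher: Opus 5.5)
Your chain is the paper's own: a nef class $M$ with $(M\cdot B^2)$ nonpositive, then $S=H_x$ with $\tilde H_x\sim B$ (since $x$ is a local orbifold coordinate at every listed point), then \cite[Lemma~2.8]{KOWalpha}. The difference is that the paper does not redo the casework. It imports the test class and assertion~(1) from \cite[Proposition~8.4]{OkadaI}, and from \cite[Theorem~8.9]{OkadaI} for the $\frac15(1,1,4)$ points of \textnumero~76. The part you want to redo with coordinate isolating sets fails in two places.

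\emph{Strict negativity cannot hold in general.} For \textnumero~14 at a $\frac12(1,1,1)$ point, $(B^3)=(A^3)-\frac18(E^3)=0$. For $N=\varphi^*A-\theta E$ one has $(N\cdot B^2)=(A^3)-\frac{\theta}{r^2}(E^3)=\frac12-\theta$. So a nef $N$ with $(N\cdot B^2)<0$ would put $B$ in the interior of the nef cone, i.e.\ $B$ ample, contradicting $(B^3)=0$. The most you can get is $(N\cdot B^2)\le 0$, i.e.\ $B^2\notin\Int\bNE(Y)$. This is also all that the input $(M\cdot B^2)\le 0$ quoted in the paper's proof gives, and it suffices:
\begin{itemize}
\item Assertion~(1) follows from the implication ``maximal extraction $\Rightarrow (-K_Y)^2\in\Int\bNE(Y)$'' in the remark after Lemma~\ref{lem:exclmcmob}, not from Lemma~\ref{lem:exclNE}.
\item In your $\alpha$-argument, $\epsilon>0$ and $(N\cdot E\cdot B)=\frac{c}{r}(E^3)>0$ still give $(N\cdot\tilde D\cdot\tilde S)<0$.
\item The final step is just $\omult_{\msp}(D)\le r\,\ord_E(D)\le 1$ together with Remark~\ref{rem:lctomult}; no inversion of adjunction is needed.
\end{itemize}

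\emph{The genuine gap: the $\frac15(1,1,4)$ points of \textnumero~76.} Here the isolating-set method cannot produce the test class. The local orbifold coordinates are $x,s,y$ with $\ord_E=\frac15(1,1,4)$, so $\tilde H_x\sim B$ and $\tilde H_y\sim 4B$. Hence $4B^2\equiv\tilde H_x\cdot\tilde H_y$ is an effective cycle supported on the proper transform $\tilde C$ of the curve $C=(x=y=0)_X\ni\msp$; in particular $B^2\in\bNE(Y)$. With $(A^3)=\frac1{35}$ and $(E^3)=\frac{25}{4}$, you need $N=\varphi^*A-\theta E$ nef with $\theta\ge r^2(A^3)/(E^3)=\frac4{35}$.

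Now any set isolating $\msp$ contains some $g$ not vanishing on $C$. Then $0\le(\tilde H_g\cdot\tilde H_x\cdot\tilde H_y)=\frac{4\deg g}{35}-\ord_E(g)$, so $\ord_E(g)/\deg g\le\frac4{35}$: the method can at best hit the borderline. Coordinate choices stay strictly below it; for example $\{x,y,s,t\}$ gives only $\theta=\frac1{30}$, coming from $s$.

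The class that works is $35\varphi^*A-4E$, which is trivial on $\tilde C$. Its nefness rests on the irreducibility of $C$, which is, up to nonzero constants, $(zt+s^2=st^2+q(z^2,w)=0)$ with $q$ having distinct roots. Irreducibility makes $\tilde C$ span the second ray of $\bNE(Y)$. This is an argument about a curve through $\msp$, not about isolating $\msp$, and it is why the paper treats this case separately.

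Finally, the excluded $\frac13(1,1,2)$ point of \textnumero~47 does not occur in Table~\ref{table:singptNE} at all, so it is not where your method breaks.
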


\begin{proof}
Let $\msp \in X$ be as in the statement.
If $\msi = 76$ and $\msp \in X$ is of type $\frac{1}{5} (1, 1, 4)$, then the assertions (1) and $B^2 \notin \bNE (Y)$ are proved in \cite[Theorem~8.9]{OkadaI}.
Otherwise the assertion (1) and the existence of a test class $M$ on $Y$ such that $(M \cdot B^2) \le 0$ are proved in \cite[Proposition~8.4]{OkadaI}.
A test class on $Y$ is nothing but a nonzero nef divisor on $Y$ (see \cite[Definition~3.6]{OkadaI}), hence this implies $B^2 \notin \bNE (Y)$.

Except for the cases where $\msi = 31$ and $\msp$ is of type $\frac{1}{3} (1, 1, 2)$, and $\msi = 47$ and $\msp$ is of type $\frac{1}{3} (1, 1, 2)$, we can choose $x$ as part of local orbifold coordinates of $X$ at $\msp$, which implies that $\tilde{H}_x \sim B$.

Suppose that $\msi = 47$ and $\msp$ is of type $\frac{1}{3} (1, 1, 2)$.
We may assume that $x$ is not a part of local orbifold coordinates of $X$ at $\msp$.
We can write the defining polynomials of $X$ at $\msp$ as
\[
\begin{split}
\msF_1 &= y^3 x + y^2 f_4 + y f_7 + f_{10}, \\
\msF_2 &= y^2 w + y g_9 + g_{12},
\end{split}
\]
where $f_i, g_i \in \mbC [x, z, s, t, w]$ are homogeneous polynomials of degree $i$ such that $z, s \notin f_4$.
We have
\[
(x = t = w = 0)_X = (x = t = w = g_{12} (0, z, s, 0, 0) = 0)
\]
and this shows that $X$ contains three WCI curves of type $(1, 4, 5, 6)$ corresponding to the solutions of the cubic polynomial $g_{12} (0, z, s, 0, 0) = 0$.
This contradicts the assumption that $X$ does not contain any WCI curve of type $(1, 4, 5, 6)$ passing through $\msp$.
Therefore, the existence of a prime $S$ on $X$ such that $\tilde{S} \sim m B$ for some $m > 0$ is proved in all the cases.

Finally, by \cite[Lemma~2.8]{KOWalpha}, we have $\alpha_{\msp} (X) \ge 1$.
\end{proof}

\begingroup
\renewcommand{\arraystretch}{1.2}
\begin{table}[htb]
\caption{Singular points for which $(-K_Y)^2 \notin \bNE (Y)$}
\label{table:singptNE}
\centering
\begin{tabular}{clc}
\toprule
\textnumero & $X_{d_1,d_2} \subset \mbP (a_0,\dots, a_5)$ & $\msp$ \\ 
\midrule
14 & $X_{6,6} \subset \mbP (1,2,2,2,3,3)$ & $\frac{1}{2} (1, 1, 1)$ \\
24 & $X_{6,10} \subset \mbP (1,2,2,3,4,5)$ & $\frac{1}{2} (1, 1, 1)$ \\
31 & $X_{8,10} \subset \mbP (1,2,3,4,4,5)$ & $\frac{1}{2} (1, 1, 1)$ \\
45 & $X_{10,12} \subset \mbP (1,2,4,5,5,6)$ & $\frac{1}{2} (1, 1, 1)$ \\
47 & $X_{10,12} \subset \mbP (1,3,4,4,5,6)$ & $\frac{1}{2} (1, 1, 1)$, $\frac{1}{4} (1, 1, 3)$ \\
51 & $X_{10,14} \subset \mbP (1,2,4,5,6,7)$ & $\frac{1}{2} (1, 1, 1)$ \\
59 & $X_{12,14} \subset \mbP (1,4,4,5,6,7)$ & $\frac{1}{2} (1, 1, 1)$, $\frac{1}{5} (1, 1, 4)$ \\
64 & $X_{12,16} \subset \mbP (1,2,5,6,7,8)$ & $\frac{1}{2} (1, 1, 1)$ \\
71 & $X_{14,16} \subset \mbP (1,4,5,6,7,8)$ & $\frac{1}{2} (1, 1, 1)$, $\frac{1}{6} (1, 1, 5)$ \\
75 & $X_{14,18} \subset \mbP (1,2,6,7,8,9)$ & $\frac{1}{2} (1, 1, 1)$, $\frac{1}{3} (1, 1, 2)$ \\
76 & $X_{12,20} \subset \mbP (1,4,5,6,7,10)$ & $\frac{1}{2} (1, 1, 1)$, $\frac{1}{5} (1, 1, 4)$ \\
78 & $X_{16,18} \subset \mbP (1,4,6,7,8,9)$ & any singular point \\
84 & $X_{18,30} \subset \mbP (1,6,8,9,10,15)$ & any singular point \\
85 & $X_{24,30} \subset \mbP (1,8,9,10,12,15)$ & any singular point \\
\bottomrule
\end{tabular}
\end{table}
\endgroup

\subsection{Local alpha invariants at singular points, Part II}
\label{sec:alphasingptII}

Let $X$ be a member of family \textnumero~$\msi$ and $\msp \in X$ a singular point listed in Table~\ref{table:singpttc}.

\begin{Lem}
\label{lem:lctHxSingPt2}
We have $\lct_{\msp} (X; H_x) \ge 1/2$.
\end{Lem}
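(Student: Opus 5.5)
We will bound the orbifold multiplicity of $H_x$ at $\msp$ by at most $2$ and then invoke Remark~\ref{rem:lctomult}, which gives $\lct_{\msp} (X; H_x) \ge 1/\omult_{\msp} (H_x)$. The points $\msp$ in Table~\ref{table:singpttc} are terminal cyclic quotient points of type $\frac{1}{r}(1,a,r-a)$, and we compute in their orbifold chart. Throughout, $q_{\msp}\colon \breve{U}\to U$ denotes the index-$r$ cover, with $\breve{U}\subset\mbC^3$.

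First we treat the points at which $x$ can be chosen as one of the local orbifold coordinates. After replacing coordinates we may take $\msp=\msp_v$ for the appropriate variable $v$. Quasismoothness at $\msp$ forces two monomials of the form $v^k u$, with $u$ a variable, to appear in $\msF_1$ and $\msF_2$. Eliminating those two variables by the implicit function theorem, we write $X$ near $\msp$ as $\mbC^3/\bmu_r$ with coordinates $x,u_1,u_2$. In these coordinates $H_x$ is the smooth hyperplane $(x=0)$, so $\omult_{\msp}(H_x)=1$ and $\lct_{\msp}(X;H_x)=1$.

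The remaining points are those where $x$ is not a local orbifold coordinate. By the analysis in the proof of Proposition~\ref{prop:alphaSingPt1} and in \S\ref{sec:pfbirrig}, this happens exactly when a monomial such as $v^{k}x$ (for instance $z^3x$ in family \textnumero~31, $y^3x$ in family \textnumero~47) is what makes $X$ quasismooth at $\msp$. In that case $x$ is eliminated, and in the remaining orbifold coordinates $\msF_i=0$ expresses $x$ as a power series $h(u_1,u_2,u_3)$. For each such family we will read off $\mult_0 h$ from the explicit normal forms, such as Lemmas~\ref{lem:No31excl3C2eq}, \ref{lem:No31excl3C4eq} and \ref{lem:No47excl3C2eq} (e.g.\ $\msF_1 = z^2y + st + \cdots$ forces $y$ to have a quadratic term $st$). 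On the cover, $q_{\msp}^*H_x=(h=0)$, and we will show that $h$ contains a nonzero quadratic monomial. Typically this is the product $st$ of two coordinates forced by quasismoothness at $\msp_s$ or $\msp_t$, or a square $w^2$. This gives $\omult_{\msp}(H_x)\le 2$.

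The main obstacle is this second case. One must check that, after the substitutions used to eliminate variables, the quadratic part of $h$ does not cancel. This has to be verified separately for each family in Table~\ref{table:singpttc}. We first try to exhibit a monomial of degree $\le 2$ in $h$ forced by quasismoothness at a coordinate point other than $\msp$, since such a term cannot be removed by coordinate changes fixing $\msp$.

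If this fails in some degenerate case, we fall back on Lemma~\ref{lem:isolomult}. We take $D=H_x$ and $S=H_{v'}$ for a suitable coordinate $v'$ not vanishing identically on $H_x$, together with a $\msp$-isolating set of maximum degree $d$. Then
\[
\omult_{\msp}(H_x)\le r_{\msp}\,d\,a_{v'}(A^3),
\]
and we check numerically that the right-hand side is at most $2$.
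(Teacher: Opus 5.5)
Your strategy is the paper's: bound $\omult_{\msp}(H_x)\le 2$ and apply Remark~\ref{rem:lctomult}, with $\omult_{\msp}(H_x)=1$ whenever $x$ survives as an orbifold coordinate. However, you leave the only nontrivial step as a promise (``verified separately for each family''), and your fallback does not give the bound. Here is what is needed.

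A monomial of $\msF_i$ that contributes to $\partial\msF_i/\partial x$ at $\msp$ has the form $m x$, with $m$ involving only variables of weight divisible by $r$. So this Jacobian column vanishes at $\msp$ unless $d_i\equiv 1 \pmod{r}$. This argument also covers points that cannot be moved to a coordinate vertex, such as the $\frac{1}{2}(1,1,1)$ points of family \textnumero~37 lying on $(y=0)$, so you do not need $\msp=\msp_v$.

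In Table~\ref{table:singpttc} the congruence holds only at two points: the $\frac{1}{3}(1,1,2)$ point of \textnumero~47 ($d_1=10$) and the $\frac{1}{5}(1,2,3)$ point of \textnumero~71 ($d_2=16$).
At the \textnumero~47 point, when $y^2z,y^2s\notin\msF_1$, the orbifold coordinates are $z,s,t$, and $t^2\in\msF_1$ is forced by quasismoothness at $\msp_t$.
At the \textnumero~71 point, when $z^2s\notin\msF_2$, they are $s,t,w$, and $w^2\in\msF_2$ is forced by quasismoothness at $\msp_w$.
A degree count modulo $r$ shows these are the only quadratic monomials in the orbifold coordinates that can occur in the relevant equation, and that no linear terms occur. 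The other eliminated variable ($w$, resp.\ $y$) has order at least $2$. Hence nothing cancels and $\omult_{\msp}(H_x)=2$.

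Two parts of your write-up should be discarded. First, the family \textnumero~31 illustrations lie outside this lemma, and they show that the quadratic term is not automatic. In the normal form of Lemma~\ref{lem:No31excl3C2eq} with $y^2\notin g_4$, the orbifold coordinates are $y,s,t$ and $x$ has cubic leading term $y\,q(s,t)$, with $q$ a binary quadratic form, so $\omult_{\msp}(H_x)=3$. This is why the paper treats that point separately in Lemma~\ref{lem:No31delta3C1S}. Second, your fallback via Lemma~\ref{lem:isolomult} fails at both relevant points, even with the smallest admissible $a_{v'}$ and $d$. It gives $3\cdot 4\cdot 4\cdot\frac{1}{12}=4$ at the \textnumero~47 point and $5\cdot 6\cdot 4\cdot\frac{1}{30}=4$ at the \textnumero~71 point, so it cannot replace the direct computation.
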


\begin{proof}
The assertion is easily verified if $d_i \not\equiv 1 \pmod{r}$ for $i = 1, 2$, where $r$ is the index of the singularity $\msp \in X$.
Indeed, if this is the case, then we can choose $x$ as part of local orbifold coordinates of $X$ at $\msp$ and this immediately implies that $H_x$ is quasismooth at $\msp$ and $\lct_{\msp} (X; H_x) = 1$.
This condition is satisfied except when $\msi = 47$ and $\msp \in X$ is of type $\frac{1}{3} (1, 1, 2)$, and $\msi = 71$ and $\msp \in X$ is of type $\frac{1}{5} (1, 2, 3)$.

Suppose that $\msi = 47$ and $\msp \in X$ is of type $\frac{1}{3} (1, 1, 2)$.
We may assume $\msp = \msp_y$.
Then, $H_x$ is not quasismooth at $\msp$ if and only if $y^2 z, y^2 s \notin \msF_1$.
In this case, we can write the defining polynomials of $X$ as
\[
\begin{split}
\msF_1 &= z^3 x + z^2 f_4 + z f_7 + f_{10}, \\
\msF_2 &= z^2 w + z g_9 + g_{12},
\end{split}
\]
where $f_i, g_i \in \mbC [x, z, s, t, w]$ are homogeneous polynomials such that $z, s \notin f_4$.
We can choose $z, s, t$ as a local orbifold coordinates of $X$ at $\msp$.
By the quasismoothness of $X$, we have $t^3 \in f_{10}$ and this implies that $\omult_{\msp} (H_x) = 2$. 
Hence, we have $\lct_{\msp} (X; H_x) \ge 1/2$.

Suppose that $\msi = 71$ and $\msp = \msp_z \in X$ is of type $\frac{1}{5} (1, 2, 3)$.
Then, $H_x$ is not quasismooth at $\msp$ if and only if $z^2 s \notin \msF_2$.
In this case, we can take $s, t, w$ as local orbifold coordinates of $X$ at $\msp$ and we have $\omult_{\msp} (H_x) = 2$ by the existence of $w^2 \in \msF_2$.
Thus, we have $\lct_{\msp} (X; H_x) \ge 1/2$ and the proof is complete.
\end{proof}

\begin{Prop}
\label{prop:alphaSingPt2}
The point $\msp$ is not a maximal center and we have $\alpha_{\msp} (X) \ge 1/2$.
\end{Prop}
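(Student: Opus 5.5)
The non-maximality of $\msp$ is already available: for the points in Table~\ref{table:singpttc} it follows from \cite{OkadaI} (see Remark~\ref{rem:knownBR}), together with \S\ref{sec:pfbirrig} for the points marked $\heartsuit$. So the real work is the bound $\alpha_{\msp}(X)\ge 1/2$. By Remark~\ref{rem:alphairred} it suffices to show $\lct_{\msp}(X;D)\ge 1/2$ for every irreducible $\mbQ$-divisor $D\in|A|_{\mbQ}$ passing through $\msp$.

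If $D=H_x$, Lemma~\ref{lem:lctHxSingPt2} already gives the bound. So assume $D\ne H_x$, so that $D$ and $H_x$ share no component. By Remark~\ref{rem:lctomult} it is then enough to show $\omult_{\msp}(D)\le 2$. Lemma~\ref{lem:isolomult} with $S=H_x$ gives
\[
\omult_{\msp}(D)\le r_{\msp}\, d\,(D\cdot H_x\cdot A)= r_{\msp}\, d\,(A^3),
\]
where $d$ is the maximum degree of a $\msp$-isolating set. For each family and point in Table~\ref{table:singpttc}, I would write down such a set explicitly. Typically this is $\{x\}$ together with the coordinates other than the one not vanishing at $\msp$, using the monomials forced by quasismoothness (such as $w^2$, $t^2$, $z^2y$) to check that the common zero locus near $\msp$ is just $\msp$. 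I would then verify $r_{\msp}\, d\,(A^3)\le 2$ case by case.

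Where the raw product exceeds $2$, I would drop degree: replace $H_x$ by a suitable low-degree divisor $S\sim bA$ through $\msp$ that avoids $D$, as in the proof of Proposition~\ref{prop:mostsmptalpha}. Alternatively, I would use a more economical isolating set after a coordinate change, possibly invoking Lemma~\ref{lem:isoltc} with the components of $D\cap H_x$ through $\msp$ as the $Z_i$. Failing that, I would exploit that $x$ vanishes to order $1/r_{\msp}$ at $\msp$, so $\omult_{\msp}(H_x)=1$, which sharpens the inequality by comparing with the right-hand side.

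The main obstacle is this case-by-case check, and especially the exceptional points where $x$ is not a local orbifold coordinate ($\frac13(1,1,2)$ on \textnumero~47, $\frac15(1,2,3)$ on \textnumero~71). There $\omult_{\msp}(H_x)=2$, so the bound via $H_x$ loses a factor of $2$. For these I would instead intersect with a second quasihyperplane $T$ through $\msp$ that is quasismooth at $\msp$, such as $H_z$ or $H_s$, chosen so that $D\ne T$. I would then run the same estimate with $S=T$ and a sharper isolating set, splitting into subcases according to which monomials appear in $\msF_1,\msF_2$, exactly as in the proof of Lemma~\ref{lem:lctHxSingPt2}.
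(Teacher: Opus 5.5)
Your main line is the paper's proof. Lemma~\ref{lem:lctHxSingPt2} handles $D=H_x$, and for $D\ne H_x$, Lemma~\ref{lem:isolomult} with $S=H_x$ gives $\omult_{\msp}(D)\le r_{\msp}\,e\,(A^3)$, where $e$ is the maximum degree of the isolating set. What you leave open is exactly the content of Table~\ref{table:singpttc}, and your default choice does not always work.

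The bound is tight, $r_{\msp}e(A^3)=2$, at three kinds of points: the $\frac12(1,1,1)$ points of \textnumero~20, the $\frac13(1,1,2)$ points of \textnumero~37, and the $\frac14(1,1,3)$ point of \textnumero~51. If you take all coordinates except the one not vanishing at $\msp$, the set contains $w$, and you get $\frac83$, $\frac{12}{5}$ and $\frac73$ there. You must drop $w$. The sets $\{x,s,t\}$, $\{x,y,s,t\}$ and $\{x,y,s,t\}$ already cut out finite sets on $X$ (for \textnumero~51 because $w^2\in\msF_2$), with $e=3,5,6$. These give exactly $2$.

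Your fallback of replacing $H_x$ by another $S\sim bA$ cannot help. All these singular points lie on $H_x$, and $x$ is the only coordinate of degree $1$. So any other $S$ has $b\ge 2$ and multiplies the right-hand side by $b$.

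Your concern about \textnumero~47 and \textnumero~71 misreads Lemma~\ref{lem:isolomult}. That lemma bounds the product $\omult_{\msp}(D)\,\omult_{\msp}(S)$. So $\omult_{\msp}(H_x)=2$ improves the estimate rather than costing a factor of $2$. Likewise, $\omult_{\msp}(H_x)=1$ is just the default case, not a sharpening.

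At these points the direct bound already gives $3\cdot 4\cdot\frac{1}{12}=1$ for \textnumero~47 and $5\cdot 6\cdot\frac{1}{30}=1$ for \textnumero~71. The failure of quasismoothness of $H_x$ matters only for $\lct_{\msp}(X;H_x)$ itself, which Lemma~\ref{lem:lctHxSingPt2} settles. Switching to $H_z$ or $H_s$ as you propose would make the estimate worse. For example, it gives $3\cdot 4\cdot 4\cdot\frac{1}{12}=4>2$ for \textnumero~47.
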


\begin{proof}
The first assertion is proved in \cite[Propositions~8.4 and 8.11]{OkadaI} under suitable generality assumptions (that are made explicit in \cite[Section 9]{OkadaI}), and the complete proof is given in \S~\ref{sec:pfbirrig}.

We prove the latter assertion.
Let $D \in |A|_{\mbQ}$ be an irreducible $\mbQ$-divisor other than $H_x$.
Then, $D \cdot H_x$ is an effective $1$-cycle.

Let $\Lambda$ be the set of coordinates given in the $4$th column of Table~\ref{table:singpttc}.
It is straightforward to see that the common zero locus of the coordinates in $\Lambda$ is a finite set of points including $\msp$.
This shows that $\Lambda$ is a $\msp$-isolating set of maximum degree $e \coloneq \max \{\, \deg v \mid v \in \Lambda \,\}$, which is given in the $5$th column of Table~\ref{table:singpttc}.
By Lemma~\ref{lem:isolomult}, we have 
\[
\omult_{\msp} (D) \le r (D \cdot H_x \cdot e A) = r e (A^3) \le 2,
\]
where $r$ is the index of the singularity $\msp \in X$ and the last inequality follows from direct computations.
This shows that $\lct_{\msp} (X; D) \ge 1/2$ and, together with Lemma~\ref{lem:lctHxSingPt2}, we conclude $\alpha_{\msp} (X) \ge 1/2$.
\end{proof}

\begingroup
\renewcommand{\arraystretch}{1.2}
\begin{table}[htb]
\caption{Isolating sets for some singular points}
\label{table:singpttc}
\centering
\begin{tabular}{clcclc}
\toprule
\textnumero & $X_{d_1,d_2} \subset \mbP (a_0,\dots, a_5)$ & $(A^3)$ & $\msp$ & Isolating set & $e$ \\ 
\midrule
20 & $X_{6,8} \subset \mbP (1,2,2,3,3,4)$ & $\frac{1}{3}$ & $\frac{1}{2} (1, 1, 1)$ & $\{x, s, t\}$ & $3$ \\
37 & $X_{8,12} \subset \mbP (1,2,3,4,5,6)$ & $\frac{2}{15}$ & $\frac{1}{2} (1, 1, 1)$ & $\{x, z, t\}$ & $5$ \\
& & & $\frac{1}{3} (1, 1, 2)$ & $\{x, y, s, t\}$ & $5$ \\
47 & $X_{10,12} \subset \mbP (1,3,4,4,5,6)$ & $\frac{1}{12}$ & $\frac{1}{3} (1, 1, 2)$ & $\{x, z, s\}$ & $4$ \\
51 & $X_{10,14} \subset \mbP (1,2,4,5,6,7)$ & $\frac{1}{12}$ & $\frac{1}{4} (1, 1, 3)$ & $\{x, y, s, t\}$ & $6$ \\
59 & $X_{12,14} \subset \mbP (1,4,4,5,6,7)$ & $\frac{1}{20}$ & $\frac{1}{4} (1, 1, 3)$ & $\{x, s, t\}$ & $6$ \\
71 & $X_{14,16} \subset \mbP (1,4,5,6,7,8)$ & $\frac{1}{30}$ & $\frac{1}{4} (1, 1, 3)$ & $\{x, z, s\}$ & $6$ \\
& & & $\frac{1}{5} (1, 2, 3)$ & $\{x, y, s\}$ & $6$ \\
\bottomrule
\end{tabular}
\end{table}
\endgroup

By the same argument, we can prove the following, where we note that $\msp \in X$ below is a maximal center (unless $X$ is special), and we will come back to this result in \S \ref{sec:No76delta7}.

\begin{Prop}
\label{prop:No76delta7}
Let $X = X_{12, 20} \subset \mbP (1, 4, 5, 6, 7, 10)$ be a member of family \textnumero~$76$ and let $\msp$ be the singular point of type $\frac{1}{7} (1, 3, 4)$.
Then, we have $\alpha_{\msp} (X) \ge 1$.
In particular, we have $\delta_{\msp} (X) > 1$.
\end{Prop}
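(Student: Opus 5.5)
We will follow the argument of Proposition~\ref{prop:alphaSingPt1}. We aim to show two things: $B^2 \notin \bNE (Y)$ for the Kawamata blowup $\varphi \colon Y \to X$ at $\msp$, and there is a prime divisor $S$ on $X$ with $\tilde{S} \sim m B$ for some $m > 0$. Then \cite[Lemma~2.8]{KOWalpha} gives $\alpha_{\msp} (X) \ge 1$. Since $\delta_{\msp} (X) \ge \frac{4}{3} \alpha_{\msp} (X)$ for threefolds (the standard comparison of local alpha and delta invariants), we get $\delta_{\msp} (X) \ge 4/3 > 1$.

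First we normalize coordinates. We take $\msp = \msp_t$, the point whose only nonzero coordinate is the one of weight $7$. Quasismoothness at $\msp$ forces $t z \in \msF_1$ and $t^2 s \in \msF_2$, because $12 - 7 = 5$ and $20 - 14 = 6$. Hence $z$ and $s$ are eliminated near $\msp$, and $x, y, w$ are local orbifold coordinates with weights $\frac{1}{7} (1, 4, 3)$. In particular $x$ is part of the local orbifold coordinates, so $\tilde{H}_x \sim \varphi^*A - \frac{1}{7} E = B$. This gives the required prime divisor with $m = 1$; it is also directly visible that $H_x$ is quasismooth at $\msp$, so $\lct_{\msp} (X; H_x) = 1$.

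Next we show $B^2 \notin \bNE (Y)$. We have $(A^3) = \frac{240}{8400} = \frac{1}{35}$ and $(E^3) = \frac{49}{12}$, so
\[
(B^3) = \frac{1}{35} - \frac{1}{84} > 0 .
\]
Hence we need a nef divisor $M$ on $Y$ with $(M \cdot B^2) \le 0$. Such an $M$ should have the form $M = k \varphi^*A - c E$ with $c/k$ larger than $1/7$. We will produce it in the same way as in the proof of Proposition~\ref{prop:No37excl2C1}, i.e.\ via \cite[Lemma~6.6]{OkadaII}. The plan is to compute $\ord_E$ of the eliminated coordinates from the equations: $\ord_E (z) \ge \frac{8}{7}$ from $t z = -(\cdots)$, and a similar bound for $s$ from $t^2 s = -(\cdots)$. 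Then we choose a $\msp$-isolating set $\Lambda$, for instance $\{x, y, z, s, w\}$ or a subset such as $\{x, y, z, s\}$, for which the ratio $\ord_E (v)/\deg v$ is bounded below by some $c/k$. \cite[Lemma~6.6]{OkadaII} then makes $M = k \varphi^*A - c E$ nef, and we check the inequality
\[
(M \cdot B^2) = k (A^3) - \frac{c}{49} (E^3) \le 0 .
\]

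The main obstacle is this last numerical check. The naive isolating-set estimate of Proposition~\ref{prop:alphaSingPt2}, using $H_x$ and $\{x, y, z, s\}$ of degree $6$, only gives $\omult_{\msp} (D) \le 7 \cdot 6 \cdot \frac{1}{35} = \frac{6}{5}$, which is not enough for $\alpha_{\msp} (X) \ge 1$. So the gain must come from the larger vanishing orders of $z$, $s$ (and possibly $w$) along $E$. If the bounds from the leading terms $t z$ and $t^2 s$ are insufficient, I would first refine them by inspecting the next monomials of $\msF_1$ and $\msF_2$, for example $y^3$, $s^2$, $z^4$, $w^2$. Failing that, I would instead exhibit a surface $T$ with $\tilde{T} \sim k B + e E$, $e > 0$, and two-dimensional base locus control, so that $\tilde{H}_x|_{\tilde{T}}$ is negative definite in the spirit of Lemma~\ref{lem:exclnegdef}, which also yields $B^2 \notin \bNE (Y)$.
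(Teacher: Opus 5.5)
Your plan rests on proving $B^2\notin\bNE(Y)$, and this is false for a general member of family \textnumero~76, so neither of your routes can succeed.

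\textbf{Why the approach fails.} By Table~\ref{table:BRcodim2}, $\msp$ is a QI center: for general $X$ the Kawamata blowup initiates a Sarkisov self-link. It is therefore a maximal extraction, so $-K_Y\in\Int\bMov(Y)$ and $B^2=(-K_Y)^2\in\Int\bNE(Y)$ (Lemma~\ref{lem:exclmcmob} and the remark following it). The paper states explicitly that this $\msp$ is a maximal center unless $X$ is special.

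The method of Proposition~\ref{prop:alphaSingPt1} only works at non-maximal centers, because $B^2\notin\bNE(Y)$ already excludes $\msp$ as a maximal center by Lemma~\ref{lem:exclNE}. Your fallback via a negative-definite configuration fails for the same reason: by Lemma~\ref{lem:exclnegdef} it would also prove that $\msp$ is not a maximal center.

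You can see the obstruction numerically. $B$ itself is nef, by \cite[Lemma~6.6]{OkadaII} applied to the isolating set $\{x,y,z\}$ below, since $\ord_E(v)\ge\frac{\deg v}{7}$ for $v=x,y,z$. Both $(\varphi^*A\cdot B^2)=\frac{1}{35}$ and $(B\cdot B^2)=\frac{1}{60}$ are positive. So a nef $M$ with $(M\cdot B^2)\le 0$ would have to be $\varphi^*A-cE$ with $c\ge\frac{12}{35}$, far beyond $B$ in the nef cone.

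Separately, your bound $\ord_E(z)\ge\frac{8}{7}$ is wrong. The coordinate $z$ has $\bmu_7$-character $5$, so $\ord_E(z)\in\frac57+\mbZ_{\ge0}$, and $\ord_E(z)=\frac57$ whenever $x^2w\in\msF_1$.

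\textbf{The missing idea: a smaller isolating set.} With it, the direct estimate you dismissed is exactly sharp enough.
\begin{enumerate}
\item Quasismoothness at $\msp_s$ forces $s^2\in\msF_1$, and quasismoothness at $\msp_w$ forces $w^2\in\msF_2$.
\item After normalizing you can write $\msF_1=tz+s^2+f_{12}$ and $\msF_2=t^2s+tg_{13}+w^2+g_{20}$ with $f_{12},g_{13},g_{20}\in(x,y,z)$. This holds because $s^2$ and $w^2$ are the only monomials in $s,w$ of degree $12$, $13$ or $20$.
\item Hence $(x=y=z=0)_X=\{\msp\}$, so $\{x,y,z\}$ is a $\msp$-isolating set of maximum degree $5$, not $6$.
\item Lemma~\ref{lem:isolomult} with $S=H_x$ then gives $\omult_{\msp}(D)\le 7\cdot5\cdot(A^3)=1$ for every irreducible $D\in|A|_{\mbQ}$ other than $H_x$.
\item Together with $\lct_{\msp}(X;H_x)=1$, which you established correctly, this gives $\alpha_{\msp}(X)\ge 1$, and then $\delta_{\msp}(X)\ge\frac43>1$.
\end{enumerate}
This is exactly the paper's argument.
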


\begin{proof}
We have $\msp = \msp_y$.
By the same argument as in the proof of Lemma~\ref{lem:lctHxSingPt2}, we have $\lct_{\msp} (X; H_x) = 1$ since $d_1 = 14 \not\equiv 1 \pmod{7}$ and $d_2 = 20 \not\equiv 1 \pmod{7}$.
We have $t z, s^2 \in \msF_1$ and $t^2 s, w^2 \in \msF_2$ by the quasismoothness of $X$, and we may assume that their coefficients in $\msF_1$ and $\msF_2$ are all $1$ by rescaling coordinates.
We can write defining equations  of $X$ as
\[
\begin{split}
\msF_1 &= t z + s^2 + f_{12} = 0, \\
\msF_2 &= t^2 s + t g_{13} + w^2 + g_{20},
\end{split}
\]
where $f_{12}, g_{13}, g_{20} \in \mbC [x, y, z, s, w]$ such that $s^2 \notin f_{12}$ and $w^2 \notin g_{20}$.
We see that $f_{12}, g_{13}, g_{20} \in (x, y, z) \subset \mbC [x, y, z, s, w]$.
It is then easy to verify that $(x = y = z =0)_X = \{\msp\}$, which shows that $\{x, y, z\}$ is a $\msp$-isolating set of maximum degree $5$.
Let $D \in |A|_{\mbQ}$ be an irreducible $\mbQ$-divisor other than $H_x$.
By Lemma~\ref{lem:isolomult}, we have $\omult_{\msp} (D) \le 7 (D \cdot H_x \cdot 5 A) = 1$.
This shows $\lct_{\msp} (X; D) \ge 1$ and $\alpha_{\msp} (X) \ge 1$.
In particular, we have $\delta_{\msp} (X) \ge (4/3) \alpha_{\msp} (X) > 1$.
\end{proof}

\section{Local delta invariants}
\label{sec:delta}

This section is devoted to the proof of the following.

\begin{Thm}
\label{thm:locdelta}
Let $X$ be a member of family \textnumero~$\msi$, where $\msi \in \msI^*_{\br}$, and let $\msp$ be a singular point with the mark $\delta$ as a subscript in the $4$th column of \emph{Table~\ref{table:BRcodim2}}.
Then either $\msp \in X$ is not a maximal center and $\alpha_{\msp} (X) \ge 1/2$ or $\delta_{\msp} (X) > 1$.
\end{Thm}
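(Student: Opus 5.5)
The plan is a case-by-case treatment over the singular points marked $\delta$ in Table~\ref{table:BRcodim2}, which are essentially the points that may be genuine maximal centers, i.e.\ those whose Kawamata blowup initiates a Sarkisov self-link. For each such point I first split according to the special configurations identified in \S\ref{sec:pfbirrig}. In the configurations where $\msp$ was excluded there (negative-definite intersection matrix, infinitely many $B$-nonpositive curves, or $(\tilde T^2\cdot\tilde S)<0$), I aim for the first alternative, $\alpha_{\msp}(X)\ge 1/2$. I get it by the method of Proposition~\ref{prop:alphaSingPt2}: bound $\lct_{\msp}(X;H_x)\ge 1/2$ via the orbifold multiplicity of $H_x$ read off from the normal forms of the defining polynomials, then apply Lemma~\ref{lem:isolomult} to an irreducible $D\in|A|_{\mbQ}$, $D\ne H_x$, paired with $H_x$ and a $\msp$-isolating set of maximum degree $e$. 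When $r_{\msp}e(A^3)\le 2$ this suffices. Otherwise I use the extra divisor $\tilde S$ proportional to $B$ as in Proposition~\ref{prop:alphaSingPt1}, together with \cite[Lemma~2.8]{KOWalpha}, to get $\alpha_{\msp}(X)\ge 1$. Family \textnumero~76 at the $\frac17(1,3,4)$ point is already covered by Proposition~\ref{prop:No76delta7}.

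For the remaining points, the generic situation where the Kawamata blowup gives a self-link, I prove $\delta_{\msp}(X)>1$ by the Abban--Zhuang method. The first choice is a type~$\mathrm{I}$ flag $\msp\in Z\subset Y\subset X$ with $Y=H_x$ ($l_Y=1$) or a quasismooth member of $|\mfm_{\msp}(a_iA)|$ of small degree, and $Z=Y\cap H$ with $H$ a coordinate hypersurface through $\msp$ chosen among the local orbifold coordinates. Proposition~\ref{prop:typeI} then yields
\[
\delta_{\msp}(X)\ge\min\left\{4l_Y,\ 4l_H,\ \frac{4}{r_{\msp}l_Yl_H(A^3)}\right\},
\]
and it suffices to check $r_{\msp}l_Yl_H(A^3)<4$ numerically for each family. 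Plt-ness near $\msp$ follows from quasismoothness of $Y$ and $Z$ at $\msp$, so I need to verify quasismoothness of $Y$, $Z$, and irreducibility and reducedness of $Z$ from the normal forms. Normality of $Y$ comes from Lemma~\ref{lem:normqhyp}.

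The main obstacle is the case where $Z=Y\cap H$ is forced to be reducible, for instance when a WCI curve through $\msp$ lies on $Y\cap H$, as in families \textnumero~31, 47, 59 and 71. There I switch to a type~$\mathrm{IIa}$ flag, taking $\Gamma$ to be the component through $\msp$ and $\Delta$ the residual curve. I compute $\lambda=(\Gamma^2)$, $\mu=-(\Delta^2)$, $\nu=(\Gamma\cdot\Delta)$ on $Y$ by the same adjunction and orbifold-correction bookkeeping used in \S\ref{sec:pfbirrig}, i.e.\ $-2+\sum(1-1/r_i)$ corrections and intersection with $A$. I also find $\ord_{\msp}(\Delta|_\Gamma)$, then evaluate the explicit integrals of Proposition~\ref{prop:typeII} to show each term exceeds $1$. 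I expect the delicate part to be checking $\mu>0$, i.e.\ negativity of $\Delta$, and keeping the integral bounds strictly above $1$ when $r_{\msp}$ is large. If one choice of $H$ fails, I would try another coordinate hypersurface of different degree $l_H$, trading off the $4l_H$ and $1/(r l_Y l_H(A^3))$ terms.
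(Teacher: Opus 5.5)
You have the right overall plan, and for most marked points it is what the paper does: type~I flags via Proposition~\ref{prop:typeI}, type~IIa when the curve splits, the $\alpha$-route where $\msp$ is not a maximal center, and Proposition~\ref{prop:No76delta7} for \textnumero~76. This covers \textnumero~8, 20, 45, the $\frac14$ points of \textnumero~31, both marked points of \textnumero~64, and the $\frac15(1,1,4)$ point of \textnumero~37. In two places, however, the tool you name fails and the paper uses something else.

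\textbf{Largest-index point of \textnumero~24, 51, 75 (even $a$).} The paper does not use a flag of global curves here.
\begin{itemize}
\item It takes the weighted blowup $\psi\colon T\to S=H_x$ at $\msp$ with weights $\frac{1}{a+2}(a+1,1)$ on $(s,w)$, and uses the exceptional curve $F$ as the second step of the flag.
\item It computes the Zariski decomposition of $(1-u)\psi^*A_S-vF$; the negative part is a multiple of $G=\psi_*^{-1}((z=0)_S)$.
\item It then applies \cite[Corollary~4.18]{Fujita23} and gets $\delta_{\msp}(X)\ge 4$, with no irreducibility question at all.
\end{itemize}
Your plan stalls here, already for \textnumero~24.
\begin{itemize}
\item There $r_{\msp}=4$ and $(A^3)=\frac14$, so Proposition~\ref{prop:typeI} needs $l_Yl_H\le 3$.
\item The choices $(l_Y,l_H)=(1,2)$ and $(2,1)$ give a $Z$ that is not plt at $\msp$, because $y$ and $z$ are not local orbifold coordinates there. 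So the only candidate is $Z=(x=s=0)_X$.
\item Suppose $y^3\notin\msF_1$ in the normal form of Lemma~\ref{lem:Manyfamdefeq}. This is allowed; it is exactly the case $G|_F=2\msq_3$ of Lemma~\ref{lem:MfamdeltaC2cone}. Then $\msF_1|_{x=0}=tz+s^2$, and $Z=\Gamma\cup\Delta$ with $\Gamma=(x=s=z=0)\ni\msp$ and $\Delta=(x=s=t=0)$.
\item Since $\msF_1|_{x=z=0}=\msF_1|_{x=t=0}=s^2$, you get $H_z|_{H_x}=2\Gamma$ and $H_t|_{H_x}=2\Delta$. Hence $\Gamma\equiv A|_{H_x}$ and $\Delta\equiv 2A|_{H_x}$ both have positive square.
\item For any $H\supset\Gamma$, the residual $H|_{H_x}-\Gamma\equiv(l_H-1)A|_{H_x}$ is never negative. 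So no type~IIa flag with this $\Gamma$ exists on $Y=H_x$, and changing $l_H$ does not help.
\end{itemize}
A repair inside your framework is to take $Y$ general in $|3A|$ and $H=H_x$. Then $\Gamma$ and $\Delta$ meet transversally at one smooth point of $Y$ (where $H_x$ has an $A_1$ point), with $(\Gamma^2)=-\frac34$, $(\Delta^2)=-\frac12$ and $(\Gamma\cdot\Delta)=1$. Proposition~\ref{prop:typeII} then gives $\delta_{\msp}(X)\ge\frac32$. Neither this choice nor the analogous checks for \textnumero~51 and 75 are in your proposal.

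\textbf{The $\frac13$ point of \textnumero~31 in the configuration of \S\ref{sec:No31excl3C2}.} Here $\msp$ is not a maximal center, so you aim for $\alpha_{\msp}(X)\ge\frac12$.
\begin{itemize}
\item Since $z^3x\in\msF_2$ and $z^2y\notin\msF_1$, the coordinate $x$ is not a local orbifold coordinate.
\item When in addition $y^2\notin g_4$, $H_x$ has tangent cone $y\,q(s,t)$ in the coordinates $y,s,t$. So $\omult_{\msp}(H_x)=3$, and the multiplicity bound gives only $\lct_{\msp}(X;H_x)\ge\frac13$.
\item Your fallback, $\tilde S\sim mB$ together with \cite[Lemma~2.8]{KOWalpha}, needs $B^2\notin\bNE(Y)$. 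That is not available here, since the exclusion in this configuration went through Lemma~\ref{lem:exclnumprop}.
\item The paper instead uses \cite[Lemma~3.18]{KOWsuperrigid}: the cubic leading term is not the cube of a linear form, hence $\lct_{\msp}(X;H_x)\ge\frac12$.
\end{itemize}
Your bound for $D\ne H_x$, namely $3\cdot 4\cdot\frac16=2$, is fine.

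Finally, \textnumero~47, 59 and 71 have no $\delta$-marked points, so the WCI-curve obstacle you anticipate there does not arise.
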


\subsection{Families \textnumero~24, 37, 51, 64, 75 and the largest index singular point}
\label{sec:Manyfamdelta}

Let $X = X_{d_1, d_2} \subset \mbP (a_0, \dots, a_5)$ be a member of family \textnumero~$\msi$, where 
\[
\msi \in \{24, 37, 51, 64, 75\},
\] 
and let $\msp = \msp_t$ be the largest index singular point of $X$.
We observe that $a_0 = 1$, $a_1 = 1$, $a_{i+2} = a_2 + i$ for $i = 1, 2, 3$, $d_1 = 2 a_3$ and $d_2 = 2 a_5$. 
By setting $a \coloneq a_2 \ge 2$, we have
\[
X = X_{2a + 2 , 2a + 6} \subset \mbP (1, 2, a, a+1, a+2, a+3).
\]

\begin{Lem}
\label{lem:Manyfamdefeq}
The following hold.
\begin{enumerate}
\item If either $\msi \ne 37$ or $\msi = 37$ and $s^2 \in f_{2a+2}$, then the defining polynomials of $X$ can be written as
\[
\begin{split}
\msF_1 &= t z - w f_{a-1} + s^2 + f_{2a+2}, \\
\msF_2 &= t^2 y + t g_{a+4} + w^2 + g_{2a+6},
\end{split}
\]
where $f_i \in \mbC [x, y]$ and $g_i \in \mbC [x, y, z, s]$.
Moreover, we may assume that there is no monomial divisible by $s^2$ in $g_{2a+6}$.
\item If $\msi = 37$ and $s^2 \notin f_{2a+2} = f_8$, then the defining polynomials of $X$ can be written as
\[
\begin{split}
\msF_1 &= t z - w f_2 + s f_4 + f_8, \\
\msF_2 &= t^2 y + t g_7 + w^2 + g_{12},
\end{split}
\]
where $f_i \in \mbC [x, y]$ and $g_i \in \mbC [x, y, z, s]$ with $\coeff_{f_2} (y) = \coeff_{g_{12}} (s^3) = 1$.
\end{enumerate}
\end{Lem}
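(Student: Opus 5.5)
The plan is to normalize $\msF_1$ and $\msF_2$ by monomial bookkeeping, using quasismoothness to force certain monomials to appear and triangular coordinate changes to remove the others. Recall that the weights of $x, y, z, s, t, w$ are $1, 2, a, a+1, a+2, a+3$, that $d_1 = 2a+2$, $d_2 = 2a+6$, and that $\msp = \msp_t \in X$.

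\emph{Step 1 (forced monomials).} Since $\msp_t \in X$, no pure power of $t$ occurs in either $\msF_i$. The point $\msp_t$ is a quasismooth point of codimension $2$, so there must be two monomials of the form $t^k v$, one in each $\msF_i$ (after taking suitable linear combinations), with distinct variables $v$.
\begin{itemize}
\item In degree $2a+2$, the only possibility is $t\cdot(\text{weight } a)$, that is, $tz$; when $a = 2$ one also has $ty$, but we may replace $z$ to absorb it.
\item In degree $2a+6$, the candidates are $t^2 y$ and $t \cdot (\text{weight } a+4)$. No variable has weight $a+4$, so $t^2 y \in \msF_2$.
\end{itemize}
Quasismoothness at $\msp_w$ gives $w^2 \in \msF_2$. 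In case (1), the hypothesis (or quasismoothness at $\msp_s$ when $\msi \ne 37$) gives $s^2 \in \msF_1$. We rescale so that all these coefficients are $1$.

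\emph{Step 2 (normalizing $\msF_1$).} Every monomial of $\msF_1$ divisible by $t$ is $t$ times a form of degree $a$, i.e. $t(\lambda z + h_a(x, y))$. The substitution $z \mapsto z - h_a$ reduces this to $tz$. A monomial of degree $2a+2$ divisible by $w$ is $w$ times a form of degree $a-1$, which lies in $\mbC[x, y]$. This gives the term $-w f_{a-1}$, and $w^2$ cannot occur. Completing the square in $s$ (i.e. $s \mapsto s + (\text{linear terms in } x, y, z)$) removes the $s$-linear terms. The remainder is $f_{2a+2} \in \mbC[x, y, z]$, which we keep denoting by $f_{2a+2}$; I will check which variables it may involve in each family.

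\emph{Step 3 (normalizing $\msF_2$).}
\begin{enumerate}
\item First remove the monomials divisible by $s^2$ from the $t$-free part by replacing $\msF_2$ with $\msF_2 - h_4 \msF_1$ for a suitable form $h_4$ of degree $4$. This only changes the $t$-linear part and the $w$-linear part.
\item Next absorb the $t^2 \cdot (\text{degree } 2)$ terms, i.e. $t^2 x^2$, into $y$ via $y \mapsto y - \lambda x^2$.
\item Then complete the square in $w$ via $w \mapsto w - \frac{1}{2} h_{a+3}$. Here $h_{a+3}$ may contain $tx$, and the substitution creates terms $t x f_{a-1}$ in $\msF_1$. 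These are reabsorbed into $z$ as in Step 2. Since each of these substitutions is of strictly lower weight, the process terminates.
\end{enumerate}
What remains in $\msF_2$ is $t\, g_{a+4} + g_{2a+6}$ with $g_i \in \mbC[x, y, z, s]$.

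\emph{Step 4 (case (2), $\msi = 37$, $a = 3$).} Here $s^2 \notin \msF_1$, so $\msF_1$ contains $s f_4$ with $f_4 \in \mbC[x, y]$ in place of $s^2$. Quasismoothness at $\msp_s$ forces $s^3 \in \msF_2$. Then $\coeff_{f_2}(y) \ne 0$: otherwise $\msF_1 \in (x, y, z, t)^2$ after the normalization above, and $X$ fails to be quasismooth along the nonempty set $(x = y = z = t = \msF_2 = 0)$. This is the same argument as in Lemma~\ref{lem:No37excl3C2eq}.

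The main obstacle is the bookkeeping in Step 3, namely ensuring that the iterated substitutions do not reintroduce the $s^2$-monomials, the $t^2 x^2$ term, or the $w$-linear terms that were already removed. The point to verify is that every such substitution has strictly smaller weight in the relevant variable.
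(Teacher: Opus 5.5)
\textbf{Gap: $z$-terms in $\msF_1$ are never removed.} After $z\mapsto z-h_a$, isolating $-wf_{a-1}$, and completing the square in $s$, you are left with $f_{2a+2}\in\mbC[x,y,z]$. The lemma asserts $f_{2a+2}\in\mbC[x,y]$, and none of your substitutions can achieve this. Monomials such as $z^2y$, $z^2x^2$ (and $z^3$ when $a=2$) have degree $2a+2$. They survive $z\mapsto z-h_a$, $s\mapsto s+\cdots$, $y\mapsto y-\lambda x^2$ and $w\mapsto w-\frac12 h_{a+3}$.

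The missing step, which the paper performs explicitly, is to use $tz\in\msF_1$ a second time. Write $\msF_1=z(t+\phi_{a+2})+(\text{terms not divisible by }z)$ with $\phi_{a+2}\in\mbC[x,y,z,s]$, and replace $t\mapsto t-\phi_{a+2}$. This removes every $z$-divisible monomial of $\msF_1$ except $tz$.

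The same omission breaks your case (2). For $a=3$ the monomial $szx$ has degree $8$. Without this substitution you cannot claim that the $s$-part of $\msF_1$ is $sf_4$ with $f_4\in\mbC[x,y]$, nor that $f_8\in\mbC[x,y]$. This form is used later in the paper, e.g.\ $\overline{\msF}_1=tz-\alpha wy^{(a-1)/2}+\beta y^{a+1}$ in the odd case, and the equation $z+s^2+f_{2a+2}(0,y)=0$ of $F$ in the even case.

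\textbf{Ordering and termination.} The new substitution must be placed correctly. It adds $-2t\phi_{a+2}y+\phi_{a+2}^2y-\phi_{a+2}g_{a+4}$ to $\msF_2$. It would also create $w$-linear terms if a $twx$ term were still present, so it must come after completing the square in $w$.

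Your termination argument (``each substitution is of strictly lower weight'') is not what makes the procedure stop, since triangularity alone does not prevent cycling. The actual loop is this:
\begin{enumerate}
\item Removing $s^2m$ from $g_{2a+6}$ via $\msF_2-m\msF_1$ creates $wmf_{a-1}$.
\item Completing the square in $w$ again feeds $\frac12 mf_{a-1}^2$ into $\msF_1$.
\item Re-absorbing the $z$- and $s$-parts of that term creates new $s^2$-monomials in $\msF_2$.
\end{enumerate}
The loop terminates because each round multiplies the corrections by $f_{a-1}\in(x,y)$. So the $(x,y)$-adic order of the offending terms strictly increases, and they vanish once it exceeds $2a+6$.

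\textbf{Minor point for $a=2$.} Here $t^2z$ is also a candidate monomial in $\msF_2$. You need the $2\times 2$ coefficient matrix of $ty,tz$ in $\msF_1$ and $t^2y,t^2z$ in $\msF_2$ to be invertible, followed by a linear change of $(y,z)$. Merely replacing $z$ is not enough.
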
 

\begin{proof}
By the quasismoothness of $X$ at $\msp$, we have $t z \in \msF_1$ and $t^2 y \in \msF_2$.
Note that, if $\msi = 24$, then we may choose $y, z$ so that $t z \in \msF_1$ and $t^2 y \in \msF_2$.
We have $w^2 \in \msF_2$ by the quasismoothness of $X$.
Note also that $d_1 = 2 a_4 < 2 a_5$.
Then the defining polynomials can be written as
\[
\begin{split}
\msF_1 &= t z - w f_{a-1} + f_{2a+2}, \\
\msF_2 &= t^2 y + t (w g_1 + g_{a+4}) + w^2 + w g_{a+3} + g_{2a+6},
\end{split}
\]
where $f_{a-1} \in \mbC [x, y]$ and $f_{2a+2}, g_i \in \mbC [x, y, z, s]$.
Note that if $s^2 \in \msF_1$, then we can remove the terms divisible by $s^2$ in $\msF_2$ by replacing $\msF_2 - h \msF_1$ for some homogeneous polynomial $h$ of degree $d_2 - d_1$.
Replacing $w \mapsto w - (t g_1 + g_{a+3})/2$ and then $y \mapsto y - g_1^2/4$, we may assume that $g_1 = g_{a+3} = 0$.
Filtering off terms divisible by $z$ in $\msF_1$, we can write $\msF_1 = (t + \phi_{a+2}) z - w f_{a-1} + f_{2a+2} (x, y, 0, s)$, $\phi_{a+2} = \phi_{a+2} (x, y, z, s)$ is a homogeneous polynomial of degree $a+2$.
Replacing the coordinate $t \mapsto t - \phi_{a+2}$, we may assume that $f_{2a+2}$ does not involve the coordinate $z$.

If $\msi \ne 37$, then $d_1 = 2 a_4$ and $a_4 \nmid d_2$, hence $s^2 \in f_{2a+2}$ by the quasismoothness of $X$.
Hence, if either $\msi \ne 37$ or $\msi = 37$ and $s^2 \in \msF_1$, we may assume that the coefficient of $s^2$ in $\msF_1$ is $1$.
Replacing $s$ suitably, we may assume that there is no monomial divisible by $s$ in $\msF_1$ other than $s^2$ and we obtain the desired polynomials in this case.

Suppose that $\msi = 37$, that is, $a = 3$, and $s^2 \notin f_{2a+2} = f_8$.
It is clear that $s^3 \in g_{2a+6} = g_{12}$ by the quasismoothness of $X$.
We claim that $f_{a-1} (0, y) = f_2 (0, y) \ne 0$.
Assume to the contrary that $f_2 (0, y) = 0$.
Then, $\msF_1 = t z + f_8 (x, y, z) \in (x, y, z, t)^2$ and we see that the set $\Lambda \coloneq (x = y = z = t = 0)_X$ is nonempty (in fact, $\Lambda$ consists of one point).
This shows that $X$ is not quasismooth at any point of $\Lambda$.
This is a contradiction and the claim is proved.
\end{proof}

\subsubsection{Case: $a$ is odd}
\label{sec:MfamdeltaC1}
We assume that $a$ is odd.
Note that either $a = 3$ or $a = 5$.
In this case $X$ is a member of families \textnumero~$37$ and \textnumero~$64$, and the point $\msp \in X$ is of type $\frac{1}{5} (1, 1, 4)$ and $\frac{1}{7} (1, 1, 6)$, respectively.
Let $S \in |(a+1) A|$ be a general member and we set $C \coloneq (x = 0)_S$.

\begin{Lem}
\label{sec:MfamdeltaC1qsmS}
The surface $S$ is quasismooth and $\msp \in S$ is a singular point of type $\frac{1}{a+2} (1, 1)$.
\end{Lem}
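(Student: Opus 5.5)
The plan is to write a general member of $|(a+1)A|$ explicitly, use Bertini away from its base locus, and check the finitely many base points by hand using the normal forms of Lemma~\ref{lem:Manyfamdefeq}. Since $a \in \{3,5\}$ is odd, $a+1$ is even. The monomials of degree $a+1$ in $\mbC[x,y,z,s,t,w]$ are $s$ together with monomials in $x,y,z$ only, among them $x^{a+1}$, $y^{(a+1)/2}$ and $xz$. Hence a general $S \in |(a+1)A|$ is
\[
S = (s - h(x,y,z) = 0)_X
\]
with $h$ a general form of degree $a+1$ in $x,y,z$. The base locus of $|(a+1)A|$ is contained in $(x = y = s = 0)_X$.

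\emph{Step 1: the base locus is finite.} I substitute $x=y=s=0$ into the normal forms of Lemma~\ref{lem:Manyfamdefeq}, with coordinates $z,t,w$ of weights $a,a+2,a+3$. In both cases $\msF_1$ becomes $tz$, because $f_i \in \mbC[x,y]$. Moreover $\msF_2$ becomes $t\,g_{a+4}(0,0,z,0) + w^2 + g_{2a+6}(0,0,z,0)$.
\begin{itemize}
\item If $z=0$, then $w^2=0$, and we obtain only $\msp = \msp_t$.
\item If $t=0$, then $w^2 + g_{2a+6}(0,0,z,0) = 0$. This is $w^2 + c z^4$ when $a=3$, and just $w^2$ when $a=5$, since $a \nmid 2a+6$ in that case. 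So we get one or two further points, namely $\msp_z$ or points with $w^2 = -c z^4$.
\end{itemize}
Thus the base locus is a finite set $\{\msp, \msq_1, \dots\}$.

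\emph{Step 2: Bertini away from the base locus.} On the affine cone $C_X$, which is smooth away from the origin, the linear system is base point free away from the cone over the base locus. A general member is therefore smooth there, so $S$ is quasismooth outside the finitely many base points.

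\emph{Step 3: quasismoothness at the base points.} At $\msp = \msp_t$, the monomials $tz \in \msF_1$ and $t^2y \in \msF_2$ let us eliminate $z$ and $y$ by the implicit function theorem. So $x,s,w$ are local orbifold coordinates, of weights $1$, $a+1$ and $a+3 \equiv 1 \pmod{a+2}$. This recovers the type $\frac{1}{a+2}(1,1,a+1)$ of $\msp$. Since $s - h$ has nonzero linear part in the coordinate $s$, the surface $S$ is quasismooth at $\msp$ with orbifold coordinates $x,w$. Hence $\msp \in S$ is of type $\frac{1}{a+2}(1,1)$.

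At each remaining base point $\msq$ I must show that the $3 \times 6$ matrix formed by $J_X(\msq)$ together with $d(s-h)(\msq)$ has rank $3$. Here $d(s-h) = ds - h_x\,dx - h_y\,dy - h_z\,dz$, and its coefficients involve the free coefficients of $h$, such as those of $xz$, $x^{a+1}$ and $y^{(a+1)/2}$. At such a point $t=0$ and $z \ne 0$. So $\partial \msF_1/\partial t = z \ne 0$, and the $w$-column of $\msF_2$ is $2w$ (at $\msp_z$, where $w=0$, other columns must be used). Because $X$ is quasismooth, $J_X(\msq)$ already has rank $2$. It then suffices that $d(s-h)$ is not in its row span. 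Since $h$ is general, $d(s-h)(\msq)$ varies in a subspace containing the $x$-direction, through the coefficient of $xz$. So I will verify that the row span of $J_X(\msq)$ does not contain all of these directions. This reduces to checking that the $s$- and $x$-columns of $J_X(\msq)$ are not both forced to match.

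This case check at $\msp_z$ for $a=5$, and at the points $w^2 = -cz^4$ for $a=3$, is the main obstacle. It is done separately for the two normal forms (1) and (2) of Lemma~\ref{lem:Manyfamdefeq}. I would handle it by writing out the relevant minors explicitly and using the quasismoothness of $X$ at $\msq$, which is where the monomials $s^2$, or $s f_4$ and $s^3$, enter.
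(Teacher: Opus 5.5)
Your proposal follows the paper's proof and is correct: the base locus $(x=y=s=0)_X$ is finite, Bertini applies away from it, and the base points are checked directly, which the paper simply calls straightforward. The check you flag as the main obstacle needs neither the split into normal forms (1) and (2) nor the monomials $s^2$, $sf_4$, $s^3$. At a lift $(0,0,z_0,0,0,w_0)$ of a base point $\msq\neq\msp$ one has $z_0\neq 0$ and $d(s-h)=ds-c\,z_0\,dx$, with $c$ the general coefficient of $xz$ in $h$. If two values of $c$ put this covector in the row span of $J_X(\msq)$, that span would be spanned by $dx$ and $ds$, which contradicts $\partial\msF_1/\partial t=z_0\neq 0$ (an observation you already made).
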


\begin{proof}
It is easy to see that $\msp \in S$ is of type $\frac{1}{a+2} (1, 1)$.

We have 
\[
\Bs |(a+1)A| = (x = y = s = 0)_X =
\begin{cases}
\{\msp_1, \msp_2, \msp_t\}, & \text{if $a = 3$}, \\
\{\msp_z, \msp_t\}, & \text{if $a = 5$},
\end{cases}
\]
where $\msp_1, \msp_2 \in X$ are the singular points of type $\frac{1}{3} (1, 1, 2)$ when $a = 3$.
It is straightforward to check that a general $S \in |(a+1)A|$ is quasismooth at any point in $\Bs |(a+1)A|$.
This shows that $S$ is quasismooth.
\end{proof}

Let 
\[
s = \lambda y^{(a+1)/2} + x h_a
\] 
be the equation which defines $S$ in $X$, where $\lambda \in \mbC$ and $h_a = h_a (x, y, z) \in \mbC [x, y, z]$ are general.
We set $\alpha \coloneq \coeff_{f_{a-1}} \left(y^{(a-1)/2}\right)$.
By eliminating $x, y$ in terms of the equations $x = 0$ and $s = \lambda y^{(a+1)/2}$, the scheme $C$ is isomorphic to the complete intersection in $\mbP (2, a, a+2, a+3)$ defined by the equations $\overline{\msF}_1 = \overline{\msF}_2 = 0$, where
\[
\overline{\msF}_i = \overline{\msF}_i (y, z, t, w) \coloneq \msF_1 \left(0, y, z, \lambda y^{(a+1)/2}, t, w\right)
\]
for $i = 1, 2$.
We can write
\[
\begin{split}
\overline{\msF}_1 &= t z - \alpha w y^{(a-1)/2} + \beta y^{a+1} = 0, \\
\overline{\msF}_2 &= t^2 y + \gamma t y^{(a+1)/2} z + w^2 + \bar{g}_{2a+6} = 0,
\end{split}
\]
where $\beta, \gamma \in \mbC$ and $\bar{g}_{2a+6} \coloneq g_{2a+6} (0, y, z, \lambda y^{(a+1)/2}) \in \mbC [y, z]$.
We can write
\[
\bar{g}_{2a+6} = \theta + \delta y^3 + \varepsilon y^{a+3},
\]
where $\theta, \delta, \varepsilon \in \mbC$.
We have $\theta = 0$ if $a = 5$ and $\theta = \coeff_{\msF_2} (z^4)$ if $a = 3$.
Note that we can view $\beta, \gamma, \delta$ and $\varepsilon$ as polynomials in $\lambda$.
We denote by $\deg_{\lambda} \beta$ etc.\ the degree of $\beta$ as a polynomial in $\lambda$.

\begin{Lem}
\label{sec:MfamdeltaC1coeff}
The following assertions hold.
\begin{enumerate}
\item If either $a = 5$ or $a = 3$ and $s^2 \in \msF_1$, then $\deg_{\lambda} \beta = 2$.
If $a = 3$ and $s^2 \notin \msF_1$, then $\deg_{\lambda} \beta \le 1$.
In any case, we have $\beta \ne 0$.
\item We have $\deg_{\lambda} \gamma \le 1$ and $\deg_{\lambda} \delta \le 1$.
\item If $a = 3$ and $s^2 \in \msF_1$, then $\deg_{\lambda} \varepsilon \le 1$.
If $a = 3$ and $s^2 \notin \msF_1$, then $\deg_{\lambda} \varepsilon = 3$.
\item We have $\beta^2 + \alpha^2 \varepsilon \ne 0$.
\end{enumerate}
\end{Lem}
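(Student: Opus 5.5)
The plan is to compute each coefficient directly by substituting $x=0$ and $s=\lambda y^{(a+1)/2}$ into the normal forms of Lemma~\ref{lem:Manyfamdefeq}. We track how each monomial of $\msF_1,\msF_2$ contributes to $\beta,\gamma,\delta,\varepsilon$ as a polynomial in $\lambda$. A monomial containing $s^k$ contributes a term of $\lambda$-degree exactly $k$. Hence the degree bounds reduce to identifying, for each of $\beta,\gamma,\delta,\varepsilon$, the maximal power of $s$ that can occur among the monomials of the relevant degree that survive $x=0$.

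\textbf{Items (1)--(3).} We will argue in the weighted degrees of $\mbP(1,2,a,a+1,a+2,a+3)$ with $x=0$.

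For $\beta$, the monomials of $f_{2a+2}$ in $y,s$ give $\beta = \coeff(s^2)\lambda^2 + (\text{lower order in }\lambda)$. In case (1) of Lemma~\ref{lem:Manyfamdefeq}, $s^2\in\msF_1$ with coefficient $1$, so $\deg_\lambda\beta=2$. In case (2), the $s$-dependence is only through $s f_4$, giving $\deg_\lambda\beta\le1$. To get $\beta\neq0$ there, I would use quasismoothness to force $\coeff_{f_4}(y^2)\ne0$ or $\coeff_{f_8}(y^4)\ne0$. Otherwise $\msF_1|_{x=0}$ lies in a suitable ideal and produces a non-quasismooth point on $(x=z=t=w=0)$, in the same way that $f_2(0,y)\neq0$ is deduced in Lemma~\ref{lem:Manyfamdefeq}.

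For $\gamma$, the relevant monomials are those of $g_{a+4}$ of the form $y^{\cdot}z$ and $s\,y^{\cdot}z$. Since $g_{a+4}$ has degree $a+4<2(a+1)$, it contains at most one factor $s$, so $\deg_\lambda\gamma\le1$.

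For $\delta$ and $\varepsilon$, we read off the monomials of $g_{2a+6}$. Because the normal form removed all monomials divisible by $s^2$ from $g_{2a+6}$ in case (1), only terms with $s^{\le1}$ survive, giving the bounds $\le1$. For $a=3$ in case (2), the term $s^3$ with coefficient $1$ contributes $\lambda^3 y^6$, so $\deg_\lambda\varepsilon=3$. For $\delta$ (the coefficient of $y^3 z^{\cdot}$-type terms), the degree count shows that at most one $s$ fits.

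\textbf{Item (4).} Here we use the degree bounds from (1)--(3). In case (1), $\beta^2$ has $\lambda$-degree $4$ while $\alpha^2\varepsilon$ has $\lambda$-degree $\le1$, so the sum is a nonzero polynomial in $\lambda$. In case (2), $\alpha=\coeff_{f_2}(y)=1$, so $\alpha^2\varepsilon$ has degree $3>2\ge\deg_\lambda\beta^2$, and again the sum is nonzero. The remaining subtlety is $a=5$, where $\alpha$ may vanish; there item (4) follows from $\beta\ne0$. Since $\lambda$ is general, nonvanishing as a polynomial gives nonvanishing at $\lambda$.

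The main obstacle is expected to be the bookkeeping in (2)--(3), namely verifying that no higher power of $s$ can sneak into $\gamma,\delta,\varepsilon$ for each $a\in\{3,5\}$. The proof of $\beta\ne0$ in the $s^2\notin\msF_1$ case, via quasismoothness, is the other step that needs care.
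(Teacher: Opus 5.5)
\textbf{There is a genuine gap in your proposal.} Your $\lambda$-degree bookkeeping for (1)--(3) and the degree comparison for (4) are correct, and they are exactly the paper's argument. In the case $a=3$, $s^2\notin\msF_1$, your proof of (4) uses only $\alpha=\coeff_{f_2}(y)=1$ and $\deg_\lambda\varepsilon=3>2\ge\deg_\lambda\beta^2$, so it does not need $\beta\neq0$. Your separate remark about $a=5$ is unnecessary, since the case-(1) count already covers $\alpha=0$.

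The gap is the step you flagged: $\beta\neq0$ when $a=3$ and $s^2\notin\msF_1$. Put $c=\coeff_{f_4}(y^2)$ and $d=\coeff_{f_8}(y^4)$, so that $\beta=c\lambda+d$. Suppose $c=d=0$. Then $\msF_1$ vanishes identically on $(x=z=t=w=0)$, and $(x=z=t=w=0)_X$ is the zero set of $G(y,s)\coloneqq g_{12}(0,y,0,s)$ in $\mbP(2,4)$. Since $\coeff_{g_{12}}(s^3)=1$, every such point has $y\neq0$. At these points $\nabla\msF_1=(0,0,0,0,0,-y)$, coming from the term $-wf_2=-wy$. Meanwhile $\nabla\msF_2=(0,\partial_yG,0,\partial_sG,0,0)$, because the other partials vanish for weight reasons. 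So the Jacobian has rank $2$ at every simple root of $G$, and no non-quasismooth point appears. The analogy with the proof of $f_2(0,y)\neq0$ breaks down precisely because $wy$ keeps $\msF_1$ out of $(x,z,t,w)^2$. The paper's own justification fails for the same reason. It claims $X$ would be singular along the curve $(x=z=t=0)_X$, but that locus is $(wy=w^2+G=0)\subset\mbP(2,4,6)$, which is just the four $\frac12$-points.

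In fact no argument can close this step, because the assertion is false. Let $X\subset\mbP(1,2,3,4,5,6)$ be defined by $\msF_1=tz-wy+x^8$ and $\msF_2=t^2y+w^2+s^3+z^4+y^6+x^{12}$. This is in the normal form of Lemma~\ref{lem:Manyfamdefeq}(2), with $f_4=0$ and $f_8=x^8$.

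$X$ is quasismooth:
\begin{itemize}
\item $\nabla\msF_1$ vanishes only on the $s$-axis, which meets $C_X$ only at the origin.
\item Suppose $\nabla\msF_2=\mu\nabla\msF_1$ at a point of $C_X$. Then $s=0$, $2w=-\mu y$, $\mu t=4z^3$, $2ty=\mu z$ and $x^7(3x^4-2\mu)=0$.
\item The cases $\mu=0$ or $z=0$ lead only to the origin.
\item For $z\neq0$, the weight-zero quantity $\rho\coloneqq\mu^3/z^4$ would have to satisfy $3\rho^4-1024\rho^2+2^{18}=0$. From $\msF_1=0$ it would also satisfy either $\rho^2=-512$ or $9\rho^2+512\rho+4608=0$, and neither has a common root with the quartic.
\end{itemize}
Yet $\msF_1(0,y,z,\lambda y^2,t,w)=tz-wy$, so $\beta\equiv0$.

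So the lemma should assert $\beta\neq0$ only when $a=5$ or $s^2\in\msF_1$. The rest of it, including (4), stands.

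Downstream this appears harmless. For $\beta=0$, the polynomial $\widetilde{\msF}_2=(y^3+1)w^2+\gamma y^3w+\theta+\delta y^3+\varepsilon y^6$ in the proof of Lemma~\ref{sec:MfamdeltaC1qsmC} has coefficients with no common factor for general $\lambda$. Its discriminant has odd degree $9$ in $y$, so it is still irreducible.
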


\begin{proof}
Note that $\lambda^m$ appears in $\beta$ if $y^k z^l s^m \in \msF_1$ for some $k, l \ge 0$, and similarly for $\gamma, \delta, \varepsilon$.
It is then easy to see that the assertions (1), (2) and (3) hold except for the assertion that $\beta \ne 0$.

If either $a = 5$ or $a = 3$ and $s^2 \in \msF_1$, then $\beta \ne 0$ since $\deg_{\lambda} \beta = 2$ and $\lambda$ is general.
Suppose that $a = 3$ and $s^2 \notin \msF_1$.
Then, the assertion $\beta = 0$ is equivalent to $s y^2, y^4 \notin \msF_1$, and in this case $X$ is singular along the curve $(x = z = t = 0)_X$.
This is impossible, and hence $\beta \ne 0$.

It remains to prove $\beta^2 + \alpha^2 \varepsilon \ne 0$.
Note that $\alpha$ does not depend on $\lambda$, that is, $\deg_{\lambda} \lambda = 0$.
By (1), (2) and (3), we have 
\[
\deg_{\lambda} (\beta^2 + \alpha^2 \varepsilon) = 
\begin{cases}
4, & \text{if either $a = 5$ or $a = 3$ and $s^2 \in \msF_1$}, \\
3, & \text{if $a = 3$ and $s^2 \notin \msF_1$}.
\end{cases}
\]
This shows that $\beta^2 + \alpha^2 \varepsilon \ne 0$ since $\lambda$ is general.
\end{proof}

\begin{Lem}
\label{sec:MfamdeltaC1Cz0}
The scheme $(z = 0)_C$ does not contain a curve.
\end{Lem}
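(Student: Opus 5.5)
We argue directly from the explicit equations of $C$ in $\mbP (2_y, a_z, (a+2)_t, (a+3)_w)$. Setting $z = 0$ in $\overline{\msF}_1 = \overline{\msF}_2 = 0$, the scheme $(z = 0)_C$ is the subscheme of $\mbP (2_y, (a+2)_t, (a+3)_w)$ cut out by
\[
- \alpha w y^{(a-1)/2} + \beta y^{a+1} = 0, \qquad t^2 y + w^2 + \delta y^3 + \varepsilon y^{a+3} = 0,
\]
where we use that $\theta$ only multiplies $z^4$ and hence drops out, and that the $\gamma$-term is divisible by $z$. (If $a = 3$ the term $\delta y^3$ should be read together with $\varepsilon y^{a+3}$ according to degrees; only the monomials of degree $2a+6$ in $y, t, w$ survive, namely $t^2 y$, $w^2$ and $y^{a+3}$, so effectively the second equation is $t^2 y + w^2 + \varepsilon' y^{a+3}$ with $\varepsilon'$ the total coefficient of $y^{a+3}$.) Since this is a complete intersection of two equations in a weighted projective plane, it contains a curve only if the two defining polynomials share a common factor. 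So the plan is to show that they have no common component.

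The first equation factors as $y^{(a-1)/2} (-\alpha w + \beta y^{(a+3)/2})$. We then split into the two possible components. On the component $y = 0$, the second equation becomes $w^2 = 0$, so $(y = w = 0)$ is the single point $\msp_t$ and contains no curve. On the component $-\alpha w + \beta y^{(a+3)/2} = 0$, we use $\beta \ne 0$ from Lemma~\ref{sec:MfamdeltaC1coeff}(1). If $\alpha = 0$, this forces $y = 0$, which reduces to the previous case. If $\alpha \ne 0$, we substitute $w = \alpha^{-1} \beta y^{(a+3)/2}$ into the second equation and obtain
\[
y \bigl( t^2 + \alpha^{-2} (\beta^2 + \alpha^2 \varepsilon) y^{a+2} \bigr) = 0.
\]
This is a nonzero binary form in $y, t$, because $\beta^2 + \alpha^2 \varepsilon \ne 0$ by Lemma~\ref{sec:MfamdeltaC1coeff}(4), and indeed $t^2$ already appears. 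Hence it cuts out finitely many points on the rational curve $(-\alpha w + \beta y^{(a+3)/2} = 0) \cong \mbP (2, a+2)$. Therefore $(z = 0)_C$ is a finite set of points.

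The main obstacle is the bookkeeping of which terms survive the restriction $z = 0$, in particular making sure that no contribution from $\bar{g}_{2a+6}$ other than $y^{a+3}$ remains. This matters for $a = 3$, where $\delta y^3$ must be merged with the $y^{a+3}$ coefficient by degree. One should then check that the nonvanishing statement of Lemma~\ref{sec:MfamdeltaC1coeff}(4) is applied to the correct combined coefficient.
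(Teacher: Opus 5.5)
Your proof is correct and follows essentially the same route as the paper. For $\alpha = 0$, the condition $\beta \ne 0$ forces $y = w = 0$, leaving only $\msp$; for $\alpha \ne 0$, substituting $w = \alpha^{-1} \beta y^{(a+3)/2}$ gives $y \bigl( t^2 + (\alpha^{-2} \beta^2 + \varepsilon) y^{a+2} \bigr)$, which cuts out finitely many points. The ``obstacle'' you flag is not real: the paper's formula for $\bar{g}_{2a+6}$ is written with $z = 1$, so the $\delta$-term is really $\delta y^3 z^2$ and the $\theta$-term is $\theta z^4$; both vanish on $(z = 0)$, and the coefficient of $y^{a+3}$ is exactly $\varepsilon$. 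In any case, the monomial $t^2 y$ alone makes the restricted binary form nonzero, so Lemma~\ref{sec:MfamdeltaC1coeff}(4) is not needed for this statement.
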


\begin{proof}
Suppose that $\alpha = 0$.
Then it is easy to see that $(z = 0)_C = \{\msp\}$ set-theoretically since $\beta \ne 0$.

Suppose that $\alpha \ne 0$.
In this case, we have $(z = 0)_C = \{\msp\} \cup \Xi$ set-theoretically, where
\[
\Xi \coloneq \left(y = z = w - \alpha^{-1} y^{(a+3)/2} = \overline{\msF}_2 \left(y, 0, t, \alpha^{-1} \beta y^{(a+3)/2}\right) = 0\right).
\]
We have
\[
\overline{\msF}_2 \left(y, 0, t, \alpha^{-1} \beta y^{(a+3)/2}\right) = y (t^2 + (\alpha^{-2} \beta^2 + \varepsilon) y^{a+2}).
\]
This shows that $(z = 0)_C$ does not contain a curve.
\end{proof}

\begin{Lem}
\label{sec:MfamdeltaC1qsmC}
The curve $C$ is irreducible and reduced, and it is quasismooth at $\msp$.
\end{Lem}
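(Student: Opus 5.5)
The statement has three parts: quasismoothness of $C$ at $\msp$, irreducibility, and reducedness. I would handle quasismoothness by a Jacobian computation, and irreducibility and reducedness together through an affine chart that sees every component of $C$.

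\emph{Quasismoothness at $\msp = \msp_t$.} Regard $C$ as the complete intersection $(\overline{\msF}_1 = \overline{\msF}_2 = 0) \subset \mbP (2, a, a+2, a+3)$. Evaluate the Jacobian matrix at $\msp_t = (0\!:\!0\!:\!1\!:\!0)$:
\begin{itemize}
\item in the row of $\overline{\msF}_1$, the entry $\partial \overline{\msF}_1/\partial z$ equals $t = 1$;
\item in the row of $\overline{\msF}_2$, the entry $\partial \overline{\msF}_2/\partial y$ equals $t^2 = 1$;
\item all other terms vanish or give no contribution at $\msp_t$.
\end{itemize}
Hence the Jacobian has rank $2$ there, so $C$ is quasismooth at $\msp$. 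This also shows that $\msp$ lies on exactly one local branch of $C$.

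\emph{Reduction to a plane curve.} By Lemma~\ref{sec:MfamdeltaC1Cz0}, $(z = 0)_C$ contains no curve. So every irreducible component of $C$, and every embedded or non-reduced structure along a curve, is visible on $U_z$. On $U_z$ we set $z = 1$. The first equation then gives
\[
t = \alpha w y^{(a-1)/2} - \beta y^{a+1}.
\]
Substituting this into $\overline{\msF}_2$ shows that $C \cap U_z$ is the $\bmu_a$-quotient of the affine plane curve $(G = 0) \subset \mbA^2_{y, w}$, where
\[
G = (1 + \alpha^2 y^a) w^2 + b(y)\, w + c(y).
\]
Here $b$ and $c$ are explicit polynomials in $y$ whose coefficients are built from $\alpha, \beta, \gamma, \delta, \varepsilon, \theta$. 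A quotient of an irreducible and reduced curve is irreducible and reduced. So it suffices to show that $G$ is irreducible in $\mbC[y, w]$.

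\emph{Irreducibility of $G$.} $G$ is a polynomial of degree $2$ in $w$. Its leading coefficient $1 + \alpha^2 y^a$ is coprime to $c(y)$, since for general $\lambda$ they share no root. Therefore $G$ is irreducible as soon as its discriminant $D(y) = b^2 - 4(1 + \alpha^2 y^a) c$ is not a square in $\mbC[y]$. I would show this by finding an odd-degree leading term of $D$, or failing that a simple root.
\begin{itemize}
\item If $\alpha = 0$, then $b = \gamma \beta$-type terms times powers of $y$ and $D = -4c + b^2$. The top-degree term of $c$ is $\beta^2 y^{2a+3}$. Its degree $2a+3$ is odd and its coefficient is nonzero because $\beta \ne 0$ by Lemma~\ref{sec:MfamdeltaC1coeff}(1). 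Hence $D$ is not a square.
\item If $\alpha \ne 0$, the $y^{3a+3}$ terms of $b^2$ and of $4\alpha^2 y^a c$ cancel. The surviving high-degree terms are then a $\gamma$-term of degree $(5a+3)/2$ and a term $-4(\beta^2 + \alpha^2 \varepsilon) y^{2a+3}$.
\end{itemize}

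\emph{Main obstacle.} The case $\alpha \ne 0$ is the delicate one. When $\gamma = 0$, or when $(5a+3)/2$ is odd (that is, $a = 3$), I would use Lemma~\ref{sec:MfamdeltaC1coeff}(4), which gives $\beta^2 + \alpha^2 \varepsilon \ne 0$, to produce the required nonvanishing odd-degree term. For $a = 5$ with $\gamma \ne 0$, the leading degree $14$ is even, and the argument must change. There I would instead compare the $\lambda$-degrees of the coefficients in $D$, using Lemma~\ref{sec:MfamdeltaC1coeff}(1)--(3) and the generality of $\lambda$, to show that $D$ has a simple root (for instance at $y = 0$, where the $\delta y^3$ and $\theta$ terms contribute).
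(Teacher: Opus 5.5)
\textbf{There is a genuine gap, caused by a miscalculation of the discriminant.} Your decisive step differs from the paper's. The paper shows that the coefficients of $\widetilde{\msF}_2$ have no common factor, assumes a root $w=p/q$, and then runs separate degree chases for $a=3$ and $a=5$ until it forces $\beta^2+\alpha^2\varepsilon=0$. A discriminant argument could reach the same contradiction more directly, but it fails as you wrote it.

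The error is in $D=b^2-4(1+\alpha^2y^a)c$. The $\gamma$-term of degree $(5a+3)/2$ does \emph{not} survive. The cross term of $b^2$ is $-4\alpha^2\beta\gamma\, y^{(5a+3)/2}$. The product $4\alpha^2y^a\cdot(-\beta\gamma y^{3(a+1)/2})$ inside $4\alpha^2y^a c$ is the same monomial with the same coefficient. So these two cancel, just as the $y^{3a+3}$ terms do.

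This error has two consequences.
\begin{itemize}
\item You leave the case $a=5$, $\alpha\gamma\ne0$ unproved, and your substitute plan fails. For $a=5$ one has $\theta=0$, so $D(0)=-4\theta=0$. The lowest term of $D$ is $-4\delta y^3$, so $D$ vanishes at $y=0$ to order at least $3$ and there is no simple root there. Odd-order vanishing would need $\delta\ne0$, which need not hold.
\item Your $a=3$ step contradicts your own computation. There $(5a+3)/2=9=2a+3$, so a surviving $\gamma$-term would add to the top coefficient. Lemma~\ref{sec:MfamdeltaC1coeff}(4) alone would then not show that this coefficient is nonzero.
\end{itemize}
A minor slip: when $\alpha=0$ you simply have $b=0$, not ``$\gamma\beta$-type terms''.

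\textbf{The repair} is to finish the expansion:
\[
D=-4(\beta^2+\alpha^2\varepsilon)y^{2a+3}+\alpha^2\gamma^2y^{2a}+4\beta\gamma y^{3(a+1)/2}-4(\varepsilon+\alpha^2\delta)y^{a+3}-4\alpha^2\theta y^{a}-4\delta y^3-4\theta .
\]
For $a\in\{3,5\}$ every other exponent is below $2a+3$. Hence $D$ has odd degree $2a+3$, with leading coefficient $-4(\beta^2+\alpha^2\varepsilon)\ne0$ by Lemma~\ref{sec:MfamdeltaC1coeff}(4), so $D$ is not a square. This handles $\alpha=0$ and $\alpha\neq0$, and both values of $a$, at once.

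The same conclusion holds if the $\gamma$-term of $\overline{\msF}_2$ is $\gamma t y^2 z$, which is what the degrees force when $a=5$. The cross terms still cancel identically, and the remaining $\gamma$-terms then have degree $a+3<2a+3$.

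\textbf{You also need to prove the primitivity you assert.} The roots $y_0$ of $1+\alpha^2y^a$ do not depend on $\lambda$, and $y_0\neq0$. By the $\lambda$-degree bounds underlying Lemma~\ref{sec:MfamdeltaC1coeff}, $c(y_0)$ is a polynomial in $\lambda$ with nonzero leading coefficient. Its degree is $4$ in general, and $3$ if $a=3$ and $s^2\notin\msF_1$. So $c(y_0)\ne0$ for general $\lambda$.

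With these two fixes your argument is correct and shorter than the paper's. The paper's substitution $\tilde p=\alpha p-\beta y^{(a+3)/2}q$ is in effect completing the square, that is, the discriminant in disguise. Your version replaces the two separate degree chases with a single leading-term computation.
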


\begin{proof}
It is straightforward to check that $C$ is quasismooth at $\msp$.
Hence it remains to show that $C$ is irreducible and reduced.

By Lemma~\ref{sec:MfamdeltaC1Cz0}, it is enough to show that $C \cap U_z$ is irreducible and reduced.
Plugging $z = 1$ in the equations $\overline{\msF}_1 = \overline{\msF}_2 = 0$ and eliminating the variable $t$ in terms of the equation $\overline{\msF}_1 (y, 1, t, w) = 0$, $C \cap U_z$ is the $\bmu_a$-quotient of the subscheme in $\mbA^2_{y, w}$ defined by the equation
\[
\widetilde{\msF}_2 \coloneq \overline{\msF}_2 \left(y, 1, \alpha w y^{(a-1)/2} - \beta y^{a+1}, w\right) = 0.
\]
We have
\[
\begin{split}
\widetilde{\msF}_2 &= (\alpha^2 y^a + 1) w^2 + \alpha y^a \left(\gamma - 2 \beta y^{(a+3)/2}\right)w \\
& \hspace{2.5cm} + \beta^2 y^{2a+3} - \beta \gamma y^{3(a+1)/2} + \theta + \delta y^3 + \varepsilon y^{a+3}.
\end{split}
\]
It suffices to show that $\widetilde{\msF}_2$ is an irreducible polynomial.

Suppose that $\alpha = 0$.
Then
\[
\widetilde{\msF}_2 = w^2 +  \beta^2 y^{2a+3} - \beta \gamma y^{3(a+1)/2} + \theta + \delta y^3 + \varepsilon y^{a+3}
\]
and the polynomial $\beta^2 y^{2a+3} - \beta \gamma y^{3(a+1)/2} + \theta + \delta y^3 + \varepsilon y^{a+3}$ cannot be a square since its top degree term $\beta^2 y^{2a+3}$, where $\beta \ne 0$, is not a square.
This shows that $\widetilde{\msF}_2$ is irreducible.

Suppose that $\alpha \ne 0$.
We claim that three polynomials
\[\alpha^2 y^a + 1, \ y^a \left(\gamma - 2 \beta y^{(a+3)/2}\right), \  \beta^2 y^{2a+3} - \beta \gamma y^{3(a+1)/2} + \theta + \delta y^3 + \varepsilon y^{a+3}
\] 
do not share a common component.
If either $a = 5$ or $a = 3$ and $s^2 \in \msF_1$, then no component of $\alpha^2 y^a + 1$ divides $y^a \left(\gamma  - 2 \beta y^{(a+3)/2}\right)$ since $\lambda \in \mbC$ is general, $\beta = \lambda^2 + \cdots$ and $\deg_{\lambda} (\gamma) \le 1$ by Lemma~\ref{sec:MfamdeltaC1coeff}.
If $a = 3$ and $s^2 \notin \msF_1$, then no component of $\alpha^2 y^a + 1 = \alpha^2 y^3 + 1$ divide $\beta^2 y^9 + (\varepsilon - \beta \gamma) y^6 + \delta y^3 + \theta$ since $\deg_{\lambda} \beta, \deg_{\lambda} \gamma, \deg_{\lambda} \delta \le 1$ and $\deg_{\lambda} \varepsilon = 3$ by Lemma~\ref{sec:MfamdeltaC1coeff}.
This proves the claim.

Now we assume to the contrary that $\widetilde{\msF}_2$ is reducible.
By the claim, the equation $\widetilde{\msF}_2 = 0$ has a solution $w = p/q$, where $p, q \in \mbC [y]$ are relatively prime polynomials.
We have
\begin{equation}
\label{eq:Mfamdelta-1}
\begin{split}
0 &= q^2 \widetilde{\msF}_2 = (\alpha^2 y^a + 1) p^2 + \alpha y^a \left(\gamma - 2 \beta y^{(a+3)/2}\right) p q \\
& \hspace{3.5cm} + \left(\beta^2 y^{2a+3} - \beta \gamma y^{3(a+1)/2} + \theta + \delta y^3 + \varepsilon y^{a+3}\right) q^2 \\
&= y^a \left(\alpha p - \beta y^{(a+3)/2} q\right)^2 + p^2 + \alpha \gamma y^a pq \\
& \hspace{3.5cm} + \left(- \beta \gamma y^{3(a+1)/2} + \theta + \delta y^3 + \varepsilon y^{a+3}\right) q^2 
\end{split}
\end{equation}
We set $n \coloneq \deg_y q$.
We have $\deg_y p = n + (a+3)/2$ because otherwise $q^2 \widetilde{\msF}_2$ cannot be $0$.

Suppose that $a = 3$.
We put $\tilde{p} \coloneq \alpha p - \beta y^3 q$.
We have $\deg_y (y^a \tilde{p}^2) \le 2 n + 6$ by \eqref{eq:Mfamdelta-1}, which implies $\deg_y \tilde{p} \le n + 1$.
Plugging $p = \alpha^{-1} \left(\tilde{p} + \beta y^{(a+3)/2}q\right)$ in \eqref{eq:Mfamdelta-1}, we obtain
\[
q^2 \widetilde{\msF}_2 = y^3 \tilde{p}^2 + \alpha^{-2} (\tilde{p} + \beta y^3 q)^2 + \gamma y^3 \tilde{p} q + (1 + \delta y^3 + \varepsilon y^6)q^2.
\]
The coefficient of $y^{2n+6}$ in $q^2 \widetilde{\msF}_2$ is $\alpha^{-2} \beta^2 + \varepsilon$ and it is nonzero by Lemma~\ref{sec:MfamdeltaC1coeff}.
This is a contradiction since $q^2 \widetilde{\msF}_2 = 0$.

Suppose that $a = 5$.
Note that $\theta = 0$ in this case.
By \eqref{eq:Mfamdelta-1}, we see that $p$ is divisible by $y^2$ and we write $p = y^2 p_1$.
Then, by plugging $p = y^2 p_1$ and dividing \eqref{eq:Mfamdelta-1} by $y^3$, we get
\[
y^6 (\alpha p_1 - \beta y^2 q)^2 + y p_1^2 + \alpha \gamma y^4 p_1 q + (-\beta \gamma y^6 + \delta + \varepsilon y^5) q^2 = 0.
\]
If $\delta \ne 0$, then $q$ must be divisible by $y$, but this is impossible since $p$ is coprime to $q$.
Hence $\delta = 0$ and we see that $p_1$ is divisible by $y^2$.
We write $p_1 = y^2 p_2$ and, by dividing the above equation by $y^5$, we obtain
\begin{equation}
\label{eq:Mfamdelta-2}
y^5 (\alpha p_2 - \beta q)^2 + p_2^2 + \alpha \gamma y p_2 q + (-\beta \gamma y + \varepsilon) q^2 = 0.
\end{equation}
We have $\deg_y p_2 = \deg_y p - 4 = n$.
We put $\tilde{p} \coloneq \alpha p_2 - \beta q$.
By \eqref{eq:Mfamdelta-2}, we have $\deg (y^5 \tilde{p}^2) \le 2 n + 1$, which implies $\deg_y \tilde{p} \le n - 2$.
Plugging $p_2 = \alpha^{-1} (\tilde{p} + \beta q)$ in \eqref{eq:Mfamdelta-2}, we obtain
\begin{equation}
\label{eq:Mfamdelta-3}
\msG \coloneq y^5 \tilde{p}^2 + \alpha^{-2} (\tilde{p} + \beta q)^2 + \gamma y \tilde{p} q + \varepsilon q^2 = 0.
\end{equation}
We have $\coeff_{\msG} (y^{2n+1}) = \coeff_{\tilde{p}} (y^{n-2})^2$.
It follows that $\coeff_{\tilde{p}} (y^{n-2}) = 0$, that is, $\deg_y \tilde{p} \le n-3$, since $\msG = 0$.
Then, we have $\coeff_{\msG} (y^{2n}) = \alpha^{-2} \beta^2 + \varepsilon = 0$ since $\msG = 0$.
This is impossible by Lemma~\ref{sec:MfamdeltaC1coeff} and the proof is complete.
\end{proof}

By Lemmas~\ref{sec:MfamdeltaC1qsmS} and \ref{sec:MfamdeltaC1qsmC}, $\msp \in C \subset S \subset X$ is a flag of type $\mathrm{I}$.
By Proposition~\ref{prop:typeI}, we have
\[
\delta_{\msp} (X) \ge
\min \left\{4, \ 4 (a+1), \ \frac{4}{(a+2) \cdot (a+1) \cdot 1 \cdot (A^3)} \right\} = \frac{2a}{a+1} > 1
\]
since $a = 3, 5$.

\subsubsection{Case: $a$ is even} 
We assume that $a$ is even.
Note that $a = 2, 4, 6$. 
In this case $X$ is a member of family \textnumero~$24$, \textnumero~$51$ and \textnumero~$75$ and the point $\msp \in X$ is of type $\frac{1}{4} (1, 1, 3)$, $\frac{1}{6} (1, 1, 5)$ and $\frac{1}{8} (1, 1, 7)$, respectively.

We set $S \coloneq (x = 0)_X$ which is a normal surface by Lemma~\ref{lem:normqhyp}.
We choose homogeneous coordinates as in Lemma~\ref{lem:Manyfamdefeq}.
We have $f_{a-1} (0, y) = 0$ since $a$ is even and the weight of $y$ is $2$.
By eliminating the variable $x$, $S$ is isomorphic to the weighted complete intersection in $\mbP (2, a, a+1, a+2, a+3)$ defined by the equations
\[
\begin{split}
\overline{\msF}_1 &\coloneq t z + s^2 + \bar{f}_{2a+2} = 0, \\
\overline{\msF}_2 &\coloneq t^2 y + t \bar{g}_{a+4} + w^2 + \bar{g}_{2a+6} = 0.
\end{split}
\]
We can choose $s$ and $w$ as local orbifold coordinates of $S$ at $\msp$.
Let $\psi \colon T \to S$ be the weighted blowup of $S$ at $\msp$ with weight
\[
\wt (s, w) = \frac{1}{a+2} (a+1, 1).
\]
We denote by $F$ the exceptional divisor of $\psi$.
We set 
\[
\begin{split}
P (u ) &\coloneq -K_X - u S \sim (1 - u)A, \\
N (u) &\coloneq 0.
\end{split}
\]
For $0 \le u \le 1$, let $P (u, v)$ and $N (u, v)$ be the positive and negative parts, respectively, of the Zariski decomposition of the divisor
\[
\psi^*(P (u)|_S) - v F = (1-u) \psi^*A_S - v F,
\] 
where $A_S \coloneq A|_S$.
We have
\[
(\psi^*A_S)^2 = (A_S)^2 = A^3 = \frac{2}{a (a+2)} \quad \text{and} \quad
(F^2) = - \frac{a+2}{a+1}.
\]

By the equations $\overline{\msF}_1 = \overline{\msF}_2 = 0$, we have
\[
\ord_F (y, z) = \frac{1}{a+2} (2, 2a+2).
\]

We set $\alpha \coloneq - \coeff_{\bar{f}_{2a+2}} (y^{a+1})$.
We have
\[
(z = 0)_S = (z = s^2 - \alpha y^{a+1} = \overline{\msF}_2 = 0)
\]

\begin{Def}
We define a divisor class $D$ and a Weil divisor $G$ on $S$ as
\[
\begin{split}
D &\coloneq \psi^*A - \frac{1}{a+2} F, \\
G &\coloneq \psi_*^{-1} ((z = 0)_S) \sim a \psi^*A_S - \frac{2a+2}{a+2} F.
\end{split}
\]
\end{Def}

\begin{Lem}
\label{lem:MfamdeltaC2cone}
\begin{enumerate}
\item For a real number $r > 0$, $\psi^*A_S - r F$ is nef if and only if 
\[
r \le \frac{1}{a+2}.
\]
In particular, $D$ is a nef but not ample.
\item For a real number $r > 0$, $\psi^*A_S - r F$ is pseudoeffective if and only if 
\[
r \le \frac{2(a+1)}{a(a+2)}.
\]
\item Either $G|_F = \msq_1 + \msq_2$ for some distinct smooth points $\msq_1, \msq_2 \in F$ of $T$ or $G|_F = 2 \msq_3$ for some smooth point $\msq_3 \in F$ of $T$.
\end{enumerate}
\end{Lem}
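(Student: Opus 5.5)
Everything happens on the surface $S \subset \mbP (2, a, a+1, a+2, a+3)$ and its weighted blowup $\psi \colon T \to S$ at $\msp$. The key tool is a negative curve on $T$ built from $G$, together with the numerics $(A_S^2) = 2/(a(a+2))$, $(F^2) = -(a+2)/(a+1)$ and $\psi^*A_S \cdot F = 0$. I would prove (3) first, since the structure of $G$ near $F$ feeds into (1) and (2).

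\emph{Part (3).} In the orbifold chart at $\msp$ with coordinates $(s, w)$, eliminate $t$ using $\overline{\msF}_2$ (recall $t^2 y \in \overline{\msF}_2$, and near $\msp$ we have $t = 1$). This lets us solve $y = -w^2 - \bar g_{2a+6} - \cdots$, and similarly $\overline{\msF}_1$ gives $z = -s^2 - \bar f_{2a+2}(y, \dots)$. The local equation of $(z = 0)_S$ is therefore $s^2 - \alpha y^{a+1} + (\text{higher})$ with $y = -w^2 + \cdots$. Its lowest-weight part for $\wt(s, w) = \frac{1}{a+2}(a+1, 1)$ is $s^2 - \alpha(-1)^{a+1} w^{2a+2}$, of weight $\frac{2a+2}{a+2}$, which matches $\ord_F(z)$. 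Restricting to $F \cong \mbP(a+1, 1)$, $G|_F$ is cut out by this binary form. If $\alpha \ne 0$, it factors as a product of two distinct factors $s \pm c\, w^{a+1}$, giving two distinct points. If $\alpha = 0$, it is $s^2$, a double point. In both cases the points lie away from the singular point $(1\!:\!0)$ of $F$, i.e.\ they satisfy $w \ne 0$, so they are smooth points of $T$. One still has to check that these points are not the $\frac{1}{a+1}$ point of $F$, and that $G$ has no component supported in $F$.

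\emph{Part (1).} The divisor $D = \psi^*A_S - \frac{1}{a+2}F$ is the class of $\psi_*^{-1}(y=0)_S$, up to the factor $2$: indeed $\ord_F(y) = \frac{2}{a+2}$, so $2D \sim \psi_*^{-1}(y=0)_S$. The base locus of $|2A_S|$ together with the other coordinates isolates $\msp$, so $D$ is nef by the standard isolating-class argument: intersect with curves not in the support, and use that $(y = 0)_S$ has components through $\msp$ only of the expected order. To see that $D$ is not ample, I would exhibit a curve with $D \cdot C = 0$. The natural candidate is a component of $\psi_*^{-1}(y = 0)_S$, namely the proper transform of $(x = y = 0)_X \cap S$, which consists of curves like $(y = tz + s^2 = w^2 + \cdots = 0)$. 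Alternatively compute
\[
(D^2) = \frac{2}{a(a+2)} - \frac{1}{(a+2)(a+1)},
\]
which is positive, so non-ampleness must come from a curve. Since $\deg$ data give $D \cdot \tilde C = (A \cdot C) - \frac{1}{a+2}\,\mathrm{mult}$, I would check that $= 0$ for the curve $(x = y = z = 0)$ or the components of $(x = y = 0)_S$ through $\msp$.

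\emph{Part (2).} The threshold $\frac{2(a+1)}{a(a+2)} = \frac{1}{a}\cdot\frac{2a+2}{a+2}$ is exactly the $F$-slope of $G$, since $G \sim a\psi^*A_S - \frac{2a+2}{a+2}F$. So pseudoeffectivity for $r$ up to this value follows from effectivity of $G$. For the converse, I would compute
\[
(G^2) = a^2 \cdot \frac{2}{a(a+2)} - \frac{(2a+2)^2}{(a+2)^2}\cdot\frac{a+2}{a+1} = \frac{2a - 4(a+1)}{a+2} < 0.
\]
Hence $G$, or each of its components, is a negative curve. If $G$ is irreducible, the cone $\bNE(T) = \overline{\mathrm{Eff}}(T)$ is spanned by $F$ and $G$, and the boundary of the effective cone is $G$. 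The main obstacle is to show that $G$ is irreducible (and reduced), or, if it splits into two components (e.g.\ when $\alpha \ne 0$ and $s^2 - \alpha y^{a+1}$ factors globally), that each component is numerically proportional to $G$ with negative self-intersection. I would handle this by a symmetry argument on the two branches $s = \pm\sqrt{\alpha}\, y^{(a+1)/2}$, which are exchanged by $s \mapsto -s$ on the relevant data, giving equal classes.
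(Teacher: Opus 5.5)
Your part (3) is the paper's argument. Your nefness claim for $D$ also works once made precise. Take $\{y,s\}$ as the isolating set: on $(y=s=0)_S$ the first equation becomes $tz=0$; the branch $z=0$ forces $w^2=0$, and $t=0$ leaves finitely many points of $\mbP(a,a+3)$. Since $\tilde{H}_y\sim 2D$ and $\tilde{H}_s\sim (a+1)D$, this gives $(D\cdot C)\ge 0$ for every curve $C$, a leaner version of the paper's base-locus argument.

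\textbf{The gap in (1).} Everything that needs a $D$-trivial curve is missing, and your candidates fail. $(x=y=z=0)_X$ is not a curve: there $\overline{\msF}_1$ gives $s^2=0$ and then $\overline{\msF}_2$ gives $w^2=0$, so it is just $\{\msp\}$. Components of $\psi_*^{-1}(y=0)_S$ are in general not $D$-trivial either: if $(y=0)_S$ is irreducible, then $(D\cdot\tilde{H}_y)=2(D^2)=\frac{2}{a(a+1)}>0$. So neither the non-ampleness of $D$ nor the ``only if'' half of (1) is proved. The missing observation, for which you already had all the data, is
\[
(D\cdot G)=a\cdot\frac{2}{a(a+2)}-\frac{2a+2}{(a+2)^2}\cdot\frac{a+2}{a+1}=0 .
\]
Since $G$ is effective and $(F\cdot G)=2>0$, this gives $((\psi^*A_S-rF)\cdot G)<0$ for $r>\frac{1}{a+2}$, so the class is not nef there. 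It also shows $D$ is not ample.

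\textbf{The gap in (2).} Your converse claims that $\bNE(T)$ is spanned by $F$ and $G$. That presupposes $\rho(T)=2$, which you have not justified and which is not available in general: $S=(x=0)_X$ is a special surface on an arbitrary quasismooth $X$. The involution $s\mapsto -s$ does preserve $S$, since in the normal form every monomial has even degree in $s$. But it only shows that swapped components have equal intersection numbers with invariant classes, not that they are numerically proportional in $N^1(T)$. Moreover, $G$ need not even be reduced: if $\alpha=0$ then $(z=0)_S=2(z=s=0)_S$.

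None of this is needed. Once $D$ is nef, a pseudoeffective $\psi^*A_S-rF$ must satisfy
\[
(D\cdot(\psi^*A_S-rF))=\frac{2}{a(a+2)}-\frac{r}{a+1}\ge 0,
\]
that is, $r\le\frac{2(a+1)}{a(a+2)}$. The other direction is your decomposition $\psi^*A_S-rF=\frac{1}{a}G+\bigl(\frac{2(a+1)}{a(a+2)}-r\bigr)F$. This is the paper's route: nefness of $D$ together with $(D\cdot G)=0$ yields all of (1) and (2), with no irreducibility or Picard-rank input.
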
 

\begin{proof}
We set $r_0 \coloneq 1/(a+2)$.
We have
\begin{equation}
\label{eq:MfamdeltaC2cone1}
(D \cdot G) = \left(\psi^*A - \frac{1}{a+2} F\right) \cdot \left(a \psi^*A_S - \frac{2a+2}{a+2} F\right) = \frac{2}{a+2} - \frac{2}{a+2} = 0.
\end{equation}
This shows that $\psi^*A_S - r F$ is nef only if $r \le r_0$.
We show that $\psi^*A_S - r_0 F$ is nef.
For $v \in \{y, z, s, t, w\}$, we set $H_v \coloneq (v = 0)_S$ and $\tilde{H}_v \coloneq \psi_*^{-1} H_v$.
For a sufficiently divisible $m > 0$, there are integers $k_v > 0$ and $l_v \ge 0$ for $v \in \{x, y, s, t, w\}$ such that the linear system $|m (\psi^*A_S-r_0F)|$ contains $k_v \tilde{H}_v + l_v G$.
Note that $l_y = l_s = 0$.
This shows that the base locus of $|m (\psi^*A_S-r_0F)|$ is contained in the union of $\tilde{H}_x \cap \tilde{H}_y \cap \tilde{H}_s \cap \tilde{H}_t$ and $\tilde{H}_y \cap \tilde{H}_s \cap G$.
It is easy to see that both sets are zero dimensional and hence $\psi^*A_S - r_0F$ is nef.
This shows (1).
The assertion (2) follows from (1) and \eqref{eq:MfamdeltaC2cone1}.

The exceptional divisor $F$ is isomorphic to the complete intersection
\[
(z + s^2 + f_{2a+2} (0, y) = y + w^2 = 0) \subset \mbP (2, 2a+2, a+1, 1),
\]
where, by abuse of notation, we think of $y, z, s, w$ as homogeneous coordinates of $\mbP (2, 2a+2, a+1, 1)$ of weight $2, 2a+2, a+1, 1$, respectively.
The divisor $G|_F$ is defined by the equation $z = 0$.
The assertion (3) follows immediately from this description.
\end{proof}

\begin{Lem}
We set
\[
\tau_1 (u) \coloneq \frac{1}{a+2} (1-u) \text{ and }
\tau_2 (u) \coloneq \frac{2(a+1)}{a(a+2)} (1-u).
\]
Then, we have
\[
\begin{split}
P (u, v) &=
\begin{dcases}
(1-u) \psi^*A_S - v F, & \left(0 \le v \le \tau_1 (u) \right), \\
\frac{2(a+1)(1-u) - a (a+2) v}{a+2} D, & \left(\tau_1 (u) \le v \le \tau_2 (u)\right),
\end{dcases} \\
N (u, v) &=
\begin{dcases}
0, & \left(0 \le v \le \tau_1 (u) \right), \\
\left(v - \frac{1}{a+2} (1-u) \right) G, & \left(\tau_1 (u) \le v \le \tau_2 (u) \right).
\end{dcases}
\end{split}
\]
\end{Lem}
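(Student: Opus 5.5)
Write $L_{u,v} \coloneq (1-u)\psi^*A_S - vF$. The plan is to compute the Zariski decomposition directly from the cone description in Lemma~\ref{lem:MfamdeltaC2cone}. Since $L_{u,v}$ is homogeneous of degree one in the pair $(1-u, v)$, it suffices to understand $\psi^*A_S - rF$ with $r = v/(1-u)$. The thresholds are then read off as follows: $\tau_1(u)$ comes from the nef threshold $r = 1/(a+2)$, and $\tau_2(u)$ from the pseudoeffective threshold $r = 2(a+1)/(a(a+2))$.

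\emph{Range $0 \le v \le \tau_1(u)$.} By Lemma~\ref{lem:MfamdeltaC2cone}(1), $L_{u,v}$ is nef. Hence $P(u,v) = L_{u,v}$ and $N(u,v) = 0$.

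\emph{Range $\tau_1(u) \le v \le \tau_2(u)$.} Here I will exhibit the decomposition explicitly and verify the defining properties. Set $c \coloneq v - \tau_1(u) \ge 0$ and take the candidate $N = cG$, $P = L_{u,v} - cG$. Using
\[
G \sim a\psi^*A_S - \tfrac{2a+2}{a+2}F \quad\text{and}\quad D = \psi^*A_S - \tfrac{1}{a+2}F,
\]
the class of $P$ is $\bigl((1-u) - ac\bigr)\psi^*A_S - \bigl(v - \tfrac{2a+2}{a+2}c\bigr)F$. One checks that the $F$-coefficient equals $\tfrac{1}{a+2}$ times the $\psi^*A_S$-coefficient, so that $P = \bigl((1-u) - ac\bigr)D$. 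Substituting $c$ gives
\[
(1-u) - ac = \frac{2(a+1)(1-u) - a(a+2)v}{a+2},
\]
which is nonnegative precisely when $v \le \tau_2(u)$. Then $P$ is a nonnegative multiple of the nef class $D$, hence nef, and $(P \cdot G) = 0$ by \eqref{eq:MfamdeltaC2cone1}.

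To conclude that this is the Zariski decomposition, I still need $(G^2) < 0$, so that the support of $N$ has negative definite intersection matrix. I will compute
\[
(G^2) = a^2 (A_S^2) - \left(\frac{2a+2}{a+2}\right)^2 \frac{a+2}{a+1} = \frac{2a}{a+2} - \frac{4(a+1)}{a+2} < 0.
\]
The last check is that $G$ is irreducible, so that $N$ is supported on a single curve. From the equations, $(z=0)_S$ on $x=0$ is $(z = s^2 - \alpha y^{a+1} = 0)$ together with $t^2 y + t\bar g + w^2 + \dots$, and I will check that this is an irreducible curve. If it is reducible, the argument is not harmed as long as each component is $D$-trivial and the intersection matrix of the components is negative definite. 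I expect verifying the irreducibility of $G$, or handling its components, to be the only real obstacle; it is a routine equation check in the affine chart $U_y$. With these checks done, uniqueness of the Zariski decomposition gives the stated formulas.
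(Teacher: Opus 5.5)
Your proposal is correct and follows the same route as the paper, whose proof is only the one-line remark that the decomposition follows from Lemma~\ref{lem:MfamdeltaC2cone}: the divisor is nef up to $v=\tau_1(u)$, and beyond that you subtract $(v-\tau_1(u))G$ to get a nonnegative multiple of $D$, with $(D\cdot G)=0$ and $(G^2)=-2<0$. The irreducibility of $G$, which you flag as the remaining obstacle, does not need to be checked. Since $D$ is nef with $(D^2)=\frac{1}{a(a+1)}>0$ and $(D\cdot G)=0$, every component of $G$ is $D$-trivial, and by the Hodge index theorem their intersection matrix is negative definite, so uniqueness of the Zariski decomposition applies regardless.
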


\begin{proof}
This follows from Lemma~\ref{lem:MfamdeltaC2cone}.
\end{proof}

We have $K_T = \psi^*K_S$ and $(K_T + F)|_F = K_F + \frac{a}{a+1} \msq_0$, where $\msq_0$ is the singular point of $T$ of type $\frac{1}{a+1} (a, 1)$ on $F$.
By \cite[Corollary~4.18]{Fujita23}, we have
\[
\delta_{\msp} (X) \ge \min \left\{ \frac{A_X (S)}{S_{-K_X} (S)}, \ \frac{A_{S} (\msq)}{S (V_{\bullet, \bullet}^S; F)}, \ \inf_{\msq \in F} \frac{A_{C, \frac{a}{a+1} \msq_0} (\msq)}{S (W_{\bullet, \bullet, \bullet}^{S, F}; \msq)} \right\},
\]
where the first value is $4$ and the remaining values are computed as follows.
\[
\begin{split}
S (V_{\bullet, \bullet}^S; F) &= \frac{3}{(A^3)} \int_0^1 \int_0^{\tau_2 (u)} P (u, v)^2 d v d u \\
&= \frac{3}{(A^3)} \int_0^1 \left( \int_0^{\tau_1 (u)} \left (\frac{2}{a (a+2)} (1-u)^2 - \frac{a+2}{a+1} v^2 \right) d v \right. \\
& \hspace{1cm} \left. + \int_{\tau_1 (u)}^{\tau_2 (u)} \frac{1}{a (a+1)} \left(\frac{2 (a+1)(1-u) - a (a+2) v}{a+2} \right)^2 d v \right) d u \\
&= \frac{3}{(A^3)} \int_0^1 \left(\frac{5 a + 6}{3 a (a+1) (a+2)^2} (1-u)^3 + \frac{1}{3 a^2 (a+1)} (1-u)^3 \right) d u \\
&= \frac{3 a (a+2)}{2} \int_0^1 \frac{3a+2}{3 a^2 (a+2)^2} (1-u)^3 \\
&= \frac{3 a + 2}{8 a (a+2)}.
\end{split}
\]
\[
\begin{split}
S (W_{\bullet, \bullet, \bullet}^{S, F}; \msq) &=
\frac{3}{(A^3)} \int_0^1 \int_0^{\tau_2 (u)} (P (u, v) \cdot F)^2 d v d u + F_{\msq} (W_{\bullet, \bullet, \bullet}^{S, F}) \\
&= \frac{3}{(A^3)} \int_0^1 \left( \frac{1}{3 (a+1)^2 (a+2)} + \frac{1}{3 a (a+1)^2} \right) (1-u)^3 + F_{\msq} (W_{\bullet, \bullet, \bullet}^{S, F}) \\
&= \frac{3 a (a+2)}{2} \int_0^1 \frac{2}{3 a (a+2)} (1-u)^3 + F_{\msq} (W_{\bullet, \bullet, \bullet}^{S, F}) \\
&= \frac{1}{4 (a+1)} + F_{\msq} (W_{\bullet, \bullet, \bullet}^{S, F}).
\end{split}
\]
\[
\begin{split}
& F_{\msq} (W_{\bullet, \bullet, \bullet}^{S, F}) \\
&= \frac{6}{(A^3)} \int_0^1 \int_0^{\tau_2 (u)} (P (u, v) \cdot F) \cdot \ord_{\msq} (N (u, v)|_F) d v d u \\
&= \frac{6}{(A^3)} \int_0^1 \int_{\tau_1 (u)}^{\tau_2 (u)} \frac{2 (a+1)(1-u) - a (a+2)v}{(a+1)(a+2)} \cdot \left(v - \frac{1-u}{a+2} \right) \ord_{\msq} (G|_F) d v d u \\
&= 3 a (a+2) \ord_{\msq} (G|_F) \int_0^1 \frac{1}{6 a^2 (a+1)} (1-u)^3 d u \\
&= \frac{a+2}{8 a (a+1)} \ord_{\msq} (G|_F).
\end{split}
\]
By Lemma~\ref{lem:MfamdeltaC2cone}, we have
\[
\delta_{\msp} (X) \ge \min \left\{ 4, \ \frac{8 a (a+2)}{3 a + 2}, \ \min \{4, \ 2 a\} \right\} = 4
\]
since $a \ge 2$.

\subsection{Family \textnumero~8 and $\frac{1}{2} (1, 1, 1)$ points}
\label{sec:No8delta2}

Let $X = X_{4, 6} \subset \mbP (1, 1, 2, 2, 2, 3)$ be a member of family \textnumero~8 and let $\msp$ be a $\frac{1}{2} (1, 1, 1)$ point of $X$.
We may assume that $\msp = \msp_t$.

\begin{Lem}
\label{lem:No8del2eq}
The defining polynomials of $X$ can be written as
\[
\begin{split}
\msF_1 &= t z + w f_1 (x, y) + \theta s^2 + s f_2 (x, y) + f_4 (x, y), \\
\msF_2 &= t^2 s + t g_4 + w^2 + g_6,
\end{split}
\]
where $\theta \in \mbC$, $f_i \in \mbC [x, y]$ and $g_i \in \mbC [x, y, z, s]$.
Moreover, if $\theta = 0$, then $s^3 \in g_6$.
\end{Lem}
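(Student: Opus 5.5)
We will argue as in the proofs of Lemmas~\ref{lem:Manyfamdefeq} and \ref{lem:No37excl3C2eq}: use quasismoothness of $X$ to force certain monomials, then make coordinate changes to remove the rest. In $\mbP (1,1,2,2,2,3)$ the weight $2$ coordinates are $z,s,t$, and $\msp = \msp_t$ is a $\frac{1}{2}(1,1,1)$ point.

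\textbf{Step 1: monomials involving $t$.} Quasismoothness at $\msp_t$ requires the Jacobian to have rank $2$ there. The only monomials that are linear in the other variables times a power of $t$ are $t z, t s, t^2 z, t^2 s$ (note that $t x^2$ and $t y^2$ are not linear). Hence, after a linear change among $z, s$ and among the two equations, we may take $t z \in \msF_1$ and $t^2 s \in \msF_2$. Since $t^2$ has degree $4 = d_1$, we also need $t^2 \notin \msF_1$, because $\msp_t \in X$. We rescale so that all these coefficients are $1$. We have $w^2 \in \msF_2$ by quasismoothness at $\msp_w$, and we normalize its coefficient to $1$.

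\textbf{Step 2: shaping $\msF_1$.} Write $\msF_1 = t z + t\,\ell(x,y,z,s) + (\text{terms without } t)$. Every term containing $t$ other than $tz$ can be absorbed by replacing $z \mapsto z - \ell$; a $t z$-type term $z^2$ is then handled together with $s$. The remaining $t$-free part has degree $4$ in $x, y, z, s, w$. The $w$-terms are $w f_1(x,y)$, since $w$ has weight $3$. Next we filter off every monomial divisible by $z$ other than $tz$, i.e.\ we write $\msF_1 = (t + \phi) z + \cdots$ and replace $t \mapsto t - \phi$, as in Lemma~\ref{lem:Manyfamdefeq}. This leaves $\theta s^2 + s f_2(x,y) + f_4(x,y)$. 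This substitution changes $\msF_2$, but $\msF_2$ is still of the stated shape $t^2 s + t g_4 + w^2 + g_6$ once we allow $g_4, g_6 \in \mbC[x,y,z,s]$.

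\textbf{Step 3: shaping $\msF_2$.} We remove the terms $t^3$, $t^2 z$, $t^2\cdot(\text{quadratic in } x,y)$, $w t$, and $w\cdot(\cdot)$ in $\msF_2$. The $t^3$ and $t^2 z$ terms are removed by subtracting a multiple of $t\,\msF_1$, or by adjusting $s \mapsto s + c t + \cdots$ and then renormalizing $\msF_1$ (this is the delicate bookkeeping). The term $t^2 q(x,y)$ is absorbed into $s$. The terms linear in $w$ are removed by completing the square, $w \mapsto w - \tfrac12(\cdots)$. Any terms reintroduced into $\msF_1$ by these substitutions are fixed by repeating Step 2. This process terminates because of the weights involved.

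\textbf{Step 4: the condition when $\theta = 0$.} Suppose $\theta = 0$ and $s^3 \notin g_6$. Consider the locus $(x = y = z = w = 0)$. There $\msF_1 = 0$ holds identically, and $\msF_2 = t^2 s + t g_4(0,0,0,s) + g_6(0,0,0,s)$ with $g_4(0,0,0,s) = c s^2$ and $g_6(0,0,0,s) = 0$. This gives a point $(0\!:\!0\!:\!0\!:\!1\!:\!\tau\!:\!0)$ with $\tau^2 + c\tau = 0$; for instance $\msp_s$ with $\tau = 0$. At $\msp_s$ the gradient of $\msF_1$ involves only the $\partial/\partial z$, $\partial/\partial w$, $\partial/\partial x$, $\partial/\partial y$ components coming from $tz$, $w f_1$, $s f_2$, $f_4$. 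The $\partial/\partial z$ component is $t = 0$, $w f_1$ contributes $0$, and $s f_2$ contributes $\partial f_2$, which is linear in $x, y$ and so vanishes. Hence $\nabla \msF_1 = 0$ at $\msp_s$, and quasismoothness fails there. This contradiction shows $s^3 \in g_6$.

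The main obstacle is Step 3: eliminating the $t^3$ and $t^2 z$ terms without destroying the normalized form of $\msF_1$.
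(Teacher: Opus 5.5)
Your route is the paper's: quasismoothness at $\msp_t$ gives $tz\in\msF_1$ and $t^2s\in\msF_2$ after a linear change of $z,s$. Quasismoothness also gives $w^2\in\msF_2$. The remaining unwanted terms are then absorbed by substitutions in $z,t,s,w$. Your Step~4 is correct and is exactly the paper's final sentence: if $\theta=0$ and $s^3\notin g_6$, then $\msp_s\in X$ and $\nabla\msF_1(\msp_s)=0$.

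The gap is Step~3, which you leave open as ``the main obstacle''. The mechanism you propose for $t^3$ does not work. Since $t^2\notin\msF_1$, the polynomial $t\msF_1$ contains no $t^3$, so subtracting multiples of it cannot cancel $t^3$. The substitution $s\mapsto s+ct$ would put $ts$ back into $\msF_1$, and also $t^2$ if $\theta\neq 0$. The missing observation is that there is nothing to remove. Every substitution you make fixes $\msp_t$, and $\msp_t\in X$, so $\msF_2(\msp_t)=0$ always. Hence $t^3\notin\msF_2$ throughout, for the same reason you used to get $t^2\notin\msF_1$.

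Likewise, $t^2z$ disappears already in Step~1. Choose $z,s$ so that the $2\times 2$ matrix of coefficients of $tz,ts$ in $\msF_1$ and of $t^2z,t^2s$ in $\msF_2$ becomes the identity; it is invertible by quasismoothness at $\msp_t$. Alternatively, replacing $\msF_2$ by $\msF_2-c\,t\msF_1$ removes $t^2z$, and this is harmless precisely because $t^2\notin\msF_1$.

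The other issue is the order of the substitutions, which your ``repeat until it terminates'' glosses over. Completing the square $w\mapsto w-\frac12c_3$ adds $-\frac14c_3^2$ to $\msF_2$. Since $c_3$ may contain $t$ times a linear form in $x,y$, this recreates a term $t^2q(x,y)$ with $q$ quadratic. It also adds $-\frac12c_3f_1$ to $\msF_1$.

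So do the $w$-substitution first, which is also the paper's order.
\begin{itemize}
\item After Step~1, complete the square in $w$.
\item Then normalize $\msF_1$ as in your Step~2, adding a quadratic form in $x,y$ to $z$ and a degree-$2$ polynomial in $x,y,z,s$ to $t$. This creates no $w$-terms in $\msF_2$. It also leaves the $t^2$-coefficient $s+q(x,y)$ unchanged, because $\msF_2$ contains neither $t^2z$ nor $t^3$.
\item Finally, absorb $q(x,y)$ into $s$. This keeps $\msF_1$ in the form $tz+wf_1+\theta s^2+sf_2+f_4$, since the only $s$-terms left in $\msF_1$ lie in $\theta s^2+sf_2$.
\end{itemize}
With this order one pass suffices, and no termination argument is needed.
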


\begin{proof}
By the quasismoothness of $X$ at $\msp =\msp_t$, we may assume that $t z \in \msF_1$, $t^2 s \in \msF_2$.
Moreover, we have $w^2 \in \msF_2$ by the quasismoothness of $X$.
The defining polynomials of $X$ can be written as
\[
\begin{split}
\msF_1 &= t z + w f_1 (x, y) + \theta s^2 + s f_2 + f_4, \\
\msF_2 &= t^2 s + t (w g_1 + g_4) + w^2 + w g_3 + g_6,
\end{split}
\]
where $\theta \in \mbC$, $f_i \in \mbC [x, y, z]$ and $g_i \in \mbC [x, y, z, s]$.
Replacing $w \mapsto w - (t g_1 + g_3)/2$ and then replacing $z$, we may assume that $g_1 = g_3 = 0$.
Filtering off terms divisible by $z$ in $\msF_1$ and replacing $t$, we may assume that $f_i \in \mbC [x, y]$.
By the quasismoothness of $X$, either $s^2 \in \msF_1$ or $s^3 \in \msF_2$, and the proof is complete.
\end{proof}

\subsubsection{Case: $\theta \ne 0$}

We may assume that $\theta = 1$ by rescaling coordinates.
Let $S$ and $H$ be general members of $|A|$.
Note that $S$ is a normal surface by Lemma~\ref{lem:normqhyp} and we set $C \coloneq S \cap H$.

\begin{Lem}
\label{lem:No8delta2C2C}
The curve $C$ is irreducible and reduced and it is quasismooth at $\msp$.
\end{Lem}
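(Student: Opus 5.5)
Since $S$ and $H$ are general members of $|A|$, the curve $C = S \cap H$ is cut out on $X$ by $\{\lambda_1 x + \lambda_2 y = \mu_1 x + \mu_2 y = 0\}$ for general coefficients. For $\lambda_1\mu_2 - \lambda_2\mu_1 \ne 0$ this is exactly the scheme $(x = y = 0)_X$. I will therefore study $C = (x = y = 0)_X$ using the normal form of Lemma~\ref{lem:No8del2eq} with $\theta = 1$.

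First I substitute $x = y = 0$. The terms $w f_1$, $s f_2$, $f_4$ all lie in $(x,y)$ and vanish. The remaining equations are
\[
t z + s^2 = 0, \qquad t^2 s + t \bar g_4 + w^2 + \bar g_6 = 0
\]
in $\mbP(2_z, 2_s, 2_t, 3_w)$, where $\bar g_i = g_i(0,0,z,s)$. This is the situation of Lemma~\ref{lem:No8Lxy}: after the reparametrization used there, $C$ is isomorphic to the plane conic $(tz + s^2 = 0) \subset \mbP^2_{z,s,t}$. That conic has rank $3$ because $\theta \ne 0$, so it is smooth and irreducible. Concretely, $C$ is the image of the conic in $\mbP(1,1,1,3)$ under the graph of $w^2 = -(t^2 s + t\bar g_4 + \bar g_6)$ divided by the $\bmu_2$ action. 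One has to check that $w^2 = c(z,s,t)$ does not split over the conic. I will parametrize the conic by $(z,s,t) = (u^2, -uv, v^2)$, so that $c$ becomes a sextic $c(u,v)$ on $\mbP^1$. In $\mbP(1,1,1,3)$, reducibility would mean $c(u,v)$ is a square. After quotienting by the diagonal $\bmu_2$ (weights $2,2,2,3$), $C$ is the double-cover curve modulo an involution. Rather than handle this by hand, I plan to invoke Lemma~\ref{lem:No8Lxy} directly: it already gives that $(x=y=0)_X$ is an irreducible smooth curve when $q$ has rank $3$. Hence $C$ is irreducible and reduced.

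It remains to prove quasismoothness at $\msp = \msp_t$. I compute the Jacobian of the cone of $C$, with defining polynomials $x$, $y$, $\msF_1$, $\msF_2$, at $(0,0,0,0,1,0)$. There $\prt \msF_1/\prt z = t = 1$ and $\prt \msF_2/\prt s = t^2 = 1$. Together with the rows for $x$ and $y$, this gives rank $4 = 5 - 1$, so $C$ is quasismooth at $\msp$.

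The main obstacle is pinning down irreducibility without over-relying on Lemma~\ref{lem:No8Lxy}, since "smooth" in that lemma is really quasismooth. If that lemma's argument is insufficient, I would fall back on the sextic argument above. The key point is that $t^2 s$ restricted to the conic gives $-u v^5$, a term with odd exponents that prevents $c(u,v)$ from being a perfect square; I would have to check that it cannot be cancelled by $\bar g_4$ and $\bar g_6$, which involve only $z,s$.
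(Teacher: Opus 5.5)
Your proof is correct and follows the paper's route. You reduce to $C=(x=y=0)_X\cong(tz+s^2=t^2s+t\bar g_4+w^2+\bar g_6=0)\subset\mbP(2,2,2,3)$, identify this via the isomorphism $\mbP(2,2,2,3)\cong\mbP(1,1,1,3)$ of Lemma~\ref{lem:No8Lxy} with the rank-$3$ conic $tz+s^2=0$, and read off quasismoothness at $\msp$ from $\prt\msF_1/\prt z=\prt\msF_2/\prt s=1$ there; the paper simply calls these steps straightforward.

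Your fallback sextic argument is unnecessary. The sign change $w\mapsto -w$ is already the action of $-1\in\mbC^*$ with weights $(2,2,2,3)$, so eliminating $\tilde w=w^2$ identifies $C$ with the conic regardless of whether $c(u,v)$ is a square.
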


\begin{proof}
We see that $C = (x = y = 0)_X$ is isomorphic to the complete intersection
\[
(t z + s^2 = t^2 s + t \bar{g}_4 + w^2 + \bar{g}_6 = 0) \subset \mbP (2, 2, 2, 3),
\]
where $\bar{g}_i = g_i (0, 0, z, s)$.
It is straightforward to see that $C$ is irreducible and reduced and quasismooth at $\msp$.
\end{proof}

By Lemma~\ref{lem:No8delta2C2C}, the flag $\msp \in C \coloneq S \cap H \subset S \subset X$ is of type $\mathrm{I}$ and, by Proposition~\ref{prop:typeI}, we obtain
\[
\delta_{\msp} (X) \ge \min \left\{ 4, \ 4, \ \frac{4}{2 \cdot 1 \cdot 1 \cdot 1} \right\} = 2.
\]

\subsubsection{Case: $\theta = 0$}

We assume $\theta = 0$.
By Lemma~\ref{lem:No8del2eq}, we may assume $\coeff_{g_6} (s^3) = -1$ by rescaling $s$.
Let $S, H \in |A|$ be general members.
Then, $H|_S = \Gamma + \Delta$, where
\[
\begin{split}
\Gamma &\coloneq (x = y = z = t^2 s + \alpha t s^2 + w^2 - s^3 = 0), \\
\Delta &\coloneq (x = y = t = w^2 + \bar{g}_6 = 0),
\end{split}
\]
where $\alpha \coloneq \coeff_{g_4} (s^2)$ and $\bar{g}_i \coloneq g_i (0, 0, z, s)$ for $i = 4, 6$.
Note that $\msp \in \Gamma$ and $\msp \notin \Delta$.

\begin{Lem}
\label{lem:No8delta2C1S}
The following hold.
\begin{enumerate}
\item The surface $S$ is quasismooth.
\item The curves $\Gamma$ and $\Delta$ are both irreducible smooth  rational curves.
\item We have
\[
(\Gamma^2) = (\Delta^2) = - \frac{1}{2} \quad \text{and} \quad (\Gamma \cdot \Delta) = 1.
\]
\end{enumerate}
\end{Lem}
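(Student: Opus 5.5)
The plan is to read everything off the explicit equations of Lemma~\ref{lem:No8del2eq} with $\theta = 0$ and $\coeff_{g_6}(s^3) = -1$. Take $S = (x - \lambda y = 0)_X$ and $H = (x - \mu y = 0)_X$ for general $\lambda, \mu$. Then the base locus of $|A|$ is $C_0 \coloneq (x = y = 0)_X = \Gamma \cup \Delta$.

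\textbf{Step (2): the curves $\Gamma$ and $\Delta$.}
\begin{itemize}
\item For $\Gamma$, setting $x = y = z = 0$ kills $\msF_1$ identically, because $f_1, f_2, f_4 \in \mbC[x, y]$ and $\theta = 0$. So $\Gamma$ is the curve
\[
w^2 = s^3 - t^2 s - \alpha t s^2
\]
in $\mbP(2_s, 2_t, 3_w)$.
\item For $\Delta$, setting $x = y = t = 0$ also kills $\msF_1$. So $\Delta$ is the curve $w^2 = -\bar{g}_6(z, s)$ in $\mbP(2_z, 2_s, 3_w)$.
\item In both cases the curve has the form $w^2 = c(u, v)$ with $c$ a cubic form. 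Under $\mbP(2, 2, 3) \cong \mbP(1, 1, 3)$, with new coordinate $w' = w^2$, the curve becomes the graph $w' = c(u, v)$. Hence it is $\cong \mbP^1$, irreducible and smooth (smooth as a curve, although it passes through orbifold points).
\end{itemize}

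\textbf{Step (3): intersection numbers.}
\begin{itemize}
\item Degrees: $(A \cdot \Gamma) = (A \cdot \Delta) = \frac{2 \cdot 6}{2 \cdot 2 \cdot 2 \cdot 3} = \frac{1}{2}$. This is consistent with $(A^3) = 1$ and $H|_S = \Gamma + \Delta$.
\item Since $K_S = (K_X + S)|_S = 0$, the orbifold adjunction formula on $S$ gives
\[
(\Gamma^2) = -2 + \sum_{\msq} \left(1 - \tfrac{1}{r_{\msq}}\right),
\]
summed over the singular points of $S$ on $\Gamma$.
\item These are the points of $\Gamma$ with $w = 0$, i.e.\ the zeros of $s(s^2 - \alpha t s - t^2)$. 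There are three distinct such points: the discriminant of the quadratic factor is $\alpha^2 + 4 \ne 0$ up to the sign convention, and distinctness is in any case forced by the smoothness of $\Gamma$ from Step (2). Each is a $\frac{1}{2}(1,1,1)$ point of $X$, and $\msp = \msp_t$ is one of them. Given (1), $S$ has an $A_1$ point at each.
\item Hence $(\Gamma^2) = -2 + \frac{3}{2} = -\frac{1}{2}$.
\item Next, $(\Gamma \cdot \Delta) = (H|_S \cdot \Gamma) - (\Gamma^2) = \frac{1}{2} + \frac{1}{2} = 1$.
\item Finally, $(\Delta^2) = (H|_S \cdot \Delta) - (\Gamma \cdot \Delta) = \frac{1}{2} - 1 = -\frac{1}{2}$.
\item As a consistency check, $\Delta$ also passes through three $\frac{1}{2}$ points, the zeros of $\bar{g}_6$.
\end{itemize}

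\textbf{Step (1): quasismoothness of $S$, the main obstacle.}
\begin{itemize}
\item Away from $\Gamma \cup \Delta$, $S$ is quasismooth by Bertini applied to the free part of the pencil.
\item Along $\Gamma \cup \Delta$, quasismoothness of $S_\lambda$ at a point $\msq$ requires rank $3$ of the Jacobian $J_X$ augmented by the row $(1, -\lambda, 0, 0, 0, 0)$.
\item If the $2 \times 4$ block of $J_X$ in the columns $z, s, t, w$ has rank $2$ at $\msq$, then this holds for every $\lambda$. That block having rank $2$ is precisely quasismoothness of the scheme $C_0$ at $\msq$.
\item By Step (2), $C_0$ is reduced and is quasismooth away from the finitely many points of $\Gamma \cap \Delta$, namely $z = t = 0$, $w^2 = s^3$.
\item At each of these finitely many points, quasismoothness of $X$ shows that the set of bad $\lambda$ is a proper closed subset. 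So a general $\lambda$ works, and $S$ is quasismooth.
\item One must be careful that this finiteness really holds. That is the point I expect to need an explicit Jacobian check at $\Gamma \cap \Delta$.
\end{itemize}
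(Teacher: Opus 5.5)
\textbf{Where the argument breaks.} Your outline matches the paper's, and all your numbers are right:
\begin{itemize}
\item the Jacobian of $S=(x-\lambda y=0)_X$ along $\Bs|A|=(x=y=0)_X$ for (1);
\item the isomorphism $\mbP(2,2,3)\cong\mbP(1,1,3)$ for (2);
\item adjunction on $S$ (with $K_S\sim 0$) together with $H|_S=\Gamma+\Delta$ for (3). Getting $(\Delta^2)$ from $(H|_S\cdot\Delta)$ instead of a second adjunction is a harmless variation.
\end{itemize}
However, one inference is invalid, and you use it twice. In Step (3) you say the three roots of $s(s^2-\alpha ts-t^2)$ are distinct because $\Gamma$ is smooth by Step (2). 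In Step (1) you say that, by Step (2), $C_0=(x=y=0)_X$ is quasismooth away from $\Gamma\cap\Delta$. Neither follows. Via $\mbP(2,2,3)\cong\mbP(1,1,3)$, the curve $(w^2=c(s,t))\subset\mbP(2,2,3)$ is isomorphic to $\mbP^1$ for \emph{every} nonzero cubic form $c$. This includes $c=s^2t$, whose curve is not quasismooth at $(0\!:\!1\!:\!0)$. Smoothness of the quotient curve says nothing about its affine cone.

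You genuinely need this fact. Without quasismoothness of $\Gamma$ at its points with $w=0$, the count $3\times\frac12(1,1)$ can fail. The formula $(\Gamma^2)=-2+\sum(1-\frac1r)$ also stops applying, since it presupposes that $\Gamma$ lifts to a smooth branch in each orbifold chart, i.e.\ that $(S,\Gamma)$ is plt there. Your finiteness claim in Step (1) needs the same statement for $\Delta$ too. The spot you flag for an explicit check, $\Gamma\cap\Delta$, is actually the easy part.

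\textbf{The fix.} The missing input is the quasismoothness of $X$, read off from $J_X$ on $(x=y=0)_X$.
\begin{itemize}
\item \emph{The $x$- and $y$-columns.} In the $\msF_1$-row these entries are $w\,\partial f_1/\partial x$ and $w\,\partial f_1/\partial y$, because $f_2$ and $f_4$ have degree at least $2$ in $x,y$. In the $\msF_2$-row they vanish: for degree reasons, $g_4$ and $g_6$ contain no monomial of the form $x\cdot m$ or $y\cdot m$ with $m$ a monomial in $z,s$.
\item \emph{Points with $w=0$.} Here the $x,y$-columns vanish, so $\rank J_X=2$ forces the $(z,s,t,w)$-block to have rank $2$. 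This is exactly quasismoothness of $C_0$ at such points. On $\Gamma$ it amounts to $\alpha^2+4\neq0$: if $\alpha^2=-4$, then at $(0\!:\!0\!:\!0\!:\!\frac{\alpha}{2}\!:\!1\!:\!0)\in X$ both rows of $J_X$ are multiples of the $z$-coordinate vector. Similarly, $\bar g_6$ has no repeated factor.
\item \emph{Points with $w\neq0$.} The $\msF_2$-row has entry $2w\neq0$ in the $w$-column, while the $\msF_1$-row has $0$ there. So the block drops rank only where $t=z=0$, i.e.\ at the single point $(0\!:\!0\!:\!0\!:\!1\!:\!0\!:\!1)=\Gamma\cap\Delta$. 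There $\rank J_X=2$ gives $f_1\neq0$. Hence $(1,-\lambda,0,0,0,0)$ lies outside the row space of $J_X$ for all but at most one $\lambda$.
\end{itemize}
With this check replacing the appeal to Step (2), your proof is complete. It also makes explicit where the generality of $\lambda$ is used, which the paper's one-line conclusion that $J_S$ has rank $3$ leaves implicit.
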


\begin{proof}
Let $x - \lambda y = 0$ be the equation that defines $S$ in $X$, where $\lambda \in \mbC$ is general.
For the assertion (1), it is enough to show that $S$ is quasismooth along the base locus of $|A|$, where we have $\Bs |A| = (x = y = 0)_X$.
On the subset, we have
\[
J_S|_{(x = y = 0)} =
\begin{pmatrix}
\alpha w & \beta w & t & 0 & z & 0 \\
0 & 0 & \prt \msF_2/\prt z & \prt \msF_2/\prt s & \prt \msF_2/\prt t & 2 w \\
1 & \lambda & 0 & 0 & 0 & 0
\end{pmatrix}
\]
The matrix consisting of the first two rows is $J_X$ and it is of rank $2$ at any point of $X$.
This shows that $J_S$ is of rank $3$ at any point of $S$, and (1) is proved.

Eliminating $x, y, z$, the curve $\Gamma$ is isomorphic to the hypersurface in $\mbP (2, 2, 2, 3)$ defined by $t^2 s + \alpha t s^2 + w^2 - s^3 = 0$.
We have an isomorphism $\mbP (2, 2, 3) \cong \mbP (1, 1, 3)$ by the correspondence $\bar{s} = s$, $\bar{t} = t$ and $\bar{w} = w^2$, where we think of $\bar{s}, \bar{t}, \bar{w}$ as homogeneous coordinates of $\mbP (1, 1, 3)$ of weights $1, 1, 3$, respectively.
By this isomorphism, $\Gamma$ corresponds to the hypersurface in $\mbP (1, 1, 3)$ defined by $\bar{t}^2 \bar{s} + \alpha \bar{t} \bar{s}^2 + \bar{w} - \bar{s}^3 = 0$, which shows $\Gamma \cong \mbP^1$.
Similarly, we have $\Delta \cong \mbP^1$ and (2) is proved.

We see that $\Sing_{\Gamma} (S) = \{3 \times \frac{1}{2} (1, 1)\}$.
Hence we have
\[
(\Gamma^2) = -2 + \frac{1}{2} + \frac{1}{2} + \frac{1}{2} = - \frac{1}{2}.
\]
Similarly we have $(\Delta^2) = -1/2$ since $\Sing_{\Delta} (S) = \{3 \times \frac{1}{2} (1, 1)\}$. 
By taking intersection numbers of $H|_S = \Gamma + \Delta$ and $\Gamma$, we have $(\Gamma \cdot \Delta) = 1$ since $(H|_S \cdot \Gamma) = (A \cdot \Gamma) = 1/2$.
\end{proof}

It follows that $\msp \in \Gamma \subset S \subset X$ is a flag of type $\mathrm{IIa}$ and, by Proposition~\ref{prop:typeII}, we have
\[
\delta_{\msp} (X) \ge \min \left\{ 4, \ \frac{1}{S (V_{\bullet, \bullet}^S; \Gamma)}, \ \frac{1}{2 S (W_{\bullet, \bullet, \bullet}^{S, \Gamma}; \msp)} \right\},
\]
where
\[
\begin{split}
S (V_{\bullet, \bullet}^S; \Gamma) &= 3 \int_0^1 \left(\int_0^{\frac{1-u}{2}} \left((1-u)^2 - (1-u)v - \frac{1}{2} v^2 \right) d v \right. \\
&\hspace{3.5cm} \left. + \int_{(1-u)/2}^{1-u} \frac{3}{2} (1-u-v)^2 d v \right) d u \\
&= \frac{5}{16},
\end{split}
\]
and
\[
\begin{split}
S (W_{\bullet, \bullet, \bullet}^{S, \Gamma}; \msp) &= 3 \int_0^1 \left( \int_0^{\frac{1-u}{2}} \left(\frac{1}{2} (1-u) + \frac{1}{2} v \right)^2 d v  \right. \\
& \hspace{2cm} \left. + \int_{\frac{1-u}{2}}^{1-u} \frac{9}{4} (1-u-v)^2 d v \right) d u + F_{\msp} (W_{\bullet, \bullet, \bullet}^{S, \Gamma}) \\
&= \frac{7}{32}.
\end{split}
\]
Note that $F_{\msp} (W_{\bullet, \bullet, \bullet}^{S, \Gamma}) = 0$ since $\msp \notin \Gamma \cap \Delta$.
Thus, we obtain
\[
\delta_{\msp} (X) \ge \min \left\{ 4, \ \frac{16}{5}, \ \frac{16}{7} \right\} = \frac{16}{7}.
\]

\subsection{Family \textnumero~20 and $\frac{1}{3} (1, 1, 2)$ points}
\label{sec:No20delta3}

Let $X = X_{6, 8} \subset \mbP (1, 2, 2, 3, 3, 4)$ be a member of family \textnumero~20 and let $\msp$ be a $\frac{1}{3} (1, 1, 2)$ point of $X$.
We may assume that $\msp = \msp_t$.

\begin{Lem}
The defining polynomials of $X$ can be written as
\[
\begin{split}
\msF_1 &= t s + w f_2 + f_6, \\
\msF_2 &= t^2 z + t g_5 + w^2 + g_8,
\end{split}
\]
where $f_i \in \mbC [x, y, z]$ and $g_i \in \mbC [x, y, z, s]$.
Moreover, we have $f_6 (0, y, z) \ne 0$ and either $s^2 y \in g_8$ or $s^2 z \in g_8$.
\end{Lem}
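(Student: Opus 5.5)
The approach mirrors the normal-form lemmas proved earlier (Lemma~\ref{lem:No8del2eq}, Lemma~\ref{lem:Manyfamdefeq}): use quasismoothness at the relevant points to force certain monomials, then make coordinate changes to kill unwanted terms.

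\textbf{Forcing the leading monomials.} We work in $\mbP(1_x,2_y,2_z,3_s,3_t,4_w)$ with $\msp=\msp_t$ a $\frac{1}{3}(1,1,2)$ point. Quasismoothness at $\msp_t$ requires two monomials of the form $t^k v$, one in each equation, with distinct variables $v$ of weights $\equiv d_i \pmod 3$. For $d_1=6$ the candidates are $t^2$ and $ts$. But $t^2\in\msF_1$ would mean $\msp_t\notin X$, so $ts\in\msF_1$. For $d_2=8$ the candidates are $t^2y$ and $t^2z$; after a linear change in $y,z$ we may assume $t^2z\in\msF_2$. Finally, $w^2\in\msF_2$ because otherwise $\msp_w\in X$ would be a point of index $4$, which does not occur on this family (or $X$ would fail to be quasismooth there).

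\textbf{Normalising the coordinates.} Write
\[
\msF_1=ts+wf_2+(\text{rest}),\qquad \msF_2=t^2z+t(wg_1+g_5)+w^2+wg_4+g_8 .
\]
First complete the square in $w$ by $w\mapsto w-(tg_1+g_4)/2$. This produces a $t^2g_1^2$ term, which we absorb by replacing $z$. Next, filter off the terms of $\msF_1$ divisible by $t$: they have the form $t(s+\phi_3)$, and the replacement $s\mapsto s-\phi_3$ leaves $\msF_1=ts+wf_2+f_6$ with $f_6$ free of $t$. Filtering off $s$-divisible terms in the same way, $s\cdot(\text{cubic}) = s(t+\psi_3)$, so replacing $t$ removes them. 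This gives $f_6\in\mbC[x,y,z]$, and $f_2\in\mbC[x,y,z]$ automatically for degree reasons. The remainders $g_5,g_8$ lie in $\mbC[x,y,z,s]$, again by degree and because the $t$- and $w$-terms have already been separated. The replacement of $t$ changes $t^2z$ by terms that get reabsorbed into $tg_5+g_8$.

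\textbf{The two extra conditions.} These come from quasismoothness at specific points, and this is the main obstacle. For $f_6(0,y,z)\ne0$: if it vanished, then along $(x=s=t=w=0)$ we would have $\msF_1\in(x,s,t,w)^2$ restricted appropriately, so the nonempty locus $(x=s=t=w=0)\cap(\msF_2=0)$ would consist of non-quasismooth points. Concretely, $\msF_2$ restricts to $g_8(0,y,z,0)$, a nonzero-degree quartic in $y,z$ that has zeros in $\mbP(2,2)$, and the Jacobian drops rank there. For the second condition, consider $\msp_s$ (or the locus $x=y=z=w=0$, where $\msF_1=ts$). Quasismoothness along $(x=y=z=t=w=0)=\{\msp_s\}$ needs a monomial $s^kv$ in $\msF_2$ with $v\in\{y,z\}$; in degree $8$ this must be $s^2y$ or $s^2z$, i.e.\ one of them lies in $g_8$. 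The delicate point is making sure the earlier coordinate changes do not destroy these monomials. I would check this directly from the final form, since the argument only uses the Jacobian at $\msp_s$ in that form.
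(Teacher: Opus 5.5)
Your normalisation of the two equations is fine, and so is your proof that $s^2y\in g_8$ or $s^2z\in g_8$. At $\msp_s$ the $\msF_1$-row of the Jacobian is the unit vector in the $t$-direction, and the only degree-$8$ monomials $s^kv$ are $s^2y$ and $s^2z$. Both parts agree with the paper.

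\textbf{The gap is the argument for $f_6(0,y,z)\neq 0$.} The assertion $\msF_1\in(x,s,t,w)^2$ overlooks the term $wf_2$: $f_2$ has degree $2$ and may contain a nonzero linear form $\ell\coloneqq f_2(0,y,z)$. Suppose $f_6(0,y,z)=0$, so $f_6\in(x^2)$ by degree. At a point $(0\!:\!y_0\!:\!z_0\!:\!0\!:\!0\!:\!0)\in X$ the two Jacobian rows are $(0,0,0,0,0,\ell(y_0,z_0))$ and $(0,\prt_y q,\prt_z q,0,0,0)$, where $q\coloneqq g_8(0,y,z,0)$. These have rank $2$ at every simple root of $q$ off $(\ell=0)$.

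No sharper Jacobian computation can fix this. Take $\msF_1=ts+wy+x^6$ and $\msF_2=t^2z+tg_5+w^2+g_8$ with general $g_5,g_8\in\mbC[x,y,z,s]$. This is a quasismooth member in exactly the stated normal form, yet $f_6(0,y,z)=0$. To see quasismoothness:
\begin{itemize}
\item The cone over $(\msF_1=0)$ is singular only along the $z$-axis, which $X$ avoids because $z^4\in g_8$.
\item The degree-$8$ system has base locus $(x=y=z=w=0)$, which meets $(\msF_1=0)$ only at $\msp_s$ and $\msp_t$; quasismoothness there is checked by hand.
\item Bertini handles everything else.
\end{itemize}
The paper's one-line justification, non-quasismoothness at the $\frac12(1,1,1)$ points, has the same blind spot: in this example all six of those points are quasismooth.

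\textbf{What is missing is a case split and one more normalisation.}
\begin{itemize}
\item If $\ell=0$, then $f_6(0,y,z)=0$ gives $\msF_1\in(x,s,t)^2$. So $X$ is not quasismooth along the nonempty set $X\cap(x=s=t=0)$, or along your smaller locus with $w=0$. In this case your argument is correct.
\item If $\ell\neq0$ and $f_6(0,y,z)=0$, use the freedom you did not exploit. Replace $\msF_2$ by $\msF_2+y\msF_1$, then substitute $w\mapsto w-\frac12 yf_2$ to re-complete the square. Both equations keep their shape: $g_5$ becomes $g_5+ys$, and $g_8$ changes by an element of $\mbC[x,y,z]$. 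Meanwhile $f_6$ becomes $f_6-\frac12 yf_2^2$, whose restriction to $x=0$ is $-\frac12 y\ell^2\neq0$.
\end{itemize}
So the ``moreover'' on $f_6$ holds only in the sense that the normal form can be chosen to satisfy it, which is what its later use requires. Your proof needs this extra step.
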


\begin{proof}
The proof of the first part is similar to that of Lemma~\ref{lem:No8del2eq} and we omit it.
Suppose that $f_6 (0, y, z) = 0$.
Then it is straightforward to see that $X$ cannot be quasismooth at $\frac{1}{2} (1, 1, 1)$ points.
We also see that either $s^2 y \in g_8$ or $s^2 z \in g_8$ by the quasismoothness of $X$ at $\msp_s \in X$.
\end{proof}

Let $S \in |2A|$ be a general member, $H \coloneq (x = 0)_X$ and set $C \coloneq S \cap H$.
Note that $S$ is a normal surface by Lemma~\ref{lem:normqhyp}.

\begin{Lem}
\label{lem:No20delta3C}
The scheme $C$ is irreducible and reduced.
Moreover, $S$ and $C$ are both quasismooth at $\msp$.
\end{Lem}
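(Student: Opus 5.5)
We mirror the argument of Lemma~\ref{lem:No8delta2C2C}: work in the explicit coordinates of the preceding lemma and reduce everything to a computation in a smaller weighted projective space. Since $S \in |2A|$ is general, it is cut out by $\lambda y + \mu z + \nu x^2 = 0$ with general $(\lambda, \mu, \nu)$. Intersecting with $H = (x = 0)_X$, the scheme $C$ is defined by $x = 0$ and $z = c y$ for a general constant $c$. Eliminating $x$ and $z$, $C$ becomes the complete intersection in $\mbP (2_y, 3_s, 3_t, 4_w)$ given by
\[
t s + w \bar f_2 + \bar f_6 = 0, \qquad t^2 c y + t \bar g_5 + w^2 + \bar g_8 = 0,
\]
where $\bar f_i (y) = f_i (0, y, c y)$ and $\bar g_i (y, s) = g_i (0, y, c y, s)$. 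By weight reasons $\bar f_2 = \beta y$ and $\bar f_6 = \gamma y^3$, where $\gamma \ne 0$ for general $c$ because $f_6 (0, y, z) \ne 0$. Similarly $\bar g_5 = \epsilon y s$, and $\bar g_8 = \eta y s^2 + \kappa y^4$ with $\eta \ne 0$ for general $c$, because $s^2 y \in g_8$ or $s^2 z \in g_8$.

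\textbf{Quasismoothness at $\msp$.} At $\msp = \msp_t$ we compute directly. The Jacobian of $X$ there contains $\partial \msF_1/\partial s = t$ and $\partial \msF_2/\partial z = t^2$, so $x, y, w$ are local orbifold coordinates of $X$ at $\msp$, and $\msp$ is of type $\frac{1}{3}(1, 2, 1)$. The surface $S$ is cut out by $\lambda y + \mu z + \nu x^2$. Substituting $z$ (solved from $\msF_2 = 0$ as a series in $x, y, w$), its linear part in $y$ is nonzero for general coefficients, so $S$ is quasismooth at $\msp$ with orbifold coordinates $x, w$. Adding $x = 0$, the curve $C$ is quasismooth at $\msp$ with orbifold coordinate $w$. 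Equivalently, on the reduced equations the Jacobian rows contain $\partial/\partial s = t$ and $\partial/\partial y = c\,t^2$, which have rank $2$ at $\msp_t$.

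\textbf{Irreducibility and reducedness.} First, $C \cap (t = 0)$ is finite: setting $t = 0$ gives $w \beta y + \gamma y^3 = 0$ and $w^2 + \eta y s^2 + \kappa y^4 = 0$, and a short case check (using $\gamma, \eta \ne 0$) shows there are only finitely many solutions. So it suffices to treat the affine chart $t = 1$, which is a $\bmu_3$-quotient. There the first equation gives $s = -\beta w y - \gamma y^3$, and substituting into the second yields a single polynomial $\Phi (y, w)$ that is quadratic in $w$ with leading coefficient $1 + \eta \beta^2 y^3 + \cdots$. This plane curve is irreducible unless $\Phi$ factors. Following the method of Lemma~\ref{sec:MfamdeltaC1qsmC}, a factorization would produce a root $w = p/q$ in $\mbC(y)$. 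Comparing the top-degree terms in $y$ rules this out, using $\gamma \ne 0$, $\eta \ne 0$ and the genericity of $c$; for instance, the term $\eta \gamma^2 y^7$ has odd degree, which should obstruct $\Phi$ being a perfect square or splitting.

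\textbf{Main obstacle.} The hardest step is this factorization analysis. The leading coefficient of $\Phi$ in $w$ is not constant, so the discriminant argument needs care. We would handle this exactly as in the $a = 3, 5$ analysis of Lemma~\ref{sec:MfamdeltaC1qsmC}, using degree and parity bookkeeping in $y$ together with the genericity of $c$, and isolating the degenerate subcases $\beta = 0$ and $\epsilon = 0$ separately.
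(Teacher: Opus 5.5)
Your reduction matches the paper's up to the choice of chart: the paper eliminates $y$ and works on $(s\neq0)$, while you eliminate $z$ and work on $(t\neq0)$. Your finiteness check on $(t=0)$ and your quasismoothness argument at $\msp$ are fine.

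The gap is the irreducibility of $\Phi$. This is the real content of the lemma, and you support it only by a heuristic that does not work as stated. Explicitly,
\[
\Phi = (1+\eta\beta^2y^3)\,w^2+\beta y^2(2\eta\gamma y^3-\epsilon)\,w+\left(cy+(\kappa-\epsilon\gamma)y^4+\eta\gamma^2y^7\right)=:a\,w^2+b\,w+c'.
\]
The odd degree of $\eta\gamma^2y^7$ settles only the case $\beta=0$, where $\Phi=w^2+c'$. Now suppose $\beta\neq0$ and $w=p/q$ is a root. Then necessarily $\deg p=\deg q+2$. The three terms $ap^2$, $bpq$, $c'q^2$ then all have degree $2\deg q+7$, and their leading coefficients sum to $\eta(\beta p_0+\gamma q_0)^2$, which can vanish. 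Equivalently, the $y^{10}$-terms of $b^2-4ac'$ cancel, and its $y^7$-coefficient is $-4\eta(\gamma^2+\beta^2\kappa)$, which $\gamma,\eta\neq0$ do not force to be nonzero. So top-degree bookkeeping as in Lemma~\ref{sec:MfamdeltaC1qsmC} is inconclusive here. It would lead you into case distinctions that the hypotheses do not control.

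The missing idea is to look at the lowest order in $y$ instead. We have $a(0)=1$, $y\mid b$, and $c'=y\,(c+O(y^3))$ with $c\neq0$ for general $S$. So $\Phi$ is Eisenstein at the prime $y$ and has no root in $\mbC(y)$. Equivalently, $b^2-4ac'=-4cy+O(y^4)$ vanishes to order exactly one at $y=0$ and cannot be a square.

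This is precisely Lemma~\ref{lem:irredpoly1}. The paper applies it in the chart $s=1$ with the prime $z$, where the nonvanishing input is the coefficient of $s^2z$ in $\bar g_8$. In your chart the input is instead the coefficient $c$ of $t^2y$, which comes from $t^2z\in\msF_2$.

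In either version you also need $\gcd(a,b,c')=1$. This is a hypothesis of Lemma~\ref{lem:irredpoly1} too, and it is implicit in your ``root $w=p/q$'' step. It is automatic when $\beta=0$, since then $a=1$, but it must be checked when $\beta\neq0$.
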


\begin{proof}
It is straightforward to see that both $S$ and $S \cap H$ are quasismooth at $\msp$.

It remains to show that $S \cap H$ is an irreducible and reduced curve.
Let $y = \lambda x^2 + \mu z$ be the equation which defines $S$ in $X$, where $\lambda, \mu \in \mbC$ are general.
Eliminating $x$ and $y = \lambda x^2 + \mu z$, $S \cap H$ is isomorphic to the complete intersection in $\mbP (2_y, 3_s, 3_z, 4_w)$ defined by the equations
\[
t s + \alpha w z + \beta z^3 = t^2 z + t \bar{g}_5 + w^2 + \bar{g}_8 = 0,
\]
where $\alpha, \beta \in \mbC$ and $\bar{g}_i \coloneq g_i (0, \mu z, z, s)$.
Note that $\beta \ne 0$ since $g_6 (0, y, z) \ne 0$ and $\mu \in \mbC$ is general.
We see that $S \cap H \cap (s = 0)$ is a finite set of points since $\beta \ne 0$.
Hence, by setting $s = 1$, it is enough to show that the complete intersection in $\mbA^3_{z, t, w}$ defined by the equations
\[
t + \alpha w z + \beta z^3 = t^2 z + t \bar{g}_5 (z, 1) + w^2 + \bar{g}_8 (z, 1) = 0
\]
is irreducible and reduced.
Eliminating $t = - (\alpha w z + \beta z^3)$, it is then enough to show that the polynomial
\begin{equation}
\label{eq:No20del3-1}
\begin{split}
& (\alpha w z + \beta z^3)^2 z - (\alpha w z + \beta z^3) \bar{g}_5 (z, 1) + w^2 + \bar{g}_8 (z, 1) \\
&= (\alpha z^3 + 1) w^2 + (2 \alpha \beta z^5 + \alpha z \bar{g}_5 (z, 1)) w + \beta^2 z^7 + \beta z^3 \bar{g}_5 (z, 1) + \bar{g}_8 (z, 1)
\end{split}
\end{equation}
is irreducible.
We have $s^2 z \in \bar{g}_8 (z, s)$ since either $s^2 y \in g_8$ or $s^2 z \in g_8$.
Hence $z \in \bar{g}_8 (z, 1)$.
By Lemma~\ref{lem:irredpoly1} below, the polynomial in \eqref{eq:No20del3-1} is irreducible and the proof is complete.
\end{proof}

\begin{Lem}
\label{lem:irredpoly1}
Let
\[
f (x, y) =  a (x) y^2 + b (x) y + c (x),
\]
be a polynomial in variables $x, y$, where $a (x), b (x), c (x) \in \mbC [x]$.
Suppose that $x \nmid a (x)$, $x \mid b (x)$, $x \mid c (x)$, $x^2 \nmid c (x)$ and $a (x)$, $b (x)$, $c (x)$ do not share a common component.
Then $f (x, y)$ is irreducible.
\end{Lem}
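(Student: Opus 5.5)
We argue by contradiction, factoring $f$ in $\mbC[x][y]$ and then reducing modulo $x$ (an Eisenstein-type argument at the prime $x$). Since $f$ has degree $2$ in $y$, there are only two ways it can be reducible in $\mbC[x,y] = \mbC[x][y]$.

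\emph{Factor of $y$-degree $0$.} Here a nonconstant polynomial $d(x)$ divides every coefficient $a(x), b(x), c(x)$. Then any irreducible factor of $d(x)$ is a common component of $a, b, c$, which the hypothesis excludes.

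\emph{Two factors of $y$-degree $1$.} Write
\[
f = (p_1(x) y + q_1(x))(p_2(x) y + q_2(x)),
\]
so that $a = p_1 p_2$, $b = p_1 q_2 + p_2 q_1$ and $c = q_1 q_2$. Reduce modulo $x$, i.e.\ set $x = 0$.
\begin{enumerate}
\item Since $x \nmid a$, we get $p_1(0) p_2(0) \ne 0$.
\item Since $x \mid c$, we get $q_1(0) q_2(0) = 0$. Say $q_1(0) = 0$.
\item Then $x \mid b$ gives $0 = b(0) = p_1(0) q_2(0) + p_2(0) q_1(0) = p_1(0) q_2(0)$.
\item Because $p_1(0) \ne 0$, it follows that $q_2(0) = 0$.
\end{enumerate}
Thus $x \mid q_1$ and $x \mid q_2$, so $x^2 \mid q_1 q_2 = c$. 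This contradicts the hypothesis $x^2 \nmid c(x)$.

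Since neither case can occur, $f$ is irreducible. No step here is a real obstacle. The only point needing care is the first case: the no-common-component hypothesis is exactly what rules out a constant-in-$y$ factor, that is, non-primitivity of $f$ over $\mbC[x]$.
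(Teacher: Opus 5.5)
Your proof is correct and follows essentially the same route as the paper: the no-common-component hypothesis rules out a factor of $y$-degree $0$, and then $x \mid c$, $x \mid b$ and $x \nmid a$ force $x$ to divide both constant terms $q_1, q_2$, so $x^2 \mid c$. The only difference is presentational: you spell out the degree-$0$ case, which the paper folds into a single sentence, and you phrase the divisibility argument as evaluation at $x=0$.
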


\begin{proof}
Suppose that $f (x, y)$ is not irreducible.
Then, since $a (x)$, $b (x)$, $c (x)$ do not share a common component, we can decompose $f (x, y)$ as
\[
f (x, y) = (f (x) y + g (x))(f' (x) y + g' (x)),
\]
where $f (x), g (x), f' (x), g'(x) \in \mbC [x]$.
We see that $g (x) g' (x) = c (x)$ is divisible by $x$.
We may assume that $g (x)$ is divisible by $x$ possibly after interchanging $g$ and $g'$.
We can write $g (x) = x g_1 (x)$ for some $g_1 (x) \in \mbC [x]$.
Then, by comparing the coefficients of $y$, we have $f (x) g' (x) + x g_1 (x) f' (x) = b (x)$, which implies that $f (x) g' (x)$ is divisible by $x$.
But this is impossible since $x \nmid a (x)$ and $x^2 \nmid c (x)$.
Therefore $f (x, y)$ is irreducible.
\end{proof}

By Lemma~\ref{lem:No20delta3C}, $\msp \in C \subset S \subset X$ is a flag of type $\mathrm{I}$.
By Proposition~\ref{prop:typeI} we obtain
\[
\delta_{\msp} (X) \ge \min \left\{8, 4, \frac{4}{3 \cdot 2 \cdot 1 \cdot \frac{1}{3}} \right\} = 2.
\]

\subsection{Family \textnumero~31 and the $\frac{1}{3} (1, 1, 2)$ point}
\label{sec:No31delta3}

Let $X = X_{8, 10} \subset \mbP (1, 2, 3, 4, 4, 5)$ be a member of family \textnumero 31 and let $\msp$ be the $\frac{1}{3} (1, 1, 2)$ point.

\subsubsection{Case: $\msp$ is not a maximal center}

Suppose that $\msp$ is not a maximal center.
We set $S \coloneq (x = 0)_X$.

\begin{Lem}
\label{lem:No31delta3C1S}
We have $\lct_{\msp} (X; S) \ge 1/2$.
\end{Lem}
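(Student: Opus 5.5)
The plan is to bound $\omult_{\msp}(S)$ and conclude with Remark~\ref{rem:lctomult}: if $\omult_{\msp}(S) \le 2$, then $\lct_{\msp}(X;S) \ge 1/2$. So the whole proof reduces to computing the order at the origin of the local equation of $S$ on the orbifold chart.

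Here $X = X_{8,10} \subset \mbP(1,2,3,4,4,5)$ and $\msp = \msp_z$ is the $\frac{1}{3}(1,1,2)$ point. By quasismoothness at $\msp$ we have $z^2 y \in \msF_1$ or $zw \in \msF_1$, and $z^3 x \in \msF_2$ or $z^2 s \in \msF_2$ or $z^2 t \in \msF_2$. I will split into the same cases as \S\ref{sec:No31MC3}, using the normal forms of Lemmas~\ref{lem:No31excl3C1eq}, \ref{lem:No31excl3C2eq}, \ref{lem:No31excl3C3eq} and \ref{lem:No31excl3C4eq}. Those lemmas used the existence of a WCI curve of type $(1,2,4,5)$ only to pin down particular terms. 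Here $\msp$ is assumed not to be a maximal center, so I expect to redo the coordinate normalizations without that assumption. Only the local structure at $\msp$ matters for this statement, so this should be routine.

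\textbf{Easy case.} Suppose $z^3 x \notin \msF_2$, i.e.\ some $z^2 s$ or $z^2 t$ term appears. Then $d_2 = 10 \equiv 1 \pmod 3$ is absorbed by a variable other than $x$, and $d_1 = 8 \equiv 2$ is absorbed by $y$ or $w$. Hence $x$ can be chosen as one of the local orbifold coordinates at $\msp$, as in Lemma~\ref{lem:lctHxSingPt2}. Then $S$ is quasismooth at $\msp$, $\omult_{\msp}(S)=1$, and $\lct_{\msp}(X;S)=1$.

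\textbf{Main case.} Suppose $z^3 x \in \msF_2$ and $z^2 s, z^2 t \notin \msF_2$. Then $x$ is eliminated via $\msF_2$, and the local orbifold coordinates are $y,s,t$ (if $zw \in \msF_1$) or $s,t,w$ (if $z^2 y \in \msF_1$). The local equation of $S$ is obtained by solving $\msF_2 = 0$ for $x$, i.e.\ $x = -(z^2 g_4 + z g_7 + w^2 + g_{10})/z^3$ with $z=1$, and restricting to the chart. We then need a degree-$\le 2$ term in the remaining coordinates.
\begin{itemize}
\item If $zw \in \msF_1$, then $w$ is eliminated through $\msF_1$, giving $w = -st - \cdots$. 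The quadratic terms $s^2 y$, $t^2 y$ in $g_{10}$ (Lemma~\ref{lem:No31excl3C2eq}) give cubic terms, which is not enough. Instead I look for a term $y^2$ or $y s$, $y t$ in the local expansion. Monomials $z^2 y^2$ in $g_4$, or $z y^2 \cdot(\cdot)$ in $g_7$, would give order $2$.
\item If $z^2 y \in \msF_1$, then $y$ is eliminated, and the term $w^2$ in $\msF_2$ directly yields $w^2$ in the equation of $S$. Hence $\omult_{\msp}(S) \le 2$ in that subcase.
\end{itemize}

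\textbf{Main obstacle.} The difficulty is the subcase $zw \in \msF_1$, $z^3 x \in \msF_2$, where the weights force the quadratic part to be controlled by $g_4 \in \mbC[x,y]$, i.e.\ by the coefficient of $y^2$. If that coefficient vanishes, I would try to derive a contradiction with quasismoothness, or with the hypothesis that $\msp$ is not a maximal center. Failing that, I would use the fact that $\omult$ only needs bounding by the $\frac{1}{3}(2,1,1)$-weighted order rather than the ordinary multiplicity, and apply the weighted-blowup criterion (cf.\ \cite[Lemmas~3.4 and 3.25]{KOWalpha}) on the Kawamata blowup. In that blowup, $\tilde S \sim B - E$ by Proposition~\ref{prop:No31excl3C2}. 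This gives $\ord_E S = 4/3$ and $a_X(E)=1/3$, and the log canonical threshold along $E$ is $(1/3)/(4/3)\cdot$... which must be combined with the restriction $\tilde S|_E$ to get the bound $1/2$.
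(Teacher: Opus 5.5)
You are trying to prove $\omult_{\msp}(S)\le 2$, but that inequality fails in exactly the case you flag as the obstacle, and neither of your proposed contradictions is available there. Take $zw\in\msF_1$, $z^3x\in\msF_2$, $z^2s,z^2t\notin\msF_2$, and normalize $\msF_1=zw+st+a_8(x,y)$. A $z^2y$ term can be absorbed into $w$; the terms $zwc_2$ and $wg_5$ then have to stay in $\msF_2$.

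In the orbifold coordinates $y,s,t$, the only quadratic term of the local equation of $S$ is $\coeff_{\msF_2}(z^2y^2)\,y^2$. Degree reasons exclude $ys$ and $yt$, and your ``$zy^2\cdot(\cdot)$ in $g_7$'' does not exist in degree $7$. This coefficient can vanish on quasismooth members: its partial derivatives vanish along $y=0$, and a general member with this coefficient zero is quasismooth.

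The hypothesis that $\msp$ is not a maximal center cannot exclude this either. By Proposition~\ref{prop:No31excl3C2}, $\msp$ is never a maximal center when $zw\in\msF_1$ and $z^2s,z^2t\notin\msF_2$. So $\omult_{\msp}(S)=3$ genuinely occurs.

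Your subcase claim for $z^2y\in\msF_1$ is also too quick. Let $a,b$ be the coefficients of $z^2y,zw$ in $\msF_1$ and $\alpha,\beta$ those of $z^2y^2,zyw$ in $\msF_2$. If both $z^2y$ and $zw$ occur, eliminating $y=-(b/a)w+\cdots$ turns $\alpha y^2+\beta yw+w^2$ into $(1-\beta b/a+\alpha b^2/a^2)\,w^2$, which may vanish. You must first normalize so that $z^2y\notin\msF_1$ whenever $zw\in\msF_1$.

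The missing step is to handle multiplicity $3$ directly, as the paper does. The cubic part of the local equation is $y\,q(s,t)$, where $q$ is a quadratic form with nonzero $s^2$ and $t^2$ coefficients, since $s^2y,t^2y\in\msF_2$ by quasismoothness at $\msp_s,\msp_t$. Hence it is not the cube of a linear form. On the blowup of the origin of the $\bmu_3$-cover, the pair $(\mbP^2,\tfrac12(y\,q=0))$ is therefore lc, and \cite[Lemma~3.18]{KOWsuperrigid} (inversion of adjunction) gives $\lct_{\msp}(X;S)\ge 1/2$.

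Your Kawamata-blowup fallback could also be completed, but not as written. The relevant ratio is $A_X(E)/\ord_E(S)=(4/3)/(4/3)=1$, not $a_X(E)/\ord_E(S)$. You would then still need to show that $(E,\tfrac12\tilde S|_E)$ is lc, where $\tilde S|_E$ is the curve $y\,q(s,t)+s^2t^2=0$ in $E\cong\mbP(2,1,1)$. It is: the curve is singular only at the vertex, where it has a node or an $A_3$ point. As it stands, the proposal stops exactly at the hard case.
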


\begin{proof}
We may assume that $\omult_{\msp} (S) > 2$ because otherwise there is nothing to prove.
Then, $x$ must be a tangent coordinate and we can write $\msF_2 = z^3 x + z^2 g_4 + z g_7 + g_{10}$ for some $g_i \in \mbC [x, y, s, t, w]$ such that $s, t \notin g_4$.
We have $w^2 \in g_{10}$ by the quasismoothness of $X$ and we may assume that $y^2 \notin g_4$ by replacing $w \mapsto w - \theta z y$ for a suitable $\theta \in \mbC$.
If $z^2 y \in \msF_1$, then we can choose $s, t, w$ as orbifold coordinates of $X$ at $\msp$ and then we have $\omult_{\msp} (S) = 2$.
This is a contradiction and we have $z^2 y \notin \msF_1$.
By choosing coordinates suitably, we can write 
\[
\begin{split}
\msF_1 &= z w + s t + a_8, \\
\msF_2 &= z^3 x + z^2 b_4 + z (g_7 + w c_2) + w^2 + w g_5 + g_{10},
\end{split}
\]
where $a_8, b_4, c_2 \in \mbC [x, y]$ and $g_i \in \mbC [x, y, s, t]$.
Plugging $z = 1$ and eliminating $w = - (st+a_8)$, $\msp \in X$ is isomorphic to the $\bmu_2$-quotient of the hypersurface in $\mbA^4_{x, y, s, t}$ defined by the equation
\[
x + b_4 + g_7 - (st+a_8)c_2 + (st+a_8)^2 - (st + a_8)g_5 + g_{10} = 0.
\]
Filtering off terms divisible by $x$, w have
\begin{equation}
\label{eq:lem:No31delta3C1S1}
(-1 + \cdots) x = \bar{b}_4 + \bar{g}_7 - (st+\bar{a}_8)\bar{c}_2 + (st + \bar{a}_8)^2 - (st+\bar{a}_8)\bar{g}_5 + \bar{g}_{10},
\end{equation}
where $\bar{a}_2 = a_2 (0, y)$, $\bar{g}_i = b_i (0, y, s, t)$ and so on.
The least degree term in the right-hand side coincides with $\msF_2 (0, y, 0, s, t, 0)$ and it can be written as $y (\alpha s^2 + \beta s t + \gamma t^2)$.
Note that $\alpha \ne 0$ and $\beta \ne 0$ by the quasismoothness of $X$ at $\msp_s$ and $\msp_t$.
In particular, the least degree term of the right-hand side of \eqref{eq:lem:No31delta3C1S1} cannot be a cube of a linear form.
By (the proof of) \cite[Lemma~3.18]{KOWsuperrigid}, we have $\lct_{\msp} (X; S) \ge 1/2$ and the proof is complete.
\end{proof}

Let $D \in |A|_{\mbQ}$ be an irreducible $\mbQ$-divisor other than $S$.
It is easy to see that the set $\{x, y, s, t\}$ isolates $\msp$ and its maximum degree is $4$ since $w^2 \in \msF_2$.
By Lemma~\ref{lem:isolomult}, we have $\omult_{\msp} (D) \le 2$, and hence $\lct_{\msp} (X; D) \ge 1/2$.
Combining this with Lemma~\ref{lem:No31delta3C1S}, we obtain  $\alpha_{\msp} (X) \ge 1/2$.

\subsubsection{Case: $\msp$ is a maximal center}

Suppose that $\msp$ is a maximal center.
By the results of \S~\ref{sec:No31MC3}, we have $z w \in \msF_1$ and either $z^2 s \in \msF_2$ or $z^2 t \in \msF_2$.
Moreover, we may assume that the defining polynomials $\msF_1$ and $\msF_2$ of $X$ are as in Lemma~\ref{lem:No31excl3C1eq}.
We set $S \coloneq (x = 0)_X \sim A$ and let $H \in |2A|$ be a general member.
Note that $S$ is a normal surface by Lemma~\ref{lem:normqhyp} and it is quasismooth at $\msp$.

\begin{Lem}
We have $H|_S = \Gamma + \Delta$, where $\Gamma$ and $\Delta$ are irreducible and reduced curves satisfying the following properties.
\begin{enumerate}
\item The curve $\Gamma$ is quasismooth, $\msp \in \Gamma$ and $\msp \notin \Delta$.
\item $(\Gamma^2) = - 7/12$, $(\Delta^2) = -1/4$ and $(\Gamma \cdot \Delta) = 3/4$.
\end{enumerate}
\end{Lem}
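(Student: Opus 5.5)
The plan is to compute the restriction $H|_S$ explicitly using the normal form of Lemma~\ref{lem:No31excl3C1eq}, and then to get the intersection numbers from adjunction on the quasismooth curve $\Gamma$ together with the linear relation $H|_S = \Gamma + \Delta$. Since $|2A|$ is spanned by $y$ and $x^2$, a general $H$ is $(y - \lambda x^2 = 0)_X$. Hence $S \cap H = (x = y = 0)_X$ scheme-theoretically, so $H|_S$ is the $1$-cycle of this scheme.

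Setting $x = y = 0$ in the equations of Lemma~\ref{lem:No31excl3C1eq}, I will check by degree counting in the variables $s, w$ (weights $4, 5$) which terms survive:
\begin{itemize}
\item $f_4$, $f_8$, $g_3$, $g_6$ and $g_7$ all vanish;
\item $g_{10}(0,0,s,w) = c\, w^2$, and $c \ne 0$ by quasismoothness, so after rescaling $c = 1$.
\end{itemize}
Thus $S \cap H$ is the complete intersection $(zw + st = z^2 s + w^2 = 0) \subset \mbP(3_z, 4_s, 4_t, 5_w)$.

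The components come from a chart computation.
\begin{itemize}
\item On $U_z$, eliminating $w = -st$ gives $s(1 + st^2) = 0$.
\item The branch $s = 0$ is $\Gamma \coloneq (x = y = s = w = 0)$, which is reduced. It is the WCI curve of type $(1,2,4,5)$, it passes through $\msp$, it is quasismooth and it is isomorphic to $\mbP^1$.
\item The other branch $1 + st^2 = 0$ avoids the origin. Its closure is $\Delta$, which away from $s = 0$ is the graph of $t = -zw/s$ over the irreducible curve $(z^2 s + w^2 = 0) \subset \mbP(3,4,5)$. Hence $\Delta$ is irreducible and reduced and $\msp \notin \Delta$.
\item The locus $S \cap H \cap (z = 0)$ is finite, namely the point $\msp_t$ together with points where $s \ne 0$ that already lie on $\Delta$. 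So no further components appear, and $H|_S = \Gamma + \Delta$.
\end{itemize}
I expect this step, ruling out hidden components or multiplicities, to be the main point needing care.

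For the numerics, $(A^3) = 80/480 = 1/6$ and $(A \cdot \Gamma) = 1/12$. Therefore $(A \cdot \Delta) = 2/6 - 1/12 = 1/4$.

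Next, $K_S = (K_X + S)|_S \sim 0$. Since $\Gamma \cong \mbP^1$ is quasismooth, the adjunction formula on the quasismooth surface $S$ along $\Gamma$ gives
\[
(\Gamma^2) = -2 + \sum_{\msq \in \Sing_\Gamma(S)} \left(1 - \frac{1}{r_{\msq}}\right).
\]
I will verify that $S$ is quasismooth along $\Gamma$ and that $\Sing_\Gamma(S)$ consists exactly of:
\begin{itemize}
\item $\msp = \msp_z$, of index $3$;
\item $\msp_t$, of index $4$.
\end{itemize}
The $\frac12$ points are not on $\Gamma$, since $y = 0$ there. This yields $(\Gamma^2) = -2 + \frac23 + \frac34 = -\frac{7}{12}$.

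Finally, intersecting $H|_S = \Gamma + \Delta$ with $\Gamma$ gives
\[
(\Gamma \cdot \Delta) = 2(A \cdot \Gamma) - (\Gamma^2) = \frac16 + \frac{7}{12} = \frac34.
\]
Intersecting with $\Delta$ gives
\[
(\Delta^2) = 2(A \cdot \Delta) - (\Gamma \cdot \Delta) = \frac12 - \frac34 = -\frac14.
\]
Normality of $S$ (Lemma~\ref{lem:normqhyp}) justifies using Mumford intersection numbers throughout.
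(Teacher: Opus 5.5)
Your proposal is correct and follows essentially the same route as the paper. Both arguments reduce $S\cap H=(x=y=0)_X$ to $(zw+st=z^2s+w^2=0)\subset\mbP(3,4,4,5)$, then split it via the $U_z$ chart together with finiteness on $(z=0)$, get $(\Gamma^2)=-7/12$ by adjunction from the $\frac13(1,2)$ and $\frac14(1,3)$ points of $S$ on $\Gamma$, and solve $H|_S=\Gamma+\Delta$ for the remaining numbers. The only cosmetic difference is that you read off $\msp\notin\Delta$ directly from the chart, whereas the paper deduces it from the quasismoothness of $S\cap H$ at $\msp$.
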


\begin{proof}
Eliminating the variables $x, y$, the scheme $S \cap H = (x = y = 0)_X$ is isomorphic to the complete intersection in $\mbP (3, 4, 4, 5)$ defined by the equations
\[
z w + s t = z^2 s +  w^2 = 0.
\] 
We see that $S \cap H$ contains $\Gamma = (x = y = s = w = 0)$ as an irreducible component.
We have $S \cap H \cap (z = 0) = \{\msp_s, \msp_t\}$ and it is easy to see that $S \cap H \cap (z \ne 0)$ is the union of two irreducible and reduced curves.
It follows that $H|_S = \Gamma + \Delta$ for some irreducible and reduced curve $\Delta$.
We see that $\Gamma$ is quasismooth and $\msp \in \Gamma$.
We see that $S \cap H$ is quasismooth at $\msp$ and this implies $\msp \notin \Delta$.
Hence (1) is proved.

The scheme $S \cap H$ is quasismooth outside $\{\msp_s, \msp_t\}$.
This implies that $S$ is quasismooth along $\Gamma \setminus \{\msp_t\}$.
It is easy to see that $S$ is quasismooth at $\msp_t$, and hence $S$ is quasismooth along $\Gamma$.
We have $\Sing_{\Gamma} (S) = \{\frac{1}{3} (1, 2), \frac{1}{4} (1, 3)\}$ and hence
\[
(\Gamma^2) = -2 + \frac{2}{3} + \frac{3}{4} = - \frac{7}{12}.
\]
We have $(A \cdot \Gamma) = 1/12$ and $(A \cdot \Delta) = (A \cdot H|_S) - (A \cdot \Gamma) = 1/4$.
By taking intersection numbers of $H|_S = \Gamma + \Delta$ and $\Gamma, \Delta$, we have
\[
(\Gamma \cdot \Delta) = \frac{3}{4} \quad \text{and} \quad (\Delta^2) = -\frac{1}{4},
\]
and (2) is proved.
\end{proof}

It follows that $\msp \in \Gamma \subset S \subset X$ is a flag of type $\mathrm{IIa}$ and, by Proposition~\ref{prop:typeII}, we have
\[
\delta_{\msp} (X) \ge \min \left\{ 4, \ \frac{1}{S (V_{\bullet, \bullet}^S; C)}, \ \frac{1}{3 S (W_{\bullet, \bullet, \bullet}^{S, C}; \msp)} \right\},
\]
where we compute $S (V_{\bullet, \bullet}^S; C)$ and $S (W_{\bullet, \bullet, \bullet}^{S, C}; \msp)$ as follows:
\[
\begin{split}
S (V_{\bullet, \bullet}^S; C) 
&= 18 \int_0^1 \left( \int_0^{\frac{1}{3} (1-u)} \left(\frac{1}{6} (1-u)^2 - \frac{1}{6} (1-u)v - \frac{7}{12} v^2 \right) d v \right. \\
& \left. \hspace{4.5cm} + \int_{\frac{1}{3} (1-u)}^{\frac{1}{2} (1-u)} \frac{5}{3} \left(\frac{1}{2} (1-u) - v \right)^2 d v \right) d u \\
&= \frac{3}{16},
\end{split}
\]
\[
\begin{split}
S (W_{\bullet, \bullet, \bullet}^{S, C}; \msp) 
&=
18 \int_0^{1} \left(\int_0^{\frac{1}{3} (1-u)} \left(\frac{1}{12} (1-u) + \frac{7}{12} v \right)^2 d v  \right.\\
& \left. \hspace{1.5cm} + \int_{\frac{1}{3} (1-u)}^{\frac{1}{2} (1-u)} \left(\frac{25}{9}\right)^2 \left(\frac{1}{2}(1-u) - v \right)^2 d v \right) d u + F_{\msp} (W_{\bullet, \bullet, \bullet}^{S, C}) \\
&= \frac{7}{96}
\end{split}
\]
We have $F_{\msp} (W_{\bullet, \bullet, \bullet}^{S, C}) = 0$ since $\ord_{\msp} (\Delta|_{\Gamma}) = 0$.
Thus we obtain 
\[
\delta_{\msp} (X) \ge \min \left\{ 4, \ \frac{16}{3}, \ \frac{32}{7} \right\} = 4.
\]

\subsection{Family \textnumero~31 and $\frac{1}{4} (1, 1, 3)$ points}
\label{sec:No31delta4}

Let $X = X_{8, 10} \subset \mbP (1, 2, 3, 4, 4, 5)$ be a member of family \textnumero 31 and let $\msp$ be a $\frac{1}{4} (1, 1, 3)$ point.
We may assume that $\msp = \msp_t$.

\begin{Lem}
The defining polynomials of $X$ can be written as
\[
\begin{split}
\msF_1 &= t s + w f_3 + f_8, \\
\msF_2 &= t^2 y + t (s g_2 + g_6) + w^2 + s^2 y + s g'_6 + g_{10},
\end{split}
\]
where $f_i, g_i, g'_6 \in \mbC [x, y, z]$.
\end{Lem}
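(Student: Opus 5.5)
Since $\msp = \msp_t$ is of type $\frac{1}{4}(1,1,3)$ on $X_{8,10} \subset \mbP(1,2,3,4,4,5)$, quasismoothness at $\msp$ forces $\msF_1$ to contain a monomial $t\cdot(\text{weight }4)$, i.e.\ $ts$ (the only other candidate $t^2$ would make $\msp \notin X$), and $\msF_2$ to contain $t^2 y$ (the only degree-$10$ monomial $t^2\cdot(\text{weight }2)$). Rescaling, we assume both coefficients are $1$. We also have $w^2 \in \msF_2$, by quasismoothness at $\msp_w$, and rescale so its coefficient is $1$.

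We then normalize $\msF_1$. Write $\msF_1 = ts + w f_3 + (\text{terms in } x,y,z,s)$. Note that $t^2$ does not appear, and no monomial contains both $t$ and $w$, since that would have degree $\ge 9$. Terms of the form $t\cdot h_4(x,y,z,s)$ other than $ts$ are absorbed by $s \mapsto s - h_4(x,y,z)$, using the fact that the coefficient of $ts$ is $1$. Terms divisible by $s$ that do not involve $t$, namely $s\cdot h_4(x,y,z,s)$, are absorbed by $t \mapsto t - h_4$. After these substitutions, $\msF_1 = ts + w f_3 + f_8$ with $f_3, f_8 \in \mbC[x,y,z]$. Here $f_3$ may involve $z$, and no $w^2$ can occur in $\msF_1$ for degree reasons. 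The substitution in $t$ must preserve $t^2y \in \msF_2$ up to lower-order modifications, which is harmless because such a substitution only changes terms not of the form $t^2\cdot(\text{weight }2)$ in $\msF_2$.

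Next we normalize $\msF_2$. Write
\[
\msF_2 = t^2 y + t(\text{deg } 6) + w(\text{deg } 5) + w^2 + (\text{rest}),
\]
noting that $t^2 s$ and $t^3$ cannot occur since they have degree $12$. Completing the square, $w \mapsto w - \frac{1}{2}(\cdot)$, removes every term linear in $w$, including $tw\,x$. The degree-$6$ coefficient of $t$ can be written as $s g_2 + g_6$ with $g_2, g_6 \in \mbC[x,y,z]$, because the $s^2$-part would have degree $8 \ne 6$. The remaining terms in $x,y,z,s$ have the form $s^2 q_2 + s g'_6 + g_{10}$ with $q_2 \in \mbC[x,y]$. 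We may replace $\msF_2$ by $\msF_2 - h_2\msF_1$ for $h_2 \in \mbC[x,y]$, although this reintroduces a $ts\,h_2$ term, which is then absorbed into $s g_2$. Quasismoothness at $\msp_s$ (where $s^2 y$ is the only way $\msF_2$ can contribute a term $s^2\cdot(\text{weight }2)$, and $\msF_1$ provides $ts$) forces the coefficient of $s^2 y$ to be nonzero, or at least allows rescaling $y$ to make it $1$ after using $y \mapsto y + c x^2$. The $x^2 s^2$ term is then eliminated by a shift of $y$, which modifies $t^2 y$ only by $t^2 x^2$; that term is in turn absorbed by $s \mapsto s + \cdots$ after dividing, or left inside $t(sg_2+g_6)$ since $x^2 t^2$ has the form $t\cdot(\text{deg }6)$ through $t\cdot x^2 t$.

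The main obstacle is the bookkeeping: ensuring that each coordinate change does not destroy an earlier normalization, in particular that the shifts of $y$ and $s$ remain compatible with $\msF_1 = ts + wf_3 + f_8$. To handle this, we perform the changes in the order $s$, $t$, $w$, and then $y$, re-normalizing $\msF_1$ by $t$-shifts after the last step. If the coefficient of $s^2 y$ turned out to vanish, we would instead check that $X$ fails to be quasismooth along $(x=z=t=w=0)_X$.
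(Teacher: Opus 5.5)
There is a genuine gap, at the terms $t^2x^2$ and $s^2x^2$, and it cannot be patched by better bookkeeping. Your normalization of $\msF_1$ and of the coefficients of $ts$, $t^2y$, $w^2$ is fine.

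\emph{Where the argument breaks.} The monomials $t^2x^2$ and $s^2x^2$ are missing from your accounting from the start: the part $t\cdot(\text{deg }6)$ may contain $t\cdot tx^2$. Your completion of the square $w\mapsto w-\tfrac12(\beta tx+\alpha sx+\cdots)$ then creates more of them, through $-\tfrac14(\beta tx+\alpha sx)^2$. It also alters $\msF_1$ through $wf_3$, and restoring $\msF_1$ needs an $s$-shift as well as a $t$-shift. A single shift $y\mapsto y+cx^2$ removes only one of the two $x^2$-coefficients. Your way of disposing of the leftover $t^2x^2$ does not work. The monomial $x^2t^2$ is not of the form $t\,g_6$ with $g_6\in\mbC[x,y,z]$. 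Substitutions $s\mapsto s+h_4$, $t\mapsto t+k_4$ with $h_4,k_4\in\mbC[x,y,z]$ never change the coefficient of $t^2$ or of $s^2$.

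\emph{Why it cannot be fixed.} In general no choice of coordinates reaches the displayed form. Normalize the coefficient of $w^2$ to $1$ and write the relevant part of $\msF_2$ as
\[
t^2(ay+bx^2)+s^2(a'y+b'x^2)+xw(\beta t+\alpha s),\qquad aa'\neq 0 .
\]
The admissible changes act on these coefficients as follows.
\begin{itemize}
\item Shifts of $s,t,z$ by polynomials in $x,y,z$ leave them unchanged.
\item Replacing $\msF_2$ by $\msF_2+h_2\msF_1$ leaves them unchanged.
\item The substitution $w\mapsto w+\nu tx+\nu' sx+\cdots$ sends $(b,\beta)\mapsto(b+\beta\nu+\nu^2,\ \beta+2\nu)$, so it preserves $4b-\beta^2$, and likewise $4b'-\alpha^2$.
\item The shift $y\mapsto y+cx^2$ changes $(4b-\beta^2)/4a$ and $(4b'-\alpha^2)/4a'$ by the same amount.
\end{itemize}
Hence
\[
\frac{4b-\beta^2}{4a}-\frac{4b'-\alpha^2}{4a'}
\]
is invariant up to a nonzero factor, which accounts for rescalings and for swapping $s$ and $t$. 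It vanishes for the form in the statement. It does not vanish for a general quasismooth member of family \textnumero~31, because the coefficients of $t^2x^2$ and $s^2x^2$ are free and quasismoothness is an open condition.

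\emph{What can be proved.} Your method does prove the statement with $s^2y$ replaced by $s^2(y+\kappa x^2)$ for some $\kappa\in\mbC$. In particular, the displayed form holds modulo $x$. This weaker version is all the paper uses afterwards. Everything later takes place on $S=(x=0)_X$, in fact on $(x=z=0)_X$, and quasismoothness of $S$ depends only on $\msF_1|_{x=0}$ and $\msF_2|_{x=0}$.
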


\begin{proof}
This is straightforward and we omit the proof.
\end{proof}

We set $S \coloneq (x = 0)_X$ and let $H \in |3 A|$ be a general member.
Note that $S$ is a normal surface by Lemma~\ref{lem:normqhyp} and it is easy to see that $S$ is quasismooth at $\msp$.

\subsubsection{Case: $y^4 \in f_8$}

Suppose that $y^4 \in f_8$.
We set $C \coloneq S \cap H$.

\begin{Lem}
The curve $C$ is irreducible and reduced, and it is quasismooth at $\msp$.
\end{Lem}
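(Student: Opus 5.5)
The plan is to follow the pattern of Lemmas~\ref{sec:MfamdeltaC1qsmC} and \ref{lem:No20delta3C}: write $C$ explicitly as a weighted complete intersection of smaller codimension, check quasismoothness at $\msp$ from the Jacobian, and reduce irreducibility to the irreducibility of a single polynomial in two variables on a standard affine chart.

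\emph{Explicit model of $C$.} Let $H$ be cut out by $z = \lambda x^3 + \mu x y$ with $\lambda, \mu \in \mbC$ general. On $S = (x = 0)_X$ this becomes $z = 0$, so $C = (x = z = 0)_X$. After eliminating $x$ and $z$, $C$ is the complete intersection in $\mbP (2_y, 4_s, 4_t, 5_w)$ defined by
\[
t s + \beta y^4 = 0, \qquad t^2 y + t (\gamma s y + \eta y^3) + w^2 + s^2 y + \kappa s y^3 + \varepsilon y^5 = 0 .
\]
Here $\beta = \coeff_{f_8} (y^4) \ne 0$ by assumption. The term $w f_3$ drops out because $f_3 \in \mbC [x, y, z]$ has degree $3$ and so lies in $(x, z)$. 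The remaining constants $\gamma, \eta, \kappa, \varepsilon$ come from $g_2, g_6, g'_6, g_{10}$ restricted to $x = z = 0$.

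\emph{Quasismoothness at $\msp = \msp_t$.} At $\msp_t$ we have $\prt/\prt s$ of the first equation equal to $t \ne 0$, and $\prt/\prt y$ of the second equal to $t^2 \ne 0$. So the Jacobian has rank $2$ there, and $C$ is quasismooth at $\msp$.

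\emph{Irreducibility and reducedness.} First restrict to $(y = 0)$. There the first equation gives $t s = 0$ and the second reduces to $w^2 = 0$ (the $t^2 y$ term dies), so $C \cap (y = 0) = \{\msp_s, \msp_t\}$ is finite. It is therefore enough to show that $C \cap U_y$ is irreducible and reduced.

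Set $y = 1$. Since $\beta \ne 0$, the first equation lets us eliminate $s = -\beta/t$ on $(t \ne 0)$. On $(t = 0)$ the first equation gives $\beta = 0$, which is impossible, so that locus is empty. The chart becomes a $\bmu_2$-quotient of the curve in $(\mbA^1 \setminus 0)_t \times \mbA^1_w$ defined, after multiplying by $t^2$, by
\[
t^2 w^2 + t^4 + \eta t^3 - \gamma \beta t^2 + \varepsilon t^2 - \kappa \beta t + \beta^2 = 0 .
\]
This is $w^2 t^2 + c(t)$ with $c(0) = \beta^2 \ne 0$. If it factored, the $w$-degrees would force a factorization $(t w + p(t))(t w - p(t))$ with $p^2 = -c$, so it suffices to show that $-c(t)$ is not a square. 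Equivalently, $t^2 w'^2 + c(t)$ with $w' = w$ is irreducible.

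\emph{Main obstacle.} The expected difficulty is the degenerate case in which $c$ is a perfect square, e.g. $(t^2 + a t + \beta)^2$. In that case I would either use the general choice of $\lambda, \mu$ (noting that the constants depend on $\mu$ through $g_i (0, y, \lambda \cdot 0 + \mu \cdot 0, \ldots)$, which vanishes, so they do not vary) or invoke the quasismoothness of $X$ to rule it out. In particular the coefficients $\kappa$ and $\varepsilon$ are tied to the conditions at the $\frac{1}{2}(1,1,1)$ points. If neither works, I would revisit the reduction and argue directly that a square $c$ would make $C$ non-reduced or reducible in a way that contradicts the quasismoothness of $X$ along $(x = z = 0)$.
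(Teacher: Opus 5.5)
Your setup is correct: $C=(x=z=0)_X$ does not depend on $H$, the model in $\mbP(2_y,4_s,4_t,5_w)$ is right, and so is the Jacobian check at $\msp_t$. The finiteness $C\cap(y=0)=\{\msp_s,\msp_t\}$ and the reduction to $t^2w^2+c(t)=0$, with $c(t)=t^4+\eta t^3+(\varepsilon-\gamma\beta)t^2-\kappa\beta t+\beta^2$, also hold. The gap is the last step, which carries all the content. You reduce irreducibility to ``$-c(t)$ is not a square'' and then do not prove it. Nothing in the shape of $c$ rules this out: $c$ is a monic quartic with $c(0)=\beta^2\neq 0$, and $(t^2+\beta)^2$ has that shape. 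As you note yourself, the generality of $\lambda,\mu$ cannot help, since $C$ does not depend on them. So the proof is incomplete as written. Your criterion is also only sufficient. $\bmu_2$ acts on $(s,t,w)$ by $w\mapsto -w$, so if $-c=p^2$ the two components $tw=\pm p(t)$ of the cover are swapped, and the quotient $C\cap U_y$ is still irreducible.

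There are three ways to close this. The quickest, in your own chart, is to pass to invariants. We have $\mbC[s,t,w]^{\bmu_2}=\mbC[s,t,W]$ with $W=w^2$, and both equations are invariant. Hence $C\cap U_y\cong\Spec\mbC[s,t,W]/(ts+\beta,\ W+t^2+\gamma ts+s^2+\eta t+\kappa s+\varepsilon)\cong\Spec\mbC[t,t^{-1}]$, which is integral with no case analysis.

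The paper instead uses $U_s$. There $C\cap(s=0)=\{\msp\}$, and on $s=1$ one eliminates $t=-\beta y^4$ to get $w^2+y\bigl(1+\kappa y^2+\cdots+\beta^2y^8\bigr)$. The term $y$, coming from $s^2y$, gives a simple zero at $y=0$, so this is never a square.

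If you want to keep your cover argument, your remark about the $\frac{1}{2}(1,1,1)$ points can be made precise. The roots of $c$ are exactly the points of $C$ with $w=0$, i.e.\ the $\frac{1}{2}(1,1,1)$ points of $X$ with $x=z=w=0$, $y\neq 0$. At these points the $x$-, $z$- and $w$-columns of $J_X$ vanish, and by Euler's relation the $y$-column depends on the $s$- and $t$-columns. So quasismoothness of $X$ forces $t\,\partial_t\msF_2-s\,\partial_s\msF_2\neq0$ there. Along $ts=-\beta$ this says exactly that $c'(t_0)\neq 0$ at each root $t_0$. Thus $c$ has simple roots and is not a square.
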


\begin{proof}
Rescaling $y$, we may assume that $\coeff_{f_8} (y^4) = -1$.
It is easy to see that $C$ is quasismooth at $\msp$.
We have $C = (x = z = 0)_X$ and
\[
\begin{split}
\overline{\msF}_1 &= t s - y^4, \\
\overline{\msF}_2 &= t^2 y + t (\alpha s y + \beta y^3) + w^2 + s^2 y + \gamma s y^3 + \delta y^5,
\end{split}
\]
where $\alpha, \beta, \gamma, \delta \in \mbC$ and $\overline{\msF}_i \coloneq \msF_i (0, y, 0, s, t, w)$.
We have $S \cap H \cap (s = 0) = \{\msp\}$.
Hence, by setting $s = 1$, it is enough to show that the complete intersection in $\mbA^3_{y, t, w}$ defined by the equations
\[
t - y^4 = \overline{\msF}_2 (y, 1, t, w) = 0
\]
is irreducible and reduced.
Eliminating $t = y^4$, it is then enough to show that the polynomial
\[
\overline{\msF}_2 (y, 1, y^4, w) = y^{10} + y^4 (\alpha y + \beta y^3) + w^2 + y + \gamma y^3 + \delta y^5
\]
is irreducible.
This is easily verified since the polynomial $y^{10} + y^4 (\alpha y + \beta y^3) +  y + \gamma y^3 + \delta y^5$ is not a square for any $\alpha, \beta, \gamma \in \mbC$.
\end{proof}

It follows that $\msp \in S \coloneq C \subset S \subset X$ is a flag of type $\mathrm{I}$ and by Proposition~\ref{prop:typeI}, we obtain
\[
\delta_{\msp} (X) \ge \min \left\{12, \ 4, \ \frac{4}{4 \cdot 3 \cdot 1 \cdot \frac{1}{6}} \right\} = 2.
\]

\subsubsection{Case: $y^4 \notin f_8$}

Suppose that $y^4 \notin f_8$.

\begin{Lem}
We have $H|_S = \Gamma + \Delta$, where $\Gamma$ and $\Delta$ are irreducible and reduced curves satisfying the following properties.
\begin{enumerate}
\item The curves $\Gamma$ and $\Delta$ are irreducible smooth rational curves, $\msp \in \Gamma$ and $\msp \notin \Delta$.
\item $(\Gamma^2) = (\Delta^2) = - 1/4$ and $(\Gamma \cdot \Delta) = 1/2$.
\end{enumerate}
\end{Lem}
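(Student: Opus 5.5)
The plan is to compute $S \cap H$ explicitly and read off both components, as in the analogous lemma for the $\frac{1}{3}(1,1,2)$ point.

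\emph{Identifying $S \cap H$.} The homogeneous polynomials of degree $3$ are spanned by $z$, $xy$ and $x^3$, so on $S = (x = 0)_X$ a general $H \in |3A|$ restricts to $(z = 0)_S$. Hence $S \cap H = (x = z = 0)_X$. In the equations of the preceding lemma, $f_3(0,y,0) = 0$ because $y$ has even weight, and $f_8(0,y,0) = 0$ by the assumption $y^4 \notin f_8$. So $\msF_1$ restricts to $ts$, while $\msF_2$ restricts to
\[
t^2 y + t(\alpha s y + \beta y^3) + w^2 + s^2 y + \gamma s y^3 + \delta y^5 .
\]
This gives $S \cap H = \Gamma \cup \Delta$, where $\Gamma = (x = z = s = 0)_X$ and $\Delta = (x = z = t = 0)_X$:
\begin{itemize}
\item $\Gamma$ is the curve $w^2 + y(t^2 + \beta t y^2 + \delta y^4) = 0$ in $\mbP(2_y, 4_t, 5_w)$;
\item $\Delta$ is the analogous curve $w^2 + y(s^2 + \gamma s y^2 + \delta y^4) = 0$ in $\mbP(2_y, 4_s, 5_w)$.
\end{itemize}
Clearly $\msp = \msp_t \in \Gamma$ and $\msp \notin \Delta$. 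The intersection is reduced along each component because each factor appears once in $ts$.

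\emph{Irreducibility, smoothness and rationality.} The form $t^2 + \beta t y^2 + \delta y^4$ is not divisible by $y$, so $w^2 + y(\cdots)$ is not a product. As in the proof of Lemma~\ref{lem:No8delta2C1S}, I would pass through the isomorphisms $\mbP(2,4,5) \cong \mbP(1,2,5) \cong \cdots$ obtained by dividing out common factors of the weights. In these coordinates the curve becomes a quasilinear curve (linear in the variable replacing $w^2$), hence $\cong \mbP^1$. The same argument works for $\Delta$.

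\emph{Intersection numbers.} I would first check quasismoothness of $S$ along $\Gamma$ and $\Delta$ using the Jacobian, which needs the nonvanishing of the coefficients of $t^2y$, $s^2y$ and $w^2$. Then I would list $\Sing_\Gamma(S)$ and $\Sing_\Delta(S)$, which consist of $\msp_t$, $\msp_s$ and the $\frac{1}{2}(1,1)$ points cut out by $y$. The self-intersections then follow from the orbifold adjunction formula for a smooth rational curve on a quasismooth surface,
\[
(\Gamma^2) = -2 + \sum \frac{r-1}{r},
\]
summed over the singular points of $S$ on $\Gamma$, and similarly for $(\Delta^2)$. Finally, $(\Gamma \cdot \Delta)$ comes from intersecting $H|_S = \Gamma + \Delta$ with $\Gamma$, using $(H|_S \cdot \Gamma) = 3(A \cdot \Gamma)$ and $(A^3) = 1/6$; I would use $\Delta$ as a consistency check.

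\emph{Main obstacle.} The delicate step is getting the singular-point bookkeeping along each curve exactly right, together with the degrees $(A \cdot \Gamma)$ and $(A \cdot \Delta)$. A naive degree count in $\mbP(2,4,5)$ does not immediately reconcile with the stated values, so I would recompute $(A \cdot \Gamma)$ carefully. I would also verify the local type of $S$ at $\msp_s$ and at the points $(y = 0)$ on each curve before applying the formula.
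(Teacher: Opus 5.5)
You follow the paper's route: compute $S\cap H=(x=z=0)_X$, prove rationality by removing a common factor of the weights, check that $S$ is quasismooth along the curves, apply orbifold adjunction, and read off $(\Gamma\cdot\Delta)$ from $H|_S=\Gamma+\Delta$. The discrepancy you flag at the end is real, but it is not in $(A\cdot\Gamma)$. Since $\Gamma$ and $\Delta$ are WCI curves of type $(1,3,4,10)$, your naive count $(A\cdot\Gamma)=(A\cdot\Delta)=\frac{120}{480}=\frac14$ is correct, and the paper uses the same value. Your last step then gives $(\Gamma\cdot\Delta)=3\cdot\frac14-(\Gamma^2)=\frac34+\frac14=1$, not $\frac12$. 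In fact the triple in part (2) is inconsistent. $(\Gamma+\Delta)^2$ must equal $(3A|_S)^2=9(A^3)=\frac32$, whereas $-\frac14-\frac14+2\cdot\frac12=\frac12$. The paper's proof reaches $\frac12$ by intersecting $H|_S=\Gamma+\Delta$ with $\Gamma$ as though $(H|_S\cdot\Gamma)=(A\cdot\Gamma)$, i.e.\ it drops the factor $3$. So part (2) cannot be proved as stated, and no recomputation of $(A\cdot\Gamma)$ will rescue it.

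Your quasismoothness step also has a genuine gap, which the paper shares. The set $\Gamma\cap\Delta$ is the single point $\msq=(0\!:\!1\!:\!0\!:\!0\!:\!0\!:\!w_0)$ with $w_0^2=-\delta\neq0$, and this point has trivial stabilizer. Work in the chart $y=1$ and eliminate $w$ through $\msF_2$, which is possible since $\partial\msF_2/\partial w=2w_0\neq0$. Near $\msq$ the surface $S$ is then $ts+z(aw+bz)=0$, where $a=\coeff_{\msF_1}(zw)$ and $b=\coeff_{\msF_1}(yz^2)$. So quasismoothness of $S$ at $\msq$ is equivalent to $a\neq0$; it is not controlled by the coefficients of $t^2y$, $s^2y$, $w^2$.

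\begin{itemize}
\item If $a\neq0$, then $S$ is smooth at $\msq$ with local coordinates $s,t$, and $\Gamma=(s=0)$, $\Delta=(t=0)$ meet transversally. This confirms $(\Gamma\cdot\Delta)=1$ and $(\Gamma^2)=(\Delta^2)=-\frac14$.
\item If $a=0$, quasismoothness of $X$ is not violated, because $X$ is quasismooth at $\msq$ as soon as $xyw\in\msF_1$. Then $b\neq0$, since otherwise $\msF_1|_{x=0}=ts$ and $S$ would be reducible, so $S$ has an $A_1$ point at $\msq$. In this case $(\Gamma\cdot\Delta)=\frac12$ but $(\Gamma^2)=(\Delta^2)=-2+\frac34+\frac12+\frac12+\frac12=\frac14$, and the later type $\mathrm{IIa}$ flag is unavailable because $(\Delta^2)>0$.
\end{itemize}

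So you must split according to whether $zw\in\msF_1$, and neither case gives the stated values.

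Two minor points. Your adjunction formula without a $K_S$ term is right only because $K_S=(K_X+S)|_S\sim0$, which you should state. Also, the two $\frac12(1,1)$ points on $\Gamma$ are the points with $w=0$, $y\neq0$ and $t^2+\beta ty^2+\delta y^4=0$. They are not cut out by $y$: setting $y=0$ on $\Gamma$ gives only $\msp$.
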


\begin{proof}
Eliminating the variables $x, z$, the scheme $S \cap H = (x = z = 0)_X$ is isomorphic to the complete intersection in $\mbP (2, 4, 4, 5)$ defined by the equations
\[
t s = t^2 y + t (\alpha s y + \beta y^3) + w^2 + s^2 y + \gamma s y^3 + \delta y^5 = 0,
\] 
where $\alpha \coloneq \coeff_{g_2} (y)$, $\beta \coloneq \coeff_{g_6} (y^3)$, $\gamma \coloneq \coeff_{g'_6} (y^3)$ and $\delta \coloneq \coeff_{g_{10}} (y^5)$.
We set 
\[
\begin{split}
\Gamma &\coloneq (x = z = s = t^2 y + \beta t y^3 + w^2 + \delta y^5 = 0), \\
\Delta &\coloneq (x = z = t = w^2 + s^2 y + \gamma s y^3 + \delta y^5 = 0).
\end{split}
\]
By an argument similar to that in the proof of Lemma~\ref{lem:No8delta2C1S}, the curve $\Gamma$ is isomorphic to the hypersurface in $\mbP (1, 2, 2, 5)$ with homogeneous coordinates $\bar{y}, \bar{s}, \bar{t}, \bar{w}$ of weight $1, 2, 2, 5$, respectively, defined by the equation
\[
\bar{t}^2 y + \beta \bar{t} \bar{y}^3 + \bar{w} = 0.
\]
Hence $\Gamma$ is isomorphic to $\mbP^1$.
Similarly, we have $\Delta \cong \mbP^1$.
It is immediate to see that $\msp \in \Gamma$ and $\msp \notin \Delta$.
Hence (1) is proved.

We claim that $S$ is quasismooth along $\Gamma$.
The curve $\Gamma$ is quasismooth outside $\msp_y$.
The set $\Gamma \cap \Delta$ consists of one point and it is easy to see that $S$ is quasismooth at $\msp_y$ and $\Gamma \cap \Delta$.
This shows that $S$ is quasismooth along $\Gamma$.
We have $\Sing_{\Gamma} (S) = \{2 \times \frac{1}{2} (1, 1), \frac{1}{4} (1, 3)\}$.
Hence we have
\[
(\Gamma^2) = -2 + \frac{1}{2} + \frac{1}{2} + \frac{3}{4} = -\frac{1}{4}. 
\]
We have $(A \cdot \Gamma) = (A \cdot \Delta) = 1/4$.
By taking intersection numbers of $H|_S = \Gamma + \Delta$ and $\Gamma, \Delta$, we obtain
\[
(\Gamma \cdot \Delta) = \frac{1}{2} \quad \text{and} \quad (\Delta^2) = -\frac{1}{4},
\]
and (2) is proved.
\end{proof}

It follows that $\msp \in \Gamma \subset S \subset X$ is a flag of type $\mathrm{IIa}$ and, by Proposition~\ref{prop:typeII}, we have
\[
\delta_{\msp} (X) \ge \min \left\{ 4, \ \frac{1}{S (V_{\bullet, \bullet}^S; C)}, \ \frac{1}{4 S (W_{\bullet, \bullet, \bullet}^{S, C}; \msp)} \right\},
\]
where we compute $S (V_{\bullet, \bullet}^S; C)$ and $S (W_{\bullet, \bullet, \bullet}^{S, C}; \msp)$ as follows:
\[
\begin{split}
S (V_{\bullet, \bullet}^S; \Gamma) &= 18 \int_0^1 \left(\int_0^{\frac{1}{6} (1-u)} \left(\frac{1}{18} (1-u)^2 - \frac{1}{6} (1-u)v - \frac{1}{4} v^2 \right) d v \right. \\
& \left. \hspace{5cm} + \int_{\frac{1}{6} (1-u)}^{\frac{1}{3} (1-u)} \frac{9}{16} \left(\frac{1}{3} (1-u)-v\right)^2 d v \right) d u \\
&= \frac{5}{144}.
\end{split}
\]
\[
\begin{split}
S (W_{\bullet, \bullet, \bullet}^{S, C}; \msp) &=
18 \int_0^1 \left( \int_0^{\frac{1}{6} (1-u)} \left(\frac{1}{12} (1-u) + \frac{1}{4} v \right)^2 d v \right. \\
& \left. \hspace{2.5cm} + \int_{\frac{1}{6} (1-u)}^{\frac{1}{3} (1-u)} \frac{9}{16} \left(\frac{1}{3} (1-u)-v\right)^2 d v \right) d u + F_{\msp} (W_{\bullet, \bullet, \bullet}^{S, C}) \\
&= \frac{7}{576} + F_{\msp} (W_{\bullet, \bullet, \bullet}^{S, C}).
\end{split}
\]
We have $F_{\msp} (W_{\bullet, \bullet, \bullet}^{S, C}) = 0$ since $\ord_{\msp} (\Delta|_{\Gamma}) = 0$.
Thus, we obtain the estimate
\[
\delta_{\msp} (X) \ge \min \left\{ 4, \ \frac{144}{5}, \ \frac{144}{7} \right\} = 4.
\]

\subsection{Family \textnumero~45 and $\frac{1}{5} (1, 1, 4)$ points}
\label{sec:No45delta5}

Let $X = X_{10, 12} \subset \mbP (1, 2, 4, 5, 5, 6)$ be a member of family \textnumero~45 and let $\msp$ be a $\frac{1}{5} (1, 1, 4)$ point.
We may assume that $\msp = \msp_t$.

\begin{Lem}
The defining polynomials of $X$ can be written as
\[
\begin{split}
\msF_1 &= t s + w z + f_{10}, \\
\msF_2 &= t^2 y + t g_7 + w^2 + g_{12},
\end{split}
\]
where $f_{10} \in \mbC [x, y, z]$ and $g_7, g_{12} \in \mbC [x, y, z, s]$ with $y s \notin g_7$ and $\coeff_{g_{12}} (s^2 y) = \coeff_{g_{12}} (z^3) = 1$.
\end{Lem}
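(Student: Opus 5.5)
We are in family \textnumero~45, $X = X_{10,12} \subset \mbP(1,2,4,5,5,6)$, with $\msp = \msp_t$ a $\frac{1}{5}(1,1,4)$ point. The plan is to imitate the proofs of Lemmas~\ref{lem:No8del2eq} and~\ref{lem:Manyfamdefeq}: first read off the forced monomials from quasismoothness, then normalize by successive coordinate changes.

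\emph{Forced monomials.} Since $\msp = \msp_t$ is a quotient point of index $5$, quasismoothness at $\msp$ forces $t^{a}v \in \msF_i$ for suitable variables $v$, where $v$ runs over the non-orbifold directions. The degree-$10$ monomials of the form $t\cdot v$ are $ts$, $tt$, $tx^5, \dots$. Since $t^2$ and $s^2, st$ have degree $10$ as well, we can arrange $ts \in \msF_1$ after a linear change of $s,t$ which keeps $\msp = \msp_t$; here we use that $\msp_t \in X$ forces $t^2 \notin \msF_1$. The degree-$12$ monomials are $t^2y$ and $tz\cdot$(degree $3$, which is impossible); thus $t^2y \in \msF_2$. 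Quasismoothness at $\msp_w$ gives $w^2 \in \msF_2$ or $w\cdot(\text{deg }4)\in\msF_1$; degree considerations give $w^2 \in \msF_2$ and also $wz \in \msF_1$. The latter is needed for quasismoothness along the locus $x=y=s=t=0$, where otherwise $\msF_1$ lies in the square of the ideal, exactly as in the $f_2(0,y)\ne0$ argument of Lemma~\ref{lem:Manyfamdefeq}.

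\emph{Normalization.} We complete the square in $w$ in $\msF_2$ (replacing $w \mapsto w - (\dots)/2$), absorb the extra $w$-terms of $\msF_1$ into $z$, and then filter off all terms of $\msF_1$ divisible by $t$ or $s$ into $t s$ by replacing $s$ and $t$. This yields $\msF_1 = ts + wz + f_{10}(x,y,z)$. Next we remove $ys$ from $g_7$ by replacing $\msF_2$ with $\msF_2 - c\,y\,\msF_1$, which uses $ts \in \msF_1$; this produces no new $t^2$-type terms. Finally, quasismoothness at $\msp_s$ gives $s^2y \in \msF_2$, and quasismoothness at $\msp_z$ gives $z^3 \in \msF_2$, because $z^2 \cdot(\deg 2)$ and the other competing monomials are excluded. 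Rescaling then sets both coefficients equal to $1$.

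\emph{Main obstacle.} The step that needs care is checking that each coordinate change preserves the previously achieved forms. In particular, the change of $s$ used to kill $s$-terms in $\msF_1$ must not reintroduce a $ys$ term in $g_7$; to handle this, we do that change before removing $ys$. We also need to verify that $wz \in \msF_1$ is genuinely forced, which we check directly via the non-quasismooth locus argument.
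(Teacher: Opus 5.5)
\textbf{Gap: $wz\in\msF_1$ is not forced.} Your argument says that otherwise $\msF_1$ lies in $(x,y,s,t)^2$. That is false. The degree-$10$ monomials outside $(x,y,s,t)^2$ are $zw$ and $yz^2$, so $\msF_1\equiv \alpha\,zw+\beta\,yz^2$ modulo $(x,y,s,t)^2$.

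Suppose $\alpha=0$. Then $X\cap(x=y=s=t=0)$ is the single $\frac12$-point $P_0=(0\!:\!0\!:\!z_0\!:\!0\!:\!0\!:\!w_0)$ with $\gamma z_0^3+\delta w_0^2=0$, where $\gamma=\coeff_{\msF_2}(z^3)\neq0$ and $\delta=\coeff_{\msF_2}(w^2)\neq0$. At $P_0$ the Jacobian rows are $\beta z_0^2\,dy$ and a row with $dw$-entry $2\delta w_0\neq0$. So quasismoothness only forces $\beta\neq0$, not $\alpha\neq0$.

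In fact a general pair with $\alpha=0$ is quasismooth:
\begin{itemize}
\item By Bertini, $(\msF_1=0)\subset\mbA^6$ is smooth off its base locus $(x=y=s=t=0)$. On that locus the gradient is $\beta z^2\,dy$, so the hypersurface is singular only along the $w$-axis, which $X$ misses because $w^2\in\msF_2$.
\item The base locus of the degree-$12$ system is $(x=y=z=w=0)$. It meets $X$ only at the two $\frac15$-points, where $ts\in\msF_1$ and $t^2y,\,s^2y\in\msF_2$ give rank $2$.
\end{itemize}

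\textbf{Why no coordinate change can fix this.} Every graded automorphism of $\mbC[x,y,z,s,t,w]$ preserves the ideal $(x,y,s,t)$ and sends $z\mapsto cz$, $w\mapsto ew$ modulo it. Moreover $\msF_1$ can only be rescaled, since there is no degree-$10$ multiple of $\msF_2$. Hence $\alpha=0$ is intrinsic, and the normal form $\msF_1=ts+wz+f_{10}$ does not exist for these members. The statement as written needs the extra hypothesis $zw\in\msF_1$, or a separate case in which the $w$-part of $\msF_1$ is $w\,h_4(x,y)$ and $yz^2\in\msF_1$. Your argument, with the false step removed, proves it only under that hypothesis.

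\textbf{Smaller issues in the normalization when $zw\in\msF_1$.} The normalization is otherwise fine, apart from two omissions:
\begin{itemize}
\item Replacing $\msF_2$ by $\msF_2-c\,y\,\msF_1$ to kill $tys$ creates a term $-c\,yzw$ in $\msF_2$. You must complete the square in $w$ again; this only adds a multiple of $yz^2$ to $f_{10}$.
\item A possible $t^2x^2$ term in $\msF_2$ must be absorbed by $y\mapsto y-c'x^2$.
\end{itemize}
Some side justifications are also garbled:
\begin{itemize}
\item $t^2y\in\msF_2$ holds because it is the only degree-$12$ monomial of the form $t^k v$. Degree-$3$ monomials such as $x^3$ and $xy$ do exist, so ``impossible'' is not the reason.
\item $z^3\in\msF_2$ holds because otherwise $\msp_z\in X$ and the $\msF_2$-row of the Jacobian vanishes at $\msp_z$.
\end{itemize}
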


\begin{proof}
This is straightforward and we omit the proof.
\end{proof}

Let $S$ be a general member of the linear system $\langle w, y z, x^6\rangle \subset |6A|$ and set $H \coloneq (x = 0)_X$.
We set $C \coloneq S \cap H$.
Note that $S$ is a normal surface by Lemma~\ref{lem:normqhyp}.
It is easy to see that $S$ is quasismooth at $\msp$.

\begin{Lem}
The curve $C$ is irreducible and reduced, and it is quasismooth at $\msp$.
\end{Lem}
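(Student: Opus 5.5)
The plan is to follow the template of Lemma~\ref{lem:No20delta3C}: write $C$ as an explicit weighted complete intersection curve, check quasismoothness at $\msp$ from the Jacobian, and reduce irreducibility to that of a single polynomial in two variables. A general member of $\langle w, yz, x^6\rangle$ restricts to $H = (x=0)_X$ as $w = \lambda y z$ with $\lambda \in \mbC$ general. Eliminating $x$ and $w$, $C$ becomes the complete intersection in $\mbP(2_y, 4_z, 5_s, 5_t)$ given by
\[
\overline{\msF}_1 = ts + \lambda y z^2 + \bar f_{10}(y,z), \qquad
\overline{\msF}_2 = t^2 y + t \bar g_7 + \lambda^2 y^2 z^2 + \bar g_{12}(y,z,s),
\]
where the bars mean setting $x=0$. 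A degree count in weights $(2,4,5)$ shows that the only degree-$7$ monomial is $ys$. Since $ys \notin g_7$, we get $\bar g_7 = 0$. Also $\bar f_{10}$ is a combination of $y^5, y^3z, yz^2$, and $\bar g_{12} = s^2 y + z^3 + (\text{combination of } y^2z^2, y^4z, y^6)$, using the normalisation $\coeff_{g_{12}}(s^2y) = \coeff_{g_{12}}(z^3) = 1$ from the preceding lemma.

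Quasismoothness at $\msp = \msp_t$ is immediate. There $\partial \overline{\msF}_1/\partial s = t \neq 0$ and $\partial \overline{\msF}_2/\partial y = t^2 \neq 0$, while the remaining entries of these two columns vanish at $\msp_t$, so the Jacobian has rank $2$. Together with the quasismoothness of $S$ at $\msp$ already noted, this gives the local part of the statement.

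For irreducibility and reducedness, I first show that $C \cap (s=0)$ is finite. With $s=0$, the equation $\overline{\msF}_1=0$ becomes $\lambda y z^2 + \bar f_{10}(y,z)=0$, which cuts finitely many points $(y\!:\!z)$; then $\overline{\msF}_2 = 0$ is a quadratic in $t$ with leading coefficient $y$. The point $y=0$ forces $z=0$ via the $z^3$ term, giving only $\msp_t$. So no component of $C$ lies in $(s=0)$, and it suffices to treat $C \cap U_s$. This is the $\bmu_5$-quotient of the affine curve in $\mbA^3_{y,z,t}$ obtained by setting $s=1$; irreducibility and reducedness of the cover imply the same for the quotient. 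Eliminating $t = -(\lambda y z^2 + \bar f_{10}(y,z))$ reduces everything to the irreducibility of the single polynomial
\[
\Phi(y,z) \coloneq y(\lambda y z^2 + \bar f_{10})^2 + y + z^3 + \bar g_{12}(y,z,0)-z^3.
\]

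The main obstacle is this last step. I would first try a Newton polygon (Ostrowski) argument. The lowest-order part of $\Phi$ is $y + z^3$, which contributes the edge from $(1,0)$ to $(0,3)$; this is a primitive lattice segment. For a general $\lambda$, the top part is dominated by $\lambda^2 y^3 z^4$ together with the pure $y$-powers coming from $y \bar f_{10}^2$ and $\bar g_{12}$. I would check that the resulting polygon cannot be written as a nontrivial Minkowski sum of lattice polygons, which forces $\Phi$ to be irreducible. If the polygon turns out to be decomposable for special coefficients, the fallback is an analogue of Lemma~\ref{lem:irredpoly1}. View $\Phi$ as a polynomial in $z$ with coefficients in $\mbC[y]$. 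The constant term is divisible by $y$ but not by $y^2$ (its $y$-linear part is exactly $y$), and the $z^3$ coefficient is a unit modulo $y$. An Eisenstein-type argument at the prime $y$ then shows that any factorisation has a factor of $z$-degree $0$ modulo $y$; such a factor is a nonconstant polynomial in $y$ dividing every coefficient of $\Phi$. Finally, the generality of $\lambda$ (the $\lambda^2 y^3 z^4$ and $\lambda$-dependent terms) rules out a common factor of all the coefficients, as in the proof of Lemma~\ref{sec:MfamdeltaC1qsmC}.
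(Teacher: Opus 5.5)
Your setup is correct and matches the paper: the equations of $C$ in $\mbP(2_y,4_z,5_s,5_t)$, $\bar g_7=0$, the Jacobian check at $\msp_t$, finiteness of $C\cap(s=0)$, and the passage to the $\bmu_5$-cover. There is one slip. Your $\Phi$ omits the term $\lambda^2y^2z^2$ coming from $w^2$, even though you had it in $\overline{\msF}_2$. Write $\bar f_{10}=y(\alpha z^2+\beta y^2z+\gamma y^4)$, $\bar g_{12}(y,z,0)=z^3+\delta y^2z^2+\varepsilon y^4z+\zeta y^6$ and $\kappa=\lambda+\alpha$. The correct polynomial is $h=y^3(\kappa z^2+\beta y^2z+\gamma y^4)^2+(\lambda^2+\delta)y^2z^2+z^3+\varepsilon y^4z+\zeta y^6+y$. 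Its $y^2z^2$ coefficient is where the paper finally uses the generality of $\lambda$.

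The real gap is the irreducibility of $h$. Your Eisenstein step is right as far as it goes. We have $h\equiv z^3\pmod y$, and $h(y,0)$ is divisible by $y$ but not by $y^2$. So in any factorization $h=h_1h_2$, one factor, say $h_1$, is a nonzero constant mod $y$ and $h_2\equiv cz^3$. Hence $\deg_z h_2\ge 3$, which rules out a $(2,2)$ split more cleanly than the paper's coefficient chase. But the next inference, that $h_1\in\mbC[y]$, is false. It would need the leading $z$-coefficient to be a unit mod $y$, whereas here it is $\kappa^2y^3$, so reduction mod $y$ lowers $z$-degrees. A factor $h_1=a_1(y)z-a_0(y)$ with $y\mid a_1$ and $a_0(0)\neq 0$ is a nonzero constant mod $y$ without lying in $\mbC[y]$. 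Your argument says nothing about such a factor.

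This case cannot be dismissed by any argument at the prime $y$. The $y$-adic Newton polygon of $h$ in $z$ (points $(j,\ord_y a_j)$, where $a_j$ is the coefficient of $z^j$) has vertices $(0,1),(3,0),(4,3)$. So over $\mbC((y))$, $h$ really does split as an irreducible cubic times a linear factor whose root has valuation $-3$.

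Your Ostrowski route fails for the same reason. For $\gamma\neq 0$, the Newton polygon of $h$ (points $(i,j)$ for $y^iz^j$) is $\mathrm{conv}\{(1,0),(11,0),(3,4),(0,3)\}$. Its top edge on $i+2j=11$ has lattice length $4$. It decomposes as $\mathrm{conv}\{(0,0),(5,0),(3,1)\}+\mathrm{conv}\{(1,0),(6,0),(0,3)\}$, which is exactly the $(1,3)$ shape.

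What is missing is a global argument that $h$ has no root $z=a_0/a_1\in\mbC(y)$. This is the bulk of the paper's proof. It uses the rational root theorem ($a_1\mid\kappa^2y^3$, $a_0\mid h(y,0)$), then substitutes $z=a_0/a_1$, compares coefficients, and invokes the generality of $\lambda$.

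One way to finish along your lines:
1. The local analysis forces $a_1=y^3$ and makes the linear factor unique.
2. The semi-invariance $h(\omega y,\omega^2 z)=\omega\, h(y,z)$ for $\omega^5=1$ then forces $a_0=A(y^5)$.
3. With $u=y^5$, the equation $y^9h(y,A/y^3)=0$ reads $(\kappa A^2+\beta uA+\gamma u^2)^2+A^3+(\lambda^2+\delta)uA^2+\varepsilon u^2A+\zeta u^3+u^2=0$.
4. Its top coefficient gives $\deg A\le 1$, and its constant coefficient gives $A(0)=-\kappa^{-2}$.
5. The linear coefficient, combined with these, gives a nontrivial polynomial relation in $\lambda$, which fails for general $\lambda$.
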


\begin{proof}
It is clear that $S \cap H$ is quasismooth at $\msp$ since $t s \in \msF_1$ and $t^2 y \in \msF_2$, $w$ and $x$ appear in the equations that define $S$ and $H$ in $X$, respectively.
It remains to show that $S \cap H$ is an irreducible and reduced curve.

Let $w = \lambda y z + \mu x^6$ be the equation which defines $S$ in $X$, where $\lambda, \mu \in \mbC$ are general.
By eliminating $x$ and $w$, we may identify $S \cap H$ with the subscheme 
\[
(t s + \lambda  yz^2 + y q (z, y) = t^2 y + \lambda^2 y^2 z^2 + s^2 y + c (y, z) = 0) \subset \mbP (2_y, 4_z, 5_s, 5_t),
\]
where $y q (y, z) \coloneq f_{10} (0, y, z)$ and $c (y, z) \coloneq g_{12} (0, y, z, 0)$.
We see that $(s = 0) \cap S \cap H$ is a finite set of points since $\lambda \in \mbC$ is general and $z^3 \in c (y, z)$.
By setting $s = 1$, it is enough to show that the affine curve
\[
C \coloneq (t + \lambda y z^2 + y q(y, z) = t^2 y + \lambda^2 y^2 z^2 + y + c (y, z) = 0) \subset \mbA^3_{y, z, t}
\]
is irreducible and reduced.
We write $q (y, z) = \alpha z^2 + \beta z y^2 + \gamma y^4$ and $c (y, z) = z^3 + \delta z^2 y^2 + \varepsilon z y^4 + \zeta y^6$, where $\alpha, \dots, \zeta \in \mbC$.
Eliminating $t = - \lambda y z^2 - y q (y, z)$, $C$ is isomorphic to the plane curve in $\mbA^2_{y, z}$ defined by the equation $h = 0$, where
\[  
\begin{split}
h &\coloneq y^3 (\lambda z^2 + q (y, z))^2 + \lambda^2 y^2 z^2 + y + c (y, z) \\
&= \eta^2 y^3 z^4 + (2 \eta \beta y^5 + 1) z^3 + y^2 ((2 \eta \gamma + \beta^2) y^5 + \lambda^2 + \delta) z^2 + \\
& \hspace{3cm} + y^4 (2 \beta \gamma y^5 + \varepsilon)z + y (\gamma^2 y^{10} + \zeta y^5 + 1), \\ 
&= \eta y^3 z^4 + \phi_3 z^3 + y^2 \phi_2 z^2 + y^4 \phi_1 z + y \phi_0, 
\end{split}
\]
where $\eta \coloneq (\lambda + \alpha)^2 \in \mbC$ is a nonzero constant and $\phi_i = \phi_i (y) \in \mbC [y]$ are defined by the above equation.
Note that $y \nmid \phi_3$ and $y \nmid \phi_0$.

Suppose that $h$ is reducible.
Then we can write $h = h_1 h_2$ for some $h_1, h_2 \in \mbC [y, z]$.
It is clear that $\deg_z h_i > 0$ for $i = 1, 2$.
We claim that the case $\deg_z h_1 = \deg_z h_2 = 2$ does not happen.
If it happens, then we can write $h_1 = a_2 z^2 + a_1 z + a_0$ and $h_2 = b_2 z^2 + b_1 z + b_0$ for some $a_i, b_i \in \mbC [y]$.
Comparing the coefficient of $z^4$, we have $a_2 b_2 = \eta y^3$.
If $y \mid a_2$ and $y \mid b_2$, then we have $y \mid \phi_3$, which is absurd.
Interchanging $h_1$ and $h_2$ if necessary, we may assume that $a_2 = y^3$ and $b_2 = \eta$.
Comparing the coefficients of $z^3, z^2, z^1, z^0$, we have
\begin{gather}
y^3 b_1 + \eta a_1 = \phi_3, \label{eq:Cz3}\\
y^3 b_0 + a_1 b_1 + \eta a_0 = y^2 \phi_2, \label{eq:Cz2} \\
a_1 b_0 + a_0 b_1 = y^4 \phi_1, \label{eq:Cz1} \\
a_0 b_0 = y \phi_0. \label{eq:Cz0}
\end{gather}
By \eqref{eq:Cz3}, we have $y \nmid a_1$.
By \eqref{eq:Cz0}, either $y \mid a_0$ or $y \mid b_0$.
If $y \mid a_0$, then $y \mid b_0$ by \eqref{eq:Cz1} since $y \nmid a_1$.
But this implies $y \mid \phi_0$, which is absurd.
Hence $y \nmid a_0$ and $y \mid b_0$.
By \eqref{eq:Cz2}, we have $y \nmid b_1$ since $y \nmid a_0$.
It follows that $y \mid a_1 b_0$ and $y \nmid a_0 b_1$, which is impossible by \eqref{eq:Cz1}.
Thus the claim is proved.

Hence, possibly interchanging $h_1$ and $h_2$, we may assume that $\deg_z h_1 = 1$ and $\deg_z h_2 = 3$ and we can write $h_1 = a_1 z - a_0$, $h_2 = b_3 z^3 + b_2 z^2 + b_1 z + b_0$ for some $a_i, b_i \in \mbC [y]$ with $a_1 \ne 0$ and $b_3 \ne 0$.
Plugging $z = a_0/a_1$ and clearing the denominators, we obtain
\begin{equation}
\label{eq:No45del5irr-1}
\begin{split}
& y^3 ((\lambda + \alpha) a_0^2 + \beta y^2 a_0 a_1 + \gamma y^4 a_1^2)^2 \\
& \hspace{2cm} + a_0^3 a_1 + (\lambda + \delta) y^2 a_0^2 a_1^2 + \varepsilon y^4 a_0 a_1^3 + \zeta y^6 a_1^4 + a_1^4 = 0.
\end{split}
\end{equation}
We may write $a_1 = y^m$ for some $0 \le m \le 3$ since $a_1 b_1 = (\lambda + \alpha)^2 y^3$.
Note that $a_0$ divides $y + \zeta y^6 + \gamma^2 y^{11}$.

Suppose that $m = 0$, that is, $a_1 = 1$.
By setting $y = 0$, we have $a_0 (0) \ne 0$.
Then we see that either $a_0$ is a nonzero constant or $\deg a_0 \in \{5, 10\}$ since $a_0$ divides $1 + \zeta y^5 + \gamma^2 y^{10}$.
If $a_0$ is a nonzero constant, then the degree $3$ terms of \eqref{eq:No45del5irr-1} are $(\lambda + \alpha)^2 y^3 a_0^2$ and this is nonzero since $\lambda$ is general.
This is absurd.

Suppose that $m \ge 1$.
Then $a_0$ is not divisible by $y$ because otherwise $h$ is divisible by $y$ and this is impossible.
Hence the constant term of $a_0$ is nonzero.
If $m = 1, 2$, then the least degree terms of \eqref{eq:No45del5irr-1} are $(\lambda + \alpha)^2 a_0 (0)^2 y^3$ which is nonzero.
This is absurd.
Hence $m = 3$.
As before, either $a_0$ is a nonzero constant or $\deg a_0 \in \{5, 10\}$.
If $\deg a_0 \ge 5$, then the highest degree terms of \eqref{eq:No45del5irr-1} are the highest degree terms of $(\lambda + \alpha)^2 a_0^4 y^3$ which is nonzero.
This is absurd and hence $a_0$ is a nonzero constant.
The highest degree term in \eqref{eq:No45del5irr-1} is $\gamma^2 y^7 a_1^4 = \gamma^2 y^{23}$, which implies $\gamma = 0$.
The equation \eqref{eq:No45del5irr-1} can be written as
\[
\gamma^2 y^{23} + (\zeta + 2 \beta \gamma) y^{18} + \beta^2 a_0^2 y^{13} + (\varepsilon a_0 + 1) y^{12} + ((\lambda + \delta) + 2 \beta (\lambda + \alpha)) a_0^2 y^8 + \cdots.
\] 
This shows that $\gamma = \zeta = \beta = \lambda + \delta= 0$ since $a_0 \ne 0$.
The last equality $\lambda + \delta = 0$ is impossible since $\lambda$ is general.
We obtain a contradiction and the proof is complete.
\end{proof}

It follows that $\msp \in S \coloneq C \subset S \subset X$ is a flag of type $\mathrm{I}$ and by Proposition~\ref{prop:typeI}, we obtain
\[
\delta_{\msp} (X) \ge \min \left\{ 24, \ 4, \ 2  \right\} = \frac{4}{3}.
\]

\subsection{Family \textnumero~64 and the $\frac{1}{5} (1, 2, 3)$ point}
\label{sec:No64delta5}

Let $X = X_{12, 16} \subset \mbP (1, 2, 5, 6, 7, 8)$ be a member of family \textnumero~64 and let $\msp = \msp_z$ be the $\frac{1}{5} (1, 2, 3)$ point.

\subsubsection{Case: $z^2 s \in \msF_2$}

We assume that $z^2 s \in \msF_2$.

\begin{Lem}
The defining polynomials of $X$ can be written as
\[
\begin{split}
\msF_1 &= z t + w f_4 + s^2 + s f_6 + f_{12}, \\
\msF_2 &= z^2 s + z g_{11} + w^2 + g_{16},
\end{split}
\]
where $f_i \in \mbC [x, y]$ and $g_i \in \mbC [x, y, s, t]$.
\end{Lem}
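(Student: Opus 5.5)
The plan is to pin down, from quasismoothness, the few monomials that must appear with nonzero coefficient, and then remove the unwanted terms by a sequence of coordinate changes compatible with the weights $(1,2,5,6,7,8)$ of $(x,y,z,s,t,w)$.

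\textbf{Step 1: the forced monomials.}
\begin{itemize}
\item At $\msp_t$: no pure power of $t$ has degree $12$ or $16$, so $\msp_t \in X$. The monomials of $\msF_1$ that are linear in a non-$t$ variable at $\msp_t$ are of the form $t\cdot(\text{degree }5)$, and among them only $t z$ has nonzero derivative at $\msp_t$ (the others, $t x^5, t x^3 y, t x y^2$, do not). Hence $zt \in \msF_1$.
\item At $\msp_s$: a nonzero derivative of $\msF_1$ there requires a monomial $s \cdot v$ with $v$ a variable of degree $6$, and the only one is $s^2$. So $s^2 \in \msF_1$.
\item At $\msp_w$: the same reasoning for $\msF_2$ gives $w^2 \in \msF_2$.
\item $z^2 s \in \msF_2$ holds by assumption.
\end{itemize}
After rescaling we may assume that the coefficients of $zt$, $s^2$, $z^2 s$ and $w^2$ are all $1$.

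\textbf{Step 2: normalising $\msF_2$.} I will do this first, because it involves changing $s$, which then affects $\msF_1$.
\begin{itemize}
\item The terms of $\msF_2$ divisible by $z^2$ other than $z^2 s$ have the form $z^2 h_6(x,y,z)$; this includes $z^3 x$. Replacing $s \mapsto s - h_6$ removes them, so after this $\msF_2$ has no $z^2$-terms except $z^2 s$.
\item Completing the square in $w$ removes all terms $w \cdot(\text{degree }8)$. This uses $w \mapsto w - \tfrac12(\cdots)$, and the shift is harmless because $w$ has the largest weight.
\item Any remaining term of the form $z^2\cdot(\ldots)$ created by these substitutions can be cleared by replacing $\msF_2$ with $\msF_2 - c\, z\,\msF_1$ (using $zt \in \msF_1$) and then shifting $s$ again.
\end{itemize}
What remains is $z^2 s + z g_{11} + w^2 + g_{16}$. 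The terms linear in $z$ form $z g_{11}$, and $g_{11}$ contains no $z$ because of the degrees: $z^2$-terms are already gone, and $w$ has degree $8 < 11$ but $zw\cdot(\text{degree }3)$ has been removed by completing the square. Everything else is $g_{16}$.

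\textbf{Step 3: normalising $\msF_1$.}
\begin{itemize}
\item Filter off the terms divisible by $z$ and write them as $z(t + \phi_7)$. Replacing $t \mapsto t - \phi_7$ leaves $zt$ as the only $z$-term. This absorbs $z^2 y$ and the $zs x$-type terms.
\item The remaining terms linear in $t$ without $z$ are $t x^5, t x^3 y, t x y^2$. They are absorbed by the shift $z \mapsto z - (x^5 + \cdots)$, and the degree bookkeeping shows this does not reintroduce unwanted terms in $\msF_2$ beyond what Step 2 can clean again.
\item $t^2$ has degree $14$, so it cannot occur in $\msF_1$.
\item After these steps the $w$-terms are $w f_4(x,y)$, since degree-$4$ monomials involve only $x$ and $y$. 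The $s$-terms are $s^2 + s f_6$ with $f_6 \in \mbC[x,y]$, because the only degree-$6$ monomial involving $z$ is $zx$, and $s z x$ was absorbed into $t$. Finally, $f_{12} \in \mbC[x,y]$.
\end{itemize}
Note that in the target form the term $s f_6$ is allowed to remain, so there is no need to complete the square in $s$.

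\textbf{Main obstacle.} The delicate point is that the substitutions interact. The shift $s \mapsto s - h_6$ from Step 2 changes $s^2$ in $\msF_1$, and the shift of $z$ in Step 3 changes $z^2 s$ in $\msF_2$. I would handle this by a finite descending-order bookkeeping: each substitution shifts a variable by a polynomial in variables of strictly smaller weight (or of the same weight with a coefficient that vanishes at $\msp_z$). Iterating, the procedure terminates, and the coefficients of $zt$, $s^2$, $z^2 s$, $w^2$ stay nonzero, so they can be rescaled back to $1$.
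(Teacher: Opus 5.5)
Your strategy is the paper's (the paper only records that quasismoothness at $\msp_t$ forces $zt\in\msF_1$ and omits the coordinate changes), and your Step~1 is correct. One move in Step~2 is invalid, though: $\msF_2-c\,z\,\msF_1$ is not homogeneous of degree $16$, because $\deg(z\msF_1)=17$. Since $d_2-d_1=4$, the only admissible modifications are $\msF_2-h_4\msF_1$ with $h_4\in\mbC[x,y]_4$. The trick you borrowed works in family \textnumero~20 only because there $\deg z=d_2-d_1$. It is also unnecessary: the terms of $\msF_2$ divisible by $z^2$, other than $z^2s$, are $z^3x$ and terms in $z^2\,\mbC[x,y]_6$, and shifts of $s$ alone remove them.

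The real gap is termination, which you assert rather than prove. Shifting variables by polynomials in lower-weight variables does not by itself stop the substitutions from regenerating one another, and several intermediate claims fail as written. If $s\mapsto s-h_6$ involves $zx$, then two things happen. The terms of $\msF_2$ in $s^2\,\mbC[x,y]_4$ and $sz\,\mbC[x,y]_5$ produce new terms in $z^2\,\mbC[x,y]_6$. And $s^2\in\msF_1$ produces $szx$ and $z^2x^2$. Separately, shifting $t$ before $z$ in Step~3 turns $t\,h_5$ into new $z$-terms $-z(\kappa x^2+\kappa' y)h_5$, so "$zt$ is the only $z$-term" is undone.

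What actually makes the process finite is the following.
Cleaning $\msF_1$ (shift $z$ first, then $t$) feeds back into $\msF_2$ only terms in $z^2\,\mbC[x,y]_6$. These are removed by $s\mapsto s-h_6(x,y)$, which is harmless for $\msF_1$.
Cleaning $\msF_2$ feeds bad terms into $\msF_1$ only through the $w$-shift (via $wf_4$) and through the $zx$-part of an $s$-shift (via $s^2$ and $sf_6$). Each of these is needed only once. No other shift involves $w$. Also $z^3x$ never reappears: every later shift polynomial has $z$-degree at most $1$, while the $t$- and $w$-terms of $\msF_2$ have total degree at most $2$ in $z,t,w$.

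So use the fixed order: complete the square in $w$; then $s\mapsto s-\epsilon zx$; then $z\mapsto z-h_5(x,y)$; then $t\mapsto t-\psi_7(x,y,z,s)$ (at this point $zt$ is the only $t$-term of $\msF_1$, so nothing new is created); finally $s\mapsto s-h_6(x,y)$. None of these steps changes the coefficients of $zt$, $s^2$, $z^2s$, $w^2$, so the final rescaling goes through.
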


\begin{proof}
By the quasismoothness of $X$ at $\msp_t$, we have $z t \in \msF_1$.
The rest of the proof is straightforward and we omit it.
\end{proof}

We set $S \coloneq (x = 0)_X \sim A$ and let  $H \in |2 A|$ be a general member.
Note that $S$ is a normal surface by Lemma~\ref{lem:normqhyp} and it is easy to see that $S$ is quasismooth at $\msp$.
We set $C \coloneq S \cap H$.

\begin{Lem}
The curve $C$ is irreducible and reduced, and it is quasismooth at $\msp$.
\end{Lem}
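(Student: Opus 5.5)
The plan is to make $C = S \cap H$ completely explicit, after which both assertions become direct checks.

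\emph{Explicit equations.} Since $H \in |2A|$ is general, it is cut out by $y = \lambda x^2$ with $\lambda$ general. Hence $C = (x = y = 0)_X$, scheme-theoretically. After eliminating $x$ and $y$, I would identify $C$ with a complete intersection in $\mbP (5_z, 6_s, 7_t, 8_w)$. The polynomials $f_4, f_6, f_{12} \in \mbC [x, y]$ vanish there, as $f_4$ and $f_6$ are forms of positive degree and $f_{12}$ can have no constant term. For the $g$-terms I would use a degree count in $s, t$ of weights $6, 7$:
\begin{itemize}
\item no monomial in $s, t$ alone has degree $11$, so $g_{11} (0, 0, s, t) = 0$;
\item no monomial in $s, t$ alone has degree $16$, so $g_{16} (0, 0, s, t) = 0$.
\end{itemize}
Therefore $C$ should be
\[
C \cong (z t + s^2 = z^2 s + w^2 = 0) \subset \mbP (5, 6, 7, 8).
\]

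\emph{Quasismoothness at $\msp = \msp_z$.} The Jacobian matrix of these two equations has rows $(t, 2s, z, 0)$ and $(2zs, z^2, 0, 2w)$. At $\msp_z$ these become $(0, 0, 1, 0)$ and $(0, 1, 0, 0)$, so the rank is $2$ and $C$ is quasismooth at $\msp$.

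\emph{Irreducibility and reducedness.}
\begin{itemize}
\item On $(z = 0)$ the equations force $s = 0$ and then $w = 0$, so $C \cap (z = 0) = \{\msp_t\}$ is a single point.
\item On $U_z$, set $z = 1$. The equations become $t = -s^2$ and $w^2 = -s$, so the affine curve upstairs is the image of $\mbA^1_w$ under $w \mapsto (s, t, w) = (-w^2, -w^4, w)$. This is an irreducible, reduced smooth curve, and $C \cap U_z$ is its $\bmu_5$-quotient, hence irreducible and reduced as well.
\end{itemize}
Because $C$ is a complete intersection of dimension $1$, it is Cohen--Macaulay and has no embedded points. It therefore cannot have an embedded or $0$-dimensional component at $\msp_t$. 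Combining this with the two points above gives that $C$ is irreducible and reduced.

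\emph{Main obstacle.} The only real content is the first step, namely checking that no $s, t$-monomials survive in $g_{11}$ and $g_{16}$ and that the $f_i$ vanish. This rests on the degree count and on the normal form in the preceding lemma. Once that holds, the remaining checks are immediate.
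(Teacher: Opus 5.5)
Your proposal is correct and follows essentially the same route as the paper: identify $C=(x=y=0)_X$ with $(zt+s^2=z^2s+w^2=0)\subset\mbP(5,6,7,8)$, show that $C\cap U_z$ is the $\bmu_5$-quotient of the irreducible and reduced curve $(t+s^2=s+w^2=0)$, and use $C\cap(z=0)=\{\msp_t\}$. Beyond the paper, you spell out the degree count that kills $g_{11}$ and $g_{16}$ on $(x=y=0)$, the Jacobian check at $\msp_z$, and the Cohen--Macaulay argument that rules out an embedded point at $\msp_t$; the paper leaves these implicit.
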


\begin{proof}
We have
\[
C = (x = y = 0)_X = (x = y = z t + s^2 = z^2 s + w^2 = 0),
\]
and it is quasismooth outside $\msp_t$.
On the open set $(z \ne 0)_X$, $C$ is the $\bmu_5$-quotient of the complete intersection in $\mbA^3_{s, t, w}$ defined by the equations
\[
t + s^2 = s + w^2 = 0,
\]
which is clearly irreducible and reduced.
This shows that $C$ is irreducible and reduced since $C \cap (z = 0) = \{\msp_t\}$.
\end{proof}

It follows that $\msp \in C \subset S \subset X$ is a type $\mathrm{I}$ flag.
By Proposition~\ref{prop:typeI}, we have
\[
\delta_{\msp} (X) \ge \min \left\{ 8, 4, \frac{4}{5 \cdot 2 \cdot 1 \cdot \frac{2}{35}} \right\} = 4.
\]

\subsubsection{Case: $z^2 s \notin \msF_1$}

We assume that $z^2 s \notin \msF_1$.
Note that $\msp$ is not a maximal center by \cite[Remark~5.2]{OkadaI}.
We set $S \coloneq (x = 0)_X \sim A$.

\begin{Lem}
We have $\omult_{\msp} (S) = 2$.
\end{Lem}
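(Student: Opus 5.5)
The plan is to work in local orbifold coordinates of $X$ at $\msp = \msp_z$. There, $S = (x = 0)_X$ becomes the $\bmu_5$-quotient of a hypersurface germ in $\mbA^3$, and $\omult_{\msp}(S)$ is the order at the origin of its local equation. The first step is to record which monomials divisible by $z$ can occur. By degree counting, the only monomials of the form $z^k \cdot (\text{linear in } x,y,s,t,w)$ are $z t$ and $z^2 y$ in $\msF_1$ (degree $12$), and $z^3 x$ and $z^2 s$ in $\msF_2$ (degree $16$); there is no monomial $z^k y$, $z^k t$ or $z^k w$ of degree $16$. Exactly as in the previous case, $z t \in \msF_1$ by quasismoothness of $X$ at $\msp_t$. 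In the present case $z^2 s \notin \msF_2$ (the condition should read $\msF_2$, as in the preceding subsection), so quasismoothness at $\msp$ forces $z^3 x \in \msF_2$. Moreover $w^2 \in \msF_2$ by quasismoothness of $X$ at $\msp_w$.

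Setting $z = 1$, I will use the implicit function theorem twice. From $\msF_1 = 0$, using $t \in \msF_1(x,y,1,s,t,w)$, we can solve for $t$ as a power series $t = \tau(x,y,s,w)$ with no constant term. From $\msF_2 = 0$, using $x \in \msF_2(x,y,1,s,t,w)$, we then solve for $x$. Hence $y, s, w$ serve as local orbifold coordinates of $X$ at $\msp$, with weights $\frac15(2, 6, 8) = \frac15(2,1,3)$, consistent with the type $\frac15(1,2,3)$. After substituting $t = \tau$, the surface $S = (x=0)_X$ is locally the quotient of the hypersurface
\[
\bigl(\msF_2(0, y, 1, s, \tau(0,y,s,w), w) = 0\bigr) \subset \mbA^3_{y,s,w},
\]
so $\omult_{\msp}(S)$ equals the order of this power series.

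It remains to show that this order is exactly $2$. For the lower bound, I need to rule out linear terms. A linear term in $y$, $s$ or $w$ could only come from $z^k y$, $z^k s$ or $z^k w$ in $\msF_2$. The first and last have no admissible degree, and the middle one is $z^2 s$, which is excluded by assumption. The substitution $t = \tau$ could also produce linear terms, since $\tau$ may contain a linear term in $y$ coming from $z^2 y \in \msF_1$. But $\msF_2$ contains no monomial $z^k t$, because $16 - 7 = 9$ is not divisible by $5$. Consequently $t$ enters $\msF_2(0,y,1,s,t,w)$ only through terms of total order at least $2$, and no linear term arises from it. For the upper bound, note that $w^2$ appears with nonzero coefficient. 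It cannot be cancelled by the substitution, because $w$ itself is not substituted and $\tau$ contributes only through products of total order at least $2$ involving $t$ or its powers; checking that none of these produces a $w^2$ term with the opposite coefficient is the one point requiring care. I expect it to follow by listing the few degree-$16$ monomials of the form $z^k t \cdot m$ with $m$ linear in $y,s,t,w$. If $\tau$ happens to have a linear $w$-term, such a monomial would require $z^k t w$ of degree $16$, i.e.\ $5k = 1$, which is impossible. The same kind of degree count disposes of the remaining monomials. Therefore the lowest-order part of the local equation is a nonzero quadratic form containing $w^2$, and $\omult_{\msp}(S) = 2$.
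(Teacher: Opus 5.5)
Your proposal is correct and follows the paper's argument. In both, $x$ is eliminated via $z^3x\in\msF_2$, which is forced because $z^2s\notin\msF_2$; you are right that the case hypothesis must concern $\msF_2$. There is then no linear term, and $w^2\in\msF_2$ gives $\omult_{\msp}(S)=2$. Your degree count showing that $w^2$ is the only quadratic monomial of $\msF_2(0,y,1,s,t,w)$ fills in a detail the paper leaves implicit. So does your choice of $y,s,w$ as local coordinates, eliminating $t$ via $zt$, which is always present. The paper's choice of $s,t,w$ tacitly requires $z^2y\in\msF_1$.
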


\begin{proof}
We can write $\msF_2 = z^3 x + z^2 g_6 + z g_{11} + g_{16}$, where $g_i \in \mbC [x, y, s, t, w]$ with $s \notin g_6$.
Hence we can choose $s, t, w$ as local orbifold coordinates of $X$ at $\msp$.
We have $w^2 \in g_{16}$ by the quasismoothness of $X$.
This shows that $\omult_{\msp} (S) = 2$.
\end{proof}

Let $D \in |A|_{\mbQ}$ be an irreducible $\mbQ$-divisor such that $D \ne S$.
We see that $\{x, y, s, t\}$ is a $\msp$-isolating set of maximum degree $7$.
By Lemma~\ref{lem:isolomult}, we have $\omult_{\msp} (D) \le 2$.
This shows that $\lct_{\msp} (X; D) \ge 1/2$ for any $D \in |A|_{\mbQ}$, and hence we obtain 
\[
\alpha_{\msp} (X) \ge \frac{1}{2}.
\]

\subsection{Family \textnumero~76 and the $\frac{1}{7} (1, 3, 4)$ point}
\label{sec:No76delta7}

Let $X = X_{12, 20} \subset \mbP (1, 4, 5, 6, 7, 10)$ be a member of family \textnumero~76 and let $\msp = \msp_t$ be the $\frac{1}{7} (1, 3, 4)$ point.
Then we have $\delta_{\msp} (X) > 1$ by Proposition~\ref{prop:No76delta7}.

The proof of Theorem~\ref{thm:locdelta} is now complete.

\begin{proof}[Proof of \emph{Theorem~\ref{thm:Kst}}]
Any smooth member of family \textnumero~$1$ is K-stable by \cite[Theorem~1.3]{Zhuang20}.
In view of Remark~\ref{rem:KstNo60}, the assertions follow from Theorems~\ref{thm:alphasmpt}, \ref{thm:alphasingpt} and \ref{thm:locdelta}.
\end{proof}

\section{Table}
\label{sec:table}

We give the list of families of well-formed and quasismooth Fano $3$-fold WCIs of codimension $2$ and index $1$ that are indexed by $\msI^*_{\br}$ and their singular points in Table~\ref{table:BRcodim2}.

We explain the fourth column ``Singular points" that indicates the singular points of members of the corresponding family.
The symbol $\frac{1}{r} [a, r-a]$ (resp.\ $\frac{1}{2}, \frac{1}{3} (1, 1), \frac{1}{4}$) is an abbreviation for a cyclic quotient singular point of type $\frac{1}{r} (1, a, r-a)$ (resp.\ $\frac{1}{2} (1, 1, 1), \frac{1}{3} (1, 1, 2), \frac{1}{4} (1, 1, 3)$).
\begin{itemize}
\item The mark $\mathrm{QI}$ as a superscript of a singularity means that it is a $\mathrm{QI}$ center, meaning that the Kawamata blowup of a (general) member $X$ at the point initiates a Sarkisov-self link, called a quadratic involution.
\item The mark $\heartsuit$ as a subscript of a singularity means that \cite{OkadaI} proves only under some generality assumptions that either the point is not a maximal center or there exists a Sarkisov-self link initiated by the Kawamata blowup at the point.
See \S \ref{sec:known} for details.
\item The mark $\alpha$ as a subscript of a singularity means that it is not a maximal center and that $\alpha_{\msp} (X) \ge 1/2$.
This is proved in \S \ref{sec:alphasingpt}.
\item The mark $\beta$ as a subscript of a singularity means that $\delta_{\msp} (X) > 1$.
This is proved in \S \ref{sec:delta}.
\end{itemize}

\begingroup
\renewcommand{\arraystretch}{1.35}
\begin{table}[htb]
\caption{Birationally rigid Fano 3-fold WCIs of codimension 2}
\label{table:BRcodim2}
\centering
\begin{tabular}{clcl}
\toprule
\textnumero & $X_{d_1,d_2} \subset \mbP (a_0,\dots,a_5)$ & $(A^3)$ & Singular points \\ 
\midrule
8 & $X_{4, 6} \subset \mbP (1,1,2,2,2,3)$ & $1$ & $6 \ntimes \frac{1}{2}^{\QI}_{\delta}$ \\
14 & $X_{6,6} \subset \mbP (1,2,2,2,3,3)$ & $\frac{1}{2}$ & $9 \ntimes \frac{1}{2}_{\alpha}$ \\
20 & $X_{6,8} \subset \mbP (1,2,2,3,3,4)$ & $\frac{1}{3}$ & $6 \ntimes \frac{1}{2}_{\heartsuit, \alpha}, \ 2 \ntimes \frac{1}{3}^{\QI}_{\delta}$ \\
24 & $X_{6,10} \subset \mbP (1,2,2,3,4,5)$ & $\frac{1}{4}$ & $7 \ntimes \frac{1}{2}_{\alpha}, \ \frac{1}{4}^{\QI}_{\delta}$ \\
31 & $X_{8,10} \subset \mbP (1,2,3,4,4,5)$ & $\frac{1}{6}$ & $4 \ntimes \frac{1}{2}_{\alpha}, \ \frac{1}{3}^{*{\QI}}_{\heartsuit, \delta}, \ 2 \ntimes \frac{1}{4}^{\QI}_{\delta}$ \\
37 & $X_{8,12} \subset \mbP (1,2,3,4,5,6)$ & $\frac{2}{15}$ & $4 \ntimes \frac{1}{2}_{\heartsuit, \alpha}, \ 2 \ntimes \frac{1}{3}_{\heartsuit, \alpha}, \ \frac{1}{5} [1, 4]^{\QI}_{\delta}$ \\
45 & $X_{10,12} \subset \mbP (1,2,4,5,5,6)$ & $\frac{1}{10}$ & $5 \ntimes \frac{1}{2}_{\alpha}, \ 2 \ntimes \frac{1}{5} [1,4]^{\QI}_{\delta}$ \\
47 & $X_{10,12} \subset \mbP (1,3,4,4,5,6)$ & $\frac{1}{12}$ & $\frac{1}{2}_{\alpha}, \ 2 \ntimes \frac{1}{3}_{\heartsuit, \alpha}, \ 3 \ntimes \frac{1}{4}_{\alpha}$ \\
51 & $X_{10,14} \subset \mbP (1,2,4,5,6,7)$ & $\frac{1}{12}$ & $5 \ntimes \frac{1}{2}_{\alpha}, \ \frac{1}{4}_{\heartsuit, \alpha}, \ \frac{1}{6} [1,5]^{\QI}_{\delta}$ \\
59 & $X_{12,14} \subset \mbP (1,4,4,5,6,7)$ & $\frac{1}{20}$ & $2 \ntimes \frac{1}{2}_{\alpha}, \ 3 \ntimes \frac{1}{4}_{\heartsuit, \alpha}, \ \frac{1}{5} [1, 4]_{\alpha}$ \\
60 & $X_{12,14} \subset \mbP (2,3,4,5,6,7)$ & $\frac{1}{30}$ & $7 \ntimes \frac{1}{2}, \ 2 \ntimes \frac{1}{3}, \ \frac{1}{5} [2,3]$ \\
64 & $X_{12,16} \subset \mbP (1,2,5,6,7,8)$ & $\frac{2}{35}$ & $4 \ntimes \frac{1}{2}_{\alpha}, \ \frac{1}{5} [2,3]^{\QI}_{\delta}, \ \frac{1}{7} [1,6]^{\QI}_{\delta}$ \\ 
71 & $X_{14,16} \subset \mbP (1,4,5,6,7,8)$ & $\frac{1}{30}$ & $\frac{1}{2}_{\alpha}, \ 3 \ntimes \frac{1}{4}_{\heartsuit, \alpha}, \ \frac{1}{5} [2, 3]_{\alpha}, \ \frac{1}{6} [1,5]_{\alpha}$ \\
75 & $X_{14,18} \subset \mbP (1,2,6,7,8,9)$ & $\frac{1}{24}$ & $5 \ntimes \frac{1}{2}_{\alpha}, \ \frac{1}{3}_{\alpha}, \ \frac{1}{8}[1,7]^{\QI}_{\delta}$  \\
76 & $X_{12,20} \subset \mbP (1,4,5,6,7,10)$ & $\frac{1}{35}$ & $2 \ntimes \frac{1}{2}_{\alpha}, \ 2 \ntimes \frac{1}{5} [1, 4]_{\alpha}, \ \frac{1}{7} [3,4]^{\QI}_{\delta}$ \\
78 & $X_{16,18} \subset \mbP (1,4,6,7,8,9)$ & $\frac{1}{42}$ & $2 \ntimes \frac{1}{2}_{\alpha}, \ \frac{1}{3}_{\alpha}, \ 2 \ntimes \frac{1}{4}_{\alpha}, \ \frac{1}{7} [1,6]_{\alpha}$ \\
84 & $X_{18,30} \subset \mbP (1,6,8,9,10,15)$ & $\frac{1}{120}$ & $2 \ntimes \frac{1}{2}_{\alpha}, \ 2 \ntimes \frac{1}{3}_{\alpha}, \ \frac{1}{5}[1,4]_{\alpha}, \ \frac{1}{8} [1, 7]_{\alpha}$ \\
85 & $X_{24,30} \subset \mbP (1,8,9,10,12,15)$ & $\frac{1}{180}$ & $\frac{1}{2}_{\alpha}, \ \frac{1}{3}_{\alpha}, \ \frac{1}{4}_{\alpha}, \ \frac{1}{5} [2,3]_{\alpha}, \ \frac{1}{9} [1,8]_{\alpha}$ \\
\bottomrule
\end{tabular}
\end{table}
\endgroup

\bibliographystyle{alpha}
\bibliography{okada}

\end{document}